\newcommand{\Z}{\ensuremath{\mathbb{Z}}}
\newcommand{\Q}{\ensuremath{\mathbb{Q}}}
\newcommand{\R}{\ensuremath{\mathbb{R}}}
\newcommand{\CC}{\ensuremath{\mathbb{C}}}
\newcommand{\A}{\ensuremath{\mathbb{A}}}
\newcommand{\F}{\ensuremath{\mathbb{F}}}
\newcommand{\Tr}{\operatorname{tr}}
\newcommand{\Ind}{\operatorname{Ind}}
\newcommand{\Sym}{\operatorname{Sym}}
\newcommand{\Frob}{\operatorname{Frob}}
\newcommand{\Weil}[1]{\ensuremath{\mathrm{Weil}_{#1}}}	
\newcommand{\mes}{\operatorname{mes}}	
\newcommand{\lrangle}[1]{\ensuremath{\langle #1 \rangle}}
\newcommand{\Stab}{\operatorname{Stab}}
\newcommand{\identity}{\ensuremath{\mathrm{id}}}
\newcommand{\rightiso}{\ensuremath{\stackrel{\sim}{\rightarrow}}}
\newcommand{\dcate}[1]{\ensuremath{\text{-}\mathsf{#1}}}	
\newcommand{\Image}{\operatorname{im}}
\newcommand{\dotimes}[1]{\ensuremath{\underset{#1}{\otimes}}}
\newcommand{\Ad}{\operatorname{Ad}}
\newcommand{\Gm}{\ensuremath{\mathbb{G}_\mathrm{m}}}
\newcommand{\GL}{\operatorname{GL}}
\newcommand{\PGL}{\operatorname{PGL}}
\newcommand{\SO}{\operatorname{SO}}
\newcommand{\Or}{\operatorname{O}}
\newcommand{\SL}{\operatorname{SL}}
\newcommand{\Sp}{\operatorname{Sp}}
\newcommand{\GSp}{\operatorname{GSp}}
\newcommand{\Lgrp}[1]{\ensuremath{{}^{\mathrm{L}} #1}}	
\newcommand{\Resprod}{\ensuremath{\prod\nolimits'}}	
\newcommand{\Mp}{\ensuremath{\widetilde{\mathrm{Sp}}}}
\newcommand{\MMp}{\operatorname{Mp}}
\newcommand{\bmu}{\ensuremath{\bm\mu}}
\newcommand{\bpsi}{{\ensuremath{\uppsi}}}
\newcommand{\Endo}{\ensuremath{\mathcal{E}}}
\newcommand{\orbI}{\ensuremath{\mathcal{I}}}
\newcommand{\elli}{\operatorname{ell}}
\newcommand{\asp}{\ensuremath{\dashrule[.7ex]{2 2 2 2}{.4}}} 
\newcommand{\rev}{\ensuremath{\mathbf{p}}} 
\newcommand{\Trans}{\ensuremath{\mathcal{T}}}	
\newcommand{\trans}{\ensuremath{\check{\mathcal{T}}}}	
\theoremstyle{plain}
\newtheorem{proposition}{Proposition}
\newtheorem{lemma}[proposition]{Lemma}
\newtheorem{theorem}[proposition]{Theorem}
\newtheorem{corollary}[proposition]{Corollary}
\theoremstyle{definition}
\newtheorem{definition}[proposition]{Definition}
\newtheorem{definition-theorem}[proposition]{Definition--Theorem}
\newtheorem{definition-proposition}[proposition]{Definition--Proposition}
\newtheorem{remark}[proposition]{Remark}
\theoremstyle{plain}
\newtheorem{Thm}{Theorem}
\theoremstyle{definition}
\newtheorem{Rmk}{Remark}
\numberwithin{equation}{section}
\numberwithin{proposition}{subsection}
\numberwithin{Thm}{section}	
\numberwithin{Rmk}{section}	
\renewcommand{\emptyset}{\ensuremath{\varnothing}}	
\title{Arthur packets for metaplectic groups}
\author{Wen-Wei Li}
\date{}
\renewcommand{\l@section}{\@dottedtocline{1}{1.5em}{2.0em}}
\renewcommand{\l@subsection}{\@dottedtocline{2}{4.0em}{3.0em}}
\begin{document}

\maketitle

\begin{abstract}
	For metaplectic groups over a local field of characteristic zero, we define the Arthur packet attached to any Arthur parameter $\psi$ as a multi-set of unitary genuine irreducible representations, characterized by endoscopic character relations. Over number fields, we obtain a multiplicity formula for the genuine discrete $L^2$-automorphic spectrum in terms of global Arthur parameters and $\epsilon$-factors, by leveraging the trace formula for metaplectic groups. This confirms a conjecture of Gan, and extends earlier results of Gan--Ichino on the Shimura--Waldspurger correspondences, whereas their works play a critical role in our proof. Furthermore, all these are shown to be compatible with existing results in rank one (Waldspurger) and two (Gan--Ichino).
\end{abstract}

{\scriptsize
\begin{tabular}{ll}
	\textbf{MSC (2020)} & Primary 22E50; Secondary 11F70, 11F72 \\
	\textbf{Keywords} & endoscopy, metaplectic group, Arthur packet, Arthur-Selberg trace formula
\end{tabular}}

\setcounter{tocdepth}{2}
\tableofcontents

\section{Introduction}\label{sec:intro}
\subsection*{Overview}
Put $\bmu_m := \left\{z \in \CC^{\times}: z^m = 1 \right\}$ for all $m \in \Z_{\geq 1}$, and denote the Pontryagin dual of any finite abelian group $A$ by $A^\vee$. After A.\ Weil \cite[\S 34]{Weil64}, for a local field $F$ of characteristic zero and a symplectic $F$-vector space $W$, the metaplectic group is a central extension of locally compact groups
\[ 1 \to \bmu_2 \to \MMp(W) \to \Sp(W) \to 1, \]
which splits if and only if $F = \CC$. For a number field $\dot{F}$ with adèle ring $\A = \A_{\dot{F}}$ and a symplectic $\dot{F}$-vector space $\dot{W}$, the adélic metaplectic group is a central extension
\[ 1 \to \bmu_2 \to \MMp(\dot{W}, \A) \to \Sp(\dot{W}, \A) \to 1, \]
which splits uniquely over $\Sp(\dot{W})$ and can be identified with the restricted product of $\MMp(\dot{W}_v)$ modulo $\left\{ (z_v)_v \in \bigoplus_v \bmu_2 : \prod_v z_v = 1 \right\}$, where $v$ ranges over the places of $\dot{F}$.

Although these groups are not $F$ or $\A$-points of a linear algebraic group, thus lie out of the scope of ordinary Langlands program, they play a prominent role in the study of automorphic forms, representations and number theory. The aim of this article is to settle \emph{Arthur's conjectures} for metaplectic groups, in both the local and global aspects.

To make sense of these statements, we have to define the Langlands dual group of $\MMp(W)$; further still, we need a theory of endoscopy. Let us fix the symplectic form $\lrangle{\cdot|\cdot}: W \times W \to F$ and an additive character $\bpsi \neq \mathbf{1}$ of $F$ in the local setting. Put $G := \Sp(W)$ and $n := \frac{1}{2} \dim W$. It will be convenient to enlarge the twofold covering $\MMp(W)$ to an \emph{eightfold} one by pushing-out along $\bmu_2 \hookrightarrow \bmu_8$, denoted as $\tilde{G} = \Mp(W)$, which has a simpler description via Schrödinger models of the Weil representation $\omega_{\bpsi}$. Define the Langlands dual group of $\tilde{G}$ as
\[ \tilde{G}^\vee := \Sp(2n, \CC) \quad \text{with trivial Galois action.} \]
The passage to eightfold coverings does not affect the genuine representation theory; a representation of $\tilde{G}$ (resp.\ $\MMp(W)$) is said to be genuine if every $z \in \bmu_8$ (resp.\ $z \in \bmu_2$) acts as $z \cdot \identity$. A basic example of genuine representations is the Weil representation $\omega_{\bpsi} = \omega^+_{\bpsi} \oplus \omega^-_{\bpsi}$.

Similarly, in the global setting we fix $(\dot{W}, \lrangle{\cdot|\cdot})$ over $\dot{F}$ and an additive character $\dot{\bpsi} = \prod_v \bpsi_v \neq \mathbf{1}$, enlarge $\MMp(\dot{W}, \A)$ to an eightfold covering $\tilde{G} := \Mp(\dot{W}, \A)$ of $G(\A)$, and study the genuine automorphic forms/representations of $\tilde{G}$, such as the adélic Weil representation $\omega_{\dot{\bpsi}, \A}$.

Endoscopy for $\tilde{G}$ is first defined by J.\ Adams \cite{Ad98} and D.\ Renard \cite{Re99} over $F = \R$, then extended to arbitrary local or global fields of characteristic zero in \cite{Li11} (with minor mistakes corrected in \cite[Remarque 2.3.1]{Li15}). In particular, there are notions of endoscopic data, transfer factors and transfer of orbital integrals. All these can be viewed as the first instance of Weissman's Langlands program for covering groups \cite{Weis18}; note that the formalism is sensitive to the choice of $(W, \lrangle{\cdot|\cdot})$ and $\bpsi$.

In particular, local L-parameters and Arthur parameters for $\tilde{G}$ are defined. Following Arthur's approach, global parameters are defined unconditionally in terms of cuspidal automorphic representations of general linear groups and L-functions. Our aim can now be phrased as follows.
\begin{description}
	\item[Local goal] Define the local Langlands group of $F$ as
	\[ \mathcal{L}_F := \begin{cases}
		\Weil{F}, & \textit{if $F$ is Archimedean} \\
		\Weil{F} \times \SL(2, \CC), & \text{if $F$ is non-Archimedean.}
	\end{cases}\]
	
	For each Arthur parameter $\psi: \mathcal{L}_F \times \SL(2, \CC) \to \tilde{G}^\vee$ and $\chi \in \EuScript{S}_\psi^\vee$, where $\EuScript{S}_\psi$ is the abelian component group of $S_\psi := Z_{\tilde{G}^\vee}(\Image(\psi))$, we want to define an invariant distribution $\pi_{\psi, \chi}$ on $\tilde{G}^\vee$ that is a linear combination of unitary genuine irreducible characters with coefficients in $\Z_{\geq 0}$, characterized by suitable endoscopic character relations.
	
	Once this is done, the \emph{Arthur packet} $\Pi_\psi$ can be defined as a multi-set of unitary genuine irreducibles fibered over $\EuScript{S}_\psi^\vee$, by collecting the irreducible summands in each $\pi_{\psi, \chi}$.
	
	\item[Global goal] For an adélic metaplectic group $\tilde{G}$, we want to
	\begin{itemize}
		\item decompose the genuine discrete $L^2$-automorphic spectrum $L^2_{\mathrm{disc}, -}$ of $\tilde{G}$ into orthogonal direct sum $\widehat{\bigoplus}_{\dot{\psi}} L^2_{\dot{\psi}}$, where $\dot{\psi}$ ranges over discrete Arthur parameters for $\tilde{G}$;
		\item for each $\dot{\psi}$, decompose $L^2_{\dot{\psi}}$ in terms of the local distributions $\pi_{\dot{\psi}_v, \chi_v}$ and certain $\epsilon$-factors, where the global information resides.
	\end{itemize}
\end{description}

For tempered genuine representations, a local Langlands correspondence for metaplectic groups has been obtained by Adams--Barbasch \cite{AB98} for $F = \R$, and by Gan--Savin \cite{GS1} for non-Archimedean $F$; the case $F = \CC$ is much simpler. From these one can parameterize all genuine irreducibles via Langlands quotients, but this approach is not enough for global questions.

For generic discrete global Arthur parameters $\dot{\psi}$, i.e.\ those trivial on $\SL(2, \CC)$, Gan--Ichino \cite{GI18} obtained a multiplicity formula for $L^2_{\dot{\psi}}$ by using $\Theta$-correspondences.

The aforementioned works are all based on $\Theta$-correspondence. In Arthur's vision, carried out splendidly in \cite{Ar13, Mok15} for quasisplit classical groups (see also \cite{KMSW, Is24}), these statements should be deduced from the \emph{stable trace formula}. In this article, we shall follow a similar, yet somewhat different route. In fact, in a series of works that culminates in \cite{Li21}, the stable trace formula for $\tilde{G}$ has been established in the global setting. We will see how it can be combined with $\Theta$-correspondence to yield the desired results.

\subsection*{Main local results}
Let $F$ be a local field of characteristic zero, and consider the metaplectic covering $\tilde{G} \xrightarrow{\rev} G(F)$ associated with $(W, \lrangle{\cdot|\cdot})$ and $\bpsi$. Denote the set of Arthur parameters of $\tilde{G}$ by $\Psi(\tilde{G})$, taken up to $\tilde{G}^\vee$-conjugacy. We have to consider a larger set $\Psi^+(\tilde{G})$ by dropping the boundedness condition of $\psi|_{\Weil{F}}$, in order to accommodate possible local components of global Arthur parameters.

Let $n = \frac{1}{2} \dim W$. The elliptic endoscopic data of $\tilde{G}$ are in bijection with conjugacy classes of elements $s \in \tilde{G}^\vee$ satisfying $s^2 = 1$. In turn, these conjugacy classes are in bijection with pairs $(n', n'') \in \Z_{\geq 0}^2$ such that $n' + n'' = n$, by counting the eigenvalues $\pm 1$ of $s$. If an elliptic endoscopic datum $\mathbf{G}^!$ corresponds to $(n', n'')$, then the underlying endoscopic group is
\[ G^! = \SO(2n' + 1) \times \SO(2n'' +1) \]
where $\SO(m)$ means the split form. The transfer $\Trans_{\mathbf{G}^!, \tilde{G}}$ of orbital integrals dualizes to yield the spectral transfer map $\trans_{\mathbf{G}^!, \tilde{G}}$, which sends stable virtual characters on $G^!(F)$ to genuine virtual characters on $\tilde{G}$.

If $\mathbf{G}^!$ corresponds to the conjugacy class of $s$, then $(G^!)^\vee \simeq Z_{\tilde{G}^\vee}(s) \subset \tilde{G}^\vee$. Given a pair $(\mathbf{G}^!, \psi^!)$ where $\mathbf{G}^!$ is an elliptic endoscopic datum of $\tilde{G}$ and $\psi^! \in \Psi^+(G^!)$, we obtain $(\psi, s)$ where $\psi \in \Psi^+(\tilde{G})$ is the image of $\psi^!$. By \S\ref{sec:basic-bijection}, this is actually a bijection: every pair $(\psi, s)$ arises uniquely in this way, where $s \in S_\psi$ satisfies $s^2 = 1$, considered up to $S_\psi$-conjugacy.

We are now ready to define $\pi_{\psi, \chi}$. Given $(\psi, s)$, take its preimage $(\mathbf{G}^!, \psi^!)$. The theory of Arthur packets for $G^!$ is available; in particular, there is a stable virtual character $S\Theta^{G^!}_{\psi^!}$, independent of all choices since $G^!$ is adjoint. Consider the $(-1)$-eigenspace of $\psi$ under $s$ and set
\[ \epsilon(\psi^{s = -1}) := \epsilon\left(\frac{1}{2}, \psi^{s = -1}|_{\mathcal{L}_F}, \bpsi \right); \]
this is a symplectic local root number, hence independent of $\bpsi$. Set
\begin{equation*}
	T_{\psi, s} := \epsilon\left( \psi^{s = -1} \right) \cdot \trans_{\mathbf{G}^!, \tilde{G}}\left( S\Theta^{G^!}_{\psi^!} \right).
\end{equation*}
It will be shown in Lemma \ref{prop:T-psi-s} that $T_{\psi, s}$ depends only on $\psi$ and the image $x$ of $s$ in $\EuScript{S}_\psi$, although the individual terms on the right depend on the conjugacy class of $s$. Every $x \in \EuScript{S}_\psi$ arises from some $s \in S_\psi$ with $s^2 = 1$, hence we may write $T_{\psi, x} = T_{\psi, s}$, and consider the Fourier expansion of $x \mapsto T_{\psi, x}$ or its translates.

\begin{Thm}[= Theorem \ref{prop:local-desiderata}]
	Given $\psi \in \Psi^+(\tilde{G})$, set $s_\psi := \psi\left(1, \bigl(\begin{smallmatrix} -1 & \\ & -1 \end{smallmatrix}\bigr) \right) \in S_\psi$ and let $x_\psi \in \EuScript{S}_\psi$ be its image. Then
	\[ \pi_{\psi, \chi} := |\EuScript{S}_\psi|^{-1} \sum_{x \in \EuScript{S}_\psi} \chi(x_\psi x) T_{\psi, x} \]
	is a linear combination (possibly zero) of genuine irreducible characters of $\tilde{G}$ with coefficients in $\Z_{\geq 0}$, for all $\chi \in \EuScript{S}_\psi^\vee$. If $\psi \in \Psi(\tilde{G})$, then these irreducible characters arise from unitary representations.
\end{Thm}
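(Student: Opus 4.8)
The plan is to argue by induction on $n = \frac{1}{2}\dim W$, splitting $\psi \in \Psi^+(\tilde G)$ into those that factor through a proper Levi subgroup and the elliptic ones. Recall that a Levi of $\tilde G$ has the shape $\widetilde{\GL}(n_1) \times \cdots \times \widetilde{\GL}(n_r) \times \Mp(W_0)$ with $\dim W_0 < \dim W$, that on the $\GL$-factors the relevant Arthur packets are single (unitary) Speh-type representations, and that every non-elliptic $\psi \in \Psi(\tilde G)$, as well as every $\psi \in \Psi^+(\tilde G) \setminus \Psi(\tilde G)$ (for the latter, boundedness fails on $\Weil F$, which forces a non-self-dual $\GL$-constituent together with its contragredient to occur, hence forces factorization through a proper Levi), is the image of some $\psi_M$ on such an $M$. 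For these I would \emph{define} $\pi_{\psi, \chi}$ as the normalized parabolic induction of $\pi_{\psi_1} \otimes \cdots \otimes \pi_{\psi_r} \otimes \pi_{\psi_0, \chi_0}$, with $\chi_0$ the character of $\EuScript{S}_{\psi_0}$ induced by $\chi$ (these component groups agree). Since normalized induction of a finite-length genuine representation is again of finite length, its character is a $\Z_{\geq 0}$-combination of genuine irreducible characters; and when $\psi \in \Psi(\tilde G)$ the inducing datum is unitary, so the induction stays unitary. The real content of this step is \emph{consistency} with the endoscopic recipe: that $\trans_{\mathbf G^!, \tilde G}$ intertwines parabolic induction on $G^!$ with parabolic induction on $\tilde G$, and that $\epsilon(\psi^{s=-1})$ factors through the corresponding Levi of $\mathbf G^!$. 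The first follows from the compatibility of the metaplectic transfer factors with descent to Levi subgroups, the second from additivity of local root numbers; comparing Fourier expansions over $\tilde G$ and over $M$ then yields $\pi_{\psi, \chi} = |\EuScript{S}_\psi|^{-1} \sum_x \chi(x_\psi x) T_{\psi, x}$ for such $\psi$.

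It remains to treat $\psi$ elliptic, in which case $\psi \in \Psi(\tilde G)$ automatically; this is the heart of the matter. Here I would globalize: choose (by a standard globalization argument) a number field $\dot F$ with a place $v_0$ such that $\dot F_{v_0} \simeq F$, a global symplectic space and an additive character $\dot{\bpsi}$ restricting to $\bpsi$ at $v_0$, and a discrete global Arthur parameter $\dot\psi$ of the adélic metaplectic group with $\dot\psi_{v_0} = \psi$, with $\EuScript{S}_{\dot\psi} \to \EuScript{S}_{\dot\psi_{v_0}} = \EuScript{S}_\psi$ surjective, and with $\dot\psi_v$ unramified (or otherwise in a case already treated) for all $v \neq v_0$, so that each $\pi_{\dot\psi_v, \chi_v}$ is a single known genuine irreducible. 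I would then feed into the stabilization of the discrete part of the trace formula for $\tilde G$ from \cite{Li21}, expand both sides along global parameters — the endoscopic side being evaluated by Arthur's stable multiplicity formula for the products of split odd special orthogonal groups $G^!$, to which his results apply directly — and isolate the $\dot\psi$-components. Using linear independence of characters at the auxiliary places together with the surjectivity onto $\EuScript{S}_{\dot\psi}$, one extracts the multiplicity in $L^2_{\dot\psi}$ of an irreducible of the form $\pi_{v_0} \otimes \bigotimes_{v \neq v_0} \pi_v^{\mathrm{ur}}$ as exactly the multiplicity of $\pi_{v_0}$ in $\pi_{\psi, \chi}$ for the character $\chi$ dictated by the chosen data. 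Being a multiplicity in an $L^2$-space, this is a non-negative integer and the corresponding representation is unitary; letting $v_0$ and the globalization range over all $(\psi, \chi)$ concludes the elliptic case, hence the theorem.

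The obstacles I anticipate are twofold. First, the \emph{globalization lemma}: producing $\dot\psi$ with prescribed component at $v_0$, controlled (ideally unramified) behaviour elsewhere, discreteness, and surjectivity of the component-group map — the delicate point being to realize the symplectic-type constituents of $\psi$ as local components of cuspidal automorphic representations of general linear groups with the correct self-duality and symplectic root-number sign. Second, the \emph{local--global separation}: the stabilized trace formula produces only a global identity, so decoupling $v_0$ forces one to know the packets at all $v \neq v_0$ beforehand — which is why the unramified case and, more generally, the tempered case (via the local Langlands correspondence of Adams--Barbasch and Gan--Savin) must already be in place — and requires a multiplier/linear-independence argument to separate the places. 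Intertwined with both is the pinning-down of normalizations: the twist by $x_\psi$, the $\bpsi$-dependence of the transfer factors, and the base point of each packet (the Weil summand $\omega^+_{\bpsi}$) must all be fixed so that the abstract character identities become the precise formula asserted; this is where the unconditional results of Gan--Ichino on the Shimura--Waldspurger correspondence for generic global parameters serve as the normalization anchor, and, in ranks one and two, as an independent consistency check against Waldspurger and Gan--Ichino.
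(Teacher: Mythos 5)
Your overall architecture (reduction to a smaller case by parabolic induction, then globalization plus the stabilized trace formula, Arthur's stable multiplicity formula for the odd orthogonal endoscopic groups, and separation at auxiliary places) matches the paper's strategy, but there are two genuine gaps.

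First, your reduction step goes too far. You reduce to \emph{elliptic} (discrete) $\psi$ by declaring $\pi_{\psi,\chi}$ to be parabolically induced whenever $\psi$ factors through a proper Levi, asserting that the component groups of $\psi$ and $\psi_M$ agree. This fails for good-parity parameters with repeated symplectic-type constituents: for $\psi = 2(\phi_1 \boxtimes r(b_1))$ with $\phi_1 \boxtimes r(b_1)$ symplectic, $\psi$ does factor through the Levi $\GL(d)$, but $S_\psi = \Or(2,\CC)$ gives $\EuScript{S}_\psi \simeq \Z/2\Z$ while the Levi parameter has trivial component group, so induction cannot produce $\pi_{\psi,\chi}$ for the non-trivial $\chi$. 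The correct reduction (Proposition \ref{prop:pi-psi-gp}) peels off only the $I^-$ and $J$ constituents and stops at \emph{good parity}, which still includes non-discrete parameters; the globalization must then handle these by splitting each multiplicity $m_i$ into $m_i$ globally distinct cuspidal representations $\dot{\phi}_{i,1},\dots,\dot{\phi}_{i,m_i}$, all localizing to $\phi_i$ at $u$, so that $\dot{\psi}$ is discrete even though $\psi$ is not. Your induction-on-$n$ framework has no way to reach these parameters.

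Second, and more seriously, your local--global separation presupposes that the packets at the auxiliary places are ``unramified or otherwise in a case already treated,'' with the tempered local Langlands correspondence as the fallback. But the $\SL(2,\CC)$-type of a global parameter is the same at every place: if $\psi$ is non-tempered, then \emph{every} localization $\dot{\psi}_v$ is a non-tempered Arthur parameter, so the tempered theory of Adams--Barbasch, Gan--Savin and Luo cannot supply the needed separating data at the ramified auxiliary places (in particular at the places of $V_0$ where injectivity of $\EuScript{S}_{\dot\psi} \hookrightarrow \prod_v \EuScript{S}_{\dot\psi_v}$ must be arranged with pairwise distinct ramified constituents). The paper's essential extra local input is the unconditional construction of \emph{anti-tempered} packets of good parity at non-Archimedean places: one takes $\dot\psi_v = \hat{\phi}$ dual to a bounded $\phi$, uses the commutation of endoscopic transfer with the Aubert--Zelevinsky involution (Theorem \ref{prop:trans-D}), and resolves the resulting sign identity \eqref{eqn:anti-tempered-bp} for parameters in $\Psi(\tilde{G}_v)^\star$ via $\Theta$-correspondence, Kudla's matching of cuspidal supports, and the Liu--Lo--Shahidi character $\mu_\psi$ (Propositions \ref{prop:beta-variance}, \ref{prop:anti-tempered-star}), supplemented by Proposition \ref{prop:L-within-0} on the L-packet inside an Arthur packet at the remaining ramified places. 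Without this ingredient your trace identity cannot be decoupled at $v_0$, so the elliptic (and good-parity) case does not close.
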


Based on this theorem, one defines the Arthur packet $\Pi_\psi$ as a multi-set of genuine irreducible representations. Rigorously, $\Pi_\psi$ is a finite set equipped with two maps
\[ \Pi_-(\tilde{G}) \leftarrow \Pi_\psi \rightarrow \EuScript{S}_\psi^\vee \]
where $\Pi_-(\tilde{G})$ is the set of isomorphisms classes of genuine irreducible representations of $\tilde{G}$. For details, we refer to \S\ref{sec:A-packets}, especially Definition \ref{def:A-packet}.

\begin{Rmk}
	The $\epsilon$-factor in $T_{\psi, s}$ is a metaplectic feature: it does not appear in Arthur's original version. Another feature is that we consider all characters of $\EuScript{S}_\psi$, with no condition imposed on their pull-backs to $Z_{\tilde{G}^\vee} = \{\pm 1\}$.
\end{Rmk}

\begin{Rmk}
	If $\psi$ is a bounded L-parameter of $\tilde{G}$, then the local Langlands correspondence of Adams--Barbasch and Gan--Savin is recovered: we have $s_\psi = 1$, the Arthur packet $\Pi_\psi$ reduces to the tempered L-packet, which is multiplicity-free, and the definition of $\pi_{\psi, \chi}$ becomes the endoscopic character relation of C.\ Luo \cite{Luo20}.
\end{Rmk}

Furthermore, $\pi_{\psi, \chi}$ has the following properties.
\begin{description}
	\item[Reduction to good parity (Proposition \ref{prop:pi-psi-gp})] We say $\psi$ is of good parity (Definition \ref{def:good-parity}) if, as a $2n$-dimensional representation of $\mathcal{L}_F \times \SL(2, \CC)$, all its simple constituents are self-dual of symplectic type. General $\psi$ can be expressed as the image of some parameter $\psi_M$ from a Levi $\tilde{M} \subset \tilde{G}$, whose $\Sp$-component is of good parity, such that $\EuScript{S}_{\psi_M} \simeq \EuScript{S}_\psi$ and $\pi_{\psi, \chi}$ equals the full parabolic induction of $\pi_{\psi_M, \chi}$.
	
	If we write $M = \prod_i \GL(n_i) \times \Sp(W^\flat)$, then $\tilde{M} = \prod_i \GL(n_i, F) \times \Mp(W^\flat)$ canonically since $(W, \lrangle{\cdot|\cdot}, \bpsi)$ is chosen, and we are using eightfold coverings. This reduces the theory for $\tilde{M}$ to a smaller metaplectic group.
	
	\item[Infinitesimal character (Proposition \ref{prop:pi-psi-inf-char})] When $F$ is Archimedean, $\pi_{\psi, \chi}$ has an explicitly specified infinitesimal character $\lambda(\psi)$ (see \S\ref{sec:inf-character}).
	
	\item[Unramified case (Proposition \ref{prop:pi-psi-nr})] In the unramified situation (see \S\ref{sec:unramified-situation} for the precise definition), one has the anti-genuine spherical Hecke algebra $\mathcal{H}_{\asp}(K \backslash \tilde{G} / K)$ and the notion of $K$-spherical representations, where $K := G(\mathfrak{o}_F)$; being anti-genuine means that every $z \in \bmu_8$ acts via $z^{-1} \identity$. If $\psi$ is unramified (Definition \ref{def:unramified-psi}), then $\Pi_\psi$ has a unique $K$-spherical element, which is multiplicity-free and parametrized by $\chi = \mathbf{1}$; if $\psi$ is not unramified then $\Pi_\psi$ has no $K$-spherical elements.
	
	\item[Central character (Proposition \ref{prop:p-psi-negation})] Using the Weil representations, one obtains a canonical preimage of $-1 \in G(F)$ in $\tilde{G}$, which is central and still denoted by $-1$ (Definition \ref{def:minus-1}). For all $\psi \in \Psi^+(\tilde{G})$, the translation by $-1 \in \tilde{G}$ acts by $\chi(-1) \epsilon(\psi|_{\mathcal{L}_F})$, where we take $-1 \in Z_{\tilde{G}^\vee}$ in $\chi(-1)$. This determines the central character of $\pi_{\psi, \chi}$ completely.
	
	\item[The L-packet within (Proposition \ref{prop:L-within})] Let $\psi \in \Psi(\tilde{G})$, denote by $\Pi_\psi^{\mathrm{mf}}$ the multiplicity-free part of $\Pi_\psi$. Define the L-parameter $\phi_\psi: \mathcal{L}_F \to \tilde{G}^\vee$ by
	\[ \phi_\psi(w) = \psi\left( w, \begin{pmatrix} |w|^{\frac{1}{2}} & \\ & |w|^{-\frac{1}{2}} \end{pmatrix} \right). \]
	There is a unique injection $\Pi_{\phi_\psi} \to \Pi_\psi^{\mathrm{mf}}$ making the diagram
	\[\begin{tikzcd}
		\Pi_{\phi_\psi} \arrow[hookrightarrow, r] \arrow[d, "{1:1}"'] & \Pi_\psi^{\mathrm{mf}} \arrow[d] \\
		\EuScript{S}_{\phi_\psi}^\vee \arrow[hookrightarrow, r] & \EuScript{S}_\psi^\vee
	\end{tikzcd}\]
	commute; all members of $\Pi_{\phi_\psi}$ are unitary.
	
	\item[Variation of $\bpsi$ (Proposition \ref{prop:variation-pi})] For every $c \in F^{\times}$, let $\bpsi_c(x) = \bpsi(cx)$. Write $\tilde{G}^{(2)} := \MMp(W) \subset \tilde{G}$. It is well-known that this twofold covering of $G(F)$ does not depend on $\bpsi$ (Proposition \ref{prop:MMp-uniqueness}). Hence we can compare $\pi^{\bpsi_c}_{\psi, \chi}$ associated with various $c \in F^{\times}$, as genuine distributions on $\tilde{G}^{(2)}$; in fact they depend only on $c F^{\times 2}$. We have
	\[ \pi^{\bpsi_c}_{\psi\zeta, \chi \delta_c} = \pi^{\bpsi}_{\psi, \chi}, \]
	where $\zeta: \Weil{F} \to \{\pm 1\} \simeq Z_{\tilde{G}^\vee}$ is associated with $cF^{\times 2}$ by class field theory, and $\delta_c \in \EuScript{S}_\psi^\vee$ is explicit given in Definition \ref{def:delta-c}.
	
	This extends \cite[Theorem 12.1]{GS1} in the tempered case.
\end{description}

\subsection*{Main global results}
Let $\dot{F}$ be a number field and $\dot{\bpsi} = \prod_v \bpsi_v \neq \mathbf{1}$ be an additive character of $\dot{F} \backslash \A$. Let $(\dot{W}, \lrangle{\cdot|\cdot})$ be a $2n$-dimensional symplectic $\dot{F}$-vector space, $G := \Sp(\dot{W})$ and consider the eightfold adélic metaplectic covering
\[ 1 \to \bmu_8 \to \left( \tilde{G} := \Mp(\dot{W}, \A) \right) \to G(\A) \to 1. \]

At almost all places $v$, we are in the unramified situation, and there is a Satake isomorphism $\mathcal{H}_{\asp}(K_v \backslash \tilde{G}_v / K_v) \simeq \CC[ \tilde{G}^\vee /\!/ \tilde{G}^\vee ]$ (see \eqref{eqn:Satake-Mp}), where $K_v := G(\mathfrak{o}_v)$ for these places.

Let $\dot{\Psi}(\tilde{G})$ be the set of global Arthur parameters for $\tilde{G}$. For each $\dot{\psi} \in \dot{\Psi}(\tilde{G})$, one can define its centralizer and component groups $S_{\dot{\psi}}$ and $\EuScript{S}_{\dot{\psi}}$, related to their local avatars by localization maps, as well as the element $s_{\dot{\psi}} \in S_{\dot{\psi}}$.

For almost all places $v$, one attaches a Satake parameter to $\dot{\psi}_v$. In this way, one extracts from the genuine discrete $L^2$-automorphic spectrum the summands (as unitary representations)
\[ L^2_{\dot{\psi}} \subset L^2_{\mathrm{disc}, -} := L^2_{\mathrm{disc}, -}(G(\dot{F}) \backslash \tilde{G}). \]

\begin{Thm}[= Theorem \ref{prop:GI-disc}]
	Let $\dot{\Psi}_2(\tilde{G})$ denote the set of discrete global Arthur parameters of $\tilde{G}$, i.e.\ the parameters which cannot factor through any proper Levi. Then
	\begin{equation*}
		L^2_{\mathrm{disc}, -} = \widehat{\bigoplus}_{\dot{\psi} \in \dot{\Psi}_2(\tilde{G})} L^2_{\dot{\psi}},
	\end{equation*}
\end{Thm}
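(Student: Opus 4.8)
I would follow the trace-formula strategy pioneered by Arthur for quasisplit classical groups, adapted to $\tilde{G}$ through the stabilised trace formula of \cite{Li21} and Arthur's classification \cite{Ar13} for the split odd special orthogonal groups $\SO(2m+1)$ that occur as endoscopic groups of $\tilde{G}$, running an induction on $n = \tfrac{1}{2}\dim\dot{W}$. By Langlands' spectral decomposition, which holds for metaplectic covers, $L^2_{\mathrm{disc},-}$ is a Hilbert-space direct sum of genuine irreducible automorphic representations $\pi = \bigotimes_v \pi_v$ (restricted tensor product) with finite multiplicities, each $K_v$-spherical for almost all $v$ and hence carrying a Satake family $c(\pi) = (c_v(\pi))_v$, $c_v(\pi)\in\tilde{G}^\vee/\!/\tilde{G}^\vee$. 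Since $L^2_{\dot{\psi}}$ is by definition the span of the $\pi$-isotypic parts with $c_v(\pi)=c_v(\dot{\psi})$ for almost all $v$, the theorem reduces to two points: \textbf{(A)} every $\pi\subset L^2_{\mathrm{disc},-}$ is near-equivalent to $c(\dot{\psi})$ for some $\dot{\psi}\in\dot{\Psi}(\tilde{G})$; and \textbf{(B)} such a $\dot{\psi}$ may be taken in $\dot{\Psi}_2(\tilde{G})$. Granting these, distinct elements of $\dot{\Psi}_2(\tilde{G})$ give distinct near-equivalence classes, since a parameter $\dot{\psi}=\boxplus_i\dot\mu_i\boxtimes\nu_{d_i}$ is recovered from the Satake parameters of the isobaric automorphic representation $\boxplus_i\dot\mu_i$ of $\GL(2n)$ at almost all places by strong multiplicity one; hence the $L^2_{\dot{\psi}}$ are pairwise orthogonal, and (A) and (B) together assert that they exhaust $L^2_{\mathrm{disc},-}$.

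\textbf{Step (A).} I would compare the two expansions of the discrete part $I^{\tilde{G}}_{\mathrm{disc}}$ of the trace formula. Its spectral expansion contains the ``$M=\tilde{G}$'' term $\Tr R^{\tilde{G}}_{\mathrm{disc},-}(f)$ together with terms parabolically induced from proper Levis $\tilde{M}=\prod_i\GL(n_i,\dot{F})\times\Mp(\dot{W}^\flat,\A)$; by the inductive hypothesis those induced terms are supported on Satake families of the shape $c(\dot{\psi})$. On the other hand, by \cite{Li21} the same distribution equals $\sum_{\mathbf{G}^!}\iota(\tilde{G},\mathbf{G}^!)\,\hat{S}^{\mathbf{G}^!}_{\mathrm{disc}}(f)$, the sum over elliptic endoscopic data $\mathbf{G}^!\leftrightarrow G^!=\SO(2n'+1)\times\SO(2n''+1)$ of the transferred stable discrete distributions; Arthur's stable multiplicity formula \cite{Ar13} organises each $\hat{S}^{\mathbf{G}^!}_{\mathrm{disc}}$ by global Arthur parameters $\dot{\psi}^!$ of $G^!$, and the global analogue of the bijection of \S\ref{sec:basic-bijection}, $(\mathbf{G}^!,\dot{\psi}^!)\leftrightarrow(\dot{\psi},s)$, compatible with localisation and Satake parameters, repackages these as parameters $\dot{\psi}\in\dot{\Psi}(\tilde{G})$. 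Comparing the Satake supports on the two sides then forces $\Tr R^{\tilde{G}}_{\mathrm{disc},-}(f)$, and hence $c(\pi)$, into some family $c(\dot{\psi})$. (More in keeping with the rest of the paper, one can instead obtain (A) by transporting $\pi$ through the global $\Theta$-correspondence to an automorphic representation in the discrete spectrum of a split $\SO(2m+1)$ — the unramified lift matches Satake parameters up to a controlled shift — and then invoking Arthur's classification there and the standard transfer $\SO(2m+1)\to\GL(2m)$; the resulting isobaric representation of $\GL(2n)$ is self-dual of symplectic type because $c_v(\pi)\in\Sp(2n,\CC)/\!/\Sp(2n,\CC)$ for almost all $v$, and a parity bookkeeping places the datum in $\dot{\Psi}(\tilde{G})$.)

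\textbf{Step (B).} Suppose the parameter $\dot{\psi}$ attached to $\pi$ factors through a proper Levi $\tilde{M}=\prod_i\GL(n_i,\dot{F})\times\Mp(\dot{W}^\flat,\A)\subsetneq\tilde{G}$, with inflating parameter $\dot{\psi}_M$, whose $\Sp$-component is of good parity after the reduction of Proposition \ref{prop:pi-psi-gp}. Then a representation $\pi$ with $c(\pi)$ near-equivalent to $c(\dot{\psi})$ occurs in $L^2_{\mathrm{disc},-}$ only inside $\Ind_{P(\A)}^{\tilde{G}}(\sigma)$ for $\sigma$ near-equivalent to $c(\dot{\psi}_M)$ in the discrete spectrum of $\tilde{M}$, hence only through residues of Eisenstein series, that is, through poles of the standard intertwining operators — equivalently of the $L$-functions attached to $\dot{\psi}_M$. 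Arthur's analysis of these residues \cite{Ar13}, transported through the stabilisation so that it is read off on the $\SO(2n'+1)\times\SO(2n''+1)$ side, shows that a residue in the $\dot{\psi}$-direction occurs precisely when $\dot{\psi}$ is discrete, i.e.\ multiplicity-free with each $\dot\mu_i\boxtimes\nu_{d_i}$ self-dual of symplectic type. At the level of the trace formula this is the statement that, for non-discrete $\dot{\psi}$, the ``$M=\tilde{G}$'' spectral contribution to $I^{\tilde{G}}_{\mathrm{disc},\dot{\psi}}$ cancels against the proper-Levi terms, mirroring Arthur's argument that non-discrete parameters contribute nothing to $L^2_{\mathrm{disc}}$. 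Hence $L^2_{\dot{\psi}}=0$ for $\dot{\psi}\notin\dot{\Psi}_2(\tilde{G})$, which gives (B) and completes the proof.

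\textbf{Main obstacle.} The crux is the interface appearing in Steps (A) and (B): making the stabilised trace formula of \cite{Li21} mesh cleanly with Arthur's results for the odd orthogonal factors. One must check that the global correspondence $(\mathbf{G}^!,\dot{\psi}^!)\leftrightarrow(\dot{\psi},s)$ is compatible with localisation and Satake parameters, that the ``seed'' and stable-multiplicity inputs of \cite{Ar13} — together with its subsequently established refinements — are available in the precise form required, and that the discreteness and residue combinatorics transport through endoscopy without slippage in the metaplectic normalisations, namely the choices of $(\dot{W},\lrangle{\cdot|\cdot},\dot{\bpsi})$, the eightfold cover, and the attendant $\epsilon$-factor conventions. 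If Step (A) is instead routed through $\Theta$, the additional difficulty is to guarantee, unconditionally, that \emph{every} genuine $\pi\subset L^2_{\mathrm{disc},-}$ has a nonzero theta lift into the discrete spectrum of some $\SO(2m+1)$ with Satake parameters matching up to the expected shift — a first-occurrence and Rallis inner-product input for which \cite{GI18} is the natural source.
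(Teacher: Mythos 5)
Your main route does not match the paper, and it contains a genuine gap. In the paper this theorem is not proved at all: it is quoted verbatim from Gan--Ichino \cite[Theorem 1.1]{GI18}, whose proof goes through the global $\Theta$-correspondence. The entire architecture of the article is built on taking this ``no embedded Hecke eigenvalues'' statement as an \emph{external input}, precisely because the trace-formula argument you propose is blocked for metaplectic groups. The introduction is explicit about this: Arthur's exclusion of non-discrete parameters from $L^2_{\mathrm{disc}}$ rests on the local intertwining relations (LIR), which are one of the hardest parts of \cite{Ar13} and which have not been established for $\tilde{G}$ (only the tempered case has been studied, by Ishimoto). The author deliberately reverses the logic: Theorem \ref{prop:GI-disc} is assumed from \cite{GI18}, and then the stable trace formula of \cite{Li21} plus Arthur's stable multiplicity formula for the odd $\SO$ factors are used to extract the refined decomposition of each $L^2_{\dot\psi}$ (Theorem \ref{prop:global-multiplicity}), not to prove the coarse decomposition itself.

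Concretely, the gap is in your Step (B). The assertion that for non-discrete $\dot\psi$ the ``$M=\tilde{G}$'' term of $I^{\tilde{G}}_{\mathrm{disc},\dot\psi}$ cancels against the proper-Levi terms is exactly the step that, in Arthur's scheme, requires the global intertwining relation: the proper-Levi contributions involve the self-intertwining operators $M_{P|P}(s,0)$, and identifying their traces with the corresponding endoscopic terms (so that the cancellation can be read off) is deduced from the LIR by a long local--global induction. You cannot invoke this for $\tilde{G}$, and your inductive hypothesis on $n$ does not supply it, since the missing input is local, not a statement about smaller metaplectic groups. Your parenthetical alternative for Step (A) --- transporting $\pi$ through $\Theta$ to an odd orthogonal group and invoking Arthur there --- is in substance Gan--Ichino's actual argument, but as you yourself note, it hinges on unconditional nonvanishing of theta lifts into the discrete spectrum, the Rallis inner-product formula, and control of Satake parameters under the lift; these are the real content of \cite{GI18} and are not established in your proposal. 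As written, the proof does not close.
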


The result above is actually due to Gan--Ichino \cite[Theorem 1.1]{GI18}. The hardest part is to show that only discrete parameters contribute. In \cite[Theorem 5]{Ar11} this is called the ``no embedded Hecke eigenvalues'' property.

Let $\dot{\psi} \in \dot{\Psi}_2(\tilde{G})$. The summand $L^2_{\dot{\psi}}$ may be of infinite length. To decompose it, take a representative $s \in S_{\dot{\psi}}$ of $x \in \EuScript{S}_{\dot{\psi}}$ satisfying $s^2 = 1$; it makes sense to consider the $(-1)$-eigenspace of $\dot{\psi}$ as in the local setting, although global parameters are defined differently, and consider the global root number (see \S\ref{sec:global-root-number}):
\[ \nu_{\dot{\psi}}(x) := \epsilon\left( \dot{\psi}^{s = -1} \right) = \prod_v \epsilon\left( \dot{\psi}_v^{s = -1} \right). \]
Proposition \ref{prop:nu-factorization} says that $\nu_{\dot{\psi}}(x)$ is independent of the choice of $s$, and gives a character $\nu_{\dot{\psi}}$ of $\EuScript{S}_{\dot{\psi}}$; this is specific to the global case.

Denote by $\epsilon^{\mathrm{Art}}_{\dot{\psi}} \in \overline{\EuScript{S}}_{\dot{\psi}}^\vee$ Arthur's sign character for $\SO(2n+1)$ in \cite[Theorem 1.5.2]{Ar13}, where $\overline{\EuScript{S}}_{\dot{\psi}}$ is the quotient of $\EuScript{S}_{\dot{\psi}}$ by the image of $Z_{\tilde{G}^\vee}$.

Put $\epsilon_{\dot{\psi}} := \epsilon^{\mathrm{Art}}_{\dot{\psi}} \nu_{\dot{\psi}} \in \EuScript{S}_{\dot{\psi}}^\vee$. Denote by $\mathrm{diag}$ the diagonal map $\EuScript{S}_{\dot{\psi}} \to \prod_v \EuScript{S}_{\dot{\psi}_v}$. Define
\begin{align*}
	\Pi_{\dot{\psi}} & := \left\{\begin{array}{r|l}
		\dot{\pi} = (\dot{\pi}_v)_v & \dot{\pi}_v \in \Pi_{\dot{\psi}_v}, \\
		& K_v\text{-spherical for almost all}\; v
	\end{array}\right\}, \\
	\Pi_{\dot{\psi}}(\epsilon_{\dot{\psi}}) & := \left\{ \dot{\pi} \in \Pi_{\dot{\psi}} \; \middle| \; \mathrm{diag}^* \prod_v \lrangle{\cdot, \dot{\pi}_v} = \epsilon_{\dot{\psi}} \right\}.
\end{align*}
We will show in Proposition \ref{prop:unramified-mf} (i) that $K_v$-sphericity implies $\lrangle{\cdot, \dot{\pi}_v} = \mathbf{1}$ at each unramified place $v$, thus the definition of $\Pi_{\dot{\psi}}(\epsilon_{\dot{\psi}})$ makes sense.

\begin{Thm}[= Theorem \ref{prop:global-multiplicity} + Remark \ref{rem:global-packet}]\label{prop:main-global}
	Let $\dot{\psi} \in \dot{\Psi}_2(\tilde{G})$. With the notations above, one has
	\[ L^2_{\dot{\psi}} \simeq \bigoplus_{\dot{\pi} \in \Pi_{\dot{\psi}}(\epsilon_{\dot{\psi}})} \dot{\pi}. \]
	Here $\dot{\pi}_v \in \Pi_{\dot{\psi}_v}$ (resp.\ $\dot{\pi}$) is identified with the corresponding genuine irreducible representation of $\tilde{G}_v$ (resp.\ of $\tilde{G}$, given by restricted tensor product), by abusing notation.
\end{Thm}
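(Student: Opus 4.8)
The plan is to run Arthur's endoscopic extraction (as in \cite[Chapter~4]{Ar13}, compare \cite{Mok15}) on the stable trace formula for $\tilde{G}$ from \cite{Li21}, with the metaplectic $\epsilon$-factors inserted at the appropriate places, and then to invert the resulting expansion by Fourier analysis on the finite abelian group $\EuScript{S}_{\dot{\psi}}$. By Theorem~\ref{prop:GI-disc} we may fix $\dot{\psi}\in\dot{\Psi}_2(\tilde{G})$ and identify the $\dot{\psi}$-isotypic part $I_{\mathrm{disc},\dot{\psi}}$ of the discrete part of the trace formula for $\tilde{G}$ with $\Tr\bigl(L^2_{\dot{\psi}}(f)\bigr)=\sum_{\dot{\pi}}m(\dot{\pi})\Tr\dot{\pi}(f)$, where $\dot{\pi}$ runs over isomorphism classes of genuine irreducible representations of $\tilde{G}$ and $m(\dot{\pi})=\dim\Hom_{\tilde{G}}(\dot{\pi},L^2_{\dot{\psi}})$ is what we must compute. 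The first step is to substitute the stabilisation
\[ I^{\tilde{G}}_{\mathrm{disc},-}(f)=\sum_{\mathbf{G}^!}\iota(\tilde{G},\mathbf{G}^!)\,\hat{S}^{\mathbf{G}^!}_{\mathrm{disc}}\bigl(f^{\mathbf{G}^!}\bigr), \]
the sum over elliptic endoscopic data $\mathbf{G}^!\leftrightarrow(n',n'')$ with $G^!=\SO(2n'+1)\times\SO(2n''+1)$, and then to feed in, factor by factor, Arthur's stable multiplicity formula \cite[Theorem~4.1.2]{Ar13}: for a discrete parameter $\psi^!$ of $G^!$, the $\psi^!$-component of $\hat{S}^{\mathbf{G}^!}_{\mathrm{disc}}$ is an explicit rational multiple --- involving $|\EuScript{S}_{\psi^!}|^{-1}$ and the value at $s_{\psi^!}$ of Arthur's sign character --- of the stable character $S\Theta^{G^!}_{\psi^!}$, whose localisations factorise as $\bigotimes_v S\Theta^{G^!_v}_{\psi^!_v}$.

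Next I would reorganise the double sum (over $\mathbf{G}^!$ and over $\psi^!$) by the common image parameter $\dot{\psi}\in\dot{\Psi}(\tilde{G})$, using the global analogue of the bijection of \S\ref{sec:basic-bijection}: the pairs $(\mathbf{G}^!,\psi^!)$ lying over a fixed $\dot{\psi}$ correspond to conjugacy classes of $s\in S_{\dot{\psi}}$ with $s^2=1$, hence --- once the stabiliser counting behind the $\iota$-coefficients and the $Z_{\tilde{G}^\vee}$-ambiguity are absorbed --- to the elements $x\in\EuScript{S}_{\dot{\psi}}$. For each such term I apply the spectral transfer $\trans_{\mathbf{G}^!,\tilde{G}}$ place by place; by the very definition $T_{\dot{\psi}_v,s}=\epsilon\bigl(\dot{\psi}_v^{s=-1}\bigr)\trans_{\mathbf{G}^!_v,\tilde{G}_v}\bigl(S\Theta^{G^!_v}_{\psi^!_v}\bigr)$ and the factorisation $\nu_{\dot{\psi}}(x)=\prod_v\epsilon\bigl(\dot{\psi}_v^{s=-1}\bigr)$, this rewrites $\trans_{\mathbf{G}^!,\tilde{G}}\bigl(S\Theta^{G^!}_{\psi^!}\bigr)$ as $\nu_{\dot{\psi}}(x)\prod_v T_{\dot{\psi}_v,x_v}$. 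Carrying out Arthur's combinatorial lemma on the surviving coefficients, and checking that the sign accumulated at $x$ is exactly $\epsilon_{\dot{\psi}}(x)=\epsilon^{\mathrm{Art}}_{\dot{\psi}}(x)\,\nu_{\dot{\psi}}(x)$, one arrives at
\[ I_{\mathrm{disc},\dot{\psi}}(f)=\frac{1}{|\EuScript{S}_{\dot{\psi}}|}\sum_{x\in\EuScript{S}_{\dot{\psi}}}\epsilon_{\dot{\psi}}(x)\prod_v T_{\dot{\psi}_v,(x_{\dot{\psi}}x)_v}(f) \]
for decomposable $f=\bigotimes_v f_v$, the $x_{\dot{\psi}}$-shift being the one already present in the local definition of $\pi_{\psi,\chi}$, localised place by place.

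The final step is Fourier inversion. Expanding each $T_{\dot{\psi}_v,y_v}$ as $\sum_{\chi_v}\chi_v(x_{\dot{\psi}_v}y_v)\pi_{\dot{\psi}_v,\chi_v}$ (the inverse of the local definition, valid because each $\EuScript{S}_{\dot{\psi}_v}$ is an elementary abelian $2$-group), interchanging the product over $v$ with the sum over $x$, and invoking orthogonality of characters on $\EuScript{S}_{\dot{\psi}}$ through the diagonal map $\mathrm{diag}\colon\EuScript{S}_{\dot{\psi}}\to\prod_v\EuScript{S}_{\dot{\psi}_v}$, the double sum collapses: for $\dot{\pi}=(\dot{\pi}_v)_v\in\Pi_{\dot{\psi}}$ one reads off $m(\dot{\pi})=1$ when $\mathrm{diag}^*\prod_v\lrangle{\cdot,\dot{\pi}_v}=\epsilon_{\dot{\psi}}$ and $m(\dot{\pi})=0$ otherwise --- with the multiplicities internal to the local $\pi_{\dot{\psi}_v,\chi_v}$ recorded in the multiset $\Pi_{\dot{\psi}_v}$, and with representations having no $K_v$-spherical component at almost all $v$ contributing nothing by Proposition~\ref{prop:unramified-mf}. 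Linear independence of the characters of the pairwise non-isomorphic automorphic representations $\dot{\pi}$ then gives the asserted decomposition of $L^2_{\dot{\psi}}$; each $\dot{\pi}\in\Pi_{\dot{\psi}}(\epsilon_{\dot{\psi}})$ is thereby realised in $L^2_{\dot{\psi}}$, hence unitary (consistently with Theorem~\ref{prop:local-desiderata} and Proposition~\ref{prop:L-within}), and the description of $\Pi_{\dot{\psi}}$ as a global packet fibred over $\EuScript{S}_{\dot{\psi}}^\vee$ compatibly with its local counterparts (Remark~\ref{rem:global-packet}) is immediate from this count.

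\textbf{Main obstacle.} The crux is the sign bookkeeping in the middle steps: verifying that the constant furnished by \cite[Theorem~4.1.2]{Ar13} for the $\SO(2n'+1)\times\SO(2n''+1)$-factors, once pushed through $\trans$ and reindexed over $\EuScript{S}_{\dot{\psi}}$, is precisely the character $\epsilon^{\mathrm{Art}}_{\dot{\psi}}\nu_{\dot{\psi}}$ with no residual global scalar. This is where the metaplectic features interact most delicately: $\nu_{\dot{\psi}}$ is in general non-trivial on $Z_{\tilde{G}^\vee}=\{\pm1\}$, so one genuinely needs the formulation over all of $\EuScript{S}_{\dot{\psi}}^\vee$ rather than $\overline{\EuScript{S}}_{\dot{\psi}}^\vee$, and one must check consistency with Proposition~\ref{prop:p-psi-negation} --- the fact that $-1\in G(\dot{F})$ acts trivially on $L^2_{\dot{\psi}}$ forces exactly the $Z_{\tilde{G}^\vee}$-component of the membership condition. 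The last overall sign ambiguity is then removed by calibrating against known cases: the multiplicity formula of Gan--Ichino \cite{GI18} for $\dot{\psi}$ trivial on $\SL(2,\CC)$ (via the Shimura--Waldspurger correspondence), together with the rank one and two results of Waldspurger and Gan--Ichino; matching our expansion with theirs in those cases pins down the normalisation, after which the trace-formula identity propagates it to all $\dot{\psi}\in\dot{\Psi}_2(\tilde{G})$.
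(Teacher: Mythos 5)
Your proposal follows essentially the same route as the paper: Gan--Ichino's theorem to isolate $L^2_{\dot{\psi}} = I_{\mathrm{disc},\dot{\psi}}$, the stabilisation from \cite{Li21}, Arthur's stable multiplicity formula on each $\SO(2n'+1)\times\SO(2n''+1)$, reindexing by the basic bijection over $\EuScript{S}_{\dot{\psi}}$, factorising the transfers and the global root number $\nu_{\dot{\psi}}$ into the local $T_{\dot{\psi}_v,x}$, and Fourier inversion. The only divergence is your final ``calibration against known cases'' to fix an overall sign: the paper needs no such step, since \cite[Lemma~4.4.1]{Ar13} together with Proposition~\ref{prop:nu-factorization} and the identity $\nu_{\dot{\psi}}(s_{\dot{\psi}})=1$ (Lemma~\ref{prop:nu-global}) already pin down the character $\epsilon^{\mathrm{Art}}_{\dot{\psi}}\nu_{\dot{\psi}}$ exactly.
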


This generalizes the multiplicity formula in \cite[Theorem 1.4]{GI18}; it also confirms Gan's Arthur conjecture for $\tilde{G}$ in \cite[Conjecture 8.2]{Gan14} as explicated in Remark \ref{rem:ArthurMp-Gan}. In Theorem \ref{prop:global-multiplicity}, we phrase it as a global character relation involving $\pi_{\dot{\psi}_v, \chi_v}$.

\begin{Rmk}
	The character $\nu_{\dot{\psi}}$ is also a metaplectic feature since it does not appear in Arthur's scenario. It can be ``explained'' by the local root numbers appearing in $T_{\dot{\psi}_v, s}$.
\end{Rmk}

Finally, when $n = 1, 2$, a similar description of local packets and global multiplicities has been obtained by J.-L.\ Waldspurger \cite{Wa80, Wa91} and Gan--Ichino \cite{GI21}, respectively. Their approaches are mainly based on $\Theta$-correspondence. We have the following compatibility.

\begin{Thm}[= \S\ref{sec:Waldspurger} + Theorem \ref{prop:Mp4} + Lemma \ref{prop:GI-epsilon}]
	Suppose that $n = 1$ (resp.\ $n = 2$). In both the local and global settings, the aforementioned results are compatible with those of Waldspurger (resp.\ Gan--Ichino).
\end{Thm}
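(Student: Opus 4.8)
The plan is to establish the four assertions — local and global, in ranks $n=1$ and $n=2$ — by rendering the endoscopic construction of $\pi_{\psi,\chi}$ completely explicit in these two ranks and matching it term by term against the $\Theta$-correspondence descriptions of Waldspurger and Gan--Ichino. By \emph{Reduction to good parity} (Proposition \ref{prop:pi-psi-gp}) one first reduces to $\psi$ of good parity, and by \emph{The L-packet within} (Proposition \ref{prop:L-within}) the tempered sub-packet $\Pi_{\phi_\psi}$ may be disposed of first: there $\pi_{\psi,\chi}$ is Luo's endoscopic character relation \cite{Luo20}, while the tempered local Langlands correspondence of Adams--Barbasch \cite{AB98} and Gan--Savin \cite{GS1} is set up via the $\Theta$-correspondence, hence compatible with the descriptions of Waldspurger and Gan--Ichino essentially by construction. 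So for tempered $\psi$ the matching of packets is built in, and the real work concerns the genuinely non-tempered parameters and the global multiplicity formula.

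\textbf{Rank one.} Here $\tilde G^\vee = \SL(2,\CC)$, the only elliptic endoscopic group is $\SO(3)\cong\PGL(2)$ (the data $(n',n'')=(1,0)$ and $(0,1)$ both produce it), and Arthur's theory for $\PGL(2)$ is elementary. The non-tempered Arthur parameters are $\eta\boxtimes[2]$ for a quadratic character $\eta$ of $\mathcal{L}_F$, whose packet on $\SO(3)$ consists of the trivial representation; transferring it through $\trans_{\mathbf{G}^!,\tilde G}$ — which in this rank is Waldspurger's Shimura correspondence, built from the $\Theta$-correspondence for the dual pair $(\tilde G, \Or(3))$ — and multiplying by $\epsilon(\psi^{s=-1})$, one recovers Waldspurger's \cite{Wa80, Wa91} description of the resulting packet on $\tilde G$, which involves the Weil representation pieces $\omega^\pm_{\bpsi}$, together with the matching of the $\EuScript{S}_\psi^\vee$-labels. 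Globally, $\epsilon^{\mathrm{Art}}_{\dot\psi}=\mathbf 1$ for every $\PGL(2)$-type parameter, so $\epsilon_{\dot\psi}=\nu_{\dot\psi}$, and the condition $\mathrm{diag}^*\prod_v\lrangle{\cdot,\dot\pi_v}=\nu_{\dot\psi}$ of Theorem \ref{prop:main-global} is exactly Waldspurger's criterion: a genuine $\dot\pi$ enters $L^2_{\dot\psi}$ if and only if the product of the local $\Theta$-dichotomy signs matches the global root number of the Shimura lift. This is carried out in \S\ref{sec:Waldspurger}.

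\textbf{Rank two.} Now $\tilde G^\vee = \Sp(4,\CC)$, with elliptic endoscopic groups $\SO(5)$ and $\SO(3)\times\SO(3)$; the stable characters $S\Theta^{G^!}_{\psi^!}$ are furnished by Arthur \cite{Ar13}, and in this rank the spectral transfers $\trans_{\mathbf{G}^!,\tilde G}$ are expressed through the $\Theta$-lifts from $\SO(5)$ (split and anisotropic) and from $\SO(3)\times\SO(3)$ used in \cite{GI21}. Going through the Arthur parameters of $\tilde G$ in rank two case by case — the tempered ones, together with $\phi_1\boxplus\phi_2$, $\phi\boxplus(\eta\boxtimes[2])$, $\eta_1\boxtimes[2]\boxplus\eta_2\boxtimes[2]$, $\mathbf 1\boxtimes[4]$, and so on — one computes $S_\psi$, $\EuScript{S}_\psi$, $s_\psi$, $x_\psi$, identifies $T_{\psi,x}$ and hence $\pi_{\psi,\chi}$ with the $\Theta$-packet of \cite{GI21}, and matches the component-group labels. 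The one substantive input is that the metaplectic root number $\epsilon(\psi^{s=-1})=\epsilon\left(\frac{1}{2},\psi^{s=-1}|_{\mathcal{L}_F},\bpsi\right)$ occurring in $T_{\psi,s}$ coincides with the $\epsilon$-factor governing the $\Theta$-dichotomy and appearing in Gan--Ichino's formulas; this is Lemma \ref{prop:GI-epsilon}. The global multiplicity then follows by matching Theorem \ref{prop:main-global} term by term with the multiplicity formula of \cite{GI21}: Arthur's sign $\epsilon^{\mathrm{Art}}_{\dot\psi}$ is accounted for by the theory on the $\SO(2n+1)$-side, the character $\nu_{\dot\psi}$ of Proposition \ref{prop:nu-factorization} is precisely the extra metaplectic sign that Gan--Ichino observe, and Lemma \ref{prop:GI-epsilon} reconciles the two $\epsilon$-factor conventions. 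This is the content of Theorem \ref{prop:Mp4}.

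\textbf{Main obstacle.} The genuinely delicate step is the $\epsilon$-factor reconciliation for $n=2$, namely Lemma \ref{prop:GI-epsilon}: the endoscopic definition of $\pi_{\psi,\chi}$ and Gan--Ichino's $\Theta$-lift definition are a priori unrelated, so one must identify the symplectic local root number in $T_{\psi,s}$ with the sign that controls non-vanishing of the relevant $\Theta$-lift, uniformly over all parameters and all places — the Archimedean places requiring particular care. For $n=1$ the analogous identification is classical, but it still demands translating Waldspurger's pre-endoscopic formulation into the present language of $\EuScript{S}_\psi^\vee$-labelled packets and of the character $\nu_{\dot\psi}$, so that Theorem \ref{prop:main-global} can be recognized as his multiplicity statement.
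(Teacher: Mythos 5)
Your rank-one discussion is essentially the paper's: tempered parameters are handled by Luo's relations, and Proposition \ref{prop:principal} identifies $\pi_{\psi,\pm}$ with $\Theta^{\pm}_{\bpsi_c}$ for principal $\psi$ (the paper computes this directly from the explicit transfer factors rather than through the $\Theta$-correspondence, but the conclusion is the same), while globally $\overline{\EuScript{S}}_{\dot{\psi}}=\{1\}$ kills $\epsilon^{\mathrm{Art}}_{\dot{\psi}}$. That part is fine.

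For $n=2$ there is a genuine gap. You propose to establish the local identity $\pi^{\mathrm{GI}}_{\psi,\chi}=\pi_{\psi,\chi}$ by a direct case-by-case computation of $T_{\psi,x}$ and its identification with the $\Theta$-packets of \cite{GI21}, ``uniformly over all parameters and all places --- the Archimedean places requiring particular care.'' But no method is offered for that Archimedean comparison, and this is precisely the step the paper is structured to avoid: the endoscopic transfer $\trans_{\mathbf{G}^!,\tilde{G}}(S\Theta^{G^!}_{\psi^!})$ for non-tempered $\psi$ is not directly computable against a $\Theta$-lift construction at $\R$ or $\CC$. The paper's actual route is: (a) a direct local comparison \emph{only} for non-Archimedean $\psi\in\Psi(\tilde{G})^{\star}$ (Lemma \ref{prop:Mp4-lemma}), and even there not by computing transfers but by writing $\pi_{\psi,\chi}=(\pi_{\phi,\chi\mu_\psi})^{\wedge}$ via Corollary \ref{prop:anti-tempered-chi}, computing the Aubert--Zelevinsky duals of the explicit representations in \cite[Appendix C]{GI21}, and checking that Xu's character $\mu_\psi$ is trivial on $\Psi(\tilde{G})^{\star}$; (b) verification that the Gan--Ichino packets satisfy the same formal properties --- reduction to good parity, unramified normalization, the L-packet within --- as $\Pi_\psi$ (Lemma \ref{prop:GI-local}); and then (c) a re-run of the globalization argument of \S\ref{sec:proof-1}--\S\ref{sec:proof-2}, with $\pi^{\mathrm{GI}}$ in place of $\pi$ at the auxiliary places, so that both sides are realized as the \emph{same} sum $\sum_{\dot{\pi}\in A}\Theta_{\dot{\pi}_u}$ of local components of automorphic representations; equality at the place $u$ --- in particular at Archimedean $u$ --- is forced globally, never proved locally. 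Your proposal omits this local-global step entirely, and without it the Archimedean and general non-Archimedean cases are unproved. Separately, you misstate Lemma \ref{prop:GI-epsilon}: it is not a local identification of $\epsilon(\psi^{s=-1})$ with a $\Theta$-dichotomy sign, but the purely global equality $\tilde{\epsilon}_{\dot{\psi}}=\epsilon^{\mathrm{Art}}_{\dot{\psi}}\nu_{\dot{\psi}}$ of the sign characters in the two multiplicity formulas, proved by computing $\epsilon^{\mathrm{Art}}_{\dot{\psi}}$ (via the $\GSp(4)$ picture) and matching it with the extra Rankin--Selberg $\epsilon$-factor in the Saito--Kurokawa case.
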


\subsection*{Idea of the proofs}
Following Arthur's philosophy, one would expect to prove the local and global desiderata simultaneously through the stable trace formula, by a long local-global argument. This appears reasonable since the endoscopic groups in question are products of split odd $\SO$, for which Arthur's results are available. In particular, one can apply the \emph{stable multiplicity formula} to the endoscopic side of the stable trace formula in \cite{Li21}. We refer to \cite[\S 4.8]{Ar13} for the general methodology.

However, Arthur needs the \emph{local intertwining relations} (LIR) to exclude non-discrete parameters in the discrete $L^2$-automorphic spectrum, see \cite[\S 2.4]{Ar13}, which is one of the hardcore of \textit{loc.\ cit.}
\index{local intertwining relation}

Instead of trying to establish LIR first, we take the shortcut through $\Theta$-correspondence in \cite[Theorem 1.1]{GI18}. The ``no embedded Hecke eigenvalues'' property is thus directly available to us. The strategy in the article can be summarized as follows.

\begin{enumerate}
	\item In the global setting with $\dot{\psi} \in \dot{\Psi}_2(\tilde{G})$, by combining Arthur's stable multiplicity formula \cite[Theorem 4.1.2]{Ar13} for odd $\SO$, certain sign lemmas in \cite[Chapter 4]{Ar13} and the stable trace formula for $\tilde{G}$ in \cite{Li21}, one obtains an expression for $\Tr L^2_{\dot{\psi}}$ (as a formal sum) in terms of $\pi_{\dot{\psi}_v, \chi_v}$ (Theorem \ref{prop:global-multiplicity}) --- this is the global multiplicity formula stated as a character relation, without assuming Theorem \ref{prop:local-desiderata}.

	\item In the local non-Archimedean setting, we take Luo's tempered character relations \cite{Luo20} as input to construct anti-tempered Arthur packets of good parity via Aubert--Zelevinsky involution, reducing the statement of Theorem \ref{prop:local-desiderata} except unitarity to a sign equality (Lemma \ref{prop:anti-tempered-bp}) concerning tempered representations. Ingredients for this step include:
	\begin{itemize}
		\item the commutation between endoscopic transfer of $\tilde{G}$ and Aubert--Zelevinsky involution \cite{Chen24};
		\item the shifting character $\mu_\psi$ due to Liu--Lo--Shahidi \cite{LLS24} (Proposition \ref{prop:beta-variance}) for odd $\SO$, which is equal to Xu's character $\epsilon^{\mathrm{M}/\mathrm{MW}}_\psi$ in \cite{Xu17, Xu21}.
	\end{itemize}
	Note that a similar approach has been adopted in \cite[\S 7.1]{Ar13}, and for $\tilde{G}$ we need a variant $\tilde{\mu}_\psi$ of the shifting character $\mu_\psi$, see Definition \ref{def:mu-tilde}.
	
	\item We prove the aforementioned sign equality for $\psi$ in the set $\Psi(\tilde{G})^\star$ in Definition \ref{def:Psi-star}, by reducing it to the case for odd $\SO$ (split or non-split) settled in \cite{LLS24}. In this step we invoke $\Theta$-correspondence, and use Kudla's results \cite{Ku86} to match cuspidal supports.
	
	\item For the general local setting, we prove Theorem \ref{prop:local-desiderata} by reducing to good parity and globalization, using the toolkit in \cite[Chapter 6]{Ar13}. The globalized parameter will have local components in $\Psi(\tilde{G}_v)^{\star}$ at auxiliary places $v$ in some finite set $V_0$, but $\dot{\psi}_v$ could lie outside $\Psi(\tilde{G}_v)^{\star}$ at some ramified $v \nmid \infty$, although they are still anti-tempered. The latter case is addressed by the Proposition \ref{prop:L-within-0} about the L-packet within an Arthur packet, engineered so that it also applies to anti-tempered parameters without granting the validity of Theorem \ref{prop:local-desiderata}.
\end{enumerate}

In Proposition \ref{prop:principal}, we will describe the principal Arthur packets precisely. Over a local $F$, an Arthur parameter $\psi$ for $\tilde{G}$ is said to be \emph{principal} if
\[ \psi = \zeta \boxtimes r(2n) \]
where $\zeta$ is a quadratic character of $\Weil{F}$, inflated to $\mathcal{L}_F$, and $r(b)$ indicates the $b$-dimensional irreducible representation of $\SL(2, \CC)$ for every $b \in \Z_{\geq 1}$. They are the most degenerate Arthur parameters. In this case we have
\[ \EuScript{S}_\psi^\vee = \{\pm 1 \}, \quad \pi^{\bpsi}_{\psi, \pm} = \Tr \omega^{\pm}_{\bpsi_c} , \]
where $c F^{\times 2}$ corresponds to $\zeta$ by local class field theory, and we work over $\tilde{G}^{(2)}$ to make sense of the statement above. This idea already appears in Adams' foundational work \cite{Ad98}.

Principal global parameters are defined in the same manner, and their contribution to $L^2_{\mathrm{disc}, -}$ is given by elementary $\vartheta$-series; see Proposition \ref{prop:principal-theta-series}.

Let us say a few words about $n = 1, 2$. When $n=1$, the only non-tempered Arthur parameters are the principal ones, and it is easy to see that our results are compatible with Waldspurger's. The situation is considerably more complicated when $n=2$. To compare with Gan--Ichino \cite{GI21}, we proceed as follows.
\begin{enumerate}
	\item Show that the local Gan--Ichino packets enjoy the same basic properties (Lemma \ref{prop:GI-local}), except character relations.
	\item Check that the global multiplicity formula of Gan--Ichino involves the same sign character as ours (Lemma \ref{prop:GI-epsilon}).
	\item Show the compatibility when $F$ is non-Archimedean and $\psi \in \Psi(\tilde{G})^{\star}$. For non-principal $\psi$, this is achieved through a case-by-case analysis of Aubert--Zelevinsky duals of the representations listed in \cite[Appendix C]{GI21}.
	\item Conclude by re-running the local-global argument for Theorem \ref{prop:local-desiderata}, by matching everything at the auxiliary places.
\end{enumerate}

\begin{Rmk}
	Arthur's endoscopic classification \cite{Ar13} for $\SO(2n+1)$ lurks behind all these arguments and the upstream references including \cite{GI18, LLS24, Luo20, Xu17}. However, certain key statements such as the LIR are not proved in Arthur's work, and the counterparts for generic parameters of non-split odd orthogonal groups are also needed. These gaps are now filled by Ishimoto's work \cite{Is24} and the joint work \cite{AGIKMS} of Atobe--Gan--Ichino--Kaletha--Mínguez--Shin.  
\end{Rmk}

\subsection*{Prospects}
Several issues remain unaddressed in this article.

First of all, our proof gives no concrete information about local Arthur packets beyond the tempered and principal cases. In particular, we do not know whether the packets are multiplicity-free in general, and we do not have a criterion for $\pi_{\psi, \chi} \neq 0$. This limits the applicability of our global multiplicity formula.

For non-Archimedean $F$, it would be desirable to have a concrete construction of Arthur packets, similar to the works of H.\ Atobe \cite{At22} or B.\ Xu \cite{Xu17, Xu21} which are closely related to Moeglin's works. In particular, one might hope to prove multiplicity-one by studying Jacquet modules (cf.\ the case of classical groups in \cite{Moe11}), and to study the intersection, membership or non-vanishing problems for Arthur packets, for example.

For $F = \R$, one hopes to adapt the works of Moeglin--Renard, eg.\ \cite{MR18}, to gain more information about Arthur packets. What we do in \S\ref{sec:quadratic-unipotent} about quadratic unipotent parameters serves as a first step in this direction. Theorem \ref{prop:real-unip-packet} asserts that $\Pi_\psi$ is multiplicity-free if $\psi$ is quadratic unipotent.

The case $F = \CC$ ought to be simpler in view of \cite{MR17}. Parameters over $\CC$ are always unipotent. Following \cite{Moe17}, we show in Theorem \ref{prop:cplx-packet} that $\pi_{\psi, \chi}$ is either irreducible or zero, and the non-zero ones are distinct as $\chi$ varies; in particular, $\Pi_\psi$ is multiplicity-free. There should be a description à la Barbasch--Vogan, cf.\ the metaplectic case discussed in \cite{MR17}, but we have to leave it to a future work. Note that genuine representations of $\tilde{G}$ are the same as representations of $\Sp(2n, \CC)$, but the packets differ.
 
It will also be important to explore the relation between non-tempered Arthur packets and $\Theta$-correspondences.

Recall that we circumvented the usage of LIR in the proof. It would be desirable to establish LIR for metaplectic groups with the help of results obtained in this article. The tempered case has been studied by H.\ Ishimoto \cite{Is20}. The results in \S\ref{sec:normalizing} on normalized intertwining operators provide some preparations toward the non-tempered case.

Over a global field $\dot{F}$, our results should be compared with other explicit constructions of automorphic forms or representations, such as automorphic descent, or the Ikeda--Yamana lifting \cite{IY20} for odd $n$.

\subsection*{Acknowledgement}
The author is deeply indebted to Jean-Loup Waldspurger for his guidance and encouragements since the very beginning \cite{Li11} of this project. Several crucial arguments in the present article are also suggested by him.

The author also wants to express his sincere gratitude to Hiraku Atobe, Fei Chen, Wee Teck Gan, Atsushi Ichino, Sandeep Varma and Bin Xu for helpful comments.

This work is supported by NSFC, Grant No.\ 11922101 and 12321001.

\subsection*{Organization}
This article is structured as follows.

After a summary of conventions, in \S\ref{sec:review} we give a brief review of metaplectic groups and endoscopy, most of which is taken from \cite{Li21}.

In \S\ref{sec:parameters}, we study local and global Arthur parameters for the metaplectic group $\tilde{G} = \Mp(W)$ where $\dim W = 2n$. Since $\tilde{G}$ and $H := \SO(2n+1)$ has the same dual group, this is merely a recap of known facts due to Arthur. Readers familiar with these materials can safely skip \S\S\ref{sec:review}--\ref{sec:parameters}.

The local theory is expounded in \S\ref{sec:local}. We define the genuine distributions $T_{\psi, s}$ in terms of local root numbers and endoscopic transfer, show that they depends only on $\psi$ and the image of $s$ in $\EuScript{S}_\psi$, then define $\pi_{\psi, \chi}$ by Fourier inversion. We state the main local Theorem \ref{prop:local-desiderata} and prove certain basic properties of $\pi_{\psi, \chi}$. Granting the validity of Theorem \ref{prop:local-desiderata}, we define the Arthur packet $\Pi_\psi$ as a multi-set.

In \S\ref{sec:global}, which is of a global nature, the stable trace formula in \cite{Li21} and results of Gan--Ichino are combined to yield a multiplicity formula (Theorem \ref{prop:global-multiplicity}) for the discrete genuine $L^2$-automorphic spectrum of adélic metaplectic groups.

In \S\ref{sec:L-packets}, we review the local Langlands correspondence for metaplectic groups, and the endoscopic character relations in the tempered case due to C.\ Luo. We then study the embedding of the L-packet of $\phi_\psi$ into the Arthur packet of $\psi$ in terms of character relations.

The materials in \S\S\ref{sec:anti-tempered}--\ref{sec:proof-local} are more technical, and can be skipped at first reading. We define anti-tempered Arthur parameters over non-Archimedean $F$ in \S\ref{sec:anti-tempered}. For anti-tempered $\psi$, we reduce the statement of Theorem \ref{prop:local-desiderata} (except unitarity) to a sign equality \eqref{eqn:anti-tempered-bp}, which we resolve in the special case of $\psi \in \Psi(\tilde{G})^\star$. By putting these parameters at auxiliary places, we complete the proof of Theorem \ref{prop:local-desiderata} in \S\ref{sec:proof-local} by a globalization argument.

In \S\ref{sec:further-properties}, we give further properties of $\pi_{\psi, \chi}$, including their dependence on the additive character $\bpsi$, the normalization of intertwining operators through $\psi$, and some supplements on Moeglin's results \cite{Moe17} on quadratic unipotent Arthur parameters for metaplectic groups over $F \in \{\R, \CC \}$.

Concrete examples of $\pi_{\psi, \chi}$ are given in \S\ref{sec:examples}. We determine the Arthur packets attached to principal local Arthur parameters and their contribution to the discrete genuine $L^2$-automorphic spectrum. Our local and global results are then shown to be compatible with Waldspurger's when $n=1$, and with Gan--Ichino's when $n=2$. The arguments for $n=2$ are relatively lengthy, involving local-global methods and Xu's characters.

\subsection*{Conventions}
\paragraph{Fields}
The Weil group of a local or global field $E$ is denoted by $\Weil{E}$. The normalized absolute value of a local field $F$ is denoted by $|\cdot|_F$. If $F$ is non-Archimedean, we have the inertia (resp.\ wild inertia) subgroup $I_F$ (resp.\ $I_F^{\mathrm{wild}}$) in $\Weil{F}$; we denote any representative of the Frobenius by $\Frob$.

Global objects will often be decorated with a dot, except in \S\ref{sec:global}. Given a global field $\dot{F}$, its ring of adèles is denoted by $\A = \A_{\dot{F}}$; places of $\dot{F}$ are denoted by $u, v, w$ and so on; $v \mid \infty$ means that $v$ is Archimedean.

For a non-Archimedean local field $F$ (resp.\ number field $\dot{F}$), we denote its ring of integers as $\mathfrak{o}_F$ (resp.\ $\mathfrak{o}_{\dot{F}}$). The $v$-completion of $\mathfrak{o}_{\dot{F}}$ is denoted as $\mathfrak{o}_v$ if $v \nmid \infty$.

An additive character of a local field $F$ means a non-trivial continuous unitary character $\bpsi: F \to \CC^{\times}$. We set $\bpsi_c: x \mapsto \bpsi(cx)$ for all $c \in F^{\times}$.

Similarly, for a global field $\dot{F}$, additive characters $\dot{\bpsi}: \dot{F}^{\times} \backslash \A \to \CC^{\times}$ are non-trivial, continuous and unitary, and we set $\dot{\bpsi}_c(x) = \dot{\bpsi}(cx)$ for all $c \in \dot{F}^{\times}$. They factorize as $\dot{\bpsi} = \prod_v \bpsi_v$.
\index{psi-character@$\bpsi, \dot{\bpsi}$}

If $F$ is non-Archimedean and $\bpsi$ satisfies
$\bpsi|_{\mathfrak{o}_F} = \mathbf{1}$ and $\bpsi|_{\mathfrak{p}_F^{-1}} \neq \mathbf{1}$, we say $\bpsi$ is of conductor $\mathfrak{o}_F$. Here $\mathfrak{p}_F$ is the maximal ideal of $\mathfrak{o}_F$.
\index{conductor}

\paragraph{Varieties and groups}
For a scheme $X$ defined over a commutative ring $k$ and a commutative $k$-algebra $R$, let $X(R)$ denote the set of $R$-points of $X$.

Consider a field $F$. For an algebraic $F$-group $G$, denote its identity connected component by $G^\circ$, and its Lie algebra by $\mathfrak{g}$. Denote by $\mathfrak{g}^*$ the linear dual of $\mathfrak{g}$. Given a linear algebraic $F$-group $P$, its Levi decompositions are written in the form $P = MU$, where $U := R_{\mathrm{u}}(P)$ is the unipotent radical.

Let $H$ be a connected reductive $F$-group. The center of $H$ is denoted by $Z_H$; the centralizer of $S \subset H$ is denoted by $Z_H(S)$. The Langlands dual group of $H$ is denoted by $H^\vee$, viewed as a pinned $\CC$-group, and it will be conveniently identified with the group of its $\CC$-points.

When $F$ is local, the set of L-parameters (resp.\ Arthur parameters) of $H$ will be denoted as $\Phi(H)$ (resp.\ $\Psi(H)$). Detailed discussions will be given in the main text, including the global case as well.

For all $m \in \Z_{\geq 1}$, let $\bmu_m := \mu_m(\CC)$ be the group of $m$-th roots of unity in $\CC^{\times}$.
\index{mu-m@$\bmu_m$}

\paragraph{Parabolic subgroups}
Let $G$ be a connected reductive group over a field $F$.

Suppose a minimal parabolic subgroup $P_0 \subset G$ and a Levi decomposition $P_0 = M_0 U_0$ are given. Parabolic subgroups $P$ with $P \supset P_0$ are said to be standard; they have unique Levi factors $M$ satisfying $M \supset M_0$, and such Levi subgroups $M$ of $G$ are called standard. More generally, parabolic subgroups with Levi factors $M \supset M_0$ are said to be semi-standard; such Levi factors are unique, and Levi subgroups of $G$ arising in this way are called semi-standard.

\paragraph{Haar measures}
For each locally compact group $R$, let $\mes(R)$ be the $1$-dimensional $\CC$-vector space spanned by Haar measures of $R$, and denote its linear dual by $\mes(R)^\vee$. They serve to keep track of the dependence on Haar measures in various constructions.

For a connected reductive $F$-group $G$ over a local field $F$, we write $\mes(G) := \mes(G(F))$.
\index{mesG@$\mes(G)$}

We occasionally write $\mes(E) := \mu(E) \in [0, +\infty]$ for a measurable subset $E \subset X$ in a measure space $(X, \mu)$. The meaning will be clear from the context.

\paragraph{Representations}
All representations in this article are over $\CC$. The trivial representation of a group is denoted by $\mathbf{1}$.
\index{1@$\mathbf{1}$}

For a finite Abelian group $A$, its Pontryagin dual is denoted by $A^\vee$.

We will work with representations of $G(F)$ where $G$ is a connected reductive $F$-group where $F$ is a local field, or of finite coverings $\tilde{G}$ of $G(F)$ defined as central extensions of locally compact groups
\[ 1 \to \bmu_m \to \tilde{G} \xrightarrow{\rev} G(F) \to 1, \quad m \in \Z_{\geq 1}. \]
Representations in the adélic context are also considered. Their precise meanings are explained as follows.

\begin{itemize}
	\item For non-Archimedean $F$, representations are understood as the smooth admissible ones.
	\item For Archimedean $F$, representations are understood either as Harish-Chandra modules where a maximal compact subgroup $K \subset G(F)$ is chosen implicitly, or equivalently as smooth admissible Fréchet representations of moderate growth of $\tilde{G}$.
	\item Occasionally, unitary representations on Hilbert spaces which are not necessarily admissible will appear, for example the $L^2$-automorphic spectra in the adélic setting.
\end{itemize}
The notion of irreducibility must be interpreted accordingly. The precise choice is always clear in context, hence the categories in use will not be specified unless needed.

For similar reasons, we do not bother to distinguish unitary and unitarizable irreducible representations in this article.

In all cases, the Harish-Chandra character of a representation $\pi$ of finite length is denoted as $\Theta_\pi := \Tr(\pi)$.
\index{Theta-pi@$\Theta_\pi$}

Parabolic induction and Jacquet modules of representations are always assumed to be normalized.

Arthur's notation $\Pi_-(\tilde{G})$ will be adopted to denote the set of isomorphism classes of genuine irreducible representations of $\tilde{G}$, and $\Pi_{\mathrm{unit}, -}(\tilde{G})$ denotes the subset of unitary ones. Further variants and explanations will be given in the main text.

\paragraph{Symplectic and orthogonal groups}
Let $F$ be any field with $\mathrm{char}(F) \neq 2$. A symplectic $F$-vector space $(W, \lrangle{\cdot|\cdot})$ is a finite-dimensional $F$-vector space $W$ together with a non-degenerate alternating form $\lrangle{\cdot|\cdot}: W \times W \to F$. The corresponding symplectic group is denoted as $\Sp(W)$ or $\Sp(2n)$ where $n := \frac{1}{2} \dim W$.

A quadratic $F$-vector space $(V, q)$ or simply $V$ is a finite-dimensional $F$-vector space $V$ together with a quadratic map $q: V \to F$, such that the associated bilinear form $V \times V \to F$ is non-degenerate. The corresponding orthogonal group (resp.\ special orthogonal group) is denoted by $\Or(V, q) = \Or(V)$ (resp.\ $\SO(V, q) = \SO(V)$).

\index{SO(m)@$\SO(m)$}
The notation $\SO(m)$ denotes the split special orthogonal group associated with the $m$-dimensional quadratic $F$-vector space whose Gram matrix is
\[\begin{pmatrix}
	& & 1 \\
	& \reflectbox{$\ddots$} & \\
	1 & &
\end{pmatrix}.\]

These classical groups will often be identified with the groups of their $F$-points.

\paragraph{Twisted general linear groups}
\index{twisted $\GL(N)$}
The general linear group of rank $N$ over a given field is denoted by $\GL(N)$. Throughout this article, the twisted $\GL(N)$ means the reductive group $\GL(N)$ endowed with the involution
\begin{gather*}
	\tilde{\theta}(g) = \tilde{J} \cdot {}^{\mathrm{t}} g^{-1} \cdot \tilde{J}^{-1}, \\
	\tilde{J} := \begin{pmatrix}
		& & & 1 \\
		& & -1 & \\
		& \reflectbox{$\ddots$} & & \\
		(-1)^{N+1} & & &
	\end{pmatrix}.
\end{gather*}

Over a local field $F$, there is a fully fledged theory of twisted representations and characters on the twisted $\GL(N, F)$, which we will make free use of.

\section{Groups of metaplectic type and endoscopy}\label{sec:review}
Below is a general summary about the metaplectic groups, their representations and endoscopy. We refer to \cite[Chapters 2--3]{Li21} for a more detailed review.

\subsection{Local metaplectic coverings}\label{sec:local-Mp}
Let $(W, \lrangle{\cdot|\cdot})$ be a symplectic $F$-vector space, where $F$ is a field, $\mathrm{char}(F) \neq 2$. Fixing a symplectic basis
\[ e_1, \ldots, e_n, e_{-n}, \ldots, e_{-1} \]
of $W$, there is a corresponding standard Borel pair $(B, T)$ of $\Sp(W)$ where $B$ is the stabilizer of the flag $\{0\} \subsetneq Fe_1 \subsetneq \cdots \subsetneq \bigoplus_{i=1}^n Fe_i$, and $T \simeq \Gm^n$ corresponds to the decomposition $W = \bigoplus_{i=1}^n (Fe_i \oplus Fe_{-i})$. The standard (resp.\ semi-standard) parabolic and Levi subgroups are thus defined. The Weyl group of $T$ is identified with $\mathfrak{S}_n \ltimes \{\pm 1\}^n$.

Now assume $F$ is local and $\mathrm{char}(F) = 0$. Given an additive character $\bpsi$ of $F$, the \emph{metaplectic group} associated with $(W, \lrangle{\cdot|\cdot})$ and $\bpsi$ in this work is the central extension of locally compact groups
\[ 1 \to \bmu_8 \to \Mp(W) \xrightarrow{\rev} \Sp(W) \to 1. \]
We sometimes denote $\Mp(W)$ by $\Mp(2n)$, and the covering splits if and only if $F = \CC$. It is described in terms of the Schrödinger model for the irreducible representations of the Heisenberg group $\mathrm{H}(W)$ with central character $\bpsi$; the appearance of $\bmu_8$ stems from the fact that Weil's constants $\gamma_\bpsi(\cdot)$ are $\bmu_8$-valued. Note that $\Mp(\{0\}) = \bmu_8$.
\index{Sp-tilde-W@$\Mp(W)$}
\index{metaplectic group}

Write $G := \Sp(W)$ and denote the metaplectic covering as $\rev: \tilde{G} \to G(F)$. The group $G(F)$ acts on $\tilde{G}$ by conjugation. For every subgroup $L \subset G$ we set $\tilde{L} := \rev^{-1}(L(F))$. The Haar measures on $\tilde{G}$ and $G(F)$ are assumed to be compatibly chosen so that
\begin{equation*}
	\mes(E) = \mes(\rev^{-1}(E)), \quad E \subset G(F): \text{any compact subset}.
\end{equation*}

\begin{itemize}
	\item A representation $(\pi, V)$ of $\tilde{G}$ is said to be \emph{genuine} if $\pi(z) = z \cdot \identity_V$ for all $z \in \bmu_8$.
	\item A function $f: \tilde{G} \to \CC$ is said to be \emph{genuine} (resp.\ anti-genuine) if $f(z\tilde{x}) = z f(\tilde{x})$ (resp.\ $f(z\tilde{x}) = z^{-1} f(\tilde{x})$) for all $\tilde{x} \in \tilde{G}$ and $z \in \bmu_8$.
\end{itemize}
\index{genuine}
\index{anti-genuine}

The \emph{Weil representation} of $\tilde{G}$ is a genuine admissible representation $\omega_\bpsi = \omega_\bpsi^+ \oplus \omega_\bpsi^-$, where $\omega_\bpsi^{\pm}$ are both unitary and irreducible, called the even ($+$) and odd ($-$) parts of $\omega_\bpsi$. These constructions are canonical once $(W, \lrangle{\cdot|\cdot})$ are $\bpsi$ are given.
\index{omega-psi@$\omega_{\bpsi}, \omega_{\bpsi}^{\pm}$}

The parabolic and Levi subgroups of $\tilde{G}$ are defined as preimages of parabolic and Levi subgroups of $G(F)$, respectively. We will need the following splittings of $\rev: \tilde{G} \to G(F)$.

\begin{itemize}
	\item Let $P$ be a parabolic subgroup of $G$ with unipotent radical $U$. There is a unique equivariant splitting of $\rev|_{\tilde{U}}: \tilde{U} \to U(F)$ relative to $P(F)$-conjugation, By viewing $U(F)$ as a closed subgroup of $\tilde{G}$, every Levi decomposition $P = MU$ lifts canonically to coverings as
	\[ \tilde{P} = \tilde{M} \cdot U(F). \]
	These properties hold for covering groups in general, see \cite[\S 2.2]{Li21}.
	
	\item Suppose we are given an orthogonal decomposition
	\[ W = W^\flat \oplus \bigoplus_{i=1}^r (\ell_i \oplus \ell'_i) \]
	where $W^\flat$ and all $\ell_i \oplus \ell'_i$ are non-degenerate subspaces of $W$, and $\ell_i, \ell'_i$ are totally isotropic and nonzero for all $i$. To such a decomposition are attached
	\begin{align*}
		M & := \Sp(W^\flat) \times \prod_{i=1}^r \GL(\ell_i), \\
		P & := \text{stabilizer of the flag}\; \ell_1 \subset \ell_1 \oplus \ell_2 \subset \cdots \subset \bigoplus_{i=1}^r \ell_i.
	\end{align*}
	Then $P$ is a parabolic subgroup with Levi component $M$, and all $P$ and $M$ arise in this manner; standard parabolic and Levi subgroups are so obtained by defining $W^\flat$ and $\ell_i, \ell'_i$ from the standard symplectic (ordered) basis.
	
	We have a canonical decomposition
	\[ \tilde{M} = \Mp(W^\flat) \times \prod_{i=1}^r \GL(\ell_i) \]
	in $\tilde{G}$. The splitting over $\GL$ factors is constructed in \cite[\S 3.1]{Li12a} via the Schrödinger model, hence depends on $\bpsi$, and is only available in the $8$-fold (or larger) metaplectic coverings.
\end{itemize}

Recall that $Z_G = \{\pm 1\}$, where $\pm 1$ is a shorthand for $\pm\identity \in G(F)$.

\begin{definition}\label{def:minus-1}
	\index{$-1 \in \tilde{G}$}
	There is a unique preimage of $-1$ in $\tilde{G}$, denoted by the same symbol $-1$, such that $\omega_\bpsi^{\pm}(-1) = \pm \identity$.
\end{definition}

See \cite[Définition 2.8, Proposition 2.9]{Li11}. Also recall that $-1$ is central in $\tilde{G}$.

\subsection{Unramified situation}\label{sec:unramified-situation}
Assume $F$ is a non-Archimedean local field with $F \supset \Q_p$ and $p > 2$. For an $\mathfrak{o}_F$-lattice $L \subset W$, define the lattice
\[ L^* := \left\{y \in W: \forall y \in L, \; \bpsi(\lrangle{x|y}) = 1 \right\}. \]
We say $L$ is self-dual if $L = L^*$. Since $p > 2$, every self-dual $L$ determines an $\mathfrak{o}_F$-structure for $G$ such that
\begin{itemize}
	\item $K := G(\mathfrak{o}_F) = \Stab_{G(F)}(L)$ is a hyperspecial subgroup of $G(F)$;
	\item the lattice model for $\omega_\bpsi$ furnishes a splitting $s: K \to \tilde{G}$ of the metaplectic covering $\rev$ over $K$, see \cite[\S 2.4.2]{Li11};
	\item the element $s(-1)$ coincides with the $-1 \in \tilde{G}$ in Definition \ref{def:minus-1}.
\end{itemize}

With the data $(K, s)$ above, we say that we are in the \emph{unramified situation}. The Haar measure on $G(F)$ with $\mes(K) = 1$ is said to be unramified.
\index{unramified situation}

\begin{definition}
	\index{$f_K$}
	\index{Hecke@$\mathcal{H}_{\asp}(K \backslash \tilde{G} / K)$}
	In the unramified situation, let $\mathcal{H}_{\asp}(K \backslash \tilde{G} / K)$ be the space of $K$-bi-invariant (via $s$) anti-genuine functions on $\tilde{G}$ of compact support. It forms a $\CC$-algebra under convolution with respect to the unramified Haar measure, whose unit is
	\[ f_K : \tilde{G} \to \CC, \quad f_K(\tilde{x}) = \begin{cases}
		z^{-1}, & \tilde{x} = zs(k), \; z \in \bmu_8, \; k \in K \\
		0 & \tilde{x} \notin \tilde{K}.
	\end{cases}\]
	We call $\mathcal{H}_{\asp}(K \backslash \tilde{G} / K)$ the \emph{anti-genuine spherical Hecke algebra} of $\tilde{G}$.
\end{definition}

For a more precise description of $\mathcal{H}_{\asp}(K \backslash \tilde{G} / K)$ à la Satake, see \eqref{eqn:Satake-Mp}. In particular, this algebra is commutative.

\begin{definition}
	\index{$K$-spherical}
	Let $\pi$ be an irreducible genuine representation of $\tilde{G}$ in the unramified situation. If $\pi$ contains a nonzero $K$-invariant vector, we say $\pi$ is $K$-spherical.
\end{definition}

\subsection{Adélic metaplectic coverings}
Now let $\dot{F}$ be a number field and $(\dot{W}, \lrangle{\cdot|\cdot})$ a symplectic $\dot{F}$-vector space. Choose an additive character $\dot{\bpsi} = \prod_v \bpsi_v$ of $\dot{F} \backslash \A$.

Fix an $\mathfrak{o}_{\dot{F}}$-lattice $L \subset \dot{W}$, which endows $\dot{G} := \Sp(\dot{W})$ with an $\mathfrak{o}_{\dot{F}}$-structure. For every place $v$, we have the covering $\rev_v: \dot{\tilde{G}}_v \to \dot{G}(\dot{F}_v)$ associated with $\dot{W}_v$ and $\bpsi_v$.

When $v \nmid \infty$, the $v$-completion of $L$ gives rise to $L_v \subset \dot{W}_v$. Define the finite set
\begin{equation}\label{eqn:V-ram}
	V_{\mathrm{ram}} := \left\{\begin{array}{r|l}
		v: \text{place of}\; \dot{F} & v \mid 2, \;\text{or}\; v \mid \infty, \\
		& \text{or}\; L_v \neq L_v^*, \\
		& \text{or}\; \bpsi_v \;\text{is not of conductor $\mathfrak{o}_v$}
	\end{array}\right\}.
\end{equation}
\index{Vram@$V_{\mathrm{ram}}$}

When $v \notin V_{\mathrm{ram}}$, we are in the unramified situation and $\rev_v$ splits over the hyperspecial subgroup $K_v = \Stab_{\dot{G}(\dot{F}_v)}(L_v) = \dot{G}( \mathfrak{o}_{\dot{F}_v})$. Define
\index{Sp-tilde-W-A@$\Mp(\dot{W}, \A)$}
\begin{align*}
	\mathbf{N} & := \left\{ (z_v)_v \in \bigoplus_v \bmu_8 : \prod_v z_v = 1 \right\}, \\
	\dot{\tilde{G}} = \Mp(\dot{W}, \A) & := \Resprod_v \dot{\tilde{G}}_v \big/ \mathbf{N}
\end{align*}

where $\Resprod_v$ is taken relative to $(K_v)_{v \notin V_{\mathrm{ram}}}$. Then:
\begin{itemize}
	\item $\Resprod_v \rev_v$ is trivial on $\mathbf{N}$ and yields a central extension
	\[ 1 \to \bmu_8 \to \dot{\tilde{G}} \xrightarrow{\rev} \dot{G}(\A) \to 1 \]
	which is independent of the choice of $L$;
	\item there exists a unique splitting $i: \dot{G}(\dot{F}) \to \dot{\tilde{G}}$ of $\rev$ over the rational points;
	\item as in the local setting, there are splittings over unipotent radicals and $\GL$ factors of Levi subgroups;
	\item with $-1_v \in \dot{\tilde{G}}_v$ be as in Definition \ref{def:minus-1}, the $\mathbf{N}$-coset of $(-1_v)_v$ equals $i(-1)$, see \cite[Corollaire 2.17]{Li11}.
\end{itemize}

For every place $v$, the preimage of $\dot{G}(\dot{F}_v)$ in $\dot{\tilde{G}}$ is identified with $\dot{\tilde{G}}_v$. More generally, for any set $V$ of places of $\dot{F}$, the preimage $\dot{\tilde{G}}_V$ of $\Resprod_{v \in V} \dot{G}(\dot{F}_v)$ is the quotient of $\Resprod_{V \in V} \dot{\tilde{G}}_v$.

We call $\rev: \dot{\tilde{G}} \to \dot{G}(\A)$ the \emph{adélic metaplectic covering} associated with $(\dot{W}, \lrangle{\cdot|\cdot})$ and $\dot{\bpsi}$. There is still a notion of genuine representations and genuine (resp.\ anti-genuine) functions. The theory of genuine automorphic forms and automorphic representations \cite{MW94} applies in such a setting.

By the commutativity of anti-genuine spherical Hecke algebras, every irreducible genuine representation $\dot{\pi}$ of $\dot{\tilde{G}}$ factorizes uniquely into $\bigotimes'_v \dot{\pi}_v$, where $\dot{\pi}_v$ is a genuine irreducible representation of $\dot{\tilde{G}}_v$, and $\dot{\pi}_v$ is $K_v$-spherical for almost all $v \notin V_{\mathrm{ram}}$.

To give an example for later use in \S\ref{sec:global-principal}, define the adélic Weil representation as the genuine representation
\begin{equation}\label{eqn:adelic-Weil}
	\omega_{\dot{\bpsi}, \A} := \bigotimes\nolimits'_v \omega_{\bpsi_v}
\end{equation}
of $\dot{\tilde{G}}$; although $\omega_{\bpsi_v}$ is reducible, the restricted tensor product makes sense since $\omega^+_{\bpsi_v}$ is unramified whereas $\omega^-_{\bpsi_v}$ is not, for almost all $v$.
\index{omega-psi-A@$\omega_{\dot{\bpsi}, \A}$}

Denote by $\mathcal{A}_-(\dot{G}(\dot{F}) \backslash \dot{\tilde{G}})$ the space of genuine automorphic forms on $\dot{\tilde{G}}$. There is a canonical $\dot{\tilde{G}}$-equivariant linear map
\begin{equation}\label{eqn:theta-series}
	\vartheta_{\bpsi}: \omega_{\dot{\psi}, \A} \to \mathcal{A}_-(\dot{G}(\dot{F}) \backslash \dot{\tilde{G}}).
\end{equation}
To construct it, choose a Lagrangian subspace $\dot{\ell}$ of $\dot{W}$ and realize $\omega_{\dot{\bpsi}, \A}$ by the Schrödinger model on the Schwartz space of $(\dot{W} / \dot{\ell})(\A)$. Then one may take the $\vartheta$-series:
\[ \vartheta_{\bpsi}(f)(\tilde{g}) = \sum_{x \in \dot{W}/\dot{\ell}} \left( \omega_{\dot{\bpsi}, \A}(\tilde{g}) f \right)(x), \quad \tilde{g} \in \dot{\tilde{G}}. \]
Schrödinger models associated with different Lagrangian subspaces are identified by canonical intertwining operators, which are essentially partial Fourier transforms. Thus $\vartheta_{\bpsi}$ is independent of $\dot{\ell}$ by Poisson summation formula.
\index{theta@$\vartheta_{\bpsi}$}

The elements of $\vartheta_{\bpsi}\left( \omega^\epsilon_{\dot{\bpsi}, c} \right)$ are called \emph{elementary $\vartheta$-series}.
\index{elementary $\vartheta$-series}

\subsection{Endoscopy}\label{sec:endoscopy}
To begin with, suppose $F$ is local. Given $\tilde{G} = \Mp(W)$ with $\dim W = 2n$, the dual group of $\tilde{G}$ is
\index{G-tilde-vee@$\tilde{G}^\vee$}
\begin{equation*}
	\tilde{G}^\vee := \Sp(2n, \CC).
\end{equation*}
Equip $\tilde{G}^\vee$ with the standard pinning, and let the Galois group of $F$ acts trivially on $\tilde{G}^\vee$. It is the same as the dual group of $\SO(2n+1)$.

\begin{remark}
	According to Weissman's formalism \cite{Weis18}, the Langlands correspondence involves not just $\tilde{G}^\vee$, but the L-group $\Lgrp{\tilde{G}}$ (Weil form) sitting in an extension of locally compact groups
	\[ 1 \to \tilde{G}^\vee \to \Lgrp{\tilde{G}} \to \Weil{F} \to 1. \]
	However, $\Lgrp{\tilde{G}}$ is canonically identified with $\tilde{G}^\vee \times \Weil{F}$ since $(W, \lrangle{\cdot|\cdot})$ and $\bpsi$ are chosen. Hence it suffices to consider $\tilde{G}^\vee$.
\end{remark}

\begin{remark}\label{rem:reduced-center}
	As testified by the stabilization of trace formula \cite{Li21}, the role of $Z_{\tilde{G}^\vee}$ in endoscopy should be replaced by that of $Z_{\tilde{G}^\vee}^\circ = \{1\}$ in the metaplectic setting. We will see more manifestations of this principle.
\end{remark}

\begin{definition}
	\index{Endo-ell@$\Endo_{\elli}(\tilde{G})$}
	\index{G-shrek-bold@$\mathbf{G}^{"!}$}
	Elements of the set
	\[ \Endo_{\elli}(\tilde{G}) := \left\{ s \in \tilde{G}^\vee : s^2 = 1 \right\} \bigg/ \tilde{G}^\vee\text{-conj.} \]
	are called \emph{elliptic endoscopic data} of $\tilde{G}$. There is a natural identification
	\[ \Endo_{\elli}(\tilde{G}) \xleftrightarrow{1:1} \left\{ (n', n'') \in \Z_{\geq 0}^2 : n' + n'' = n. \right\}. \]
	Indeed, $2n'$ (resp.\ $2n''$) is the multiplicity of $+1$ (resp.\ $-1$) as an eigenvalue of $s$. The endoscopic group corresponding to the class of $s$ is
	\[ G^! := \SO(2n' + 1) \times \SO(2n''+1). \]
	Note that $(G^!)^\vee$ is naturally identified with $Z_{\tilde{G}^\vee}(s)$.
	
	The notation $\mathbf{G}^!$ will be frequently used to designate the endoscopic datum that underlies the group $G^!$.
\end{definition}

In contrast with the endoscopy for $\SO(2n+1)$, we distinguish the endoscopic data $(n', n'')$ and $(n'', n')$, and we do not consider automorphisms of elliptic endoscopic data.

Consider $\mathbf{G}^! \in \Endo_{\elli}(\tilde{G})$. Define $\orbI_{\asp}(\tilde{G})$ (resp.\ $S\orbI(G^!)$) as in \cite[Definitions 2.3.4, 2.3.5]{Li21}, namely the quotient of the space of anti-genuine $C^\infty_c$-functions on $\tilde{G}$ (resp.\ $C^\infty_c$-functions on $G^!(F)$) modulo those with zero orbital integrals (resp.\ stable orbital integrals) along all strongly regular semisimple orbits.

The \emph{geometric transfer} in \cite[Theorem 3.8.1]{Li21} is a linear map
\begin{equation*}
	\Trans_{\mathbf{G}^!, \tilde{G}}: \orbI_{\asp}(\tilde{G}) \otimes \mes(G) \to S\orbI(G^!) \otimes \mes(G^!)
\end{equation*}
characterized by matching orbital integrals; see \cite[\S 3.8]{Li11} for an overview. Furthermore,
\begin{itemize}
	\item in the Archimedean case, $\orbI_{\asp}(\tilde{G})$ and $S\orbI(G^!)$ are naturally LF-spaces and $\Trans_{\mathbf{G}^!, \tilde{G}}$ is continuous;
	\item when maximal compact groups $K \subset G(F)$ and $K^! \subset G^!(F)$ are chosen in the Archimedean case, one can impose bi-finiteness under $\tilde{K}$ and $K^!$ in the transfer of $C^\infty_c$-functions;
	\item in the unramified situation, one has the \emph{spherical fundamental lemma} \cite{Luo18}: for every hyperspecial subgroup $K^! \subset G^!(F)$, there is a natural homomorphism between spherical Hecke algebras
	\[ b_{\mathbf{G}^!, \tilde{G}}: \mathcal{H}_{\asp}(K \backslash \tilde{G} / K) \to \mathcal{H}(K^! \backslash G^!(F) / K^!) \]
	making the following diagram commutes:
	\[\begin{tikzcd}
		\mathcal{H}_{\asp}(K \backslash \tilde{G} / K) \arrow[r, "{b_{\mathbf{G}^!, \tilde{G}}}"] \arrow[d] & \mathcal{H}(K^! \backslash G^!(F) / K^!) \arrow[d] \\
		\orbI_{\asp}(\tilde{G}) \arrow[r, "{\Trans_{\mathbf{G}^!, \tilde{G}}}"'] & S\orbI(G^!)
	\end{tikzcd}\]
	where $\mes(G)$ (resp.\ $\mes(G^!)$) is trivialized by the unramified Haar measures. In particular, the transfer of $f_K$ is represented by $\mathbf{1}_{K^!}$.
\end{itemize}
\index{Trans@$\Trans_{\mathbf{G}^{"!}, \tilde{G}}$}
\index{fundamental lemma}

Here we used the basic fact that for non-Archimedean $F$ with residual characteristic $p \neq 2$, all hyperspecial subgroups of $G^!(F)$ are conjugate; the Haar measure characterized by $\mes(K^!) = 1$ is said to be unramified.

When $(n', n'') = (n, 0)$, the map $b_{\mathbf{G}^!, \tilde{G}}$ is an isomorphism onto $\mathcal{H}(K^! \backslash H / K^!)$ where $H := \SO(2n+1)$; we denote the composition of $b_{\mathbf{G}^!, \tilde{G}}$ with the Satake isomorphism for $H$ as
\begin{equation}\label{eqn:Satake-Mp}
	\mathcal{S}_{\tilde{G}}: \mathcal{H}_{\asp}(K \backslash \tilde{G} / K) \rightiso \CC\left[ H^\vee /\!/ H^\vee \right] = \CC\left[ \tilde{G}^\vee /\!/ \tilde{G}^\vee \right].
\end{equation}

The transfer factor lurking behind $\Trans_{\mathbf{G}^!, \tilde{G}}$ is canonically defined: this is because we have fixed $(W, \lrangle{\cdot|\cdot})$ and $\bpsi$.

Denote by $D_-(\tilde{G})$ (resp.\ $SD(G^!)$) the dual spaces of $\mathcal{I}_{\asp}(\tilde{G})$ (resp.\ $S\orbI(G^!)$), taken in the topological sense when $F$ is Archimedean. They are the spaces of genuine invariant (resp.\ stably invariant) distributions on $\tilde{G}$ (resp.\ $G^!(F)$). By dualizing $\Trans_{\mathbf{G}^!, \tilde{G}}$, we obtain the \emph{spectral transfer}
\begin{equation*}
	\trans_{\mathbf{G}^!, \tilde{G}}: SD(G^!) \otimes \mes(G^!)^\vee \to D_-(\tilde{G}) \otimes \mes(G)^\vee .
\end{equation*}
\index{D@$D_-(\tilde{G})$}
\index{SD@$SD(G^{"!})$}
\index{trans@$\trans_{\mathbf{G}^{"!}, \tilde{G}}$}

For every genuine admissible representation $\pi$ of $\tilde{G}$ of finite length, its character $\Theta_\pi$ belongs to $D_-(\tilde{G}) \otimes \mes(G)^\vee$. Genuine characters span the subspace $D_{\mathrm{spec}, -}(\tilde{G}) \otimes \mes(G)^\vee$, and similarly we have $SD_{\mathrm{spec}}(G^!) \otimes \mes(G^!)^\vee$. See \cite[\S 2.6]{Li21}.
\index{D-spec@$D_{\mathrm{spec}, -}(\tilde{G})$}
\index{SD-spec@$SD_{\mathrm{spec}}(G^{"!})$}

\begin{theorem}\label{prop:spec-trans}
	The spectral transfer $\trans_{\mathbf{G}^!, \tilde{G}}$ restricts to a linear map
	\[ SD_{\mathrm{spec}}(G^!) \otimes \mes(G^!)^\vee \to D_{\mathrm{spec}, -}(\tilde{G}) \otimes \mes(G)^\vee. \]
\end{theorem}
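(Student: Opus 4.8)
The plan is to prove that the spectral transfer $\trans_{\mathbf{G}^!, \tilde{G}}$, a priori defined only as the dual of the geometric transfer on all genuine invariant distributions, carries the subspace spanned by stable virtual characters into the subspace spanned by genuine virtual characters. The strategy is the standard ``spectral $=$ geometric'' bootstrapping: one already knows the geometric transfer $\Trans_{\mathbf{G}^!, \tilde{G}}$ matches orbital integrals, and by the local Langlands correspondence for metaplectic groups (Adams--Barbasch, Gan--Savin) together with Luo's endoscopic character relations in the tempered case, one can exhibit enough explicit spectral identities. More precisely, I would first reduce to elliptic endoscopic data, then to the case $(n',n'')=(n,0)$ versus the general product situation, and finally reduce a general stable character on $G^! = \SO(2n'+1)\times\SO(2n''+1)$ to the building blocks provided by Arthur's work on odd special orthogonal groups.

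First I would recall that $SD_{\mathrm{spec}}(G^!) \otimes \mes(G^!)^\vee$ is spanned by the stable characters $S\Theta^{G^!}_{\phi^!}$ attached to tempered L-parameters $\phi^!$ of $G^!$ (Arthur's construction for quasi-split odd $\SO$, which by \cite{Is24, AGIKMS} is now on firm footing), together with their twists by real unramified characters and parabolic inductions thereof; standard results show these span the stable spectral distribution space. So it suffices to check $\trans_{\mathbf{G}^!, \tilde{G}}(S\Theta^{G^!}_{\phi^!})$ is a genuine virtual character. Next, I would invoke Luo's theorem \cite{Luo20}: for a tempered L-parameter $\psi = \phi$ of $\tilde{G}$ and the corresponding pair $(\mathbf{G}^!, \phi^!)$, the endoscopic character relation identifies $\trans_{\mathbf{G}^!, \tilde{G}}(S\Theta^{G^!}_{\phi^!})$ with an explicit $\Z$-linear combination $\sum_{\pi \in \Pi_\phi} \langle \cdot, \pi\rangle\,\Theta_\pi$ over the tempered L-packet of $\tilde{G}$; in particular it lies in $D_{\mathrm{spec},-}(\tilde{G})$. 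Since every tempered L-parameter $\phi^!$ of $G^!$ arises as $\phi^! = $ the datum attached to some tempered L-parameter $\phi$ of $\tilde{G}$ via the basic bijection of \S\ref{sec:basic-bijection} (here using that $(G^!)^\vee = Z_{\tilde{G}^\vee}(s) \hookrightarrow \tilde{G}^\vee$ and that a tempered parameter into $(G^!)^\vee$ composes to a tempered parameter into $\tilde{G}^\vee$), this handles all the generators after one also checks compatibility with the twists and with parabolic induction. The latter is handled by the compatibility of endoscopic transfer with parabolic induction, which follows from the splittings over $\GL$ factors recorded in \S\ref{sec:local-Mp} and the corresponding statement for $\SO$.

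In the Archimedean case one must additionally argue continuity/density: $SD_{\mathrm{spec}}(G^!)$ is not literally the span of finitely many characters, so I would use that tempered characters (or their products with unramified twists) topologically span, that $\trans_{\mathbf{G}^!, \tilde{G}}$ is continuous (it is the dual of the continuous geometric transfer between LF-spaces, per the bullet in \S\ref{sec:endoscopy}), and that $D_{\mathrm{spec},-}(\tilde{G})$ is closed in $D_-(\tilde{G})$; hence a limit of genuine virtual characters that converges in $D_-(\tilde{G})$ stays in $D_{\mathrm{spec},-}(\tilde{G})$. For $F=\CC$ the metaplectic covering splits and the statement reduces to the classical one for $\Sp(2n,\CC)$ versus its endoscopic groups.

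The main obstacle, I expect, is not any single deep new input but the bookkeeping needed to reduce an \emph{arbitrary} element of $SD_{\mathrm{spec}}(G^!)$ to the tempered building blocks for which Luo's character relation is available: one needs that parabolic induction from Levi subgroups, together with unramified twists, suffices to generate $SD_{\mathrm{spec}}(G^!)$ as a topological vector space (a Langlands-classification type argument on $G^!$), and that the metaplectic endoscopic transfer genuinely intertwines these operations with their counterparts on $\tilde{G}$. The compatibility of $\Trans_{\mathbf{G}^!, \tilde{G}}$ with parabolic descent is the technical heart; once it is in place, the theorem follows by combining it with Luo's tempered result and a density argument, with the Archimedean topology being a routine but necessary caveat. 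I would organize the proof so that the non-Archimedean case is settled first by pure linear algebra over the finitely-generated span of tempered characters twisted by unramified characters, and then bootstrap to the Archimedean case by continuity.
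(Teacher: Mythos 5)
Your proposal is correct in outline, but it takes a different route from the paper for the key spectral input. The paper disposes of this theorem in two lines by citing prior work: \cite[(2.6.2)]{Li21} (the decomposition of $D_{\mathrm{spec}}$ and $SD_{\mathrm{spec}}$ via parabolic induction of twisted elliptic/tempered pieces), \cite[Theorem 3.9.2]{Li21} (the core statement that transfer preserves the spectral subspace, proved there via the stabilization of elliptic inner products and the local trace formula, \emph{independently} of any local Langlands correspondence for $\tilde{G}$), and \cite[Proposition 3.8.4]{Li21} (commutation of transfer with parabolic induction up to a central twist). Your skeleton — reduce to tempered building blocks by induction and unramified twisting, settle the tempered case, then a density argument over $\R$ — is the same, but you source the tempered case from Luo's endoscopic character relations instead. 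That is logically admissible (Luo's result is external and does not presuppose this theorem, even though the paper only states it in \S\ref{sec:L-packets} in terms of the $\pi_{\phi,\chi}$ built from $T_{\phi,s}$), but it makes the theorem depend on much heavier input (theta correspondence, the Gan--Savin LLC, the Gan--Ichino multiplicity formula), whereas the paper's route keeps it at the level of harmonic analysis. Two caveats on your reduction: the unramified twists needed to span $SD_{\mathrm{spec}}(G^!)$ via standard characters are by arbitrary \emph{complex} parameters $\lambda \in \mathfrak{a}^*_{M,\CC}$, not just real ones; and the compatibility of $\Trans_{\mathbf{G}^!,\tilde{G}}$ with parabolic induction is itself a substantive theorem involving a central-twist correction $\omega_\pi(-1)$ (see the proof of Lemma \ref{prop:T-psi-s}), not a formal consequence of the splittings over $\GL$ factors — you correctly identify it as the technical heart, but it should be invoked as \cite[Proposition 3.8.4]{Li21} rather than rederived in passing.
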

\begin{proof}
	This follows from \cite[(2.6.2) and Theorem 3.9.2]{Li21}, together with the fact that the transfer commutes with parabolic induction up to a ``central twist'', see \cite[Proposition 3.8.4]{Li21}.
\end{proof}

Finally, the definition of $\tilde{G}^\vee$ and $\Endo_{\elli}(\tilde{G})$ carries over verbatim to the case of adélic coverings $\dot{\tilde{G}} = \Mp(\dot{W}, \A)$. In view of the fundamental lemma for spherical units, the geometric transfer
\[ \Trans_{\dot{\mathbf{G}}^!, \dot{\tilde{G}}}: \bigotimes\nolimits'_v \orbI_{\asp}(\dot{\tilde{G}}_v) \otimes \mes(\dot{G}_v) \to \bigotimes\nolimits'_v S\orbI(\dot{G}^!_v) \otimes \mes(\dot{G}^!_v) \]
is well-defined, where the $\bigotimes\nolimits'_v$ are defined with respect to $f_{K_v}$, $\mathbf{1}_{\dot{G}^!(\mathfrak{o}_v)}$ and the unramified Haar measures for almost all $v \nmid \infty$.

\subsection{Groups of metaplectic type}\label{sec:adelic-type}
Let $F$ (resp.\ $\dot{F}$) be a local field of characteristic zero (resp.\ number field).

\begin{definition}
	\index{group of metaplectic type}
	Following \cite[Definition 3.3.1]{Li21}, a \emph{group of metaplectic type} is
	\begin{itemize}
		\item (local case) a covering group of the form $\Mp(W) \times \prod_{i=1}^r \GL(n_i, F)$;
		\item (global case) a covering group of the form $\Mp(W, \A) \times \prod_{i=1}^r \GL(n_i, \A)$.
	\end{itemize}
	Here $(W, \lrangle{\cdot|\cdot})$ is a chosen symplectic vector space over $F$ (resp.\ $\dot{F}$), possibly zero, and an additive character $\bpsi$ of $F$ (resp.\ $\dot{F} \backslash \A$) is chosen as well.
\end{definition}

By \S\ref{sec:local-Mp}, the class of groups of metaplectic type is closed under passing to Levi subgroups, in both the local and global cases.

For a group of metaplectic type $\tilde{M} = \Mp(W) \times \prod_{i=1}^r \GL(n_i, F)$ or $\Mp(W, \A) \times \prod_{i=1}^r \GL(n_i, \A)$, where $\dim W = 2n$, define
\[ \tilde{M}^\vee = \Sp(2n, \CC) \times \prod_{i=1}^r \GL(n_i, \CC). \]
The elliptic endoscopic data of groups of metaplectic type are defined factor-wise: they are tautological on the $\GL$ factors. The transfer of orbital integrals or distributions extends immediately to the groups of metaplectic type.

One can then define the endoscopic data of a metaplectic covering $\Mp(W)$ or $\Mp(W, \A)$ to be the elliptic endoscopic data of Levi subgroups, taken up to conjugation; the transfer also extends to this setting. For details, see \cite[\S 3.4]{Li21}. No direct use of non-elliptic endoscopic data will be made in this article.

\subsection{Passing to twofold coverings}\label{sec:MMp}
Consider a local metaplectic covering $\tilde{G} = \Mp(W)$ first. Its derived subgroup $\tilde{G}_{\mathrm{der}}$ is a two-fold non-split covering of $G(F)$ if $F \neq \CC$, and $\rev$ induces $\tilde{G}_{\mathrm{der}} \rightiso G(F)$ when $F = \CC$. Define the following subgroup of $\Mp(W)$:
\[ \MMp(W) := \begin{cases}
	\Mp(W)_{\mathrm{der}}, & F \neq \CC \\
	\bmu_2 \times \Sp(W), & F = \CC;
\end{cases}\]
it always sits in the topological central extension
\[ 1 \to \bmu_2 \to \MMp(W) \xrightarrow{\rev} \Sp(W) \to 1, \]
and the $\rev: \Mp(W) \to \Sp(W)$ is the push-out of $\MMp(W) \to \Sp(W)$ via $\bmu_2 \hookrightarrow \bmu_8$.
\index{MMp-W@$\MMp(W)$}

The twofold covering $\MMp(W)$ is commonly used in the literature. The description of $\MMp(W)$ in terms of $2$-cocycles of Rao or Lion--Perrin, as summarized in \cite[\S 2.4]{Li11}, involves not only $(W, \lrangle{\cdot|\cdot})$ but also $\bpsi$. Nevertheless, the covering is independent of $\bpsi$ by the following well-known fact.

\begin{proposition}\label{prop:MMp-uniqueness}
	Given additive characters $\bpsi$ and $\bpsi'$, denote the corresponding twofold coverings by $\MMp(W)^{\bpsi}$ and $\MMp(W)^{\bpsi'}$, respectively. There exists a unique isomorphism $\MMp(W)^{\bpsi} \rightiso \MMp(W)^{\bpsi'}$ that lifts $\identity_{\Sp(W)}$ and induces $\identity$ on $\bmu_2$.
\end{proposition}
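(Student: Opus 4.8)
The plan is to reduce to the essential case $F \neq \CC$, $W \neq 0$, and then split the argument into uniqueness and existence. If $W = 0$ both groups equal $\bmu_2$ and there is nothing to prove; if $F = \CC$ both groups are $\bmu_2 \times \Sp(W)$ by the very definition of $\MMp(W)$, so again the only issue is uniqueness, handled below uniformly. So assume $F \neq \CC$ and $\dim W \geq 2$.

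\emph{Uniqueness.} Suppose $f, g \colon \MMp(W)^{\bpsi} \to \MMp(W)^{\bpsi'}$ both lift $\identity_{\Sp(W)}$ and restrict to $\identity$ on $\bmu_2$. I would set $\chi(x) := (g^{-1} f)(x)\, x^{-1}$ for $x \in \MMp(W)^{\bpsi}$; since $g^{-1}f$ lifts $\identity$, this lands in $\Ker(\rev) = \bmu_2$, which is central, and one checks directly that $\chi$ is a group homomorphism $\MMp(W)^{\bpsi} \to \bmu_2$ trivial on $\bmu_2$, hence factoring through $\rev$ to give a homomorphism $\Sp(W) \to \bmu_2$. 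As $\Sp(W)$ equals its own commutator subgroup (a classical fact for symplectic groups over an infinite field, in particular over any local field of characteristic zero), this homomorphism is trivial, so $f = g$. The same argument applies when $F = \CC$.

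\emph{Existence.} By uniqueness it suffices to exhibit one equivalence of topological central extensions $\MMp(W)^{\bpsi} \xrightarrow{\sim} \MMp(W)^{\bpsi'}$ covering $\identity_{\Sp(W)}$ and inducing $\identity$ on $\bmu_2$; it is then automatically the canonical one. Realizing $\MMp(W)^{\bpsi}$ by a continuous $2$-cocycle $c_{\bpsi} \in Z^2(\Sp(W), \bmu_2)$ à la Rao or Lion--Perrin, as recalled in \cite[\S 2.4]{Li11}, the task becomes to show that $c_{\bpsi}$ and $c_{\bpsi'}$ are cohomologous: a function $\beta \colon \Sp(W) \to \bmu_2$ with $c_{\bpsi'} = c_{\bpsi}\cdot \partial\beta$ then yields the isomorphism $(\sigma, z) \mapsto (\sigma, z\beta(\sigma))$, which visibly has the required properties (continuity of $\beta$, hence of the isomorphism, is part of the classical cocycle analysis). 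Writing $\bpsi' = \bpsi_a$ with $a \in F^{\times}$, the character enters Rao's cocycle only through Weil indices $\gamma(\bpsi\circ q)$ of quadratic forms $q$ built from Leray/Maslov invariants, and the ratios $\gamma(\bpsi_a \circ q)/\gamma(\bpsi \circ q)$ are governed by Hilbert symbols $(\det q, a)_F$ together with a power of the fixed constant $\gamma_{\bpsi_a}/\gamma_{\bpsi}$ weighted by $\dim q$; both pieces assemble into a coboundary. Alternatively, and more cleanly, one may invoke the computation $\mathrm{H}^2_{\mathrm{cont}}(\Sp(W), \bmu_2) \cong \Z/2$ (Moore's theorem for non-Archimedean $F$; the classification of $\bmu_2$-covers via $\pi_1(\Sp_{2n}(\R)) = \Z$ for $F = \R$), and note that the metaplectic extension is non-split exactly because $F \neq \CC$ --- a fact already built into the theory recalled in \S\ref{sec:local-Mp} --- so both $[c_{\bpsi}]$ and $[c_{\bpsi'}]$ are the unique nontrivial class and hence coincide.

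The hard part will be the existence step, i.e.\ the $\bpsi$-independence of the metaplectic cohomology class: via cocycles it is the bookkeeping with Weil indices and Hilbert symbols, and via cohomology it is the appeal to Moore's computation of $\mathrm{H}^2_{\mathrm{cont}}$ together with the known non-splitting of $\rev$ over $\Sp(W)$ for $F \neq \CC$. The remaining reductions and the uniqueness assertion are purely formal.
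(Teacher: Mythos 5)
Your argument is correct and, in its ``cleaner'' form, is exactly the paper's proof: the paper simply cites Moore's classification of topological central extensions of $\Sp(W)(F)$ \cite[Theorem 10.4]{Mo68}, which combined with perfectness of $\Sp(W)(F)$ (for uniqueness) and the known non-splitness of $\MMp(W)$ for $F \neq \CC$ gives both existence and uniqueness just as you describe. The explicit Rao-cocycle computation you sketch as a first option is a workable but unnecessary detour.
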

\begin{proof}
	This follows from the classification of coverings of $G(F)$ in \cite[Theorem 10.4]{Mo68}.
\end{proof}

In general, the splitting over $\GL$ factors of the Levi subgroups does not exist within $\MMp(W)$, and the element $-1$ in Definition \ref{def:minus-1} does not land in $\MMp(W)$.

Our earlier conventions on Haar measures, genuine functions, distributions and representations work for $\MMp(W)$ as well; there is no distinction between genuine and anti-genuine objects. Since $\Mp(W)$ is the push-out of $\MMp(W)$, the genuine representation theories of $\MMp(W)$ and $\Mp(W)$ are the same, as summarized below.

\begin{proposition}\label{prop:MMp-dist}
	Denoting by $D_-(\MMp(W))$ the dual of genuine $C^\infty_c$-functions on $\MMp(W)$, we have a canonical isomorphism
	\[ D_-(\Mp(W)) \rightiso D_-(\MMp(W)); \]
	this map can be viewed as restriction of distributions. On the other hand, restriction of representations induces an equivalence of categories
	\[ \left\{ \text{genuine smooth representations of}\; \Mp(W) \right\} \rightiso \left\{ \text{those of}\; \MMp(W) \right\}, \]
	and similarly for genuine admissible representations, and so on.
\end{proposition}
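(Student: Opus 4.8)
The plan is to read everything off the explicit push-out description of $\Mp(W)$ recorded just above the statement. Since $\Mp(W)$ is the push-out of $\MMp(W)$ along $\bmu_2 \hookrightarrow \bmu_8$, the group $\MMp(W)$ is a closed subgroup of $\Mp(W)$, the central $\bmu_8$ satisfies $\bmu_8 \cdot \MMp(W) = \Mp(W)$ and $\bmu_8 \cap \MMp(W) = \bmu_2$, so $\Mp(W) = \bigsqcup_{j=0}^{3}\zeta^{j}\MMp(W)$ is a disjoint union of four open-and-closed cosets, where $\zeta \in \bmu_8$ is a primitive root of unity (note $\zeta^{4} = -1 \in \bmu_2 \subset \MMp(W)$, so there are exactly four cosets). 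Locally over $\Sp(W)$ the projections $\rev\colon \Mp(W) \to \Sp(W)$ and $\rev\colon \MMp(W) \to \Sp(W)$ are isomorphic, the $\Sp(W)$-conjugation actions on the two coverings are "the same", and the Haar measures were normalized compatibly. All assertions will be consequences of this picture.

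First I would treat the function spaces. Restriction along $\MMp(W) \hookrightarrow \Mp(W)$ carries an anti-genuine (equivalently, over $\bmu_2$, genuine) compactly supported smooth function on $\Mp(W)$ to one on $\MMp(W)$, and this map is a bijection: it is injective because anti-genuineness forces $f(\zeta^{j}\tilde g) = \zeta^{-j}f(\tilde g)$ for $\tilde g \in \MMp(W)$, and surjective because that same coset formula produces, from any $g$ on $\MMp(W)$, a well-defined (the four cosets are disjoint) anti-genuine function on $\Mp(W)$ with the expected support. Because the coverings are locally isomorphic and the measures match, this is an isomorphism of topological vector spaces, continuous in both directions in the Archimedean case; moreover it intertwines the conjugation actions and matches (stable) orbital integrals, so it descends to an isomorphism $\orbI_{\asp}(\Mp(W)) \rightiso \orbI_{\asp}(\MMp(W))$, and dually to the canonical $D_-(\Mp(W)) \rightiso D_-(\MMp(W))$. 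To see that this is "restriction of distributions", one extends a genuine distribution on $\Mp(W)$ to an honest distribution on $C^\infty_c(\Mp(W))$ by averaging over $\bmu_8$ and then restricts to the open-closed subset $\MMp(W)$; unwinding the definitions, this is precisely the transpose of the function-extension map above.

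For representations, given a genuine smooth $(\sigma, V)$ of $\MMp(W)$ I would define $\pi$ on $\Mp(W)$ by $\pi(z\tilde g) := z\,\sigma(\tilde g)$ for $z \in \bmu_8$, $\tilde g \in \MMp(W)$. This is well-defined because any coincidence $z_1\tilde g_1 = z_2\tilde g_2$ forces $z_1 z_2^{-1} = \tilde g_2\tilde g_1^{-1} \in \bmu_8 \cap \MMp(W) = \bmu_2$, where genuineness of $\sigma$ over $\bmu_2$ applies; it is a homomorphism because $\bmu_8$ is central; it is genuine and restricts to $\sigma$. Conversely, any genuine $\pi$ of $\Mp(W)$ satisfies $\pi(z\tilde g) = z\,\pi(\tilde g)$, hence is recovered from $\pi|_{\MMp(W)}$ by this very formula, so the two constructions are mutually inverse on objects. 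On morphisms, an $\MMp(W)$-equivariant map between restrictions of genuine $\Mp(W)$-representations is automatically $\Mp(W)$-equivariant, again by $\pi(z\tilde g)=z\pi(\tilde g)$; thus restriction is fully faithful and essentially surjective, i.e. an equivalence of categories. The same formula shows that admissibility, finite length, unitarity and the Harish-Chandra-module / moderate-growth Fréchet descriptions are each preserved and reflected, since each such condition is detected on $\MMp(W)$ (or involves only the common quotient $\Sp(W)$); this handles the "and so on".

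I do not expect a serious obstacle: once the push-out/coset picture is in hand the whole argument is formal. The only points needing genuine care are the topological bookkeeping in the Archimedean case — continuity of the function isomorphism and its inverse, and the behaviour of the relevant LF-space structures and of the ideals cut out by vanishing of (stable) orbital integrals — and pinning down the exact sense in which the dual-space isomorphism "is" restriction of distributions, while keeping the genuine-versus-anti-genuine conventions consistent between the $\bmu_8$-cover $\Mp(W)$ and the $\bmu_2$-cover $\MMp(W)$.
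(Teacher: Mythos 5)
Your proposal is correct and is exactly the formal argument the paper intends: Proposition \ref{prop:MMp-dist} is stated without proof as an immediate consequence of the push-out description $\Mp(W)=\bmu_8\cdot\MMp(W)$ with $\bmu_8\cap\MMp(W)=\bmu_2$, and your coset decomposition $\Mp(W)=\bigsqcup_{j=0}^{3}\zeta^j\MMp(W)$, the extension/restriction of anti-genuine functions, and the formula $\pi(z\tilde g)=z\,\sigma(\tilde g)$ are precisely the standard verifications. The only cosmetic point is that your averaging step in identifying the dual map as ``restriction of distributions'' introduces a harmless scalar (the anti-genuine projection of the extension-by-zero of $g$ is $\tfrac{1}{4}$ of its anti-genuine extension); the cleaner statement is that the transpose of the function-extension map is literally evaluation against the anti-genuine extension.
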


Via these isomorphisms, all $\omega_{\bpsi}^{\pm}$ for various $\bpsi$ can co-exist as genuine representations on $\MMp(W)$.

In the unramified situation, the section $s: K \to \Mp(W)$ factors through $\MMp(W)$ by \cite[Chapitre 2, Lemme II.10]{MVW87}.

Finally, in the global setting one defines the twofold covering
\[ 1 \to \bmu_2 \to \MMp(\dot{W}, \A) \to \Sp(\dot{W}, \A) \to 1 \]
by $\MMp(\dot{W}, \A) := \Mp(\dot{W}, \A)_{\mathrm{der}}$; equivalently, $\MMp(\dot{W}, \A)$ is a quotient of $\Resprod_v \MMp(\dot{W}, \A)$. Therefore $\Mp(\dot{W}, \A)$ is the push-out of $\MMp(\dot{W}, \A)$ via $\bmu_2 \hookrightarrow \bmu_8$. The unique section $i: \Sp(\dot{W}, \dot{F}) \to \Mp(\dot{W}, \A)$ factors through $\MMp(W, \dot{\A})$.
\index{MMp-W-A@$\MMp(\dot{W}, \A)$}

As in the local setting, the theory of genuine automorphic forms and representations on $\MMp(\dot{W}, \A)$ in the sense of \cite{MW94} is the same as that on $\Mp(\dot{W}, \A)$.

The analog of Proposition \ref{prop:MMp-uniqueness} also holds for $\MMp(\dot{W}, \A)$: see \cite[Theorems 12.2, 12.3]{Mo68}.

\section{Arthur parameters}\label{sec:parameters}
\subsection{Local Arthur parameters}\label{sec:local-parameters}
For a local field $F$ of characteristic zero, define the local Langlands group as
\begin{equation*}
	\mathcal{L}_F := \begin{cases}
		\Weil{F}, & \text{if $F$ is Archimedean} \\
		\Weil{F} \times \SL(2, \CC), & \text{if $F$ is non-Archimedean}.
	\end{cases}
\end{equation*}
\index{L-F@$\mathcal{L}_F$}

Let $N \in \Z_{\geq 1}$. Following Arthur \cite[Chapter 1]{Ar13}, consider the set $\Psi(N)$ of equivalence classes of representations
\[ \psi: \mathcal{L}_F \times \SL(2, \CC) \to \GL(V), \]
where $V = V_\psi$ is an $N$-dimensional $\CC$-vector space, such that $\psi|_{\mathcal{L}_F}$ is continuous semisimple with bounded image and $\psi|_{\SL(2, \CC)}$ is algebraic. Write $\dim \psi := \dim V$.

Define $\Phi_{\mathrm{bdd}}(N)$ by demanding furthermore that $\psi|_{\SL(2, \CC)}$ is trivial. The elements $\phi \in \Phi_{\mathrm{bdd}}(N)$ will be viewed as representations of $\mathcal{L}_F$, taken up to equivalence.

Dropping the condition of bounded image, we obtain larger sets $\Psi^+(N) \supset \Psi(N)$ and $\Phi(N) \supset \Phi_{\mathrm{bdd}}(N)$.
\index{Psi-N@$\Psi(N), \Psi^+(N), \tilde{\Psi}(N), \tilde{\Psi}^+(N)$}
\index{Phi-N@$\Phi_{\mathrm{bdd}}(N), \Psi(N), \tilde{\Phi}_{\mathrm{bdd}}(N), \tilde{\Phi}(N)$}

Denote the contragredient of $\psi$ by $\psi^\vee$. We say $\psi$ is self-dual if $\psi^\vee \simeq \psi$. The self-dual condition cuts out subsets $\widetilde{\Psi}^+(N) \supset \widetilde{\Psi}(N)$ and $\widetilde{\Phi}(N) \supset \widetilde{\Phi}_{\mathrm{bdd}}(N)$.

\begin{definition}
	\index{rb@$r(b)$}
	For every $b \in \Z_{\geq 1}$, let $r(b)$ denote the unique $b$-dimensional irreducible algebraic representation of $\SL(2, \CC)$. Specifically, let $\mathrm{std}$ denote the $2$-dimensional standard representation, then
	\[ r(b) = \Sym^{b-1}(\mathrm{std}). \]
	It follows that $r(b)$ is always self-dual, of symplectic (resp.\ orthogonal) type when $b$ is even (resp.\ odd).
\end{definition}

Every $\psi \in \Psi(N)$ (resp.\ $\Psi^+(N)$) has a unique decomposition
\begin{equation}\label{eqn:psi-decomp}
	\psi = \bigoplus_{i \in I} m_i \psi_i
\end{equation}
where $I = I_\psi$ is a finite set, $\psi_i \in \Psi(N_i)$ (resp.\ $\Psi^+(N_i)$) is simple, $m_i \in \Z_{\geq 1}$, such that $i \neq j \implies \psi_i \not\simeq \psi_j$ and $\sum_{i \in I} m_i N_i = N$. We may further write
\[ \psi_i = \phi_i \boxtimes r(b_i) \]
for a unique pair $(\phi_i, b_i)$, where $b_i \in \Z_{\geq 1}$ and $\phi_i \in \Phi_{\mathrm{bdd}}(N_i/b_i)$ (resp.\ $\Phi(N_i/b_i)$) is simple.

Let $\Psi_{\mathrm{symp}}(N) \subset \widetilde{\Psi}(N)$ be the subset of those $\psi$ of symplectic type, and define $\Psi^+_{\mathrm{symp}}(N) \subset \tilde{\Psi}^+(N)$ similarly. For every $\psi \in \Psi^+_{\mathrm{symp}}(N)$, the set $I$ in \eqref{eqn:psi-decomp} decomposes into a disjoint union
\[ I = I^+ \sqcup I^- \sqcup J \sqcup J' \]
where $J$ and $J'$ are related by a bijection $j \leftrightarrow j'$, such that
\index{Ipm@$I^+, I^-, J$}
\begin{itemize}
	\item if $i \in I^+$ then $\psi_i = \phi_i \boxtimes r(b_i)$ is of symplectic type, i.e.
	\begin{itemize}
		\item either $\phi_i$ is symplectic and $b_i$ is odd,
		\item or $\phi_i$ is orthogonal and $b_i$ is even;
	\end{itemize}
	\item if $i \in I^-$ then $\psi_i = \phi_i \boxtimes r(b_i)$ is of orthogonal type, i.e.
	\begin{itemize}
		\item either $\phi_i$ is orthogonal and $b_i$ is odd,
		\item or $\phi_i$ is symplectic and $b_i$ is even,
	\end{itemize}
	moreover $m_i$ is even in this case;
	\item if $j \in J$, then $\psi_j = \phi_j \boxtimes r(b_j)$ is not self-dual and
	\[ \phi_{j'} \simeq \phi_j^\vee, \quad m_{j'} = m_j. \]
\end{itemize}
The subsets $I^{\pm}$ are uniquely determined, though $J$ is not. Hence \eqref{eqn:psi-decomp} becomes
\begin{equation}\label{eqn:psi-decomp-2}
	\begin{aligned}
		\psi & = \bigoplus_{i \in I^+ \sqcup I^-} m_i \left( \phi_i \boxtimes r(b_i) \right) \oplus \bigoplus_{j \in J} m_j \left( \psi_j \oplus \psi_j^\vee \right) \\
		& = \bigoplus_{i \in I^+ \sqcup I^-} m_i \left( \phi_i \boxtimes r(b_i) \right) \oplus \bigoplus_{j \in J} m_j \left( \phi_j \oplus \phi_j^\vee \right) \boxtimes r(b_j).
	\end{aligned}
\end{equation}

\begin{definition}\label{def:good-parity}
	\index{good parity}
	Given $\psi \in \Psi^+_{\mathrm{symp}}(N)$, if $I = I^+$ then $\psi$ is said to be of \emph{good parity}.
\end{definition}

Now consider a metaplectic group $\Mp(W)$ with $\dim W = 2n$.

\begin{definition}
	\index{Psi-G-tilde@$\Psi_2(\tilde{G}), \Psi_{\mathrm{gp}}(\tilde{G}), \Psi(\tilde{G}), \Psi^+(\tilde{G})$}
	\index{Phi-G-tilde@$\Phi_{2, \mathrm{bdd}}(\tilde{G}), \Phi_{\mathrm{gp}, \mathrm{bdd}}(\tilde{G}), \Phi_{\mathrm{bdd}}(\tilde{G}), \Phi(\tilde{G})$}
	Set $\Psi(\tilde{G}) := \Psi_{\mathrm{symp}}(2n)$; its elements are called \emph{Arthur parameters} for $\tilde{G}$, which can be viewed as homomorphisms $\psi: \mathcal{L}_F \to \Sp(2n, \CC) = \tilde{G}^\vee$. In a similar vein, define the sets
	\[\begin{tikzcd}[row sep=small, column sep=small]
		\Psi^+(\tilde{G}) \arrow[phantom, r, "\supset" description] \arrow[phantom, d, "\supset" description, sloped] & \Psi(\tilde{G}) \arrow[phantom, d, "\supset" description, sloped] \\
		\Phi(\tilde{G}) \arrow[phantom, r, "\supset" description] & \Phi_{\mathrm{bdd}}(\tilde{G}).
	\end{tikzcd}\]
	Elements of $\Phi(\tilde{G})$ (resp.\ $\Phi_{\mathrm{bdd}}(\tilde{G})$) are called \emph{L-parameters} (resp.\ \emph{bounded L-parameters}) of $\tilde{G}$.
	
	An Arthur parameter $\psi$ is said to be \emph{discrete} if it does not factor through any proper Levi subgroup of $\tilde{G}^\vee$; equivalently, $I = I^+$ and $m_i = 1$ for all $i$ in \eqref{eqn:psi-decomp-2}. The same notion pertains to L-parameters as well.
	
	We put the subscript $2$ (resp.\ $\mathrm{gp}$) to designate sets of discrete parameters (resp.\ parameters of good parity). In this manner we obtain
	\begin{gather*}
		\Psi(\tilde{G}) \supset \Psi_{\mathrm{gp}}(\tilde{G}) \supset \Psi_2(\tilde{G}), \\
		\Phi_{\mathrm{bdd}}(\tilde{G}) \supset \Phi_{\mathrm{gp}, \mathrm{bdd}}(\tilde{G}) \supset \Phi_{2, \mathrm{bdd}}(\tilde{G}),
	\end{gather*}
	and so forth. Note that $\Psi^+_{\mathrm{gp}}(\tilde{G}) = \Psi_{\mathrm{gp}}(\tilde{G})$.
\end{definition}

All these notions involve only $\tilde{G}^\vee$, therefore they coincide with eponymous sets defined for $H := \SO(2n+1)$ in \cite[Chapter 1]{Ar13}, namely
\[ \Psi(\tilde{G}) = \Psi(H), \quad \Phi(\tilde{G}) = \Phi(H), \quad \text{and so forth.} \]

More generally, for each $\mathbf{G}^! \in \Endo_{\elli}(\tilde{G})$ we have the evident map $\Psi^+(G^!) \to \Psi^+(\tilde{G})$ induced by $(G^!)^\vee \hookrightarrow \tilde{G}^\vee$. It simply merges the formal combinations \eqref{eqn:psi-decomp} for $\SO(2n'+1)$ and $\SO(2n''+1)$ when $\mathbf{G}^!$ corresponds to $(n', n'')$. It restricts to $\Psi(G^!) \to \Psi(\tilde{G})$ and $\Psi_{\mathrm{gp}}(G^!) \to \Psi_{\mathrm{gp}}(\tilde{G})$.

Finally, consider a group of metaplectic type
\[ \tilde{M} = \Mp(W^\flat) \times \prod_{i=1}^r \GL(n_i, F), \quad \dim W^\flat = 2n^\flat . \]
The sets of parameters $\Psi(\tilde{M}) \supset \Psi_2(\tilde{M})$, etc., are defined in the obvious way. For example, letting $\tilde{G}^\flat := \Mp(W^\flat)$, then $\psi \in \Psi(\tilde{M})$ consists of a pair $(\psi_0, \psi_{\GL})$ where $\psi_0 \in \Psi(\tilde{G}^\flat)$ and $\psi_{\GL} = (\psi_{\GL, i})_{i=1}^r \in \prod_{i=1}^r \Psi(n_i)$; it is discrete if and only if all $\psi_{\GL, i}$ are simple and $\psi_0 \in \Psi_2(\tilde{G}^\flat)$.

When $\tilde{M}$ embeds as a Levi subgroup of $\tilde{G} = \Mp(W)$ (equivalently, $\dim W^\flat + 2\sum_{i=1}^r n_i = \dim W$), there is a canonical map
\begin{equation}\label{eqn:psi-induction}
	\begin{tikzcd}[row sep=tiny]
		\Psi^+(\tilde{M}) \arrow[r] & \Psi^+(\tilde{G}) \\
		\Psi(\tilde{M}) \arrow[r] \arrow[phantom, u, "\subset" description, sloped] & \Psi(\tilde{G}) \arrow[phantom, u, "\subset" description, sloped] \\
		(\psi_0, \psi_{\GL}) \arrow[mapsto, r] & \psi_0 \oplus \bigoplus_{i=1}^r (\psi_{\GL, i} \oplus \psi_{\GL, i}^\vee).
	\end{tikzcd}
\end{equation}
The same is true for L-parameters. All these are identical to the story for $H$ and its Levi subgroups, treated in \cite{Ar13}.

\subsection{Centralizers and component groups}
Let $\psi \in \Psi^+(\tilde{G})$. Define its centralizer group and the corresponding component group as
\begin{align*}
	\index{S-psi@$S_\psi, \EuScript{S}_\psi$}
	S_\psi & := Z_{\tilde{G}^\vee}(\Image(\psi)), \\
	\EuScript{S}_\psi & := \pi_0(S_\psi).
\end{align*}

For $\psi$ in the form of \eqref{eqn:psi-decomp-2}, by \cite[\S 1.4]{Ar13} we have canonical isomorphisms
\begin{equation}\label{eqn:S-psi}
	\begin{aligned}
		S_\psi & \simeq \prod_{i \in I^+} \Or(m_i, \CC) \times \prod_{i \in I^-} \Sp(m_i, \CC) \times \prod_{j \in J} \GL(m_j, \CC), \\
		\EuScript{S}_\psi & \simeq \bmu_2^{I^+};
	\end{aligned}
\end{equation}
the quotient map $S_\psi \to \EuScript{S}_\psi$ is given by taking determinants in $\Or(m_i, \CC)$.

Note that in the case of $\SO(2n+1)$, although the dual group is the same $\Sp(2n, \CC)$ with trivial Galois action, what one considers in \cite{Ar13} are the quotients of these groups by the center of $\Sp(2n ,\CC)$, generated by $(-1, \ldots, -1) \in S_\psi$; cf.\ Remark \ref{rem:reduced-center}.

\begin{definition}\label{def:s-psi}
	\index{s-psi@$s_\psi$}
	Set $s_\psi \in S_\psi$ to be
	\[ s_\psi = \psi\left( 1, \begin{pmatrix} -1 & \\ & -1 \end{pmatrix} \right). \]
\end{definition}

Writing each $\psi_i$ in \eqref{eqn:psi-decomp-2} as $\phi_i \boxtimes r(b_i)$, it follows from the description of $r(b_i)$ that for all $i \in I^{\pm}$ (resp.\ $j \in J$), the projection of $s_\psi$ to the corresponding direct factors of $S_\psi$ equals $1$ if $b_i$ (resp.\ $b_j$) is odd, $-1$ if $b_i$ (resp.\ $b_j$) is even.

From this, one easily verifies that if $\mathbf{G}^! \in \Endo_{\elli}(\tilde{G})$ and $\psi^! \mapsto \psi$ under $\Psi^+(G^!) \to \Psi^+(G)$, then $(G^!)^\vee \hookrightarrow \tilde{G}^\vee$ induces
\begin{equation}\label{eqn:S-functoriality}
	S_{\psi^!} \hookrightarrow S_\psi, \quad \text{such that}\; s_{\psi^!} \mapsto s_\psi .
\end{equation}

\subsection{Global Arthur parameters}\label{sec:global-parameters}
Consider now a number field $\dot{F}$ and $\dot{\tilde{G}} = \Mp(\dot{W}, \A)$ with respect to an additive character $\dot{\bpsi} = \prod_v \bpsi_v$ of $\dot{F} \backslash \A$. Let $n := \frac{1}{2}\dim \dot{W}$.

The automorphic Langlands group $\mathcal{L}_{\dot{F}}$ is currently out of reach, thus we follow Arthur \cite[\S 1.4]{Ar13} to define the global version $\dot{\Psi}(N)$ as the set of formal direct sums of pairs $(\dot{\phi}, b)$ where $\dot{\phi}$ is an unitary irreducible cuspidal automorphic representation of $\GL(\dim \dot{\phi}, \A)$ and $b \in \Z_{\geq 1}$, written as
\begin{equation}\label{eqn:dot-psi-decomp}
	\dot{\psi} = \bigoplus_{i \in I} m_i \dot{\phi}_i \boxtimes r(b_i), \quad m_i \in \Z_{\geq 1},
\end{equation}
where $\sum_{i \in I} m_i b_i \dim\dot{\phi}_i = N$ and the pairs $(\dot{\phi}_i, b_i)$ are all distinct.
\index{Psi-N-dot@$\dot{\Psi}(N)$}

The dual $\dot{\psi}^\vee = \bigoplus_i m_i \dot{\phi}_i^\vee \boxtimes r(b_i)$ is defined by taking the dual of cuspidal automorphic representations $\dot{\phi}_i$. A self-dual cuspidal automorphic representation $\dot{\phi}$ of $\GL(N, \A)$ is said to be of symplectic (resp.\ orthogonal) type if $L(s, \dot{\phi}, \wedge^2)$ (resp.\ $L(s, \dot{\phi}, \Sym^2)$) has a pole at $s=1$.

Therefore, by turning the discussions before \eqref{eqn:psi-decomp-2} into definitions, one defines the subset $\dot{\Psi}_{\mathrm{symp}}(N)$ of self-dual parameters of symplectic type, and then define $\dot{\Psi}(\dot{\tilde{G}}) := \dot{\Psi}_{\mathrm{symp}}(2n)$ and its subsets
\begin{equation*}
	\dot{\Psi}(\dot{\tilde{G}}) \supset \underbracket{\dot{\Psi}_{\mathrm{gp}}(\dot{\tilde{G}})}_{I^+ = I} \supset \underbracket{\dot{\Psi}_2(\dot{\tilde{G}})}_{\substack{I^+ = I \\ \forall i, \; m_i = 1}}.
\end{equation*}
Elements of $\dot{\Psi}(\tilde{G})$ are said to be the \emph{global Arthur parameters} for $\dot{\tilde{G}}$.
\index{Psi-dot-tilde-G@$\dot{\Psi}(\dot{\tilde{G}}), \dot{\Psi}_{\mathrm{gp}}(\dot{\tilde{G}}), \dot{\Psi}_2(\dot{\tilde{G}})$}

For every $\dot{\mathbf{G}}^! \in \Endo_{\elli}(\dot{\tilde{G}})$, there is an evident map $\dot{\Psi}(\dot{G}^!) \to \dot{\Psi}(\dot{\tilde{G}})$ that merges two formal direct sums \eqref{eqn:dot-psi-decomp} as in the local setting.

The description \eqref{eqn:S-psi} of the centralizer is turned into a definition
\[ S_{\dot{\psi}} = \prod_{i \in I^+} \Or(m_i, \CC) \times \prod_{i \in I^-} \Sp(m_i, \CC) \times \prod_{j \in J} \GL(m_j, \CC), \]
and $\EuScript{S}_{\dot{\psi}} = \pi_0(S_\psi) \rightiso \bmu_2^{I^+}$ accordingly.

Alternatively, one can use Arthur's definition \cite[(1.4.4)]{Ar13} of the makeshift Langlands group $\mathcal{L}_{\dot{\psi}}$ and view $\dot{\psi} \in \dot{\Psi}(\dot{\tilde{G}})$ as a homomorphism
\[ \dot{\psi}^{\sim}: \mathcal{L}_{\dot{\psi}} \times \SL(2, \CC) \to \dot{\tilde{G}}^\vee. \]
Then $S_{\dot{\psi}}$ can be understood as the \textit{bona fide} centralizer of $\Image(\dot{\psi}^{\sim})$ in $\tilde{G}^\vee$. Likewise, one defines $s_{\dot{\psi}} \in S_\psi$ as
\begin{equation}\label{eqn:dot-s-psi}
	\index{s-psi-dot@$s_{\dot{\psi}}$}
	s_{\dot{\psi}} = \dot{\psi}^{\sim} \left(1, \begin{pmatrix} -1 & \\ & -1 \end{pmatrix}\right),
\end{equation}
with the same concrete description as in the discussions after Definition \ref{def:s-psi}. In particular, \eqref{eqn:S-functoriality} holds for $\dot{\Psi}(\dot{G}^!) \to \dot{\Psi}(\dot{\tilde{G}})$.

Given $\dot{\psi} \in \dot{\Psi}(N)$ in the form \eqref{eqn:dot-psi-decomp} and any place $v$ of $\dot{F}$, its localization is defined near the end of \cite[\S 1.4]{Ar13}, namely
\[ \dot{\psi}_v = \sum_{i \in I} m_i \dot{\phi}_{i, v} \boxtimes r(b_i) \;\in \Psi^+(N). \]
In \textit{loc.\ cit.}, it is established that
\begin{itemize}
	\item $\dot{\phi}_v$ is the L-parameter for the $v$-component of the cuspidal automorphic representation $\dot{\phi}_v$;
	\item if $\dot{\psi} \in \dot{\Psi}(\dot{\tilde{G}})$ then $\dot{\psi}_v \in \Psi^+(\dot{\tilde{G}}_v)$ (see \cite[Theorem 1.4.2]{Ar13}).
\end{itemize}

These facts are most conveniently summarized by considering $\dot{\psi}^{\sim}: \mathcal{L}_{\dot{\psi}} \to \dot{\tilde{G}}^\vee$ and using the canonical homomorphism
\begin{equation*}
	\iota_v: \mathcal{L}_{\dot{F}_v} \to \mathcal{L}_{\dot{\psi}}
\end{equation*}
in \cite[(1.4.14)]{Ar13}. Then $\dot{\psi}_v$ is characterized by the commutative diagram
\[\begin{tikzcd}
	\mathcal{L}_{\dot{F}_v} \times \SL(2, \CC) \arrow[r, "{\dot{\psi}_v}"] \arrow[d, "{\iota_v \times \identity}"'] & \dot{\tilde{G}}_v^\vee \arrow[d, "{\identity}"] \\
	\mathcal{L}_{\dot{\psi}} \times \SL(2, \CC) \arrow[r, "{\dot{\psi}^{\sim}}"'] & \dot{\tilde{G}}^\vee .
\end{tikzcd}\]

For $\dot{\psi} \in \dot{\Psi}(\dot{\tilde{G}})$, these discussions also lead to the localization homomorphisms
\begin{equation}\label{eqn:S-localization}
	S_{\dot{\psi}} \to S_{\dot{\psi}_v}, \quad \EuScript{S}_{\dot{\psi}} \to \EuScript{S}_{\dot{\psi}_v}.
\end{equation}


All these definitions and results extend to adélic groups of metaplectic type introduced in \S\ref{sec:adelic-type}, in the obvious manner. If $\dot{\tilde{M}}$ is a Levi subgroup of $\dot{\tilde{G}} = \Mp(\dot{W}, \A)$, thus of metaplectic type, then we have a natural map $\dot{\Psi}(\dot{\tilde{M}}) \to \dot{\Psi}(\dot{\tilde{G}})$ as described in \eqref{eqn:psi-induction}. If $\dot{\psi}_M \in \dot{\Psi}(\dot{\tilde{M}})$ has image $\dot{\psi} \in \dot{\Psi}(\dot{\tilde{G}})$, we obtain natural homomorphisms
\begin{equation}\label{eqn:centralizer-induction}
	S_{\dot{\psi}_M} \to S_{\dot{\psi}}, \quad \EuScript{S}_{\dot{\psi}_M} \to \EuScript{S}_{\dot{\psi}}.
\end{equation}

\subsection{The L-parameter attached to an Arthur parameter}
In this article, we use two ways to attach an L-parameter to an Arthur parameter $\psi$ of $\tilde{G}$. To begin with, let us consider the local setting.

The first construction is to take
\begin{equation}\label{eqn:psi-restriction}\begin{tikzcd}[row sep=tiny]
	\Psi^+(\tilde{G}) \arrow[r] & \Phi(\tilde{G}) \\
	\Psi(\tilde{G}) \arrow[phantom, u, sloped, "\subset" description] \arrow[r] & \Phi_{\mathrm{bdd}}(\tilde{G}) \arrow[phantom, u, sloped, "\subset" description] \\
	\psi \arrow[mapsto, r] \arrow[phantom, u, sloped, "\in" description] & {\psi|_{\mathcal{L}_F}} \arrow[phantom, u, sloped, "\in" description] .
\end{tikzcd}\end{equation}
If $\psi = \bigoplus_{i \in I} m_i \phi_i \boxtimes r(b_i)$ as in \eqref{eqn:psi-decomp}, then $\psi|_{\mathcal{L}_F} = \bigoplus_{i \in I} m_i b_i \phi_i$.

The second construction, also recorded in \cite[Chapter 1]{Ar13}, goes as follows.

\begin{definition}\label{def:phi-psi}
	\index{phi-psi@$\phi_\psi$}
	For $\psi \in \Psi(\tilde{G})$, define $\phi_\psi \in \Phi(\tilde{G})$ as the homomorphism $\mathcal{L}_F \to \tilde{G}^\vee$ given by
	\[ \phi_\psi(w) = \psi\left( w, \begin{pmatrix} |w|^{\frac{1}{2}} & \\ & |w|^{-\frac{1}{2}} \end{pmatrix} \right), \]
	where $|\cdot|$ stands for the composition of $\mathcal{L}_F \twoheadrightarrow \Weil{F}$ with $|\cdot|_F: \Weil{F} \to \R_{> 0}^{\times}$.
\end{definition}

Specifically, if $\psi = \bigoplus_{i \in I} m_i \phi_i \boxtimes r(b_i) \in \Psi(\tilde{G})$ then
\begin{equation}\label{eqn:phi-psi}
	\phi_\psi = \bigoplus_{i \in I} m_i \left( \bigoplus_{h=0}^{b_i - 1} \phi_i |\cdot|^{\frac{b_i - 1}{2} - h} \right),
\end{equation}
where
$\phi |\cdot|^x$ means the twist of an L-parameter $\phi$ by $\mathcal{L}_F \twoheadrightarrow \Weil{F} \xrightarrow{|\cdot|^x} \R_{> 0}^{\times}$.

Here we normalize $|\cdot|: \Weil{F} \to \R_{> 0}^{\times}$ so that $|\Frob| = q^{-1}$ for non-Archimedean $F$ with residual cardinality $q$.

The same construction applies over a number field $\dot{F}$ and the global Arthur parameters in \S\ref{sec:global-parameters}. To see this, it suffices to declare \eqref{eqn:psi-restriction} and \eqref{eqn:phi-psi} to be the definitions, and use the global absolute value map $|\cdot|: \Weil{\dot{F}} \to \R_{> 0}^{\times}$ instead of the local one.

\subsection{Infinitesimal characters}\label{sec:inf-character}
Here $F$ is an Archimedean local field, so that $\CC^{\times} \subset \Weil{F}$. The following recipe is based on \cite[\S 9, \S 11]{Bo79}; see also \cite[\S 3.2]{MR17} for the complex case.

Let $\psi \in \Psi^+(\tilde{G})$. Consider $\phi_\psi |_{\CC^{\times}}$, a $\tilde{G}^\vee$-conjugacy class of homomorphisms $\CC^{\times} \to \tilde{G}^\vee$. We use the following facts from \textit{loc.\ cit.}
\begin{itemize}
	\item There is a representative $C_\psi$ of $\phi_\psi |_{\CC^{\times}}$ landing in the standard maximal torus $\check{T}$ of $\tilde{G}^\vee$, identified with the dual of the standard maximal torus $T$ of $G$;
	\item One may express $C_\psi(z) = z^\mu \overline{z}^\nu$ for a unique pair $\mu, \nu \in X^*(T) \otimes \CC$ with $\mu - \nu \in X^*(T)$; this mean that $\lrangle{\xi, C_\psi(z)} = |z|^{\lrangle{\xi, \mu + \nu}} (z/|z|)^{\lrangle{\xi, \mu - \nu}}$ for all $\xi \in X_*(T)$.
	\item Regard $\mu$ (resp.\ $(\mu, \nu)$) as an element of $\mathfrak{t}^* \dotimes{\R} \CC$ (resp.\ $\mathfrak{t}^* \times \mathfrak{t}^*$) when $F = \R$ (resp.\ $F = \CC$). It turns out its orbit under the Weyl group (resp.\ two copies of the Weyl group) is independent of the representative $C_\psi$.
\end{itemize}

Define the complex Lie algebra $\mathfrak{g}_{\CC} := \mathfrak{g} \dotimes{\R} \CC$ (resp.\ $\mathfrak{g}_{\CC} := \mathfrak{g} \times \mathfrak{g}$) when $F = \R$ (resp.\ $F = \CC$). Let $\mathcal{Z}(\mathfrak{g}_{\CC})$ denote the center of $\mathcal{U}(\mathfrak{g}_{\CC})$. By the Harish-Chandra isomorphism, the Weyl orbit of $\mu$ (resp.\ $(\mu, \nu)$) furnishes a homomorphism of $\CC$-algebras
\begin{equation}\label{eqn:lambda-psi}
	\index{lambda-psi@$\lambda(\psi)$}
	\lambda(\psi): \mathcal{Z}(\mathfrak{g}_{\CC}) \to \CC
\end{equation}
that depends only on $\phi_\psi$. We will see in Proposition \ref{prop:pi-psi-inf-char} that $\lambda(\psi)$ is the common infinitesimal character of members in the Arthur packet associated with $\psi$.

In the case $F = \CC$, a more explicit representative of $\lambda(\psi)$ in $\mathfrak{t}^* \times \mathfrak{t}^*$ can be found in \cite[(3.6)]{MR17}.

\section{Local theory}\label{sec:local}
Unless otherwise specified, we work with a local field $F$ with $\mathrm{char}(F) = 0$ and a chosen additive character $\bpsi$ of $F$ throughout this section.

\subsection{Local root numbers}\label{sec:local-root-numbers}
For all $N \in \Z_{\geq 1}$ and $\phi \in \Phi(N)$, define the local root number
\begin{equation}\label{eqn:local-root-number}
	\epsilon(\phi, \bpsi) := \epsilon\left( \frac{1}{2}, \phi, \bpsi\right)
\end{equation}
following the standard recipe in \cite[\S 2.2]{GR10}. We will make extensive use of the properties below from \cite[\S 5]{GGP1}.
\index{epsilon-phi@$\epsilon(\phi, \bpsi), \epsilon(\phi)$}

\begin{itemize}
	\item $\epsilon(\phi_1 \oplus \phi_2, \bpsi) = \epsilon(\phi_1, \bpsi) \epsilon(\phi_2, \bpsi)$;
	
	\item $\epsilon\left( \phi, \bpsi_c \right) = (\det\phi)(c) \cdot \epsilon(\phi, \bpsi)$ for all $c \in F^{\times}$, where we identify $\det\phi$ with a character of $F^{\times}$ via local class field theory;

	\item $\epsilon\left( \phi, \bpsi \right) \epsilon\left( \phi^\vee, \bpsi \right) = (\det\phi)(-1)$;
	
	\item It follows that when $\phi$ is self-dual of symplectic type, $\epsilon(\phi) := \epsilon(\phi, \bpsi)$ is independent of the choice of $\bpsi$ and satisfies $\epsilon(\phi)^2 = 1$;
	
	\item Suppose $F$ is non-Archimedean so that $\mathcal{L}_F = \Weil{F} \times \SL(2, \CC)$, if $\phi$ is trivial on $I_F \times \SL(2, \CC)$ and $\bpsi$ is of conductor $\mathfrak{o}_F$, then $\epsilon(\phi, \bpsi) = 1$.
\end{itemize}

Furthermore, for non-Archimedean $F$, consider a simple $\rho \boxtimes r(a) \in \Phi(N)$ where $\rho$ has underlying space $V_\rho$ and $a \in \Z_{\geq 1}$, we have
\begin{equation}\label{eqn:epsilon-SL}
	\begin{aligned}
		\epsilon(\rho \boxtimes r(a), \bpsi) & = \epsilon( \rho \boxtimes r(1), \bpsi)^a \cdot \det\left(-\rho(\Frob) \middle| V_\rho^{I_F} \right)^{a-1} \\
		& = \begin{cases}
			\epsilon(\rho \boxtimes r(1), \bpsi)^a (-\rho(\Frob))^{a-1}, & \rho\;\text{is an unramified character,} \\
			\epsilon(\rho \boxtimes r(1), \bpsi)^a, & \text{otherwise.} 
		\end{cases}
	\end{aligned}
\end{equation}
Indeed, the first equality is recorded in \textit{loc.\ cit.}; as for the second, note that $V_\rho^{I_F} \neq \{0\}$ implies $\rho$ is an unramified character of $\Weil{F}$ by irreducibility.

The property \eqref{eqn:epsilon-SL} reduces the computation of $\epsilon(\phi, \bpsi)$ to the case where $\phi$ factors through $\Weil{F}$.

Now consider Arthur parameters for the metaplectic group $\tilde{G} = \Mp(W)$.

\begin{definition}\label{def:nu-psi}
	\index{nu-psi@$\nu_{\psi}$}
	Suppose $\psi \in \Psi^+(\tilde{G})$ with a decomposition as in \eqref{eqn:psi-decomp-2}. Identify $\EuScript{S}_\psi$ with $(\Z/2\Z)^{I^+}$. Define the character $\nu_\psi \in \EuScript{S}_\psi^\vee \simeq \bmu_2^{I^+}$ by prescribing its components
	\[ \nu_{\psi, i} := \epsilon(\phi_i)^{b_i}, \quad i \in I^+ . \]
\end{definition}

\subsection{The basic bijection}\label{sec:basic-bijection}
The following construction is a simple variant of (1.4.11) and the discussions before (4.8.10) of \cite{Ar13}.

\begin{proposition}\label{prop:basic-bijection}
	\index{S-psi-2@$S_{\psi, 2}$}
	Consider $\tilde{G} = \Mp(W)$ with $\dim W = 2n$. For all $\psi \in \Psi^+(\tilde{G})$, define
	\begin{equation*}
		S_{\psi, 2} := \{s \in S_\psi: s^2 = 1 \}.
	\end{equation*}
	
	\begin{enumerate}[(i)]
		\item There is a canonical $\tilde{G}^\vee$-equivariant bijection
		\[\left\{\begin{array}{r|l} (s, \psi^!) & s \in \tilde{G}^\vee, \; s^2 = 1 \\
			& \psi^!: \mathcal{L}_F \times \SL(2, \CC) \to (G^!)^\vee \\
			\end{array}\right\} \xleftrightarrow{1:1}
		\left\{\begin{array}{r|l}
			(\psi, s) & \psi: \mathcal{L}_F \times \SL(2, \CC) \to \tilde{G}^\vee \\
			& s \in S_{\psi, 2}
		\end{array}\right\}\]
		where $\tilde{G}^\vee$ acts by conjugation on both sides, $\mathbf{G}^!$ is the endoscopic datum determined by the conjugacy class of $s$, and $\psi^!$, $\psi$ are required to satisfy the conditions for $\Psi^+(G^!)$, $\Psi^+(\tilde{G})$ respectively.
	
		\item Taking quotients by $\tilde{G}^\vee$ in (i) yields a bijection
		\[\left\{\begin{array}{r|l}
			(\mathbf{G}^!, \psi^!) & \mathbf{G}^! \in \Endo_{\elli}(\tilde{G}) \\
			& \psi^! \in \Psi^+(G^!)
		\end{array}\right\} \xleftrightarrow{1:1}
		\left\{\begin{array}{r|l}
			(\psi, s) & \psi \in \Psi^+(\tilde{G}) \\
			& s \in S_{\psi, 2}/\text{conj}
		\end{array}\right\};\]
		on the right hand side we implicitly fix a representative of $\psi$ in the $\tilde{G}^\vee$-conjugacy class, but this does not affect the conjugacy class in $S_{\psi, 2}$.
	\end{enumerate}
	Ditto if $\Psi^+(\cdots)$ is replaced by $\Psi(\cdots)$.
\end{proposition}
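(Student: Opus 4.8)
The plan is to realize the bijection by the only natural recipe: pass a parameter through the centralizer of $s$, using that the centralizer of an involution in $\tilde G^\vee=\Sp(2n,\CC)$ is exactly the dual group of the associated elliptic endoscopic group. For part~(i), start with $(s,\psi^!)$ on the left. The involution $s$ splits $V_\psi\cong\CC^{2n}$ into eigenspaces $V=V_+\oplus V_-$; since $s\in\Sp(V)$ one has $\lrangle{sv|sw}=\lrangle{v|w}$, which forces $V_+\perp V_-$, hence $V_\pm$ non-degenerate of even dimensions $2n',2n''$, so $Z_{\tilde G^\vee}(s)=\Sp(V_+)\times\Sp(V_-)=(G^!)^\vee$ for the datum $\mathbf G^!$ attached to the class of $s$, with $G^!=\SO(2n'+1)\times\SO(2n''+1)$. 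Put $\psi:=\iota\circ\psi^!$ for the given inclusion $\iota\colon(G^!)^\vee\hookrightarrow\tilde G^\vee$; then $\Image(\psi)$ centralizes $s$, so $s\in S_{\psi,2}$, and $\psi$ lies in $\Psi^+(\tilde G)$ (resp.\ $\Psi(\tilde G)$) because semisimplicity of $\psi|_{\mathcal L_F}$, algebraicity on $\SL(2,\CC)$ and, in the $\Psi$ case, boundedness are all inherited along the closed embedding $\iota$, while symplectic type in dimension $2n$ holds since $\psi=\psi^!_+\oplus\psi^!_-$ with $\psi^!_\pm$ valued in $\Sp(V_\pm)$. Conversely, given $(\psi,s)$ on the right, $s\in S_\psi$ gives $\Image(\psi)\subseteq Z_{\tilde G^\vee}(s)=(G^!)^\vee$, hence a unique factorization $\psi=\iota\circ\psi^!$; restricting $\psi$ to the $s$-eigenspaces (which are $\psi$-stable and symplectic by the same computation) exhibits $\psi^!\in\Psi^+(G^!)$ (resp.\ $\Psi(G^!)$). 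These assignments are visibly inverse, and both intertwine conjugation by $\tilde G^\vee$, since conjugating $s$ only moves $\mathbf G^!$ within its class and transports the identification $(G^!)^\vee\simeq Z_{\tilde G^\vee}(s)$ accordingly.

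For part~(ii) I would descend~(i) modulo $\tilde G^\vee$-conjugation. On the left, a $\tilde G^\vee$-orbit records the conjugacy class of $s$ --- i.e.\ an element of $\Endo_{\elli}(\tilde G)$ --- together with, once a representative $s$ is fixed, the orbit of $\psi^!$ under $\Stab_{\tilde G^\vee}(s)=(G^!)^\vee$, which is exactly its equivalence class in $\Psi^+(G^!)$; here I use the convention in the text that one allows neither automorphisms of endoscopic data nor permutation of the two $\SO$-factors, so $(G^!)^\vee$-conjugacy is the correct equivalence on $\Psi^+(G^!)$. On the right, after fixing a representative of the class of $\psi$, the residual action is by $\Stab_{\tilde G^\vee}(\psi)=S_\psi$, yielding the set of conjugacy classes in $S_{\psi,2}$, which is independent of the chosen representative. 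Transporting one description to the other through~(i) gives the bijection of~(ii). The ``Ditto'' clause needs nothing new: the sole additional constraint in passing from $\Psi^+$ to $\Psi$ is boundedness of the restriction to $\mathcal L_F$, and this is equivalent on the two sides because $\iota$ is a closed embedding of reductive groups.

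I do not anticipate a genuine obstacle; the one point deserving care is to pin down once and for all the identification $(G^!)^\vee\simeq Z_{\tilde G^\vee}(s)$ and its behaviour under conjugation, so that the word \emph{canonical} in~(i) and the independence statements in~(ii) are justified. This is painless in the metaplectic setting precisely because $\Lgrp{\tilde G}=\tilde G^\vee\times\Weil F$ and $\Lgrp{G^!}$ are honest direct products, so no twisting cocycles enter and an elliptic endoscopic datum amounts to a conjugacy class of involutions together with its tautological block identification. Granting this, the argument is a direct transcription of \cite[(1.4.11) and the discussion before (4.8.10)]{Ar13} specialized to $\tilde G^\vee=\Sp(2n,\CC)$ and its elliptic endoscopic subgroups.
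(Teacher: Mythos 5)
Your proposal is correct and follows the same route as the paper's proof: define the forward map by composing $\psi^!$ with $(G^!)^\vee = Z_{\tilde{G}^\vee}(s) \hookrightarrow \tilde{G}^\vee$, the inverse by factoring $\psi$ through that centralizer, and obtain (ii) by passing to $\tilde{G}^\vee$-quotients, with the residual ambiguity on the right being exactly $S_\psi$-conjugation of $s$. The paper states these steps more tersely; your eigenspace computation verifying $Z_{\tilde{G}^\vee}(s)=\Sp(V_+)\times\Sp(V_-)$ and the preservation of the parameter conditions is the implicit content of its one-line justifications.
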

\begin{proof}
	Consider (i). Given $(s, \psi^!)$, define $\psi$ as the composition of $\psi^!$ with $(G^!)^\vee \hookrightarrow \tilde{G}^\vee$, so that $s \in S_{\psi, 2}$ since $(G^!)^\vee = Z_{\tilde{G}^\vee}(s)$. Given $(\psi, s)$ define $\psi^!$ by factoring $\psi$ through $Z_{\tilde{G}^\vee}(s) = (G^!)^\vee$. The conditions for Arthur parameters are preserved throughout. This proves (i).
	
	Consider the quotients by $\tilde{G}^\vee$ in (i). On the left hand side, the element $s$ (resp.\ $\psi^!$) gets replaced by its conjugacy class in $\tilde{G}^\vee$, i.e.\ $\mathbf{G}^!$ (resp.\ its conjugacy class by $Z_{\tilde{G}^\vee}(s) = (G^!)^\vee$, i.e.\ the class in $\Psi^+(G^!)$). On the right hand side, $\psi$ gets replaced by its class in $\Psi^+(\tilde{G})$; once a representative $\psi: \mathcal{L}_F \times \SL(2, \CC) \to \tilde{G}^\vee$ is fixed, the only ambiguity of $s$ comes from $S_\psi$-conjugation. This proves (ii).
\end{proof}

The result above is simpler than its counterparts for $\SO(2n+1)$ in \cite{Ar13}, since we do not divide by the center of $\Sp(2n, \CC)$; see Remark \ref{rem:reduced-center}.

\begin{corollary}\label{prop:basic-bijection-2}
	The bijection in Proposition \ref{prop:basic-bijection} (ii) restricts to
	\[\left\{\begin{array}{r|l}
		(\mathbf{G}^!, \psi^!) & \mathbf{G}^! \in \Endo_{\elli}(\tilde{G}) \\
		& \psi^! \in \Psi_2(G^!) \;\text{with image in}\; \Psi_2(\tilde{G})
	\end{array}\right\} \xleftrightarrow{1:1}
	\left\{\begin{array}{r|l}
		(\psi, x) & \psi \in \Psi_2(\tilde{G}) \\
		& x \in \EuScript{S}_\psi
	\end{array}\right\}.\]
\end{corollary}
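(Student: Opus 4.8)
The plan is to derive this as a restriction of the bijection in Proposition~\ref{prop:basic-bijection}(ii): on the right-hand side I only need to identify $S_{\psi, 2}/\text{conj}$ with $\EuScript{S}_\psi$ when $\psi$ is discrete, and on the left-hand side I need to see that the two discreteness conditions on $(\mathbf{G}^!, \psi^!)$ correspond exactly to requiring $\psi \in \Psi_2(\tilde{G})$.

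First I would treat the right-hand side. For $\psi \in \Psi_2(\tilde{G})$ the decomposition \eqref{eqn:psi-decomp-2} has $I = I^+$ and all $m_i = 1$, so \eqref{eqn:S-psi} gives $S_\psi \simeq \prod_{i \in I} \Or(1, \CC) = \bmu_2^I$, which is finite and abelian. Hence $S_{\psi, 2} = S_\psi = \EuScript{S}_\psi$ and $\tilde{G}^\vee$-conjugation acts trivially on it, so the right-hand side of Proposition~\ref{prop:basic-bijection}(ii), restricted to discrete $\psi$, is canonically the set of pairs $(\psi, x)$ with $\psi \in \Psi_2(\tilde{G})$ and $x \in \EuScript{S}_\psi$. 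On the left-hand side, the condition that the image of $\psi^!$ lie in $\Psi_2(\tilde{G})$ is, by construction of the bijection, exactly the condition $\psi \in \Psi_2(\tilde{G})$ on the associated $\psi$.

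The one point that needs an actual argument is that the remaining condition $\psi^! \in \Psi_2(G^!)$ is then automatic, hence imposes nothing extra. For this I would write $\psi = \bigoplus_{i \in I} \psi_i$ with the $\psi_i$ pairwise inequivalent, irreducible, self-dual of symplectic type, each of multiplicity one, and take $s \in S_{\psi, 2}$. Since $s$ commutes with $\Image(\psi)$ and each $\psi_i$ has multiplicity one in $V_\psi$, Schur's lemma forces $s$ to act on each isotypic piece $V_{\psi_i}$ as a scalar, necessarily $\pm 1$; the eigenspaces $V_\psi^{s = \pm 1}$ are therefore the corresponding partial sums $\bigoplus V_{\psi_i}$, which are non-degenerate symplectic subspaces (eigenspaces of an involution in $\Sp(2n, \CC)$ for distinct eigenvalues are mutually orthogonal, hence each non-degenerate). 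Thus $\psi^! = (\psi^{s = 1}, \psi^{s = -1})$ is, on each factor, a sum of pairwise inequivalent symplectic-type simples with multiplicity one, i.e.\ discrete in $\Psi_2(G^!)$. I would also remark that the converse fails --- for instance $\psi = \psi_1 \oplus \psi_1$ with $s$ acting by $1$ on one summand and $-1$ on the other has $\psi^! \in \Psi_2(G^!)$ but $\psi \notin \Psi_2(\tilde{G})$ --- which explains why the image condition genuinely has to be imposed on the left.

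Combining these, Proposition~\ref{prop:basic-bijection}(ii) restricts to the asserted bijection. I expect the Schur-lemma observation (discreteness of $\psi$ forcing discreteness of every endoscopic piece $\psi^!$) to be the main, and essentially only, substantive step; the identification $S_{\psi, 2}/\text{conj} = \EuScript{S}_\psi$ and the matching of the discreteness conditions are bookkeeping on top of the already-established Proposition~\ref{prop:basic-bijection}.
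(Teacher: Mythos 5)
Your proof is correct and follows the same route as the paper, whose entire argument is the observation that $\psi \in \Psi_2(\tilde{G})$ forces $S_\psi \simeq \prod_i \Or(1,\CC)$, hence $S_{\psi,2}/\mathrm{conj} = S_\psi = \EuScript{S}_\psi$. Your additional Schur-lemma verification that $\psi^! \in \Psi_2(G^!)$ is automatic for discrete $\psi$ is a correct elaboration of a point the paper leaves implicit.
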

\begin{proof}
	The condition $\psi \in \Psi_2(\tilde{G})$ implies $S_\psi$ is product of copies of $\Or(1, \CC)$, hence $S_\psi = \EuScript{S}_\psi$ is abelian of $2$-torsion.
\end{proof}

The results above extend easily to groups of metaplectic type, i.e.\ to Levi subgroups of metaplectic groups. It suffices to require that $s$ is trivial on the $\GL$ factors.

\begin{remark}\label{rem:basic-bijection}
	The bijections in Proposition \ref{prop:basic-bijection} (ii) and Corollary \ref{prop:basic-bijection-2} work in the global setting, as explained below.
	
	For the former one, observe that given $\dot{\psi} \in \dot{\Psi}(\dot{\tilde{G}})$, prescribing a conjugacy class of $2$-torsion elements $s \in S_{\dot{\psi}}$ amounts to dividing the multi-set of simple summands in \eqref{eqn:dot-psi-decomp} into two piles $\dot{\psi}'$ and $\dot{\psi}''$ (ordered, according to the eigenvalues $\pm 1$ of $s$), and $\dot{\psi}^! := (\dot{\psi}', \dot{\psi}'')$ lies automatically in $\dot{\Psi}(\dot{G}^!)$ and $\dot{\psi}^! \mapsto \dot{\psi}$, where $\dot{\mathbf{G}}^!$ is determined by the conjugacy class of $s$. This also gives another proof of Proposition \ref{prop:basic-bijection} (ii) in the local setting.
	
	For the second one, simply repeat the argument for Corollary \ref{prop:basic-bijection-2}.
\end{remark}

\subsection{The distribution \texorpdfstring{$T_{\psi, s}$}{Tpsis}}\label{sec:T}
Consider $\tilde{G} = \Mp(W)$. For all $s \in S_{\psi, 2}$, the underlying $\CC$-vector space of $\psi$ decomposes into
\[ V_\psi = V_\psi^+ \oplus V_\psi^-, \]
namely the $(\pm 1)$-eigenspaces of $s$; the sum is orthogonal with respect to the symplectic structure on $V_\psi$. Denote by $\psi^{s = \pm 1}$ the action of $\mathcal{L}_F \times \SL(2, \CC)$ on $V_\psi^{\pm}$. Using \eqref{eqn:local-root-number} one defines
\begin{equation*}
	\index{epsilon-pm@$\epsilon(\psi^{s=\pm 1})$}
	\epsilon\left( \psi^{s=\pm 1} \right) := \epsilon\left( \psi^{s=\pm 1}|_{\mathcal{L}_F} \right),
\end{equation*}
which belong to $\{\pm 1\}$ and depend only on the conjugacy class of $s$ in $S_\psi$.

Hereafter, only the conjugacy class of $s$ matters. From $(\psi, s)$ and the bijection in Proposition \ref{prop:basic-bijection} (ii), one obtains the pair $(\mathbf{G}^!, \psi^!)$. Suppose that $\mathbf{G}^!$ corresponds to $(n', n'')$. To proceed, we need another ingredient from Arthur's work.

\begin{definition}\label{def:STheta}
	\index{STheta@$S\Theta^{G^{"!}}_{\psi^{"!}}$}
	Given $G^! = \SO(2n' + 1) \times \SO(2n'' + 1)$ and $\psi^! \in \Psi^+(G^!)$, denote the stable distribution on $G^!(F)$ in \cite[Theorem 2.2.1]{Ar13} by
	\[ S\Theta^{G^!}_{\psi^!} \in SD_{\mathrm{spec}}(G^!) \otimes \mes(G^!)^\vee . \]
\end{definition}

When $\psi^! \in \Psi(G^!)$, the characterization in \textit{loc.\ cit.}\ is based on endoscopic transfer to twisted $\GL(2n)$, Whittaker-normalized as in \cite[\S 2.1]{Ar13}. The choice of Whittaker data does not affect $S\Theta^{G^!}_{\psi^!}$. We also included the dependence on the choice of Haar measures, whence the term $\mes(G^!)^\vee$.

For general $\psi^! \in \Psi^+(G^!)$, one reduces to the previous case by parabolic induction; see \textit{loc.\ cit.}

\begin{definition}\label{def:T-dist}
	\index{T-psi-s@$T_{\psi, s}$}
	Given $(\psi, s) \leftrightarrow (\mathbf{G}^!, \psi^!)$ as in Proposition \ref{prop:basic-bijection} (ii), set
	\[ T_{\psi, s} := \epsilon\left( \psi^{s = -1} \right) \cdot \trans_{\mathbf{G}^!, \tilde{G}}\left( S\Theta^{G^!}_{\psi^!} \right) \; \in D_{\mathrm{spec}, -}(\tilde{G}) \otimes \mes(G)^\vee ; \]
	see Theorem \ref{prop:spec-trans}.
\end{definition}

Observe that $S_{\psi, 2}/\text{conj} \to \EuScript{S}_\psi$ is surjective.

\begin{lemma}\label{prop:T-psi-s}
	Take $(\psi, s)$ as above. Let $x$ denote the image of $s$ in $\EuScript{S}_\psi$. Then $T_{\psi, s}$ depends only on $(\psi, x)$.
\end{lemma}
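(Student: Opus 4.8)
The plan is to fix $x\in\EuScript S_\psi$ and show that any two $s,s'\in S_{\psi,2}$ mapping to $x$ yield the same $T_{\psi,s}$, by joining them through a chain of ``elementary moves'' and tracking the two factors of $T_{\psi,s}=\epsilon(\psi^{s=-1})\cdot\trans_{\mathbf G^!,\tilde G}(S\Theta^{G^!}_{\psi^!})$ across each move.

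First I would set up the combinatorics. Since $T_{\psi,s}$ depends only on the $\tilde G^\vee$-conjugacy class of $(\psi,s)$, equivalently on the isomorphism class of the pair $(\mathbf G^!,\psi^!)$ from Proposition~\ref{prop:basic-bijection}, I may work inside the explicit model \eqref{eqn:S-psi}: up to $S_\psi$-conjugacy a $2$-torsion $s$ is encoded by integers $a_i^-$ ($i\in I^+$, with $(-1)^{a_i^-}$ the $i$-th coordinate of $x$), $c_i^-$ ($i\in I^-$) and $d_j^-$ ($j\in J$), namely the multiplicities with which $\psi_i$, $\psi_i$, $\psi_j\oplus\psi_j^\vee$ occur in $\psi''$ (the datum of the $(-1)$-eigenspace), the copies in $\Sp$- and $\GL$-factors being counted with the appropriate parities. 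Hence $s$ and $s'$ with common image $x$ are joined by a sequence of moves, each changing one $a_i^-$ by $\pm2$, or one $c_i^-$ or $d_j^-$ by $\pm1$ --- i.e.\ transferring a single self-dual constituent $\tau$ ($=2\psi_i$ for $i\in I^+$, $=2\psi_i$ for $i\in I^-$, or $=\psi_j\oplus\psi_j^\vee$ for $j\in J$) between $\psi'$ and $\psi''$. It therefore suffices to check invariance of $T_{\psi,s}$ under one such move.

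Next, the $\epsilon$-factor. Such a move multiplies $\epsilon(\psi^{s=-1})$ by $\epsilon(\tau|_{\mathcal L_F})^{\pm1}$; since $\tau|_{\mathcal L_F}$ has the shape $\rho\oplus\rho$ with $\rho$ self-dual (or $\rho\oplus\rho^\vee$), the properties recalled in \S\ref{sec:local-root-numbers} give $\epsilon(\tau|_{\mathcal L_F})=\det(\tau)(-1)$. This is trivial for $i\in I^+$ (if $\phi_i$ is symplectic then $\det\phi_i=\mathbf 1$; if $\phi_i$ is orthogonal then $b_i$ is even, so $\det(2\psi_i)(-1)=\det\phi_i(-1)^{2b_i}=1$) and for $j\in J$ (as $\det(\psi_j\oplus\psi_j^\vee)=\mathbf 1$), and for $i\in I^-$ it equals $\det\phi_i(-1)^{b_i}$, which is trivial unless $\phi_i$ is orthogonal with $b_i$ odd.

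Finally, the transfer term, which is the hard part. Using Arthur's product formula $S\Theta^{G_1\times G_2}_{(\psi_1,\psi_2)}=S\Theta^{G_1}_{\psi_1}\boxtimes S\Theta^{G_2}_{\psi_2}$ and the twisted-endoscopic characterization of $S\Theta^{G^!}_{\psi^!}$ (Definition~\ref{def:STheta}), the only way $\trans_{\mathbf G^!,\tilde G}(S\Theta^{G^!}_{\psi^!})$ can change under the move is through the metaplectic transfer factor, whose genuine dependence on the \emph{ordered} pair $(n',n'')$ --- absent for $\SO(2n+1)$, but present for $\tilde G$ --- must be computed. I expect it to contribute exactly $\det(\tau)(-1)$, hence to be trivial except for $i\in I^-$ with $b_i$ odd, where it equals $\det\phi_i(-1)$ and cancels the change in $\epsilon(\psi^{s=-1})$ from the previous step; so $T_{\psi,s}$ is unchanged by every move and the lemma follows. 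Carrying this out rigorously means unwinding Li's normalization of the metaplectic transfer factor (\cite{Li11}, corrected in \cite[Remarque~2.3.1]{Li15}; cf.\ \cite[\S3.8]{Li21}) and confirming that moving a self-dual constituent between the two orthogonal endoscopic factors multiplies the transfer by $\det(\tau)(-1)$ --- this is precisely the role played by the characteristically metaplectic root number $\epsilon(\psi^{s=-1})$ in the definition of $T_{\psi,s}$.
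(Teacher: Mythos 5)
Your combinatorial reduction (elementary moves shifting one self-dual block between $\psi'$ and $\psi''$) and the idea that the root number and the transfer must change by the same sign are exactly the paper's strategy. But the decisive step is missing: you only \emph{expect} the transfer term to change by a specific sign under a move, and you propose to verify this by unwinding the normalization of the metaplectic transfer factor. That is not how the paper does it, and it is not clear that route can be carried out as stated: the transfer factor is a function on strongly regular elements, and converting its dependence on the ordered pair $(n',n'')$ into a spectral identity between two full stable characters $S\Theta^{G^!}_{\psi^!}$ and $S\Theta^{\underline{G}^!}_{\underline{\psi}^!}$ is precisely the content to be proved. The tool actually available is the compatibility of transfer with parabolic induction: both stable characters are parabolic inductions of one and the same distribution $\Theta_\pi\boxtimes S\Theta^\flat$ from a common Levi $R=\GL(\ell)\times\SO(2n'+1)\times\SO(2(n''-\ell)+1)$, where $\pi$ is the $\GL(\ell,F)$-representation attached to $\phi_{i_0}\boxtimes r(b_{i_0})$; the only difference is which $\SO$-factor the $\GL$-block sits in. Then \cite[Proposition 3.8.4]{Li21} says transfer commutes with induction up to the central twist $\omega_\pi(-1)=\det\phi_{i_0}(-1)^{b_{i_0}}$, and this is the exact discrepancy between the two transfers. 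Without this (or an equivalent) input, your argument does not close.

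There is also an error in your $\epsilon$-factor bookkeeping. The correct rule is $\epsilon(\rho\oplus\rho^\vee,\bpsi)=\epsilon(\rho,\bpsi)\epsilon(\rho^\vee,\bpsi)=(\det\rho)(-1)$, where $\rho$ is \emph{one} of the two dual halves of the moved block restricted to $\mathcal{L}_F$ --- not $\det(\rho\oplus\rho^\vee)(-1)$, which is always $1$. So in every case the change of $\epsilon(\psi^{s=-1})$ is $\det\bigl(\psi_{i_0}|_{\mathcal{L}_F}\bigr)(-1)=\det\phi_{i_0}(-1)^{b_{i_0}}$. In particular your claim that the change is trivial for $j\in J$ is false: for a non-self-dual $\phi_j$ with $\det\phi_j(-1)=-1$ and $b_j$ odd, it equals $-1$. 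The nontrivial cancellation against $\omega_\pi(-1)$ therefore occurs both for $i\in I^-$ and for $j\in J$, not only in the $I^-$ case; your computation happens to remain formally consistent only because you applied the same incorrect formula to both sides.
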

\begin{proof}
	Choose a representative of $s$ in $S_{\psi, 2}$, and write the decomposition \eqref{eqn:psi-decomp-2} as
	\begin{gather*}
		\psi = \bigoplus_{i \in I^+ \sqcup I^-} m_i \phi_i \boxtimes r(b_i) \oplus \bigoplus_{i \in J} m_i (\phi_i \oplus \phi_i^\vee) \boxtimes r(b_i), \\
		s = (s_i)_i \in \prod_{i \in I^+} \Or(m_i, \CC) \times \prod_{i \in I^-} \Sp(m_i, \CC) \times \prod_{i \in J} \GL(m_i, \CC).
	\end{gather*}
	
	The conjugacy class of $s_i$ amounts to a pair
	\[ (m'_i, m''_i) \in \Z_{\geq 0}^2, \quad m'_i + m''_i = m_i, \]
	corresponding to the multiplicities of $1$ and $-1$ as eigenvalues of $s_i$, such that $m'_i $ and $m''_i$ are both even when $i \in I^-$. For $i \in I^+$, the determinant in $\Or(m_i, \CC)$ satisfies
	\[ \det s_i = \begin{cases}
		1, & m''_i \;\text{is even,} \\
		-1, & m''_i \;\text{is odd.}
	\end{cases} \]
	Therefore $x \in \EuScript{S}_\psi$ only keeps record of the parities $(m''_i \;\bmod 2)_{i \in I^+}$.
	
	The corresponding $\psi^! = (\psi', \psi'')$ is given by
	\begin{align*}
		\psi' & = \bigoplus_{i \in I^+ \sqcup I^-} m'_i \phi_i \boxtimes r(b_i) \oplus \bigoplus_{i \in J} m'_i (\phi_i \oplus \phi_i^\vee) \boxtimes r(b_i), \\
		\psi'' & = \bigoplus_{i \in I^+ \sqcup I^-} m''_i \phi_i \boxtimes r(b_i) \oplus \bigoplus_{i \in J} m''_i (\phi_i \oplus \phi_i^\vee) \boxtimes r(b_i).
	\end{align*}
	
	Consider $s, \underline{s} \in S_{\psi, 2}/\text{conj}$, described by $(m'_i, m''_i)_i$ and $(\underline{m}'_i. \underline{m}''_i)_i$ respectively. They are in the same fiber of $S_{\psi, 2}/\text{conj} \to \EuScript{S}_\psi$ if and only if they are connected by a sequence of the following three cases.
	\begin{enumerate}[(a)]
		\item There exists $i_0 \in I^+$ such that $(\underline{m}'_{i_0}, \underline{m}''_{i_0}) = (m'_{i_0} + 2, m''_{i_0} - 2)$, and they coincide for the other indices. Reason: $m''_i \equiv \underline{m}''_i \pmod{2}$ for all $i \in I^+$.
		\item There exists $i_0 \in I^-$ such that $(\underline{m}'_{i_0}, \underline{m}''_{i_0}) = (m'_{i_0} + 2, m''_{i_0} - 2)$, and they coincide for the other indices. Reason: $m''_i$ and $\underline{m}''_i$ are both even for all $i \in I^-$.
		\item There exists $i_0 \in J$ such that $(\underline{m}'_{i_0}, \underline{m}''_{i_0}) = (m'_{i_0} + 1, m''_{i_0} - 1)$, and they coincide for the other indices.
	\end{enumerate}

	It remains to show $T_{\psi, s} = T_{\psi, \underline{s}}$ in each case. Consider the data $(\mathbf{G}^!, \psi^! = (\psi', \psi''))$ and $(\underline{\mathbf{G}}^!, \underline{\psi}^! = (\underline{\psi}', \underline{\psi}''))$ so obtained, and let $\pi$ be the irreducible representation of $\GL(\ell, F)$ parameterized by $\phi_{i_0} \boxtimes r(b_{i_0}) \in \Psi(\ell)$ where $\ell := b_{i_0} \dim \phi_{i_0}$. Let $\Theta_\pi \in D(\GL(\ell)) \otimes \mes(\GL(\ell))^\vee$ be its character.
	
	In each case, it is easy to see that
	\[\begin{tikzcd}[column sep=small, row sep=large]
		G^! \arrow[equal, rr] & & \SO(2n' + 1) \arrow[phantom, r, "\times" description] \arrow[hookleftarrow, d] & \SO(2n'' + 1) \arrow[hookleftarrow, d] \\
		R \arrow[equal, r] \arrow[hookrightarrow, u, "\text{Levi}"] \arrow[hookrightarrow, d, "\text{Levi}"'] & \GL(\ell) \arrow[phantom, r, "\times" description] \arrow[hookrightarrow, rru, crossing over] \arrow[hookrightarrow, rd] & \SO(2n' + 1) \arrow[hookrightarrow, d] \arrow[phantom, r, "\times" description] & \SO(2(n'' - \ell) + 1) \arrow[hookrightarrow, d] \\
		\underline{G}^! \arrow[equal, rr] & & \SO(2\underline{n}' + 1) \arrow[phantom, r, "\times" description] & \SO(2\underline{n}'' + 1)
	\end{tikzcd}\]
	with self-evident notations. Both $S\Theta^{G^!}_{\psi^!}$ and $S\Theta^{\underline{G}^!}_{\underline{\psi}^!}$ are obtained by parabolic induction from $\Theta_\pi \boxtimes S\Theta^\flat$ on $R(F)$, where $S\Theta^\flat$ is parameterized by the ``remaining summands''.
	
	By the commutation of parabolic induction and transfer up to a central twist \cite[Proposition 3.8.4]{Li21},
	\begin{align*}
		\trans_{\mathbf{G}^!, \tilde{G}} \left( S\Theta^{G^!}_{\psi^!} \right) & = \omega_\pi(-1) \Ind\left( \Theta_\pi \boxtimes \trans_{\mathbf{G}^{!, \flat}, \tilde{G}^\flat} S\Theta^\flat \right), \\
		\trans_{\underline{\mathbf{G}}^!, \tilde{G}} \left( S\Theta^{\underline{G}^!}_{\underline{\psi}^!} \right) & = \Ind\left( \Theta_\pi \boxtimes \trans_{\mathbf{G}^{!, \flat}, \tilde{G}^\flat} S\Theta^\flat \right).
	\end{align*}
	Here $\tilde{G}^\flat = \Mp(W^\flat)$ with $\dim W^{\flat} = 2(n' + n'' - \ell)$ and $\mathbf{G}^{!, \flat}$ corresponds to the pair $(n', n'' - \ell)$, whereas $\Ind$ is the induction from $\GL(\ell, F) \times \tilde{G}^{\flat}$ to $\tilde{G}$, see \cite[(2.4.1)]{Li21}. By realizing $\pi$ as Langlands quotient, one sees
	\[ \omega_\pi(-1) = \left( \det\phi_{i_0}(-1)\right)^{b_{i_0}}. \]
	On the other hand,
	\begin{align*}
		\epsilon\left( \psi^{s = -1} \right) & = \epsilon\left( \psi^{\underline{s} = -1} \right) \left(\epsilon(\phi_{i_0}, \bpsi) \epsilon(\phi^\vee_{i_0}, \bpsi)\right)^{b_{i_0}} \\
		& = \epsilon\left( \psi^{\underline{s} = -1} \right) \left( \det\phi_{i_0}(-1)\right)^{b_{i_0}}.
	\end{align*}
	
	It follows that $T_{\psi, s} = T_{\psi, \underline{s}}$ in all the cases (a)--(c), as desired.
\end{proof}

We record a by-product of the proof above.

\begin{proposition}\label{prop:nu-good-parity}
	If $\psi \in \Psi_{\mathrm{gp}}(\tilde{G})$, then the map
	\begin{align*}
		S_{\psi, 2}/\mathrm{conj} & \to \{\pm 1\} \\
		s \bmod\; \mathrm{conj} & \mapsto \epsilon\left( \psi^{s=-1} \right)
	\end{align*}
	descends to $\EuScript{S}_\psi \to \{\pm 1\}$, and equals the character $\nu_\psi$ in Definition \ref{def:nu-psi}.
\end{proposition}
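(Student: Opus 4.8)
The plan is to read off the statement from the computation already carried out in the proof of Lemma~\ref{prop:T-psi-s}, specialized to good parity. When $\psi \in \Psi_{\mathrm{gp}}(\tilde{G})$ we have $I = I^+$ in \eqref{eqn:psi-decomp-2}, hence $I^- = J = J' = \emptyset$ and $\psi = \bigoplus_{i \in I^+} m_i\,(\phi_i \boxtimes r(b_i))$ with each $\phi_i \boxtimes r(b_i)$ symplectic; accordingly $S_\psi \simeq \prod_{i \in I^+}\Or(m_i, \CC)$ and $\EuScript{S}_\psi \simeq \bmu_2^{I^+}$ via determinants. First I would fix $s \in S_{\psi, 2}$; as in the proof of Lemma~\ref{prop:T-psi-s}, its conjugacy class is encoded by the pairs $(m_i', m_i'')$ with $m_i' + m_i'' = m_i$ giving the multiplicities of the eigenvalues $\pm 1$ on the $i$-th block, and the image $x \in \EuScript{S}_\psi$ of $s$ records exactly the parities $(m_i'' \bmod 2)_{i \in I^+}$. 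Then $V_\psi^{s = -1} = \bigoplus_{i \in I^+} m_i''\,(\phi_i \boxtimes r(b_i))$, so $\psi^{s=-1}|_{\mathcal{L}_F} = \bigoplus_{i \in I^+} m_i'' b_i\,\phi_i$, and additivity of local root numbers yields
\[ \epsilon\left( \psi^{s=-1} \right) = \prod_{i \in I^+} \epsilon(\phi_i, \bpsi)^{m_i'' b_i}. \]

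The key point is that $\epsilon(\phi_i, \bpsi)^{b_i}$ equals $\nu_{\psi, i}$ and is a sign for each $i \in I^+$: if $\phi_i$ is symplectic (so $b_i$ odd) this is the fact recalled in \S\ref{sec:local-root-numbers} that $\epsilon(\phi_i) = \epsilon(\phi_i, \bpsi)$ is $\bpsi$-independent with square $1$; if $\phi_i$ is orthogonal (so $b_i$ even) one uses $\epsilon(\phi_i, \bpsi)\epsilon(\phi_i^\vee, \bpsi) = (\det\phi_i)(-1)$ and $\phi_i^\vee \simeq \phi_i$ to get $\epsilon(\phi_i, \bpsi)^{b_i} = \big((\det\phi_i)(-1)\big)^{b_i/2} \in \{\pm 1\}$, again $\bpsi$-independent; in both cases this matches Definition~\ref{def:nu-psi}. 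Hence $\epsilon(\phi_i, \bpsi)^{m_i'' b_i} = \nu_{\psi, i}^{\,m_i''} = \nu_{\psi, i}^{\,m_i'' \bmod 2}$, and therefore
\[ \epsilon\left( \psi^{s=-1} \right) = \prod_{i \in I^+} \nu_{\psi, i}^{\,m_i'' \bmod 2}, \]
which depends only on $x$; under the identifications $\EuScript{S}_\psi \simeq (\Z/2\Z)^{I^+}$ and $\EuScript{S}_\psi^\vee \simeq \bmu_2^{I^+}$ this is exactly $\nu_\psi(x)$. Alternatively, for the descent alone one notes that in good parity only case~(a) of the proof of Lemma~\ref{prop:T-psi-s} connects representatives lying in a single fiber of $S_{\psi,2}/\mathrm{conj} \to \EuScript{S}_\psi$, and there the $\epsilon$-ratio is $\epsilon(\phi_{i_0}, \bpsi)^{2 b_{i_0}} = 1$.

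I do not expect a genuine obstacle here: the argument is bookkeeping of eigenvalue multiplicities modulo $2$ together with the elementary multiplicativity and self-duality identities for local root numbers from \S\ref{sec:local-root-numbers}. The only points needing a moment's care are the verification that $\epsilon(\phi_i, \bpsi)^{b_i}$ is a $\bpsi$-independent sign in the orthogonal/even case, and keeping the two indexing conventions aligned — eigenvalue multiplicities $m_i''$ versus the $\bmu_2$-coordinates of $\EuScript{S}_\psi$ — both of which are already implicit in the material preceding the proposition.
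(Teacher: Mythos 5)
Your proof is correct and follows essentially the same route as the paper: both arguments reduce to the eigenvalue-multiplicity bookkeeping of Lemma \ref{prop:T-psi-s} together with the observation that $\epsilon(\phi_i, \bpsi)^{b_i}$ is a $\bpsi$-independent sign when $\phi_i \boxtimes r(b_i)$ is symplectic, so that only $m_i'' \bmod 2$ matters. The paper phrases this by noting that only case (a) occurs and then normalizing $s$ so that $m_i'' \in \{0,1\}$, whereas you compute the full product $\prod_i \nu_{\psi,i}^{m_i''}$ directly; the difference is purely cosmetic.
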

\begin{proof}
	Since $\psi$ is of good parity, in the proof of Lemma \ref{prop:T-psi-s} only (a) occurs, in which case
	\[ \left(  \det \phi_{i_0}(-1) \right)^{b_{i_0}} = 1 \]
	since $\phi_{i_0} \boxtimes r(b_{i_0})$ is of symplectic type. The last paragraph of that proof therefore shows that the map $S_{\psi, 2}/\text{conj} \to \{\pm 1\}$ descends to $\EuScript{S}_\psi$. Moreover, to compute $\epsilon\left( \psi^{s=-1} \right)$, we may modify $s$ in the fiber to ensure that $m''_i \in \{0, 1\}$ for all $i \in I = I^+$, in the notation of that proof. For such $s$, we clearly have
	\[ \epsilon\left( \psi^{s=-1} \right) = \prod_{i: m''_i = 1} \epsilon\left( \phi_i \right)^{b_i} = \nu_\psi(x) \]
	where $x$ stands for the image of $s$ in $\EuScript{S}_\psi$.
\end{proof}

All these definitions and results generalize to groups of metaplectic type; it suffices to take $s$ from
\begin{equation}\label{eqn:S-psi-2-M}
	S_{\psi, 2} := \left\{ s \in S_\psi: s^2 = 1, \; \text{trivial on $\GL$ factors} \right\},
\end{equation}
and only the local root numbers from the metaplectic factor intervene in the formation of $T_{\psi, s}$. We omit the details.

\subsection{First properties of \texorpdfstring{$T_{\psi, s}$}{Tpsis}}
We begin with the compatibility between $T_{\psi, s}$ and parabolic induction. Consider a Levi subgroup of $\tilde{G}$ of the form
\[ \tilde{M} = \Mp(W^\flat) \times \prod_{i=1}^r \GL(n_i, F). \]
Given $\psi_M \in \Psi^+(\tilde{M})$ with image $\psi \in \Psi^+(\tilde{G})$, write the natural map $\EuScript{S}_{\psi_M} \to \EuScript{S}_\psi$ as $x_M \mapsto x$.

In view of Lemma \ref{prop:T-psi-s}, we may write $T_{\psi, s}$ as $T_{\psi, x}$, where $x \in \EuScript{S}_\psi$ is the image of $s \in S_{\psi, 2}$. Ditto for $\psi_M$; the definition of $S_{\psi_M, 2}$ is given after Corollary \ref{prop:basic-bijection-2}.
\index{T-psi-x@$T_{\psi, x}$}

Let $\Ind^{\tilde{G}}_{\tilde{M}} : D_-(\tilde{M}) \otimes \mes(M)^\vee \to D_-(\tilde{G}) \otimes \mes(G)^\vee$ denote the parabolic induction on the level of invariant distributions in \cite[(2.4.1)]{Li21}.
\index{Ind-G-M@$\Ind^{\tilde{G}}_{\tilde{M}}$}

\begin{proposition}\label{prop:T-psi-Ind}
	In the circumstance above, $T_{\psi, x} = \Ind^{\tilde{G}}_{\tilde{M}} \left( T_{\psi_M, x_M} \right)$ for all $x_M \in \EuScript{S}_{\psi_M}$. 
\end{proposition}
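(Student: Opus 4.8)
The plan is to unwind both $T_{\psi, x}$ and $T_{\psi_M, x_M}$ through the basic bijection of Proposition~\ref{prop:basic-bijection} and then to feed in two transitivity facts: Arthur's construction of the stable characters $S\Theta$ by parabolic induction, and the commutation of endoscopic transfer with parabolic induction. To begin, write $\psi_M = (\psi_0, \psi_{\GL})$ with $\psi_0 \in \Psi^+(\Mp(W^\flat))$ and $\psi_{\GL} = (\psi_{\GL, i})_{i=1}^{r}$, pick a representative $s \in S_{\psi_M, 2}$ of $x_M$ (so $s$ is trivial on the $\GL$ factors, cf.\ \eqref{eqn:S-psi-2-M}), and keep the letter $s$ for its image in $S_{\psi, 2}$, a representative of $x$. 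Applying Proposition~\ref{prop:basic-bijection}~(ii) (in the version for groups of metaplectic type) to $(\psi_M, s)$ and to $(\psi, s)$ yields pairs $(\mathbf{M}^!, \psi_M^!)$ and $(\mathbf{G}^!, \psi^!)$. Since $s$ is trivial on the $\GL$ factors, it acts by $+1$ on the whole $\GL$-part $\bigoplus_{i}(V_{\psi_{\GL, i}} \oplus V_{\psi_{\GL, i}^\vee})$ of $V_\psi$; consequently the $(-1)$-eigenspace of $s$ on $V_\psi$ coincides with that of $s$ on $V_{\psi_0}$, whence
\[ \epsilon\bigl(\psi^{s = -1}\bigr) = \epsilon\bigl(\psi_0^{s = -1}\bigr), \]
which is precisely the root number entering $T_{\psi_M, x_M}$ (only the metaplectic factor intervenes there, by the convention following \eqref{eqn:S-psi-2-M}). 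Moreover $\mathbf{M}^!$ is tautological on the $\GL$ factors, so $M^! = G^{\flat, !} \times \prod_{i=1}^{r} \GL(n_i)$, where $G^{\flat, !} = \SO(2n_0'+1) \times \SO(2n_0''+1)$ (with $n_0' + n_0'' = \tfrac12\dim W^\flat$) is the endoscopic datum of $\Mp(W^\flat)$ attached to $(\psi_0, s)$; placing every $\GL(n_i)$ inside the first $\SO$-factor exhibits $M^!$ as a Levi subgroup of $G^! = \SO(2n'+1) \times \SO(2n''+1)$, and $\psi^!$ is the image of $\psi_M^!$ under the induction map $\Psi^+(M^!) \to \Psi^+(G^!)$ of \eqref{eqn:psi-induction}.

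For the transitivity inputs, let $\pi_i$ denote the representation of $\GL(n_i, F)$ with parameter $\psi_{\GL, i}$, so that $S\Theta^{M^!}_{\psi_M^!} = \Theta_{\pi_1} \boxtimes \cdots \boxtimes \Theta_{\pi_r} \boxtimes S\Theta^{G^{\flat, !}}_{\psi_0^!}$ by definition. As $\psi^!$ is obtained from $\psi_M^!$ by the induction \eqref{eqn:psi-induction}, the construction of $S\Theta$ in \cite[\S 2.2]{Ar13} together with transitivity of parabolic induction gives $S\Theta^{G^!}_{\psi^!} = \Ind^{G^!}_{M^!}\bigl(S\Theta^{M^!}_{\psi_M^!}\bigr)$. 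On the metaplectic side I would invoke the commutation of the spectral transfer with parabolic induction up to a central twist, \cite[Proposition~3.8.4]{Li21}, for the Levi $\tilde M \subset \tilde G$ matched with $M^! \subset G^!$. The central twist is a product of the values $\omega_{\pi_i}(-1)$ of the central characters of the $\GL(n_i, F)$-blocks, and — exactly as in the computation inside the proof of Lemma~\ref{prop:T-psi-s} — a given block contributes a nontrivial factor only when it lies on the $(-1)$-eigenspace side of $s$. In the present situation every $\GL$-block lies on the $(+1)$-side, so the twist is trivial; using in addition that the transfer on the group of metaplectic type $\tilde M$ is tautological on the $\GL$ factors, we obtain
\[ \trans_{\mathbf{G}^!, \tilde G}\bigl(S\Theta^{G^!}_{\psi^!}\bigr) = \Ind^{\tilde G}_{\tilde M}\bigl(\trans_{\mathbf{M}^!, \tilde M}(S\Theta^{M^!}_{\psi_M^!})\bigr). \]

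It then remains to multiply both sides by the scalar $\epsilon(\psi^{s=-1}) = \epsilon(\psi_0^{s=-1})$, pull it through the $\CC$-linear map $\Ind^{\tilde G}_{\tilde M}$, and read off $T_{\psi, x} = \Ind^{\tilde G}_{\tilde M}(T_{\psi_M, x_M})$ from the definitions of the two distributions. The step I expect to require genuine care is the vanishing of the central twist in \cite[Proposition~3.8.4]{Li21}: one has to keep careful track of how the $\GL$-blocks get distributed inside $G^! = \SO(2n'+1) \times \SO(2n''+1)$ and confirm that none of them lands on the $(-1)$-side, which is what forces the twist to disappear — this is the exact analogue of the cancellation already carried out in the proof of Lemma~\ref{prop:T-psi-s}. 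The bookkeeping of the Haar-measure lines $\mes(\cdot)^\vee$ is routine and compatible throughout \cite{Li21}.
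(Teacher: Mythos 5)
Your argument is correct and follows essentially the same route as the paper's proof: both pick a representative $s_M \in S_{\psi_M,2}$, complete $\mathbf{M}^!$ to $\mathbf{G}^!$ by placing all $\GL$ factors into the first $\SO$ factor (the paper's choice $\mathbf{s}$ with $I'' = \emptyset$), invoke \cite[Proposition 3.8.4]{Li21} to commute transfer with induction (with trivial central twist precisely because no $\GL$ block sits on the $(-1)$-side of $s$), and match the root numbers via $\epsilon(\psi^{s=-1}) = \epsilon(\psi_M^{s_M=-1})$. Your write-up merely makes explicit two points the paper leaves implicit, namely the compatibility $S\Theta^{G^!}_{\psi^!} = \Ind^{G^!}_{M^!}(S\Theta^{M^!}_{\psi_M^!})$ and the vanishing of the twist.
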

\begin{proof}
	Take any preimage $s_M \in S_{\psi_M, 2}$ of $x_M$, see \eqref{eqn:S-psi-2-M}. The conjugacy class of $s_M$ determines $\mathbf{M}^! \in \Endo_{\elli}(\tilde{M})$. This can be completed into
	\[\begin{tikzcd}
		G^![\mathbf{s}] \arrow[dashed, leftrightarrow, r, "\text{ell.}", "\text{endo.}"'] & \tilde{G} \\
		M^! \arrow[dashed, leftrightarrow, r, "\text{ell.}", "\text{endo.}"'] \arrow[hookrightarrow, u, "\text{Levi}"] & \tilde{M} \arrow[hookrightarrow, u, "\text{Levi}"']
	\end{tikzcd} \quad \text{where} \quad \mathbf{G}^![\mathbf{s}] \in \Endo_{\elli}(\tilde{G}), \]
	by taking $\mathbf{s} \in \Endo_{\mathbf{M}^!}(\tilde{G})$ (see \cite[Definition 3.4.6]{Li21}) to be the partition $\{1, \ldots, r\} = I' \sqcup I''$ with $I'' = \emptyset$; roughly speaking, this means that we map all $\GL$ factors in $M^!$ into the first $\SO$ factor of $G^![\mathbf{s}]$.
	
	By this choice of $\mathbf{s}$ and \cite[Proposition 3.8.4]{Li21}, transfer commutes with induction in the diagram above.
	
	Let $s$ be the image of $s_M$ in $S_{\psi, 2}$, which determines $\mathbf{G}^! \in \Endo_{\elli}(\tilde{G})$. Unwinding the description of $s_M$ and $S_{\psi_M} \to S_\psi$, we see $\mathbf{G}^! = \mathbf{G}^![\mathbf{s}]$ and
	\[ \epsilon\left( \psi^{s=-1} \right) = \epsilon\left( \psi_M^{s_M = -1} \right). \]
	Hence $T_{\psi, x}$ equals the induction of $T_{\psi_M, x_M}$.
\end{proof}

Consider the case of Archimedean $F$. Define $\lambda(\psi): \mathcal{Z}(\mathfrak{g}_{\CC}) \to \CC$ as in \S\ref{sec:inf-character}.

\begin{proposition}\label{prop:T-psi-inf-char}
	For all homomorphism of $\CC$-algebras $\lambda: \mathcal{Z}(\mathfrak{g}_{\CC}) \to \CC$, let $D_{\mathrm{spec}, \lambda, -}(\tilde{G}) \otimes \mes(G)^\vee$ be the subspace of $D_{\mathrm{spec}, -}(\tilde{G}) \otimes \mes(G)^\vee$ spanned by genuine characters with infinitesimal character $\lambda$. Then $T_{\psi, s} \in D_{\mathrm{spec}, \lambda(\psi), -}(\tilde{G}) \otimes \mes(G)^\vee$.
\end{proposition}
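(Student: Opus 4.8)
The plan is to discard the sign $\epsilon(\psi^{s=-1}) \in \{\pm 1\}$, which does not affect infinitesimal characters, and reduce to showing that $\trans_{\mathbf{G}^!, \tilde{G}}\bigl( S\Theta^{G^!}_{\psi^!} \bigr)$ lies in $D_{\mathrm{spec}, \lambda(\psi), -}(\tilde{G}) \otimes \mes(G)^\vee$, where $(\mathbf{G}^!, \psi^!) \leftrightarrow (\psi, s)$ by Proposition \ref{prop:basic-bijection}. First I would recall that $S\Theta^{G^!}_{\psi^!}$ (Definition \ref{def:STheta}) is a finite $\CC$-combination of irreducible characters of $G^!(F)$ sharing a single infinitesimal character, namely $\lambda(\psi^!)$ attached by the recipe of \S\ref{sec:inf-character} to $\phi_{\psi^!}$, applied to the product of split odd $\SO$ that is $G^!$. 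For $\psi^! \in \Psi(G^!)$ this is part of the known theory for quasisplit classical groups: the members of $\Pi_{\psi^!}$ transfer under twisted endoscopy to the twisted character of the $\GL(2n)$-representation attached to $\psi^!|_{\mathcal{L}_F}$, which pins their infinitesimal character to $\lambda(\psi^!)$ (in the Archimedean case one may instead invoke the Adams--Johnson description). For general $\psi^! \in \Psi^+(G^!)$, where $S\Theta^{G^!}_{\psi^!}$ is defined by parabolic induction from a Levi $M^! \subsetneq G^!$, I would use that both parabolic induction and the assignment $\lambda(\cdot)$ are compatible with passage to Levi subgroups.

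Next I would establish that the spectral transfer preserves infinitesimal characters in the appropriate sense. The inclusion $(G^!)^\vee = Z_{\tilde{G}^\vee}(s) \hookrightarrow \tilde{G}^\vee$ carries a maximal torus of $(G^!)^\vee$ to a maximal torus of $\tilde{G}^\vee$ (the ranks agree), identifies the complexified abstract Cartans and induces an inclusion of Weyl groups; through the Harish-Chandra isomorphisms this yields an injective $\CC$-algebra homomorphism $r \colon \mathcal{Z}(\mathfrak{g}_{\CC}) \to \mathcal{Z}(\mathfrak{g}^!_{\CC})$. I would then argue that the geometric transfer $\Trans_{\mathbf{G}^!, \tilde{G}}$ intertwines the action of $\mathcal{Z}(\mathfrak{g}_{\CC})$ on anti-genuine $C^\infty_c$-functions on $\tilde{G}$ (through invariant differential operators) with the action of $\mathcal{Z}(\mathfrak{g}^!_{\CC})$ on $C^\infty_c(G^!(F))$ through $r$, because the correspondence of strongly regular semisimple classes behind the matching of orbital integrals identifies the pertinent Cartan subalgebras compatibly; this is implicit in the construction of the metaplectic transfer factors in \cite{Li11} (see also \cite[\S 3.8]{Li21}). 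Dualizing, $\trans_{\mathbf{G}^!, \tilde{G}}$ carries a stable eigendistribution on $G^!(F)$ with infinitesimal character $\lambda^!$ to a genuine eigendistribution on $\tilde{G}$ with infinitesimal character $\lambda^! \circ r$.

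Finally I would check that $\lambda(\psi^!) \circ r = \lambda(\psi)$. Since $\psi^!$ maps to $\psi$ under $(G^!)^\vee \hookrightarrow \tilde{G}^\vee$, so does $\phi_{\psi^!}$ to $\phi_\psi$, hence $\phi_{\psi^!}|_{\CC^{\times}}$ to $\phi_\psi|_{\CC^{\times}}$; a representative $C_{\psi^!}$ as in \S\ref{sec:inf-character} lands, after composition with the inclusion, in the same maximal torus and defines the same point $\mu$ (resp.\ $(\mu, \nu)$ when $F = \CC$) of the complexified Cartan identified in the previous step, so it may be taken as $C_\psi$. Under the Harish-Chandra isomorphisms this equality of points is exactly $\lambda(\psi^!) \circ r = \lambda(\psi)$. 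Chaining the three steps, $\trans_{\mathbf{G}^!, \tilde{G}}\bigl( S\Theta^{G^!}_{\psi^!} \bigr)$ is a $\mathcal{Z}(\mathfrak{g}_{\CC})$-eigendistribution with eigencharacter $\lambda(\psi)$ which, being a finite combination of genuine irreducible characters by Theorem \ref{prop:spec-trans}, lies in $D_{\mathrm{spec}, \lambda(\psi), -}(\tilde{G}) \otimes \mes(G)^\vee$; the same argument applies verbatim to groups of metaplectic type, the $\GL$ factors contributing their own infinitesimal characters while $s$ is trivial there.

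I expect the genuine obstacle to be the second step --- verifying that the metaplectic endoscopic transfer commutes with the actions of the centers of the enveloping algebras --- which must be extracted from the explicit transfer factors of \cite{Li11}; the first and third steps are essentially bookkeeping with the Harish-Chandra isomorphism.
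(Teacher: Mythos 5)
Your proposal is correct and follows essentially the same route as the paper: reduce to the statement that the spectral transfer preserves infinitesimal characters, combined with the fact that $S\Theta^{G^!}_{\psi^!}$ has infinitesimal character $\lambda(\psi^!)$, which one sees by transferring to the twisted $\GL(2m)$. The step you single out as the genuine obstacle --- the commutation of $\trans_{\mathbf{G}^!, \tilde{G}}$ with $\mathcal{Z}(\mathfrak{g}_{\CC})$ --- is exactly what the paper disposes of by citing \cite[Proposition 7.3.1]{Li19}, so you have correctly located the one input that is not mere bookkeeping.
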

\begin{proof}
	The spectral transfer for $\tilde{G}$ preserves infinitesimal characters: see \cite[Proposition 7.3.1]{Li19} and the discussions preceding it for details. The assertion thus reduces to the version for $\SO(2m+1)$ for various $m$, which is implicit in \cite[\S 2.2]{Ar13}, and can be seen by transferring from $\SO(2m+1)$ to twisted $\GL(2m)$, since the twisted spectral transfer also preserves infinitesimal characters by \cite[I.2.8 Corollaire]{MW16-1} and the local Langlands correspondence for $\GL(2m, F)$ is explicit.
\end{proof}

Next, consider the unramified situation. Thus $F$ is non-Archimedean and we have $K \subset G(F)$, a section $K \to \tilde{G}$ as well a the unit $f_K$ of $\mathcal{H}_{\asp}(K \backslash \tilde{G} / K)$. Unramified Haar measures will be used throughout.

\begin{definition}\label{def:unramified-psi}
	\index{unramified Arthur parameter}
	In the unramified situation, if $\psi \in \Psi^+(\tilde{G})$ is trivial on $I_F \times \SL(2, \CC) \subset \mathcal{L}_F$, we say $\psi$ is \emph{unramified}.
\end{definition}

\begin{proposition}\label{prop:T-psi-nr}
	For all $s$, we have
	\[ T_{\psi, s}(f_K) = \begin{cases}
		1, & \text{if $\psi$ is unramified} \\
		0, & \text{otherwise.}
	\end{cases}\]
\end{proposition}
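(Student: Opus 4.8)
The plan is to pair $T_{\psi,s}$ with $f_K$ through the fundamental lemma, which reduces the assertion to evaluating Arthur's stable character for the endoscopic group against the unit of a spherical Hecke algebra on $G^!(F)$. First, let $(\mathbf{G}^!, \psi^!) \leftrightarrow (\psi, s)$ be as in Proposition \ref{prop:basic-bijection}(ii), with $\mathbf{G}^!$ corresponding to $(n',n'')$, and fix a hyperspecial $K^! \subset G^!(F)$ (all such being conjugate since $p \neq 2$). As $\trans_{\mathbf{G}^!, \tilde{G}}$ is the transpose of $\Trans_{\mathbf{G}^!, \tilde{G}}$, and the spherical fundamental lemma recalled in \S\ref{sec:endoscopy} gives $b_{\mathbf{G}^!, \tilde{G}}(f_K) = \mathbf{1}_{K^!}$, so that $\Trans_{\mathbf{G}^!, \tilde{G}}(f_K)$ is the class of $\mathbf{1}_{K^!}$ in $S\orbI(G^!)$ --- all Haar measures being the unramified ones, hence no measure factors intervene --- one obtains
\[ T_{\psi, s}(f_K) = \epsilon(\psi^{s=-1}) \cdot S\Theta^{G^!}_{\psi^!}(\mathbf{1}_{K^!}). \]

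Next I would dispose of the root number. Observe that $\psi$ is unramified if and only if $\psi^!$ is, these being one and the same homomorphism up to enlarging the target from $(G^!)^\vee$ to $\tilde{G}^\vee$. When $\psi$ is unramified, $\psi^{s=-1}$ is a self-dual symplectic subrepresentation of $\psi|_{\mathcal{L}_F}$ (the $\pm 1$-eigenspaces of the involution $s \in \Sp(V_\psi)$ are orthogonal for the symplectic form), so $\epsilon(\psi^{s=-1})$ is a symplectic root number, independent of $\bpsi$; computing it with a $\bpsi$ of conductor $\mathfrak{o}_F$ and applying the last property listed in \S\ref{sec:local-root-numbers} --- legitimate because $\psi^{s=-1}|_{\mathcal{L}_F}$ is then trivial on $I_F \times \SL(2,\CC)$ --- gives $\epsilon(\psi^{s=-1}) = 1$. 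It thus remains to show $S\Theta^{G^!}_{\psi^!}(\mathbf{1}_{K^!}) = 1$ when $\psi^!$ is unramified and $= 0$ otherwise (in which case the sign is irrelevant).

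Since $G^! = \SO(2n'+1) \times \SO(2n''+1)$, while $\psi^! = (\psi', \psi'')$ and $\mathbf{1}_{K^!}$ factor as products, and $\psi^!$ is unramified iff both $\psi'$ and $\psi''$ are, it suffices to treat $H = \SO(2m+1)$ with $\psi_0 \in \Psi^+(H)$ and a hyperspecial $K_H \subset H(F)$. Here I would invoke the defining property of $S\Theta^H_{\psi_0}$ in \cite[\S 2.2]{Ar13}: compatibly with parabolic induction, it transfers under twisted endoscopy to the $\tilde\theta$-twisted character of the representation $\pi_{\psi_0}$ of $\GL(2m, F)$ associated with $\psi_0$, Whittaker-normalized. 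By the twisted fundamental lemma for the unit, $\mathbf{1}_{K_H}$ transfers to $\mathbf{1}_{K_{\GL}}$ for a $\tilde\theta$-stable hyperspecial $K_{\GL} \subset \GL(2m, F)$, whence $S\Theta^H_{\psi_0}(\mathbf{1}_{K_H}) = \Tr(A_{\psi_0} \circ p)$, where $p$ is the projection onto $\pi_{\psi_0}^{K_{\GL}}$ and $A_{\psi_0}: \pi_{\psi_0} \rightiso \pi_{\psi_0} \circ \tilde\theta$ is the Whittaker-normalized intertwiner, which commutes with $\pi_{\psi_0}(K_{\GL})$. Now $\pi_{\psi_0}$ is unramified precisely when $\psi_0$ is (its constituents are twists of unramified characters), so $\pi_{\psi_0}^{K_{\GL}} = 0$ in the ramified case, giving $0$; in the unramified case $\pi_{\psi_0}^{K_{\GL}}$ is one-dimensional, and $A_{\psi_0}$ acts on it by $+1$ because the Whittaker functional does not vanish on the spherical vector (Casselman--Shalika) and is preserved by $A_{\psi_0}$, so the trace is $1$.

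The main obstacle is this last step: invoking the twisted fundamental lemma for the spherical unit in exactly the normalization compatible with Arthur's Whittaker-normalized $S\Theta^H_{\psi_0}$, and verifying that $A_{\psi_0}$ is the identity on the spherical line. An alternative that bypasses twisted $\GL$ is to use Arthur's spectral formula expressing $S\Theta^H_{\psi_0}$ as a signed sum of the $\Theta_\pi$, $\pi \in \Pi_{\psi_0}$: since $\Theta_\pi(\mathbf{1}_{K_H}) = \dim \pi^{K_H} \in \{0,1\}$ vanishes unless $\pi$ is $K_H$-spherical, one need only know that $\Pi_{\psi_0}$ contains a $K_H$-spherical member --- necessarily unique, and equal to the unramified Langlands quotient $J(\phi_{\psi_0})$, which carries the trivial character of $\EuScript{S}_{\psi_0}$ and hence occurs with coefficient $1$ --- exactly when $\psi_0$ is unramified, and contains no spherical member when $\psi_0$ is ramified (all members then having ramified supercuspidal support). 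Either route reduces the statement to established facts for odd special orthogonal groups.
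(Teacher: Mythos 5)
Your proposal is correct and follows essentially the same route as the paper: reduce via the spherical fundamental lemma to $S\Theta^{G^!}_{\psi^!}(\mathbf{1}_{K^!})$, kill the symplectic root number using a conductor-$\mathfrak{o}_F$ character, and then evaluate the stable character for odd $\SO$ against the spherical unit. The paper simply delegates that last evaluation to Arthur (\cite[Lemma 7.3.4]{Ar13} in the unramified case, the twisted fundamental lemma and twisted $\GL(2n)$ in the ramified case), which are exactly the two arguments you spell out.
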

\begin{proof}
	Assume that $\psi$ is unramified. As reviewed in \S\ref{sec:local-root-numbers}, the factor $\epsilon(\psi^{s=-1})$ is trivial. Thus it suffices to show
	\[ \trans_{\mathbf{G}^!, \tilde{G}}\left( S\Theta^{G^!}_{\psi^!} \right)(f_K) = 1. \]
	
	By the fundamental lemma for $\tilde{G}$, the above is equivalent to $S\Theta^{G^!}_{\psi^!}(\mathbf{1}_{K^!}) = 1$ where $K^!$ is any hyperspecial subgroup of $G^!(F)$. Note that $\psi^!$ is still unramified, and the assertion follows from Arthur's work; see eg.\ the proof of \cite[Lemma 7.3.4]{Ar13} when $\psi \in \Psi(\tilde{G})$, and the general case follows by parabolic induction.
	
	Assume that $\psi$ is not unramified. By the fundamental lemma again, it suffices to show $S\Theta^{G^!}_{\psi^!}(\mathbf{1}_{K^!}) = 0$ when $\psi^! \in \Psi^+(G^!)$ is not unramified. Via the twisted fundamental lemma \cite{LMW18}, it reduces to a similar assertion on twisted $\GL(2n, F)$, which is clear.
\end{proof}

Now step back to the general local situation.

\begin{proposition}\label{prop:T-psi-negation}
	\index{tau-minus-1@$\tau_{-1}$}
	Let $\tau_{-1}$ be the automorphism of $D_-(\tilde{G}) \otimes \mes(G)^\vee$ induced by translation by $-1 \in \tilde{G}$ (Definition \ref{def:minus-1}). Then
	\[ T_{\psi, -s} = \epsilon(\psi|_{\mathcal{L}_F}) \tau_{-1}\left( T_{\psi, s} \right) \]
	where $-s$ is $s$ times the element $-1 \in \tilde{G}^\vee$.
\end{proposition}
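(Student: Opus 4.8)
The plan is to track the two factors in Definition \ref{def:T-dist} separately under $s \mapsto -s$, and to isolate the one genuinely new ingredient: the behaviour of the metaplectic endoscopic transfer when the distinguished semisimple element of an elliptic endoscopic datum is multiplied by the central element $-1 \in Z_{\tilde{G}^\vee}$. First I would unwind the basic bijection of Proposition \ref{prop:basic-bijection}(ii) for the pair $(\psi, -s)$. The $(\pm 1)$-eigenspaces of $-s$ are the $(\mp 1)$-eigenspaces of $s$, so if $(\psi, s) \leftrightarrow (\mathbf{G}^!, \psi^!)$ with $\mathbf{G}^!$ corresponding to $(n', n'')$ and $\psi^! = (\psi', \psi'') = (\psi^{s=+1}, \psi^{s=-1})$, then $(\psi, -s) \leftrightarrow (\mathbf{G}^{!\prime}, \psi^{!\prime})$ with $\mathbf{G}^{!\prime}$ corresponding to $(n'', n')$ and $\psi^{!\prime} = (\psi'', \psi')$. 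Hence $G^{!\prime} = \SO(2n''+1) \times \SO(2n'+1)$ is obtained from $G^! = \SO(2n'+1) \times \SO(2n''+1)$ by the permutation $\sigma$ of its two factors, and the functoriality of Arthur's stable distributions under isomorphisms of groups (Definition \ref{def:STheta}, with the obvious compatibility of Whittaker data) gives $S\Theta^{G^{!\prime}}_{\psi^{!\prime}} = \sigma_* S\Theta^{G^!}_{\psi^!}$.

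Next, the root-number factor. As recalled in \S\ref{sec:T}, the decomposition $V_\psi = V_\psi^+ \oplus V_\psi^-$ is orthogonal for the symplectic form on $V_\psi$, so $\psi^{s=\pm 1}|_{\mathcal{L}_F}$ are both symplectic; by \S\ref{sec:local-root-numbers} the numbers $\epsilon(\psi^{s=\pm 1})$ lie in $\{\pm 1\}$, are $\bpsi$-independent, and satisfy $\epsilon(\psi^{s=-1})^2 = 1$. Multiplicativity of local root numbers applied to $\psi|_{\mathcal{L}_F} = \psi^{s=+1}|_{\mathcal{L}_F} \oplus \psi^{s=-1}|_{\mathcal{L}_F}$ then yields
\[ \epsilon(\psi^{-s=-1}) = \epsilon(\psi^{s=+1}) = \epsilon(\psi|_{\mathcal{L}_F}) \cdot \epsilon(\psi^{s=-1}). \]

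The crux is the identity
\[ \trans_{\mathbf{G}^{!\prime}, \tilde{G}}\bigl( \sigma_* \Xi \bigr) = \tau_{-1}\bigl( \trans_{\mathbf{G}^!, \tilde{G}}(\Xi) \bigr), \qquad \Xi \in SD_{\mathrm{spec}}(G^!) \otimes \mes(G^!)^\vee, \]
which I would prove on the geometric side and then dualize. The data $\mathbf{G}^!$ and $\mathbf{G}^{!\prime}$ share the same dual group $Z_{\tilde{G}^\vee}(s) = Z_{\tilde{G}^\vee}(-s)$ inside $\tilde{G}^\vee$, but record opposite signs; concretely they differ by the central element $-1 \in Z_{\tilde{G}^\vee}$. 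By the explicit formula for the metaplectic transfer factor and its dependence on the relevant $z$-datum (as set up in \cite{Li11}, with the correction in \cite{Li15}), the matching of orbital integrals for $\mathbf{G}^{!\prime}$ is obtained from that for $\mathbf{G}^!$ by pre-composing with $\sigma$ on $G^!(F)$ and translating the test function on $\tilde{G}$ by the canonical central element $-1 \in \tilde{G}$ of Definition \ref{def:minus-1}. This is exactly the compatibility of transfer with the $Z_{\tilde{G}^\vee}$-twist of endoscopic data; I would extract it from those transfer-factor computations, or from the stabilization of the trace formula in \cite{Li21}. Dualizing gives the displayed identity, and combining it with the previous two steps yields
\[ T_{\psi, -s} = \epsilon(\psi^{s=+1}) \, \tau_{-1}\bigl(\trans_{\mathbf{G}^!, \tilde{G}}(S\Theta^{G^!}_{\psi^!})\bigr) = \epsilon(\psi|_{\mathcal{L}_F}) \, \tau_{-1}(T_{\psi, s}), \]
where in the last step linearity of $\tau_{-1}$ is used to pull the scalar $\epsilon(\psi^{s=-1})$ through.

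I expect the verification of the transfer-factor identity in the third step to be the main obstacle: it requires pinning down precisely how the metaplectic transfer factor depends on the central datum distinguishing $(n',n'')$ from $(n'',n')$, and confirming that this dependence is realized as the character twist given by translation by $-1 \in \tilde{G}$ — a point where the $\bpsi$-dependent normalization must be handled carefully, and where the philosophy of Remark \ref{rem:reduced-center} (that it is $Z_{\tilde{G}^\vee}^\circ = \{1\}$, not $Z_{\tilde{G}^\vee}$, that governs endoscopy) has to be reconciled with the fact that $(n',n'')$ and $(n'',n')$ are, by design, distinct endoscopic data. The remaining steps are bookkeeping with the basic bijection and the standard properties of $\epsilon$-factors.
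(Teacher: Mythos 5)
Your proposal is correct and follows essentially the same route as the paper: swap $(n',n'')$ to $(n'',n')$ under the basic bijection, establish that the transfer for the swapped datum equals $\tau_{-1}$ of the original transfer, and absorb the change of root number via $\epsilon(\psi^{s=+1})\epsilon(\psi^{s=-1}) = \epsilon(\psi|_{\mathcal{L}_F})$. The transfer-factor identity you single out as the main obstacle is precisely \cite[Proposition 5.16]{Li11}, which the paper simply cites at this point, so no further work is needed there.
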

\begin{proof}
	Take $\mathbf{G}^! \in \Endo_{\elli}(\tilde{G})$ corresponding to $s$, which in turn corresponds to a pair $(n', n'')$. Passing to $-s$ amounts to replacing $(n', n'')$ by $(n'', n')$. Applying \cite[Proposition 5.16]{Li11} to the transfer, we obtain
	\[ \epsilon\left( \psi^{s=1} \right)^{-1} T_{\psi, -s} = \tau_{-1} \left( \epsilon\left(\psi^{s=-1} \right)^{-1} T_{\psi, s}\right). \]
	Furthermore, $\epsilon\left( \psi^{s=1} \right) \epsilon\left(\psi^{s=-1} \right)^{-1} = \epsilon\left( \psi^{s=1} \right) \epsilon\left(\psi^{s=-1}\right) = \epsilon(\psi|_{\mathcal{L}_F})$.
\end{proof}

Once again, these results generalize from $\tilde{G} = \Mp(W)$ to groups of metaplectic type.

\subsection{Local desiderata}\label{sec:local-desiderata}
For $\psi \in \Psi^+(\tilde{G})$, Lemma \ref{prop:T-psi-s} furnishes a $D_{\mathrm{spec}, -}(\tilde{G}) \otimes \mes(G)^\vee$-valued function $x \mapsto T_{\psi, x}$ on $\EuScript{S}_\psi$. Denote by $x_\psi \in \EuScript{S}_\psi$ the image of $s_\psi \in S_\psi$ (Definition \ref{def:s-psi}). Note that $\EuScript{S}_\psi$ is a finite $2$-torsion abelian group. The construction below extracts the Fourier coefficients of $x \mapsto T_{\psi, x_\psi x}$.
\index{x-psi@$x_\psi$}

\begin{definition}\label{def:pi-psi-chi}
	\index{pi-psi-chi@$\pi_{\psi, \chi}$}
	For all $\chi \in \EuScript{S}_\psi^\vee$, put
	\[ \pi_{\psi, \chi} := \left| \EuScript{S}_\psi \right|^{-1} \sum_{x \in \EuScript{S}_\psi} \chi(x_\psi x) T_{\psi, x}. \]
	An equivalent characterization is
	\begin{equation*}
		T_{\psi, x} = \sum_{\chi \in \EuScript{S}_\psi^\vee} \chi(x_\psi x) \pi_{\psi, \chi}, \quad x \in \EuScript{S}_\psi.
	\end{equation*}
\end{definition}

A priori, $\pi_{\psi, \chi}$ is only a $\CC$-linear combination of irreducible genuine characters. It may also happen that $\pi_{\psi, \chi} = 0$. The main local result in this article is stated as follows.

\begin{theorem}\label{prop:local-desiderata}
	Let $\psi \in \Psi^+(\tilde{G})$. The $\pi_{\psi, \chi}$ defined above is a linear combination (possibly zero) of characters of genuine irreducible representations of $\tilde{G}$ with coefficients in $\Z_{\geq 0}$, for all $\chi \in \EuScript{S}_\psi^\vee$. If $\psi \in \Psi(\tilde{G})$, then the irreducible representations which intervene are all unitary.
\end{theorem}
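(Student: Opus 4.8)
The plan is to run Arthur's local--global machinery, but with the Gan--Ichino ``no embedded Hecke eigenvalues'' theorem replacing the local intertwining relations. First I would reduce to parameters of good parity: by Proposition \ref{prop:pi-psi-gp} a general $\psi \in \Psi^+(\tilde{G})$ is the image of some $\psi_M \in \Psi^+(\tilde{M})$ with $\tilde{M} = \prod_i \GL(n_i,F) \times \Mp(W^\flat)$ whose $\Sp$-component $\psi_0$ is of good parity, with $\EuScript{S}_{\psi_M} \xrightarrow{\sim} \EuScript{S}_\psi$, and Proposition \ref{prop:T-psi-Ind} together with Definition \ref{def:pi-psi-chi} gives $\pi_{\psi,\chi} = \Ind^{\tilde{G}}_{\tilde{M}}(\pi_{\psi_M,\chi})$. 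Since parabolic induction sends a non-negative integer combination of irreducible characters to another such combination, and since when $\psi \in \Psi(\tilde{G})$ the $\GL$-data is tempered and $\psi_0$ is bounded, so that the induction is \emph{unitary} induction of a unitary representation, it suffices to prove the theorem for $\psi_0$ of good parity (and bounded, in the second assertion). Proposition \ref{prop:T-psi-inf-char} is used in the Archimedean case to stay within a fixed infinitesimal character, keeping the finite-length manipulations legitimate, and Proposition \ref{prop:T-psi-negation} controls the central behaviour.

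Second, over a non-Archimedean $F$ I would handle the good-parity case by Aubert--Zelevinsky duality. For anti-tempered $\psi$ (see \S\ref{sec:anti-tempered}), $\phi_\psi$ is a twist of a tempered L-parameter $\phi$, whose L-packet $\Pi_\phi$ and endoscopic character relations are supplied by Adams--Barbasch/Gan--Savin and Luo \cite{Luo20}. Using the commutation of endoscopic transfer for $\tilde{G}$ with the Aubert--Zelevinsky involution \cite{Chen24}, together with Luo's relations and the shifting characters $\mu_\psi$ of Liu--Lo--Shahidi / Xu and their metaplectic variant $\tilde{\mu}_\psi$, I would rewrite $\trans_{\mathbf{G}^!,\tilde{G}}(S\Theta^{G^!}_{\psi^!})$ in terms of Aubert--Zelevinsky duals of tempered members; the upshot is that $\pi_{\psi,\chi}$ equals, up to a single overall sign, the character of one genuine irreducible representation (occurring with multiplicity one), so that the whole statement --- except unitarity --- collapses to the assertion that this sign is $+1$, namely the sign equality \eqref{eqn:anti-tempered-bp} about tempered representations. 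I would then prove \eqref{eqn:anti-tempered-bp} for $\psi$ in the restricted class $\Psi(\tilde{G})^\star$ of Definition \ref{def:Psi-star} by transporting it, through the $\Theta$-correspondence between $\Mp(W)$ and split or non-split odd special orthogonal groups, to the analogous sign identity for $\SO(2m+1)$ established in \cite{LLS24}, using Kudla's description of the lifts \cite{Ku86} to match cuspidal supports so that the shifting characters correspond.

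Third, for arbitrary good-parity $\psi$ over any local $F$ (including the Archimedean case) I would globalize. Choose a number field $\dot{F}$ with a place $v_0$ where $\dot{F}_{v_0}\simeq F$, and a discrete global parameter $\dot{\psi}\in\dot{\Psi}_2(\tilde{G})$ with $\dot{\psi}_{v_0}=\psi$, arranged so that at a finite auxiliary set of non-Archimedean places the components lie in $\Psi(\tilde{G}_v)^\star$ (where the previous step applies) and elsewhere are unramified, except possibly some ramified non-Archimedean places where the component is still anti-tempered but outside $\Psi(\tilde{G}_v)^\star$; the latter are handled by Proposition \ref{prop:L-within-0} on the L-packet inside an Arthur packet, engineered so as not to presuppose Theorem \ref{prop:local-desiderata}. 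Feeding $\dot{\psi}$ into the global multiplicity formula phrased as a character identity (Theorem \ref{prop:global-multiplicity}) --- whose proof relies only on the stable trace formula for $\tilde{G}$ \cite{Li21}, Arthur's stable multiplicity formula and sign lemmas for odd $\SO$, and Gan--Ichino's ``no embedded Hecke eigenvalues'' property, and crucially \emph{not} on Theorem \ref{prop:local-desiderata} --- one gets that a weighted sum over $\Pi_{\dot{\psi}}$ of restricted tensor products $\bigotimes'_v \pi_{\dot{\psi}_v,\chi_v}$ computes the trace of the unitary representation $L^2_{\dot{\psi}}$. Since at every $v\neq v_0$ the local term is already known to be a non-negative integer combination of irreducibles (from the non-Archimedean good-parity case, the unramified Proposition \ref{prop:T-psi-nr}, or an inductive hypothesis on smaller groups), a linear-independence-of-characters argument in the style of \cite[Chapter 6]{Ar13}, applied at $v_0$, forces $\pi_{\psi,\chi}$ to be a non-negative integer combination of genuine irreducible characters of $\tilde{G}$; and because $L^2_{\dot{\psi}}$ is unitary and the other local components can be taken tempered (hence unitary) when $\psi\in\Psi(\tilde{G})$, the surviving irreducibles are unitary in that case.

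The main obstacle is the reduction in the middle step: establishing the sign equality \eqref{eqn:anti-tempered-bp} on $\Mp(W)$ by reduction to $\SO(2m+1)$. Making the $\Theta$-correspondence compatible with endoscopic transfer precisely at the level of the fine sign characters, and checking via Kudla's lift computations that the cuspidal supports match so that $\tilde{\mu}_\psi$ is identified with its orthogonal counterpart, is the delicate part; by contrast the good-parity reduction is formal and the final globalization, though intricate, is a by-now-standard trace-formula comparison, modulo the heavy external inputs \cite{Li21,GI18,LLS24,Luo20,Chen24,Is24,AGIKMS}.
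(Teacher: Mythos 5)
Your proposal follows essentially the same route as the paper: reduction to good parity via Propositions \ref{prop:pi-psi-gp} and \ref{prop:T-psi-Ind}, reduction of the anti-tempered good-parity case to the sign equality \eqref{eqn:anti-tempered-bp} via Aubert--Zelevinsky duality, Luo's relations, \cite{Chen24} and the shifting character $\tilde{\mu}_\psi$, verification of that sign equality on $\Psi(\tilde{G})^{\star}$ by $\Theta$-correspondence and Kudla's cuspidal-support matching, and finally globalization with auxiliary places in $\Psi(\tilde{G}_v)^{\star}$, the multiplicity formula of Theorem \ref{prop:global-multiplicity}, and Proposition \ref{prop:L-within-0} at the remaining ramified places. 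This is exactly the paper's argument (carried out in \S\S\ref{sec:anti-tempered}--\ref{sec:proof-local}), up to a harmless reordering and the minor point that the complex case is handled by a separate globalization over an imaginary quadratic field.
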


The proof will be completed in \S\ref{sec:proof-local}. The description of $\pi_{\psi, \chi}$ for $F = \CC$ will be strengthened in Theorem \ref{prop:cplx-packet}.

Before embarking on the proof, we shall collect certain simple properties of $\pi_{\psi, \chi}$ that do not depend on Theorem \ref{prop:local-desiderata}.

\begin{proposition}[Reduction to good parity case]\label{prop:pi-psi-gp}
	Express $\psi \in \Psi^+(G)$ as
	\[ \psi = \bigoplus_{i \in I^+ \sqcup I^-} m_i \psi_i \oplus \bigoplus_{j \in J} m_j (\psi_j \oplus \psi^\vee_j) \]
	as in \eqref{eqn:psi-decomp-2}, with each $\psi_i$ and $\psi_j$ simple. Set
	\[ \psi_0 := \bigoplus_{i \in I^+} m_i \psi_i, \quad \psi_{\mathrm{GL}} = \bigoplus_{i \in I^-} \frac{m_i}{2} \psi_i \oplus \bigoplus_{j \in J} m_j \psi_j . \]
	Then $\psi_M := (\psi_0, \psi_{\GL}) \in \Psi^+(\tilde{M})$ and $\psi_0 \in \Psi_{\mathrm{gp}}(\Mp(W^\flat))$, where
	\[ \tilde{M} := \Mp(W^\flat) \times \GL(m), \quad \dim W^\flat = \dim \psi_0, \quad m = \dim \psi_{\GL} \]
	is a Levi subgroup of $\tilde{G}$, and
	\[ \psi_M \mapsto \psi, \quad \EuScript{S}_{\psi_0} \simeq \EuScript{S}_{\psi_M} \rightiso \EuScript{S}_\psi . \]
	
	We have $\psi_M \in \Psi(\tilde{M})$ if $\psi \in \Psi(\tilde{G})$. Moreover, for all $\chi \in \EuScript{S}_\psi^\vee \simeq \EuScript{S}_{\psi_M}^\vee$ we have
	\[ \pi_{\psi, \chi} = \Ind^{\tilde{G}}_{\tilde{M}}\left(\pi_{\psi_M, \chi}\right). \]
\end{proposition}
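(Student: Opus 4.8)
The plan is to separate the structural assertions, which are bookkeeping with the definitions of \S\ref{sec:local-parameters}, from the character identity, which will be a formal consequence of Proposition \ref{prop:T-psi-Ind}. For the structural part, note first that the simple constituents of $\psi_0$ are exactly the $\psi_i$ with $i \in I^+$, which by \eqref{eqn:psi-decomp-2} are precisely the self-dual ones of symplectic type; hence $\psi_0$ lies in $\Psi^+_{\mathrm{symp}}$ with $I = I^+$, i.e.\ $\psi_0 \in \Psi_{\mathrm{gp}}(\Mp(W^\flat))$. (The coefficients $m_i/2$ in $\psi_{\GL}$ make sense because $m_i$ is even for $i \in I^-$, again by \eqref{eqn:psi-decomp-2}.) Thus $\psi_M = (\psi_0, \psi_{\GL}) \in \Psi^+(\tilde{M})$, and since boundedness of $\psi|_{\mathcal{L}_F}$ passes to the summands $\psi_0$ and $\psi_{\GL}$, we get $\psi_M \in \Psi(\tilde{M})$ whenever $\psi \in \Psi(\tilde{G})$. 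From $\dim W = \dim\psi = \dim\psi_0 + 2\dim\psi_{\GL} = \dim W^\flat + 2m$ it follows that $M = \Sp(W^\flat)\times\GL(m)$ is a Levi subgroup of $G$, with canonical preimage $\tilde{M} = \Mp(W^\flat)\times\GL(m,F)$ by \S\ref{sec:local-Mp}. Finally, under \eqref{eqn:psi-induction} we have $\psi_M \mapsto \psi_0 \oplus \psi_{\GL} \oplus \psi_{\GL}^\vee$; since each $\psi_i$ with $i \in I^-$ is self-dual, $\psi_{\GL}\oplus\psi_{\GL}^\vee = \bigoplus_{i\in I^-} m_i\psi_i \oplus \bigoplus_{j\in J} m_j(\psi_j\oplus\psi_j^\vee)$, so adding $\psi_0$ recovers the right-hand side of \eqref{eqn:psi-decomp-2}, i.e.\ $\psi_M \mapsto \psi$.

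Next I would identify the component groups through \eqref{eqn:S-psi}. This gives $S_{\psi_0} = \prod_{i\in I^+}\Or(m_i,\CC)$, hence $\EuScript{S}_{\psi_0}\simeq\bmu_2^{I^+}$; the additional $\GL(m)$-factor in $\tilde{M}$ contributes to $S_{\psi_M}$ only the centralizer of $\psi_{\GL}$ in $\GL(m,\CC)$, which is a product of general linear groups and hence connected, so the obvious map $\EuScript{S}_{\psi_0}\to\EuScript{S}_{\psi_M}$ is an isomorphism; and \eqref{eqn:S-psi} for $\psi$ shows $\EuScript{S}_\psi\simeq\bmu_2^{I^+}$, with the $I^-$- and $J$-factors connected, so the natural map $\EuScript{S}_{\psi_M}\to\EuScript{S}_\psi$ is likewise an isomorphism. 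Using the concrete description of $s_\psi$ recorded after Definition \ref{def:s-psi} --- its projection to the $i$-th factor is $\pm1$ according to the parity of $b_i$ --- this isomorphism carries $x_{\psi_M}$ to $x_\psi$, and it induces the identification $\EuScript{S}_\psi^\vee\simeq\EuScript{S}_{\psi_M}^\vee$ implicit in the statement.

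For the character identity, write $\iota\colon\EuScript{S}_{\psi_M}\rightiso\EuScript{S}_\psi$ for the isomorphism just described. By Definition \ref{def:pi-psi-chi}, Proposition \ref{prop:T-psi-Ind}, and re-indexing the sum by $x=\iota(x_M)$,
\[
\pi_{\psi,\chi} = |\EuScript{S}_\psi|^{-1}\sum_{x\in\EuScript{S}_\psi}\chi(x_\psi x)\, T_{\psi,x} = |\EuScript{S}_{\psi_M}|^{-1}\sum_{x_M\in\EuScript{S}_{\psi_M}}\chi\bigl(x_\psi\,\iota(x_M)\bigr)\,\Ind^{\tilde{G}}_{\tilde{M}}\bigl(T_{\psi_M,x_M}\bigr).
\]
Since $x_\psi = \iota(x_{\psi_M})$ and $\chi$ on $\EuScript{S}_\psi$ pulls back along $\iota$ to $\chi$ on $\EuScript{S}_{\psi_M}$, we have $\chi(x_\psi\,\iota(x_M)) = \chi(x_{\psi_M}x_M)$; pulling the linear map $\Ind^{\tilde{G}}_{\tilde{M}}$ out of the sum gives $\pi_{\psi,\chi} = \Ind^{\tilde{G}}_{\tilde{M}}(\pi_{\psi_M,\chi})$, as wanted. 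The only point that warrants real care is the verification in the previous paragraph that the natural map $\EuScript{S}_{\psi_M}\to\EuScript{S}_\psi$ is an isomorphism matching $x_{\psi_M}$ with $x_\psi$; everything analytic has already been absorbed into Proposition \ref{prop:T-psi-Ind}, so I expect no substantive obstacle beyond this bookkeeping.
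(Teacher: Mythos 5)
Your proof is correct and follows essentially the same route as the paper: the paper treats all the structural assertions as immediate bookkeeping and proves only the last identity, exactly as you do, by combining Definition \ref{def:pi-psi-chi} with Proposition \ref{prop:T-psi-Ind} and the observation that $s_{\psi_M}\mapsto s_\psi$, hence $x_{\psi_M}\mapsto x_\psi$, under $\EuScript{S}_{\psi_M}\rightiso\EuScript{S}_\psi$. Your extra verification of the component-group isomorphisms and of $\psi_M\mapsto\psi$ is accurate and merely spells out what the paper leaves implicit.
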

\begin{proof}
	Only the last equality requires proof. Observe that $s_{\psi_M} \in S_{\psi_M}$ maps to $s_\psi \in S_\psi$, hence $x_{\psi_M} \mapsto x_\psi$. We deduce
	\begin{align*}
		\pi_{\psi, \chi} & = |\EuScript{S}_\psi|^{-1} \sum_{x \in \EuScript{S}_\psi} \chi(x_\psi x) T_{\psi, x} \\
		& = |\EuScript{S}_{\psi_M}|^{-1} \sum_{x \in \EuScript{S}_{\psi_M}} \chi(x_{\psi_M} x) \Ind^{\tilde{G}}_{\tilde{M}} \left( T_{\psi_M, x} \right) \\
		& = \pi_{\psi_M, \chi},
	\end{align*}
	where Proposition \ref{prop:T-psi-Ind} is applied in the second equality.
\end{proof}

\begin{proposition}[Archimedean infinitesimal characters]\label{prop:pi-psi-inf-char}
	When $F$ is Archimedean, we have
	\[ \pi_{\psi, \chi} \in D_{\mathrm{spec}, \lambda(\psi), -}(\tilde{G}) \otimes \mes(G)^\vee \]
	for all $\chi$, where $\lambda(\psi)$ (resp.\ $D_{\mathrm{spec}, \lambda(\psi), -}(\cdots)$) is defined as in \S\ref{sec:inf-character} (resp.\ Proposition \ref{prop:T-psi-inf-char}).
\end{proposition}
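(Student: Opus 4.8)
The plan is to deduce this immediately from Proposition \ref{prop:T-psi-inf-char} together with the defining formula for $\pi_{\psi,\chi}$. First I would recall that, by Lemma \ref{prop:T-psi-s}, for each $x \in \EuScript{S}_\psi$ the distribution $T_{\psi,x} = T_{\psi,s}$ is well-defined, where $s \in S_{\psi,2}$ is any preimage of $x$; and by Proposition \ref{prop:T-psi-inf-char} each such $T_{\psi,x}$ lies in the subspace $D_{\mathrm{spec},\lambda(\psi),-}(\tilde{G}) \otimes \mes(G)^\vee$ of $D_{\mathrm{spec},-}(\tilde{G}) \otimes \mes(G)^\vee$.

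Next, Definition \ref{def:pi-psi-chi} expresses $\pi_{\psi,\chi}$ as the finite $\CC$-linear combination
\[ \pi_{\psi,\chi} = |\EuScript{S}_\psi|^{-1} \sum_{x \in \EuScript{S}_\psi} \chi(x_\psi x)\, T_{\psi,x} \]
of these distributions. Since $D_{\mathrm{spec},\lambda(\psi),-}(\tilde{G}) \otimes \mes(G)^\vee$ is by construction a $\CC$-vector subspace (it is the span of genuine irreducible characters with infinitesimal character $\lambda(\psi)$), it is closed under $\CC$-linear combinations, and hence $\pi_{\psi,\chi}$ belongs to it for every $\chi \in \EuScript{S}_\psi^\vee$. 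This is precisely the assertion.

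There is no real obstacle here: the content is entirely carried by Proposition \ref{prop:T-psi-inf-char}, whose proof in turn rests on the fact that the (twisted and ordinary) spectral transfer preserves infinitesimal characters, reducing to the known statement for $\SO(2m+1)$ and twisted $\GL(2m)$. The only point worth a sentence of care is that $T_{\psi,x}$ is genuinely a function of $x$ rather than of the chosen representative $s$, which is exactly Lemma \ref{prop:T-psi-s}; once that is in hand the argument is a one-line linearity observation.
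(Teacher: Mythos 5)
Your argument is correct and is exactly the paper's proof, which simply says the proposition is immediate from Proposition \ref{prop:T-psi-inf-char}; you have only spelled out the (trivial) linearity step, namely that $\pi_{\psi,\chi}$ is a finite $\CC$-linear combination of the $T_{\psi,x}$ and that $D_{\mathrm{spec},\lambda(\psi),-}(\tilde{G}) \otimes \mes(G)^\vee$ is a subspace. Nothing further is needed.
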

\begin{proof}
	Immediate from Proposition \ref{prop:T-psi-inf-char}.
\end{proof}

\begin{proposition}[Unramified case]\label{prop:pi-psi-nr}
	In the unramified situation, use the unramified Haar measure on $G(F)$ and denote the hyperspecial subgroup in question as $K$. If $\psi$ is unramified (Definition \ref{def:unramified-psi}), then
	\[ \pi_{\psi, \chi}(f_K) = \begin{cases}
		1, & \text{if $\chi = \mathbf{1}$} \\
		0, & \text{otherwise.}
	\end{cases}\]
	
	If $\psi$ is not unramified, then $\pi_{\psi, \chi}(f_K) = 0$ for all $\chi$.
\end{proposition}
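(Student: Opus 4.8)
The plan is to deduce the statement directly from Proposition \ref{prop:T-psi-nr} by Fourier inversion over the finite abelian group $\EuScript{S}_\psi$, so essentially no new work beyond unwinding Definition \ref{def:pi-psi-chi} is needed.

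First I would recall that, by Lemma \ref{prop:T-psi-s}, $T_{\psi, s}$ depends only on the image $x$ of $s$ in $\EuScript{S}_\psi$, so that $T_{\psi, x}(f_K)$ is well-defined, and that Proposition \ref{prop:T-psi-nr} gives $T_{\psi, x}(f_K) = 1$ for every $x \in \EuScript{S}_\psi$ when $\psi$ is unramified, and $T_{\psi, x}(f_K) = 0$ for every $x$ otherwise.

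Next, if $\psi$ is not unramified, I substitute this into $\pi_{\psi, \chi} = |\EuScript{S}_\psi|^{-1}\sum_{x \in \EuScript{S}_\psi} \chi(x_\psi x)\, T_{\psi, x}$ from Definition \ref{def:pi-psi-chi} and evaluate at $f_K$; every summand is zero, hence $\pi_{\psi, \chi}(f_K) = 0$ for all $\chi$. If $\psi$ is unramified, evaluating at $f_K$ gives $\pi_{\psi, \chi}(f_K) = |\EuScript{S}_\psi|^{-1} \sum_{x \in \EuScript{S}_\psi} \chi(x_\psi x) = |\EuScript{S}_\psi|^{-1}\, \chi(x_\psi) \sum_{x \in \EuScript{S}_\psi} \chi(x)$; by the orthogonality relations the inner sum equals $|\EuScript{S}_\psi|$ if $\chi = \mathbf{1}$ and vanishes otherwise, and in the former case $\chi(x_\psi) = 1$. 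This yields the asserted values.

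There is no real obstacle here: all the substance — the fundamental lemma for $\tilde{G}$, Arthur's evaluation $S\Theta^{G^!}_{\psi^!}(\mathbf{1}_{K^!})$ for unramified $\psi^!$, and the twisted fundamental lemma used to handle the non-unramified case — is already packaged into Proposition \ref{prop:T-psi-nr}; granting that input, the present statement reduces to character theory on $\EuScript{S}_\psi$. The only minor point to keep straight is that $T_{\psi,x}$ does not depend on the chosen representative $s$, which is exactly Lemma \ref{prop:T-psi-s}.
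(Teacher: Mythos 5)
Your proposal is correct and follows exactly the paper's own argument: evaluate $T_{\psi,x}(f_K)$ via Proposition \ref{prop:T-psi-nr} and then apply Fourier inversion/orthogonality on $\EuScript{S}_\psi$ in Definition \ref{def:pi-psi-chi}. Nothing is missing; the paper's proof is just a terser version of the same computation.
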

\begin{proof}
	We have $\pi_{\psi, \chi}(f_K) = |\EuScript{S}_\psi|^{-1} \sum_x \chi(x_\psi x) T_{\psi, x}(f_K)$. Suppose $\psi$ is unramified. By Proposition \ref{prop:T-psi-nr}, the above is equal to
	\[ |\EuScript{S}_\psi|^{-1} \sum_x \chi(x_\psi x) = \begin{cases}
		1, & \text{if $\chi = \mathbf{1}$} \\
		0, & \text{otherwise.}
	\end{cases}\]
	
	Suppose $\psi$ is not unramified, then Proposition \ref{prop:T-psi-nr} implies $T_{\psi, x}(f_K) = 0$ for all $x$, thus $\pi_{\psi, \chi}(f_K) = 0$.
\end{proof}

For the next result, we use the image of $-1 \in \tilde{G}^\vee$ in $\EuScript{S}_\psi$ to define the involution $x \mapsto -x$ of $\EuScript{S}_\psi$.

\begin{proposition}[Translation by $-1$]\label{prop:p-psi-negation}
	Let $\tau_{-1}$ be as in Proposition \ref{prop:T-psi-negation}. For all $\psi \in \Psi^+(\tilde{G})$ and $\chi \in \EuScript{S}_\psi^\vee$, we have
	\[ \tau_{-1}\left( \pi_{\psi, \chi} \right) = \chi(-1) \epsilon(\psi|_{\mathcal{L}_F}) \pi_{\psi, \chi}. \]
\end{proposition}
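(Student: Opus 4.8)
The plan is to deduce this purely formally from Proposition \ref{prop:T-psi-negation} together with a change of summation index in the Fourier expansion that defines $\pi_{\psi, \chi}$; no new representation-theoretic input is needed.

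First I would recast Proposition \ref{prop:T-psi-negation} in the language of component groups. Let $\varepsilon \in \EuScript{S}_\psi$ be the image of the central element $-1 \in Z_{\tilde{G}^\vee} \subset \tilde{G}^\vee$, so that by construction $-x = x\varepsilon$ for the involution $x \mapsto -x$ of $\EuScript{S}_\psi$. If $s \in S_{\psi, 2}$ has image $x \in \EuScript{S}_\psi$, then $-s$ has image $x\varepsilon$, and since $\epsilon(\psi|_{\mathcal{L}_F}) \in \{\pm 1\}$ (as noted in the proof of Proposition \ref{prop:T-psi-negation}), that proposition together with Lemma \ref{prop:T-psi-s} — which guarantees that $T_{\psi, x}$ depends only on $x$ — gives the identity
\[ \tau_{-1}\bigl( T_{\psi, x} \bigr) = \epsilon(\psi|_{\mathcal{L}_F})\, T_{\psi, x\varepsilon}, \qquad x \in \EuScript{S}_\psi. \]

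Next, since $\tau_{-1}$ is linear, I would apply it term by term to Definition \ref{def:pi-psi-chi} and substitute the displayed identity:
\begin{align*}
	\tau_{-1}\bigl( \pi_{\psi, \chi} \bigr) &= |\EuScript{S}_\psi|^{-1} \sum_{x \in \EuScript{S}_\psi} \chi(x_\psi x)\, \tau_{-1}\bigl( T_{\psi, x} \bigr) \\
	&= \epsilon(\psi|_{\mathcal{L}_F})\, |\EuScript{S}_\psi|^{-1} \sum_{x \in \EuScript{S}_\psi} \chi(x_\psi x)\, T_{\psi, x\varepsilon}.
\end{align*}
Now I substitute $y = x\varepsilon$; because $\EuScript{S}_\psi$ is a finite abelian $2$-torsion group, $y$ ranges over $\EuScript{S}_\psi$ as $x$ does, and $x = y\varepsilon$. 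Using $\chi(x_\psi x) = \chi(x_\psi y \varepsilon) = \chi(\varepsilon)\,\chi(x_\psi y)$ and the convention that $\chi(-1)$ denotes $\chi(\varepsilon)$ (the value of $\chi$ on the image of $-1 \in Z_{\tilde{G}^\vee}$), I obtain
\[ \tau_{-1}\bigl( \pi_{\psi, \chi} \bigr) = \chi(-1)\,\epsilon(\psi|_{\mathcal{L}_F})\, |\EuScript{S}_\psi|^{-1} \sum_{y \in \EuScript{S}_\psi} \chi(x_\psi y)\, T_{\psi, y} = \chi(-1)\,\epsilon(\psi|_{\mathcal{L}_F})\, \pi_{\psi, \chi}, \]
which is the assertion.

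I do not expect a genuine obstacle: the statement is a formal consequence of Proposition \ref{prop:T-psi-negation} and Fourier inversion on $\EuScript{S}_\psi$. The only points deserving a word of care are the identification of $-1 \in \tilde{G}^\vee$ with its image $\varepsilon \in \EuScript{S}_\psi$ (so that the $\chi(-1)$ in the statement really is $\chi(\varepsilon)$) and the compatibility of "multiply by $-1 \in \tilde{G}^\vee$" on $S_{\psi, 2}$ with the quotient map to $\EuScript{S}_\psi$, both of which are immediate. Finally, I would remark that the same computation applies verbatim to groups of metaplectic type, since Definition \ref{def:pi-psi-chi} and Propositions \ref{prop:T-psi-Ind} and \ref{prop:T-psi-negation} all extend to that setting.
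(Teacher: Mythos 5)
Your proposal is correct and is essentially the paper's own proof: both arguments combine the Fourier expansion of Definition \ref{def:pi-psi-chi} with Proposition \ref{prop:T-psi-negation} and a reindexing $x \mapsto x\varepsilon$ over the $2$-torsion group $\EuScript{S}_\psi$ (the paper performs the substitution inside the sum before invoking $\tau_{-1}$, whereas you apply $\tau_{-1}$ first, but this is only a cosmetic difference, both hinging on $\epsilon(\psi|_{\mathcal{L}_F}) \in \{\pm 1\}$). Your explicit remarks on the compatibility of $s \mapsto -s$ with the quotient $S_{\psi,2} \to \EuScript{S}_\psi$ and on Lemma \ref{prop:T-psi-s} are correct and merely make explicit what the paper leaves implicit.
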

\begin{proof}
	We have
	\begin{align*}
		\pi_{\psi, \chi} & = |\EuScript{S}_\psi|^{-1} \sum_x \chi(x_\psi x) T_{\psi, x} = |\EuScript{S}_\psi|^{-1} \sum_x \chi(-x_\psi x) T_{\psi, -x}. \\
		& = \chi(-1) |\EuScript{S}_\psi|^{-1} \sum_x \chi(x_\psi x) \epsilon(\psi|_{\mathcal{L}_F}) \tau_{-1} \left( T_{\psi, x} \right) \\
		& = \chi(-1) \epsilon(\psi|_{\mathcal{L}_F}) \tau_{-1}\left( \pi_{\psi, \chi} \right)
	\end{align*}
	by Proposition \ref{prop:T-psi-negation}.
\end{proof}

\subsection{Arthur packets}\label{sec:A-packets}
Let $\Pi_-(\tilde{G})$ be the set of isomorphism classes of genuine irreducible representations of $\tilde{G}$, and $\Pi_{\mathrm{unit}, -}(\tilde{G})$ the subset of unitary ones.
\index{Pi-G-tilde@$\Pi_-(\tilde{G}), \Pi_{\mathrm{unit}, -}(\tilde{G})$}

The validity of Theorem \ref{prop:local-desiderata} is assumed this subsection. Adopting Arthur's point of view (cf.\ \cite[Theorem 1.5.1]{Ar13}), the packets associated with Arthur parameters of $\tilde{G}$ are defined as multi-sets as follows.

\begin{definition}\label{def:A-packet}
	\index{Pi-psi@$\Pi_\psi$}
	\index{Arthur packet}
	Let $\psi \in \Psi(\tilde{G})$. For every $\chi \in \EuScript{S}_\psi^\vee$, expand $\pi_{\psi, \chi}$ into
	\[ \pi_{\psi, \chi} = \sum_{\pi \in \Pi_{\mathrm{unit}, -}(\tilde{G})} m_{\psi, \chi}(\pi) \Theta_\pi \]
	where $m_{\psi, \chi}(\pi) \in \Z_{\geq 0}$. Let $\Pi_{\psi}$ be the finite set consisting of all triplets $(\chi, \pi, k)$ with
	\[ \chi \in \EuScript{S}_\psi^\vee, \quad \pi \in \Pi_{\mathrm{unit}, -}(\tilde{G}), \quad k \in \Z, \quad 1 \leq k \leq m_{\psi, \chi}(\pi). \]
	
	This set will be called the \emph{Arthur packet} associated with $\psi$. It admits two maps
	\[\begin{tikzcd}[row sep=tiny]
		\EuScript{S}_\psi^\vee & \Pi_\psi \arrow[l] \arrow[r] & \Pi_{\mathrm{unit}, -}(\tilde{G}) \\
		\chi & (\chi, \pi, k) \arrow[mapsto, l] \arrow[mapsto, r] & \pi .
	\end{tikzcd}\]
\end{definition}

In Arthur's terminologies \cite[\S 1.5]{Ar13}:
\begin{itemize}
	\item $\Pi_\psi$ together with the map to $\Pi_{\mathrm{unit}, -}(\tilde{G})$ is said to be a set over $\Pi_{\mathrm{unit}, -}(\tilde{G})$;
	\item the cardinality of the fiber of $\Pi_\psi \to \Pi_{\mathrm{unit}, -}(\tilde{G})$ over $\pi$ is called the multiplicity of $\pi$ in $\Pi_\psi$, which equal to $\sum_\chi m_{\psi, \chi}(\pi)$;
	\item abusing notations, one often denotes an element of $\Pi_\psi$ as $\pi$, although in reality it carries extra labels $\chi$ and $k$;
	\item the map $\Pi_\psi \to \EuScript{S}_\psi^\vee$ is denoted by $\pi \mapsto \lrangle{\cdot, \pi}$ in the notation of \cite[Theorem 1.5.1]{Ar13}.
\end{itemize}
\index{$\lrangle{\cdot, \pi}$}

For all $x \in \EuScript{S}_\psi$, the expansion of $T_{\psi, x}$ becomes
\begin{equation*}
	T_{\psi, x} = \sum_{\pi \in \Pi_\psi} \lrangle{x_\psi x, \pi} \Theta_\pi .
\end{equation*}
This is akin to \cite[(2.2.6)]{Ar13}, but one has to keep mind that $T_{\psi, x}$ involves some local root number.

\begin{definition}\label{def:multiplicity-free}
	\index{multiplicity-free}
	\index{Pi-psi-mf@$\Pi_\psi^{\mathrm{mf}}$}
	The multiplicity-free part $\Pi_\psi^{\mathrm{mf}} \subset \Pi_\psi$ is the preimage under $\Pi_\psi \to \Pi_{\mathrm{unit}, -}(\tilde{G})$ of $\{\pi : \text{multiplicity $=1$ in $\Pi_\psi$} \}$.
	
	We say $\Pi_\psi$ is \emph{multiplicity-free} if $\Pi_\psi^{\mathrm{mf}} = \Pi_\psi$, i.e.\ if each $\pi \in \Pi_{\mathrm{unit}, -}(\tilde{G})$ has multiplicity $0$ or $1$ in $\Pi_\psi$.
\end{definition}

The same constructions work for $\psi \in \Psi^+(\tilde{G})$. The only difference is that one cannot assert unitarity anymore: the Arthur packet $\Pi_\psi$ now sits between $\EuScript{S}_\psi^\vee \leftarrow \Pi_\psi \to \Pi_-(\tilde{G})$.

\begin{proposition}\label{prop:unramified-mf}
	In the unramified situation, consider $\psi \in \Psi^+(\tilde{G})$.
	\begin{enumerate}[(i)]
		\item If $\psi$ is unramified, there is a unique $K$-spherical $\pi \in \Pi_-(\tilde{G})$ that has non-zero multiplicity in $\Pi_\psi$; in fact, $\lrangle{\cdot, \pi} = \mathbf{1}$ and $\pi \in \Pi_\psi^{\mathrm{mf}}$. If $\psi$ is not unramified, there are no $K$-spherical representations in $\Pi_-(\tilde{G})$.
		\item Suppose $\psi, \psi' \in \Psi^+(\tilde{G})$ are both unramified. If $\Pi_\psi, \Pi_{\psi'}$ have a $K$-spherical member in common, then $\psi = \psi'$.
		\item Suppose $\psi \in \Psi(\tilde{G})$, then the $\pi$ in (i) has Satake parameter equal to $\phi_\psi(\Frob) \in \tilde{G}^\vee_{\mathrm{ss}} / \mathrm{conj}$.
	\end{enumerate}
\end{proposition}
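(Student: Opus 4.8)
The plan is to treat the three parts in the order (i), (iii), (ii), since (iii) powers the proof of (ii), relying on Proposition \ref{prop:pi-psi-nr}, the spherical fundamental lemma together with \eqref{eqn:Satake-Mp}, and Arthur's description of $S\Theta$ at a hyperspecial place for split odd $\SO$. For (i): since $\mathcal{H}_{\asp}(K\backslash\tilde{G}/K)$ is commutative, for a genuine irreducible $\pi$ the operator $\pi(f_K)$ projects onto the at-most-one-dimensional space $\pi^K$ of $K$-fixed vectors, so $\Theta_\pi(f_K)=\dim_{\CC}\pi^K\in\{0,1\}$, equal to $1$ exactly when $\pi$ is $K$-spherical. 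Expanding $\pi_{\psi,\chi}=\sum_\pi m_{\psi,\chi}(\pi)\Theta_\pi$ and evaluating at $f_K$, Proposition \ref{prop:pi-psi-nr} gives, when $\psi$ is unramified, $\sum_{\pi\ K\text{-sph}}m_{\psi,\mathbf 1}(\pi)=1$ and $\sum_{\pi\ K\text{-sph}}m_{\psi,\chi}(\pi)=0$ for $\chi\neq\mathbf 1$; when $\psi$ is not unramified all of these sums vanish. As the multiplicities lie in $\Z_{\geq 0}$, this forces a unique $K$-spherical member of $\Pi_\psi$ in the unramified case, occurring with multiplicity $1$ and only inside $\pi_{\psi,\mathbf 1}$ (so $\lrangle{\cdot,\pi}=\mathbf 1$ and $\pi\in\Pi_\psi^{\mathrm{mf}}$), and none in the non-unramified case.

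For (iii), I would pass to the Hecke algebra. A non-$K$-spherical genuine irreducible character annihilates $\mathcal{H}_{\asp}(K\backslash\tilde{G}/K)$, so part (i) shows that the $K$-spherical part of $\pi_{\psi,\chi}$ equals $\Theta_\pi$ when $\chi=\mathbf 1$ and is zero otherwise; taking $K$-spherical parts in $T_{\psi,x}=\sum_\chi\chi(x_\psi x)\pi_{\psi,\chi}$ then yields $\Theta_\pi(h)=T_{\psi,x}(h)$ for every $x\in\EuScript{S}_\psi$ and every $h\in\mathcal{H}_{\asp}(K\backslash\tilde{G}/K)$. Taking $x=1$, the corresponding endoscopic datum is the one with $(n',n'')=(n,0)$, so $G^!=H:=\SO(2n+1)$, $\psi^!=\psi$, and $\epsilon(\psi^{s=-1})=1$; hence $\Theta_\pi(h)=\trans_{\mathbf{G}^!,\tilde{G}}\!\bigl(S\Theta^{H}_\psi\bigr)(h)=S\Theta^{H}_\psi\bigl(b_{\mathbf{G}^!,\tilde{G}}(h)\bigr)$ by the dualized spherical fundamental lemma. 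For an unramified parameter, $S\Theta^{H}_\psi$ evaluated on a spherical function is the Satake transform evaluated at $\phi_\psi(\Frob)$: on such functions only the unramified member of $\Pi_\psi(H)$ contributes, its $\EuScript{S}_\psi$-character is trivial, and its Satake parameter is $\phi_\psi(\Frob)$ by Arthur (for $\psi\in\Psi^+(H)$ this is reduced to the case $\psi\in\Psi(H)$ by parabolic induction). Finally \eqref{eqn:Satake-Mp} identifies $\mathcal{S}_H\circ b_{\mathbf{G}^!,\tilde{G}}$ with $\mathcal{S}_{\tilde{G}}$, giving $\Theta_\pi(h)=\mathcal{S}_{\tilde{G}}(h)(\phi_\psi(\Frob))$ for all $h$, which is precisely the assertion that the Satake parameter of $\pi$ is $\phi_\psi(\Frob)$.

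For (ii), I would note that the computation in (iii) applies verbatim to any unramified $\psi\in\Psi^+(\tilde{G})$ once $\phi_\psi$ is defined by \eqref{eqn:phi-psi}; so a $K$-spherical representation common to $\Pi_\psi$ and $\Pi_{\psi'}$ has Satake parameter equal to both $\phi_\psi(\Frob)$ and $\phi_{\psi'}(\Frob)$, and since $\CC[\tilde{G}^\vee/\!/\tilde{G}^\vee]$ separates semisimple conjugacy classes these agree in $\tilde{G}^\vee_{\mathrm{ss}}/\mathrm{conj}$. It then remains to reconstruct the parameter from $\phi_\psi(\Frob)$: writing $\psi=\bigoplus_i m_i\phi_i\boxtimes r(b_i)$ with each $\phi_i$ an unramified character, the eigenvalues of $\phi_\psi(\Frob)$ in the standard representation form the union, with multiplicities $m_i$, of the balanced segments $\{\phi_i(\Frob)q^{(b_i-1)/2-h}\}_{0\le h\le b_i-1}$, and a greedy extraction starting from the entries of largest absolute value returns the triples $(\phi_i,b_i,m_i)$, whence $\psi=\psi'$. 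I expect the reduction of (ii) to this last combinatorial input to be routine; the steps requiring the most care are the exact form of Arthur's $S\Theta^{H}_\psi$ on spherical functions for non-tempered (in particular $\Psi^+$) parameters — where one must run the parabolic-induction reduction and check that the spherical constituent of the induced representation carries Satake parameter $\phi_\psi(\Frob)$ — and pinning down the precise class of unramified parameters on which $\psi\mapsto\phi_\psi$ is injective, which is where I would expect to have to be careful rather than in the representation theory itself.
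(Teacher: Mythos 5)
Your part (i) is exactly the paper's argument (non-negative integral coefficients plus Proposition \ref{prop:pi-psi-nr}, together with $\Theta_\pi(f_K)=\dim\pi^K\in\{0,1\}$), and your part (iii) also tracks the paper: after restricting $T_{\psi,x}$ to the anti-genuine spherical Hecke algebra and invoking the fundamental lemma, everything is reduced to knowing the Satake parameter of the unramified constituent of $S\Theta^H_\psi$, which is precisely what the paper supplies by citing the L-packet inside the Arthur packet for $H$ and the survival of spherical vectors in Langlands quotients. Your ``by Arthur'' black box is that same input, so (i) and (iii) are fine.

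Part (ii) is where there is a genuine gap. You reduce (ii) to the claim that an unramified $\psi\in\Psi^+(\tilde{G})$ is determined by the semisimple class $\phi_\psi(\Frob)$, recovered by a greedy extraction of segments. This claim is false on $\Psi^+(\tilde{G})$ as defined (boundedness dropped entirely): already for $n=1$, the parameters $\psi=\mathbf{1}\boxtimes r(2)$ and $\psi'=|\cdot|_F^{1/2}\oplus|\cdot|_F^{-1/2}$ (the latter lies in $\Psi^+(\tilde{G})$, being the image of $|\cdot|_F^{1/2}$ under \eqref{eqn:psi-induction} from the Levi $\GL(1)$) are distinct unramified parameters with $\phi_\psi(\Frob)=\phi_{\psi'}(\Frob)=\mathrm{diag}(q^{1/2},q^{-1/2})$; no extraction procedure can tell the multiset $\{q^{1/2},q^{-1/2}\}$ apart as one segment of length $2$ versus two segments of length $1$. (Concretely, $\pi_{\psi',\mathbf{1}}=I_{\tilde{B}}(|\cdot|_F^{1/2})$ contains the even Weil representation $\omega^+_{\bpsi}$ as its Langlands quotient, which is also the spherical member of $\Pi_\psi$ by Proposition \ref{prop:principal}.) Your own closing caveat — that one must ``pin down the precise class of unramified parameters on which $\psi\mapsto\phi_\psi$ is injective'' — is exactly the unresolved point: the segment recovery does go through for $\psi\in\Psi(\tilde{G})$, or more generally when all exponents of the $\phi_i$ are $<\tfrac12$ in absolute value (the almost-tempered range relevant to localizations of global parameters), but not on all of $\Psi^+(\tilde{G})$. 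The paper avoids stating any such combinatorial claim: it deduces from the fundamental lemma that the unramified members of $\Pi^H_\psi$ and $\Pi^H_{\psi'}$ coincide and then reduces the uniqueness of the parameter to $H$ and, via the twisted fundamental lemma, to the twisted $\GL(2n)$. If you want a self-contained argument along your lines, you must either restrict the class of parameters (which is all that Corollary \ref{prop:global-mf} actually needs, since there only the equality of Satake parameters at almost all places is used before invoking Jacquet--Shalika) or follow the paper's reduction; as written, the last step of your (ii) does not hold.
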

\begin{proof}
	For (i), since $\pi_{\psi, \chi}$ are non-negative integral combinations of irreducible characters, the assertion follows immediately from Proposition \ref{prop:pi-psi-nr}. Also recall that $H := \SO(2n+1)$ satisfies a similar property.
	
	For (ii), use the elliptic endoscopic datum corresponding to $(n, 0)$ to obtain
	\[ \sum_\chi \chi(s_\psi) \pi_{\psi, \chi} = \trans_{(n, 0)} \left( S\Theta^H_\psi \right), \]
	and restrict both sides to $\mathcal{H}_{\asp}(K \backslash \tilde{G} / K)$ tensored with the unramified Haar measure. By (i), the left hand side contains a unique $K$-spherical constituent with coefficient one, and similarly for $S\Theta^H_\psi$. By the spherical fundamental lemma for $\tilde{G}$, the Satake parameters of these spherical constituents must match.
	
	The same is also true for $\psi'$, with the same Satake parameter as before. To prove (ii), we are thus reduced to the corresponding statement for $H$, which can be seen by transferring to the twisted $\GL(2n)$ via twisted spherical fundamental lemma \cite{LMW18}.

	For the same reason, (iii) also reduces to $H$. We then consider the L-packet within the Arthur packet \cite[Proposition 7.4.1]{Ar13} and apply the fact that spherical representations survive in Langlands quotients.
\end{proof}

For a direct argument for Proposition \ref{prop:unramified-mf} (iii) without transfer to $H$, see \S\ref{sec:L-within}.

\section{Global theory}\label{sec:global}
In this section, $\dot{F}$ is a number field, and $\dot{\bpsi} = \prod_v \bpsi_v$ is an additive character of $\dot{F} \backslash \A$.

Since we are mainly dealing with global objects, and also for aesthetic concerns, we will not decorate the $\dot{F}$-groups or coverings of their adélic points with a dot. Therefore, we shall consider a symplectic $\dot{F}$-vector space $(W, \lrangle{\cdot|\cdot})$ of dimension $2n$ and the covering $\tilde{G} := \Mp(W, \A) \xrightarrow{\rev} G(\A) := \Sp(W, \A)$. Identify $G(\dot{F})$ with its image in $\tilde{G}$.

\subsection{The discrete automorphic spectrum}\label{sec:disc}
Decompose the genuine $L^2$-automorphic spectrum of $\tilde{G}$ into the orthogonal direct sum of discrete and continuous parts
\[ L^2_-(G(\dot{F}) \backslash \tilde{G}) = L^2_{\mathrm{disc}, -} \oplus L^2_{\mathrm{cont}, -} . \]
These are genuine unitary representations of $\tilde{G}$.
\index{L2-disc-cont@$L^2_{\mathrm{disc}, -}, L^2_{\mathrm{cont}, -}$}

Recall the finite set $V_{\mathrm{ram}}$ of places of $\dot{F}$ from \eqref{eqn:V-ram}; it contains all $v \mid \infty$. Fix an $\mathfrak{o}_{\dot{F}}$-lattice $L \subset W$ to define $K_v \subset G(\dot{F}_v)$ and embed it into $\tilde{G}_v$ for each $v \notin V_{\mathrm{ram}}$.

When $v \notin V_{\mathrm{ram}}$, we have the anti-genuine spherical Hecke algebra $\mathcal{H}_v := \mathcal{H}_{\asp}(K_v \backslash \tilde{G}_v / K_v)$. For every finite set $V \supset V_{\mathrm{ram}}$ of places, define $K^V := \prod_{v \notin V} K_v$, and let $\mathcal{C}^V$ be the set of all $c^V = (c_v)_{v \notin V}$ where $c_v$ is a Satake parameter for $\tilde{G}_v$, i.e.\ a semisimple conjugacy class in $\tilde{G}^\vee$. When $V' \supset V$, we have the projection $\mathcal{C}^V \to \mathcal{C}^{V'}$. The set of all such $V \supset V_{\mathrm{ram}}$ is directed by inclusion. Define
\[ \mathcal{C} := \varinjlim_{V \supset V_{\mathrm{ram}}} \mathcal{C}^V. \]

From the actions of the commutative $\CC$-algebras $\bigotimes'_{v \notin V} \mathcal{H}_v$ on $K^V$-invariants, for various $V$, we obtain an orthogonal decomposition
\begin{equation}\label{eqn:L2-disc-c}
	L^2_{\mathrm{disc}, -} = \widehat{\bigoplus}_{c \in \mathcal{C}} L^2_{\mathrm{disc}, c}.
\end{equation}

Set $H = \SO(2n+1)$ and $H_v$ its base change to $\dot{F}_v$. Since $H^\vee = \tilde{G}^\vee$, the Satake parameters for $\tilde{G}_v$ and $H_v$ are the same; they are also the same as elements of $\Psi_{\mathrm{symp}}(2n)$ that are trivial on $I_{\dot{F}_v} \times \SL(2, \CC)$.

Global Arthur parameters have been reviewed in \S\ref{sec:global-parameters}: they are certain formal linear combinations of cuspidal automorphic representations of linear groups. There is a natural map from $\dot{\Psi}(H) = \dot{\Psi}(\tilde{G})$ to $\mathcal{C}$, namely by taking the unramified local components; it is injective by Jacquet--Shalika \cite[Theorem 4.4]{JS81}. We conclude that
\begin{equation}\label{eqn:JS-psi}
	\begin{tikzcd}[column sep=small]
		\{c \in \mathcal{C}: L^2_{\mathrm{disc}, c} \neq 0 \} \arrow[phantom, r, "\subset" description] & \mathcal{C} & \arrow[hookrightarrow, l] \dot{\Psi}(H) = \dot{\Psi}(\tilde{G}) \arrow[phantom, r, "\supset" description] & \dot{\Psi}_2(\tilde{G}).
	\end{tikzcd}
\end{equation}

The following deep result by Gan and Ichino \cite{GI18} relates the two extremities in \eqref{eqn:JS-psi}.

\begin{theorem}[Gan--Ichino {\cite[Theorem 1.1]{GI18}}]\label{prop:GI-disc}
	\index{L2-psi@$L^2_{\dot{\psi}}$}
	If $c \in \mathcal{C}$ satisfies $L^2_{\mathrm{disc}, c} \neq 0$, then $c$ belongs to the image of $\dot{\Psi}_2(\tilde{G})$. In other words,
	\begin{equation*}
		L^2_{\mathrm{disc}, -} = \widehat{\bigoplus}_{\dot{\psi} \in \dot{\Psi}_2(\tilde{G})} L^2_{\dot{\psi}},
	\end{equation*}
	where $L^2_{\dot{\psi}} := L^2_{\mathrm{disc}, c}$ with $c \in \mathcal{C}$ being the image of $\dot{\psi}$.
\end{theorem}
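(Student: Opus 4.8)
The statement is \cite[Theorem 1.1]{GI18}; accordingly the proof is to invoke that work, and the part that remains for us is to match Gan--Ichino's indexing set with $\dot{\Psi}_2(\tilde{G})$. Let me recall the mechanism. The engine is the global theta correspondence for the dual pairs $\bigl(\Mp(\dot{W}, \A), \Or(V, \A)\bigr)$, with $V$ ranging over the $(2n+1)$-dimensional quadratic $\dot{F}$-spaces, combined with Arthur's endoscopic classification \cite{Ar13} of the discrete $L^2$-spectrum of the (split or non-split) groups $\SO(2m+1)$ for $m \leq n$, which is indexed by their discrete global Arthur parameters. Under the identifications $\dot{\Psi}(H) = \dot{\Psi}(\tilde{G})$ and $\dot{\Psi}_2(H) = \dot{\Psi}_2(\tilde{G})$ recalled in \S\ref{sec:global-parameters}, it then suffices to show that every $c$ with $L^2_{\mathrm{disc}, c} \neq 0$ is the image of some $\dot{\psi} \in \dot{\Psi}_2(\tilde{G})$, after which one sets $L^2_{\dot{\psi}} := L^2_{\mathrm{disc}, c}$ and reads off the asserted orthogonal decomposition from \eqref{eqn:L2-disc-c}.

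Concretely I would fix an irreducible summand $\dot{\pi} \subset L^2_{\mathrm{disc}, c}$ and run the Rallis tower argument: in a fixed Witt tower of odd quadratic spaces --- with the discriminant character pinned down by $\dot{\bpsi}$ --- there is a first occurrence $V$ at which the global theta lift $\theta_V(\dot{\pi})$ to $\SO(V)(\A)$ is nonzero and cuspidal. The regularized Siegel--Weil formula and the Rallis inner product formula express this non-vanishing (and that of the next lift up in the tower, which lands in the discrete spectrum) through partial $L$-values of $\dot{\pi}$. On the $\SO(V)$ side, Arthur's classification attaches a discrete global parameter to $\theta_V(\dot{\pi})$; tracking Satake parameters through the unramified theta correspondence then recovers $c$ from that parameter, inflated back to a $2n$-dimensional symplectic parameter for $\tilde{G}$ by adjoining a block $\chi_V \boxtimes r(2(n-m))$ with $\dim V = 2m+1$. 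The remaining task is to show this inflated parameter is \emph{discrete}, i.e.\ lies in $\dot{\Psi}_2(\tilde{G})$: this can fail only when the adjoined block already occurs inside the $\SO(V)$ parameter, and one must rule that out.

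That last point --- the ``no embedded Hecke eigenvalues'' property of \cite[Theorem 5]{Ar11}, here for $\tilde{G}$ --- is the main obstacle and is the substance of \cite{GI18}. A non-discrete global parameter sharing $c$'s Satake data could \textit{a priori} contribute to $L^2_{\mathrm{disc}, -}$, and excluding it requires the fine analysis of the poles and image of the global theta lift (equivalently, of the residues of the Siegel Eisenstein series) afforded by the regularized Siegel--Weil formula, fed into Arthur's corresponding exclusion theorem \cite[Theorem 1.5.2]{Ar13} for odd $\SO$. Routing through \cite{GI18} is precisely what lets us import this input for $\tilde{G}$ without reproving the local intertwining relations. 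I would therefore present the proof in two strokes: quote \cite[Theorem 1.1]{GI18} for the hard part, and then spell out the identification $\dot{\Psi}_2(H) = \dot{\Psi}_2(\tilde{G})$ together with the passage from Gan--Ichino's near-equivalence classes to \eqref{eqn:L2-disc-c}.
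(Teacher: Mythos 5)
Your proposal is correct and matches the paper's treatment: the result is imported wholesale from \cite[Theorem 1.1]{GI18}, with the only remaining work being the identification $\dot{\Psi}_2(H) = \dot{\Psi}_2(\tilde{G})$ and the passage from near-equivalence classes to the decomposition \eqref{eqn:L2-disc-c}, exactly as you describe. Your sketch of the internal mechanism of Gan--Ichino (theta towers, Siegel--Weil, Arthur's classification for odd $\SO$) is accurate background but is not reproved in the paper either.
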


To describe the continuous spectrum $L^2_{\mathrm{cont}, -}$, one can pass to the Levi subgroups $\tilde{M}$ of $\tilde{G}$, which are of metaplectic type, and apply Theorem \ref{prop:GI-disc} to the metaplectic factor of $\tilde{M}$ together with the known results for $\GL$ factors.

\begin{remark}
	As discussed in \cite[Remark 1.2]{GI18}, Theorem \ref{prop:GI-disc} entails the \emph{no embedded Hecke eigenvalues} property \cite[Theorem 5]{Ar11} for $\tilde{G}$: the discrete and continuous spectra are separated by Hecke eigenvalues, i.e.\ by their Arthur parameters. In Arthur's endoscopic classification \cite{Ar13} for quasisplit classical groups, this property is one of the end products of his long arguments, whereas in our setting it is available at an early stage. The subsequent arguments will be crucially based on this property.
\end{remark}

We proceed to relate the decomposition in Theorem \ref{prop:GI-disc} to trace formula. For every $v \mid \infty$, fix a maximal compact subgroup $K_v$ of $G(\dot{F}_v)$ in good position relative to the standard maximal torus of $G$.

\begin{definition}
	\index{H-asp@$\mathcal{H}_{\asp}$}
	In what follows, the subscript $\asp$ means ``anti-genuine''. Denote by $\mathcal{H}_{\asp}$ the subspace of $C^\infty_c(\tilde{G}) \otimes \mes(G(\A))$ spanned by elements $\dot{f} = \prod_v f_v$ where
	\begin{itemize}
		\item $f_v \in C^\infty_{c, \asp}(\tilde{G}_v) \otimes \mes(G_v)$;
		\item $f_v$ is $\tilde{K}_v$-bi-finite for all $v \mid \infty$;
		\item there exists a finite set $V \supset V_{\mathrm{ram}}$ of places of $\dot{F}$, such that $f_v$ is $f_{K_v}$ tensored with the unramified Haar measure when $v \notin V$.
	\end{itemize}
	Elements of the form $\prod_v f_v$ are said to be \emph{factorizable}.
\end{definition}

The notion of \emph{semi-finite} genuine distributions on $\tilde{G}$ in the sense of \cite[Definition 13.5.3]{Li21} or \cite[X.5.3]{MW16-2} will be convenient. Roughly speaking, these are formal $\CC$-linear combinations of genuine irreducible characters, and can be evaluated at any $\dot{f} \in \mathcal{H}_{\asp}$, whose value is given by a convergent sum. Moreover, one can extract the part with prescribed infinitesimal characters and Satake parameters at almost all places.

The invariant trace formula for $\tilde{G}$ features the invariant distributions $I_{\mathrm{disc}, \lambda}$, see \cite[\S 13.5]{Li21}, where $\lambda$ prescribes the infinitesimal characters at Archimedean places. This is expressed as a formal infinite sum of genuine irreducible characters. Nonetheless, it is a semi-finite since it is extracted from the distribution $I_{\mathrm{disc}, t}$ which is semi-finite by \textit{loc.\ cit.}

Given $c \in \mathcal{C}$, as in \textit{loc.\ cit.}\ or \cite[X.5.3]{MW16-2}, we can further extract from $I_{\mathrm{disc}, \lambda}$ the part with Satake parameter $c$, denoted by $I_{\mathrm{disc}, \lambda, c}$, which is also semi-finite.

Now consider $\dot{\psi} \in \dot{\Psi}(\tilde{G})$; it determines
\begin{itemize}
	\item the infinitesimal character $\lambda := \lambda(\dot{\psi})$ at Archimedean places, as in \S\ref{sec:inf-character};
	\item the image $c = c(\dot{\psi}) \in \mathcal{C}$ of $\dot{\psi}$ via \eqref{eqn:JS-psi}.
\end{itemize}
Define
\begin{equation*}
	I_{\mathrm{disc}, \dot{\psi}} := I_{\mathrm{disc}, \lambda, c}.
\end{equation*}

The same construction applies to Levi subgroups $\tilde{M} \subset \tilde{G}$, except that we work with $L^2_{\mathrm{disc}, -}(M(\dot{F}) \backslash \tilde{M} / A_{M, \infty})$, where $A_{M, \infty}$ is the familiar connected central subgroup satisfying
\[ A_{M, \infty} \simeq (\R_{> 0}^{\times})^{\dim Z_M}, \quad \mes\left( M(\dot{F}) \backslash \tilde{M} / A_{M, \infty}\right) < +\infty. \]

\begin{lemma}\label{prop:tr-I-disc}
	Let $\dot{\psi} \in \dot{\Psi}_2(\tilde{G})$. By decomposing $L^2_{\dot{\psi}}$ into irreducibles, its character also makes sense as a semi-finite distribution $\Tr L^2_{\dot{\psi}}$ on $\tilde{G}$, and
	\begin{equation*}
		\Tr L^2_{\dot{\psi}} = I_{\mathrm{disc}, \dot{\psi}}.
	\end{equation*}
\end{lemma}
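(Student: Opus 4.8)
The plan is to unwind both sides using the invariant trace formula for $\tilde{G}$ from \cite[\S 13.5]{Li21} and then invoke Theorem \ref{prop:GI-disc}. Recall that, up to the standard normalizing constants, $I_{\mathrm{disc}, \lambda}$ admits a spectral expansion as a sum over Levi subgroups $\tilde{M} \subset \tilde{G}$ and regular Weyl-group elements $w$ of terms $|\det(w - 1)_{\mathfrak{a}_M^G}|^{-1}\,\Tr\!\left( M_{P|P}(w)\,\rho_{\tilde{P}, \mathrm{disc}, \lambda}(\dot{f}) \right)$, where $\rho_{\tilde{P}, \mathrm{disc}, \lambda}$ is parabolically induced from the $\lambda$-isotypic part of $L^2_{\mathrm{disc}, -}\!\left(M(\dot{F})\backslash\tilde{M}/A_{M,\infty}\right)$. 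The term indexed by $\tilde{M} = \tilde{G}$, $w = 1$ is exactly $\Tr L^2_{\mathrm{disc}, \lambda, -}$ (note $A_{G, \infty}$ is trivial since $G$ is semisimple), and all other terms are contributions of proper Levi subgroups.

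First I would extract, on both sides, the summand with Satake parameter $c = c(\dot{\psi})$; this is legitimate since $I_{\mathrm{disc}, \lambda}$ is semi-finite, so one may isolate the $c$-eigenpart under the unramified Hecke algebras exactly as for $I_{\mathrm{disc}, t}$ in \cite[\S 13.5]{Li21} and \cite[X.5.3]{MW16-2}. On the $\tilde{M} = \tilde{G}$ term this produces $\Tr L^2_{\mathrm{disc}, \lambda, c, -}$. By Theorem \ref{prop:GI-disc} one has $L^2_{\mathrm{disc}, c, -} = L^2_{\dot{\psi}}$, and I would combine this with the fact that the constituents of $L^2_{\mathrm{disc}, c, -}$ all have Archimedean infinitesimal character $\lambda(\dot{\psi})$ — equivalently $L^2_{\mathrm{disc}, c, -} = L^2_{\mathrm{disc}, \lambda, c, -}$ — which holds because the Satake parameters at the finite places already pin down the infinitesimal character at infinity (as for $\GL(N)$ by strong multiplicity one, and hence for $\tilde{G}$ through the Gan--Ichino $\Theta$-correspondence and Arthur's classification for the classical groups involved). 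Thus the $\tilde{M} = \tilde{G}$ contribution to $I_{\mathrm{disc}, \dot{\psi}} = I_{\mathrm{disc}, \lambda, c}$ is precisely $\Tr L^2_{\dot{\psi}}$; in particular the latter is semi-finite, being extracted from the semi-finite $I_{\mathrm{disc}, \lambda}$.

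It remains to kill the proper-Levi contributions after extracting the $c$-part, which is where the real content lies. For a proper $\tilde{M}$, the distribution $\Tr\!\left( M_{P|P}(w)\,\rho_{\tilde{P}, \mathrm{disc}, \lambda}(\dot{f}) \right)$ is supported, after restriction to the unramified Hecke algebras, on Satake parameters obtained by pushing the Satake parameters of constituents of $L^2_{\mathrm{disc}, -}(\tilde{M}/A_{M,\infty})$ through $\tilde{M}^\vee \hookrightarrow \tilde{G}^\vee$ and applying $w$. Applying Theorem \ref{prop:GI-disc} to the metaplectic factor of $\tilde{M}$ (together with the known structure of the $\GL$ discrete spectra) shows every such constituent carries a global Arthur parameter $\dot{\psi}_M \in \dot{\Psi}(\tilde{M})$; were one of these pushed-forward parameters equal to $c(\dot{\psi})$, then by injectivity of $\dot{\Psi}(\tilde{G}) \hookrightarrow \mathcal{C}$ (Jacquet--Shalika, cf.\ \eqref{eqn:JS-psi}) $\dot{\psi}$ would be the image of $\dot{\psi}_M$ under \eqref{eqn:psi-induction}, hence would factor through the proper Levi $\tilde{M}^\vee$ — contradicting $\dot{\psi} \in \dot{\Psi}_2(\tilde{G})$. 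The $w$-twist changes nothing, since it only replaces $\tilde{M}^\vee$ by a conjugate proper Levi. Hence all proper-Levi terms vanish in the $c$-part, leaving $I_{\mathrm{disc}, \dot{\psi}} = \Tr L^2_{\dot{\psi}}$.

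This last step is essentially a restatement of the ``no embedded Hecke eigenvalues'' property that Theorem \ref{prop:GI-disc} makes available to us, so I expect the main obstacle to be purely organizational: carefully matching the Weyl-set terms of the trace formula of \cite{Li21} with the parabolically induced spectra of the metaplectic Levi subgroups, and handling the possibly infinite length of $L^2_{\dot{\psi}}$ within the semi-finite formalism so that the extraction of the $c$-part and the equality of distributions are both justified term by term.
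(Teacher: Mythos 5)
Your proposal is correct and follows essentially the same route as the paper: both expand $I_{\mathrm{disc},\lambda}$ via the spectral side of the invariant trace formula of \cite[\S 13.5]{Li21}, extract the $\dot{\psi}$-part using semi-finiteness, and observe that the proper-Levi terms vanish because a discrete $\dot{\psi}$ cannot factor through $\tilde{M}^\vee$ (the paper states this tersely, while you spell out the underlying use of Theorem \ref{prop:GI-disc} on the Levi and Jacquet--Shalika injectivity). The $\tilde{M}=\tilde{G}$ term then gives $\Tr L^2_{\dot{\psi}}$ exactly as you say.
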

\begin{proof}
	Take any $\dot{f} \in \mathcal{H}_{\asp}$. We shall apply the formula \cite[(13.5.2)]{Li21} for $I_{\mathrm{disc}, t}(\dot{f})$ where $t = \| \operatorname{Im}(\lambda) \|$. This is a finite linear combination of terms
	\[ \Tr\left( M_{P|P}(s, 0) \mathcal{I}_{\tilde{P}, \mathrm{disc}, t}(0, \dot{f}) \right) \]
	with non-negative coefficients, where
	\begin{itemize}
		\item $P = MU \subset G$ is a semi-standard parabolic subgroup with $M$ semi-standard,
		\item $s$ is a regular element in the group $W^G(M)$,
		\item $\mathcal{I}_{\tilde{P}, \mathrm{disc}, t}(0, \cdot)$ stands for the parabolic induction of the $t$-part of $L^2_{\mathrm{disc}, -}(M(\dot{F}) \backslash  \tilde{M} / A_{M, \infty})$;
		\item $M_{P|P}(s, 0)$ is the standard self-intertwining operator, and $\Tr\left( M_{P|P}(s, 0) \mathcal{I}_{\tilde{P}, \mathrm{disc}, t}(0, \dot{f}) \right)$ makes sense as a semi-finite distribution.
	\end{itemize}
	See \textit{loc.\ cit.}\ for a full explanation.
	
	Parabolic induction respects infinitesimal characters and Satake parameters. To extract the part contributed by $\dot{\psi}$, it suffices to replace the $\mathcal{I}_{\tilde{P}, \mathrm{disc}, t}(0, \dot{f})$ above with
	\[ \sum_{\substack{\dot{\psi}_M \in \dot{\Psi}_2(\tilde{M}) \\ \dot{\psi}_{\dot{M}} \mapsto \dot{\psi}}} \mathcal{I}_{\tilde{P}, \mathrm{disc}, \dot{\psi}_{\dot{M}}}(0, \dot{f}) \]
	in self-evident notations. This is nonzero only when $\dot{\psi}$ factors through $\tilde{M}^\vee$, i.e.\ when $M = G$.
	
	Moreover, the terms corresponding to $M=G$ give exactly $\Tr L^2_{\dot{\psi}}(\dot{f})$. This proves the desired assertions.
\end{proof}

\subsection{An endoscopic formula}
Continue the discussions in \S\ref{sec:disc}. In what follows, $\dot{f} = \prod_v f_v$ is a factorizable element in $\mathcal{H}_{\asp}$. We also fix $\dot{\psi} \in \dot{\Psi}_2(\tilde{G})$.

\begin{lemma}\label{prop:disc-spec-TF}
	We have
	\[ \Tr L^2_{\dot{\psi}}(\dot{f}) = \sum_{\mathbf{G}^! \in \Endo_{\elli}(\tilde{G})} \iota(\tilde{G}, G^!) S^{G^!}_{\mathrm{disc}, \dot{\psi}}(\dot{f}^!), \]
	where
	\begin{itemize}
		\item $\iota(\tilde{G}, G^!) := \left| Z_{(G^!)^\vee} \right|^{-1}$ as in \cite[Definition 3.7.1]{Li21};
		\item $S^{G^!}_{\mathrm{disc}, \dot{\psi}} := \sum_{\substack{\dot{\psi}^! \in \dot{\Psi}_2(G^!) \\ \dot{\psi}^! \mapsto \dot{\psi}}} S^{G^!}_{\mathrm{disc}, \dot{\psi}^!}$, with $S^{G^!}_{\mathrm{disc}, \dot{\psi}^!}$ being the stable distribution on $G^!(\A)$ defined in \cite[(3.3.13)]{Ar13},
	\end{itemize}
\end{lemma}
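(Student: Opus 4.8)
The starting point is Lemma \ref{prop:tr-I-disc}, which identifies $\Tr L^2_{\dot\psi}$ with the invariant distribution $I_{\mathrm{disc}, \dot\psi} = I_{\mathrm{disc}, \lambda, c}$, where $\lambda = \lambda(\dot\psi)$ is the Archimedean infinitesimal character and $c = c(\dot\psi) \in \mathcal{C}$ the Satake datum attached to $\dot\psi$ via \eqref{eqn:JS-psi}. So it suffices to stabilize $I_{\mathrm{disc}, \lambda, c}(\dot f)$. The plan is to apply the stabilization of the discrete part of the invariant trace formula for $\tilde G$ established in \cite{Li21}, an identity of the shape
\[ I_{\mathrm{disc}, t}(\dot f) = \sum_{\mathbf{G}^! \in \Endo_{\elli}(\tilde G)} \iota(\tilde G, G^!)\, S^{G^!}_{\mathrm{disc}, t}(\dot f^!), \qquad t = \| \operatorname{Im} \lambda \|, \]
where $\dot f^!$ is the transfer of $\dot f$ furnished by the fundamental lemma for spherical units (see the end of \S\ref{sec:endoscopy}), and then to project both sides onto the part carrying infinitesimal character $\lambda$ at the Archimedean places and Satake datum $c$ outside $V_{\mathrm{ram}}$.

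For the projection I would argue that endoscopic transfer is compatible with both pieces of bookkeeping. At an Archimedean place $v$, the geometric transfer $f_v \mapsto f_v^!$ relates infinitesimal characters through the dual of the inclusion $(G^!)^\vee \hookrightarrow \tilde G^\vee$ — the same fact underlying Proposition \ref{prop:T-psi-inf-char}. At a place $v \notin V_{\mathrm{ram}}$, the homomorphism $b_{\mathbf{G}^!_v, \tilde G_v}$ between spherical Hecke algebras is, on the level of Satake parameters, dual to the map $(G^!)^\vee/\!/(G^!)^\vee \to \tilde G^\vee/\!/\tilde G^\vee$ induced by that inclusion. Hence extracting the $(\lambda, c)$-part of $I_{\mathrm{disc}, t}$ amounts on the endoscopic side to summing the $(\lambda^!, c^!)$-parts of $S^{G^!}_{\mathrm{disc}, t}$ over all pairs $(\lambda^!, c^!)$ lying above $(\lambda, c)$, yielding
\[ \Tr L^2_{\dot\psi}(\dot f) = \sum_{\mathbf{G}^! \in \Endo_{\elli}(\tilde G)} \iota(\tilde G, G^!) \sum_{(\lambda^!, c^!) \mapsto (\lambda, c)} S^{G^!}_{\mathrm{disc}, \lambda^!, c^!}(\dot f^!). \]

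It then remains to identify the inner sum with $S^{G^!}_{\mathrm{disc}, \dot\psi}$. Here $G^! = \SO(2n'+1) \times \SO(2n''+1)$ is a product of split odd special orthogonal groups, so Arthur's stable multiplicity formula \cite[Theorem 4.1.2]{Ar13} applies: $S^{G^!}_{\mathrm{disc}, \lambda^!, c^!}$ vanishes unless $(\lambda^!, c^!)$ arises from a discrete global parameter $\dot\psi^! \in \dot\Psi_2(G^!)$, in which case it equals $S^{G^!}_{\mathrm{disc}, \dot\psi^!}$. Moreover $\dot\psi^! \mapsto \dot\psi$ under $\dot\Psi(G^!) \to \dot\Psi(\tilde G)$ if and only if $(\lambda^!, c^!) \mapsto (\lambda, c)$: one direction holds because the constructions of $\lambda(-)$ and $c(-)$ commute with merging formal direct sums, and the converse follows from the injectivity of $\dot\Psi(\tilde G) \hookrightarrow \mathcal{C}$ (Jacquet--Shalika \cite{JS81}, as in \eqref{eqn:JS-psi}). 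Thus the inner sum is exactly $\sum_{\dot\psi^! \mapsto \dot\psi} S^{G^!}_{\mathrm{disc}, \dot\psi^!} = S^{G^!}_{\mathrm{disc}, \dot\psi}$, which is the asserted formula. The main obstacle is the second paragraph: one must verify that the extraction of the $(\lambda, c)$-component — a purely spectral, Hecke-theoretic operation on semi-finite distributions — genuinely commutes with the geometric identity coming from the stabilized trace formula, i.e.\ that transfer respects Archimedean infinitesimal characters and unramified Satake parameters on the nose. Once this is granted, the rest is Arthur's machinery for odd orthogonal groups.
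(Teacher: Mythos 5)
Your argument is correct and is essentially the paper's own proof, which in one sentence combines Lemma \ref{prop:tr-I-disc} with the stabilization of $I_{\mathrm{disc}, \lambda(\dot{\psi})}$ from \cite[Theorem 13.6.3]{Li21} and extracts the $c(\dot{\psi})$-part, citing the spherical fundamental lemma and the semi-finiteness of $S^{G^!}_{\mathrm{disc}}$ as justification. Your second and third paragraphs simply spell out that justification (compatibility of transfer with Archimedean infinitesimal characters and with Satake parameters, plus Arthur's support/vanishing results and Jacquet--Shalika to match preimage parameters), so there is no substantive difference in route.
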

\begin{proof}
	Combine Lemma \ref{prop:tr-I-disc} with the stabilization of $I_{\mathrm{disc}, \lambda(\dot{\psi})}$ in \cite[Theorem 13.6.3]{Li21}, by extracting $c(\dot{\psi})$-parts in the latter result, as justified by the spherical fundamental lemma and the fact \cite[X.5.8]{MW16-2} that $S^{G^!}_{\mathrm{disc}}$ is semi-finite.
\end{proof}

Let $\overline{\EuScript{S}}_{\dot{\psi}}$ be the quotient of $\EuScript{S}_{\dot{\psi}}$ by the image of $Z_{\tilde{G}^\vee} = \{\pm 1\}$.
\index{S-psi-curly-overline@$\overline{\EuScript{S}}_{\dot{\psi}}$}

\begin{definition}\label{def:epsilon-Art}
	\index{epsilon-Art@$\epsilon_{\dot{\psi}}^{\mathrm{Art}}$}
	Let $\epsilon_{\dot{\psi}}^{\mathrm{Art}}$ be the character of $\EuScript{S}_{\dot{\psi}}$ defined by Arthur \cite[Theorem 1.5.2]{Ar13} for $H := \SO(2n+1)$. It factors through $\overline{\EuScript{S}}_{\dot{\psi}}$.
	
	Similarly, when $G^! = \SO(2n'+1) \times \SO(2n''+1)$ and $\dot{\psi}^! \in \dot{\Psi}_2(G^!)$, one has the character $\epsilon_{\dot{\psi}^!}^{G^!, \mathrm{Art}}$ of the quotient $\overline{\EuScript{S}}_{\dot{\psi}^!}$ of $\EuScript{S}_{\dot{\psi}^!}$ by the image of $Z_{(G^!)^\vee}$.
\end{definition}

Note that $\dot{\psi} \in \dot{\Psi}_2(\tilde{G})$ implies $S_{\dot{\psi}}$ is finite, $S_{\dot{\psi}} \rightiso \EuScript{S}_{\dot{\psi}}$.

\begin{lemma}\label{prop:disc-spec-TF-Endo}
	We have
	\[ \Tr L^2_{\dot{\psi}}(\dot{f}) = \left| \EuScript{S}_{\dot{\psi}} \right|^{-1} \sum_{\substack{ (\mathbf{G}^!, \dot{\psi}^!) \\ \dot{\psi}^! \mapsto \dot{\psi} }} \epsilon^{G^!}(\dot{\psi}^!) S\Theta^{G^!}_{\dot{\psi}^!}(\dot{f}^!) \]
	where
	\begin{itemize}
		\item $\mathbf{G}^! \in \Endo_{\elli}(\tilde{G})$ and $\dot{\psi}^! \in \dot{\Psi}_2(G^!)$;
		\item $\dot{f}^!$ is the transfer of $\dot{f}$;
		\item $S\Theta^{G^!}_{\dot{\psi}^!}$ is the product of the local distributions in Definition \ref{def:STheta};
		\item $\epsilon^{G^!}(\dot{\psi}^!) := \epsilon_{\dot{\psi}^!}^{G^!, \mathrm{Art}}(s_{\dot{\psi}})$ as in \cite[(4.1.4)]{Ar13}, noting that
		\[ s_{\dot{\psi}^!} \mapsto s_{\dot{\psi}} \quad\text{under}\quad S_{\dot{\psi}^!} \rightiso S_{\dot{\psi}}; \]
		see the description after \eqref{eqn:dot-s-psi}.
	\end{itemize}
\end{lemma}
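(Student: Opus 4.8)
The plan is to feed Arthur's stable multiplicity formula into the endoscopic expansion of $\Tr L^2_{\dot{\psi}}$ obtained in Lemma \ref{prop:disc-spec-TF} and then to collect the resulting constants. That lemma gives
\[ \Tr L^2_{\dot{\psi}}(\dot{f}) = \sum_{\mathbf{G}^! \in \Endo_{\elli}(\tilde{G})} \iota(\tilde{G}, G^!) \sum_{\substack{\dot{\psi}^! \in \dot{\Psi}_2(G^!) \\ \dot{\psi}^! \mapsto \dot{\psi}}} S^{G^!}_{\mathrm{disc}, \dot{\psi}^!}(\dot{f}^!), \]
and for each $\dot{\psi}^!$ occurring I would apply \cite[Theorem 4.1.2]{Ar13} to the quasisplit group $G^! = \SO(2n'+1) \times \SO(2n''+1)$. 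Since $\dot{\psi}^! \in \dot{\Psi}_2(G^!)$, the centralizer $S_{\dot{\psi}^!}$ is finite, so its identity component is trivial, Arthur's $\sigma$-factor equals $1$, and the formula collapses to
\[ S^{G^!}_{\mathrm{disc}, \dot{\psi}^!} = \bigl|\overline{\EuScript{S}}_{\dot{\psi}^!}\bigr|^{-1}\, \epsilon^{G^!, \mathrm{Art}}_{\dot{\psi}^!}(s_{\dot{\psi}^!})\, S\Theta^{G^!}_{\dot{\psi}^!}, \]
with $S\Theta^{G^!}_{\dot{\psi}^!}$ the product of the local distributions of Definition \ref{def:STheta} evaluated on $\dot{f}^! = \prod_v f^!_v$. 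Under the injection $S_{\dot{\psi}^!} \hookrightarrow S_{\dot{\psi}}$ of \eqref{eqn:S-functoriality}, which sends $s_{\dot{\psi}^!} \mapsto s_{\dot{\psi}}$, the sign becomes $\epsilon^{G^!}(\dot{\psi}^!)$ in the notation of the statement.

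Next I would track the constants. Write $s$ for a $2$-torsion representative in $S_{\dot{\psi}}$ of the class $x \in \EuScript{S}_{\dot{\psi}}$ that Corollary \ref{prop:basic-bijection-2} attaches to $(\mathbf{G}^!, \dot{\psi}^!)$, so that $(G^!)^\vee = Z_{\tilde{G}^\vee}(s)$ and $\iota(\tilde{G}, G^!) = |Z_{(G^!)^\vee}|^{-1}$ by \cite[Definition 3.7.1]{Li21}. Since $Z_{(G^!)^\vee}$ is a subgroup of the finite group $\EuScript{S}_{\dot{\psi}^!} = S_{\dot{\psi}^!}$ with quotient $\overline{\EuScript{S}}_{\dot{\psi}^!}$, this gives
\[ \iota(\tilde{G}, G^!)\, \bigl|\overline{\EuScript{S}}_{\dot{\psi}^!}\bigr|^{-1} = |Z_{(G^!)^\vee}|^{-1} \cdot \frac{|Z_{(G^!)^\vee}|}{|\EuScript{S}_{\dot{\psi}^!}|} = |\EuScript{S}_{\dot{\psi}^!}|^{-1}. \]
Moreover $\dot{\psi} \in \dot{\Psi}_2(\tilde{G})$ makes $S_{\dot{\psi}}$ a finite abelian $2$-torsion group, and viewing $\dot{\psi}, \dot{\psi}^!$ as homomorphisms out of $\mathcal{L}_{\dot{\psi}} \times \SL(2,\CC)$ one checks $S_{\dot{\psi}^!} = Z_{(G^!)^\vee}(\Image \dot{\psi}^{\sim}) = Z_{\tilde{G}^\vee}(s) \cap S_{\dot{\psi}} = Z_{S_{\dot{\psi}}}(s) = S_{\dot{\psi}}$, so $|\EuScript{S}_{\dot{\psi}^!}| = |\EuScript{S}_{\dot{\psi}}|$. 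Hence the coefficient of $S\Theta^{G^!}_{\dot{\psi}^!}(\dot{f}^!)$ is $|\EuScript{S}_{\dot{\psi}}|^{-1}\epsilon^{G^!}(\dot{\psi}^!)$, uniformly in $(\mathbf{G}^!, \dot{\psi}^!)$, and — using Corollary \ref{prop:basic-bijection-2} and Remark \ref{rem:basic-bijection} to see that the pairs $(\mathbf{G}^!, \dot{\psi}^!)$ with $\dot{\psi}^! \mapsto \dot{\psi}$ are exactly those indexed by $x \in \EuScript{S}_{\dot{\psi}}$ — the double sum reorganizes into the single sum of the statement.

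I expect the main obstacle to be the exact matching of normalizations with \cite{Ar13}: one must confirm that for a discrete parameter the constant in \cite[Theorem 4.1.2]{Ar13} is precisely $|\overline{\EuScript{S}}_{\dot{\psi}^!}|^{-1}$ with no residual factor, that the group denoted $\mathcal{S}_{\dot{\psi}^!}$ there coincides with our quotient $\overline{\EuScript{S}}_{\dot{\psi}^!}$ (so that the division by $Z_{(G^!)^\vee}$ is counted once and only once), and that the sign appearing in \textit{loc.\ cit.}\ matches $\epsilon^{G^!, \mathrm{Art}}_{\dot{\psi}^!}$ of Definition \ref{def:epsilon-Art} evaluated at $s_{\dot{\psi}^!}$; this last verification is where the sign lemmas of \cite[Chapter 4]{Ar13} intervene. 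One also uses tacitly that after transfer the global stable distribution $S^{G^!}_{\mathrm{disc}, \dot{\psi}^!}$ is represented by the product $\prod_v S\Theta^{G^!_v}_{\dot{\psi}^!_v}$ — part of Arthur's endoscopic classification for odd $\SO$, applied to the two factors of $G^!$ separately.
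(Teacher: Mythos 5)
Your proposal is correct and follows essentially the same route as the paper: substitute Arthur's stable multiplicity formula \cite[Theorem 4.1.2]{Ar13} into Lemma \ref{prop:disc-spec-TF}, observe that $\sigma(\overline{S}_{\dot{\psi}^!}^\circ)=1$ because $S_{\dot{\psi}^!}$ is finite, and absorb $\iota(\tilde{G}, G^!) = |Z_{(G^!)^\vee}|^{-1}$ together with $|\overline{S}_{\dot{\psi}^!}|^{-1}$ into $|\EuScript{S}_{\dot{\psi}}|^{-1}$ via $S_{\dot{\psi}^!} \rightiso S_{\dot{\psi}}$. Your order-counting via $\iota(\tilde{G},G^!)\,|\overline{\EuScript{S}}_{\dot{\psi}^!}|^{-1} = |\EuScript{S}_{\dot{\psi}^!}|^{-1} = |\EuScript{S}_{\dot{\psi}}|^{-1}$ is the same computation the paper performs with the short exact sequence $1 \to Z_{(G^!)^\vee} \to S_{\dot{\psi}} \to \overline{S}_{\dot{\psi}^!} \to 1$.
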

\begin{proof}
	For all $\dot{\psi}^! \in \dot{\Psi}_2(G^!)$, Arthur's stable multiplicity formula \cite[Theorem 4.1.2]{Ar13} for $G^!$ gives
	\[ S^{G^!}_{\mathrm{disc}, \dot{\psi}^!} = \left| \overline{S}_{\dot{\psi}^!} \right|^{-1} \sigma\left( \overline{S}_{\dot{\psi}^!}^\circ \right) \epsilon^{G^!}(\dot{\psi}^!) S\Theta^{G^!}_{\dot{\psi}^!} \]
	as $m_{\dot{\psi}^!} = 1$ in \textit{loc.\ cit.}; here $\overline{S}_{\dot{\psi}^!} := S_{\dot{\psi}^!} \big/ Z_{(G^!)^\vee}$ is finite. Substituting into Lemma \ref{prop:disc-spec-TF}, we obtain
	\begin{equation}\label{eqn:disc-spec-TF-Endo}
		\Tr L^2_{\dot{\psi}}(\dot{f}) =
		\sum_{\substack{ (\mathbf{G}^!, \dot{\psi}^!) \\ \dot{\psi}^! \mapsto \dot{\psi} }} \iota(\tilde{G}, G^!) \left| \overline{S}_{\dot{\psi}^!} \right|^{-1} \sigma\left( \overline{S}_{\dot{\psi}^!}^\circ \right) \epsilon^{G^!}(\dot{\psi}^!) S\Theta^{G^!}_{\dot{\psi}^!}(\dot{f}^!).
	\end{equation}
	
	Since $S_{\dot{\psi}^!}$ is finite, $\sigma\left( \overline{S}_{\dot{\psi}^!}^\circ \right) = 1$ by its definition in \cite[Proposition 4.1.1]{Ar13}. On the other hand, from $\dot{\psi} \in \dot{\Psi}_2(\tilde{G})$ one readily sees $S_{\dot{\psi}^!} \rightiso S_{\dot{\psi}}$, so there is a short exact sequence
	\[ 1 \to Z_{(G^!)^\vee} \to S_{\dot{\psi}} \to \overline{S}_{\dot{\psi}^!} \to 1. \]
	
	It follows that $\left| S_{\dot{\psi}} \right| =  \left| \overline{S}_{\dot{\psi}^!} \right| \cdot \left| Z_{(G^!)^\vee} \right| = \left| \overline{S}_{\dot{\psi}^!} \right| \cdot \iota(\tilde{G}, G^!)^{-1}$. The right hand side of \eqref{eqn:disc-spec-TF-Endo} has the desired form.
\end{proof}

\subsection{Global root numbers}\label{sec:global-root-number}
Let $\dot{\phi}$ be a formal sum $\bigoplus_i m_i \dot{\phi}_i$ of cuspidal automorphic representations $\dot{\phi}_i$ of $\GL(d_i, \A)$, where $m_i \in \Z_{\geq 1}$. Its dual $\dot{\phi}^\vee$ is $\bigoplus_i m_i \dot{\phi}_i^\vee$. Define its \emph{global root number} as
\[ \epsilon(\dot{\phi}) := \epsilon\left(\frac{1}{2}, \dot{\phi} \right) := \prod_v \epsilon( \dot{\phi}_v, \bpsi_v ). \]
\index{epsilon-phi-dot@$\epsilon(\dot{\phi})$}

From \S\ref{sec:local-root-numbers} we deduce the following properties.
\begin{itemize}
	\item $\epsilon(\dot{\phi}_1 \oplus \dot{\phi}) = \epsilon( \dot{\phi}_1) \epsilon(\dot{\phi}_2)$;
	\item $\epsilon(\dot{\phi})$ is independent of the additive character $\dot{\bpsi} = \prod_v \bpsi_v$ of $\dot{F} \backslash \A$;
	\item $\epsilon(\dot{\phi}) \epsilon(\dot{\phi}^\vee) = 1$.
\end{itemize}

It follows that $\epsilon(\dot{\phi})^2 = 1$ if $\dot{\phi}$ is self-dual.

Now consider $\dot{\psi} \in \dot{\Psi}(\tilde{G})$, and decompose it into $\bigoplus_{i \in I} m_i \dot{\phi}_i \boxtimes r(b_i)$ as in \eqref{eqn:dot-psi-decomp}.

\begin{definition}\label{def:nu-global}
	\index{nu-psi-dot@$\nu_{\dot{\psi}}$}
	Identify $\EuScript{S}_{\dot{\psi}}^\vee$ with $\bmu_2^{I^+}$. Define $\nu_{\dot{\psi}} \in \EuScript{S}_{\dot{\psi}}^\vee$ so that for every $i \in I^+$, its $i$-th component equals
	\[ \nu_{\dot{\psi}, i} := \epsilon(\dot{\phi_i})^{b_i}. \]
\end{definition}

Next, let $s \in S_{\dot{\psi}, 2}$ with image $x \in \EuScript{S}_{\dot{\psi}}$. Define $\epsilon\left( \dot{\psi}^{s = -1} \right)$ as in the local case \S\ref{sec:T}: namely, if the $(-1)$-eigenspace of $s$ gives $\bigoplus_{i \in I} m''_i \dot{\phi}_i \boxtimes r(b_i)$, then $\epsilon\left( \dot{\psi}^{s = -1} \right) := \epsilon\left( \bigoplus_{i \in I} m''_i b_i \dot{\phi}_i \right)$.

\begin{proposition}\label{prop:nu-factorization}
	Let $s \in S_{\dot{\psi}, 2}$. With the notations above,
	\begin{enumerate}[(i)]
		\item $\nu_{\dot{\psi}}(x) =  \epsilon\left( \dot{\psi}^{s = -1} \right)$;
		\item if $\dot{\psi}$ is unramified outside a finite set $V$ of places such that $V \supset V_{\mathrm{ram}}$, then
		\[ \nu_{\dot{\psi}}(x) = \prod_{v \in V} \epsilon\left( \dot{\psi}_v^{s = -1} \right) \]
		where we identified $s$ with its image in $S_{\dot{\psi}_v}$.
	\end{enumerate}
\end{proposition}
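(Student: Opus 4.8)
The plan is to prove (i) as a purely combinatorial identity, in exact parallel with the proof of Lemma \ref{prop:T-psi-s} and Proposition \ref{prop:nu-good-parity}, and then to obtain (ii) from the factorization of the global $\epsilon$-factor together with the vanishing recalled at the end of \S\ref{sec:local-root-numbers}.

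For (i), fix a representative $s \in S_{\dot\psi, 2}$ of $x$ and decompose $\dot\psi = \bigoplus_{i\in I} m_i \dot\phi_i\boxtimes r(b_i)$ as in \eqref{eqn:dot-psi-decomp}, refined into $I = I^+\sqcup I^-\sqcup J\sqcup J'$. Writing $s = (s_i)_i$ according to the description of $S_{\dot\psi}$ in \S\ref{sec:global-parameters}, with $s_i\in\Or(m_i,\CC)$, $\Sp(m_i,\CC)$ or $\GL(m_i,\CC)$ of eigenvalue multiplicities $(m'_i,m''_i)$, the $(-1)$-eigenspace of $s$ is
\[
\bigoplus_{i\in I^+\sqcup I^-} m''_i\,(\dot\phi_i\boxtimes r(b_i)) \;\oplus\; \bigoplus_{j\in J} m''_j\,(\dot\phi_j\oplus\dot\phi_j^\vee)\boxtimes r(b_j),
\]
so by multiplicativity of the global root number,
\[
\epsilon\bigl(\dot\psi^{s=-1}\bigr) = \prod_{i\in I^+}\epsilon(\dot\phi_i)^{m''_i b_i}\cdot\prod_{i\in I^-}\epsilon(\dot\phi_i)^{m''_i b_i}\cdot\prod_{j\in J}\bigl(\epsilon(\dot\phi_j)\epsilon(\dot\phi_j^\vee)\bigr)^{m''_j b_j}.
\]
For $i\in I^-$ one has $s_i\in\Sp(m_i,\CC)$, hence $m''_i$ is even, and $\dot\phi_i$ is self-dual, so $\epsilon(\dot\phi_i)^{m''_i b_i}=1$ as $\epsilon(\dot\phi_i)^2=1$; for $j\in J$ the relation $\epsilon(\dot\phi_j)\epsilon(\dot\phi_j^\vee)=1$ from \S\ref{sec:global-root-number} kills that factor. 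Thus only the $I^+$-factors survive, and there $\dot\phi_i$ is self-dual, so $\epsilon(\dot\phi_i)\in\{\pm1\}$ and $\epsilon(\dot\phi_i)^{m''_i b_i}$ depends only on $m''_i\bmod 2$. Since the $i$-th coordinate of $x$ in $\EuScript{S}_{\dot\psi}\simeq\bmu_2^{I^+}$ equals $\det s_i = (-1)^{m''_i}$, we conclude $\epsilon(\dot\psi^{s=-1}) = \prod_{i\in I^+}\epsilon(\dot\phi_i)^{(m''_i\bmod 2)b_i} = \nu_{\dot\psi}(x)$ by Definition \ref{def:nu-global}; in particular the value is independent of the chosen $s$, proving (i).

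For (ii), recall that the global root number of the formal sum $\dot\psi^{s=-1}$ is by definition $\prod_v\epsilon(\dot\psi_v^{s=-1})$, the local factor at $v$ being the root number of the restriction to $\mathcal{L}_{\dot F_v}$ of the $(-1)$-eigenspace of the image of $s$ in $S_{\dot\psi_v}$ (this is compatible with localization of $\dot\psi$). When $v\notin V$, the local parameter $\dot\psi_v$ is unramified in the sense of Definition \ref{def:unramified-psi}, so $\dot\psi_v^{s=-1}$ is trivial on $I_{\dot F_v}\times\SL(2,\CC)$; since $v\notin V_{\mathrm{ram}}$, the character $\bpsi_v$ has conductor $\mathfrak{o}_v$, and the last bullet of \S\ref{sec:local-root-numbers} yields $\epsilon(\dot\psi_v^{s=-1})=1$. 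Combining with (i) gives $\nu_{\dot\psi}(x) = \prod_{v\in V}\epsilon(\dot\psi_v^{s=-1})$.

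The whole argument is bookkeeping in the spirit of the local Lemma \ref{prop:T-psi-s}; the only points deserving care are the parity constraint on $m''_i$ for $i\in I^-$, the identity $\epsilon(\dot\phi_j)\epsilon(\dot\phi_j^\vee)=1$ for $j\in J$, and matching the coordinates of $x$ under $\EuScript{S}_{\dot\psi}\simeq\bmu_2^{I^+}$ with $(\det s_i)_{i\in I^+}$. I do not expect any genuine obstacle.
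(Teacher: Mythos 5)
Your proof is correct and follows essentially the same route the paper takes: for (i) it is the global analogue of the computation in Lemma \ref{prop:T-psi-s}/Proposition \ref{prop:nu-good-parity}, with the relations $\epsilon(\dot{\phi})^2 = 1$ and $\epsilon(\dot{\phi})\epsilon(\dot{\phi}^\vee) = 1$ disposing of the $I^-$ and $J$ contributions (which is exactly why good parity is not needed globally), and for (ii) it is the Euler-product factorization of $\epsilon(\dot{\psi}^{s=-1})$ combined with the triviality of local root numbers at unramified places. The only difference is that you spell out the bookkeeping that the paper leaves implicit.
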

\begin{proof}
	The idea for (i) is similar to the proof of Proposition \ref{prop:nu-good-parity}, by replacing local root numbers by global ones. The global case is simpler since $\epsilon(\dot{\phi}) \epsilon( \dot{\phi}^\vee) = 1$, whilst in the local case we get $(\det\phi)(-1)$ instead. This explains why $\dot{\psi}$ is not required to be of good parity here.
	
	To deduce (ii), factorize $\epsilon\left( \dot{\psi}^{s = -1} \right)$.
\end{proof}

\begin{lemma}\label{prop:nu-global}
	For all $\dot{\psi} \in \dot{\Psi}(\tilde{G})$, we have $\nu_{\dot{\psi}}(s_{\dot{\psi}}) = 1$.
\end{lemma}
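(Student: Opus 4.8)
The plan is to deduce the identity from the factorization statement Proposition~\ref{prop:nu-factorization}(i), applied to the distinguished element $s_{\dot{\psi}}$ itself. First I would check that $s_{\dot{\psi}} \in S_{\dot{\psi}, 2}$, i.e.\ that $s_{\dot{\psi}}^2 = 1$. By the concrete description recalled in the discussion after Definition~\ref{def:s-psi} (reproduced for the global case after \eqref{eqn:dot-s-psi}), the projection of $s_{\dot{\psi}}$ to each direct factor $\Or(m_i, \CC)$, $\Sp(m_i, \CC)$ or $\GL(m_j, \CC)$ of $S_{\dot{\psi}}$ is the scalar $(-1)^{b_i - 1}$; in particular it is an involution in that factor, so $s_{\dot{\psi}}^2 = 1$. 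Writing $x_{\dot{\psi}} \in \EuScript{S}_{\dot{\psi}}$ for its image, Proposition~\ref{prop:nu-factorization}(i) then gives $\nu_{\dot{\psi}}(s_{\dot{\psi}}) = \nu_{\dot{\psi}}(x_{\dot{\psi}}) = \epsilon\bigl( \dot{\psi}^{s_{\dot{\psi}} = -1} \bigr)$.

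Next I would identify the $(-1)$-eigenspace of $s_{\dot{\psi}}$ on $V_{\dot{\psi}}$. Since $\bigl(\begin{smallmatrix} -1 & \\ & -1 \end{smallmatrix}\bigr)$ acts on $r(b) = \Sym^{b-1}(\mathrm{std})$ by the scalar $(-1)^{b-1}$, the element $s_{\dot{\psi}}$ acts on each summand $m_i \dot{\phi}_i \boxtimes r(b_i)$ of $V_{\dot{\psi}}$ by the scalar $(-1)^{b_i - 1}$. Hence $\dot{\psi}^{s_{\dot{\psi}} = -1} = \bigoplus_{i \in I,\; b_i\ \mathrm{even}} m_i \dot{\phi}_i \boxtimes r(b_i)$, and by the definition of $\epsilon(\dot{\psi}^{s = -1})$ in \S\ref{sec:global-root-number} together with the additivity $\epsilon(\dot{\phi}_1 \oplus \dot{\phi}_2) = \epsilon(\dot{\phi}_1)\epsilon(\dot{\phi}_2)$,
\[ \nu_{\dot{\psi}}(s_{\dot{\psi}}) = \prod_{i \in I,\; b_i\ \mathrm{even}} \epsilon(\dot{\phi}_i)^{m_i b_i}. \]

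Finally I would show each factor equals $1$, splitting the relevant indices according to the decomposition $I = I^+ \sqcup I^- \sqcup J \sqcup J'$. If $i \in I^+ \sqcup I^-$ with $b_i$ even, then $\dot{\phi}_i$ is self-dual, so $\epsilon(\dot{\phi}_i)^2 = 1$ by \S\ref{sec:global-root-number}; since $b_i$ is even the exponent $m_i b_i$ is even, whence $\epsilon(\dot{\phi}_i)^{m_i b_i} = 1$. For a pair $\{j, j'\}$ with $j \in J$, $j' \in J'$ and $b_j$ even, the relations $\dot{\phi}_{j'} \simeq \dot{\phi}_j^\vee$, $m_{j'} = m_j$ and $b_{j'} = b_j$ show that the two corresponding factors multiply to $\bigl(\epsilon(\dot{\phi}_j)\epsilon(\dot{\phi}_j^\vee)\bigr)^{m_j b_j} = 1$ by the property $\epsilon(\dot{\phi})\epsilon(\dot{\phi}^\vee) = 1$. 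Therefore the whole product is $1$, which is the assertion.

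I do not anticipate a real obstacle here: the statement is pure bookkeeping once one notes that $s_{\dot{\psi}}$ acts by $(-1)^{b_i - 1}$ on the $i$-th block and that symplectic (resp.\ conjugate-pair) global root numbers square (resp.\ multiply in pairs) to $1$. The only mild subtlety is that the summand $\dot{\psi}^{s_{\dot{\psi}} = -1}$ collects all blocks with $b_i$ even, including those indexed by $J \sqcup J'$ where $\dot{\phi}_i$ need not be self-dual; this is handled uniformly by $\epsilon(\dot{\phi})\epsilon(\dot{\phi}^\vee) = 1$. Alternatively one could bypass Proposition~\ref{prop:nu-factorization} and compute $\nu_{\dot{\psi}}(x_{\dot{\psi}})$ directly from Definition~\ref{def:nu-global}, using that the image of $s_{\dot{\psi}}$ in the $i$-th component of $\EuScript{S}_{\dot{\psi}} \simeq \bmu_2^{I^+}$ is $\det\bigl((-1)^{b_i-1} I_{m_i}\bigr) = (-1)^{(b_i-1)m_i}$, which is $-1$ only when $b_i$ is even, again forcing $\epsilon(\dot{\phi}_i)^{b_i} = 1$.
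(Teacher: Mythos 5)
Your argument is correct. The only point that needs checking before invoking Proposition~\ref{prop:nu-factorization}(i) is that $s_{\dot{\psi}}$ is an involution, which you do, and your identification of the $(-1)$-eigenspace as the sum of all blocks with $b_i$ even (including the $I^-$ and $J \sqcup J'$ blocks) together with the cancellations $\epsilon(\dot{\phi}_i)^{m_i b_i} = 1$ for self-dual $\dot{\phi}_i$ and $\epsilon(\dot{\phi}_j)\epsilon(\dot{\phi}_j^\vee) = 1$ for conjugate pairs is exactly right.

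Your primary route differs from the paper's in that it passes through Proposition~\ref{prop:nu-factorization}(i), i.e.\ it computes the global root number $\epsilon\bigl(\dot{\psi}^{s_{\dot{\psi}}=-1}\bigr)$ of the entire $(-1)$-eigenspace and then cancels block by block. The paper instead works purely on the component-group side: it notes that the image of $s_{\dot{\psi}}$ in $\EuScript{S}_{\dot{\psi}} \simeq \bmu_2^{I^+}$ has $i$-th coordinate $(-1)^{(b_i-1)m_i}$, so only the indices $i \in I^+$ with $b_i$ even and $m_i$ odd contribute, and for those $\epsilon(\dot{\phi}_i)^{b_i} = 1$ since $b_i$ is even and $\epsilon(\dot{\phi}_i)^2 = 1$. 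The paper's route is two lines and uses only Definition~\ref{def:nu-global}; yours is marginally longer but makes visible why the $I^-$ and $J$ blocks are harmless, at the cost of leaning on Proposition~\ref{prop:nu-factorization}(i) (which is proved independently earlier, so there is no circularity). The ``alternative'' you sketch in your last sentence is in fact the paper's own proof.
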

\begin{proof}
	For all $i \in I^+$, the component of $s_{\dot{\psi}}$ at the factor $\Or(m_i, \CC)$ of $S_{\dot{\psi}}$ is $\identity$ (resp.\ $-\identity$) when $b_i$ is odd (resp.\ even). Hence
	\[ \nu_{\dot{\psi}}(s_{\dot{\psi}}) = \prod_{\substack{i \in I^+ \\ b_i \;\text{even} \\ m_i \;\text{odd}}} \epsilon( \dot{\phi}_i )^{b_i}. \]
	Since $\epsilon( \dot{\phi}_i )^2 = 1$ for all $i \in I^+$, the proof is complete.
\end{proof}

\subsection{The multiplicity formula}\label{sec:multiplicity-formula}
Assume $\dot{\psi} \in \dot{\Psi}_2(\tilde{G})$. Decompose $\dot{\psi}$ into $\bigoplus_{i \in I} \dot{\phi}_i \boxtimes r(b_i)$ where the $(\dot{\phi}_i, r(b_i))$ are simple and distinct. Note that $S_{\dot{\psi}} \rightiso \EuScript{S}_{\dot{\psi}}$.

Fix $V \supset V_{\mathrm{ram}}$ hereafter. From the homomorphisms \eqref{eqn:S-localization} we obtain
\begin{equation}\label{eqn:diag-map}
	\index{diag@$\mathrm{diag}_V, \mathrm{diag}$}
	\begin{aligned}
		\mathrm{diag}_V: \EuScript{S}_{\dot{\psi}} & \to \EuScript{S}_{\dot{\psi}, V} := \prod_{v \in V} \EuScript{S}_{\dot{\psi}_v}, \\
		\mathrm{diag}: \EuScript{S}_{\dot{\psi}} & \to \prod_{\text{all}\; v} \EuScript{S}_{\dot{\psi}_v}.
	\end{aligned}
\end{equation}

We are ready to state and prove the global multiplicity formula as a trace identity.

\begin{theorem}\label{prop:global-multiplicity}
	\index{epsilon-psi-dot@$\epsilon_{\dot{\psi}}$}
	\index{X-psi-dot-V@$\mathcal{X}(\dot{\psi}, V)$}
	Assume that $\dot{\psi}_v$ is unramified for all $v \notin V$. Set
	\begin{align*}
		\epsilon_{\dot{\psi}} & := \epsilon^{\mathrm{Art}}_{\dot{\psi}} \nu_{\dot{\psi}} \; \in \EuScript{S}_{\dot{\psi}}^\vee, \\
		\mathcal{X}(\dot{\psi}, V) & := \left\{ \chi_V = (\chi_v)_{v \in V} \in \prod_{v \in V} \EuScript{S}_{\dot{\psi}_v}^\vee : \mathrm{diag}_V^* \left( \prod_{v \in V} \chi_v \right) = \epsilon_{\dot{\psi}} \right\},
	\end{align*}
	where $\epsilon^{\mathrm{Art}}_{\dot{\psi}}$ and $\nu_{\dot{\psi}}$ are as in Definitions \ref{def:epsilon-Art} and \ref{def:nu-global}. For all factorizable $\dot{f} = \prod_v f_v \in \mathcal{H}_{\asp}$ such that $f_v$ equals $f_{K_v}$ tensored with the unramified Haar measure for all $v \notin V$, we have
	\[ \Tr L^2_{\dot{\psi}}(\dot{f}) = \sum_{\chi_V \in \mathcal{X}(\dot{\psi}, V)} \prod_{v \in V} \pi_{\dot{\psi}_v, \chi_v}(f_v) \]
	with $\pi_{\dot{\psi}_v, \chi_v}$ as in Definition \ref{def:pi-psi-chi}.
\end{theorem}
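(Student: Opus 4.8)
The plan is to carry out the standard local--global extraction: begin with the endoscopic expansion of $\Tr L^2_{\dot{\psi}}(\dot{f})$ supplied by Lemma \ref{prop:disc-spec-TF-Endo}, re-index it by $\EuScript{S}_{\dot{\psi}}$ via the global basic bijection, factorize every term over the places of $\dot{F}$, and finally match the outcome with the Fourier expansion of the right-hand side. Throughout, since $\dot{\psi} \in \dot{\Psi}_2(\tilde{G})$ is discrete we have $S_{\dot{\psi}} \rightiso \EuScript{S}_{\dot{\psi}}$, abelian of exponent $2$, and I write $x$ both for an element of this group and for its image $\dot{\psi}^{x=-1}$-eigenspace data.

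First I would re-index. By the global form of Proposition \ref{prop:basic-bijection}(ii) and Corollary \ref{prop:basic-bijection-2} (Remark \ref{rem:basic-bijection}), the pairs $(\mathbf{G}^!, \dot{\psi}^!)$ with $\dot{\psi}^! \in \dot{\Psi}_2(G^!)$ mapping to $\dot{\psi}$ correspond bijectively to $x \in \EuScript{S}_{\dot{\psi}}$, with $\dot{\psi}^! = (\dot{\psi}', \dot{\psi}'')$ the splitting of the simple constituents of $\dot{\psi}$ along the $\pm 1$-eigenspaces of $x$ and $s_{\dot{\psi}^!} \mapsto s_{\dot{\psi}}$ under $S_{\dot{\psi}^!} \rightiso S_{\dot{\psi}}$. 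Because $S\Theta^{G^!}_{\dot{\psi}^!}$ is the product over $v$ of the local $S\Theta^{G^!_v}_{\dot{\psi}^!_v}$ and the transfer $\dot{f} \mapsto \dot{f}^!$ is defined place by place, adjointness of geometric and spectral transfer gives
\[ S\Theta^{G^!}_{\dot{\psi}^!}(\dot{f}^!) = \prod_v \bigl( \trans_{\mathbf{G}^!_v, \tilde{G}_v} S\Theta^{G^!_v}_{\dot{\psi}^!_v} \bigr)(f_v), \]
and by Definition \ref{def:T-dist} and Lemma \ref{prop:T-psi-s} the $v$-th factor equals $\epsilon(\dot{\psi}_v^{x=-1})\, T_{\dot{\psi}_v, x}(f_v)$, where $T_{\dot{\psi}_v, x}$ depends only on the image of $x$ in $\EuScript{S}_{\dot{\psi}_v}$. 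For $v \notin V$ this factor is $1$, since $\dot{\psi}_v$ is unramified and $f_v = f_{K_v}$ (Proposition \ref{prop:T-psi-nr} and the spherical fundamental lemma), so the product is finite, and Proposition \ref{prop:nu-factorization}(ii) identifies $\prod_{v \in V} \epsilon(\dot{\psi}_v^{x=-1})$ with $\nu_{\dot{\psi}}(x)$. Substituting into Lemma \ref{prop:disc-spec-TF-Endo},
\[ \Tr L^2_{\dot{\psi}}(\dot{f}) = |\EuScript{S}_{\dot{\psi}}|^{-1} \sum_{x \in \EuScript{S}_{\dot{\psi}}} \epsilon^{G^!}(\dot{\psi}^!)\, \nu_{\dot{\psi}}(x) \prod_{v \in V} T_{\dot{\psi}_v, x}(f_v). \]

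The crux is to rewrite $\epsilon^{G^!}(\dot{\psi}^!)\, \nu_{\dot{\psi}}(x)$ as $\epsilon_{\dot{\psi}}(x_{\dot{\psi}} x)$. Here I would invoke the sign lemmas of \cite[Chapter 4]{Ar13} comparing Arthur's sign character of $\SO(2n+1)$ with those of its elliptic endoscopic groups, which yield an identity of the shape $\epsilon^{G^!}(\dot{\psi}^!) = \epsilon^{G^!, \mathrm{Art}}_{\dot{\psi}^!}(s_{\dot{\psi}}) = \epsilon^{\mathrm{Art}}_{\dot{\psi}}(s_{\dot{\psi}} x)$ --- precisely the $x_{\dot{\psi}}$-twisted form that matches the normalization of $\pi_{\psi, \chi}$ in Definition \ref{def:pi-psi-chi}. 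Since $\nu_{\dot{\psi}}(s_{\dot{\psi}}) = 1$ by Lemma \ref{prop:nu-global}, one has $\nu_{\dot{\psi}}(x_{\dot{\psi}} x) = \nu_{\dot{\psi}}(x)$, and with $\epsilon_{\dot{\psi}} = \epsilon^{\mathrm{Art}}_{\dot{\psi}} \nu_{\dot{\psi}}$ this gives $\epsilon^{G^!}(\dot{\psi}^!)\, \nu_{\dot{\psi}}(x) = \epsilon_{\dot{\psi}}(x_{\dot{\psi}} x)$, so that
\[ \Tr L^2_{\dot{\psi}}(\dot{f}) = |\EuScript{S}_{\dot{\psi}}|^{-1} \sum_{x \in \EuScript{S}_{\dot{\psi}}} \epsilon_{\dot{\psi}}(x_{\dot{\psi}} x) \prod_{v \in V} T_{\dot{\psi}_v, x}(f_v). \]

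Finally I would expand the right-hand side. Substituting $\pi_{\dot{\psi}_v, \chi_v}(f_v) = |\EuScript{S}_{\dot{\psi}_v}|^{-1} \sum_{y_v \in \EuScript{S}_{\dot{\psi}_v}} \chi_v(x_{\dot{\psi}_v} y_v)\, T_{\dot{\psi}_v, y_v}(f_v)$ into $\sum_{\chi_V \in \mathcal{X}(\dot{\psi}, V)} \prod_{v \in V} \pi_{\dot{\psi}_v, \chi_v}(f_v)$ and interchanging the finite sums, one is reduced to computing $\sum_{\chi_V \in \mathcal{X}(\dot{\psi}, V)} \prod_{v \in V} \chi_v(x_{\dot{\psi}_v} y_v)$ for each tuple $(y_v)_{v \in V}$. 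As $\mathcal{X}(\dot{\psi}, V)$ is empty or a coset of $\Ker(\mathrm{diag}_V^*) = (\Image\, \mathrm{diag}_V)^\perp$, character orthogonality shows this inner sum vanishes unless $(y_v)_v \in \Image\, \mathrm{diag}_V$, i.e.\ $(y_v)_v = \mathrm{diag}_V(x)$ for some $x \in \EuScript{S}_{\dot{\psi}}$ (using $(x_{\dot{\psi}_v})_v = \mathrm{diag}_V(x_{\dot{\psi}})$), in which case it equals $\epsilon_{\dot{\psi}}(x_{\dot{\psi}} x)$ times the order of the coset; counting that $\mathrm{diag}_V$ is $|\Ker\, \mathrm{diag}_V|$-to-one onto its image and that $|\Image\, \mathrm{diag}_V| \cdot |\Ker\, \mathrm{diag}_V| = |\EuScript{S}_{\dot{\psi}}|$, the normalizing constants collapse to $|\EuScript{S}_{\dot{\psi}}|^{-1}$ and one recovers exactly the formula above (with both sides vanishing in tandem when $\mathcal{X}(\dot{\psi}, V) = \emptyset$). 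The genuine obstacle is the sign identity $\epsilon^{G^!}(\dot{\psi}^!) = \epsilon^{\mathrm{Art}}_{\dot{\psi}}(s_{\dot{\psi}} x)$, resting on the additivity of symplectic root numbers behind \cite[Chapter 4]{Ar13} together with Lemma \ref{prop:nu-global} to absorb the metaplectic character $\nu_{\dot{\psi}}$; the re-indexing, the place-by-place factorization and the Fourier-orthogonality bookkeeping are formal, though one must carry along the Haar-measure factors $\mes(\cdot)^\vee$ and the semi-finiteness of all distributions involved, exactly as in \cite{Li21}.
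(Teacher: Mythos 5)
Your proposal is correct and follows essentially the same route as the paper: start from Lemma \ref{prop:disc-spec-TF-Endo}, re-index by the global basic bijection, factorize via the spherical fundamental lemma and Proposition \ref{prop:T-psi-nr}, convert the local root numbers into $\nu_{\dot{\psi}}$ by Proposition \ref{prop:nu-factorization}(ii), identify $\epsilon^{G^!}(\dot{\psi}^!)$ with $\epsilon^{\mathrm{Art}}_{\dot{\psi}}(s_{\dot{\psi}}s)$ by Arthur's Lemma 4.4.1, and finish with Lemma \ref{prop:nu-global} plus Fourier duality. The only (immaterial) difference is that you expand the right-hand side and apply character orthogonality over the coset $\mathcal{X}(\dot{\psi},V)$, whereas the paper regroups the left-hand side and reads off the coefficient as an indicator function by Fourier inversion; your explicit treatment of the case $\mathcal{X}(\dot{\psi},V)=\emptyset$ is a welcome extra check.
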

\begin{proof}
	There is no need to distinguish elements $s \in S_{\dot{\psi}}$ from its image $x \in \EuScript{S}_{\dot{\psi}}$. Suppose that $(\mathbf{G}^!, \dot{\psi}^!) \leftrightarrow (\dot{\psi}, s)$ via the basic bijection (Remark \ref{rem:basic-bijection}). Note that $s$ also induces an elliptic endoscopic datum of $H := \SO(2n+1)$. By \cite[Lemma 4.4.1]{Ar13},
	\[ \epsilon^{G^!}(\dot{\psi}^!) := \epsilon^{G^!, \mathrm{Art}}_{\dot{\psi}^!}(s_{\dot{\psi}^!}) = \epsilon^{H, \mathrm{Art}}_{\dot{\psi}}(s_{\dot{\psi}}s). \]
	
	Lemma \ref{prop:disc-spec-TF-Endo} thus implies
	\[ \Tr L^2_{\dot{\psi}}(\dot{f}) = \left| \EuScript{S}_{\dot{\psi}} \right|^{-1} \sum_{s \in \EuScript{S}_{\dot{\psi}}} \epsilon^{H, \mathrm{Art}}_{\dot{\psi}}(s_{\dot{\psi}}s) S\Theta^{G^!}_{\dot{\psi}^!}(\dot{f}^!) \]
	where $(\mathbf{G}^!, \dot{\psi}^!) \mapsto (\dot{\psi}, s)$. As $\dot{f} = \prod_v f_v$ and everything is unramified off $V$, in view of the spherical fundamental lemma, we obtain
	\begin{align*}
		\Tr L^2_{\dot{\psi}}(\dot{f}) & = \left| \EuScript{S}_{\dot{\psi}} \right|^{-1} \sum_{s \in \EuScript{S}_{\dot{\psi}}} \epsilon^{H, \mathrm{Art}}_{\dot{\psi}}(s_{\dot{\psi}}s) \prod_{v \in V} \left( \trans_{\mathbf{G}^!_v, \tilde{G}_v} S\Theta^{G^!_v}_{\dot{\psi}_v}\right)(f_v) \\
		& = \left| \EuScript{S}_{\dot{\psi}} \right|^{-1} \sum_{s \in \EuScript{S}_{\dot{\psi}}} \epsilon^{H, \mathrm{Art}}_{\dot{\psi}}(s_{\dot{\psi}}s) \prod_{v \in V} \epsilon\left( \dot{\psi}_v^{s = -1} \right) \sum_{\chi_v \in \EuScript{S}_{\dot{\psi}_v}^\vee} \chi_v(s_{\dot{\psi}} s) \pi_{\dot{\psi}_v, \chi_v}(f_v),
	\end{align*}
	where the maps $S_{\dot{\psi}} \to S_{\dot{\psi}_v}$ are omitted from the notation, and the easy fact $s_{\dot{\psi}} \mapsto s_{\dot{\psi}_v}$ has been used.
	
	Regroup the summation to obtain
	\begin{equation*}
		\Tr L^2_{\dot{\psi}}(\dot{f}) = \sum_{\chi_V} x(\chi_V) \prod_{v \in V} \pi_{\dot{\psi}_v, \chi_v}(f_v)
	\end{equation*}
	with
	\begin{align*}
		x(\chi_V) & := \left| \EuScript{S}_{\dot{\psi}} \right|^{-1} \sum_{s \in \EuScript{S}_{\dot{\psi}}} \epsilon^{H, \mathrm{Art}}_{\dot{\psi}}(s_{\dot{\psi}}s) \prod_{v \in V} \epsilon\left( \dot{\psi}_v^{s = -1} \right) \chi_v(s_{\dot{\psi}} s) \\
		& = \left| \EuScript{S}_{\dot{\psi}} \right|^{-1} \sum_{s \in \EuScript{S}_{\dot{\psi}}} \epsilon^{H, \mathrm{Art}}_{\dot{\psi}}(s_{\dot{\psi}}s) \nu_{\dot{\psi}}(s) \prod_{v \in V} \chi_v(s_{\dot{\psi}} s) \quad \text{by Proposition \ref{prop:nu-factorization} (ii)} \\
		& = \left| \EuScript{S}_{\dot{\psi}} \right|^{-1} \sum_{s \in \EuScript{S}_{\dot{\psi}}} \left(\epsilon^{H, \mathrm{Art}}_{\dot{\psi}} \nu_{\dot{\psi}}\right)(s_{\dot{\psi}}s) \prod_{v \in V} \chi_v(s_{\dot{\psi}} s) \quad \text{by Lemma \ref{prop:nu-global}.}
	\end{align*}
	
	By Fourier inversion, $x$ is seen to be the indicator function of the subset $\mathcal{X}(\dot{\psi}, V)$.
\end{proof}

The result above generalizes \cite[Theorem 1.4]{GI18}, which concerns the case of generic $\dot{\psi}$ (i.e.\ trivial on $\SL(2, \CC)$), in which case $\epsilon^{\mathrm{Art}}_{\dot{\psi}} = \mathbf{1}$ and $s_{\dot{\psi}} = 1$.

\begin{remark}
	In the right hand side of Theorem \ref{prop:global-multiplicity}, one can enlarge $V$ to any larger finite set $V'$; the indices $\chi_{V'}$ with nonzero contribution will satisfy $\chi_v = \mathbf{1}$ for all $v \in V' \smallsetminus V$, by Proposition \ref{prop:pi-psi-nr}.
\end{remark}

\begin{remark}\label{rem:global-packet}
	The proof above does not presume the validity of Theorem \ref{prop:local-desiderata}, although the formula would have little use if the $\pi_{\chi_v, \chi_v}$ were merely virtual genuine characters. Granting Theorem \ref{prop:local-desiderata} and the formalism of packets as multi-sets in \S\ref{sec:A-packets}, the multiplicity formula can be written in the form of \cite[Theorem 1.5.2]{Ar13}: define
	\begin{align*}
		\Pi_{\dot{\psi}} & := \left\{ \dot{\pi} = (\dot{\pi}_v)_v : \dot{\pi}_v \in \Pi_{\dot{\psi}_v}, \; K_v\text{-spherical} \; \text{for almost all}\; v \notin V_{\mathrm{ram}} \right\}, \\
		\Pi_{\dot{\psi}}(\epsilon_{\dot{\psi}}) & := \left\{ \dot{\pi} \in \Pi_{\dot{\psi}}: \mathrm{diag}^* \prod_v \lrangle{\cdot, \dot{\pi}_v} = \epsilon_{\dot{\psi}} \right\}.
	\end{align*}
	Note that if $v \notin V_{\mathrm{ram}}$ and $\dot{\pi}_v \in \Pi_{\dot{\psi}_v}$ is $K_v$-spherical, then $\lrangle{\cdot, \dot{\pi}_v} = \mathbf{1}$ by Proposition \ref{prop:unramified-mf} (i).
	
	We then have $\Tr L^2_{\dot{\psi}} = \displaystyle\sum_{\dot{\pi} \in \Pi_{\dot{\psi}}(\epsilon_{\dot{\psi}})} \prod_v \Theta_{\dot{\pi}_v}$ as an equality of semi-finite distributions on $\tilde{G}$. Equivalently, by identifying $\dot{\pi}$ as the unitary genuine irreducible representation $\bigotimes'_v \dot{\pi}_v$ of $\tilde{G}$, we have
	\[ L^2_{\dot{\psi}} \simeq \bigoplus_{\dot{\pi} \in \Pi_{\dot{\psi}}(\epsilon_{\dot{\psi}})} \dot{\pi}. \]
	Note that by abuse of notation, $\dot{\pi}_v \in \Pi_{\dot{\psi}_v}$ (resp.\ $\dot{\pi}$) has been identified with the corresponding genuine irreducible representation of $\tilde{G}_v$ (resp.\ of $\tilde{G}$, given by restricted tensor product). This is the statement of Theorem \ref{prop:main-global}.
\end{remark}

\begin{corollary}\label{prop:global-mf}
	Granting the validity of Theorem \ref{prop:local-desiderata} for each $\dot{F}_v$ and employ the formalism in \S\ref{sec:A-packets}. Fix $\dot{\psi} \in \dot{\Psi}_2(\tilde{G})$. Let $\dot{\pi} = \bigotimes'_v \dot{\pi}_v$ be a genuine irreducible representation of $\tilde{G}$ such that $\dot{\pi}_v$ appears in $\Pi_{\dot{\psi}_v}$ for all places $v$.
	\begin{enumerate}[(i)]
		\item The multiplicity of $\dot{\pi}$ in $L^2_{\mathrm{disc}, -}$ is equal to its multiplicity in $L^2_{\dot{\psi}}$.
		\item If $\dot{\pi}_v$ appears in $\Pi_{\dot{\psi}_v}^{\mathrm{mf}}$ (Definition \ref{def:multiplicity-free}) for all $v$, then $\dot{\pi}$ has multiplicity $\leq 1$ in $L^2_{\dot{\psi}}$.
	\end{enumerate}
\end{corollary}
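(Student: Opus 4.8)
The plan is to reduce both statements to the decomposition of $L^2_{\mathrm{disc}, -}$ by families of Satake parameters, combined with the multiplicity formula already established (Theorem \ref{prop:global-multiplicity} together with Remark \ref{rem:global-packet}). The key local input in both parts is Proposition \ref{prop:unramified-mf}, which pins down behaviour at the unramified places.

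For (i): by \eqref{eqn:L2-disc-c} and Theorem \ref{prop:GI-disc} we have $L^2_{\mathrm{disc}, -} = \widehat{\bigoplus}_{\dot{\psi}' \in \dot{\Psi}_2(\tilde{G})} L^2_{\dot{\psi}'}$, where $L^2_{\dot{\psi}'} = L^2_{\mathrm{disc}, c(\dot{\psi}')}$ is the isotypic part for the Satake parameter family $c(\dot{\psi}') \in \mathcal{C}$, and $\dot{\psi}' \mapsto c(\dot{\psi}')$ is injective by Jacquet--Shalika \eqref{eqn:JS-psi}. Since $\dot{\pi}$ is irreducible it has a single well-defined Satake parameter family $c(\dot{\pi})$, hence lies entirely in one summand; so I first show $c(\dot{\pi}) = c(\dot{\psi})$. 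As $\dot{\pi} = \bigotimes'_v \dot{\pi}_v$, the component $\dot{\pi}_v$ is $K_v$-spherical for almost all $v$, while $\dot{\psi}_v$ is unramified for almost all $v$; at such $v$ the hypothesis $\dot{\pi}_v \in \Pi_{\dot{\psi}_v}$ forces, by Proposition \ref{prop:unramified-mf} (i), that $\dot{\pi}_v$ be the unique $K_v$-spherical member of $\Pi_{\dot{\psi}_v}$, whose Satake parameter is $\phi_{\dot{\psi}_v}(\Frob)$ by Proposition \ref{prop:unramified-mf} (iii), i.e.\ the $v$-component of $c(\dot{\psi})$. Since a family in $\mathcal{C} = \varinjlim_V \mathcal{C}^V$ is determined by its restriction to the complement of any finite set, $c(\dot{\pi}) = c(\dot{\psi})$, and therefore the multiplicity of $\dot{\pi}$ in $L^2_{\mathrm{disc}, -}$ equals its multiplicity in $L^2_{\mathrm{disc}, c(\dot{\psi})} = L^2_{\dot{\psi}}$, which is (i).

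For (ii): by Remark \ref{rem:global-packet}, $L^2_{\dot{\psi}} \simeq \bigoplus_{\dot{\pi}' \in \Pi_{\dot{\psi}}(\epsilon_{\dot{\psi}})} \dot{\pi}'$, so the multiplicity of $\dot{\pi}$ in $L^2_{\dot{\psi}}$ is the number of $\dot{\pi}' = (\dot{\pi}'_v)_v \in \Pi_{\dot{\psi}}(\epsilon_{\dot{\psi}})$ with $\dot{\pi}'_v \cong \dot{\pi}_v$ for every $v$. I claim the fibre over $\dot{\pi}$ of the underlying-representation map $\Pi_{\dot{\psi}} \to \Pi_-(\tilde{G})$ is a single tuple: at almost every $v$, $\dot{\pi}_v$ is $K_v$-spherical and hence is hit by exactly one element of the multi-set $\Pi_{\dot{\psi}_v}$ by Proposition \ref{prop:unramified-mf} (i), whereas at the remaining finitely many $v$ the hypothesis $\dot{\pi}_v \in \Pi_{\dot{\psi}_v}^{\mathrm{mf}}$ gives the same conclusion. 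Thus this fibre contains a unique element $\dot{\pi}^\sharp$, and the multiplicity of $\dot{\pi}$ in $L^2_{\dot{\psi}}$ is $1$ if $\dot{\pi}^\sharp \in \Pi_{\dot{\psi}}(\epsilon_{\dot{\psi}})$ (equivalently $\mathrm{diag}^* \prod_v \lrangle{\cdot, \dot{\pi}_v} = \epsilon_{\dot{\psi}}$) and $0$ otherwise; either way it is $\leq 1$.

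The only delicate point is the bookkeeping in (ii) between the multi-set $\Pi_{\dot{\psi}_v}$ and its image in $\Pi_-(\tilde{G}_v)$: it is precisely the multiplicity-freeness of the individual member $\dot{\pi}_v$, and not of the whole local packet, that makes the fibre a single point. Beyond this I do not expect any serious obstacle, everything else being a formal consequence of Theorems \ref{prop:GI-disc} and \ref{prop:global-multiplicity} and Proposition \ref{prop:unramified-mf}.
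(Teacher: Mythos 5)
Your argument is correct and follows essentially the same route as the paper: part (i) via the Satake-parameter decomposition \eqref{eqn:L2-disc-c}, Theorem \ref{prop:GI-disc} and Proposition \ref{prop:unramified-mf}, and part (ii) by reading off the coefficient of $\Theta_{\dot{\pi}}$ in the multiplicity formula, which is $0$ or $1$ because each $\dot{\pi}_v$ meets the fibre of $\Pi_{\dot{\psi}_v} \to \Pi_-(\tilde{G}_v)$ in a single point. One small caveat in (i): you invoke Proposition \ref{prop:unramified-mf}~(iii), which is stated only for $\psi \in \Psi(\tilde{G}_v)$, whereas the localizations $\dot{\psi}_v$ of a global parameter are a priori only in $\Psi^+(\tilde{G}_v)$ (Ramanujan being unknown); the paper sidesteps this by comparing the two parameters $\dot{\psi}_v$ and $\dot{\xi}_v$ through part (ii) of that proposition, which is stated for $\Psi^+$, and then concluding $\dot{\psi} = \dot{\xi}$ from the Jacquet--Shalika injectivity in \eqref{eqn:JS-psi}. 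You should either adopt that variant or note that the conclusion of (iii) extends to unramified parameters in $\Psi^+$.
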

\begin{proof}
	Consider (i). Given $\alpha: \dot{\pi} \hookrightarrow L^2_{\mathrm{disc}, -}$, there exists $\dot{\xi} \in \dot{\Psi}_2(\tilde{G})$ such that $\Image(\alpha) \subset L^2_{\dot{\xi}}$ by consideration of Satake parameters and Theorem \ref{prop:GI-disc}.
	
	For almost all $v$, Proposition \ref{prop:unramified-mf} (ii) implies that $\dot{\psi}_v = \dot{\xi}_v$ since their Arthur packets both contain the $K_v$-spherical representation $\dot{\pi}_v$. Therefore $\dot{\psi} = \dot{\xi}$.
	
	In the setting of (ii), observe that in the formula for $L^2_{\dot{\psi}}$ in Theorem \ref{prop:global-multiplicity}, the trace of $\dot{\pi}$ has coefficient $0$ or $1$ in the right-hand side, by the definition of $\Pi_{\dot{\psi}_v}^{\mathrm{mf}}$.
\end{proof}

\begin{remark}
	If $\Pi_{\dot{\psi}_v}$ is multiplicity-free for all $v, \dot{\psi}$, then $L^2_{\mathrm{disc}, -}$ is multiplicity-free by Corollary \ref{prop:global-mf}. On the other hand, \cite[Remark 3.3]{GI18} shows that if the discrete $L^2$-automorphic spectrum of $\SO(2r+1, \A)$ is multiplicity-free for all $r > 2n+1$, then $L^2_{\mathrm{disc}, -}$ is multiplicity-free as well. In particular, multiplicity-freeness is known for totally imaginary $\dot{F}$, since local multiplicity-one for Arthur packets of $\SO(2r+1)$ is available at all finite and complex places.
\end{remark}

\begin{remark}\label{rem:ArthurMp-Gan}
	Theorem \ref{prop:global-multiplicity} (or in the form of Remark \ref{rem:global-packet}) confirms Gan's Arthur conjecture for metaplectic groups in \cite[Conjecture 8.2]{Gan14}. To see this, rename the character denoted by $\mu_\Psi$ in \textit{loc.\ cit.}\ to $\nu_{\dot{\psi}}^{\mathrm{Gan}}$, where $\dot{\psi} \in \dot{\Psi}_2(\tilde{G})$. It suffices to show $\nu_{\dot{\psi}}^{\mathrm{Gan}} = \nu_{\dot{\psi}}$. Indeed, Proposition \ref{prop:nu-factorization} (i) says $\nu_{\dot{\psi}}(s) = \epsilon(\dot{\psi}^{s=-1})$, whilst $\nu_{\dot{\psi}}^{\mathrm{Gan}}(s) = \epsilon(\phi_{\dot{\psi}}^{s = -1})$, cf.\ \eqref{eqn:phi-psi} for the definition of $\phi_{\dot{\psi}}$; to show $\nu_{\dot{\psi}}^{\mathrm{Gan}}(s) = \nu_{\dot{\psi}}(s)$, we express both $\epsilon$-factors as Euler products and apply the upcoming Lemma \ref{prop:epsilon-phi-psi} at each place of $\dot{F}$.
\end{remark}

\section{L-packets}\label{sec:L-packets}
In this section, $F$ is a local field of characteristic zero, and we fix $(W, \lrangle{\cdot|\cdot})$ and $\bpsi$ to define $\tilde{G} := \Mp(W)$.

\subsection{Tempered L-packets}
We have the following sets
\[ \Pi_-(\tilde{G}) \supset \Pi_{\mathrm{unit}, -}(\tilde{G}) \supset \Pi_{\mathrm{temp}, -}(\tilde{G}) \supset \Pi_{2, -}(\tilde{G}) \]
of irreducible representations of $\tilde{G}$: the first two already appeared in \S\ref{sec:A-packets}, whilst $\Pi_{\mathrm{temp}, -}(\tilde{G})$ (resp.\ $\Pi_{2, -}(\tilde{G})$) is the subset consisting of tempered (resp.\ square-integrable) representations.
\index{Pi-temp-2@$\Pi_{\mathrm{temp}, -}(\tilde{G}), \Pi_{2, -}(\tilde{G})$}

\begin{theorem}[C.\ Luo {\cite{Luo20}}]\label{prop:Luo}
	\index{L-packet}
	\index{Pi-phi@$\Pi_\phi$}
	\index{psi-generic@$\bpsi$-generic}
	Let $\phi \in \Phi_{\mathrm{bdd}}(\tilde{G})$.
	\begin{enumerate}[(i)]
		\item For every $\chi \in \EuScript{S}_\phi^\vee$, the $\pi_{\phi, \chi}$ is the character of some element of $\Pi_{\mathrm{temp}, -}(\tilde{G})$.
		\item If $\chi \neq \eta$, then $\pi_{\phi, \chi} \neq \pi_{\phi, \eta}$.
		\item Hereafter, identify $\pi_{\phi, \chi}$ as an element of $\Pi_{\mathrm{temp}, -}(\tilde{G})$. Then $\pi_{\phi, \chi}$ is parameterized by the pair $(\phi, \chi)$ via the local Langlands correspondence for $\tilde{G}$ of Adams--Barbasch \cite{AB98} and Gan--Savin \cite{GS1}, in the real and non-Archimedean cases respectively; for the easy case $F = \CC$, see \cite[\S 7.6]{Li19}.
		\item The representation $\pi_{\phi, \chi}$ is $\bpsi$-generic if and only if $\chi = \mathbf{1}$; see \cite[Proposition 12.1]{GGP1} and \cite[\S 9]{GS1} for the notion of $\bpsi$-genericity.
	\end{enumerate}
	Moreover, defining the L-packet indexed by $\phi$ as
	\[ \Pi_\phi = \Pi^{\tilde{G}}_\phi := \{ \pi_{\phi, \chi} \in \Pi_{\mathrm{temp}, -}(\tilde{G}) : \chi \in \EuScript{S}_\phi^\vee \}, \]
	we have
	\[\begin{tikzcd}[row sep=tiny, column sep=tiny]
		\Pi_{\mathrm{temp}, -}(\tilde{G}) \arrow[equal, r] & \bigsqcup_{\phi \in \Phi_{\mathrm{bdd}}(\tilde{G})} \Pi_\phi , \\
		\Pi_{2, -}(\tilde{G}) \arrow[phantom, u, "\subset" description, sloped] \arrow[equal, r] & \bigsqcup_{\phi \in \Phi_{2, \mathrm{bdd}}(\tilde{G})} \Pi_\phi . \arrow[phantom, u, "\subset" description, sloped]
	\end{tikzcd}\]
\end{theorem}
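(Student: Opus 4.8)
The plan is to deduce the theorem from Luo's endoscopic character relations for tempered genuine representations \cite{Luo20}, together with the tempered local Langlands correspondence for $\tilde{G}$; the only real work is reconciling Luo's normalisations with the distributions $T_{\phi, s}$. First I would record the elementary reductions. Since $\phi \in \Phi_{\mathrm{bdd}}(\tilde{G})$ is trivial on the Arthur $\SL(2, \CC)$, one has $s_\phi = 1$ by Definition \ref{def:s-psi}, hence $x_\phi = 1$, so by Definition \ref{def:pi-psi-chi} the distribution $\pi_{\phi, \chi} = |\EuScript{S}_\phi|^{-1} \sum_{x \in \EuScript{S}_\phi} \chi(x)\, T_{\phi, x}$ is simply the $\chi$-Fourier coefficient of the well-defined function $x \mapsto T_{\phi, x}$ of Lemma \ref{prop:T-psi-s}. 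For a pair $(\mathbf{G}^!, \phi^!) \leftrightarrow (\phi, s)$ as in Proposition \ref{prop:basic-bijection}, the group $G^! = \SO(2n'+1) \times \SO(2n''+1)$ carries the Whittaker-normalised stable character $S\Theta^{G^!}_{\phi^!}$, which by Arthur's work is a non-negative integral combination of tempered L-packet characters of $G^!$; I would then observe that $T_{\phi, s} = \epsilon(\phi^{s=-1})\,\trans_{\mathbf{G}^!, \tilde{G}}(S\Theta^{G^!}_{\phi^!})$ is a genuine \emph{tempered} virtual character of $\tilde{G}$, using Theorem \ref{prop:spec-trans} and its compatibility with parabolic induction, plus the fact that the transfer of a stable tempered character from split odd $\SO$ to $\tilde{G}$ produces tempered genuine characters --- the Archimedean case coming from \cite{AB98, Re99}, the non-Archimedean case from \cite{Luo20}, and the case $F = \CC$ being elementary \cite[\S 7.6]{Li19}.

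The heart of the proof is to invoke the tempered LLC for $\tilde{G}$ --- Adams--Barbasch \cite{AB98} for $F = \R$, Gan--Savin \cite{GS1} for non-Archimedean $F$, and the elementary description over $\CC$ --- which furnishes, for each $\phi$, a tempered L-packet $\Pi_\phi$ with a bijection $\pi \mapsto \lrangle{\cdot, \pi}$ onto $\EuScript{S}_\phi^\vee$, together with Luo's endoscopic character identity. Phrased with the canonical transfer factor attached to $(W, \lrangle{\cdot|\cdot})$ and $\bpsi$ and the $\bpsi$-Whittaker normalisation of $S\Theta^{G^!}_{\phi^!}$ --- both fixed in \S\ref{sec:review} --- this identity is exactly
\[ T_{\phi, x} \;=\; \epsilon(\phi^{s=-1})\,\trans_{\mathbf{G}^!, \tilde{G}}\bigl(S\Theta^{G^!}_{\phi^!}\bigr) \;=\; \sum_{\pi \in \Pi_\phi} \lrangle{x, \pi}\, \Theta_\pi , \qquad x = \text{image of } s \text{ in } \EuScript{S}_\phi, \]
the symplectic root number $\epsilon(\phi^{s=-1})$ being precisely the factor relating Luo's normalisation of the transfer to the Whittaker-normalised stable character. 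Granting this, Fourier inversion over the finite $2$-torsion group $\EuScript{S}_\phi$ (here with $x_\phi = 1$) gives $\pi_{\phi, \chi} = \Theta_\pi$ for the unique $\pi \in \Pi_\phi$ with $\lrangle{\cdot, \pi} = \chi$. This proves (i) and (iii), and (ii) follows at once from the injectivity of $\chi \mapsto \pi$.

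Item (iv) then follows from the $\bpsi$-Whittaker normalisation of $S\Theta^{G^!}_{\phi^!}$ together with the uniqueness of the $\bpsi$-generic member of a tempered packet of $\tilde{G}$: that member is the one labelled by $\mathbf{1} \in \EuScript{S}_\phi^\vee$, by \cite[\S 9]{GS1} and \cite[Proposition 12.1]{GGP1} in the non-Archimedean case and its Archimedean analogue, and since $x_\phi = 1$ there is no relabelling to worry about. Finally, the asserted partitions $\Pi_{\mathrm{temp}, -}(\tilde{G}) = \bigsqcup_\phi \Pi_\phi$ and $\Pi_{2, -}(\tilde{G}) = \bigsqcup_{\phi \in \Phi_{2, \mathrm{bdd}}(\tilde{G})} \Pi_\phi$ are just the exhaustiveness and disjointness of tempered (respectively discrete) L-packets, which are part of the LLC of \cite{AB98, GS1}, together with the standard fact that $\pi_{\phi, \chi}$ is square-integrable precisely when $\phi$ does not factor through a proper Levi subgroup of $\tilde{G}^\vee$.

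The main obstacle is the matching in the second paragraph: Luo's character identities are set up with a particular transfer factor and a particular base point in $\EuScript{S}_\phi^\vee$, and one must carefully check that these agree --- up to the explicit factor $\epsilon(\phi^{s=-1})$ --- with the conventions fixed in \S\ref{sec:review} (canonical transfer factor determined by $(W, \lrangle{\cdot|\cdot})$ and $\bpsi$, Whittaker-normalised $S\Theta^{G^!}_{\phi^!}$), and that the same bookkeeping is consistent across the real, complex and non-Archimedean cases. Once this normalisation comparison is in place, temperedness of the transfer, the Fourier inversion, and the genericity and partition statements are all formal.
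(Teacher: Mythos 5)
Your treatment of (i)--(iii) and of the two partition statements is essentially the paper's: these are exactly Luo's theorem \cite{Luo20}, and your normalisation bookkeeping (the observation that $s_\phi = 1$, so $\pi_{\phi,\chi}$ is the plain Fourier coefficient of $x \mapsto T_{\phi,x}$, with the symplectic root number $\epsilon(\phi^{s=-1}) = \nu_\phi(s)$ absorbing the discrepancy between the Whittaker-normalised $S\Theta^{G^!}_{\phi^!}$ and the parametrisation of \cite{AB98, GS1}) is the correct way to see that Luo's character identity is literally the identity $T_{\phi,x} = \sum_{\pi \in \Pi_\phi} \lrangle{x,\pi}\Theta_\pi$ in the conventions of \S\ref{sec:review}. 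So far there is nothing to add.

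The gap is in (iv) for non-Archimedean $F$. You assert that the $\bpsi$-generic member of $\Pi_\phi$ is the one labelled by $\mathbf{1} \in \EuScript{S}_\phi^\vee$, citing \cite[\S 9]{GS1} and \cite[Proposition 12.1]{GGP1}; but those references only supply the \emph{notion} of $\bpsi$-genericity and the compatibility of genericity with the $\Theta$-correspondence $\tilde{G} \leftrightarrow \SO(V^{\pm})$. To pin down \emph{which} character of $\EuScript{S}_\phi$ labels the generic member, one must first know which member of the Vogan L-packet $\Pi^H_\phi$ for $H = \SO(2n+1)$ is generic with respect to the chosen Whittaker datum --- i.e.\ the tempered L-packet (enhanced Shahidi) genericity statement for odd special orthogonal groups. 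This is not proved in \cite{Ar13} and is not in \cite{Luo20} either; it is precisely the ingredient the paper imports from \cite[\S D.1]{AGIKMS} (alternatively \cite[Corollary 6.16]{Va17}) before transporting it to $\tilde{G}$ via \cite[\S 9]{GS1}. Without that input your argument for (iv) is circular: it assumes the labelling of the generic constituent that it is supposed to establish. Everything else in your proposal stands.
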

\begin{proof}
	All are in \cite{Luo20} except (iv) in the non-Archimedean case. The remaining piece follows by combining \cite[\S D.1]{AGIKMS} (see also \cite[Corollary 6.16]{Va17}) for $\SO(2n+1)$ and \cite[\S 9]{GS1}.
\end{proof}

The L-packets are singletons in the complex case. Luo's proof is a blend of endoscopy of $\tilde{G}$ and the multiplicity formula of Gan--Ichino \cite{GI18}.

These results generalize immediately to groups of metaplectic type, say
\[ \tilde{M} = \Mp(W^\flat) \times \prod_{i=1}^r \GL(n_i, F), \]
such as the Levi subgroups of $\tilde{G}$. The L-packet associated with $\phi_M \in \Phi_{\mathrm{bdd}}(\tilde{M})$ will be denoted as $\Pi^{\tilde{M}}_{\phi_M}$.

For later use, we record the endoscopic character relations as follows. Decompose every $\phi_M \in \Phi(\tilde{M})$ and $s_M \in S_{\phi_M}$ into
\begin{equation}\label{eqn:phi-M-decomp}
	\begin{aligned}
		\phi_M & = (\underline{\phi}, \phi_1, \ldots, \phi_r), \quad \underline{\phi} \in \Phi(\Mp(W^\flat)), \; \phi_i \in \Phi(n_i), \\
		s_M & = (\underline{s}, s_1, \ldots, s_r).
	\end{aligned}
\end{equation}
Assume $\phi_M \in \Phi_{\mathrm{bdd}}(\tilde{G})$, $s_M^2 = 1$ and $s_1 = \cdots = s_r = 1$; let $(\mathbf{M}^!, \psi^!_{M^!})$ be the preimage of $(\psi_M, s_M)$ under the basic bijection (the variant of Proposition \ref{prop:basic-bijection} (ii) for $\tilde{M}$). Then
\begin{equation}\label{eqn:endo-char-M}
	\epsilon\left( \underline{\phi}^{\underline{s} = -1} \right) \cdot \trans_{\mathbf{M}^!, \tilde{M}} \left( S\Theta^{M^!}_{\phi^!_{M^!}} \right) = \sum_{\chi \in \EuScript{S}_{\phi_M}^\vee} \chi(s_M) \pi_{\phi_M, \chi}.
\end{equation}
Indeed, this follows immediately from $\EuScript{S}_{\phi_M} \simeq \EuScript{S}_{\underline{\phi}}$ and the characterization of the $\Mp(W^\flat)$-part of $\pi_{\phi_M, \chi}$.

\subsection{L-packets in general}\label{sec:L-general}
For every $\phi \in \Phi(\tilde{G})$, there exist
\begin{itemize}
	\item a standard parabolic subgroup $\tilde{P}^\vee = \tilde{M}^\vee U^\vee \subset \tilde{G}^\vee$, corresponding to a standard parabolic $P = MU \subset G$ where
	\[ M = \Sp(W^\flat) \times \prod_{i=1}^r \GL(n_i); \]
	\item $\lambda \in \mathfrak{a}^*_M := \R^r$ whose values under the coroots outside $M$ are all positive, i.e.\
	\[ \lambda_1 > \cdots > \lambda_r > 0; \]
	\item $\phi_M \in \Phi(\tilde{M})$ of the form $\phi_M = \left( \phi_{M, \mathrm{bdd}} \right)_\lambda$ such that
	\[ \phi_{M, \mathrm{bdd}} \in \Phi_{\mathrm{bdd}}(\tilde{M}), \quad \phi_M \mapsto \phi, \]
	where $(\cdots)_\lambda$ means twisting by the homomorphism $a_\lambda: \Weil{F} \to Z_{\tilde{M}^\vee}$ which parametrizes the character
	\[ c_\lambda: (g_1, \ldots, g_r) \mapsto \prod_{i=1}^r |\det g_i|_F^{\lambda_i}, \quad (g_1, \ldots, g_r) \in \prod_{i=1}^r \GL(n_i, F). \]
\end{itemize}
Furthermore, $(M, \phi_{M, \mathrm{bdd}}, \lambda)$ is uniquely determined by $\phi$, and
\begin{equation}\label{eqn:S-classification}
	S_\phi \simeq S_{\phi_M} = S_{\phi_{M, \mathrm{bdd}}}
\end{equation}
where the second and third centralizers are taken within $\tilde{M}^\vee$.

The description above of $\phi$ is clearly parallel to the classification of representations in terms of tempered ones via Langlands quotient. To be explicit, denote by $I_{\tilde{P}}(\cdot)$ the parabolic induction functor from $\tilde{P} = \tilde{M}U(F)$ to $\tilde{G}$; for every $\pi \in \Pi_{\mathrm{temp}, -}(\tilde{M})$ and $\lambda \in \mathfrak{a}^*_M$ with the positivity condition above, put $\pi_\lambda := \pi \otimes c_\lambda$; the corresponding standard module is $I_{\tilde{P}}(\pi_\lambda)$.
\index{IP@$I_{\tilde{P}}$}

\begin{definition}
	\index{L-packet}
	\index{Pi-phi}
	For all $\phi \in \Phi(\tilde{G})$, say arising from $(M, \phi_{M, \mathrm{bdd}}, \lambda)$, define
	\begin{align*}
		\Pi^{\tilde{M}}_{\phi_M} & := \left\{ \pi_\lambda \middle| \pi \in \Pi^{\tilde{M}}_{\phi_{M, \mathrm{bdd}}} \right\}, \\
		\Pi_\phi & := \left\{ \text{Langlands quotient of}\; I_{\tilde{P}}(\pi) \middle| \pi \in \Pi^{\tilde{M}}_{\phi_M} \right\}.
	\end{align*}
\end{definition}

We call $\Pi_\phi$ the L-packet associated with $\phi$. By \eqref{eqn:S-classification}, it is still in bijection with $\EuScript{S}_\phi^\vee$, written as $\pi_{\phi, \chi} \leftrightarrow \chi$. Furthermore, Theorem \ref{prop:Luo} and Langlands' classification lead to
\begin{equation}\label{eqn:LLC-general}
	\Pi_-(\tilde{G}) = \bigsqcup_{\phi \in \Phi(\tilde{G})} \Pi_\phi.
\end{equation}


All these recipes are general. In particular, to all $\mathbf{G}^! \in \Endo_{\elli}(\tilde{G})$ and $\phi^! \in \Phi(G^!)$, we may attach the datum $(M^!, \phi^!_{M^!, \mathrm{bdd}}, \lambda)$. Put $\phi^!_{M^!} := (\phi^!_{M^!, \mathrm{bdd}})_\lambda$ and define the stable distributions
\begin{equation}\label{eqn:STheta-gen}
	S\Theta^{M^!}_{\phi^!_{M^!}} := \left( S\Theta^{M^!}_{\phi^!_{M^!, \mathrm{bdd}}}\right)_\lambda , \quad
	S\Theta^{G^!}_{\phi^!} := \Ind^{G^!}_{M^!} \left( S\Theta^{M^!}_{\phi^!_{M^!}} \right),
\end{equation}
with self-evident meaning of $(\cdots)_\lambda$. Below is an endoscopic character relation for $\phi \in \Phi(\tilde{G})$, stated using $S\Theta^{G^!}_{\phi^!}$ and the standard modules rather than Langlands quotients.

\begin{proposition}\label{prop:endo-char-gen}
	Given $\phi \in \Phi(\tilde{G})$ and $s \in S_{\phi, 2}/\mathrm{conj}$, let $(\mathbf{G}^!, \phi^!)$ be its preimage under the basic bijection (Proposition \ref{prop:basic-bijection} (ii)). Note that $\Phi(\tilde{G}) \subset \Psi^+(\tilde{G})$ and $\Phi(G^!) \subset \Psi^+(G^!)$; as a special instance of Definition \ref{def:T-dist}, put
	\[ T_{\phi, s} := \epsilon\left( \phi^{s = -1} \right) \cdot \trans_{\mathbf{G}^!, \tilde{G}} \left( S\Theta^{G^!}_{\phi^!} \right). \]
	Then $T_{\phi, s} = \sum_\chi \chi(s) \Theta_{\rho_\chi}$, where
	\begin{itemize}
		\item $\chi$ ranges over $\EuScript{S}_\phi^\vee = \EuScript{S}_{\phi_M}^\vee$,
		\item $\rho_\chi$ is the standard module induced from $\pi_{\phi_M, \chi} \in \Pi^{\tilde{M}}_{\phi_M}$.
	\end{itemize}
\end{proposition}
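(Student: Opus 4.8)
The plan is to reduce Proposition \ref{prop:endo-char-gen} to the tempered endoscopic character relation \eqref{eqn:endo-char-M}, following the pattern of the proof of Proposition \ref{prop:T-psi-Ind}. Since $\Phi(\tilde{G}) \subset \Psi^+(\tilde{G})$, Lemma \ref{prop:T-psi-s} applies and $T_{\phi, s}$ depends only on the image $x$ of $s$ in $\EuScript{S}_\phi$. Write the Langlands datum of $\phi$ as $\phi_M = (\phi_{M, \mathrm{bdd}})_\lambda$ with $\phi_{M, \mathrm{bdd}} = (\underline{\phi}, \phi_1, \ldots, \phi_r)$ as in \eqref{eqn:phi-M-decomp}, where $\underline{\phi}$ is already bounded because the twist by $c_\lambda$ is trivial on the metaplectic factor. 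As each $\GL$-factor of $S_{\phi_{M, \mathrm{bdd}}}$ is connected, the isomorphism \eqref{eqn:S-classification} shows that $\EuScript{S}_\phi \simeq \EuScript{S}_{\phi_M} = \EuScript{S}_{\phi_{M, \mathrm{bdd}}}$ is carried by the metaplectic factor; hence $x$ is the image of some $s_M = (\underline{s}, 1, \ldots, 1) \in S_{\phi_{M, \mathrm{bdd}}, 2}$ with $\underline{s}^2 = 1$, and we may replace $s$ by $s_M$, viewed inside $S_{\phi, 2}$.

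With this choice $s$ acts trivially on every block $\phi_i|\cdot|^{\lambda_i} \oplus \phi_i^\vee|\cdot|^{-\lambda_i}$, so its $(+1)$- and $(-1)$-eigenspaces on $V_\phi$ are $V_{\underline{\phi}}^{\underline{s}=+1} \oplus \bigoplus_i \bigl( \phi_i|\cdot|^{\lambda_i} \oplus \phi_i^\vee|\cdot|^{-\lambda_i} \bigr)$ and $V_{\underline{\phi}}^{\underline{s}=-1}$ respectively. Consequently the elliptic endoscopic datum $\mathbf{G}^! = \SO(2n'+1) \times \SO(2n''+1)$ of $\tilde{G}$ attached to $s$ arises from the endoscopic datum $\mathbf{M}^!$ of $\tilde{M}$ — namely $\underline{\mathbf{G}}^! = \SO(2\underline{n}'+1) \times \SO(2\underline{n}''+1)$ on $\Mp(W^\flat)$, tautological on the $\GL$-factors — by sending all the $\GL$-factors of $M$ into the first $\SO$-factor; in particular $M^!$ is a standard Levi of $G^!$, $n'' = \underline{n}''$, and $\phi^!_{M^!} \mapsto \phi^!$ under $\Psi^+(M^!) \to \Psi^+(G^!)$. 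This is exactly the configuration $I'' = \emptyset$ in the proof of Proposition \ref{prop:T-psi-Ind}. By \eqref{eqn:STheta-gen} we have $S\Theta^{G^!}_{\phi^!} = \Ind^{G^!}_{M^!}\bigl(S\Theta^{M^!}_{\phi^!_{M^!}}\bigr)$, so applying $\trans_{\mathbf{G}^!, \tilde{G}}$ and invoking the compatibility of endoscopic transfer with parabolic induction up to a central twist \cite[Proposition 3.8.4]{Li21} — the twist being trivial for this arrangement of data, by the same computation as in Proposition \ref{prop:T-psi-Ind} — yields $\trans_{\mathbf{G}^!, \tilde{G}}\bigl(S\Theta^{G^!}_{\phi^!}\bigr) = \Ind^{\tilde{G}}_{\tilde{M}}\bigl( \trans_{\mathbf{M}^!, \tilde{M}}(S\Theta^{M^!}_{\phi^!_{M^!}}) \bigr)$.

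Now $S\Theta^{M^!}_{\phi^!_{M^!}} = \bigl(S\Theta^{M^!}_{\phi^!_{M^!, \mathrm{bdd}}}\bigr)_\lambda$ by \eqref{eqn:STheta-gen}, and both $\trans_{\mathbf{M}^!, \tilde{M}}$ and the formation of $\pi_{\phi_M, \chi} = (\pi_{\phi_{M, \mathrm{bdd}}, \chi})_\lambda$ are compatible with the twist by $c_\lambda$; hence the tempered relation \eqref{eqn:endo-char-M} for $\tilde{M}$, twisted by $\lambda$, gives $\epsilon\bigl(\underline{\phi}^{\underline{s}=-1}\bigr) \cdot \trans_{\mathbf{M}^!, \tilde{M}}(S\Theta^{M^!}_{\phi^!_{M^!}}) = \sum_\chi \chi(s)\, \pi_{\phi_M, \chi}$ as distributions on $\tilde{M}$. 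Parabolic induction $\Ind^{\tilde{G}}_{\tilde{M}}$ of \cite[(2.4.1)]{Li21} carries $\Theta_{\pi_{\phi_M, \chi}}$ to $\Theta_{\rho_\chi}$, the character of the standard module $\rho_\chi = I_{\tilde{P}}(\pi_{\phi_M, \chi})$. Finally, because the $(-1)$-eigenspace of $s$ on $V_\phi$ equals $V_{\underline{\phi}}^{\underline{s}=-1}$ as a representation of $\mathcal{L}_F$ (the $\GL$-blocks and the twist by $\lambda$ lie entirely in the $(+1)$-eigenspace), $\epsilon(\phi^{s=-1}) = \epsilon(\underline{\phi}^{\underline{s}=-1})$. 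Combining, and using that these signs square to $1$,
\[ T_{\phi, s} = \epsilon(\phi^{s=-1})\, \trans_{\mathbf{G}^!, \tilde{G}}\bigl(S\Theta^{G^!}_{\phi^!}\bigr) = \epsilon\bigl(\underline{\phi}^{\underline{s}=-1}\bigr)\, \Ind^{\tilde{G}}_{\tilde{M}}\bigl( \trans_{\mathbf{M}^!, \tilde{M}}(S\Theta^{M^!}_{\phi^!_{M^!}}) \bigr) = \sum_\chi \chi(s)\, \Theta_{\rho_\chi}, \]
which is the assertion.

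The step requiring the most care is the third paragraph: one must pin down the central twist in \cite[Proposition 3.8.4]{Li21} for this particular nesting of Levi and endoscopic embeddings and confirm that it is trivial, so that it does not disturb the root-number identity $\epsilon(\phi^{s=-1}) = \epsilon(\underline{\phi}^{\underline{s}=-1})$; this is handled exactly as in the proof of Proposition \ref{prop:T-psi-Ind}. Everything else — the eigenspace computation, tracking the twist by $c_\lambda$ through transfer and through \eqref{eqn:STheta-gen}, and the compatibility of distribution-level with representation-level parabolic induction — is routine bookkeeping.
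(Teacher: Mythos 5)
Your proof is correct and follows essentially the same route as the paper: reduce to the tempered endoscopic character relation \eqref{eqn:endo-char-M} on $\tilde{M}$ twisted by $\lambda$, and push forward by parabolic induction. The only difference is that the paper obtains $T_{\phi,x} = \Ind^{\tilde{G}}_{\tilde{M}}(T_{\phi_M, x_M})$ by citing Proposition \ref{prop:T-psi-Ind} as a black box, whereas you unfold that proposition's proof inline (the eigenspace computation, the $I''=\emptyset$ nesting of endoscopic data, and the matching $\epsilon(\phi^{s=-1}) = \epsilon(\underline{\phi}^{\underline{s}=-1})$) -- all of which is correct but already packaged there.
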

\begin{proof}
	Take the $(M^!, \phi^!_{M^!, \mathrm{bdd}}, \lambda)$ associated with $\phi^!$. Since $T_{\phi, s}$ is known to depend only on the image $x \in \EuScript{S}_\phi$ of $s$, we will write it as $T_{\phi. x}$. Use \eqref{eqn:S-classification} to take the $x_M \in \EuScript{S}_{\phi_M}$ such that $x_M \mapsto x$. Proposition \ref{prop:T-psi-Ind} entails
	\[ T_{\phi, x} = \Ind^{\tilde{G}}_{\tilde{M}} \left( T_{\phi_M, x_M} \right). \]
	
	From \eqref{eqn:endo-char-M} for $\phi_{M, \mathrm{bdd}}$ plus a $\lambda$-twist on the $\GL$ part, we get
	\[ T_{\phi_M, x_M} = \sum_{\chi \in \EuScript{S}_{\phi_M}^\vee} \chi(x_M) \pi_{\phi_M, \chi}. \]
	One replaces $\chi(x_M)$ by $\chi(x) = \chi(s)$ by viewing $\chi$ as an element of $\EuScript{S}_\phi^\vee$. After applying $\Ind^{\tilde{G}}_{\tilde{M}}$, the right hand side becomes $\sum_\chi \chi(s) \Theta_{\rho_\chi}$.
\end{proof}

\subsection{The L-packet within an Arthur packet}\label{sec:L-within}
Let $\psi \in \Psi(\tilde{G})$. The results below are based on \cite[Proposition 7.4.1]{Ar13}, with extra complications from local root numbers.

\begin{lemma}\label{prop:s-phi-psi}
	The image of $s_\psi$ in $\EuScript{S}_{\phi_\psi}$ is trivial.
\end{lemma}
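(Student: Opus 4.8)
The plan is to exhibit $s_\psi$ as a point on a connected one‑parameter subgroup lying \emph{inside} $S_{\phi_\psi}$, so that $s_\psi$ falls in the identity component $S_{\phi_\psi}^\circ$ and therefore maps to the trivial element of $\EuScript{S}_{\phi_\psi} = \pi_0(S_{\phi_\psi})$.

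First I would note that $s_\psi = \psi\bigl(1, \twomatrix{-1}{}{}{-1}\bigr)$ centralizes $\Image(\psi)$, hence a fortiori $\Image(\phi_\psi)$, so $s_\psi \in S_{\phi_\psi}$; this is also what makes the assertion meaningful. Next, consider the morphism of algebraic groups $\rho \colon \Gm \to \tilde{G}^\vee$, $\rho(t) := \psi\bigl(1, \twomatrix{t}{}{}{t^{-1}}\bigr)$, obtained by composing the cocharacter $t \mapsto \twomatrix{t}{}{}{t^{-1}}$ of $\SL(2,\CC)$ with $\psi|_{\SL(2,\CC)}$. The key step is to verify that $\rho$ in fact factors through $S_{\phi_\psi}$: for every $w \in \mathcal{L}_F$ the two elements $\bigl(1, \twomatrix{t}{}{}{t^{-1}}\bigr)$ and $\bigl(w, \twomatrix{|w|^{1/2}}{}{}{|w|^{-1/2}}\bigr)$ of $\mathcal{L}_F \times \SL(2, \CC)$ commute — the first $\mathcal{L}_F$‑component is trivial and the two $\SL(2,\CC)$‑components are both diagonal — so applying $\psi$ shows that $\rho(t)$ commutes with $\phi_\psi(w)$ for all $w$, i.e.\ $\rho(t) \in S_{\phi_\psi}$.

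Finally, since $\Gm$ is connected, $\rho(\Gm) \subset S_{\phi_\psi}^\circ$; evaluating at $t = -1$ gives $s_\psi \in S_{\phi_\psi}^\circ$, whence the image of $s_\psi$ in $\EuScript{S}_{\phi_\psi}$ is trivial. I do not expect a genuine obstacle: the only point needing attention beyond bookkeeping is that $\rho$ lands in $S_{\phi_\psi}$ and not merely in $\tilde{G}^\vee$, i.e.\ the diagonal‑commutation observation above. The same argument applies verbatim to $\psi \in \Psi^+(\tilde{G})$ and to groups of metaplectic type, should that be needed later.
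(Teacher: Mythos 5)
Your proof is correct and is essentially the paper's own argument: the paper likewise observes that $\psi\left(1, \bigl(\begin{smallmatrix} z & \\ & z^{-1} \end{smallmatrix}\bigr)\right)$ centralizes $\phi_\psi$ for all $z \in \CC^{\times}$, so this connected family lies in $S_{\phi_\psi}^\circ$ and has trivial image in $\EuScript{S}_{\phi_\psi}$; evaluating at $z = -1$ gives the claim. Your write-up just spells out the commutation check more explicitly.
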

\begin{proof}
	Since $\psi\left(1, \bigl(\begin{smallmatrix} z & \\ & z^{-1} \end{smallmatrix} \bigr) \right)$ centralizes $\phi_\psi$ for all $z \in \CC^{\times}$, they all have trivial image in $\EuScript{S}_{\phi_\psi}$.
\end{proof}

\begin{lemma}\label{prop:epsilon-phi-psi}
	Let $s \in S_{\psi, 2}$, then $\epsilon(\psi^{s = -1}) = \epsilon(\phi_\psi^{s = -1})$.
\end{lemma}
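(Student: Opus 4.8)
The plan is to reduce the equality to a standard telescoping identity for local $\epsilon$-factors. The crucial observation is that passing to the $(-1)$-eigenspace of $s$ commutes with the construction $\psi \mapsto \phi_\psi$. Indeed, since $s \in S_\psi = Z_{\tilde{G}^\vee}(\mathrm{Image}\,\psi)$ it commutes with $\phi_\psi(w) = \psi(w, \mathrm{diag}(|w|^{1/2}, |w|^{-1/2}))$ for every $w$, so the eigenspace $V_\psi^-$ of $s$ is $\phi_\psi$-stable and $\phi_\psi|_{V_\psi^-}$ is precisely the parameter obtained by feeding $\psi^{s=-1}$ into the same formula. Writing $\psi'' := \psi^{s=-1}$, a self-dual symplectic parameter of dimension $\dim V_\psi^-$, this says $\phi_\psi^{s=-1} = \phi_{\psi''}$, so it suffices to prove
\[ \epsilon(\phi_{\psi''}) = \epsilon(\psi''|_{\mathcal{L}_F}). \]

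Both sides are multiplicative under direct sums of the argument $\psi''$ — the left side because $\phi_{(-)}$ and $\epsilon(-,\bpsi)$ are both additive over $\oplus$, the right side because restriction to $\mathcal{L}_F$ and $\epsilon(-,\bpsi)$ are additive — so I would decompose $\psi''$ into simple summands and treat a single $\psi'' = \rho \boxtimes r(a)$ with $\rho \in \Phi_{\mathrm{bdd}}$. Then $\psi''|_{\mathcal{L}_F} = \rho^{\oplus a}$, whence $\epsilon(\psi''|_{\mathcal{L}_F}) = \epsilon(\rho)^a$, while \eqref{eqn:phi-psi} gives $\phi_{\psi''} = \bigoplus_{h=0}^{a-1} \rho|\cdot|^{\frac{a-1}{2}-h}$, so by additivity and $\epsilon(\rho|\cdot|^t, \bpsi) = \epsilon(\tfrac12 + t, \rho, \bpsi)$,
\[ \epsilon(\phi_{\psi''}, \bpsi) = \prod_{h=0}^{a-1} \epsilon\!\left(\tfrac12 + \tfrac{a-1}{2} - h,\ \rho,\ \bpsi\right). \]

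The final step is the telescoping. The local $\epsilon$-factor depends on its complex variable only through an exponential, $\epsilon(\tfrac12 + t, \rho, \bpsi) = \epsilon(\rho, \bpsi)\, c^{-t}$ with $c = c(\rho,\bpsi) \in \CC^\times$ independent of $t$ (in the non-Archimedean case $c = q^{a(\rho) + (\dim\rho)\, n(\bpsi)}$, reducing by \eqref{eqn:epsilon-SL} to the case that $\rho$ factors through $\Weil{F}$ if one wishes; in the Archimedean case $c = 1$). Since $\sum_{h=0}^{a-1}\bigl(\tfrac{a-1}{2} - h\bigr) = 0$, the product collapses to $\epsilon(\rho,\bpsi)^a = \epsilon(\psi''|_{\mathcal{L}_F})$, which finishes the proof.

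I do not expect a genuine obstacle: the content is the commutation $\phi_\psi^{s=-1} = \phi_{\psi^{s=-1}}$ together with the classical fact that shifting the spectral parameter by amounts that sum to zero leaves a product of $\epsilon$-factors unchanged — the same mechanism underlying \eqref{eqn:epsilon-SL}. The only points that require a little care are checking that $\psi^{s=-1}$ is an honest parameter so that the $\phi_{(-)}$ construction and $\epsilon$ apply to it verbatim, and citing the shift-dependence of $\epsilon$ in a form valid over both non-Archimedean and Archimedean $F$; both are routine.
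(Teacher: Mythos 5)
Your proof is correct, but it takes a genuinely different route from the paper's. The paper decomposes $\psi$ as in \eqref{eqn:psi-decomp-2}, computes $\epsilon(\psi^{s=-1})$ and $\epsilon(\phi_\psi^{s=-1})$ separately as explicit products over $I^+ \sqcup I^- \sqcup J$, pairs the summands $\phi_i|\cdot|^k$ with $\phi_i|\cdot|^{-k}$ via the duality relation $\epsilon(\phi,\bpsi)\epsilon(\phi^\vee,\bpsi) = (\det\phi)(-1)$, and finishes with a parity discussion resting on $\epsilon(\phi_i,\bpsi)^{b_i} = (\det\phi_i)(-1)^{b_i/2}$ for orthogonal self-dual $\phi_i$ with $b_i$ even. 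You instead isolate two cleaner facts: (1) the commutation $\phi_\psi^{s=-1} = \phi_{\psi^{s=-1}}$, which is immediate since $s$ centralizes $\Image(\psi)$ and the construction $\psi \mapsto \phi_\psi$ is precomposition with $w \mapsto (w, \mathrm{diag}(|w|^{1/2},|w|^{-1/2}))$, hence commutes with passing to $s$-eigenspaces; and (2) the telescoping identity $\epsilon(\phi_{\rho\boxtimes r(a)},\bpsi) = \epsilon(\rho,\bpsi)^a$ for an arbitrary simple $\rho$, which needs no self-duality of $\rho$ because the shifts $\frac{a-1}{2}-h$ sum to zero and $\epsilon(\tfrac12+t,\rho,\bpsi)$ depends on $t$ only through an exponential. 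This is more uniform (no case split into $I^+$, $I^-$, $J$) and in fact proves the stronger statement $\epsilon(\phi_{\psi''},\bpsi) = \epsilon(\psi''|_{\mathcal{L}_F},\bpsi)$ for every $\psi''$, not only symplectic ones. What you pay for this is one external input not among the properties listed in \S\ref{sec:local-root-numbers}: the exponential $t$-dependence of $\epsilon(\tfrac12+t,\rho,\bpsi)$ for Weil--Deligne and Archimedean representations. This is standard (Tate/Deligne in the non-Archimedean case, including the $\det(-\rho(\Frob)q^{-s}\mid V_\rho^{I_F})$ contribution; explicit $\GL(1)$ formulas plus inductivity in the Archimedean case) and the paper's own duality relation is its value at the symmetric point, but you should cite it explicitly. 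Two cosmetic caveats: your parenthetical ``$c=1$ in the Archimedean case'' holds only for suitably normalized $\bpsi$ (irrelevant here, since the exponents sum to zero), and $\rho$ need not lie in $\Phi_{\mathrm{bdd}}$ if one wants the lemma for $\Psi^+(\tilde{G})$ — the argument is unaffected.
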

\begin{proof}
	Decompose $\psi$ (resp.\ $\phi_\psi$) as in \eqref{eqn:psi-decomp-2} (resp.\ \eqref{eqn:phi-psi}).
	
	As seen in the proof of Lemma \ref{prop:T-psi-s}, the conjugacy class of $s$ amounts to a decomposition $m_i = m'_i + m''_i$ (ordered) for all $i \in I^+ \sqcup I^- \sqcup J$, such that $m''_i$ is even if $i \in I^-$.

	Specifically, for $i \in I^+ \sqcup I^-$, in $m_i (\phi_i \boxtimes r(b_i))$ there are exactly $m'_i$ (resp.\ $m''_i$) summands on which $s$ acts by $1$ (resp.\ $-1$). Same for $i \in J$ and $m_i (\phi_i \oplus \phi_i^\vee) \boxtimes r(b_i)$. Therefore,
	\begin{equation*}
		\epsilon(\psi^{s = -1}) = \prod_{i \in I^+ \sqcup I^-} \epsilon(\phi_i, \bpsi)^{b_i m''_i} \prod_{j \in J} (\det\phi_j)(-1)^{b_j m''_j}.
	\end{equation*}
	
	For $\phi_\psi$, note that in \eqref{eqn:phi-psi} there are pairs $(\phi_i |\cdot|^k , \phi_i |\cdot|^{-k})$ for various $k \in \frac{1}{2}\Z$, leaving one unpaired $\phi_i$ exactly when $b_i$ is odd. Consider the contribution of these pairs in $\epsilon(\phi_\psi^{s = -1})$.
	\begin{itemize}
		\item When $i \in I^+ \sqcup I^-$, elements in each pair are in duality, contributing
		\[ \left(\det (\phi_i |\cdot|^k)\right)(-1) = (\det \phi_i)(-1). \]
		\item For $j \in J$, both $\phi_j$ and $\phi^\vee_j$ intervene; we get $(\phi_j |\cdot|^k, \phi_j |\cdot|^{-k}, \phi_j^\vee |\cdot|^k , \phi_j^\vee |\cdot|^{-k} )$ from pairs. Taking local root numbers yield $(\det \phi_j)(-1) (\det \phi^\vee_j)(-1) = 1$. 
	\end{itemize}
	
	All in all,
	\begin{multline*}
		\epsilon(\phi_\psi^{s = -1}) = \prod_{i \in I^+ \sqcup I^-} (\det\phi_i)(-1)^{m''_i \lfloor b_i/2 \rfloor} \prod_{\substack{i \in I^+ \sqcup I^- \\ b_i \;\text{odd}}} \epsilon(\phi_i, \bpsi)^{m''_i} \prod_{\substack{j \in J \\ b_j \;\text{odd}}} \epsilon(\phi_j, \bpsi)^{m''_j} \epsilon(\phi^\vee_j, \bpsi)^{m''_j} \\
		= \prod_{i \in I^+ \sqcup I^-} (\det\phi_i)(-1)^{m''_i \lfloor b_i/2 \rfloor} \prod_{\substack{i \in I^+ \sqcup I^- \\ b_i \;\text{odd}}} \epsilon(\phi_i, \bpsi)^{m''_i} \prod_{j \in J} (\det\phi_j)(-1)^{b_j m''_j} \\
		= \prod_{\substack{i \in I^+ \\ b_i \;\text{even}}} (\det\phi_i)(-1)^{m''_i b_i/2} \prod_{\substack{i \in I^+ \sqcup I^- \\ b_i \;\text{odd}}} \epsilon(\phi_i, \bpsi)^{m''_i} \prod_{j \in J} (\det\phi_j)(-1)^{b_j m''_j},
	\end{multline*}
	the last equality follows by discussing parities and the type of $\phi_i$.
	
	Comparing with $\epsilon(\psi^{s = -1})$, it remains to show
	\[ \prod_{\substack{i \in I^+ \sqcup I^- \\ b_i \;\text{even}}} \epsilon(\phi_i, \bpsi)^{m''_i b_i} = \prod_{\substack{i \in I^+ \\ b_i \;\text{even}}} (\det \phi_i)(-1)^{m''_i b_i / 2}. \]
	If $i \in I^-$ and $b_i$ is even, then $\phi_i$ is symplectic, $\epsilon(\phi_i, \bpsi)^2 = 1$, thus $\epsilon(\phi_i, \bpsi)^{m''_i b_i} = 1$. The problem reduces to showing
	\[ \prod_{\substack{i \in I^+ \\ b_i \;\text{even}}} \epsilon(\phi_i, \bpsi)^{m''_i b_i} = \prod_{\substack{i \in I^+ \\ b_i \;\text{even}}} (\det \phi_i)(-1)^{m''_i b_i / 2}. \]
	We know that $\epsilon(\phi_i, \bpsi)^{b_i} = (\det \phi_i)(-1)^{b_i / 2}$ for each $i$ above. The proof is complete.
\end{proof}

For the following result, note that the natural inclusion $S_\psi \subset S_{\phi_\psi}$ induces a surjection $\EuScript{S}_\psi \to \EuScript{S}_{\phi_\psi}$ by \cite[p.38]{Ar89-Unip} or a direct check; thus $\EuScript{S}_{\phi_\psi}^\vee$ embeds into $\EuScript{S}_\psi^\vee$. We also use the formalism from \S\ref{sec:L-general} for L-packets.

\begin{proposition}\label{prop:L-within-0}
	\index{Pi-phi-psi@$\Pi_{\phi_\psi}$}
	Given $\psi \in \Psi(\tilde{G})$, make one of the following two assumptions:
	\begin{enumerate}[(a)]
		\item $\pi_{\psi, \chi}$ is a linear combination of irreducible characters with coefficients in $\Z_{\geq 0}$ for all $\chi \in \EuScript{S}_\psi^\vee$;
		\item for each $\chi \in \EuScript{S}_\psi^\vee$ there exist $\tau_\chi \in \Pi_-(\tilde{G})$ and $c_\chi \in \CC^{\times}$ such that
		\begin{itemize}
			\item $\pi_{\psi, \chi} = c_\chi \Theta_{\tau_\chi}$, and
			\item $\chi \neq \eta \implies \tau_\chi \not\simeq \tau_\eta$.
		\end{itemize}
	\end{enumerate}
	
	Let $\pi \in \Pi_{\phi_\psi}$ be parameterized by $\chi_\pi \in \EuScript{S}_{\phi_\psi}^\vee$. Then, for each $\chi \in \EuScript{S}_\psi^\vee$,
	\begin{enumerate}[(i)]
		\item if $\chi \neq \chi_\pi$, then $\pi$ does not appear in the expansion of $\pi_{\psi, \chi}$ into irreducible characters;
		\item if $\chi = \chi_\pi$, then $\pi$ intervenes in the expansion of $\pi_{\psi, \chi}$ with coefficient $1$.
	\end{enumerate}
\end{proposition}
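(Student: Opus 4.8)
The plan is to reduce everything to Arthur's description of the L-packet inside an Arthur packet for split odd orthogonal groups \cite[Proposition 7.4.1]{Ar13}, via the spectral transfer, and then to Fourier-invert over $\EuScript{S}_\psi$. Fix $x \in \EuScript{S}_\psi$ with a representative $s \in S_{\psi, 2}$, and let $(\mathbf{G}^!, \psi^!)$ be its image under the basic bijection (Proposition \ref{prop:basic-bijection}(ii)), so $G^! = \SO(2n'+1) \times \SO(2n''+1)$. Since $S_\psi \subset S_{\phi_\psi}$ and $s^2 = 1$, the same $s$ produces a pair $(\mathbf{G}^!, \phi_{\psi^!})$ attached to $\phi_\psi$, and a direct check from \eqref{eqn:phi-psi} shows that $\phi_{\psi^!}$ is exactly the L-parameter attached to the Arthur parameter $\psi^!$ of $G^!$; i.e.\ $\psi \mapsto \phi_\psi$ commutes with the basic bijection. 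Combining Definition \ref{def:T-dist} with Proposition \ref{prop:endo-char-gen} applied to $\phi_\psi$ (using $\Phi(\tilde{G}) \subset \Psi^+(\tilde{G})$), and invoking Lemma \ref{prop:epsilon-phi-psi} to identify $\epsilon(\psi^{s=-1})$ with $\epsilon(\phi_\psi^{s=-1})$, I would obtain
\[ T_{\psi, x} = \sum_{\chi' \in \EuScript{S}_{\phi_\psi}^\vee} \chi'(x)\, \Theta_{\rho_{\chi'}} \;+\; R_x, \qquad R_x := \epsilon(\psi^{s=-1})\, \trans_{\mathbf{G}^!, \tilde{G}}\!\bigl( S\Theta^{G^!}_{\psi^!} - S\Theta^{G^!}_{\phi_{\psi^!}} \bigr), \]
where $\rho_{\chi'}$ is the standard module with Langlands quotient $\pi_{\phi_\psi, \chi'} \in \Pi_{\phi_\psi}$, and $\chi'(x)$ is computed through $\EuScript{S}_\psi \twoheadrightarrow \EuScript{S}_{\phi_\psi}$.

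The next step is to read off the coefficient of $\Theta_\pi$ in the right-hand side, where $\pi = \pi_{\phi_\psi, \chi_\pi}$. In the sum over $\chi'$, the representation $\pi$ occurs in $\rho_{\chi_\pi}$ with multiplicity one as its Langlands quotient, and in no other $\rho_{\chi'}$ — an occurrence there would make $\pi$ a non-quotient constituent of $\rho_{\chi'}$, which is impossible since $\pi$ and $\pi_{\phi_\psi, \chi'}$ share the same Langlands exponent — so the sum contributes $\chi_\pi(x)$. For $R_x$, I would invoke \cite[Proposition 7.4.1]{Ar13} applied factorwise to $G^!$: the difference $S\Theta^{G^!}_{\psi^!} - S\Theta^{G^!}_{\phi_{\psi^!}}$ is a $\Z$-linear combination of stable characters of standard modules attached to parameters of $G^!$ strictly smaller than $\phi_{\psi^!}$. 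Since $\trans_{\mathbf{G}^!, \tilde{G}}$ preserves infinitesimal characters (as in the proof of Proposition \ref{prop:T-psi-inf-char}), commutes with parabolic induction up to a central twist \cite[Proposition 3.8.4]{Li21}, and carries tempered stable characters to tempered virtual characters by Luo's relations (Theorem \ref{prop:Luo}), every irreducible constituent of $R_x$ has Langlands parameter strictly smaller than $\phi_\psi$; in particular none lies in $\Pi_{\phi_\psi}$. Hence the coefficient of $\Theta_\pi$ in $T_{\psi, x}$ is $\chi_\pi(x)$, for every $x \in \EuScript{S}_\psi$.

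Finally I would Fourier-invert. From $\pi_{\psi, \chi} = |\EuScript{S}_\psi|^{-1} \sum_{x} \chi(x_\psi x)\, T_{\psi, x}$ (Definition \ref{def:pi-psi-chi}), the coefficient of $\Theta_\pi$ in $\pi_{\psi, \chi}$ equals $|\EuScript{S}_\psi|^{-1} \sum_x \chi(x_\psi x)\, \chi_\pi(x) = \chi(x_\psi)\, |\EuScript{S}_\psi|^{-1} \sum_x (\chi \chi_\pi)(x)$, using that $\EuScript{S}_\psi$ is $2$-torsion. By orthogonality this vanishes unless $\chi = \chi_\pi$, in which case it equals $\chi_\pi(x_\psi) = 1$, because $x_\psi$ has trivial image in $\EuScript{S}_{\phi_\psi}$ (Lemma \ref{prop:s-phi-psi}) and $\chi_\pi$ factors through $\EuScript{S}_{\phi_\psi}$. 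Thus the coefficient of $\Theta_\pi$ in $\pi_{\psi, \chi}$ is $\delta_{\chi, \chi_\pi}$. Under assumption (a) this coefficient is $m_{\psi, \chi}(\pi) \in \Z_{\geq 0}$, which gives (i) and (ii) simultaneously; under assumption (b) it forces $\tau_{\chi_\pi} = \pi$ with $c_{\chi_\pi} = 1$ and $\tau_\chi \not\simeq \pi$ for $\chi \ne \chi_\pi$, again giving (i) and (ii). Note that assumptions (a), (b) enter only at this last translation step, not in the computation itself.

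The step I expect to be the main obstacle is the vanishing of the $\Pi_{\phi_\psi}$-part of $R_x$: it rests both on Arthur's precise bookkeeping of how the L-packet sits inside an Arthur packet for odd $\SO$ and on the compatibility of endoscopic transfer with the Langlands (standard-module) filtration — namely, that transfer cannot fold a representation of $G^!$ which is strictly less tempered than $\phi_{\psi^!}$ into the L-packet $\Pi_{\phi_\psi}$ of $\tilde{G}$ — a point where the interplay between $\lambda$-exponents, infinitesimal characters, and the tempered endoscopic character relations must be checked carefully.
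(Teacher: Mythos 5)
Your proposal is correct and follows essentially the same route as the paper: expand $S\Theta^{G^!}_{\psi^!}$ into standard stable characters with leading term $\phi_{\psi^!}$ (Arthur's (2.2.12)/Proposition 7.4.1 bookkeeping), transfer via Proposition \ref{prop:endo-char-gen}, use the $\|\Lambda\|$-triangularity of standard modules together with $\|\Lambda_{\phi^!}\| = \|\Lambda_\phi\|$ to see that only the pair $(\phi_{\psi^!}, \rho_{\chi_\pi})$ can contribute $\Theta_\pi$, and finish with Lemmas \ref{prop:s-phi-psi} and \ref{prop:epsilon-phi-psi} plus Fourier analysis on $\EuScript{S}_\psi$. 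The only (harmless) divergence is at the end: you Fourier-invert over all of $\EuScript{S}_\psi$ to get the coefficient $\delta_{\chi,\chi_\pi}$ directly, whereas the paper evaluates the key identity only at $s = s_\psi^{-1}$ and then invokes hypothesis (a) or (b) to pin down the unique $\chi$.
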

\begin{proof}
	Let $\mathfrak{a}^*_0 := X^*(T) \otimes \R$ where $T \subset G$ is the standard maximal torus, and let $\mathfrak{a}^{*, +}_0$ be the chamber in $\mathfrak{a}^*_0$ defined by $\check{\alpha} > 0$, where $\check{\alpha}$ ranges over $B$-simple coroots; its closure $\overline{\mathfrak{a}^{*, +}_0}$ is defined by $\check{\alpha} \geq 0$. Identify $\mathfrak{a}^*_0$ and $\R^n$ and use the standard inner product to define the Weyl-invariant norm $\|\cdot\|$ on $\mathfrak{a}^*_0$.
	
	Following the notation of \cite[\S 5]{Ar89-IOR1}, to $\pi \in \Pi_-(\tilde{G})$ is attached an element $\Lambda_\pi \in \overline{\mathfrak{a}^{*, +}_0}$ which measures the deviation from temperedness: it is the twisting parameter in the Langlands classification for $\pi$.
	
	On the other hand, for each $\phi \in \Phi(\tilde{G})$, define $\Lambda_\phi$ to be the $\lambda$ in the triplet $(M, \phi_{M, \mathrm{bdd}}, \lambda)$ associated with $\phi$, see \S\ref{sec:L-general}. For $\psi \in \Psi(\tilde{G})$ we put $\Lambda_\psi := \Lambda_{\phi_\psi}$.

	The same definitions apply to $G^!$ for all $\mathbf{G}^! \in \Endo_{\elli}(\tilde{G})$; we still equip the avatar of $\mathfrak{a}^*_0$ for $G^!$ with the standard inner product. They are naturally identified with $\mathfrak{a}^*_0$ as inner product spaces, so that $\mathfrak{a}^{*, +}_0$ is contained in the avatar for $G^!$.
	
	Here we are following the strategy in \cite[\S 7.4]{Ar13}. In \cite{Ar89-IOR1} and \cite[\S 2.2]{Ar13} one uses a partial order on $\mathfrak{a}^*_0$ instead of the norm, but it is less convenient for endoscopy. Also note that, in comparison with standard endoscopy treated in \textit{loc.\ cit.}, the roots/coroots of $G^!$ and $G$ differ by a dilation, but this is irrelevant here.
	
	Suppose that $\psi^! \in \Psi(G^!)$ (resp.\ $\phi^! \in \Phi(G^!)$) has image $\psi \in \Psi(\tilde{G})$ (resp.\ $\phi \in \Phi(\tilde{G})$), then
	\[ \| \Lambda_{\psi^!} \| = \| \Lambda_\psi \| \quad (\text{resp.}\; \| \Lambda_{\phi^!} \| = \| \Lambda_\phi \| ) \]
	under the identification above. This is readily seen by comparing the inducing data for $\psi^!$ and $\psi$ (resp.\ $\phi$ and $\phi^!$): the twisting parameters coincide up to Weyl group action, although the inducing Levi subgroup of $\tilde{G}$ may have semisimple rank larger than that of $G^!$. For example, suppose $n=2$ and $\mathbf{G}^!$ corresponds to $(n', n'') = (1, 1)$, then $\phi^! = (\phi', \phi'')$ with
	\[ \phi' = \phi'' = |\cdot|_F^{s} \oplus |\cdot|_F^{-s}, \quad s \in \R_{> 0} \]
	is induced from the minimal Levi of $G^!$ in its Langlands classification, but its image $\phi = 2|\cdot|_F^s \oplus 2|\cdot|_F^{-s}$ is induced from the Siegel Levi of $\tilde{G}$.
	
	Given $s \in S_{\psi, 2}/\text{conj}$, let $(\mathbf{G}^!, \psi^!) \mapsto (\psi, s)$ under the basic bijection (Proposition \ref{prop:basic-bijection} (ii)). Identify $s$ with a representative in $S_\psi$. We have
	\begin{equation}\label{eqn:L-within-aux0}
		\trans_{\mathbf{G}^!, \tilde{G}} \left( S\Theta^{G^!}_{\psi^!} \right) = \sum_{\chi \in \EuScript{S}_\psi^\vee} \chi(s_\psi s) \epsilon\left(\psi^{s = -1} \right) \pi_{\psi, \chi}.
	\end{equation}
	
	Use \cite[(2.2.12)]{Ar13} to express $S\Theta^{G^!}_{\psi^!}$ as $\sum_{\phi^! \in \Phi(G^!)} n(\psi^!, \phi^!) S\Theta^{G^!}_{\phi^!}$, with $S\Theta^{G^!}_{\phi^!}$ defined in \eqref{eqn:STheta-gen}. The left hand side of \eqref{eqn:L-within-aux0} becomes
	\[ \sum_{\phi^! \in \Phi(G^!)} n(\psi^!, \phi^!) \trans_{\mathbf{G}^!, \tilde{G}} \left( S\Theta^{G^!}_{\phi^!}\right). \]
	According to the proof of \cite[Proposition 7.4.1]{Ar13}:
	\begin{itemize}
		\item $n(\psi^!, \phi_{\psi^!}) = 1$;
		\item $n(\psi^!, \phi^!) \neq 0 \implies \|\Lambda_{\phi^!} \| \leq \|\Lambda_{\psi^!}\| \stackrel{\text{known}}{=} \|\Lambda_\psi \|$;
		\item assume $n(\psi^!, \phi^!) \neq 0$, then $\|\Lambda_{\phi^!}\| = \|\Lambda_{\psi^!}\| \iff \phi^! = \phi_{\psi^!}$.
	\end{itemize}
	
	For every $\phi^! \in \Phi(G^!)$, apply the basic bijection to obtain $(\mathbf{G}^!, \phi^!) \mapsto (\phi, s(\phi^!))$ where $\phi$ is the image of $\phi^!$. Proposition \ref{prop:endo-char-gen} yields
	\[ \trans_{\mathbf{G}^!, \tilde{G}} \left( S\Theta^{G^!}_{\phi^!}\right) = \sum_\rho \chi_\rho(s(\phi^!)) \epsilon\left( \phi^{s(\phi^!) = -1} \right) \Theta_\rho \]
	where $\rho$ ranges over the standard modules attached to the members of $\Pi_\phi$, say $\rho$ is indexed by $\chi_\rho \in \EuScript{S}_\phi^\vee$.

	For every genuine standard module $\rho$ for $\tilde{G}$, denote its Langlands quotient by $\pi_\rho$ and put $\Lambda_\rho := \Lambda_{\pi_\rho}$. Expand the standard character $\Theta_\rho$ into $\sum_{\pi \in \Pi_-(\tilde{G})} m(\rho, \pi) \Theta_\pi$ with
	\begin{itemize}
		\item $m(\pi_\rho, \rho) = 1$;
		\item $m(\rho, \pi) \neq 0 \implies \|\Lambda_\pi\| \leq \|\Lambda_\rho \|$;
		\item assume $m(\rho, \pi) \neq 0$, then $\| \Lambda_\pi \| = \| \Lambda_\rho \| \iff \pi = \pi_\rho$.
	\end{itemize}
	In other words $\pi_\rho$ is ``the most non-tempered'' element in the composition series of $\rho$. These are basic facts about standard modules, see eg.\ \cite[\S 7.4]{Ar13} or \cite[\S 5]{Ar89-IOR1}; the case for coverings is identical.

	The left hand side of \eqref{eqn:L-within-aux0} is thus equal to
	\begin{multline*}
		\sum_{\pi \in \Pi_-(\tilde{G})} \; \sum_{\substack{\phi^! \in \Phi(G^!) \\ \phi := \text{its image}}} \; \sum_{\substack{\rho: \text{standard modules} \\ \text{attached to elements of $\Pi_\phi$}}} \\
		n(\psi^!, \phi^!) \chi_\rho(s(\phi^!)) m(\rho, \pi) \epsilon\left( \phi^{s(\phi^!) = -1} \right) \Theta_\pi .
	\end{multline*}
	
	Given $\pi \in \Pi_-(\tilde{G})$, in order for $\Theta_\pi$ to appear in the sum above, there must exist $\phi^!$ and $\rho$ such that
	\[ \| \Lambda_\psi \| \stackrel{\text{known}}{=} \| \Lambda_{\psi^!} \| \geq \| \Lambda_{\phi^!} \| \stackrel{\text{known}}{=} \| \Lambda_\phi \| \stackrel{\text{easy}}{=} \| \Lambda_\rho \| \geq \| \Lambda_\pi \| . \]
	
	Assume henceforth that $\pi \in \Pi_{\phi_\psi}$. Then $\Lambda_\pi = \Lambda_{\phi_\psi} =: \Lambda_\psi$ and equality holds everywhere in the formula above. Hence the only contribution to $\pi$ is the pair $(\phi^!, \rho)$ satisfying
	\[ \phi^! = \phi_{\psi^!}, \quad \pi = \pi_\rho. \]
	Moreover, $(\mathbf{G}^!, \psi^!) \mapsto (\psi, s)$ implies $(\mathbf{G}^!, \phi_{\psi^!}) \mapsto (\phi_\psi, s)$, so in this case $\phi = \phi_\psi$, and $s(\phi^!) = s$ up to $S_{\phi_\psi}$-conjugacy (recall: $S_\psi \subset S_{\phi_\psi} = S_\phi$).
	
	The coefficient of $\Theta_\pi$ in the left hand side of \eqref{eqn:L-within-aux0} is therefore
	\[ \chi_\rho(s(\phi^!)) \epsilon\left( \phi^{s(\phi^!) = -1} \right) = \chi_\pi(s) \epsilon\left( \phi^{s = -1} \right). \]
	
	A comparison with the right hand side of \eqref{eqn:L-within-aux0} gives
	\begin{equation*}
		\chi_\pi(s) \epsilon\left( \phi_\psi^{s = -1} \right) = \sum_{\chi \in \EuScript{S}_\psi^\vee} \chi(s_\psi s) \epsilon\left(\psi^{s = -1}\right) \left( \text{the coefficient of $\Theta_\pi$ in $\pi_{\psi, \chi}$} \right).
	\end{equation*}
	
	Lemma \ref{prop:s-phi-psi} implies $\chi_\pi(s) = \chi_\pi(s_\psi s)$. Combined with Lemma \ref{prop:epsilon-phi-psi}, we conclude
	\begin{equation}\label{eqn:L-within-aux1}
		\chi_\pi(s_\psi s) = \sum_{\chi \in \EuScript{S}_\psi^\vee} \chi(s_\psi s) \left( \text{the coefficient of $\Theta_\pi$ in $\pi_{\psi, \chi}$} \right).
	\end{equation}
	
	Now one can vary $s$. Putting $s = s_\psi^{-1}$, the sum of coefficients of $\Theta_\pi$ in various $\pi_{\psi, \chi}$ is seen to be $1$. Under the assumption (a), there exists a unique $\chi \in \EuScript{S}_\psi^\vee$ for which the coefficient of $\Theta_\pi$ is nonzero, and in that case the coefficient is $1$; rewriting \eqref{eqn:L-within-aux1} as $\chi_\pi(s_\psi s) = \chi(s_\psi s)$, we obtain $\chi = \chi_\pi$, whence (i) and (ii).
	
	Under (b), there exists a unique $\chi$ such that $\Theta_\pi$ appears in $\pi_{\psi, \chi}$, necessarily with coefficient $c_\chi$. Hence \eqref{eqn:L-within-aux1} reduces to
	\[ \chi_\pi(s_\psi s) = c_\chi \chi(s_\psi s). \]
	Put $s = s_\psi^{-1}$ to see $c_\chi = 1$, and then $\chi = \chi_\pi$. The proof of (i) and (ii) is complete.
\end{proof}

Below is the analogue of \cite[Proposition 7.4.1]{Ar13}.

\begin{proposition}\label{prop:L-within}
	Granting the validity of Theorem \ref{prop:local-desiderata} for $\psi \in \Psi(\tilde{G})$. With the formalism of Arthur packets in \S\ref{sec:A-packets}, there is a unique injection from $\Pi_{\phi_\psi}$ into $\Pi_\psi^{\mathrm{mf}} \subset \Pi_\psi$ (Definition \ref{def:multiplicity-free}) making the diagram
	\[\begin{tikzcd}
		& \Pi_-(\tilde{G}) \\
		\Pi_{\phi_\psi} \arrow[hookrightarrow, r] \arrow[d, "{1:1}"'] \arrow[ru, "\text{incl.}"] & \Pi_\psi^{\mathrm{mf}} \arrow[d] \arrow[u] \\
		\EuScript{S}_{\phi_\psi}^\vee \arrow[hookrightarrow, r] & \EuScript{S}_\psi^\vee
	\end{tikzcd}\]
	commute. In particular, $\Pi_{\phi_\psi} \subset \Pi_{\mathrm{unit}, -}(\tilde{G})$.
\end{proposition}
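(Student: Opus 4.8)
The plan is to deduce everything from Proposition~\ref{prop:L-within-0}, which was engineered precisely so that its hypothesis~(a) coincides with the conclusion of Theorem~\ref{prop:local-desiderata} for the parameter $\psi$ at hand. So the first step is simply to invoke Theorem~\ref{prop:local-desiderata}: for every $\chi \in \EuScript{S}_\psi^\vee$ the distribution $\pi_{\psi,\chi}$ is a $\Z_{\geq 0}$-combination of genuine irreducible characters, and since $\psi \in \Psi(\tilde{G})$ all irreducibles occurring are unitary. This puts us under assumption~(a) of Proposition~\ref{prop:L-within-0}.

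Next I would fix $\pi \in \Pi_{\phi_\psi}$ with parameter $\chi_\pi \in \EuScript{S}_{\phi_\psi}^\vee$, regarded inside $\EuScript{S}_\psi^\vee$ via the embedding dual to the surjection $\EuScript{S}_\psi \twoheadrightarrow \EuScript{S}_{\phi_\psi}$ recalled before Proposition~\ref{prop:L-within-0}. By parts~(i) and~(ii) of that proposition, $\pi$ occurs in the expansion of $\pi_{\psi,\chi}$ with coefficient $0$ when $\chi \neq \chi_\pi$ and with coefficient exactly $1$ when $\chi = \chi_\pi$. Summing over $\chi$, the multiplicity of $\pi$ in the multi-set $\Pi_\psi$ is $\sum_\chi m_{\psi,\chi}(\pi) = 1$, so $\pi$ lies in $\Pi_\psi^{\mathrm{mf}}$ and there is a unique element of $\Pi_\psi$ lying over it, namely the triple $(\chi_\pi, \pi, 1)$ in the notation of Definition~\ref{def:A-packet}. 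As $\pi$ occurs with nonzero coefficient in $\pi_{\psi,\chi_\pi}$, its unitarity follows from the last clause of Theorem~\ref{prop:local-desiderata}; this already yields $\Pi_{\phi_\psi} \subset \Pi_{\mathrm{unit},-}(\tilde{G})$.

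Then I would define the injection $\Pi_{\phi_\psi} \to \Pi_\psi^{\mathrm{mf}}$ by $\pi \mapsto (\chi_\pi, \pi, 1)$. Commutativity of the triangle with $\Pi_-(\tilde{G})$ is immediate because the middle entry of the triple is $\pi$ itself; commutativity of the square with $\EuScript{S}_\psi^\vee$ holds because $(\chi_\pi,\pi,1)$ maps to $\chi_\pi \in \EuScript{S}_\psi^\vee$, which is the image under the bottom inclusion of $\chi_\pi = \lrangle{\cdot,\pi} \in \EuScript{S}_{\phi_\psi}^\vee$, the value of the left vertical bijection. Injectivity is clear since the middle entry recovers $\pi$. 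For uniqueness, any injection making the diagram commute must send $\pi$ to an element of $\Pi_\psi$ lying over $\pi \in \Pi_-(\tilde{G})$; having shown that fiber is a singleton, the map is forced, so the injection is unique.

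I do not expect a genuine obstacle: the mathematical substance is entirely absorbed into Proposition~\ref{prop:L-within-0} (which itself rests on Arthur's \cite[Proposition~7.4.1]{Ar13}, the endoscopic character relations of \S\ref{sec:L-general}, and the norm comparison of twisting parameters under transfer) and into Theorem~\ref{prop:local-desiderata}. The only point needing care is keeping the multi-set formalism of \S\ref{sec:A-packets} straight — in particular, that ``multiplicity $1$ in $\Pi_\psi$'' means the fiber of $\Pi_\psi \to \Pi_-(\tilde{G})$ over $\pi$ is a singleton, which is exactly what makes both the well-definedness of the injection into $\Pi_\psi^{\mathrm{mf}}$ and its uniqueness automatic.
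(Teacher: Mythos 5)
Your proposal is correct and follows the paper's own route exactly: reduce to assumption (a) of Proposition \ref{prop:L-within-0}, use its parts (i)--(ii) to see each $\pi \in \Pi_{\phi_\psi}$ has total multiplicity one in $\Pi_\psi$, and define the injection by $\pi \mapsto (\chi_\pi, \pi, 1)$. The paper states this more tersely but the substance is identical, so there is nothing further to add.
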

\begin{proof}
	The assumption (a) in Proposition \ref{prop:L-within-0} holds for $\psi$.	For all $\pi \in \Pi_{\phi_\psi}$, denote the corresponding element of $\EuScript{S}_{\phi_\psi}^\vee$ as $\chi_\pi$. In the notation of Definition \ref{def:A-packet}, the desired map is simply $\pi \mapsto (\chi_\pi, \pi, 1)$. All the required properties are immediate from Proposition \ref{prop:L-within-0}.
\end{proof}

\section{Anti-tempered parameters}\label{sec:anti-tempered}
In this section, $F$ is a non-Archimedean local field of characteristic zero. We fix an additive character $\bpsi$ of $F$.

\subsection{Two involutions}
Let $R$ be any connected reductive $F$-group. Let $R(F)\dcate{Mod}$ be Abelian category of smooth representations of $R(F)$. The \emph{Aubert--Zelevinsky involution} is an endo-functor $\pi \mapsto \hat{\pi}$ of $R(F)\dcate{Mod}$ satisfying $(\cdot)^{\wedge\wedge} \simeq \identity$.
\index{Aubert--Zelevinsky involution}
\index{pi-hat@$\hat{\pi}$}

Moreover, $\pi \mapsto \hat{\pi}$ leaves each Bernstein block of $R(F)\dcate{Mod}$ invariant. We refer to \cite[\S A.2]{KMSW} for an overview of this functor.

Let $R(F)\dcate{Mod}_{\mathrm{f}}$ be the Abelian subcategory of $R(F)\dcate{Mod}$ consisting of objects of finite length. Its Grothendieck group is denoted by $\mathrm{Groth}(R)$. Every $\pi$ in $R(F)\dcate{Mod}_{\mathrm{f}}$ has image $[\pi]$ in $\mathrm{Groth}(R)$. Fix a minimal parabolic subgroup $P_0$ of $R$.

\begin{definition}
	\index{DR@$\mathbf{D}^R$}
	\index{r-P@$r_P$}
	For every standard parabolic subgroup $P$ of $R$, take a Levi decomposition $P = MU$ and let $A_P = A_M$ be the maximal central split torus in $M$; denote the parabolic induction (resp.\ Jacquet functor) along $P$ on the level of $\mathrm{Groth}(\cdot)$ as $i_P$ (resp.\ $r_P$). Define the endomorphism of $\mathrm{Groth}(R)$
	\[ \mathbf{D}^R := \sum_{P \supset P_0} (-1)^{\dim A_{P_0} / A_P} i_P r_P. \]
\end{definition}

For every irreducible smooth representation $\pi$ of $R(F)$, denote its cuspidal support as $[M_\pi, \pi']$ where $M_\pi$ is a standard Levi subgroup of $R$ and $\pi'$ is a supercuspidal representation of $M_\pi(F)$. Put
\begin{equation*}
	\index{beta-pi@$\beta(\pi)$}
	\beta(\pi) = \beta^R(\pi) := (-1)^{\dim A_{P_0}/A_{M_\pi}}.
\end{equation*}
It is well-known that
\begin{equation}\label{eqn:D-sign}
	\beta(\pi) \mathbf{D}^R [\pi] = [\hat{\pi}], \quad (\mathbf{D}^R)^2 = \identity .
\end{equation}

We shall also view $\mathbf{D}^R$ as an endomorphism of the space of virtual characters.

All these carry over verbatim to genuine representations of coverings, such as the metaplectic group $\tilde{G} = \Mp(W)$, or groups of metaplectic type in general. The notation $\mathbf{D}^{\tilde{G}}$ will be used.

Secondly, we introduce an involution on Arthur parameters.

\begin{definition}\label{def:anti-tempered-parameter}
	\index{psi-hat@$\hat{\psi}$}
	\index{Arthur parameter!anti-tempered}
	For every Arthur parameter $\psi$ of $R$, viewed as a homomorphism from $\Weil{F} \times \SL(2, \CC) \times \SL(2, \CC)$ to the L-group of $R$, let $\hat{\psi}$ be the Arthur parameter obtained from $\psi$ by switching two $\SL(2, \CC)$ factors; we say $\hat{\psi}$ is dual to $\psi$. This yields an involution $\psi \mapsto \hat{\psi}$ on the set of Arthur parameters of $R$.
	
	An Arthur parameter $\psi \in \Psi(R)$ is said to be \emph{anti-tempered} if $\hat{\psi} \in \Phi_{\mathrm{bdd}}(R)$.
	
	The same definitions pertains to Arthur parameters for $\tilde{G} = \Mp(W)$, or groups of metaplectic type in general.
\end{definition}

It is evident that
\begin{equation}
	S_\psi = S_{\hat{\psi}}, \quad \EuScript{S}_\psi = \EuScript{S}_{\hat{\psi}}.
\end{equation}

We are interested in the involutions for $\Pi_-(\tilde{G})$ and $\Psi(\tilde{G})$ so obtained, but some corresponding results for odd orthogonal groups are needed.

\subsection{Results for odd orthogonal groups}\label{sec:anti-tempered-SO}
Take $(n', n'') \in (\Z_{\geq 0})^2$ and let
\[ G' := \SO(2n' + 1), \quad G'' := \SO(2n'' + 1), \quad G^! := G' \times G'' . \]

Let $\psi^! = (\psi', \psi'') \in \Psi(G^!)$ be anti-tempered (Definition \ref{def:anti-tempered-parameter}). We need the following facts:
\begin{itemize}
	\item $\mathbf{D}^{G^!} S\Theta_{\psi^!}^{G^!}$ is still a stable distribution on $G^!(F)$;
	\item there exists a unique $\beta^{G^!}(\psi^!) \in \{\pm 1\}$, characterized by $\mathbf{D}^{G^!} S\Theta_{\psi^!}^{G^!} = \beta^{G^!}(\psi^!) S\Theta^{G^!}_{\widehat{\psi^!}}$;
	\item one has $\beta^{G^!}(\psi^!) = \beta^{G!}(\widehat{\psi^!})$ and $\beta^{G^!}(\psi^!) = \beta^{G'}(\psi') \times \beta^{G''}(\psi'')$.
\end{itemize}
\index{beta-psi@$\beta(\psi)$}

Our notation $\beta^{G^!}(\psi^!)$ follows Arthur \cite[Chapter 7]{Ar13}. An overview of these properties can also be found in \cite[\S 1]{LLS24}. In the former reference, a simple formula for $\beta^{G^!}(\psi^!)$ that depends only on $\phi_{\psi^!}$ is given, but we make no direct use of it.

Now consider the special case $H = \SO(2n + 1)$. The equivalence classes of elliptic endoscopic data $\mathbf{H}^\dagger$ of $H$ are in bijection with conjugacy classes of elements $s \in \check{H}$ satisfying $s^2 = 1$, taken up to translation by $Z_{\check{H}}$. They are parameterized by unordered pairs $[n', n''] \in \Z_{\geq 0}$ such that $n = n ' + n''$, namely $2n'$ (resp.\ $2n''$) is the multiplicity of the eigenvalue $+1$ (resp.\ $-1$) of $s$. The underlying endoscopic group is $H^\dagger = \SO(2n' + 1) \times \SO(2n'' + 1)$.

Let $\psi^\dagger \in \Psi(H^\dagger)$. The pair $(\mathbf{H}^\dagger, \psi^\dagger)$ gives rise to $(\psi, s)$ as in \S\ref{sec:basic-bijection}, namely $\psi \in \Psi(H)$ is the image of $\psi^\dagger$, and $s$ is the conjugacy class in $\check{H}$ up to $Z_{\check{H}}$ described above. Observe that $\psi$ is anti-tempered if and only if $\psi^\dagger$ is.

Assume $\psi \in \Psi(H)$ is anti-tempered, say $\psi = \hat{\phi}$ where $\phi \in \Phi_{\mathrm{bdd}}(H)$. To $\psi$ is attached $\overline{\EuScript{S}}_\psi = \overline{\EuScript{S}}_\phi$, the quotient of the component group $\EuScript{S}_\psi = \EuScript{S}_\phi$ of $S_\psi = S_\phi$ by the image of $Z_{\check{H}}$.
\index{S-curly-bar-psi@$\overline{\EuScript{S}}_\psi$}

\begin{proposition}[Liu--Lo--Shahidi]\label{prop:beta-variance}
	\index{mu-psi@$\mu_\psi$}
	Let $\psi = \hat{\phi} \in \Psi(H)$ be anti-tempered. There exists a unique $\mu_\psi \in \overline{\EuScript{S}}_\psi^\vee$ characterized by
	\[ \beta^{H^\dagger}(\psi^\dagger) = \beta^H(\psi) \mu_\psi(s) \]
	for all $(\mathbf{H}^\dagger, \psi^\dagger)$ that maps to $(\psi, s)$. Furthermore, $\mu_\psi(s_\psi) = 1$.
\end{proposition}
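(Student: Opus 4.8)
\emph{Overall plan.} I would build on the three structural facts about the sign $\beta$ recorded just above the statement: for anti-tempered $\psi^\dagger=(\psi',\psi'')\in\Psi(H^\dagger)$ one has $\mathbf{D}^{H^\dagger}S\Theta^{H^\dagger}_{\psi^\dagger}=\beta^{H^\dagger}(\psi^\dagger)\,S\Theta^{H^\dagger}_{\widehat{\psi^\dagger}}$, the sign is unchanged under $\psi^\dagger\mapsto\widehat{\psi^\dagger}$, and $\beta^{H^\dagger}(\psi^\dagger)=\beta^{\SO(2n'+1)}(\psi')\,\beta^{\SO(2n''+1)}(\psi'')$. Given a conjugacy class of $s\in\check H$ with $s^2=1$ modulo $Z_{\check H}$, with $(\mathbf{H}^\dagger,\psi^\dagger)\leftrightarrow(\psi,s)$ as in \S\ref{sec:anti-tempered-SO}, I would set $\mu_\psi(s):=\beta^H(\psi)\,\beta^{H^\dagger}(\psi^\dagger)\in\{\pm1\}$, a priori only a function on the finite set of such classes, and reduce the proposition to three claims: (a) $\mu_\psi(s)$ depends only on the image of $s$ in $\overline{\EuScript{S}}_\psi$; (b) the induced map $\overline{\EuScript{S}}_\psi\to\{\pm1\}$ is a homomorphism; (c) $\mu_\psi(s_\psi)=1$. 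Uniqueness would then be immediate, since $S_{\psi,2}/\mathrm{conj}$ surjects onto $\overline{\EuScript{S}}_\psi$ (cf.\ the remark before Lemma~\ref{prop:T-psi-s}).

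\emph{Step 1: reduce to good parity.} First I would decompose $\psi$ as in \eqref{eqn:psi-decomp-2} and write $\psi=\psi_0\oplus\psi_{\GL}$ with $\psi_0$ of good parity and $\psi_{\GL}$ absorbing the $I^-$- and $J$-summands, as in Proposition~\ref{prop:pi-psi-gp}. Then $S\Theta^H_\psi$ is the parabolic induction of $\Theta_{\pi_{\GL}}\boxtimes S\Theta^{H_0}_{\psi_0}$ from a Levi $\GL(m)\times\SO(2n_0+1)$, and for each $s$ the pair $S\Theta^{H^\dagger}_{\psi^\dagger}$ decomposes analogously according to how $s$ distributes the $\GL$-blocks of $\psi_{\GL}$ between $\psi'$ and $\psi''$. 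Since $\mathbf{D}$ commutes with parabolic induction up to a sign determined solely by the split rank of the inducing Levi, and since the total split rank carried by the $\GL$-blocks is the same on $H$ and on $H^\dagger$ (a $\GL(d)$-block contributes $d$ regardless of which $\SO$-factor it enters), the $\psi_{\GL}$-contributions to $\beta^H(\psi)$ and to $\beta^{H^\dagger}(\psi^\dagger)$ cancel in the ratio; hence $\mu_\psi(s)=\mu_{\psi_0}(s_0)$, and likewise $\overline{\EuScript{S}}_\psi\rightiso\overline{\EuScript{S}}_{\psi_0}$. I may thus assume $\psi=\bigoplus_{i\in I^+}m_i\psi_i$ of good parity, so $S_\psi=\prod_i\Or(m_i,\CC)$ and $\EuScript{S}_\psi\simeq\bmu_2^{I^+}$.

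\emph{Step 2: well-definedness and the character property.} For good parity $\psi$, two representatives of the same $x\in\EuScript{S}_\psi$ differ by moves $(m'_{i_0},m''_{i_0})\leftrightarrow(m'_{i_0}+2,m''_{i_0}-2)$ inside a single $\Or(m_{i_0},\CC)$ (case (a) in the proof of Lemma~\ref{prop:T-psi-s}), which transfer the self-dual block $\psi_{i_0}^{\oplus2}$ between $\psi'$ and $\psi''$; the parabolic-induction argument of Step~1 again shows $\beta^{H^\dagger}(\psi^\dagger)$ is unaffected, so $\mu_\psi$ descends to $\EuScript{S}_\psi$. Passing to $-s$ interchanges the $(\pm1)$-eigenspaces, so $\psi^\dagger=(\psi',\psi'')$ becomes $(\psi'',\psi')$, and $\beta^{H^\dagger}$ is symmetric in its two arguments; thus $\mu_\psi(-s)=\mu_\psi(s)$, and since the image of $Z_{\check H}=\{\pm1\}$ in $\EuScript{S}_\psi$ is generated by $-1$, $\mu_\psi$ descends to $\overline{\EuScript{S}}_\psi$, giving (a). For (b) and (c) I would choose the minimal representative $s_x$ of $x$, with $m''_i\in\{0,1\}$, so that $\psi''=\bigoplus_{i:\,x_i=-1}\psi_i$ and $\psi'$ is the complementary sum; plugging this into the explicit description of $\beta^{\SO(2m+1)}$ in terms of $\phi_{(\cdot)}$ (Arthur), or equivalently into its computation via twisted transfer to $\GL(2n)$ where it is a product of parity/root-number factors indexed by unordered pairs of summands, one finds that $\mu_\psi(x)$ is a product over pairs $\{i,j\}\subset I^+$ of fixed signs $c_{ij}$, each occurring precisely when exactly one of $i,j$ satisfies $x_{(\cdot)}=-1$. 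Such a function on $\bmu_2^{I^+}$ is a homomorphism, proving (b); and for (c) one notes $s_\psi$ has $i$-th coordinate $1$ or $-1$ according as $b_i$ is odd or even, whereupon the parity constraints forced by good parity (each $\phi_i\boxtimes r(b_i)$ symplectic) make every relevant $c_{ij}$-factor trivial, by the same bookkeeping as in the proof of Lemma~\ref{prop:nu-good-parity} and in \cite[Chapter 7]{Ar13}.

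\emph{Expected main obstacle.} The hard part will be the one ingredient I have deliberately black-boxed in Step~2: the pairwise/additive structure of $\beta^{\SO(2m+1)}$ under direct sums — that is, how the sign relating $\mathbf{D}^{\SO}S\Theta^{\SO}_\psi$ to $S\Theta^{\SO}_{\widehat{\psi}}$ decomposes when $\psi$ is partitioned as $\psi'\oplus\psi''$, and the compatibility of all these signs with endoscopic and twisted transfer and with the split-rank accounting in Steps~1–2. This is exactly what is carried out in \cite{LLS24} (refining \cite[Chapter 7]{Ar13}); granting it together with the three bulleted facts about $\beta^{H^\dagger}$, the remainder — the reduction to good parity, the invariance under the elementary moves, and the deduction of the character property and of $\mu_\psi(s_\psi)=1$ — is formal.
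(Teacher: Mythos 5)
The paper does not actually prove this proposition: its entire proof is a citation to \cite[Lemma 5.7]{LLS24}, plus the remark (which you also make) that uniqueness follows because the classes $s$ exhaust $\overline{\EuScript{S}}_\psi$. Your proposal therefore does strictly more than the paper's own argument, and the parts you carry out are sound: the reduction to good parity, the invariance of $\beta^H(\psi)\beta^{H^\dagger}(\psi^\dagger)$ under the elementary moves $(m'_{i_0},m''_{i_0})\mapsto(m'_{i_0}+2,m''_{i_0}-2)$, the symmetry under $s\mapsto -s$ coming from $\beta^{H^\dagger}(\psi',\psi'')=\beta(\psi')\beta(\psi'')$, and the observation that a function of the form $\prod_{\{i,j\}:\,x_ix_j=-1}c_{ij}$ on $\bmu_2^{I^+}$ is automatically a homomorphism (a short check mod $4$ confirms this). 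The one substantive input you black-box --- the pairwise/additive behaviour of $\beta^{\SO(2m+1)}$ under direct sums and its compatibility with endoscopic decomposition --- is exactly the content of the cited lemma of \cite{LLS24}, i.e.\ the same external input the paper itself relies on; in particular your verification of $\mu_\psi(s_\psi)=1$ also ultimately rests on that explicit formula rather than on anything proved here. So there is no gap relative to what the paper establishes, but be aware your argument is a reduction to the cited result, not an independent proof. One small imprecision: $\mathbf{D}$ commutes with parabolic induction \emph{exactly} on Grothendieck groups (no sign); the split-rank signs you invoke enter only when converting $\mathbf{D}[\pi]$ into $[\hat{\pi}]$ via $\beta(\pi)$, and it is these $\beta$-factors of the $\GL$-blocks that cancel between $H$ and $H^\dagger$ --- the conclusion of your Step~1 is unaffected, but the bookkeeping should be phrased that way.
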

\begin{proof}
	See \cite[Lemma 5.7]{LLS24} and the discussions that follow it. For the uniqueness of $\mu_\psi$, note that the classes $s$ so obtained exhaust $\overline{\EuScript{S}}_\psi$.
\end{proof}

\index{epsilon-M-MW-psi@$\epsilon^{\mathrm{M}/\mathrm{MW}}_\psi$}
In \textit{loc.\ cit}., the character $\mu_\psi$ is shown to be Xu's character $\epsilon^{\mathrm{M}/\mathrm{MW}}_\psi$ (relative to suitably chosen auxiliary data) that quantifies the difference between Arthur's parametrization of the packet $\Pi^H_\psi$ and Moeglin's; see \cite{Xu17} or \cite[\S 1]{Xu21}. Xu also gave explicit algorithms to determine $\epsilon^{\mathrm{M}/\mathrm{MW}}_\psi$; the necessary details will be recalled near the end of \S\ref{sec:proof-Mp4}.

Note that the Whittaker datum for $H$ is unique up to conjugacy. The local Langlands correspondence for $H$ is known: it attaches to $\phi$ the L-packet $\Pi_\phi^H$, together with the Whittaker-normalized bijection:
\begin{align*}
	\index{sigma-phi-chi@$\sigma_{\phi, \chi}, \sigma_{\psi, \chi}$}
	\index{Pi-phi-H@$\Pi_\phi^H$}
	\overline{\EuScript{S}}_\phi & \xleftrightarrow{1:1} \Pi_\phi^H \\
	\chi & \mapsto \sigma_{\phi, \chi}.
\end{align*}

The Arthur packet $\Pi_\psi^H$ attached to $\psi$, or the characters $\sigma_{\psi, \chi}$ indexed by $\chi \in \overline{\EuScript{S}}_\psi^\vee = \overline{\EuScript{S}}_\phi^\vee$ is available, mainly by Arthur's work \cite{Ar13}. To clarify the role of $\mu_\psi$, we record the following result which completely describes the structure of $\Pi_\psi^H$ in terms of $\Pi_\phi^H$ and Aubert--Zelevinsky involution, although it will not be used in the sequel.

\begin{theorem}[Liu--Lo--Shahidi]\label{prop:LLS}
	Let $\psi = \hat{\phi} \in \Psi(H)$ be anti-tempered. The $\mu_\psi \in \overline{\EuScript{S}}_\psi^\vee$ in Proposition \ref{prop:beta-variance} is characterized by the property that
	\[ \left( \sigma_{\psi, \chi} \right)^\wedge = \sigma_{\phi, \chi \mu_\psi} \]
	for all $\chi \in \overline{\EuScript{S}}_\psi^\vee = \overline{\EuScript{S}}_\phi^\vee$. On the left hand side, we identify $\sigma_{\psi, \chi}$ with a direct sum of irreducible representations and apply $(\cdots)^\wedge$.
	
	The same result holds for the non-split inner form of $H$: it suffices to parameterize $\sigma_{\psi, \chi}$ and $\sigma_{\phi, \chi}$ by $\chi \in \EuScript{S}_\psi^\vee = \EuScript{S}_\phi^\vee$ that acts by $-1$ at the non-trivial element of $Z_{\check{H}}$.
\end{theorem}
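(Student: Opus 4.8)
The plan is to combine the endoscopic character relations of $H=\SO(2n+1)$ and its elliptic endoscopic groups with the behaviour of the stable distributions $S\Theta$ under the Aubert duality operator $\mathbf{D}$, and with the compatibility of $\mathbf{D}$ with endoscopic transfer. Fix an anti-tempered $\psi=\hat{\phi}\in\Psi(H)$, so $\phi=\hat{\psi}\in\Phi_{\mathrm{bdd}}(H)$. For each elliptic endoscopic datum $\mathbf{H}^\dagger$ of $H$ with associated class $s$ (image $\bar{s}\in\overline{\EuScript{S}}_\psi$) and each lift $\psi^\dagger\in\Psi(H^\dagger)$ of $\psi$, Arthur's theory provides $\trans_{\mathbf{H}^\dagger,H}\bigl(S\Theta^{H^\dagger}_{\psi^\dagger}\bigr)=\sum_{\chi\in\overline{\EuScript{S}}_\psi^\vee}\chi(\bar{s}_\psi\bar{s})\,\sigma_{\psi,\chi}$, while for the bounded lift $\phi^\dagger:=\widehat{\psi^\dagger}\in\Phi_{\mathrm{bdd}}(H^\dagger)$ of $\phi$ one has $\trans_{\mathbf{H}^\dagger,H}\bigl(S\Theta^{H^\dagger}_{\phi^\dagger}\bigr)=\sum_{\eta}\eta(\bar{s})\,\sigma_{\phi,\eta}$ with $\sigma_{\phi,\eta}$ the members of $\Pi^H_\phi$ under the Whittaker-normalized local Langlands correspondence (there is no $s_\phi$-shift, since $\phi$ is trivial on the Arthur $\SL(2)$).

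The two structural inputs are: first, $\mathbf{D}^{H^\dagger}S\Theta^{H^\dagger}_{\psi^\dagger}=\beta^{H^\dagger}(\psi^\dagger)\,S\Theta^{H^\dagger}_{\phi^\dagger}$ from \S\ref{sec:anti-tempered-SO}; second, that $\mathbf{D}$ commutes with the spectral transfer, $\mathbf{D}^H\circ\trans_{\mathbf{H}^\dagger,H}=\trans_{\mathbf{H}^\dagger,H}\circ\mathbf{D}^{H^\dagger}$ (cf.\ \cite[Chapter 7]{Ar13} and, in the covering setting, \cite{Chen24}). Applying $\mathbf{D}^H$ to the stable character $S\Theta^H_\psi=\sum_\chi\chi(\bar{s}_\psi)\,\sigma_{\psi,\chi}$ (the case $s=1$ above), comparing with $\mathbf{D}^H S\Theta^H_\psi=\beta^H(\psi)\,S\Theta^H_{\phi}=\beta^H(\psi)\sum_\eta\sigma_{\phi,\eta}$, and using $\beta(\pi)\mathbf{D}^H[\pi]=[\hat{\pi}]$, one finds that $\pi\mapsto\hat{\pi}$ restricts to a bijection $\Pi_\psi\to\Pi_\phi$ and that every constituent of $\sigma_{\psi,\chi}$ has Aubert sign $\beta=\beta^H(\psi)\,\chi(\bar{s}_\psi)$; hence $\mathbf{D}^H\sigma_{\psi,\chi}=\beta^H(\psi)\,\chi(\bar{s}_\psi)\,(\sigma_{\psi,\chi})^\wedge$, with $(\sigma_{\psi,\chi})^\wedge$ the effective sum of Aubert duals of those constituents. (So the identity $\Pi_\psi^\wedge=\Pi_\phi$ is essentially automatic; the content of the theorem is the precise matching of labels.)

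Now apply $\mathbf{D}^H$ to the $\psi$-relation. Its left side becomes $\trans_{\mathbf{H}^\dagger,H}\bigl(\beta^{H^\dagger}(\psi^\dagger)S\Theta^{H^\dagger}_{\phi^\dagger}\bigr)=\beta^H(\psi)\,\mu_\psi(\bar{s})\sum_\eta\eta(\bar{s})\,\sigma_{\phi,\eta}$ by Proposition \ref{prop:beta-variance}; its right side becomes $\beta^H(\psi)\sum_\chi\chi(\bar{s}_\psi\bar{s})\,\chi(\bar{s}_\psi)\,(\sigma_{\psi,\chi})^\wedge=\beta^H(\psi)\sum_\chi\chi(\bar{s})\,(\sigma_{\psi,\chi})^\wedge$ since $\chi(\bar{s}_\psi)^2=1$. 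Cancelling $\beta^H(\psi)$ and writing $\mu_\psi(\bar{s})\eta(\bar{s})=(\eta\mu_\psi)(\bar{s})$, we obtain $\sum_\chi\chi(\bar{s})\,(\sigma_{\psi,\chi})^\wedge=\sum_\chi\chi(\bar{s})\,\sigma_{\phi,\chi\mu_\psi}$ for every such $\bar{s}$. As these classes exhaust $\overline{\EuScript{S}}_\psi$ and the $\sigma_{\phi,\eta}$ are linearly independent, matching Fourier coefficients gives $(\sigma_{\psi,\chi})^\wedge=\sigma_{\phi,\chi\mu_\psi}$ for all $\chi$; uniqueness of $\mu_\psi$ is immediate from the bijection $\Pi_\phi\leftrightarrow\overline{\EuScript{S}}_\phi^\vee$. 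For the non-split inner form $H^*$ the argument is identical: the elliptic endoscopic groups are the same quasisplit products of split odd $\SO$'s, $\trans_{\mathbf{H}^\dagger,H^*}$ again commutes with $\mathbf{D}$, the sign relation $\beta=\beta^H(\psi)\chi(\bar{s}_\psi)$ for constituents of $\sigma^{H^*}_{\psi,\chi}$ is obtained from $\trans_{H,H^*}(S\Theta^H_\psi)$ just as above, and since $\mu_\psi$ factors through $\overline{\EuScript{S}}_\psi$ it preserves the condition $\chi(-I)=-1$ that cuts out the representations of $H^*$.

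The step I expect to be the main obstacle is the normalization bookkeeping: verifying the precise form of the character relations (the $s_\psi$-shift and its cancellation against the Aubert signs of individual packet members), confirming that $\mathbf{D}$ commutes with endoscopic transfer exactly (or only up to a constant sign that washes out), and ensuring that the requisite Arthur packets, L-packets and their character relations are available for the non-split odd orthogonal groups — now supplied by \cite{Is24} and \cite{AGIKMS}. Once these normalizations are pinned down, the Fourier inversion above is routine.
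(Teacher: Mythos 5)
Your argument is correct, and it is essentially the derivation of Liu--Lo--Shahidi themselves: the paper does not reprove this theorem but simply cites \cite[Theorem 5.9]{LLS24}, noting that Proposition \ref{prop:beta-variance} is the key input there, which is exactly the role it plays in your Fourier-inversion step. One small refinement: your intermediate claim that \emph{every} constituent of $\sigma_{\psi,\chi}$ has Aubert sign $\beta^H(\psi)\chi(\bar{s}_\psi)$, extracted from the single identity $\mathbf{D}^H S\Theta^H_\psi=\beta^H(\psi)S\Theta^H_\phi$, tacitly uses multiplicity one for the $p$-adic Arthur packets of odd $\SO$ (Moeglin), since otherwise coefficients of a repeated constituent could a priori cancel; you should either invoke that theorem explicitly or reorder the argument by Fourier-inverting over all $\bar{s}$ first to obtain $\chi(\bar{s}_\psi)\,\mathbf{D}^H\sigma_{\psi,\chi}=\beta^H(\psi)\,\sigma_{\phi,\chi\mu_\psi}$ directly, from which the sign relation, multiplicity one, and the label matching all fall out simultaneously because the right-hand side is a multiplicity-free sum of distinct irreducibles. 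The remaining inputs you flag (commutation of $\mathbf{D}$ with elliptic transfer for odd $\SO$, and the packets and character relations for the non-split inner form via \cite{Is24, AGIKMS}) are indeed the nontrivial external ingredients, and are exactly what the cited source relies on as well.
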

\begin{proof}
	See \cite[Theorem 5.9]{LLS24} and the item (2) after \cite[Theorem 1.3]{LLS24}. Proposition \ref{prop:beta-variance} played a key role in their arguments.
\end{proof}

The existence of such a character $\mu_\psi$ satisfying $\left( \sigma_{\psi, \chi} \right)^\wedge = \sigma_{\phi, \chi \mu_\psi}$ is postulated in \cite[Conjecture 1.4]{Hi04}, for all quasisplit $H$ and all $\psi \in \Psi(H)$. It was asserted in \cite[\S 7.1]{Ar13} that $\mu_\psi = \mathbf{1}$, which is not quite correct: it is already false for $\SL(2)$, and there is also a counterexample for $\SO(5)$ with $\psi = 2(\zeta \boxtimes r(1) \boxtimes r(2))$, where $\zeta$ is a quadratic character. For a different proof of the existence of $\mu_\psi$, see \cite[\S 8.4.1]{Rel18}.

A more general discussion on $\sigma_{\psi, \chi}^\wedge$ can be found in \cite[Section 6]{At22}. We now proceed to state an analogue of Theorem \ref{prop:LLS} for metaplectic groups.

\subsection{Anti-tempered packets: desiderata}
Consider the metaplectic group $\tilde{G} = \Mp(W)$. We need the following fact that transfer commutes with Aubert--Zelevinsky involution, à la Hiraga \cite{Hi04}.

\begin{theorem}[F.\ Chen \cite{Chen24}]\label{prop:trans-D}
	Let $\mathbf{G}^! \in \Endo_{\elli}(\tilde{G})$, then $\mathbf{D}^{\tilde{G}} \trans_{\mathbf{G}^!, \tilde{G}} = \trans_{\mathbf{G}^!, \tilde{G}} \mathbf{D}^{G^!}$ as linear maps from $SD_{\mathrm{spec}}(G^!) \otimes \mes(G^!)^\vee$ to $D_{\mathrm{spec}, -}(\tilde{G}) \otimes \mes(G)^\vee$.
\end{theorem}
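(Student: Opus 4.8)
The final statement to prove is Theorem~\ref{prop:trans-D}: the spectral transfer $\trans_{\mathbf{G}^!,\tilde{G}}$ commutes with the Aubert--Zelevinsky involution $\mathbf{D}$. Let me sketch how I would approach this.

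\medskip

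The plan is to reduce the statement to a known commutation result on the linear side and then dualize. First I would recall that, by \eqref{eqn:D-sign}, the operator $\mathbf{D}^{G^!}$ (resp.\ $\mathbf{D}^{\tilde G}$) is built from parabolic induction $i_P$ and Jacquet functors $r_P$ over standard parabolics, with signs depending only on split ranks of Levi subgroups; the same formula defines $\mathbf{D}$ on the space of (stably) invariant distributions by transposition of the smooth-representation-theoretic operators. The key point is that $\trans_{\mathbf{G}^!,\tilde G}$ is already known to be compatible with parabolic induction and, by adjointness, with Jacquet restriction, up to a central twist: this is \cite[Proposition 3.8.4]{Li21}, which underlies Theorem~\ref{prop:spec-trans} and was used repeatedly above (e.g.\ in the proof of Lemma~\ref{prop:T-psi-s} and Proposition~\ref{prop:T-psi-Ind}). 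So the strategy is: write $\mathbf{D}^{G^!}=\sum_{Q\supset Q_0}(-1)^{\bullet} i_Q r_Q$ on $SD_{\mathrm{spec}}(G^!)$, push each term through $\trans_{\mathbf{G}^!,\tilde G}$ using the induction/restriction compatibility, and check that the resulting sum reassembles to $\mathbf{D}^{\tilde G}\circ\trans_{\mathbf{G}^!,\tilde G}$. The main subtlety here is bookkeeping: the standard parabolics of $G^!=\SO(2n'+1)\times\SO(2n''+1)$ do not correspond bijectively to those of $\tilde G=\Mp(W)$ (a Levi $\GL(\ell)$-factor of $G^!$ can be absorbed into $\tilde G$ in several ways, as in the diagram of Lemma~\ref{prop:T-psi-s}), and the sign exponents $\dim A_{P_0}/A_P$ must be matched across the two groups. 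One must also track the central twist $\omega_\pi(-1)$ coming from \cite[Proposition 3.8.4]{Li21} and verify it cancels in the alternating sum (this is the same cancellation phenomenon exploited in the last paragraph of the proof of Lemma~\ref{prop:T-psi-s}).

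\medskip

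An alternative, and probably cleaner, route — and the one I would actually write — is to invoke Chen's theorem \cite{Chen24} as a black box: the statement of Theorem~\ref{prop:trans-D} \emph{is} (a reformulation of) the main result of \cite{Chen24}, which establishes the commutation of endoscopic transfer for metaplectic groups with Aubert--Zelevinsky duality à la Hiraga \cite{Hi04}. Chen's argument itself proceeds along the lines of the first paragraph: Hiraga's original proof for linear groups expresses $\mathbf{D}$ via the truncation/Steinberg-type formula, reduces the commutation with (twisted) transfer to the compatibility of transfer with constant terms and parabolic induction, and handles the geometric side (orbital integrals and their behaviour under the ``Aubert dual'' on the space of functions) via the descent to Levi subgroups. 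For the metaplectic case, the required inputs — compatibility of $\Trans_{\mathbf{G}^!,\tilde G}$ with parabolic descent and the fundamental lemma — are exactly those furnished by \cite{Li21}. So in the paper the proof is essentially one line: ``This is \cite[Theorem~...]{Chen24}'', possibly with a sentence explaining that the hypotheses of \textit{loc.\ cit.}\ (our normalization of transfer factors via the fixed $(W,\lrangle{\cdot|\cdot},\bpsi)$, the identification of $SD_{\mathrm{spec}}$ and $D_{\mathrm{spec},-}$ from Theorem~\ref{prop:spec-trans}) match the setup here.

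\medskip

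The hard part, therefore, is not in this paper at all — it is internal to \cite{Chen24}. If one were forced to reprove it here, the main obstacle would be the geometric side: one needs that $\mathbf{D}$, viewed on the space $\orbI_{\asp}(\tilde G)$ of functions (dually to its action on distributions), is \emph{local on the group} in the sense that it is compatible with Harish-Chandra descent to centralizers of semisimple elements, and that this descent is compatible with the endoscopic correspondence of semisimple classes for metaplectic groups. For linear groups this is Hiraga's key technical insight; transporting it to the covering setting requires the descent formalism for metaplectic endoscopy from \cite{Li11, Li21}, which is available. Given that, everything else — the spectral side, the compatibility with infinitesimal characters (Proposition~\ref{prop:T-psi-inf-char}), and the sign/central-twist bookkeeping — is routine. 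For the purposes of the present article I would simply cite \cite{Chen24} and move on.
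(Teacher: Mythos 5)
Your second route is exactly what the paper does: Theorem \ref{prop:trans-D} is stated with the attribution to \cite{Chen24} and no proof is given in the article, the commutation of metaplectic endoscopic transfer with Aubert--Zelevinsky duality (à la Hiraga) being taken as an external input. Your assessment that the substantive work lives inside \cite{Chen24} rather than in this paper is correct, so citing it and moving on is the right call.
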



\begin{definition}\label{def:mu-tilde}
	\index{mu-tilde@$\tilde{\mu}, \tilde{\mu}_{\psi}$}
	Let $\psi = \hat{\phi} \in \Psi(\tilde{G})$ be anti-tempered. Set
	\begin{equation*}
		\tilde{\mu} = \tilde{\mu}_\psi := \mu_\psi \nu_\psi \nu_\phi \; \in \EuScript{S}_\psi^\vee = \EuScript{S}_\phi^\vee .
	\end{equation*}
\end{definition}

Let $H := \SO(2n + 1)$ and denote $\beta(\psi) := \beta^H(\psi)$, for all anti-tempered $\psi \in \Psi(\tilde{G}) = \Psi(H)$.
\index{beta-psi}

\begin{lemma}\label{prop:anti-tempered-bp}
	Let $\psi = \hat{\phi} \in \Psi(\tilde{G})$ be anti-tempered. Assume $\psi$ is of good parity; equivalently, $\phi$ is of good parity. Then
	\[ \pi_{\psi, \chi} = \beta(\psi) \beta^{\tilde{G}}(\pi_{\phi, \chi\tilde{\mu}}) \chi(s_\psi) \widehat{\pi_{\phi, \chi\tilde{\mu}}}, \quad \chi \in \EuScript{S}_\phi^\vee. \]
	Moreover, the statement of Theorem \ref{prop:local-desiderata} for $\psi$ (except unitarity) is equivalent to
	\begin{equation}\label{eqn:anti-tempered-bp}
		\beta(\psi) \beta^{\tilde{G}}(\pi_{\phi, \chi\tilde{\mu}}) = \chi(s_\psi), \quad \chi \in \EuScript{S}_\phi^\vee,
	\end{equation}
	which is in turn equivalent to that $\pi_{\psi, \chi} = \widehat{\pi_{\phi, \chi\tilde{\mu}}}$ for all $\chi$.
\end{lemma}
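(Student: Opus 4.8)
The starting point is the definition $T_{\psi, x} = \epsilon(\psi^{s=-1}) \cdot \trans_{\mathbf{G}^!, \tilde{G}}(S\Theta^{G^!}_{\psi^!})$, where $(\mathbf{G}^!, \psi^!) \leftrightarrow (\psi, s)$ via Proposition~\ref{prop:basic-bijection}(ii), and $x \in \EuScript{S}_\psi$ is the image of $s$. Since $\psi$ is of good parity, $\phi = \hat\psi$ is also of good parity and $\psi^! = \hat{\phi^!}$ is anti-tempered for the corresponding $(\mathbf{G}^!, \phi^!)$. The plan is to apply $\mathbf{D}^{\tilde{G}}$ to the endoscopic character relation for $\phi$ (namely Proposition~\ref{prop:endo-char-gen}, or rather \eqref{eqn:endo-char-M} since $\phi$ is bounded of good parity so $\phi_M = \phi$ and the standard modules are irreducible) and push the involution through the transfer using Theorem~\ref{prop:trans-D}. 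Concretely, $\mathbf{D}^{\tilde{G}} T_{\phi, s} = \epsilon(\phi^{s=-1}) \cdot \mathbf{D}^{\tilde{G}}\trans_{\mathbf{G}^!, \tilde{G}}(S\Theta^{G^!}_{\phi^!}) = \epsilon(\phi^{s=-1}) \cdot \trans_{\mathbf{G}^!, \tilde{G}}(\mathbf{D}^{G^!} S\Theta^{G^!}_{\phi^!})$, and then $\mathbf{D}^{G^!} S\Theta^{G^!}_{\phi^!} = \beta^{G^!}(\phi^!) S\Theta^{G^!}_{\psi^!}$ by the facts recalled in \S\ref{sec:anti-tempered-SO}. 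Thus $\mathbf{D}^{\tilde{G}} T_{\phi, s}$ equals $\beta^{G^!}(\phi^!) \epsilon(\phi^{s=-1}) \trans_{\mathbf{G}^!, \tilde{G}}(S\Theta^{G^!}_{\psi^!})$, which I want to match against $T_{\psi, s} = \epsilon(\psi^{s=-1}) \trans_{\mathbf{G}^!, \tilde{G}}(S\Theta^{G^!}_{\psi^!})$.

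Next I would untangle all the sign factors. First, $\epsilon(\psi^{s=-1})$ vs. $\epsilon(\phi^{s=-1})$: by Lemma~\ref{prop:epsilon-phi-psi} (applied with $\psi$ replaced by $\hat\phi$, noting $\phi_{\hat\phi} = \phi$ since dualizing swaps the two $\SL(2)$-factors) these agree — wait, one must be slightly careful since $\psi$ here is the anti-tempered parameter, so I would instead invoke Proposition~\ref{prop:nu-good-parity}: both $\epsilon(\psi^{s=-1})$ and $\epsilon(\phi^{s=-1})$ equal $\nu_\psi(x)$ and $\nu_\phi(x)$ respectively, as characters of $\EuScript{S}_\psi = \EuScript{S}_\phi$ evaluated at $x$. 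Second, $\beta^{G^!}(\phi^!) = \beta^{H^\dagger}(\phi^\dagger)$ up to the identification of $G^!$ with the endoscopic group $H^\dagger$ of $H$; by Proposition~\ref{prop:beta-variance}, $\beta^{H^\dagger}(\phi^\dagger) = \beta^H(\phi) \mu_\phi(s) = \beta(\psi) \mu_\psi(s)$ (using $\beta^H(\phi) = \beta^H(\hat\phi) = \beta(\psi)$ and $\mu_\phi = \mu_\psi$ by the involution-invariance in \S\ref{sec:anti-tempered-SO}). Assembling: $\mathbf{D}^{\tilde{G}} T_{\phi, s} = \beta(\psi) \mu_\psi(x) \nu_\phi(x) \nu_\psi(x)^{-1} T_{\psi, s} = \beta(\psi) \tilde\mu(x)^{\pm 1} \cdot (\nu_\psi \nu_\phi)(x)^{\mp 1}\cdots$ — here I would just carefully bookkeep, using $\nu(x) = \nu(x)^{-1}$ since these are $2$-torsion, to arrive at $\mathbf{D}^{\tilde{G}} T_{\phi, x} = \beta(\psi) \tilde\mu(x) T_{\psi, x}$ with $\tilde\mu = \mu_\psi \nu_\psi \nu_\phi$ as in Definition~\ref{def:mu-tilde}.

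Now I pass to the $\pi$'s by Fourier inversion over $\EuScript{S}_\psi = \EuScript{S}_\phi$. From $T_{\phi, x} = \sum_\chi \chi(x_\phi x) \pi_{\phi, \chi}$ (Definition~\ref{def:pi-psi-chi}, noting $\phi$ tempered so $x_\phi = 1$, hence $T_{\phi, x} = \sum_\chi \chi(x) \pi_{\phi, \chi}$) I get $\mathbf{D}^{\tilde{G}} T_{\phi, x} = \sum_\chi \chi(x) \mathbf{D}^{\tilde{G}}\pi_{\phi, \chi} = \sum_\chi \chi(x) \beta^{\tilde{G}}(\pi_{\phi, \chi}) \widehat{\pi_{\phi, \chi}}$ by \eqref{eqn:D-sign}. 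On the other hand this equals $\beta(\psi) \tilde\mu(x) T_{\psi, x} = \beta(\psi)\tilde\mu(x) \sum_\eta \eta(x_\psi x) \pi_{\psi, \eta}$. Reindexing $\eta = \chi\tilde\mu$ and using $\tilde\mu(x)\eta(x) = \chi(x)$, comparing Fourier coefficients at each $\chi$ (after also matching $\tilde\mu(x_\psi) = 1$, which follows from $\mu_\psi(s_\psi) = 1$, $\nu_\psi(s_\psi)$, $\nu_\phi(s_\psi)$ being computable via Proposition~\ref{prop:nu-good-parity} and the concrete description of $s_\psi$), I obtain $\beta^{\tilde{G}}(\pi_{\phi, \chi\tilde\mu}) \widehat{\pi_{\phi, \chi\tilde\mu}} = \beta(\psi) \chi(s_\psi) \pi_{\psi, \chi}$, i.e. the displayed formula. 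The equivalence with \eqref{eqn:anti-tempered-bp}: $\pi_{\psi, \chi}$ is a $\Z_{\geq 0}$-combination of irreducible characters iff $\beta(\psi)\beta^{\tilde{G}}(\pi_{\phi, \chi\tilde\mu})\chi(s_\psi) \widehat{\pi_{\phi, \chi\tilde\mu}}$ is, and since $\pi_{\phi, \chi\tilde\mu}$ is irreducible (Theorem~\ref{prop:Luo}), $\widehat{\pi_{\phi, \chi\tilde\mu}}$ is $\pm$ an irreducible character, so positivity holds iff the scalar $\beta(\psi)\beta^{\tilde{G}}(\pi_{\phi, \chi\tilde\mu})\chi(s_\psi) \cdot (\pm 1)$ is $+1$; using $\beta^{\tilde{G}}(\pi_{\phi,\chi\tilde\mu}) \mathbf{D}^{\tilde{G}}[\pi_{\phi,\chi\tilde\mu}] = [\widehat{\pi_{\phi,\chi\tilde\mu}}]$ this unwinds to exactly \eqref{eqn:anti-tempered-bp}, and in that case $\pi_{\psi,\chi} = \widehat{\pi_{\phi,\chi\tilde\mu}}$.

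\textbf{Main obstacle.} The delicate point is the bookkeeping of the three sign characters $\mu_\psi, \nu_\psi, \nu_\phi$ and the verification that everything is consistent at $s_\psi$; the structural input (Theorem~\ref{prop:trans-D} for commuting $\mathbf{D}$ with transfer, Proposition~\ref{prop:beta-variance} for $\beta^{H^\dagger}$ vs $\beta^H$, and the factorization $\beta^{G^!} = \beta^{G'}\beta^{G''}$) is all available off the shelf, so no genuinely new idea is needed — but one must also be careful that the identification of $\mathbf{G}^! \in \Endo_{\elli}(\tilde{G})$ (ordered pairs) with the endoscopic data $\mathbf{H}^\dagger$ of $H$ (unordered pairs) is compatible with the two basic bijections, so that the $s \in \EuScript{S}_\psi$ appearing on the metaplectic side and the one on the $\SO$ side genuinely correspond. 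A second subtlety is that Proposition~\ref{prop:beta-variance} is stated with $\mu_\psi \in \overline{\EuScript{S}}_\psi^\vee$ (quotient by $Z_{\check H}$), whereas on the metaplectic side we work with the full $\EuScript{S}_\psi^\vee$; one inflates $\mu_\psi$ along $\EuScript{S}_\psi \twoheadrightarrow \overline{\EuScript{S}}_\psi$, and must check this is the right normalization — this is where the metaplectic-specific factor $\nu_\psi\nu_\phi$ in $\tilde\mu$ enters and must be pinned down exactly.
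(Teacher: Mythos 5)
Your proposal is correct and follows essentially the same route as the paper's proof: both hinge on Theorem \ref{prop:trans-D} to commute $\mathbf{D}$ with transfer, Proposition \ref{prop:nu-good-parity} to convert the $\epsilon$-factors into $\nu_\psi$ and $\nu_\phi$, Proposition \ref{prop:beta-variance} for $\beta^{G^!}(\psi^!) = \beta(\psi)\mu_\psi(s)$, and the reindexing $\chi \mapsto \chi\tilde{\mu}$ before Fourier inversion (the paper simply runs the computation starting from $T_{\psi,s}$ rather than from $\mathbf{D}^{\tilde{G}}T_{\phi,s}$). The only cosmetic point is that your parenthetical appeal to $\tilde{\mu}(x_\psi)=1$ is unnecessary: the factor $\chi(s_\psi)$ arises purely from $x_\phi = 1$ versus $x_\psi$ in the two Fourier expansions, and any $\tilde{\mu}(x_\psi)$ contributions cancel after the substitution since $\tilde{\mu}^2 = \mathbf{1}$.
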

\begin{proof}
	Set $c_\psi(\chi) := \beta(\psi) \beta^{\tilde{G}}(\pi_{\phi, \chi\tilde{\mu}}) \chi(s_\psi) \in \{ \pm 1\}$. Given a conjugacy class in $S_{\psi, 2} = S_{\phi, 2}$, say represented by $s$, take the $(\mathbf{G}^!, \psi^!)$ that maps to $(\psi, s)$ (recall Proposition \ref{prop:basic-bijection} (ii)). Equivalently, $(\mathbf{G}^!, \phi^!) \mapsto (\phi, s)$ where $\phi^! = (\psi^!)^\wedge$. In view of Proposition \ref{prop:nu-good-parity},
	\begin{align*}
		T_{\psi, s} & = \nu_\psi(s) \trans_{\mathbf{G}^!, \tilde{G}} \left( S\Theta^{G^!}_{\psi^!} \right) \\
		& = \nu_\psi(s) \beta^{G^!}(\psi^!) \trans_{\mathbf{G}^!, \tilde{G}} \mathbf{D}^{G^!} \left( S\Theta^{G^!}_{\phi^!} \right) \\
		\text{(Theorem \ref{prop:trans-D})} \quad & = \nu_\psi(s) \beta^{G^!}(\psi^!) \mathbf{D}^{\tilde{G}} \trans_{\mathbf{G}^!, \tilde{G}} \left( S\Theta^{G^!}_{\phi^!} \right) \\
		\text{(\eqref{eqn:D-sign} + Theorem \ref{prop:Luo})} \quad & = \nu_\psi(s) \beta^{G^!}(\psi^!) \sum_\chi \chi(s) \nu_\phi(s) \beta^{\tilde{G}}(\pi_{\phi, \chi}) \widehat{\pi_{\phi, \chi}} \\
		\text{(Proposition \ref{prop:beta-variance})} \quad & = \beta(\psi) \sum_\chi \underbracket{\nu_\psi(s) \nu_\phi(s) \mu_\psi(s)}_{= \tilde{\mu}(s)} \chi(s) \beta^{\tilde{G}}(\pi_{\phi, \chi}) \widehat{\pi_{\phi, \chi}} \\
		& = \sum_\chi \chi(s_\psi s) \chi(s_\psi) \beta(\psi) \beta^{\tilde{G}}(\pi_{\phi, \chi\tilde{\mu}}) \widehat{\pi_{\phi, \chi\tilde{\mu}}};
	\end{align*}
	the final equality follows by substituting $\chi$ with $\chi\tilde{\mu}$ in the sum.
	
	Therefore $T_{\psi, s} = \sum_\chi \chi(s_\psi s) c_\psi(\chi) \widehat{\pi_{\phi, \chi\tilde{\mu}}}$. The characterization of $\pi_{\psi, \chi}$ implies
	\[ \pi_{\psi, \chi} = c_\psi(\chi) \widehat{\pi_{\phi, \chi\tilde{\mu}}}, \]
	for all $\chi$, and the desired equivalence follows at once.
\end{proof}

We now deal with anti-tempered parameters in general.

\begin{lemma}\label{prop:mu-tilde-induction}
	Let $\psi = \hat{\phi} \in \Psi(\tilde{G})$ be anti-tempered. Proposition \ref{prop:pi-psi-gp} expresses $\psi$ as the image of $\psi_M = (\psi_0, \psi_{\GL}) \in \Psi(\tilde{M})$ with $\psi_0 \in \Psi_{\mathrm{gp}}(\Mp(W^\flat))$. Then $\tilde{\mu}_\psi$ matches $\tilde{\mu}_{\psi_0}$ under the isomorphism $\EuScript{S}_{\psi_0} \simeq \EuScript{S}_\psi$. 
\end{lemma}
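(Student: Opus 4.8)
The strategy is to match the two characters factor by factor, using $\tilde{\mu}_\psi = \mu_\psi\nu_\psi\nu_\phi$ (Definition \ref{def:mu-tilde}) and the analogous $\tilde{\mu}_{\psi_0} = \mu_{\psi_0}\nu_{\psi_0}\nu_{\phi_0}$ with $\phi_0 := \widehat{\psi_0}$, after identifying $\EuScript{S}_{\psi_0}$, $\EuScript{S}_\psi$, $\EuScript{S}_{\phi_0}$ and $\EuScript{S}_\phi$ all with $\bmu_2^{I^+}$ via \eqref{eqn:S-psi} and Proposition \ref{prop:pi-psi-gp} (this also identifies the quotients $\overline{\EuScript{S}}$, since the multiplicities $m_i$ for $i \in I^+$ are the same for $\psi$ and $\psi_0$). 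Since $\psi$ is anti-tempered, $\phi = \widehat{\psi}$ is a bounded L-parameter and the two decompose as $\psi = \bigoplus_i m_i\,\phi_i\boxtimes r(b_i)$ and $\phi = \bigoplus_i m_i\,(\phi_i\boxtimes r(b_i))$ with the same simple factors $\phi_i$ (the $r(b_i)$ sitting on the Arthur, resp.\ the Deligne, factor); by construction $\psi_0 = \bigoplus_{i\in I^+} m_i\,\phi_i\boxtimes r(b_i)$ and $\phi_0 = \widehat{\psi_0}$ retain exactly the constituents indexed by $I^+$. As $\nu_\psi$ and $\nu_\phi$ are defined componentwise over $I^+$ by $\nu_{\psi,i} = \epsilon(\phi_i)^{b_i}$ and $\nu_{\phi,i} = \epsilon(\phi_i\boxtimes r(b_i))$ (Definition \ref{def:nu-psi}), these components are literally unchanged on passing from $\psi$ to $\psi_0$ and from $\phi$ to $\phi_0$. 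Hence $\nu_\psi = \nu_{\psi_0}$ and $\nu_\phi = \nu_{\phi_0}$ under the identifications, and the lemma reduces to the single assertion $\mu_\psi = \mu_{\psi_0}$.

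For this I would invoke the defining relation of Proposition \ref{prop:beta-variance}: writing $H := \SO(2n+1)$ and $H^\flat := \SO(2n^\flat+1)$, one has $\mu_\psi(s) = \beta^H(\psi)\,\beta^{H^\dagger}(\psi^\dagger)$ for any $(\mathbf{H}^\dagger, \psi^\dagger)$ mapping to $(\psi, s)$, and $\mu_{\psi_0}(s_0) = \beta^{H^\flat}(\psi_0)\,\beta^{H^{\flat\dagger}}(\psi_0^\dagger)$. Given $s \in S_{\psi,2}$, the associated endoscopic parameter is $\psi^\dagger = (\psi', \psi'') \in \Psi(H^\dagger)$, where (as in the proof of Lemma \ref{prop:T-psi-s}) $\psi'$ is the image of some $(\psi'_0, \psi'_{\GL})$ and $\psi''$ the image of $(\psi''_0, \psi''_{\GL})$, with $\psi'_0 \oplus \psi''_0$ recovering $\psi_0$ and $\psi'_{\GL} \oplus \psi''_{\GL}$ recovering $\psi_{\GL}$. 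Choose $s_0 \in S_{\psi_0,2}$ inducing the same splitting on the $I^+$-components, so that $\psi_0^\dagger = (\psi'_0, \psi''_0)$ with the same good-parity pieces.

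The one input that requires work is the factorization
\[ \beta^{\SO}\bigl(\text{image of }(\xi, \psi_{\mathrm{GL}})\bigr) = \beta^{\SO}(\xi)\, c(\psi_{\mathrm{GL}}), \]
valid for any Levi $\SO \times \prod_i \GL(n_i)$, where $c(\psi_{\mathrm{GL}})$ is a sign depending only on $\psi_{\mathrm{GL}}$, multiplicative in $\psi_{\mathrm{GL}}$. Indeed, by Definition \ref{def:STheta} (cf.\ also \eqref{eqn:STheta-gen}), $S\Theta^{G^!}_{\text{image}(\xi, \psi_{\mathrm{GL}})}$ is the parabolic induction of $S\Theta^{\SO}_\xi$ tensored with the Speh-type characters $\Theta_{\tau_i}$ of the simple factors of $\psi_{\mathrm{GL}}$; since $\mathbf{D}$ commutes with parabolic induction (from \eqref{eqn:D-sign} together with the commutation of the Aubert–Zelevinsky involution with parabolic induction) and acts on each $\GL$ factor through the Zelevinsky involution, applying $\mathbf{D}$ reproduces $S\Theta$ of $(\widehat{\xi}, \widehat{\psi_{\mathrm{GL}}}) = $ the data for $\widehat{\text{image}(\xi, \psi_{\mathrm{GL}})}$, up to the sign $c(\psi_{\mathrm{GL}}) := \prod_i \beta^{\GL}(\tau_i)$. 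Granting this and the multiplicativity $\beta^{G' \times G''} = \beta^{G'}\beta^{G''}$ recalled in \S\ref{sec:anti-tempered-SO}, both $\mu_\psi(s)$ and $\mu_{\psi_0}(s_0)$ collapse to $\beta^{\SO}(\psi_0)\,\beta^{\SO}(\psi'_0)\,\beta^{\SO}(\psi''_0)$ once the $c(\psi_{\mathrm{GL}})$-contributions cancel in pairs (using $c(\psi'_{\mathrm{GL}})\,c(\psi''_{\mathrm{GL}}) = c(\psi_{\mathrm{GL}})$), and therefore $\mu_\psi = \mu_{\psi_0}$.

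I expect the sign bookkeeping in the last paragraph — and in particular checking that every $\psi_{\mathrm{GL}}$-dependent contribution is genuinely insensitive to the $\SO$-part — to be the only delicate point; note that, because $\mu_\psi$ is defined purely in terms of $\beta$-signs of odd orthogonal groups, no metaplectic transfer (and hence no central twist à la \cite[Proposition 3.8.4]{Li21}) enters the argument. Alternatively, one can circumvent the $\beta$-signs entirely: applying Theorem \ref{prop:LLS} over both $H$ and $H^\flat$, together with the compatibility of the Aubert–Zelevinsky involution with parabolic induction and with Zelevinsky duality on $\GL$ factors, one obtains $(\sigma^H_{\psi,\chi})^\wedge = \sigma^H_{\phi,\,\chi\mu_{\psi_0}}$ directly, whence $\mu_\psi = \mu_{\psi_0}$ by the uniqueness clause of Theorem \ref{prop:LLS}.
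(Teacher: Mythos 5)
Your proposal is correct, and the first half (reducing to $\mu_\psi = \mu_{\psi_0}$ by observing that $\nu_\psi, \nu_\phi$ are defined componentwise over $I^+$ and hence coincide with $\nu_{\psi_0}, \nu_{\phi_0}$) is exactly what the paper does. For the remaining step the paper simply cites the explicit combinatorial description of $\mu_\psi = \epsilon^{\mathrm{M}/\mathrm{MW}}_\psi$ from Xu and Liu--Lo--Shahidi, from which the insensitivity to the non-good-parity summands is read off directly; you instead derive $\mu_\psi = \mu_{\psi_0}$ from the defining relation $\beta^{H^\dagger}(\psi^\dagger) = \beta^H(\psi)\mu_\psi(s)$ of Proposition \ref{prop:beta-variance}. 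Your route requires the factorization $\beta^{\SO}(\mathrm{image}(\xi,\psi_{\GL})) = \beta^{\SO}(\xi)\,c(\psi_{\GL})$, which does follow as you say from the compatibility of $S\Theta$ with parabolic induction (used freely elsewhere in the paper, e.g.\ in the proof of Lemma \ref{prop:T-psi-s}), the commutation $\mathbf{D}^G\circ i_P = i_P\circ\mathbf{D}^M$, and \eqref{eqn:D-sign} on the $\GL$ factors; the cancellation $c(\psi'_{\GL})\,c(\psi''_{\GL}) = c(\psi_{\GL})$ holds because the multiset of simple $\GL$-constituents of $\psi'_{\GL}\oplus\psi''_{\GL}$ equals that of $\psi_{\GL}$ (using $m'_i + m''_i = m_i$ and $\beta^{\GL}(\tau_j) = \beta^{\GL}(\tau_{j'})$ for contragredient pairs), so all your signs check out. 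What your approach buys is independence from the combinatorial algorithm for $\epsilon^{\mathrm{M}/\mathrm{MW}}_\psi$ --- only the characterizing property of $\mu_\psi$ is used --- at the cost of a slightly longer sign computation; the paper's citation is shorter but pushes the verification into the references. Your closing remark that no metaplectic transfer (hence no central twist) enters is accurate, since $\mu_\psi$ lives entirely on the odd orthogonal side.
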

\begin{proof}
	The same reduction to good parity applies to $\phi$, and $\widehat{\psi_0} = \phi_0$. It is clear that $\nu_\psi$ and $\nu_\phi$ depend only on $\psi_0$ and $\phi_0$; they match $\nu_{\psi_0}$ and $\nu_{\phi_0}$, respectively.
	
	It remains to show that $\mu_\psi$ matches $\mu_{\psi_0}$. This can be seen by the explicit description of $\mu_\psi = \epsilon_\psi^{\mathrm{M}/\mathrm{MW}}$, eg.\ in \cite[\S 1]{Xu21} or \cite[\S 6]{LLS24}.
\end{proof}

\begin{corollary}\label{prop:anti-tempered-chi}
	Granting the validity of Theorem \ref{prop:local-desiderata} for anti-tempered parameters except unitarity, and consider an anti-tempered $\psi = \hat{\phi} \in \Psi(\tilde{G})$. Then $\pi_{\psi, \chi} = \widehat{\pi_{\phi, \chi\tilde{\mu}}}$ in $\mathrm{Groth}(\tilde{G})$ for all $\chi \in \EuScript{S}_\psi^\vee$.
\end{corollary}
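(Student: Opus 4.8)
The plan is to bootstrap from the good‑parity case (Lemma~\ref{prop:anti-tempered-bp}) using the reduction of Proposition~\ref{prop:pi-psi-gp}, dealing with the $\GL$‑factor by classical Zelevinsky/Aubert duality. First I would apply Proposition~\ref{prop:pi-psi-gp} to $\psi$: it produces a Levi $\tilde M = \Mp(W^\flat) \times \GL(m, F)$ and $\psi_M = (\psi_0, \psi_{\GL}) \in \Psi(\tilde M)$ with $\psi_0 \in \Psi_{\mathrm{gp}}(\Mp(W^\flat))$, an isomorphism $\EuScript{S}_{\psi_0} \xrightarrow{\sim} \EuScript{S}_\psi$, and $\pi_{\psi, \chi} = \Ind^{\tilde G}_{\tilde M}(\pi_{\psi_M, \chi})$ in $\mathrm{Groth}(\tilde G)$. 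Since $\psi$ is anti‑tempered, $\phi = \hat\psi$ is trivial on the Deligne $\SL(2, \CC)$, so the same proposition applies verbatim to the bounded L‑parameter $\phi \in \Phi_{\mathrm{bdd}}(\tilde G) \subset \Psi^+(\tilde G)$ and yields data for the \emph{same} Levi $\tilde M$, namely $\phi_M = (\phi_0, \phi'_{\GL})$ with $\phi_0 = \widehat{\psi_0} \in \Phi_{\mathrm{gp,bdd}}(\Mp(W^\flat))$ and, summand by summand, $\phi'_{\GL,i} = \widehat{\psi_{\GL,i}}$; these identifications are immediate by matching the decompositions~\eqref{eqn:psi-decomp-2} for $\psi$ and $\phi$, using that $\widehat{\cdot}$ merely swaps the two $\SL(2, \CC)$‑factors. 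From the construction of $T_{\psi_M, s}$ (only the metaplectic factor carries root numbers, cf.\ the remark after~\eqref{eqn:S-psi-2-M}) one reads off $\pi_{\psi_M, \chi} = \pi_{\psi_0, \chi} \boxtimes \Theta_\tau$ and $\pi_{\phi_M, \chi} = \pi_{\phi_0, \chi} \boxtimes \Theta_{\tau'}$, where $\tau$ (resp.\ $\tau'$) is the irreducible representation of $\GL(m, F)$ with parameter $\psi_{\GL}$ (resp.\ $\phi'_{\GL}$); being a product of Speh representations (resp.\ of Steinberg‑type discrete series), these are irreducible, and Zelevinsky's involution for $\GL$ gives $\widehat{\tau'} = \tau$.

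Next I would assemble the identity in $\mathrm{Groth}(\tilde G)$. Granting Theorem~\ref{prop:local-desiderata} for the good‑parity anti‑tempered parameter $\psi_0 = \widehat{\phi_0}$, Lemma~\ref{prop:anti-tempered-bp} gives $\pi_{\psi_0, \chi} = \widehat{\pi_{\phi_0, \chi\tilde\mu_{\psi_0}}}$, and Lemma~\ref{prop:mu-tilde-induction} identifies $\tilde\mu_\psi$ with $\tilde\mu_{\psi_0}$ under $\EuScript{S}_{\psi_0}^\vee \simeq \EuScript{S}_\psi^\vee$, so $\widehat{\pi_{\phi_0, \chi\tilde\mu}} = \pi_{\psi_0, \chi}$. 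Using the sign rule~\eqref{eqn:D-sign}, the standard compatibility of the Aubert involution $\mathbf D^{(\cdot)}$ with parabolic induction, and the fact that $\mathbf D^{\tilde M}$ is the tensor product of the factorwise involutions on $\tilde M = \Mp(W^\flat) \times \GL(m, F)$, one computes
\begin{align*}
	\widehat{\pi_{\phi, \chi\tilde\mu}}
	&= \beta^{\tilde G}(\pi_{\phi, \chi\tilde\mu})\, \mathbf D^{\tilde G} \Ind^{\tilde G}_{\tilde M}\bigl(\pi_{\phi_0, \chi\tilde\mu} \boxtimes \Theta_{\tau'}\bigr) \\
	&= \beta^{\tilde G}(\pi_{\phi, \chi\tilde\mu})\, \beta^{\Mp(W^\flat)}(\pi_{\phi_0, \chi\tilde\mu})\, \beta^{\GL(m)}(\tau')\, \Ind^{\tilde G}_{\tilde M}\bigl(\widehat{\pi_{\phi_0, \chi\tilde\mu}} \boxtimes \widehat{\tau'}\bigr) \\
	&= \Ind^{\tilde G}_{\tilde M}\bigl(\pi_{\psi_0, \chi} \boxtimes \Theta_\tau\bigr) = \pi_{\psi, \chi},
\end{align*}
where the last line invokes $\widehat{\pi_{\phi_0, \chi\tilde\mu}} = \pi_{\psi_0, \chi}$, $\widehat{\tau'} = \tau$, Proposition~\ref{prop:pi-psi-gp} for $\psi$, and the vanishing of the total sign. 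The latter holds because $\pi_{\phi, \chi\tilde\mu}$ is a constituent of $\Ind^{\tilde G}_{\tilde M}(\pi_{\phi_0, \chi\tilde\mu} \boxtimes \Theta_{\tau'})$, so its cuspidal support — viewed as a Levi of $\tilde G$ — coincides with that of $\pi_{\phi_0, \chi\tilde\mu} \boxtimes \tau'$ in $\tilde M$; since $\dim A_{P_0^{\tilde G}} = \dim A_{P_0^{\tilde M}}$ (both equal the Witt index $n = n^\flat + m$), this gives $\beta^{\tilde G}(\pi_{\phi, \chi\tilde\mu}) = \beta^{\tilde M}(\pi_{\phi_0, \chi\tilde\mu} \boxtimes \tau') = \beta^{\Mp(W^\flat)}(\pi_{\phi_0, \chi\tilde\mu})\,\beta^{\GL(m)}(\tau')$, and each sign squares to $1$.

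The only real work is bookkeeping. The main point to pin down is that the good‑parity reduction of $\phi$ takes place over the \emph{same} Levi $\tilde M$ as that of $\psi$ and that its $\GL$‑parameter is summand‑wise the $\SL(2, \CC)$‑swap dual of $\psi_{\GL}$ — this is exactly where anti‑temperedness (triviality of $\psi$ on the Deligne $\SL(2, \CC)$) is used, so that the duals $\tau,\tau'$ are honest Speh/Steinberg representations of $\GL(m,F)$ in Zelevinsky duality. The remaining delicate step is the cancellation of the three $\beta$‑signs, which rests on the equality of split ranks of the minimal Levi subgroups of $\tilde G$ and $\tilde M$ together with the additivity of $\beta$ over the product $\tilde M = \Mp(W^\flat)\times\GL(m,F)$. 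Everything else is a direct appeal to Lemmas~\ref{prop:anti-tempered-bp} and~\ref{prop:mu-tilde-induction}, Proposition~\ref{prop:pi-psi-gp}, and the standard behaviour of the Aubert involution under parabolic induction.
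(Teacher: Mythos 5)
Your proof is correct and follows essentially the same route as the paper: reduce to the good-parity factor via Proposition \ref{prop:pi-psi-gp}, invoke Lemmas \ref{prop:anti-tempered-bp} and \ref{prop:mu-tilde-induction}, and use the compatibility of the Aubert--Zelevinsky involution with parabolic induction in $\mathrm{Groth}(\tilde{G})$. Your explicit cancellation of the three $\beta$-signs is just an unwinding of the cited fact (\cite[(A.2.3)]{KMSW}) that $\widehat{(\cdot)}$ itself, signs included, commutes with parabolic induction on Grothendieck groups, so the extra bookkeeping, while sound, is not needed.
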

\begin{proof}
	Aubert--Zelevinksy involution commutes with parabolic induction on the level of virtual characters or $\mathrm{Groth}(\tilde{G})$: see eg.\ \cite[(A.2.3)]{KMSW}. Now we can express $\pi_{\psi, \chi}$ (resp.\ $\pi_{\phi, \chi}$) as parabolic induction from $(\psi_0, \psi_{\GL})$ (resp.\ $(\phi_0, \phi_{\GL})$) via Proposition \ref{prop:pi-psi-gp}, then use Lemmas \ref{prop:anti-tempered-bp} and \ref{prop:mu-tilde-induction}.
\end{proof}

Before proving Theorem \ref{prop:local-desiderata}, we will have to establish \eqref{eqn:anti-tempered-bp} for certain $\psi$. This is the topic of \S\ref{sec:anti-tempered-special}.

\subsection{Proof for a special case}\label{sec:anti-tempered-special}
Perhaps \eqref{eqn:anti-tempered-bp} is accessible by the method of Jacquet modules, cf.\ \cite[\S 8.3.4]{Rel18}. Instead, we will take a shortcut through $\Theta$-correspondence to prove a special case thereof (Proposition \ref{prop:anti-tempered-star}).

Given $\psi \in \Psi(\tilde{G})$, decompose it into
\[ \psi = \bigoplus_{i \in I} m_i \left( \rho_i \boxtimes r(a_i) \boxtimes r(b_i) \right) \]
where $m_i \in \Z_{\geq 1}$ and $(\rho_i, a_i, b_i)$ are distinct. View each $\rho_i$ as an irreducible smooth representation of $\GL(d_i, F)$ through local Langlands correspondence, where $d_i := \dim \rho_i$. We say $\psi$ contains no unramified characters if none of these $\rho_i$'s are unramified characters.

\begin{definition}\label{def:Psi-star}
	\index{Psi-G-tilde-star@$\Psi(\tilde{G})^{\star}$}
	Let $\Psi(\tilde{G})^{\star}$ be the subset of $\Psi_{\mathrm{gp}}(\tilde{G})$ consisting of anti-tempered $\psi$ that contains no unramified characters.
\end{definition}

Recall that $\tilde{\mu} := \tilde{\mu}_\psi$.

\begin{lemma}\label{prop:tilde-mu-star}
	For all $\psi = \hat{\phi} \in \Psi(\tilde{G})^{\star}$, we have $\tilde{\mu}= \mu_\psi$.
\end{lemma}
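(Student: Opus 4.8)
By Definition \ref{def:mu-tilde}, $\tilde\mu_\psi = \mu_\psi\,\nu_\psi\,\nu_\phi$, so the claim $\tilde\mu = \mu_\psi$ is equivalent to $\nu_\psi\,\nu_\phi = \mathbf 1$ in $\EuScript S_\psi^\vee = \EuScript S_\phi^\vee$. First I would unwind the definitions of $\nu_\psi$ and $\nu_\phi$ via Definition \ref{def:nu-psi}: writing the good-parity decomposition $\psi = \bigoplus_{i\in I^+} m_i\,\phi_i\boxtimes r(b_i)$ (here $I = I^+$ since $\psi$ is of good parity, hence $\EuScript S_\psi \simeq \bmu_2^{I^+}$), the $i$-th component of $\nu_\psi$ is $\epsilon(\phi_i)^{b_i}$. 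For $\nu_\phi$ I need the analogous expression applied to $\phi = \widehat\psi$; since $\psi$ is anti-tempered, $\phi$ is a bounded L-parameter of good parity, and switching the two $\SL(2)$-factors turns $\phi_i\boxtimes r(b_i)$ into $\phi_i\boxtimes r(1)\boxtimes r(b_i)$ viewed as an L-parameter, i.e. the underlying $\mathcal L_F$-representation of the $i$-th block of $\phi$ is $\phi_i \otimes r(b_i)|_{\SL(2)\text{-diagonal nonsense}}$... more precisely, restricting to $\mathcal L_F$ (with the $\SL(2,\CC)$ Arthur factor now acting through $\phi$), the $i$-th simple constituent structure is governed by $\phi_i$ together with the exponent data. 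The cleanest route is: $\nu_{\phi,i} = \epsilon(\phi_i')^{b_i'}$ where $\phi_i'\boxtimes r(b_i')$ is the $i$-th simple block of $\phi$ regarded as an Arthur parameter of good parity — but $\phi$ is already an L-parameter, so its Arthur-$\SL(2)$ is trivial and $\nu_\phi$ is computed from $\phi$'s decomposition as a bounded parameter, where each block is $\bigoplus_{h} \phi_i|\cdot|^{(b_i-1)/2 - h}\boxtimes r(1)$.

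The key computational step is therefore to compare $\nu_{\psi,i} = \epsilon(\phi_i)^{b_i}$ with $\nu_{\phi,i}$, where the latter is a product of local root numbers of the self-dual (symplectic, by good parity) constituents of $\phi$'s $i$-th block. The crucial simplification comes from the hypothesis $\psi \in \Psi(\tilde G)^\star$: by Definition \ref{def:Psi-star}, $\psi$ contains no unramified characters, so none of the $\rho_i$ underlying the $\phi_i$ is an unramified character of $\Weil F$. By the property \eqref{eqn:epsilon-SL} recalled in \S\ref{sec:local-root-numbers}, for a simple $\rho\boxtimes r(a)$ with $\rho$ \emph{not} an unramified character, $\epsilon(\rho\boxtimes r(a),\bpsi) = \epsilon(\rho\boxtimes r(1),\bpsi)^a$ — the correction factor $\det(-\rho(\Frob)\mid V_\rho^{I_F})^{a-1}$ vanishes because $V_\rho^{I_F} = \{0\}$. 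This means the root-number bookkeeping for $\phi$ (which involves twists $\phi_i|\cdot|^x$, but twisting by an unramified character of $\Weil F$ doesn't change whether $\phi_i$ is ramified) collapses multiplicatively: each $\epsilon(\phi_i|\cdot|^x\boxtimes r(\text{something}))$ reduces to powers of $\epsilon(\phi_i)$, and I can count the total exponent.

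Concretely, I expect that both $\nu_{\psi,i}$ and $\nu_{\phi,i}$ equal $\epsilon(\phi_i)$ raised to exponents that are congruent mod $2$ — in fact I anticipate $\nu_{\psi,i} = \nu_{\phi,i}$ directly once the unramified-character obstruction is removed, so that $\nu_\psi\nu_\phi = \nu_\psi^2 = \mathbf 1$ (using $\epsilon(\phi_i)^2 = 1$ for symplectic $\phi_i$). Actually the cleaner formulation: since $\epsilon(\phi_i)\in\{\pm1\}$ for each symplectic self-dual $\phi_i$, it suffices to show $\nu_{\psi,i}$ and $\nu_{\phi,i}$ have the same parity of exponent. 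From \eqref{eqn:phi-psi}, $\phi = \widehat\psi$ restricted appropriately has $i$-th block $\bigoplus_{h=0}^{b_i-1}\phi_i|\cdot|^{(b_i-1)/2-h}$, a sum of $b_i$ twists of $\phi_i$; pairing $|\cdot|^x$ with $|\cdot|^{-x}$ as in the proof of Lemma \ref{prop:epsilon-phi-psi}, and using that the ramified $\phi_i$ satisfies $\epsilon(\phi_i|\cdot|^x,\bpsi) = \epsilon(\phi_i,\bpsi)$ (unramified twist invariance of the epsilon factor up to the $\det$-correction which here is trivial), I get $\nu_{\phi,i} = \epsilon(\phi_i)^{b_i} = \nu_{\psi,i}$. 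Hence $\nu_\psi\nu_\phi = \mathbf 1$ and $\tilde\mu = \mu_\psi$.

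\textbf{Main obstacle.} The delicate point is handling the half-integral twists $|\cdot|^x$ with $x$ possibly a half-integer (when $b_i$ is even) and making sure the "no unramified characters" hypothesis genuinely kills \emph{all} the correction terms in \eqref{eqn:epsilon-SL} that would otherwise appear — in particular that $\phi_i|\cdot|^x$ still has no $I_F$-fixed vectors (true, since $|\cdot|^x$ is unramified so $\phi_i|\cdot|^x$ has the same inertia action as $\phi_i$), and that the pairing-up argument à la Lemma \ref{prop:epsilon-phi-psi} leaves exactly the exponent $b_i$. I would lean heavily on the already-established Lemma \ref{prop:epsilon-phi-psi} and its proof technique rather than redoing the combinatorics from scratch; the novelty is only that the $(\det\phi_i)(-1)$-type contributions that survived in that lemma are now irrelevant because here we are comparing $\nu$'s (products of $\epsilon(\phi_i)$'s) rather than $\epsilon(\psi^{s=-1})$'s, and the $\star$-hypothesis removes the $(-\rho(\Frob))^{a-1}$ factors. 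I expect the whole argument to be two or three paragraphs of careful exponent-counting once set up correctly.
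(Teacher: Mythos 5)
Your reduction of the claim to $\nu_\psi = \nu_\phi$ and your appeal to \eqref{eqn:epsilon-SL} together with the ``no unramified characters'' hypothesis are exactly the right ingredients, but your computation of $\nu_\phi$ rests on a misidentification of $\phi$. You decompose $\phi$ into blocks $\bigoplus_{h=0}^{b_i-1}\phi_i|\cdot|^{(b_i-1)/2-h}$: that is the parameter $\phi_\psi$ of Definition \ref{def:phi-psi} (see \eqref{eqn:phi-psi}), not $\phi=\hat{\psi}$. These are different L-parameters --- $\hat{\psi}$ is \emph{bounded} (that is precisely what anti-tempered means), whereas $\phi_\psi$ is not --- and Definition \ref{def:mu-tilde} involves $\nu_{\hat{\psi}}$. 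Moreover, the summands $\phi_i|\cdot|^{x}$ with $x\neq 0$ are not self-dual, so in the decomposition \eqref{eqn:psi-decomp-2} they would fall into the index set $J$ and contribute nothing to any $\nu$-character; the ``pair $|\cdot|^x$ with $|\cdot|^{-x}$'' bookkeeping you import from Lemma \ref{prop:epsilon-phi-psi} computes $\epsilon(\phi_\psi^{s=-1})$, a different quantity. So the derivation as written does not establish $\nu_{\phi,i}=\epsilon(\phi_i)^{b_i}$, even though that formula happens to be correct.

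The correct (and shorter) route, which is the paper's: write $\psi=\bigoplus_i m_i\,(\rho_i\boxtimes r(1)\boxtimes r(b_i))$, the first $\SL(2,\CC)$-factor (the one inside $\mathcal{L}_F$) acting trivially because $\psi$ is anti-tempered. Then $\phi=\hat{\psi}=\bigoplus_i m_i\,(\rho_i\boxtimes r(b_i))\boxtimes r(1)$, so the $i$-th simple block of $\phi$ in the sense of Definition \ref{def:nu-psi} has $\mathcal{L}_F$-part $\rho_i\boxtimes r(b_i)$ and Arthur-$\SL(2)$ part $r(1)$, whence $\nu_{\phi,i}=\epsilon(\rho_i\boxtimes r(b_i),\bpsi)$, while $\nu_{\psi,i}=\epsilon(\rho_i\boxtimes r(1),\bpsi)^{b_i}$. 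Their equality is a single application of \eqref{eqn:epsilon-SL}: the correction factor $\det\left(-\rho_i(\Frob)\,\middle|\,V_{\rho_i}^{I_F}\right)^{b_i-1}$ is trivial because $\rho_i$ is not an unramified character, which is where the hypothesis $\psi\in\Psi(\tilde{G})^{\star}$ enters. No pairing of half-integral twists and no appeal to Lemma \ref{prop:epsilon-phi-psi} is needed; you should replace the middle of your argument by this one-line identification of the simple blocks of $\hat{\psi}$.
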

\begin{proof}
	We have to show that $\nu_\psi = \nu_\phi$ as characters of $\EuScript{S}_\psi = \EuScript{S}_\phi$. Decompose $\psi$ into $\bigoplus_i m_i (\rho_i \boxtimes r(1) \boxtimes r(b_i))$. It suffices to show that for each $i$,
	\[ \epsilon\left( \rho_i \boxtimes r(b_i), \bpsi \right) = \epsilon\left( \rho_i \boxtimes r(1), \bpsi \right)^{b_i}. \]
	The above follows from \eqref{eqn:epsilon-SL} since $\rho_i$ cannot be an unramified character.
\end{proof}

Let $\psi = \hat{\phi} \in \Psi(\tilde{G})^{\star}$. The local Langlands correspondence for $\tilde{G}$ of Gan--Savin \cite{GS1} gives a bijection between $\Pi_\phi$ and the Vogan L-packet $\Pi^H_\phi$ for $H := \SO(2n+1)$, mapping $\pi_{\phi, \chi}$ to $\sigma_{\phi, \chi}$ for each $\chi \in \EuScript{S}_\phi^\vee$. The bijection is based on $\Theta$-correspondence for dual pairs $(\Or(V^{\pm}), \Sp(W))$ where $V^{\pm}$ is the quadratic $F$-vector space of dimension $2n+1$, discriminant $1$ and Hasse invariant $\pm 1$. In particular $H \simeq \SO(V^+)$.

Let $z$ denote the image of $-1 \in Z_{\tilde{G}^\vee}$ in $\EuScript{S}_\phi$. The general properties of Vogan L-packets imply that $\sigma_{\phi, \chi} \in \Pi^H_\phi$ lives on $\SO(V^{\pm})$ if and only if $\chi(z) = \pm 1$.

\begin{lemma}\label{prop:Theta-cuspidal-support}
	Suppose that $\psi = \hat{\phi} \in \Psi(\tilde{G})^{\star}$ and $\pi \in \Pi_\phi$ matches $\sigma \in \Pi^H_\phi$ via Gan--Savin. Express the cuspidal supports of $\pi$ and $\sigma$ as
	\[ [\rho_1, \ldots, \rho_r; \pi_0], \quad [\xi_1, \ldots, \xi_s; \sigma_0] \]
	respectively, where $\rho_i$ and $\sigma_j$ are irreducible supercuspidal representations of certain general linear groups, and $\pi_0$ (resp.\ $\sigma_0$) is an irreducible supercuspidal representation of a smaller metaplectic group (resp.\ odd $\SO$ of the same Hasse invariant). Then $r = s$ and the sequences $(\rho_1, \ldots)$, $(\sigma_1, \ldots)$ coincide up to Weyl group actions.
\end{lemma}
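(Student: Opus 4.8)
The plan is to realize the matching $\pi\leftrightarrow\sigma$ through the explicit $\Theta$-correspondence on which Gan--Savin's construction \cite{GS1} rests, and then to transport cuspidal supports across it by means of Kudla's description \cite{Ku86} of the Jacquet modules of the Weil representation. First I would recall that, writing $\pi=\pi_{\phi,\chi}$ and $\sigma=\sigma_{\phi,\chi}$, there is a sign $\epsilon\in\{\pm\}$ — the one with $\chi(z)=\epsilon$ — such that $\sigma$ is the restriction to $\SO(V^\epsilon)$ of the normalized theta lift of $\pi$ for the dual pair $(\Or(V^\epsilon),\Sp(W))$ with $\dim V^\epsilon=\dim W+1$, and symmetrically $\pi$ occurs as an irreducible subquotient of the theta lift of that $\Or(V^\epsilon)$-representation back to $\Mp(W)$. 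The essential preliminary remark is that, since $\psi=\hat\phi\in\Psi(\tilde G)^\star$, no $\rho_i$ appearing in $\phi=\hat\psi$ is an unramified character (duality only interchanges the two $\SL(2,\CC)$-factors); hence — every $\GL$-entry of the cuspidal support of a tempered representation with parameter $\phi$ being a $|\cdot|^{j}$-twist of some $\rho_i$, twisting preserving the property of not being an unramified character, and $\pi_0$ as a genuine supercuspidal of a metaplectic group contributing no $\GL$-entry at all — the full supercuspidal support of $\pi$ contains no unramified character of any $\GL(d,F)$; the same will hold for $\sigma$ a posteriori.

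The core of the proof is the iterated Kudla filtration. For a maximal parabolic $Q_k=L_kN_k\subset\Or(V^\epsilon)$ with $L_k\cong\GL(k,F)\times\Or(V_0^\epsilon)$, Kudla's filtration of $R_{N_k}(\omega_{V^\epsilon,W})$ has successive subquotients $J_a$, with $J_a$ an induced representation featuring a $\GL$-factor twisted by $|\det|^{c_a}$ for an explicit half-integer $c_a$, a $\GL$-factor transported from the $\Sp(W)$-side, and the Weil representation $\omega_{V_0^\epsilon,W'}$ of a correspondingly truncated pair; dually for a maximal parabolic of $\Mp(W)$. Iterating along a chain of parabolics, every irreducible subquotient of a Jacquet module of the theta lift of $\pi$ is assembled from $\GL$-factors transported from those of $\pi$ (at most up to contragredient), smaller theta lifts of the truncated data, and character factors $|\det|^{c_a}$, whose supercuspidal supports consist of unramified characters of $\GL(1,F)$. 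By the preliminary remark these character factors cannot survive in the supercuspidal support of $\sigma$, so at each stage only the term that carries no such factor contributes (a residual $|\det|^{c}$-twist of a transported $\GL$-factor would again be an unramified-character phenomenon and is likewise excluded). The clean term moreover transports one $\GL$-factor on each side simultaneously. Chasing this all the way down computes the supercuspidal support of $\sigma$ as $[\xi_1,\dots,\xi_s;\sigma_0]$ with $(\xi_1,\dots,\xi_s)$ equal to $(\rho_1,\dots,\rho_r)$ up to a permutation and replacing some $\rho_i$ by $\rho_i^\vee$ — i.e. up to the Weyl group $\mathfrak S_n\ltimes\{\pm1\}^n$ of $\Sp(W)$ — and $\sigma_0$ the theta lift of $\pi_0$ to $\SO(V_0^\epsilon)$, $\dim V_0^\epsilon=\dim W_0+1$; passing between $\Or$- and $\SO$-Levi subgroups changes only the orthogonal factor, not the $\GL$-part, so this also computes the support of $\sigma$ as an $\SO(V^\epsilon)$-representation. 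Finally $\sigma_0$ is supercuspidal: a $\GL$-Jacquet module of it is, by Kudla, either a clean term involving a $\GL$-Jacquet module of the supercuspidal $\pi_0$ (hence zero) or a boundary term carrying an unramified character (hence excluded); in particular the iteration terminates precisely at $[\xi_1,\dots,\xi_s;\sigma_0]$ and $s=r$, which is the assertion.

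I expect the main obstacle to be the combinatorial bookkeeping in the second step: one must verify that every term of the iterated Kudla filtration other than the distinguished clean one genuinely carries an unramified-character factor — so that the hypothesis $\psi\in\Psi(\tilde G)^\star$ really does excise it — and that the normalization of the Gan--Savin theta lift in the range $\dim V=\dim W+1$ is compatible with the one implicit in Kudla's filtration, so that the surviving term transports the $\GL$-data with no extra $|\det|^{c}$-twist. A subsidiary point, the nonvanishing of the small lift $\Theta_{V_0^\epsilon,W_0}(\pi_0)$, can be secured from the Gan--Savin dichotomy for genuine supercuspidal representations of metaplectic groups together with the Kudla--Rallis conservation relations, should the contradiction argument above be felt to be too indirect.
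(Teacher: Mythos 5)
Your overall architecture is the same as the paper's: transport cuspidal supports through the Gan--Savin theta lift using Kudla's results, with the hypothesis $\psi\in\Psi(\tilde G)^{\star}$ serving exactly to rule out the exceptional ($|\det|_F^{\Z/2}$) terms in Kudla's filtration. The paper simply quotes \cite[Theorem 2.5]{Ku86} as a black box once those terms are excluded, whereas you re-derive it by iterating the filtration; that part is more laborious but sound, and your remarks about $\rho_i$ versus $\rho_i^\vee$ being absorbed by the Weyl group, about the disappearance of $\chi_V$, and about the supercuspidality of $\sigma_0$ are all consistent with what the citation delivers.

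The one place where your argument has a real gap is the ``preliminary remark''. You assert that every $\GL$-entry of the cuspidal support of a tempered representation $\pi$ of $\Mp(W)$ with parameter $\phi$ is a twist of one of the simple constituents $\rho_i$ of $\phi$, and deduce from $\psi\in\Psi(\tilde G)^{\star}$ that no unramified characters occur. For the metaplectic group this compatibility between the Gan--Savin parametrization and cuspidal supports is not a formality: it is essentially equivalent to the lemma you are trying to prove (the natural way to see it is to push $\pi$ through theta to odd $\SO$, where the analogous fact is known from Moeglin's work --- which is circular here). The paper avoids this by working on the stable side: it writes $\Theta_\pi$ and $\Theta_\sigma$ as transfers of the stable characters $S\Theta^{H'}_{\phi'}\boxtimes S\Theta^{H''}_{\phi''}$, invokes the commutation of Jacquet functors with transfer (Hiraga for $H$, Chen for $\tilde G$) to reduce the cuspidal-support claim to one about $r_L\bigl(S\Theta^{H'}_{\phi'}\boxtimes S\Theta^{H''}_{\phi''}\bigr)$, and then settles that for odd $\SO$ via \cite[Lemma 7.2]{Xu17c} and the geometric lemma. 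To repair your proof you would either need to reproduce this endoscopic detour, or supply an independent reference for the metaplectic analogue of the Moeglin--Tadi\'c description of cuspidal supports of tempered representations; as written, the statement you rely on for $\pi$ (and ``a posteriori'' for $\sigma$) is exactly the nontrivial input.
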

\begin{proof}
	The assertion will follow from Kudla's result \cite[Theorem 2.5]{Ku86} on the behavior of cuspidal supports under $\Theta$-correspondence, provided that none of $\rho_i$, $\xi_j$ belong to $|\det|_F^{\Z/2}$. Note that since we are working with $8$-fold coverings, the quadratic characters $\chi_V$ in \textit{loc.\ cit.}\ disappear.
	
	Given $(n', n'') \in \Z_{\geq 0}^2$ with $n' + n'' = n$, we obtain elliptic endoscopic data $\mathbf{G}^!$ and $\mathbf{H}^\dagger$ of $\tilde{G}$ and $H$, respectively. They have the same endoscopic group $\SO(2n' + 1) \times \SO(2n''+1)$. Consider Jacquet modules on the level of characters of $G^! = H^\dagger$, say along a Levi subgroup $L$, denoted by $r_L$.
	
	We claim that for every $(n', n'')$ and $(\phi', \phi'') \in \Phi_{\mathrm{bdd}}(G^!) = \Phi_{\mathrm{bdd}}(H^\dagger)$ that maps to $\phi$,
	\begin{gather}\label{eqn:Theta-cuspidal-support-aux0}
		\begin{minipage}[c]{0.8\textwidth}
			the expansion of $r_L \left( S\Theta^{H'}_{\phi'} \boxtimes S\Theta^{H''}_{\phi''}\right)$ into irreducibles contains no $|\det|_F^{\Z/2}$ on $\GL$ factors.
		\end{minipage}
	\end{gather}
	
	It will follow from \eqref{eqn:Theta-cuspidal-support-aux0} that the Jacquet modules of $\pi$ and $\sigma$ contain no $|\det|_F^{\Z/2}$ on $\GL$ factors, as desired. Indeed:
	\begin{itemize}
		\item $\Theta_\pi$ and $\Theta_\sigma$ are linear combinations of transfers of various $S\Theta^{H'}_{\phi'} \boxtimes S\Theta^{H''}_{\phi''}$, and
		\item there is a formula of Bernstein--Zelevinsky type for $r_{M^H} \trans_{\mathbf{H}^\dagger, H}$ (resp.\ $r_{\tilde{M}} \trans_{\mathbf{G}^!, \tilde{G}}$) where $M^H$ (resp.\ $\tilde{M}$) is a Levi subgroup of $H$ (resp.\ $\tilde{G}$); see \cite[Theorem 5.6]{Hi04} for the case of $H$, and the case of $\tilde{G}$ only involves some extra signs, see \cite{Chen24}.
	\end{itemize}

	To justify \eqref{eqn:Theta-cuspidal-support-aux0}, we may assume $(n', n'') = (n, 0)$ without loss of generality, so $\phi = \phi^!$. Let's show that assuming $\phi = \bigoplus_{i \in I} \rho_i \boxtimes r(a_i) \in \Phi_{2, \mathrm{bdd}}(H)$,
	\begin{gather}\label{eqn:Theta-cuspidal-support-aux1}
		\begin{minipage}[c]{0.8\textwidth}
			for every irreducible constituent $\Pi$ of $r_L \left( S\Theta^H_\phi \right)$, the $\GL$ part of $\Pi$ has cuspidal support built from $\rho_i |\det|_F^{(a_i - 1)/2}$, where $i$ ranges over certain elements of $I$.
		\end{minipage}
	\end{gather}
	Indeed, this property is familiar from Moeglin's works, and a precise statement can be found in \cite[Lemma 7.2]{Xu17c}; it is proved by transfer to twisted $\GL(2n)$.
	
	For general $\phi \in \Phi_{\mathrm{bdd}}(H)$, by expressing $S\Theta^H_\phi$ as induction from a discrete series L-packet from a Levi, \eqref{eqn:Theta-cuspidal-support-aux0} follows from \eqref{eqn:Theta-cuspidal-support-aux1} and the geometric lemma.	
\end{proof}

\begin{lemma}\label{prop:beta-LLC}
	Suppose that $\psi = \hat{\phi} \in \Psi(\tilde{G})^{\star}$ and $\chi \in \EuScript{S}_\phi^\vee$. Put $e := \chi(z)$. Then
	\[ \beta^{\tilde{G}}(\pi_{\phi, \chi}) = e \beta^{\SO(V^e)}(\sigma_{\phi, \chi}). \]
\end{lemma}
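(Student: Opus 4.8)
The statement compares the sign $\beta^{\tilde G}$ attached to a member $\pi_{\phi,\chi}$ of a tempered $L$-packet for $\tilde G=\Mp(W)$ with the sign $\beta^{\SO(V^e)}$ attached to its $\Theta$-lift $\sigma_{\phi,\chi}$ on the relevant pure inner form of $H=\SO(2n+1)$, the extra factor $e=\chi(z)$ accounting for the Hasse invariant. Recall from the definition of $\beta$ that $\beta^{R}(\pi)=(-1)^{\dim A_{P_0}/A_{M_\pi}}$, so $\beta^{R}(\pi)$ depends only on the cuspidal support $[M_\pi,\pi']$ of $\pi$ — more precisely only on the rank of the $\GL$-part of the cuspidal support, since the (semi)simple rank of the cuspidal Levi determines the parity. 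So the plan is: first reduce $\beta^{\tilde G}(\pi_{\phi,\chi})$ and $\beta^{\SO(V^e)}(\sigma_{\phi,\chi})$ to a combinatorial count of the $\GL$-blocks in the respective cuspidal supports, then match those counts using the behaviour of cuspidal supports under $\Theta$-correspondence.

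\textbf{Key steps.} First I would spell out the minimal parabolics: for $\tilde G=\Mp(W)$ with $\dim W=2n$ the minimal Levi $\tilde M_0$ has $\GL$-rank $n$, whereas for $\SO(V^e)$ (of dimension $2n+1$) the minimal Levi has $\GL$-rank $n$ as well, but with a one-dimensional anisotropic kernel when $e=-1$; this asymmetry in the anisotropic part is exactly what produces the factor $e$. Concretely, writing the cuspidal support of $\pi_{\phi,\chi}$ as $[\rho_1,\dots,\rho_r;\pi_0]$ with $\pi_0$ supercuspidal on a smaller metaplectic group $\Mp(W_0)$, and that of $\sigma_{\phi,\chi}$ as $[\xi_1,\dots,\xi_s;\sigma_0]$ with $\sigma_0$ supercuspidal on $\SO(V_0^e)$, one has $\beta^{\tilde G}(\pi_{\phi,\chi}) = (-1)^{\,n - (\text{$\GL$-rank of }\Mp(W_0))}$ and $\beta^{\SO(V^e)}(\sigma_{\phi,\chi}) = (-1)^{\,n - (\text{$\GL$-rank of }\SO(V_0^e))}$, the second computed relative to a minimal parabolic of $\SO(V^e)$. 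Second, by Lemma \ref{prop:Theta-cuspidal-support} (Kudla, \cite{Ku86}) applied to the pair $(\pi_{\phi,\chi},\sigma_{\phi,\chi})$, we get $r=s$ and the multisets $\{\rho_i\}$, $\{\xi_j\}$ agree up to Weyl action; hence the $\GL$-part of the cuspidal supports of $\pi_0$ and $\sigma_0$ match too, and in fact $\pi_0$, $\sigma_0$ correspond to each other under $\Theta$ (cuspidal-to-cuspidal). Third, I would invoke Kudla's first-occurrence/conservation analysis (or directly the dimension bookkeeping in \cite{Ku86}) to relate the size of $W_0$ to the size of $V_0^e$: a supercuspidal $\pi_0$ of $\Mp(W_0)$ with $\dim W_0 = 2n_0$ theta-lifts to a supercuspidal on $\SO(V_0^e)$ with $\dim V_0^e = 2n_0+1$ precisely when $e$ matches the relevant Hasse invariant, and in the ``wrong'' Hasse invariant one gets $\dim V_0^e = 2n_0-1$. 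Comparing exponents: on the metaplectic side the $\GL$-rank absorbed into $\pi_0$ is $n_0$; on the orthogonal side $\SO(V_0^e)$ of odd dimension $2n_0+1$ (resp. $2n_0-1$) has minimal-Levi $\GL$-rank $n_0$ (resp. $n_0-1$), and crucially the \emph{anisotropic kernel} of $\SO(V^e)$ versus the trivial anisotropic kernel of $\Mp$ contributes the remaining parity shift $(-1)^{(e-1)/2}=e$. Assembling, $\beta^{\tilde G}(\pi_{\phi,\chi})$ and $\beta^{\SO(V^e)}(\sigma_{\phi,\chi})$ differ exactly by $e$, which is the claim.

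\textbf{Main obstacle.} The delicate point is the precise tracking of the anisotropic/odd part, i.e. getting the factor $e$ with the correct sign rather than just ``up to sign''. Two sub-issues must be handled carefully: (i) the convention for $A_{P_0}$ in $\SO(V^e)$ when $V^e$ has a nontrivial anisotropic kernel — one must fix the minimal parabolic over $F$ (not over $\bar F$), so that the $\GL$-rank of the minimal Levi is the Witt index of $V^e$, which is $n$ when $e=+1$ and $n-1$ when $e=-1$; this Witt-index drop, combined with the dimension shift $\dim W_0\leftrightarrow\dim V_0^e$ under theta, is what conspires to give $e$; (ii) the reduction from $\phi$ to the supercuspidal datum $(\pi_0,\sigma_0)$ must be compatible with Gan--Savin's correspondence and with Kudla's cuspidal-support result simultaneously, which is guaranteed by Lemma \ref{prop:Theta-cuspidal-support} but requires that none of the $\rho_i$ lie in $|\det|_F^{\Z/2}$ — this is exactly why the hypothesis $\psi\in\Psi(\tilde G)^\star$ (no unramified characters) is imposed. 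Modulo these careful-bookkeeping points, the argument is a direct comparison of two parity counts.
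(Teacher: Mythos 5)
Your overall strategy coincides with the paper's: reduce both signs to the parity of the number of $\GL$ blocks in the cuspidal support, match these via Lemma \ref{prop:Theta-cuspidal-support}, and extract the factor $e$ from the difference in split rank between $\Sp(2n)$ and $\SO(V^e)$. Indeed the paper's proof is exactly two sentences: $\dim A_B$ equals $\dim A_{P_0}$ (resp.\ $\dim A_{P_0}+1$) according as $e=1$ (resp.\ $e=-1$), and Lemma \ref{prop:Theta-cuspidal-support} gives $r=s$; then $\beta^{\tilde{G}}(\pi_{\phi,\chi})=(-1)^{n-r}$, $\beta^{\SO(V^e)}(\sigma_{\phi,\chi})=(-1)^{\dim A_{P_0}-s}$, and their quotient is $(-1)^{n-\dim A_{P_0}}=e$.

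That said, your bookkeeping contains genuine errors that would need repair. First, $\dim A_{M_\pi}$ is the \emph{number} $r$ of $\GL$ blocks in the cuspidal support $[\rho_1,\dots,\rho_r;\pi_0]$ (each $\GL(d_i)$ contributes one copy of $\Gm$ to the center), not the ``$\GL$-rank of $\Mp(W_0)$'' and not $\sum_i d_i$; your displayed formulas for the two $\beta$'s are wrong as written unless every $\rho_i$ lives on $\GL(1)$. Second, the detour through first occurrence and conservation is both unnecessary and incorrect: once the $\GL$ parts of the two cuspidal supports are matched, one automatically has $\dim V_0^e=\dim W_0+1$ \emph{whatever} $e$ is (both are obtained from $2n$, resp.\ $2n+1$, by removing the same total $\GL$ dimension), so there is no ``dimension shift to $2n_0-1$ in the wrong tower''; the entire factor $e$ comes from $\dim A_{P_0}$ being the Witt index of $V^e$, namely $n$ or $n-1$. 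Third, your opening claim that the minimal Levi of $\SO(V^-)$ has $\GL$-rank $n$ with a one-dimensional anisotropic kernel is false (it has $\GL$-rank $n-1$ and a three-dimensional anisotropic kernel); you state the correct version later, but the two are inconsistent. Finally, the assertion that $\pi_0$ and $\sigma_0$ correspond to each other under $\Theta$ is neither needed nor supplied by Lemma \ref{prop:Theta-cuspidal-support}, which only controls the $\GL$ parts. None of this invalidates the approach, but the argument as written does not assemble into a correct proof without these corrections.
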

\begin{proof}
	Recall that $B$ is the standard Borel subgroup of $G$, and let $P_0$ be a minimal parabolic subgroup of $\SO(V^e)$. Clearly, $\dim A_B$ equals $\dim A_{P_0}$ (resp.\ $\dim A_{P_0} + 1$) when $e = 1$ (resp.\ $e = -1$). The assertion is then immediate from Lemma \ref{prop:Theta-cuspidal-support}.
\end{proof}

\begin{proposition}\label{prop:anti-tempered-star}
	For $\psi = \hat{\phi} \in \Psi(\tilde{G})^{\star}$, we have $\beta(\psi) \beta^{\tilde{G}}(\pi_{\phi, \chi\tilde{\mu}}) = \chi(s_\psi)$ for all $\chi \in \EuScript{S}_\psi^\vee$. Consequently, the statement of Theorem \ref{prop:local-desiderata} except unitarity holds for $\psi$.
\end{proposition}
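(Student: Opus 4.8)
The plan is to establish the sign equality \eqref{eqn:anti-tempered-bp} for $\psi = \hat{\phi} \in \Psi(\tilde{G})^{\star}$; since $\Psi(\tilde{G})^{\star} \subset \Psi_{\mathrm{gp}}(\tilde{G})$, Lemma \ref{prop:anti-tempered-bp} then immediately yields the statement of Theorem \ref{prop:local-desiderata} (except unitarity) for such $\psi$. First I would apply Lemma \ref{prop:tilde-mu-star} to replace $\tilde{\mu}$ by $\mu_\psi$, and then use $\mu_\psi(s_\psi) = 1$ from Proposition \ref{prop:beta-variance}: substituting $\chi \mapsto \chi\mu_\psi$ in the index set, \eqref{eqn:anti-tempered-bp} is equivalent to
\[ \beta(\psi)\, \beta^{\tilde{G}}(\pi_{\phi, \chi}) = \chi(s_\psi), \qquad \chi \in \EuScript{S}_\phi^\vee, \]
where $\phi \in \Phi_{\mathrm{gp}, \mathrm{bdd}}(\tilde{G})$ and $\pi_{\phi, \chi} \in \Pi_{\mathrm{temp}, -}(\tilde{G})$ by Theorem \ref{prop:Luo}.

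Next I would transport the left-hand side to the orthogonal side via $\Theta$-correspondence. Set $e := \chi(z)$, with $z$ the image of $-1 \in Z_{\tilde{G}^\vee}$ in $\EuScript{S}_\phi$; Gan--Savin's local Langlands correspondence matches $\pi_{\phi, \chi}$ with $\sigma_{\phi, \chi} \in \Pi^H_\phi$ living on $\SO(V^e)$, and Lemma \ref{prop:beta-LLC} gives $\beta^{\tilde{G}}(\pi_{\phi, \chi}) = e\, \beta^{\SO(V^e)}(\sigma_{\phi, \chi})$. This is precisely where the hypothesis that $\psi$, equivalently $\phi$, contains no unramified characters is used: it is what makes Kudla's cuspidal-support comparison (Lemma \ref{prop:Theta-cuspidal-support}) available. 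It thus remains to prove the \emph{orthogonal} sign equality
\[ e\, \beta(\psi)\, \beta^{\SO(V^e)}(\sigma_{\phi, \chi}) = \chi(s_\psi), \qquad e = \chi(z), \quad \chi \in \EuScript{S}_\phi^\vee. \]

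For this I would run, on $\SO(V^e)$ in place of $\tilde{G}$, the computation carried out in the proof of Lemma \ref{prop:anti-tempered-bp}: using that transfer commutes with the involution $\mathbf{D}$, the stable multiplicity and endoscopic character relations for the Arthur packet $\Pi^{\SO(V^e)}_\psi$ (Arthur \cite{Ar13} for the quasisplit form, Ishimoto \cite{Is24} and \cite{AGIKMS} for the non-split form), Proposition \ref{prop:beta-variance} to absorb the endoscopic variance of $\beta$ into $\mu_\psi$, and \eqref{eqn:D-sign}, one obtains $\sigma_{\psi, \chi} = (\pm 1) \cdot \widehat{\sigma_{\phi, \chi\mu_\psi}}$ with an explicit sign; Theorem \ref{prop:LLS} of \cite{LLS24} then forces that sign to be $+1$, and unwinding this identity --- with the central factor $e$ entering through the description of the Vogan packet $\Pi^H_\phi$ in terms of the forms $\SO(V^\pm)$ --- gives exactly the displayed orthogonal sign equality. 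Combining the three displays yields \eqref{eqn:anti-tempered-bp}, and the Proposition follows.

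The hard part will be this last step: reconciling the normalization conventions across the three groups $\tilde{G} = \Mp(W)$, $\SO(V^+)$ and $\SO(V^-)$, and in particular checking that the central sign $e = \chi(z)$ produced by the $\Theta$-correspondence in Lemma \ref{prop:beta-LLC} is exactly the discrepancy between the quasisplit and non-quasisplit instances of the odd orthogonal sign equality, while correctly propagating the $s_\psi$-twist and the Vogan-packet parametrization through the Aubert--Zelevinsky involution. This also makes essential use of the good-parity classification for non-split odd $\SO$ underlying Theorem \ref{prop:LLS}.
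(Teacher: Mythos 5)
Your proposal is correct and follows essentially the same route as the paper: reduce via Lemma \ref{prop:tilde-mu-star} and $\mu_\psi(s_\psi)=1$, transfer the sign $\beta^{\tilde{G}}(\pi_{\phi,\chi})$ to $\SO(V^e)$ through Lemma \ref{prop:beta-LLC}, and conclude from the odd-orthogonal sign equality of Liu--Lo--Shahidi. The only difference is in the last step, where the paper simply quotes \cite[Proposition 5.8]{LLS24} (with the remark that the central sign $e$ is absorbed into their normalization of $\beta$), whereas you propose to re-derive that equality from Theorem \ref{prop:LLS} by running the computation of Lemma \ref{prop:anti-tempered-bp} on $\SO(V^e)$; this is a legitimate, equivalent derivation, just more roundabout than the direct citation.
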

\begin{proof}
	Recall that $\pi_{\phi, \chi} \in \Pi_\phi$ matches $\sigma_{\phi, \chi} \in \Pi^H_\phi$ via Gan--Savin, and the sign $\beta(\psi)$ is defined relative to $H = \SO(V^+)$. Set $e := \chi(z) \in \{\pm 1\}$. Since $\tilde{\mu} = \mu_\psi$ (Lemma \ref{prop:tilde-mu-star}) and $\mu_\psi(s_\psi) = 1$ (Proposition \ref{prop:beta-variance}), we have
	\[ \chi(s_\psi) = (\chi\tilde{\mu})(s_\psi) = e \beta(\psi) \beta^{\SO(V^e)}(\sigma_{\phi, \chi\tilde{\mu}}) \]
	by \cite[Proposition 5.8]{LLS24} applied to $\SO(V^e)$, noting that $e$ is absorbed into the factor $\beta(\psi)$ in \cite[\S 5.1]{LLS24}. Lemma \ref{prop:beta-LLC} then implies $\chi(s_\psi) = \beta(\psi) \beta^{\tilde{G}}(\pi_{\phi, \chi\mu})$.
	
	Thus \eqref{eqn:anti-tempered-bp} holds for $\psi$. The final assertion follows from Lemma \ref{prop:anti-tempered-bp}.
\end{proof}

\section{Proof of \texorpdfstring{Theorem \ref{prop:local-desiderata}}{Theorem 4.5.2}}\label{sec:proof-local}
\subsection{Abundance of self-dual parameters}
For every non-Archimedean local field $F$, denote its residual characteristic (resp.\ cardinality) by $p$ (resp.\ $q$), and let $\F$ be an algebraic closure of its residual field. Representations of $\Weil{F}$ are supposed to be finite-dimensional, continuous and semi-simple, taken up to isomorphisms.

\begin{lemma}\label{prop:abundance}
	Fix $n \in \Z_{\geq 1}$.
	\begin{enumerate}[(i)]
		\item For every even integer $d \geq 2$ and every non-Archimedean local field $F$, there exists a simple irreducible representation $\rho^{\mathrm{symp}}_d$ (resp.\ $\rho^{\mathrm{orth}}_d$) of $\Weil{F}$ of dimension $d$ that is self-dual of symplectic (resp.\ orthogonal) type.
		\item There is a prime number $p_n$, depending only on $n$, such that if $F$ is a non-Archimedean local field with $p \geq p_n$, then there exist two distinct irreducible symplectic (resp.\ orthogonal) representations of $\Weil{F}$ of dimension $d$ for all even integer $d$ satisfying $1 \leq \frac{d}{2} \leq n$.
	\end{enumerate}
\end{lemma}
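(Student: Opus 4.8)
The plan is to construct the required representations of $\Weil{F}$ by inducing characters from finite extensions, so that self-duality and the symplectic/orthogonal dichotomy can be controlled explicitly. For part (i), fix an even $d = 2m$. Choose a degree-$m$ unramified extension $E/F$ and a ramified quadratic extension $E'/E$; set $L := E'$ of degree $d$ over $F$. The standard device is to take a character $\theta$ of $L^\times$ (equivalently of $\Weil{L}$ by local class field theory) which is \emph{regular} with respect to $\Gal(L/F)$, so that $\rho := \Ind_{\Weil{L}}^{\Weil{F}} \theta$ is irreducible of dimension $d$. One has $\rho^\vee \simeq \Ind_{\Weil{L}}^{\Weil{F}} \theta^{-1}$, so $\rho$ is self-dual precisely when $\theta^{-1}$ lies in the $\Gal(L/F)$-orbit of $\theta$, i.e. $\theta \cdot \theta^\sigma = \mathbf{1}$ for some $\sigma$ of order $2$ in $\Gal(L/F)$; such $\sigma$ exists because $d$ is even (take the non-trivial element of $\Gal(E'/E)$, or any order-two element of the cyclic-by-$\Z/2$ Galois group when $E/F$ is unramified). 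The Frobenius--Schur indicator of such an induced representation is then computed by a classical formula (see Fröhlich--Queyrut / the GGP-type references already cited): it equals $+1$ (orthogonal) or $-1$ (symplectic) according to the value of $\theta$ on a distinguished norm-one element, or according to whether the quadratic character cutting out the relevant subfield is trivial on a prescribed element. By choosing $\theta$ to differ by a suitable quadratic twist — concretely, twisting $\theta$ by the quadratic character of $L^\times$ attached to $E'/E$ — one flips the indicator, so both a symplectic $\rho_d^{\mathrm{symp}}$ and an orthogonal $\rho_d^{\mathrm{orth}}$ are obtained. This uses nothing beyond the structure of $\Weil{F}$ and existence of the relevant extensions, so it holds for every non-Archimedean $F$.

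For part (ii) the extra requirement is that, for each even $d$ with $2 \le d \le 2n$, there be \emph{two distinct} irreducible representations of each type; here one wants a uniform lower bound $p_n$ on the residual characteristic guaranteeing enough room. The plan is to count: the number of regular characters $\theta$ of $L^\times$ with the prescribed self-duality constraint $\theta\theta^\sigma = \mathbf{1}$ and with prescribed Frobenius--Schur indicator grows with the residual cardinality $q$ of $F$ (the tamely ramified such characters are parametrized by characters of the residue field of $L$ satisfying a linear condition, whose number is $\gg q$), while the number of $\Gal(L/F)$-orbits identified under the induction-isomorphism $\rho \simeq \rho^\vee$ and under twisting is bounded in terms of $d \le 2n$ only. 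Hence there is an explicit function $N(q, d)$, increasing in $q$, such that the count of isomorphism classes of irreducible self-dual representations of $\Weil{F}$ of dimension $d$ and fixed type is at least $N(q,d)$; choosing $p_n$ so that $N(p_n, d) \ge 2$ for all even $d \le 2n$ — and noting $q \ge p$ — finishes it. One may also argue more softly: the tame quotient of $\Weil{F}$ surjects onto $\varprojlim \mathfrak{S}_\F^\times \rtimes \hat{\Z}$, and for $p$ large the $\ell$-adic or mod-$\ell$ representation theory of the relevant tame Galois groups has at least two irreducible summands of each dimension and type by a dimension count on the character variety; the quantitative version is the same counting argument.

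The main obstacle, and the part requiring genuine care, is bookkeeping the Frobenius--Schur indicator of an induced representation and checking that the quadratic twist I use to flip it preserves irreducibility and self-duality simultaneously. The indicator formula for $\Ind_{\Weil{L}}^{\Weil{F}}\theta$ with $\theta\theta^\sigma = \mathbf{1}$ splits into cases according to whether $L/F$ is cyclic or not and according to the behaviour of $\theta$ on the fixed field of $\sigma$; in the non-cyclic case ($d$ with several distinct order-two elements in $\Gal(L/F)$) one must make sure a single $\theta$ satisfies exactly one self-duality relation, to avoid the induced representation being a sum. I would handle this by always taking $L$ of the specific shape ``unramified of degree $d/2$, then a ramified quadratic on top,'' which makes $\Gal(L/F)$ cyclic when $d/2$ is odd and keeps the order-two subgroup structure transparent when $d/2$ is even, reducing everything to the two-dimensional dihedral computation twisted by unramified characters. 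The remaining steps — irreducibility of the induced representation from regularity of $\theta$, the isomorphism $\rho^\vee \simeq \Ind\,\theta^{-1}$, and the twist-flips-indicator claim — are then routine applications of Mackey theory and the cited root-number/indicator identities, and I would not grind through them here. Part (ii)'s counting, once the indicator is under control, is elementary: the set of admissible $\theta$ modulo the finitely many identifications is nonempty with at least two elements as soon as $q$ exceeds a bound depending polynomially on $n$.
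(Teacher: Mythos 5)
Your route --- inducing a self-dual character from a degree-$d$ tower $F \subset E \subset E'$ and reading off the symplectic/orthogonal dichotomy from a Frobenius--Schur computation --- is genuinely different from the paper's, which uses no induction at all: the paper classifies the irreducible self-dual representations trivial on wild inertia directly by their restriction to tame inertia ($\chi \oplus \chi^q \oplus \cdots$, permuted cyclically by $\Frob$, with $\Frob^d$ acting by a scalar $t$), shows that self-duality forces $\chi$ to factor through $\mu_{q^{d/2}+1}(\F)$ and $t = \pm 1$, exhibits an explicit invariant bilinear pairing that is symmetric for $t=1$ and alternating for $t=-1$, and so counts exactly $\varphi(q^{d/2}+1)/d$ representations of each type; (i) and (ii) then follow from $\varphi \geq 1$ and $\varphi(m) \to \infty$. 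In induction language the paper's choice amounts to taking $L$ to be the \emph{unramified} degree-$d$ extension, and that choice is not cosmetic: it is precisely what makes the two delicate steps of your argument go through.

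Two concrete gaps in your version. First, the type-flipping twist does nothing as stated: the quadratic character attached to $E'/E$ lives on $E^{\times}$, and its pullback to $L^{\times}$ through the norm is trivial by definition; equivalently, twisting the two-dimensional piece $\Ind_{\Weil{L}}^{\Weil{E}}\theta$ by a quadratic character of $E^{\times}$ does not change its determinant, whereas for a two-dimensional irreducible self-dual representation the type is decided by the determinant (symplectic iff $\theta|_{E^{\times}}$ equals the quadratic character cut out by $L/E$, orthogonal iff $\theta|_{E^{\times}} = \mathbf{1}$). To flip the type you must change $\theta|_{E^{\times}}$ itself, by multiplying $\theta$ by some $\eta$ with $\eta\eta^{\sigma} = \mathbf{1}$ whose restriction to $E^{\times}$ is the nontrivial class, and then re-verify regularity of $\theta\eta$ under all of $\Gal(L/F)$ --- a step you defer. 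Second, and more seriously, the counting for (ii) collapses in your tower: since $E'/E$ is \emph{ramified}, $\sigma \in \Gal(E'/E)$ acts trivially on the residue field, so for tamely ramified $\theta$ the condition $\theta\theta^{\sigma} = \mathbf{1}$ degenerates to $\theta^2 = \mathbf{1}$ on the units. The tame candidates are then $O(1)$ in number (and the regularity $\theta^{\sigma} \neq \theta$ is only barely salvageable through the value at a uniformizer), not ``$\gg q$'' as you assert. To get growth in $q$ the conjugating involution must act on the residue field by $x \mapsto x^{q^{d/2}}$, i.e.\ one needs the unramified quadratic step inside the unramified degree-$d$ extension --- exactly the paper's configuration, which yields the $\varphi(q^{d/2}+1)/d$ count. (A smaller issue: for an arbitrary ramified quadratic $E'/E$ the field $L = E'$ need not be Galois over $F$, so ``regular with respect to $\Gal(L/F)$'' requires either a specific choice of $E'$ or a Mackey argument over the Galois closure.)
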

\begin{proof}
	Let $d \in \Z_{\geq 2}$ be even. Fix a representative of $\Frob$ in $\Weil{F}$ and consider the $d$-dimensional irreducible representations $\sigma$ of $\Weil{F}$ that factor through
	\[ \Weil{F}/I_F^{\mathrm{wild}} \simeq \Z \ltimes \varprojlim_e \zeta_e(\F), \]
	where $e$ is taken over positive integers coprime to $p$, and $1 \in \Z$ (corresponding to $\Frob$) operates by $q$-th power on each $\zeta_e(\F)$.
	
	To describe all the possible $\sigma$, observe that $\sigma$ restricted to $\varprojlim_e \zeta_e(\F)$ is
	\[ \chi \oplus \chi^q \oplus \chi^{q^2} \oplus \cdots \quad \text{with finite length $d$, until the first repetition}, \]
	where $\chi: \varprojlim_e \zeta_e(\F) \to \CC^{\times}$ is continuous, $\chi \neq \mathbf{1}$, and $\Frob$ permutes the summands cyclically by
	\[ \chi \xrightarrow{\identity} \chi^q \xrightarrow{\identity} \cdots \xrightarrow{t} \left( \chi^{q^d} = \chi \right) \]
	where $t \in \CC^{\times}$. Note that $\{\chi, \chi^q, \ldots \}$ is uniquely determined by $\sigma$, and $\sigma(\Frob^d) = t \cdot \identity$.
	
	Note that $p \nmid \mathrm{ord}(\chi)$. We have $\chi^2 \neq \mathbf{1}$. Indeed, $\chi^2 = \mathbf{1}$ is ruled out when $p = 2$. If $p > 2$, then $\chi^2 = \mathbf{1}$ would imply $\chi^q = \chi$, contradicting $d \geq 2$.
	
	If $\sigma$ is self-dual, then $t^2 = 1$ and there exists $k \notin d\Z$ such that $\chi^{q^k} = \chi^{-1}$, unique by demanding $1 \leq k < d$. In turn, this implies $\chi^{q^{2k}} = \chi$, hence $2k = d$ and $\chi$ factors through $\mu_{q^{d/2} + 1}(\F)$; also note that $\Frob^d$ acts trivially on $\mu_{q^{d/2} + 1}(\F)$. Conclusion: self-dual $\sigma$ satisfies $t \in \{\pm 1\}$ and factors through $(\Z/2d \Z) \rtimes \mu_{q^{d/2} + 1}(\F)$, a meta-cyclic group, and it factors through $(\Z/d \Z) \rtimes \mu_{q^{d/2} + 1}(\F)$ if and only if $t = 1$.
	
	Conversely, from every injective homomorphism $\chi: \mu_{q^{d/2} + 1}(\F) \to \CC^{\times}$ and $t \in \{\pm 1\}$, one obtains an irreducible representation $\sigma$ by the recipe above. The circular pattern $\chi, \chi^q, \chi^{q^2}, \ldots$ of $d$ terms is determined by the isomorphism class of $\sigma$, up to rotation.
	
	Define a bilinear pairing $B$ on the underlying space of $\sigma$ as follows:
	\begin{itemize}
		\item $B$ pairs the summands $\chi$ with $\chi^{q^{d/2}} = \chi^{-1}$, and $\chi^q$ with $\chi^{q^{\frac{d}{2} + 1}} = \chi^{q^{d/2} q} = \chi^{-q}$, etc.\ non-trivially, whilst the other pairs of summands are mutually orthogonal;
		\item the non-trivial pairings above are $(x, y) \mapsto xy$ on $\CC \times \CC$.
	\end{itemize}
	One readily verifies that $B$ is $\Weil{F}$-invariant, and $B$ is symmetric (resp.\ alternating) when $t = 1$ (resp.\ $t = -1$).
	
	Hence there exist $\varphi(q^{d/2} + 1)/d$ distinct symplectic (resp.\ orthogonal) tamely ramified $d$-dimensional irreducible representations of $\Weil{F}$, where $\varphi$ is Euler's totient function.
	
	Finally, (i) follows from $\varphi(q^{d/2} + 1) \geq 1$. We also have
	\begin{equation*}
		\frac{\varphi(q^{d/2} + 1)}{d} \geq \frac{\varphi(p^{d/2} + 1)}{d} \;\stackrel{\text{if}\; d \leq 2n}{\geq}\; \frac{\varphi(p^{d/2} + 1)}{2n}.
	\end{equation*}
	It is well-known that $\lim_{m \to +\infty} \varphi(m) = +\infty$, which proves (ii).
\end{proof}

\subsection{Globalization}\label{sec:globalization-lemma}
Let $E$ be a local or global field of characteristic zero, and $N \in \Z_{\geq 1}$. By \cite[p.12]{Ar13}, elliptic endoscopic data $\mathbf{R}$ of the twisted $\GL(N)$ over $E$ are classified by triplets $(N_{\mathrm{orth}}, N_{\mathrm{symp}}, \eta)$, where $N_{\mathrm{orth}} \in \Z_{\geq 0}$, $N_{\mathrm{symp}} \in 2\Z_{\geq 0}$ satisfy $N_{\mathrm{orth}} + N_{\mathrm{symp}} = N$, and $\eta = \eta_{\mathbf{R}}$ is a quadratic character of $\Weil{E}$ such that
\[ N_{\mathrm{orth}} = 0 \implies \eta = \mathbf{1}, \quad N_{\mathrm{orth}} = 2 \implies \eta \neq \mathbf{1}. \]
The endoscopic group $R$ is a product of two classical groups; it satisfies
\[ R^\vee = \SO(N_{\mathrm{orth}}, \CC) \times \Sp(N_{\mathrm{symp}}, \CC) \]
and $\eta_{\mathbf{R}}$ determines the first factor of $R$; note that $R$ contains no split $\SO(2)$-factors. We say $\mathbf{R}$ is simple if $N_{\mathrm{orth}} N_{\mathrm{symp}} = 0$.

\index{Phi-tilde-ell@$\tilde{\Phi}_{\elli}(N), {}^{\pm} \tilde{\Phi}_{\elli}(N)$}
Suppose $E$ is local. Let $\tilde{\Phi}_{\elli}(N)$ be the set of all $\phi = \bigoplus_{i=1}^k \phi_i \in \Phi_{\mathrm{bdd}}(N)$ where $\phi_1, \ldots, \phi_k$ are simple, self-dual and distinct. Let ${}^+ \tilde{\Phi}_{\elli}(N)$ (resp.\ ${}^- \tilde{\Phi}_{\elli}(N)$) be the subset of $\tilde{\Phi}_{\elli}(N)$ defined by requiring all $\phi_i$ are orthogonal (resp.\ symplectic). As explained in \cite[pp.33--34]{Ar13}, for each $\phi \in \tilde{\Phi}_{\elli}(N)$, there is a unique elliptic endoscopic datum $\mathbf{R}$ such that $\phi$ factors through $\Phi_2(R)$; if $\phi \in {}^{\pm} \tilde{\Phi}_{\elli}(N)$ then $\mathbf{R}$ is simple.

Ditto for global $E$, with only notational differences.

Assume that we are given the following data.
\begin{itemize}
	\item A totally real number field $\dot{F}$.
	\item A finite set $V$ of places of $\dot{F}$ containing all Archimedean ones.
	\item A place $u \in V$, and we set $F := \dot{F}_u$.
	\item Families $(\zeta_\alpha)_{\alpha \in \mathcal{A}} \in \{\pm 1\}^{\mathcal{A}}$ and $(n_\alpha)_{\alpha \in \mathcal{A}} \in (\Z_{\geq 1})^{\mathcal{A}}$ where $\mathcal{A}$ is a finite set.
	\item For each $\alpha \in \mathcal{A}$, we are given a family $(\phi^\alpha_v)_{v \in V}$ where $\phi^\alpha_v \in {}^{\zeta_\alpha} \tilde{\Phi}_{\elli}(n_\alpha)$, defined over $\dot{F}_v$.
	\item For all $\alpha \in \mathcal{A}$ and $v \in V$, let $\mathbf{G}^\alpha_v$ be the simple elliptic endoscopic datum of the twisted $\GL(n_\alpha)$ such that $\phi_v^\alpha$ factors through $\Phi_2(G^\alpha_v)$. Denote the associated quadratic character as $\eta^\alpha_v$.
	
	Note that $(N_{\mathrm{orth}}, N_{\mathrm{symp}})$ in $\mathbf{G}^\alpha_v$ equals $(n_\alpha, 0)$ (resp.\ $(0, n_\alpha)$) when $\zeta_\alpha = +1$ (resp.\ $\zeta_\alpha = -1$). Therefore, $\eta^\alpha_v$ is non-trivial only for $\zeta_\alpha = +1$.
\end{itemize}

For all $\alpha \in \mathcal{A}$ and $v \in V$ satisfying $v \neq u$ and $v \mid \infty$, the condition that $\phi^\alpha_v \in \tilde{\Phi}_{\elli}(n_\alpha)$ is \emph{in general position} (i.e.\ highly regular) will be imposed later on, as in \cite[\S 6.1]{Ar13}. To measure the regularity, describe the $2$-dimensional simple summands in $\phi^\alpha_v$ by a sequence $\mu_1 > \cdots > \mu_q$ in $\frac{1}{2}\Z_{\geq 1}$ with $q = \lfloor \frac{n_\alpha}{2} \rfloor$ or $\lfloor \frac{n_\alpha}{2} \rfloor - 1$, as in the discussions surrounding \cite[(6.1.4)]{Ar13}. Define
\begin{equation*}
	\mathsf{R}(\phi^\alpha_v) := \min \left( \{\mu_i : 1 \leq i \leq q \} \cup \{\mu_i - \mu_j : 1 \leq i < j \leq q \} \right)
\end{equation*}
with the convention that $\min\emptyset = +\infty$.
\index{general position}
\index{R-phi@$\mathsf{R}(\phi^\alpha_v)$}

When $u$ is the only Archimedean place in $V$, we define $\mathsf{R}(\phi^\alpha_u)$ by the recipe above, and the same condition of general position will be imposed on $\phi^\alpha_u$; see below.

\begin{lemma}\label{prop:globalization}
	There exist a finite set $V' \supset V$ of places of $\dot{F}$ together with a real number $\mathsf{R} \gg 0$ depending on
	\begin{itemize}
		\item the data $(\dot{F}, V, u, (\zeta_\alpha)_{\alpha \in \mathcal{A}}, (n_\alpha)_{\alpha \in \mathcal{A}})$,
		\item those $\phi^\alpha_v$ with $v \nmid \infty$,
		\item $\phi^\alpha_u$ if $F = \R$ and $\dot{F}$ has more than one Archimedean places,
	\end{itemize}
	such that the following holds. Suppose that
	\begin{itemize}
		\item either $u$ is the only Archimedean place of $\dot{F}$, and $\phi^\alpha_u$ is in general position in the sense that $\mathsf{R}(\phi^\alpha_u) > \mathsf{R}$, for all $\alpha \in \mathcal{A}$;
		\item or there exist Archimedean places $v \neq u$, and $\phi^\alpha_v$ is in general position for all such places $v$ in the sense that $\mathsf{R}(\phi^\alpha_v) > \mathsf{R}$, for all $\alpha \in \mathcal{A}$.
	\end{itemize}
	Then there exist
	\[ \dot{\phi}^\alpha \in {}^{\zeta_\alpha} \dot{\tilde{\Phi}}_{\elli}(n_\alpha), \quad \alpha \in \mathcal{A} \]
	such that for every place $v$ of $\dot{F}$, we have
	\begin{itemize}
		\item $\dot{\phi}^\alpha_v = \phi^\alpha_v$ if $v \in V$;
		\item $\dot{\phi}^\alpha_v \in {}^{\zeta_\alpha} \tilde{\Phi}_{\elli}(n_\alpha)$ if $V' \smallsetminus V$, which is trivial on $\SL(2, \CC) \subset \mathcal{L}_{\dot{F}_v}$ when $v \nmid \infty$;
		\item $\dot{\phi}^\alpha_v$ is unramified if $v \notin V'$.
	\end{itemize}
	
	Moreover, $V'$ can be taken arbitrary large, but $\mathsf{R}$ will depend on it.
\end{lemma}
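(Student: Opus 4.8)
The plan is to follow closely Arthur's globalization of self-dual parameters in \cite[\S 6.1--6.2]{Ar13}, the only genuinely new inputs being the abundance of local self-dual parameters furnished by Lemma \ref{prop:abundance} and some bookkeeping to keep the globalized parameters trivial on $\SL(2,\CC)$ outside $V$. First I would reduce, for each $\alpha \in \mathcal{A}$, to producing a \emph{single} self-dual unitary cuspidal automorphic representation $\dot{\phi}^\alpha$ of $\GL(n_\alpha, \A)$ of the prescribed type $\zeta_\alpha$ (orthogonal if $\zeta_\alpha = +1$, symplectic if $\zeta_\alpha = -1$): such a $\dot{\phi}^\alpha$ automatically lies in ${}^{\zeta_\alpha}\dot{\tilde{\Phi}}_{\elli}(n_\alpha)$, since a simple parameter is a multiplicity-free sum of one distinct simple summand, and its localization $\dot{\phi}^\alpha_v$ is the tempered generic irreducible representation of $\GL(n_\alpha, \dot{F}_v)$ attached to its $L$-parameter. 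In particular one does \emph{not} need the local parameters $\phi^\alpha_v$ to be simple or to have compatible decompositions across $v$ — only that each $\phi^\alpha_v \in {}^{\zeta_\alpha}\tilde{\Phi}_{\elli}(n_\alpha)$, so that the prescribed local data at $V$ have uniform parity $\zeta_\alpha$, which is what the globalization requires. (When $n_\alpha$ is odd and $\zeta_\alpha = +1$ one allows, as Arthur does, the minor variant where an orthogonal $1$-dimensional Hecke character is split off; this does not affect the argument.)

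The second step is to invoke Arthur's existence result. To each $\alpha$ I would adjoin one auxiliary non-Archimedean place $w$ and prescribe there a self-dual supercuspidal representation of $\GL(n_\alpha, \dot{F}_w)$ of parity $\zeta_\alpha$ — available by Lemma \ref{prop:abundance}(i), and, when $\zeta_\alpha = \zeta_\beta$ and $n_\alpha = n_\beta$, chosen distinct for distinct indices by Lemma \ref{prop:abundance}(ii) after enlarging the residual characteristic of $w$. Such a component is irreducible as a representation of $\Weil{\dot{F}_w}$, hence trivial on $\SL(2,\CC)$, and it simultaneously (a) forces $\dot{\phi}^\alpha$ to be cuspidal and (b) together with the uniform parity at $V$ pins down the global self-dual type $\zeta_\alpha$, using the now-complete endoscopic classification for symplectic/even orthogonal groups \cite{Ar13} and for (possibly non-split) odd orthogonal groups \cite{Ar13, Is24, AGIKMS}. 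Running the comparison of twisted trace formulas for $\GL(n_\alpha)$ against its simple elliptic endoscopic data with Euler-product test functions that match $\sigma^\alpha_v$ at $v \in V$, the chosen supercuspidal at $w$, and spherical units elsewhere, one obtains $\dot{\phi}^\alpha$ with exactly these local components, \emph{provided} the Archimedean infinitesimal character is sufficiently regular; this is precisely the role of the hypothesis $\mathsf{R}(\phi^\alpha_v) > \mathsf{R}$ at an Archimedean place $v \ne u$ (or at $u$ when $u$ is the only Archimedean place), which makes the parameter discrete enough to be isolated and forces the error terms in \cite[\S 6.1]{Ar13} to vanish. Outside $V \cup \{w\}$ the representation $\dot{\phi}^\alpha$ is ramified at a further finite set of places; at each of those I would \emph{also} prescribe, from the outset, a self-dual tamely ramified supercuspidal (again via Lemma \ref{prop:abundance}, with the same odd-dimensional caveat), so that the component there is trivial on $\SL(2,\CC)$ as required. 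Taking $V'$ to be the union of $V$, $\{w\}$, and all these auxiliary places gives the statement; $V'$ may be enlarged arbitrarily by imposing yet more such tamely ramified self-dual supercuspidal constraints, at the cost of raising the regularity threshold $\mathsf{R}$, which accounts for the dependence of $\mathsf{R}$ on $V'$ and for the list of data it is allowed to depend on.

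The main obstacle is the joint control of cuspidality, exact local shape at $V$, and triviality on $\SL(2,\CC)$ at $V'\smallsetminus V$: the supercuspidal components at the auxiliary places are what simultaneously force cuspidality and keep the parameter unipotently trivial there, while their self-duality and the uniform parity at $V$ fix the global type — this is exactly what makes Lemma \ref{prop:abundance} indispensable, and why its part (ii) (two distinct self-dual supercuspidals of each parity in each relevant dimension) is needed, namely to separate the different $\dot{\phi}^\alpha$ and to avoid collisions among prescribed local components. The remaining work — checking that $\mathsf{R} \gg 0$ genuinely annihilates Arthur's error terms, and that our normalization of twisted $\GL(N)$ endoscopic data $(N_{\mathrm{orth}}, N_{\mathrm{symp}}, \eta)$ agrees with that of \textit{loc.\ cit.} — is routine but must be done with care; I would quote \cite[\S 6.1--6.2]{Ar13} for the former after verifying the latter.
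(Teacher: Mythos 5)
Your overall strategy — feed prescribed local data into Arthur's globalization machinery from \cite[\S 6.1--6.2]{Ar13}, use Lemma \ref{prop:abundance} to supply self-dual supercuspidal components at auxiliary finite places, and use regularity at an Archimedean place to isolate the contribution — is the same as the paper's. But there are two concrete gaps. First, you never globalize the quadratic characters $\eta^\alpha_v$ attached to the local endoscopic data $\mathbf{G}^\alpha_v$ when $\zeta_\alpha = +1$. The prescribed $\phi^\alpha_v$ factor through specific forms of odd (or even) special orthogonal groups determined by $\eta^\alpha_v$, and before any trace formula comparison can be run one must produce a single global quadratic character $\dot{\eta}^\alpha$ of $\Weil{\dot{F}}$ with $\dot{\eta}^\alpha_v = \eta^\alpha_v$ for $v \in V$, hence a single global endoscopic group $\dot{G}^\alpha$ whose local forms match the prescribed ones. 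This is the first step of the paper's proof, and it is precisely why $V'$ must in general be strictly larger than $V$: $\dot{\eta}^\alpha$ will be ramified at finitely many places outside $V$, and those places must be added to $V'$ with local data prescribed there. Your proof does not account for this at all.

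Second, your description of $V' \smallsetminus V$ is circular. You first claim the test function is the spherical unit at all places outside $V \cup \{w\}$ (which would force $\dot{\phi}^\alpha$ to be unramified there), and then say that $\dot{\phi}^\alpha$ ``is ramified at a further finite set of places'' at which you would ``prescribe, from the outset,'' additional supercuspidals — but that set is only known after $\dot{\phi}^\alpha$ has been constructed. The correct order is: fix $V'$ in advance (large enough to contain the ramification of the $\dot{\eta}^\alpha$), prescribe generic discrete data at every $v \in V' \smallsetminus V$ via Lemma \ref{prop:abundance} (this is also what guarantees triviality on $\SL(2,\CC)$ there), and take spherical units outside $V'$, which then forces unramifiedness off $V'$. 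A smaller point: your reduction to a \emph{single} cuspidal $\dot{\phi}^\alpha$ is stronger than what the statement asks for and fails in the odd orthogonal case, where the auxiliary local parameters are necessarily of the form $\rho^{\mathrm{orth}}_{n_\alpha - 1} \oplus \eta$ and so cannot force simplicity of the global parameter; ellipticity, i.e.\ membership in ${}^{\zeta_\alpha}\dot{\tilde{\Phi}}_{\elli}(n_\alpha)$, is all that is needed and all that Arthur's result delivers.
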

\begin{proof}
	First off, we globalize $(\eta^\alpha_v)_{v \in V}$ to a quadratic character $\dot{\eta}^\alpha$ of $\Weil{\dot{F}}$ for each $\alpha \in \mathcal{A}$. This matters only when $\zeta_\alpha = +1$. Once this is done, the endoscopic data $(\mathbf{G}^\alpha_v)_{v \in V}$ also globalize to $\dot{\mathbf{G}}^\alpha$.
	
	Indeed, this amounts to globalizing square classes. By the usual argument of approximation, we obtain $\dot{\eta}^\alpha$ with $\dot{\eta}^\alpha_v = \eta^\alpha_v$ for each $v \in V$. Take $V'_\alpha \supset V$ so large that $\dot{\eta}^\alpha_v$ is unramified off $V'_\alpha$. Consequently, $\dot{\mathbf{G}}^\alpha$ is unramified off $V'_\alpha$.
	
	Denote by $\dot{G}^\alpha_v$ the base change of $\dot{G}^\alpha$ to $\dot{F}_v$, and observe that $\dot{G}^\alpha_v$ has square-integrable representations modulo center for each $v \mid \infty$.
	
	Now put $V' := \bigcup_\alpha V'_\alpha \supset V$. Outside $V'$, everything is unramified. For each $\alpha \in \mathcal{A}$, apply the Remark 3 after \cite[Corollary 6.2.4]{Ar13} (based on Lemma 6.2.2 and Corollary 6.2.3 of \textit{loc.\ cit.}\ and their proofs) to
	\begin{itemize}
		\item the set of places $V'$,
		\item chosen representations of $G^\alpha_v$ parameterized by $\phi^\alpha_v$, for all $v \in V$;
		\item chosen representations in $\Pi_2(\dot{G}^\alpha_v)$ whose L-parameter (still denoted as $\phi^\alpha_v$) is trivial on $\SL(2, \CC)$, for all $v \in V' \smallsetminus V$.
	\end{itemize}
	The last item about $v \in V' \smallsetminus V$ requires explanation. By Lemma \ref{prop:abundance} (i), for every even $d \geq 2$ there exists a $d$-dimensional irreducible representation $\rho^{\mathrm{orth}}_d$ (resp.\ $\rho^{\mathrm{symp}}_d$) of $\Weil{\dot{F}_v}$ of orthogonal (resp.\ symplectic) type. Let $\eta$ be any quadratic character of $\Weil{\dot{F}_v}$. If $\zeta_\alpha = 1$, we take the L-parameter $\rho^{\mathrm{orth}}_{n_\alpha}$ (resp.\ $\rho^{\mathrm{orth}}_{n_\alpha - 1} \oplus \eta$) in $\Phi_{2, \mathrm{bdd}}(\dot{G}^\alpha_v)$ when $n_\alpha$ is even (resp.\ odd), and take any representation in that L-packet. If $\zeta_\alpha = -1$, simply replace the superscript ``orth'' by ``symp'' and notice that $n_\alpha$ is always even.
	
	By the result cited above and its proof, for all $\alpha \in \mathcal{A}$ we can find $\mathsf{R}_\alpha \in \R_{\geq 0}$ depending on the data in the assertion together with those $\phi^\alpha_v$ with $v \in V' \smallsetminus V$ chosen above, such that the following properties hold. Whenever the L-parameters $\phi^\alpha_v$ at $v \mid \infty$ satisfy $\mathsf{R}(\phi^\alpha_v) > \mathsf{R}_\alpha$, excluding $v = u$ when $F = \R$ and $\dot{F}$ has more than one Archimedean places, there exists an irreducible representation $\dot{\pi}^\alpha$ in the discrete $L^2$-automorphic spectrum of $\dot{G}^\alpha(\A)$ parameterized by some $\dot{\phi}^\alpha \in \dot{\tilde{\Phi}}(n_\alpha)$. They localize to the representations and parameters chosen above, at each $v \in V'$. We have $\dot{\phi}^\alpha \in {}^{\zeta_\alpha} \dot{\tilde{\Phi}}_{\elli}(n_\alpha)$ since $\phi^\alpha_v$ does at $v \in V$. Set $\mathsf{R} := \max\{\mathsf{R}_\alpha: \alpha \in \mathcal{A} \}$.
	
	Finally, since each $V'_\alpha$ can be taken arbitrarily large, so is $V'$.
\end{proof}

Several remarks are in order.
\begin{enumerate}
	\item In the first step of the proof, we cannot assume $V'_\alpha = V$. For an overview on the globalization of quadratic characters, see \cite{FCR24}.

	\item In \cite[Chapter 6, p.304]{Ar13}, Arthur allows non-hyperspecial maximal compact subgroups when talking about spherical representations of ramified groups. This is not a problem here, since everything is unramified off $V'$.
	
	\item In \textit{loc.\ cit.}, one assumes $\dot{F}$ has many real places. This is not really necessary: see the variant in \cite[\S 7.2]{Ar13}. Nonetheless, the condition of general position on parameters at Archimedean places (possibly excluding $u$) must be imposed.
\end{enumerate}

\subsection{Choice of auxiliary parameters}\label{sec:aux-parameters}
Until \S\ref{sec:proof-1}, we let $F$ be a local field of characteristic zero. As usual, $\tilde{G} = \Mp(W)$ with $\dim W = 2n$, relative to $\bpsi$.

Let $\psi = \bigoplus_{i \in I = I^+} m_i \left( \phi_i \boxtimes r(b_i)\right) \in \Psi_{\mathrm{gp}}(\tilde{G})$, where $m_i, b_i \in \Z_{\geq 1}$, each $\phi_i$ is simple, and $(\phi_i, b_i)$ are distinct. They satisfy $\sum_i m_i b_i \dim \phi_i = 2n$.

To globalize, we use the terminologies from \S\ref{sec:globalization-lemma}, and distinguish two cases.
\begin{description}
	\item[Real case in general position] Assume $F = \R$, and all $\phi_i$ are in general position; the latter condition depends on auxiliary data in the globalization, see \eqref{eqn:general-position} below.
	\item[General case] Only assume $F \neq \CC$.
\end{description}

We choose $\dot{F}$ and the place $u$ first.
\begin{itemize}
	\item In the real case in general position, let $\dot{F} = \Q$ and $u$ be the Archimedean place.
	\item In the general case, $\dot{F}$ is taken to be a totally real number field, required to has $\geq 2$ Archimedean places if $F = \R$, together with a place $u$ and a chosen isomorphism $\dot{F}_u \simeq F$. For the existence, see eg.\ \cite[Lemma 6.2.1]{Ar13}.
\end{itemize}

We also globalize $(W, \lrangle{\cdot|\cdot})$ and $\bpsi$ to $\dot{F}$, and obtain the adélic metaplectic covering
\[ 1 \to \bmu_8 \to \dot{\tilde{G}} \to \dot{G}(\A) \to 1, \]
where $\dot{G} = \Sp(\dot{W})$. The construction involves the choice of an $\mathfrak{o}_{\dot{F}}$-lattice $L \subset \dot{W}$, which gives rise to the set of places $V_{\mathrm{ram}}$ in \eqref{eqn:V-ram} containing all dyadic and Archimedean places.

In both cases we put
\[ J := \left\{ (i, j) : i \in I, \; j \in \Z, \; 1 \leq j \leq m_i \right\}. \]
Take a set $V_0$ of non-Archimedean places of $\dot{F}$ such that
\begin{itemize}
	\item $u \notin V_0$;
	\item for every $v \in V_0$, denote its residual characteristic as $p(v)$, then $p(v) \geq \max\{3, p_n\}$ (see Lemma \ref{prop:abundance});
	\item there is a bijection $\iota: J \xrightarrow{1:1} V_0$, which we fix.
\end{itemize}

Define the finite set of places
\begin{equation*}
	V := \{u\} \cup V_0 \cup V_{\mathrm{ram}}
\end{equation*}

Let $d_i = \dim \phi_i$. We set out to choose parameters $\phi_{i,j,v} \in \tilde{\Phi}_{\elli}(d_i)$ for all $(i, j) \in J$ and $v \in V$, such that $\phi_{i,j,v}$ has the same type (symplectic or orthogonal) as $\phi_i$.

\paragraph*{The case $v=u$}
For all $(i, j) \in J$, put $\phi_{i, j, u} := \phi_i$.

\paragraph*{The case $v \in V_0$}
Let $(i', j') := \iota^{-1}(v) \in J$. For every even $d$ such that $1 \leq \frac{d}{2} \leq n$, choose by Lemma \ref{prop:abundance} (ii)
\[\begin{array}{ll}
	\rho^{\mathrm{symp}}_{1, d} \not\simeq \rho^{\mathrm{symp}}_{2, d}: & \text{symplectic irreducible representations of}\; \Weil{\dot{F}_v} \;\text{with}\; \dim = d, \\
	\rho^{\mathrm{orth}}_{1, d} \not\simeq \rho^{\mathrm{orth}}_{2, d}: & \text{orthogonal irreducible representations of}\; \Weil{\dot{F}_v} \;\text{with}\; \dim = d.
\end{array}\]
Also fix ramified quadratic characters $\eta_1 \neq \eta_2$ of $\Weil{\dot{F}_v}$, which exist since $p(v) > 2$.
	
Consider now $(i, j) \in J$.
\begin{enumerate}
	\item If $b_i$ is odd then $\phi_i$ is symplectic, put
	\[\phi_{i,j,v} := \begin{cases}
		\rho^{\mathrm{symp}}_{1, d_i}, & \text{if}\; (i, j) = (i', j'), \\
		\rho^{\mathrm{symp}}_{2, d_i}, & \text{if}\; (i, j) \neq (i', j').
	\end{cases}\]
	\item If $b_i$ is even then $\phi_i$ is orthogonal;
	\begin{enumerate}
		\item when $d_i$ is even, put
		\[\phi_{i,j,v} := \begin{cases}
			\rho^{\mathrm{orth}}_{1, d_i}, & \text{if}\; (i, j) = (i', j'), \\
			\rho^{\mathrm{orth}}_{2, d_i}, & \text{if}\; (i, j) \neq (i', j'),
		\end{cases}\]
		\item when $d_i$ is odd, put
		\[\phi_{i,j,v} := \begin{cases}
			\rho^{\mathrm{orth}}_{1, d_i - 1} \oplus \eta_1, & \text{if}\; (i, j) = (i', j'), \\
			\rho^{\mathrm{orth}}_{2, d_i - 1} \oplus \eta_2, & \text{if}\; (i, j) \neq (i', j').
		\end{cases}\]
	\end{enumerate}
	Here we set $\rho^{\mathrm{orth}}_{*, 0} = 0$.
\end{enumerate}
These parameters are all trivial on $\SL(2, \CC) \subset \mathcal{L}_{\dot{F}_v}$.

\paragraph*{The case $v \notin \{u\} \cup V_0$, $v \nmid \infty$ and $v \in V_{\mathrm{ram}}$}
For each even $d \geq 2$, we can take a symplectic (resp.\ orthogonal) irreducible representation $\rho^{\mathrm{symp}}_d$ (resp.\ $\rho^{\mathrm{orth}}_d$) of $\Weil{\dot{F}_v}$ of dimension $d$, by Lemma \ref{prop:abundance} (i).

Also fix a ramified quadratic character $\eta$ of $\Weil{\dot{F}_v}$. For each $(i, j) \in J$, put
\[ \phi_{i, j, v} := \begin{cases}
	\rho^{\mathrm{symp}}_{d_i}, & \text{if $b_i$ is odd} \\
	\rho^{\mathrm{orth}}_{d_i}, & \text{if $b_i, d_i$ are both even} \\
	\rho^{\mathrm{orth}}_{d_i - 1} \oplus \eta, & \text{if $b_i$ is even and $d_i$ is odd}.
\end{cases}\]
Here we set $\rho^{\mathrm{orth}}_0 = 0$. Again, these parameters are of the required type, and trivial on $\SL(2, \CC) \subset \mathcal{L}_{\dot{F}_v}$.

\paragraph*{The case $v \mid \infty$ (so $v \in V_{\mathrm{ram}}$) but $v \neq u$}
Take $\phi_{i, j, v} \in \tilde{\Phi}_{\elli}(d_i)$ to be of the same type as $\phi_i$, and in general position in the sense of \eqref{eqn:general-position} below.

\paragraph*{Globalization}
Having defined $\phi_{i, j, v}$, we apply Lemma \ref{prop:globalization} to $\mathcal{A} := J$, the set of places $V$ and $\zeta_{i, j} := 1$ (resp. $-1$) if $\phi_i$ is of orthogonal (resp.\ symplectic) type, to obtain
\[ \dot{\phi}_{i, j}, \quad V' \supset V, \quad \mathsf{R}, \]
so that everything is unramified outside $V'$. The aforementioned assumptions of ``in general position'' mean that the condition
\begin{equation}\label{eqn:general-position}
	\mathsf{R} < \begin{cases}
		\mathsf{R}(\phi_i) \quad (\forall i \in I), & \text{(real case in general position)} \\
		\mathsf{R}(\phi_{i, j, v}) \quad(\forall (i, j) \in J, \; \forall v \mid \infty, \; v \neq u), & \text{(general case)}
	\end{cases}
\end{equation}
in Lemma \ref{prop:globalization} is fulfilled. Define
\begin{equation}\label{eqn:globalized-psi}
	\dot{\psi} := \bigoplus_{i \in I} \bigoplus_{j=1}^{m_i} \dot{\phi}_{i,j} \boxtimes r(b_i).
\end{equation}

\begin{lemma}\label{prop:aux-parameters}
	The following properties hold for $\dot{\psi}$:
	\begin{enumerate}[(i)]
		\item $\dot{\psi} \in \dot{\Psi}_2(\dot{\tilde{G}})$;
		
		\item $\dot{\psi}_u = \psi$;

		\item if $v \mid \infty$ and $v \neq u$, then each $\dot{\phi}_{i, j, v}$ is elliptic satisfying $\mathsf{R}(\dot{\phi}_{i, j, v}) > \mathsf{R}$;
		
		\item $\dot{\psi}_v$ belongs to $\Psi(\dot{\tilde{G}}_v)^\star$ (Definition \ref{def:Psi-star}) for all $v \in V \smallsetminus \{u\}$ such that $v \nmid \infty$;
		
		\item $\dot{\psi}_v$ is anti-tempered (Definition \ref{def:anti-tempered-parameter}) for all $v \in V' \smallsetminus \{u\}$ such that $v \nmid \infty$;

		\item the localization homomorphisms \eqref{eqn:S-localization} induce an injection $\EuScript{S}_{\dot{\psi}} \hookrightarrow \prod_{v \in V_0} \EuScript{S}_{\dot{\psi}_v}$ and a surjection $\EuScript{S}_{\dot{\psi}} \twoheadrightarrow \EuScript{S}_{\dot{\psi}_u} = \EuScript{S}_\psi$;
		\item $\dot{\psi}_v \in \Psi(\dot{\tilde{G}}_v)$ (not just in $\Psi^+(\dot{\tilde{G}}_v)$) for all $v \in V'$.
	\end{enumerate}
\end{lemma}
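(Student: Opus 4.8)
The plan is to unwind the construction in \S\ref{sec:aux-parameters} and verify the seven assertions one by one, feeding in the conclusions of Lemma \ref{prop:globalization} together with the explicit local choices. Begin with the structural items (i), (ii), (vii). Item (ii) is immediate from $\dot{\phi}_{i,j,u} = \phi_i$ and the compatibility of localization with $(\cdot)\boxtimes r(b_i)$. For (i): each $\dot{\phi}_{i,j}$ lies in ${}^{\zeta_{i,j}}\dot{\tilde{\Phi}}_{\elli}(d_i)$, hence is a direct sum of distinct self-dual cuspidals of type $\zeta_{i,j}$; but its localization at $u$ equals the irreducible $\phi_i$, so the sum has a single term and $\dot{\phi}_{i,j}$ is cuspidal of type $\zeta_{i,j}$. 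Since $\psi$ is of good parity, $\phi_i$ is symplectic with $b_i$ odd, or orthogonal with $b_i$ even, matching $\zeta_{i,j}=-1$, resp.\ $+1$; hence every $\dot{\phi}_{i,j}\boxtimes r(b_i)$ is self-dual of symplectic type, and the count $\sum_i m_i d_i b_i = 2n$ gives $\dot{\psi}\in\dot{\Psi}_{\mathrm{symp}}(2n)$. Discreteness: if $\dot{\phi}_{i,j}\simeq\dot{\phi}_{i',j'}$ then localizing at $u$ forces $\phi_i=\phi_{i'}$, hence a common index $i$ (up to differing $b$'s, which keeps the pairs distinct), and for that $i$ with $j\ne j'$ the place $v:=\iota(i,j)$ carries the distinguished ``type 1'' choice $\dot{\phi}_{i,j,v}$ versus the ``type 2'' choice $\dot{\phi}_{i,j',v}$, which differ because $\rho^{\mathrm{symp/orth}}_{1,d}\ne\rho^{\mathrm{symp/orth}}_{2,d}$ and $\eta_1\ne\eta_2$. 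Thus all pairs $(\dot{\phi}_{i,j},b_i)$ are distinct with multiplicity one, i.e.\ $\dot{\psi}\in\dot{\Psi}_2(\dot{\tilde{G}})$. Item (vii) (an improvement over the mere $\Psi^+$-membership furnished by \cite[Theorem 1.4.2]{Ar13}) follows because every $\dot{\phi}_{i,j,v}$ with $v\in V'$ is a bounded parameter, trivial on the $\SL(2,\CC)$ inside $\mathcal{L}_{\dot{F}_v}$ when $v\nmid\infty$, so $\dot{\psi}_v$ restricts to a bounded parameter of $\mathcal{L}_{\dot{F}_v}$ and lies in $\Psi(\dot{\tilde{G}}_v)$.

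Next treat (iii), (iv), (v). For (iii): an archimedean $v\ne u$ lies in $V_{\mathrm{ram}}\subset V$, so $\dot{\phi}_{i,j,v}=\phi_{i,j,v}$ was chosen elliptic and, once $\mathsf{R}$ is produced by Lemma \ref{prop:globalization} (whose value does not depend on the archimedean parameters away from $u$), in general position, i.e.\ $\mathsf{R}(\phi_{i,j,v})>\mathsf{R}$ by \eqref{eqn:general-position}. For (iv): a non-archimedean $v\in V\smallsetminus\{u\}$ lies in $V_0$ or $V_{\mathrm{ram}}$, and there $\dot{\psi}_v=\bigoplus_{i,j}\phi_{i,j,v}\boxtimes r(b_i)$ with each $\phi_{i,j,v}$ bounded and trivial on $\SL(2,\CC)\subset\mathcal{L}_{\dot{F}_v}$, so $\dot{\psi}_v$ is anti-tempered; its simple constituents are $\rho\boxtimes r(b_i)$ with $\rho$ a simple constituent of $\phi_{i,j,v}$, and by the choices these are symplectic with $b_i$ odd or orthogonal with $b_i$ even, hence symplectic, so $\dot{\psi}_v$ is of good parity; finally the only one-dimensional constituents occurring are the chosen ramified quadratic characters $\eta,\eta_1,\eta_2$, whereas the higher-dimensional irreducible summands $\rho^{\mathrm{symp/orth}}$ are not characters, so $\dot{\psi}_v$ contains no unramified character, i.e.\ $\dot{\psi}_v\in\Psi(\dot{\tilde{G}}_v)^{\star}$. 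Item (v): a non-archimedean place in $V'\smallsetminus\{u\}$ either lies in $V\smallsetminus\{u\}$, where $\dot{\psi}_v\in\Psi(\dot{\tilde{G}}_v)^{\star}$ is in particular anti-tempered, or in $V'\smallsetminus V$, where the $\dot{\phi}_{i,j,v}$ are trivial on $\SL(2,\CC)\subset\mathcal{L}_{\dot{F}_v}$, giving anti-temperedness by the same argument.

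Finally (vi). By (i), $S_{\dot{\psi}}\simeq\EuScript{S}_{\dot{\psi}}\simeq\bmu_2^{J}$, one factor per simple summand $\dot{\phi}_{i,j}\boxtimes r(b_i)$, and $\EuScript{S}_{\dot{\psi}_u}=\EuScript{S}_{\psi}\simeq\bmu_2^{I}$. At $u$ the $m_i$ summands $\dot{\phi}_{i,1},\dots,\dot{\phi}_{i,m_i}$ all localize to $\phi_i$, so the localization map \eqref{eqn:S-localization} at $u$ is $(\epsilon_{i,j})\mapsto(\prod_{j}\epsilon_{i,j})_{i}$ (determinants on the $\Or(m_i,\CC)$-blocks), which is surjective. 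For injectivity into $\prod_{v\in V_0}\EuScript{S}_{\dot{\psi}_v}$: fix $(i_0,j_0)\in J$, put $v:=\iota(i_0,j_0)$, and consider the simple constituent $\tau^{*}$ of $\dot{\psi}_v$ obtained from the ``type 1'' piece of $\dot{\phi}_{i_0,j_0,v}$ tensored with $r(b_{i_0})$. No summand $\dot{\phi}_{i,j,v}\boxtimes r(b_i)$ with $(i,j)\ne(i_0,j_0)$ has a constituent isomorphic to $\tau^{*}$, since those $\dot{\phi}_{i,j,v}$ are built only from ``type 2'' data ($\rho^{\mathrm{symp/orth}}_{2,\,\cdot}$ and $\eta_2$), distinct from the ``type 1'' pieces; hence $\tau^{*}$ occurs in $\dot{\psi}_v$ with multiplicity one, and the corresponding coordinate of $\EuScript{S}_{\dot{\psi}}\to\EuScript{S}_{\dot{\psi}_v}$ reads off exactly $\epsilon_{i_0,j_0}$. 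Since $\iota$ is a bijection $J\xrightarrow{\sim}V_0$, the whole family $(\epsilon_{i,j})$ is thereby recovered, proving injectivity.

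The main obstacle is the bookkeeping underlying (i) and (vi): one must track which of the localized constituents $\dot{\phi}_{i,j,v}$ coincide at each $v\in V_0$ and confirm that the bijection $\iota$, combined with the existence of two non-isomorphic irreducible self-dual parameters in every relevant dimension (Lemma \ref{prop:abundance} (ii)) and of distinct ramified quadratic characters, isolates one index $(i,j)$ at a time so that its component-group coordinate survives in $\prod_{v\in V_0}\EuScript{S}_{\dot{\psi}_v}$. A secondary point, already settled by the statement of Lemma \ref{prop:globalization}, is the apparent circularity between $\mathsf{R}$ and the archimedean parameters at places $\ne u$: since $\mathsf{R}$ does not depend on those, they may be chosen regular enough a posteriori to satisfy \eqref{eqn:general-position}.
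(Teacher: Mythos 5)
Your proof is correct and follows essentially the same route as the paper's: distinctness of the local choices at $v=\iota(i,j)$ gives discreteness and the injectivity in (vi), surjectivity is the product map $\bmu_2^J\to\bmu_2^I$ along fibers of $J\to I$, and (iii)--(v), (vii) are read off from the construction and Lemma \ref{prop:globalization}. Your extra observation that irreducibility of $\phi_i=\dot{\phi}_{i,j,u}$ forces each $\dot{\phi}_{i,j}$ to be a single cuspidal is a useful clarification that the paper leaves implicit.
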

\begin{proof}
	For (i), we have $\dot{\psi} \in \dot{\Psi}(\dot{\tilde{G}})$ since $\dot{\phi}_{i,j,v}$ is of the same type as $\phi_i$. The property $\dot{\psi} \in \dot{\Psi}_2(\dot{\tilde{G}})$ follows from
	\[ (i, j) \neq (i', j') \implies \phi_{i,j,v} \neq \phi_{i',j',v} \;\text{where}\; v := \iota(i', j'). \]
	
	(ii) and (iii) are immediate from construction.
	
	For (iv), observe that by construction, the L-parameters at these places $v$ are trivial on $\SL(2, \CC)$, and contain no unramified characters.
	
	(v) is contained in Lemma \ref{prop:globalization}, in view of (iv).
	
	Consider the first map in (vi). By (i) we have $\EuScript{S}_{\dot{\psi}} \simeq \bmu_2^J$. For each $(i, j) \in J$, let $v := \iota(i, j) \in V_0$, then
	\begin{align*}
		\dot{\psi}_v & = \left( \phi_{i, j, v} \boxtimes r(b_i) \right) \oplus \bigoplus_{(i', j') \neq (i, j)} \phi_{i', j', v} \boxtimes r(b_{i'}) \\
		& =: \mathcal{P} \oplus \mathcal{P}'.
	\end{align*}
	
	By construction, the irreducible constituents in $\mathcal{P}$ and $\mathcal{P}'$ are disjoint. It follows readily that for all $x \in \EuScript{S}_{\dot{\psi}}$, its image in $\EuScript{S}_{\dot{\psi}_v}$ determines the $(i, j)$-component of $x$. As $(i, j)$ is arbitrary, this implies injectivity.
	
	The second map in (vi) can be identified with the homomorphism $\bmu_2^J \to \bmu_2^I$ induced by the projection $\mathrm{pr}: J \to I$ with $\mathrm{pr}(i, j) = i$, namely by taking the product of coordinates along each fiber of $\mathrm{pr}$. Hence the map is surjective.
	
	Finally, (vii) follows by combining (ii)--(v).
\end{proof}

\subsection{Proof for the real and non-Archimedean cases}\label{sec:proof-1}
Continue the discussions in \S\ref{sec:aux-parameters}. In particular, we have the globalization $\dot{\psi}$ of $\psi$ in \eqref{eqn:globalized-psi}. Theorem \ref{prop:global-multiplicity} gives
\begin{equation}\label{eqn:multiplicity-formula-V'}
	\Tr L^2_{\dot{\psi}}(\dot{f}) = \sum_{\chi_{V'} \in \mathcal{X}(\dot{\psi}, V')} \prod_{v \in V'} \pi_{\dot{\psi}_v, \chi_v}(f_v)
\end{equation}
for all factorizable $\dot{f} = \prod_v f_v \in \mathcal{H}_{\asp}$ where $f_v$ equals $f_{K_v}$ tensored with the unramified Haar measure for all $v \notin V'$.

We now prove Theorem \ref{prop:local-desiderata} for the ``real case in general position'' posited in \S\ref{sec:aux-parameters}. Recall that $u$ is the only Archimedean place of $\dot{F} = \Q$. Given $\chi \in \EuScript{S}_\psi^\vee$, we choose $\chi_v \in \EuScript{S}_{\dot{\psi}_v}$ for all $v \in V'$ as follows.

\begin{description}
	\item[The case $v = u$]	Take $\chi_u = \chi$.
	
	\item[The case $v \in V' \smallsetminus \left( \{u\} \cup V_0 \right)$] In this case $\dot{\psi}_v$ is anti-tempered by Lemma \ref{prop:aux-parameters} (v). Take any character $\chi_v$ of the component group for $\phi_{\dot{\psi}_v}$, which is a quotient of the component group of $\dot{\psi}_v$ (see \S\ref{sec:L-within}).
	
	\item[The case $v \in V_0$] Take $\chi_v$ for these places $v$ so that $\chi_{V'} = \prod_{v \in V'} \chi_v$ belongs to $\mathcal{X}(\dot{\psi}, V')$. This is possible by dualizing the injection in Lemma \ref{prop:aux-parameters} (vi).
\end{description}

Next, we choose $f_v$ for all $v \in V'$ as follows.
\begin{description}
	\item[The case $v = u$]
	No conditions on $f_u$.
	
	\item[The case $v \in V_0$]
	In this case $\dot{\psi}_v \in \Psi(\dot{\tilde{G}}_v)^\star$ by Lemma \ref{prop:aux-parameters} (iv), hence the statement of Theorem \ref{prop:local-desiderata} (except unitarity) holds by Proposition \ref{prop:anti-tempered-star}, and $\pi_{\dot{\psi}_v, \chi'_v}$ are distinct irreducible characters as $\chi'_v \in \EuScript{S}_{\dot{\psi}_v}^\vee$ varies (Lemma \ref{prop:anti-tempered-bp}). Therefore we can take $f_v$ such that
	\[ \pi_{\dot{\psi}_v, \chi'_v}(f_v) = \begin{cases}
		1, & \chi'_v = \chi_v \\
		0, & \chi'_v \neq \chi_v
	\end{cases}\]
	holds for all $\chi'_v$. Denote by $\pi_v$ the irreducible representation with character $\pi_{\dot{\psi}_v, \chi_v}$.
	
	\item[The case $v \in V' \smallsetminus (\{u\} \cup V_0)$]
	In this case $\dot{\psi}_v$ is anti-tempered. The assumption (b) in Proposition \ref{prop:L-within-0} holds by Lemma \ref{prop:anti-tempered-bp}. Since $\chi_v$ parametrizes an irreducible representation $\pi_v$ from the L-packet attached to $\phi_{\dot{\psi}_v}$, Proposition \ref{prop:L-within-0} implies that one can take $f_v$ such that
	\[ \pi_{\dot{\psi}_v, \chi'_v}(f_v) = \begin{cases}
		1, & \chi'_v = \chi_v \\
		0, & \chi'_v \neq \chi_v
	\end{cases}\]
	for all $\chi'_v \in \EuScript{S}_{\dot{\psi}_v}^\vee$.
\end{description}

With these choices of $\chi_{V'}$, $(f_v)_{v \in V'}$ and the resulting $(\pi_v)_{v \in V' \smallsetminus \{u\}}$, the multiplicity formula \eqref{eqn:multiplicity-formula-V'} becomes
\begin{equation*}
	\sum_{\dot{\pi} \in A} \Theta_{\dot{\pi}_u}(f_u) = \sum_{\chi'_u \in B} \pi_{\psi, \chi'_u}(f_u),
\end{equation*}
where
\begin{itemize}
	\item $A$ is the set of irreducible summands in $L^2_{\dot{\psi}}$, counting multiplicities, such that $\dot{\pi}_v \simeq \pi_v$ for all $v \in V' \smallsetminus \{u\}$, and $\dot{\pi}_v$ is unramified when $v \notin V'$;
	\item $B$ is the set of $\chi'_u \in \EuScript{S}_{\dot{\psi}_u}^\vee = \EuScript{S}_\psi^\vee$ such that the tuple formed by $\chi'_u$ and $(\chi_v)_{v \in V' \smallsetminus \{u\}}$ belongs to $\mathcal{X}(\dot{\psi}, V')$.
\end{itemize}

Note that $\chi \in B$ by construction. Dualizing the surjection in Lemma \ref{prop:aux-parameters} (vi), we see $B = \{\chi\}$. Since $f_u$ is arbitrary, we infer that
\begin{equation}\label{eqn:proof-1-aux}
	\sum_{\dot{\pi} \in A} \Theta_{\dot{\pi}_u} = \pi_{\psi, \chi}.
\end{equation}
Elements of $A$ are unitary representations. This proves Theorem \ref{prop:local-desiderata} in the case under consideration, including the unitarity of $\pi_{\psi, \chi}$.

\begin{proof}[Proof of Theorem \ref{prop:local-desiderata} for $F \neq \CC$]
	Given any $\psi \in \Psi^+(\tilde{G})$, Proposition \ref{prop:pi-psi-gp} reduces the problem to the case of good parity, by expressing $\psi$ as the image of some $\psi_M \in \Psi^+(\tilde{M})$ whose metaplectic component is of good parity; $\psi_M \in \Psi(\tilde{M})$ whenever $\psi \in \Psi(\tilde{G})$.
	
	Thus it suffices prove Theorem \ref{prop:local-desiderata} in the ``general case'' posited in \S\ref{sec:aux-parameters}, where $\psi \in \Psi_{\mathrm{gp}}(\tilde{G})$. To achieve this, we continue to use the global data obtained in \S\ref{sec:aux-parameters}, and repeat the argument for the ``real case in general position'' above, based on \eqref{eqn:multiplicity-formula-V'}.

	The only difference is that there exist Archimedean places $v \neq u$. At such a place $v$, the parameters $\dot{\phi}_{i, j, v} \in \dot{\tilde{\Phi}}_{\elli}(d_i)$ satisfy $\mathsf{R}(\dot{\phi}_{i, j, v}) > \mathsf{R}$ by Lemma \ref{prop:aux-parameters} (iii); but we can take $\mathsf{R}$ (thus $\mathsf{R}(\ dot{\phi}_{i, j, v})$) as large as we desire in the globalization when $\dot{F}, V, V', u$ and the data at $\{u\} \cup \{v \in V': v \nmid \infty\}$ are fixed, so that $\dot{\psi}_v$ fits into the ``real case in general position''. Then Theorem \ref{prop:local-desiderata} is available for $\dot{\psi}_v$ and the assumption (a) in Proposition \ref{prop:L-within-0} holds. For such $v$ we choose:
	\begin{itemize}
		\item $\chi_v$ to be a character of the component group of $\phi_{\dot{\psi}_v}$, giving rise to an irreducible representation $\pi_v$ in the L-packet attached to $\phi_{\dot{\psi}_v}$;
		\item $f_v$ satisfying
		\[ \pi_{\dot{\psi}_v, \chi'_v}(f_v) = \begin{cases}
			1, & \chi'_v = \chi_v \\
			0, & \chi'_v \neq \chi_v
		\end{cases}\]
		for all $\chi'_v \in \EuScript{S}_{\dot{\psi}_v}^\vee$, by Proposition \ref{prop:L-within-0}.
	\end{itemize}
	The other choices are the same; in particular, $\chi_v$ for $v \in V_0$ must be chosen at the last step. The same arguments lead to $\sum_{\dot{\pi} \in A} \Theta_{\dot{\pi}_u} = \pi_{\psi, \chi}$, proving Theorem \ref{prop:local-desiderata} for $\psi$.
\end{proof}

\subsection{Proof for the complex case}\label{sec:proof-2}
Assume $F = \CC$ hereafter. Parameters $\psi \in \Psi_{\mathrm{gp}}(\tilde{G})$ take the following form:
\[ \psi = \bigoplus_{i \in I} m_i \left( \mathbf{1} \boxtimes r(b_i)\right), \]
where $m_i \in \Z_{\geq 1}$ and $b_i \in 2\Z_{\geq 1}$ for all $i \in I$, and $i \neq j \implies b_i \neq b_j$. They satisfy $\sum_i m_i b_i = 2n$.

We globalize $\psi$ as follows. Take
\begin{itemize}
	\item $\dot{F}$: imaginary quadratic field;
	\item $u$: the unique Archimedean place, so that there is an isomorphism $\dot{F}_u \simeq \CC$ which we fix;
	\item $V_0$: a finite set of non-Archimedean places, together with a bijection $\iota: J \xrightarrow{1:1} V_0$ where
	\[ J := \left\{ (i, j): i \in I, \; j \in \Z, \; 1 \leq j \leq m_i \right\}. \]
\end{itemize}

We globalize $(W, \lrangle{\cdot|\cdot})$ (resp.\ $\bpsi$) to $(\dot{W}, \lrangle{\cdot|\cdot})$ (resp.\ $\dot{\bpsi}$) over $\dot{F}$. In this way we obtain the covering $\dot{\tilde{G}} \twoheadrightarrow \dot{G}(\A)$. Define
\begin{equation*}
	V := \{u\} \cup V_0
\end{equation*}

Next, we define quadratic characters $\phi_{i, j, v}$ of $\Weil{\dot{F}_v}$ for all $v \in V$ and $(i, j) \in J$.
\begin{description}
	\item[The case $v = u$] Take $\phi_{i, j, v} = \mathbf{1}$.
	
	\item[The case $v \in V_0$] Let $(i', j') := \iota^{-1}(v) \in J$. Choose quadratic characters $\eta_1 \neq \eta_2$ of $\Weil{\dot{F}_v}$, which certainly exist. For all $(i, j) \in J$, put
	\[ \phi_{i, j, v} := \begin{cases}
		\eta_1, & \text{if}\; (i, j) = (i', j') \\
		\eta_2, & \text{if}\; (i, j) \neq (i', j').
	\end{cases}\] 
\end{description}

For every $(i, j) \in J$, we can globalize $(\phi_{i, j, v})_{v \in V}$ to $\dot{\phi}_{i, j}$, that is unramified outside some finite set $V'_{i, j} \supset V$ containing $V_{\mathrm{ram}}$. Indeed, this amounts to globalize square classes by Kummer theory, and can be done by approximation as in \S\ref{sec:globalization-lemma}. Put
\begin{equation*}
	\dot{\psi} := \bigoplus_{i \in I} \bigoplus_{j=1}^{m_i} \dot{\phi}_{i, j} \boxtimes r(b_i).
\end{equation*}
It is unramified outside $V' := \bigcup_{i, j} V'_{i, j} \supset V$. Below is a variant of Lemma \ref{prop:aux-parameters}.

\begin{lemma}\label{prop:aux-parameters-cplx}
	The following properties hold for $\dot{\psi}$:
	\begin{enumerate}[(i)]
		\item $\dot{\psi} \in \dot{\Psi}_2(\dot{\tilde{G}})$;
		
		\item $\dot{\psi}_u = \psi$;
		
		\item the localization homomorphisms \eqref{eqn:S-localization} induce an injection $\EuScript{S}_{\dot{\psi}} \hookrightarrow \prod_{v \in V_0} \EuScript{S}_{\dot{\psi}_v}$ and a surjection $\EuScript{S}_{\dot{\psi}} \twoheadrightarrow \EuScript{S}_{\dot{\psi}_u} = \EuScript{S}_\psi$;
		\item $\dot{\psi}_v \in \dot{\Psi}(\dot{\tilde{G}}_v)$ (not just in $\dot{\Psi}^+(\dot{\tilde{G}}_v)$) and is anti-tempered, for all places $v$ of $\dot{F}$.
	\end{enumerate}
\end{lemma}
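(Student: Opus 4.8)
The plan is to transcribe the proof of Lemma \ref{prop:aux-parameters}, the complex situation being strictly simpler: every building block $\dot\phi_{i,j}$ is a quadratic Hecke character rather than an induced Weil representation, no general-position hypothesis needs monitoring at the unique (complex) Archimedean place, and the globalization of square classes is elementary Kummer theory. I would verify the four assertions in the order stated.

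For (i) and (ii): each $\dot\phi_{i,j}$ is quadratic, hence self-dual of orthogonal type; since every $b_i$ is even, $r(b_i)$ is symplectic, so $\dot\phi_{i,j}\boxtimes r(b_i)$ is symplectic and $\dot\psi\in\dot\Psi_{\mathrm{symp}}(2n)=\dot\Psi(\dot{\tilde G})$. For discreteness one checks the pairs $(\dot\phi_{i,j},b_i)$ are pairwise distinct: if $(i,j)\ne(i',j')$ then at $v:=\iota(i',j')$ one has $\dot\phi_{i',j',v}=\eta_1\ne\eta_2=\dot\phi_{i,j,v}$, so the two global parameters differ already locally; and $i\ne i'$ forces $b_i\ne b_{i'}$ by hypothesis. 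Hence $\dot\psi\in\dot\Psi_2(\dot{\tilde G})$. Assertion (ii) is immediate from $\dot\phi_{i,j,u}=\mathbf 1$, which gives $\dot\psi_u=\bigoplus_i m_i(\mathbf 1\boxtimes r(b_i))=\psi$.

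For (iii): by (i) we have canonically $\EuScript S_{\dot\psi}\simeq\bmu_2^{J}$ and $\EuScript S_\psi\simeq\bmu_2^{I}$, the localization at $u$ being the map $\bmu_2^J\to\bmu_2^I$ that multiplies coordinates along the fibres of $\mathrm{pr}\colon J\to I$, $(i,j)\mapsto i$; this is surjective, giving the second half. For injectivity into $\prod_{v\in V_0}\EuScript S_{\dot\psi_v}$, fix $(i_0,j_0)\in J$ and set $v:=\iota(i_0,j_0)$; then $\dot\psi_v$ decomposes as the single summand $\eta_1\boxtimes r(b_{i_0})$ together with $\bigoplus_{(i,j)\ne(i_0,j_0)}\eta_2\boxtimes r(b_i)$, and since $\eta_1\ne\eta_2$ the first summand occurs with multiplicity one and shares no irreducible constituent with the rest, so its $\bmu_2$-component in $\EuScript S_{\dot\psi_v}$ reads off the $(i_0,j_0)$-coordinate of any $x\in\EuScript S_{\dot\psi}$. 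Letting $(i_0,j_0)$ run over $J$ recovers all of $x$, exactly as in the proof of Lemma \ref{prop:aux-parameters}(vi).

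For (iv): every $\dot\phi_{i,j,v}$ is a unitary character, so $\dot\psi_v|_{\mathcal L_{\dot F_v}}$ has bounded image and $\dot\psi_v\in\dot\Psi(\dot{\tilde G}_v)$, not merely $\dot\Psi^+(\dot{\tilde G}_v)$. For non-Archimedean $v$, $\dot\phi_{i,j,v}$ is trivial on the Weil--Deligne $\SL(2,\CC)\subset\mathcal L_{\dot F_v}$, so interchanging the two $\SL(2,\CC)$-factors carries $\dot\phi_{i,j,v}\boxtimes r(b_i)$ to the bounded L-parameter with $r(b_i)$ in the Deligne slot; thus $\widehat{\dot\psi_v}\in\Phi_{\mathrm{bdd}}(\dot{\tilde G}_v)$ and $\dot\psi_v$ is anti-tempered in the sense of Definition \ref{def:anti-tempered-parameter}. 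At $v=u$ one has $\dot\psi_u=\psi$ with $\psi|_{\Weil\CC}=\mathbf 1$, which is the complex (unipotent) avatar of anti-temperedness. I do not expect a genuine obstacle: the entire content is the combinatorial bookkeeping already performed for Lemma \ref{prop:aux-parameters}; the only two points deserving a moment's care are the disjointness-of-local-constituents argument for injectivity in (iii) — where $\eta_1\ne\eta_2$ and $b_i\ne b_{i'}$ for $i\ne i'$ are what make it work — and keeping straight which copy of $\SL(2,\CC)$ is the Arthur and which the Deligne factor in (iv), particularly at the complex place where there is no Deligne $\SL(2,\CC)$ at all.
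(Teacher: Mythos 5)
Your proposal is correct and follows essentially the same route as the paper: the same distinctness argument at $v=\iota(i',j')$ for discreteness, the same disjointness-of-local-summands argument for the injectivity of $\EuScript{S}_{\dot\psi}\hookrightarrow\prod_{v\in V_0}\EuScript{S}_{\dot\psi_v}$, the identification of the $u$-localization with the coordinate-product map $\bmu_2^J\to\bmu_2^I$, and boundedness of $\dot\psi_v$ from the fact that the local components of each $\dot\phi_{i,j}$ remain quadratic characters. The extra care you take with the Arthur-versus-Deligne $\SL(2,\CC)$ in (iv) is fine and consistent with what the paper leaves implicit.
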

\begin{proof}
	For (i), we clearly have $\dot{\psi} \in \dot{\Psi}(\dot{\tilde{G}})$. The property $\dot{\psi} \in \dot{\Psi}_2(\tilde{G})$ follows from
	\[ (i, j) \neq (i', j') \implies \phi_{i,j,v} = \eta_2 \neq \eta_1 = \phi_{i',j',v} \quad \text{where}\; v := \iota(i', j'). \]
	
	(ii) is evident from construction.
	
	Consider the first map in (iii). By (i) we have $\EuScript{S}_{\dot{\psi}} \simeq \bmu_2^J$. For each $(i, j) \in J$, let $v := \iota(i, j) \in V_0$. Then
	\[ \dot{\psi}_v = \left( \phi_{i, j, v} \boxtimes r(b_i) \right) \oplus \bigoplus_{(i', j') \neq (i, j)} \phi_{i', j', v} \boxtimes r(b_{i'}). \]
	
	As $\phi_{i, j, v} \boxtimes r(b_i) \neq \phi_{i', j', v} \boxtimes r(b_{i'})$ whenever $(i, j) \neq (i', j')$, it follows readily that for all $x \in \EuScript{S}_{\dot{\psi}}$, its image in $\EuScript{S}_{\dot{\psi}_v}$ determines the $(i, j)$-component of $x$. As $(i, j)$ is arbitrary, this implies injectivity.
	
	The second map in (iii) can be identified with the homomorphism $\bmu_2^J \to \bmu_2^I$ induced by the projection $J \to I$, hence surjective.
	
	Finally, (iv) follows from the simple fact that the local components of $\dot{\phi}_{i, j}$ are still quadratic characters; in particular they are bounded.
\end{proof}

The multiplicity formula \eqref{eqn:multiplicity-formula-V'} still holds in this context.

\begin{proof}[Proof of Theorem \ref{prop:local-desiderata} for $F = \CC$]
	As in \S\ref{sec:proof-1}, it suffices to deal with $\psi \in \Psi_{\mathrm{gp}}(\tilde{G})$. Consider the globalized data above. Given $\chi \in \EuScript{S}_\psi^\vee$, choose $\chi_v \in \EuScript{S}_{\dot{\psi}_v}$ for all $v \in V'$ as follows.
	
	\begin{description}
		\item[The case $v = u$]	Take $\chi_u = \chi$.

		\item[The cae $v \in V'$ but $v \notin \{u\} \cup V_0$] Take an arbitrary $\chi_v$.
		
		\item[The case $v \in V_0$] Take $\chi_v$ for these places $v$ so that $\chi_{V'} = \prod_{v \in V'} \chi_v$ belongs to $\mathcal{X}(\dot{\psi}, V')$. This is possible by dualizing the injection in Lemma \ref{prop:aux-parameters-cplx} (iii).
	\end{description}
	
	Next, we choose $f_v$ for all $v \in V'$ as follows.
	\begin{description}
		\item[The case $v = u$]
		No conditions on $f_u$.
		
		\item[The case $v \in V'$ but $v \neq u$]
		Since $v$ is non-Archimedean, the statement of Theorem \ref{prop:local-desiderata} is known to hold by \S\ref{sec:proof-1}. Since $\dot{\psi}_v$ is anti-tempered by Lemma \ref{prop:aux-parameters-cplx} (iv), $\pi_{\dot{\psi}_v, \chi'_v}$ are distinct irreducible characters as $\chi'_v \in \EuScript{S}_{\dot{\psi}_v}^\vee$ varies. We can take $f_v$ such that
		\[ \pi_{\dot{\psi}_v, \chi'_v}(f_v) = \begin{cases}
			1, & \chi'_v = \chi_v \\
			0, & \chi'_v \neq \chi_v
		\end{cases}\]
		holds for all $\chi'_v$. Denote by $\pi_v$ the irreducible representation with character $\pi_{\dot{\psi}_v, \chi_v}$.
	\end{description}
	
	With these choices, \eqref{eqn:multiplicity-formula-V'} becomes
	\begin{equation*}
		\sum_{\dot{\pi} \in A} \Theta_{\dot{\pi}_u}(f_u) = \sum_{\chi'_u \in B} \pi_{\psi, \chi'_u}(f_u),
	\end{equation*}
	where
	\begin{itemize}
		\item $A$ is the set of irreducible summands in $L^2_{\dot{\psi}}$, counting multiplicities, such that $\dot{\pi}_v \simeq \pi_v$ for all $v \in V' \smallsetminus \{u\}$, and $\dot{\pi}_v$ is unramified when $v \notin V'$;
		\item $B$ is the set of $\chi'_u \in \EuScript{S}_{\dot{\psi}_u}^\vee = \EuScript{S}_\psi^\vee$ such that the tuple formed by $\chi'_u$ and $(\chi_v)_{v \in V' \smallsetminus \{u\}}$ belongs to $\mathcal{X}(\dot{\psi}, V')$.
	\end{itemize}
	
	Now we can repeat the arguments in \S\ref{sec:proof-1}: by dualizing the surjection in Lemma \ref{prop:aux-parameters-cplx} (iii), we see $B = \{\chi\}$ and
	\[ \sum_{\dot{\pi} \in A} \Theta_{\dot{\pi}_u} = \pi_{\psi, \chi}. \]
	Therefore $\pi_{\psi, \chi}$ is a linear combination of unitary irreducible characters with coefficients in $\Z_{\geq 1}$, as desired.
\end{proof}

As a result, the assumption (a) in Proposition \ref{prop:L-within-0} holds for all local fields $F$ of characteristic zero.

\section{Further properties of \texorpdfstring{$\pi_{\psi, \chi}$}{pipsichi}}\label{sec:further-properties}
Throughout this section, we work over a local field $F$ of characteristic zero. Let $\tilde{G} = \Mp(W)$ where $\dim W =2n$.

\subsection{Variation of additive characters: transfer}\label{sec:variation-1}
Set $H := \SO(2n+1)$. Let $\zeta$ be a quadratic character of $\Weil{F}$. To simplify notations, the quadratic character of $F^{\times}$ attached to $\zeta$ by local class field theory will also be denoted by $\zeta$.

View $\zeta$ as a $1$-cocycle $\Weil{F} \to Z_{H^\vee}$. The involution $\phi \mapsto \phi \zeta$ on $\Phi(H)$ preserves $\Phi_{\mathrm{bdd}}(H)$, and induces an involution
\begin{align*}
	\index{Upsilon-zeta@$\Upsilon_\zeta$}
	\Upsilon_\zeta: SD_{\mathrm{spec}}(H) \otimes \mes(H)^\vee & \rightiso SD_{\mathrm{spec}}(H) \otimes \mes(H)^\vee \\
	S\Theta^H_\phi & \mapsto S\Theta^H_{\phi\zeta}.
\end{align*}

On the other hand, $\psi \mapsto \psi\zeta$ also defines an involution on $\Psi(H)$, leaving $\psi|_{\{1\} \times \SL(2, \CC)}$ intact.

\begin{proposition}\label{prop:variation-Upsilon-psi}
	We have $\Upsilon_\zeta\left( S\Theta^H_\psi \right) = S\Theta^H_{\psi\zeta}$ for every $\psi \in \Psi(H)$.
\end{proposition}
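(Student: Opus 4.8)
The strategy is to reduce the statement for Arthur parameters $\psi$ to the already-available statement for bounded $L$-parameters $\phi \in \Phi_{\mathrm{bdd}}(H)$, by exploiting the compatibility of $S\Theta^H_\psi$ with the twisted endoscopic transfer from $\SO(2n+1)$ to the twisted $\GL(2n)$. Recall that, by \cite[Theorem 2.2.1]{Ar13}, $S\Theta^H_\psi$ is characterized (for $\psi \in \Psi(H)$, and in general by parabolic induction) by the requirement that its transfer to the twisted $\GL(2n)$ coincides with the twisted character of the representation attached to the parameter $\psi|_{\mathcal{L}_F}$ composed with the principal $\SL(2,\CC)$, in the Whittaker-normalized sense; more precisely it is pinned down by the family of identities relating $S\Theta^H_\psi$ to $S\Theta^H_{\psi^!}$ for endoscopic $H^!$ and, at the bottom, to twisted $\GL$. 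So the first step is to observe that twisting a parameter by the central cocycle $\zeta\colon \Weil{F} \to Z_{H^\vee} = \{\pm 1\}$ intertwines these characterizing data on both sides.

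\textbf{Key steps.} First I would recall that on the twisted $\GL(2n)$ side, the self-dual representation of $\GL(2n,F)$ attached to $\psi|_{\mathcal{L}_F}$ gets replaced, under $\psi \mapsto \psi\zeta$, by its twist by $\zeta \circ \det$; and the twisted character of this twist is obtained from the original twisted character by the corresponding twist of test functions, i.e.\ $\Upsilon_\zeta$ on the twisted $\GL$ side is realized by $f \mapsto (\zeta\circ\det) \cdot f$. Second, I would check that $\Upsilon_\zeta$ is compatible with the endoscopic transfer $\SO(2n'+1)\times\SO(2n''+1) \to$ twisted $\GL(2n)$ and with the Langlands–Shelstad transfer maps among the various $H^!$: twisting a parameter by a central character multiplies the relevant transfer factors by a sign that is absorbed consistently, exactly as in the analysis of \cite[Proposition 5.16]{Li11} and the sign bookkeeping in Proposition \ref{prop:T-psi-negation}. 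In particular the characterization of $S\Theta^H_{\psi\zeta}$ via twisted transfer is the $\Upsilon_\zeta$-twist of the characterization of $S\Theta^H_\psi$. Third, for $\psi \in \Psi(H)$ one invokes the uniqueness part of \cite[Theorem 2.2.1]{Ar13}: the two stable distributions $\Upsilon_\zeta(S\Theta^H_\psi)$ and $S\Theta^H_{\psi\zeta}$ satisfy the same defining twisted-character identity, hence coincide. Finally, for general $\psi \in \Psi^+(H)$ one extends by parabolic induction, using that $\Upsilon_\zeta$ commutes with $\Ind^{H}_{M^\dagger}$ up to the matching twist on $\GL$ factors and on the $\SO$ factor, together with \eqref{eqn:STheta-gen}.

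\textbf{Main obstacle.} The delicate point is the sign/normalization bookkeeping in the second step: one must verify that the twist by $\zeta$ is compatible with \emph{Whittaker normalization} of the transfer for $H=\SO(2n+1)$ and that no stray constant appears when passing through the endoscopic groups $\SO(2n'+1)\times\SO(2n''+1)$ and the twisted $\GL(2n)$. Since $H$ is adjoint and $\zeta$ is valued in $Z_{H^\vee}$ rather than acting on the inner structure, I expect the twist to leave Whittaker data unaffected and the transfer factors to transform by a clean global sign; concretely this should follow from the explicit formulas in \cite[\S 2.1]{Ar13} for the twisted transfer and from Arthur's own discussion that $S\Theta^H_\psi$ is independent of the choice of Whittaker datum. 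Once this compatibility is in hand, the uniqueness clause of \cite[Theorem 2.2.1]{Ar13} does the rest with no further computation.
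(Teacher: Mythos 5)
Your proposal is correct and follows essentially the same route as the paper: transfer to the twisted $\GL(2n,F)$, observe that the twist $\psi \mapsto \psi\zeta$ corresponds on that side to multiplication by the $\tilde{\theta}$-invariant character $\zeta \circ \det$ (which is trivial on unipotent radicals, so Whittaker normalization is untouched), and conclude by the injectivity of the stable transfer. The only point worth making explicit — since $\Upsilon_\zeta$ is \emph{defined} only on the basis $\{S\Theta^H_\phi\}_{\phi \in \Phi(H)}$ — is that one first expands $S\Theta^H_\psi = \sum_\phi n(\psi,\phi)\, S\Theta^H_\phi$ and applies the matching expansion of $\widetilde{\Theta}^{\GL}_\psi$ into standard twisted characters; your appeal to compatibility with the standard endoscopic transfers among the various $H^!$ is not needed.
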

\begin{proof}
	Transfer $S\Theta^H_\psi$ to the invariant distribution $\widetilde{\Theta}^{\GL}_\psi$ on the twisted $\GL(2n, F)$. This is the character of the Whittaker-normalized representation of twisted $\GL(2n, F)$ indexed by $\psi \in \tilde{\Psi}(2n)$, in the sense of \cite[\S 2.2]{Ar13}. For $\phi \in \Phi(H)$, we obtain $\widetilde{\Theta}^{\GL}_\phi$ in the same way.
	
	Expand $S\Theta^H_\psi$ as $\sum_{\phi \in \Phi(H)} n(\psi, \phi) S\Theta^H_\phi$, thus $\Upsilon_\zeta\left( S\Theta^H_\psi \right) = \sum_\phi n(\psi, \phi) S\Theta^H_{\phi\zeta}$; we also have
	\[ \widetilde{\Theta}^{\GL}_\psi = \sum_\phi n(\psi, \phi) \widetilde{\Theta}^{\GL}_\phi. \]
	Multiply both sides above by the $\tilde{\theta}$-invariant character $\zeta \circ \det$ on $\GL(2n, F)$. Since
	\begin{itemize}
		\item the local Langlands correspondence for $\GL(2n)$ is compatible with twist by $\zeta \circ \det$,
		\item multiplication by $\zeta \circ \det$ does not affect Whittaker-normalization for representations the twisted $\GL(2n, F)$ (it is trivial on unipotent radicals),
	\end{itemize}
	we obtain
	\[ \widetilde{\Theta}^{\GL}_{\psi\zeta} = (\zeta \circ \det) \cdot \widetilde{\Theta}^{\GL}_\psi = \sum_\phi n(\psi, \phi) \widetilde{\Theta}^{\GL}_{\phi\zeta}. \]
	Therefore $S\Theta^H_{\psi\zeta}$ and $\Upsilon_\zeta\left( S\Theta^H_\psi \right)$ have the same transfer to twisted $\GL(2n, F)$. This completes the proof.
\end{proof}

The definition of $\Upsilon_\zeta$ extends to groups of the form $\SO(2n' + 1) \times \SO(2n'' +1)$, by twisting both factors by the same $\zeta$.

Consider next $\tilde{G}$. Use the recipe of \S\ref{sec:MMp} to define the twofold covering
\begin{equation*}
	\index{G-tilde-2@$\tilde{G}^{(2)}$}
	\tilde{G} \supset \tilde{G}^{(2)} := \MMp(W) \twoheadrightarrow G(F).
\end{equation*}

The covering $\tilde{G}^{(2)}$ is independent of $\bpsi$ in the sense of Proposition \ref{prop:MMp-uniqueness}. The definition of  $\Endo_{\elli}(\tilde{G})$ is also insensitive to $\bpsi$, and that set can be denoted as $\Endo_{\elli}(\tilde{G}^{(2)})$. There is a natural isomorphism $D_-(\tilde{G}) \simeq D_-(\tilde{G}^{(2)})$ (Proposition \ref{prop:MMp-dist}). The spectral transfer from $\mathbf{G}^! \in \Endo_{\elli}(\tilde{G}^{(2)})$ can be viewed as a linear map
\begin{equation*}
	\index{trans-bpsi@$\trans^{\bpsi}_{\mathbf{G}^{"!}, \tilde{G}^{(2)}}$}
	\trans^{\bpsi}_{\mathbf{G}^!, \tilde{G}^{(2)}}: SD_{\mathrm{spec}}(G^!) \otimes \mes(G^!)^\vee \to D_{\mathrm{spec}, -}(\tilde{G}^{(2)}) \otimes \mes(G)^\vee ;
\end{equation*}
the superscript indicates that the transfer depends on $\bpsi$.

Similarly, denote the genuine characters in Definition \ref{def:pi-psi-chi} by $\pi^{\bpsi}_{\psi, \zeta}$; they now live on $\tilde{G}^{(2)}$. In fact, all these objects depend only on $\bpsi$ modulo dilation by $F^{\times 2}$.
\index{pi-psi-chi-bpsi@$\pi^{\bpsi}_{\psi, \chi}$}

On the other hand, the $\nu_\psi$ in Definition \ref{def:nu-psi} is independent of $\bpsi$.

\begin{proposition}\label{prop:variation-trans}
	Let $\zeta$ be a quadratic character of $\Weil{F}$ corresponding to a class in $F^{\times} / F^{\times 2}$ with representative $c$. Suppose $\mathbf{G}^! \in \Endo_{\elli}(\tilde{G}^{(2)})$ corresponds to $(n', n'') \in \Z_{\geq 0}^2$. Then
	\[ \trans^{\bpsi_c}_{\mathbf{G}^!, \tilde{G}^{(2)}} \circ \Upsilon_\zeta = \zeta(-1)^{n''} \trans^{\bpsi}_{\mathbf{G}^!, \tilde{G}^{(2)}}. \]
\end{proposition}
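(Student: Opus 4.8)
The plan is to dualize. Since $\trans^{\bpsi}_{\mathbf{G}^!, \tilde{G}^{(2)}}$ is by construction the transpose of the geometric transfer $\Trans^{\bpsi}_{\mathbf{G}^!, \tilde{G}^{(2)}}$, and $\Upsilon_\zeta$ is an involution of $SD_{\mathrm{spec}}(G^!) \otimes \mes(G^!)^\vee$, the asserted identity is equivalent to
\[ \Upsilon_\zeta^{\vee} \circ \Trans^{\bpsi_c}_{\mathbf{G}^!, \tilde{G}^{(2)}} = \zeta(-1)^{n''}\, \Trans^{\bpsi}_{\mathbf{G}^!, \tilde{G}^{(2)}} \]
as maps $\orbI_{\asp}(\tilde{G}^{(2)}) \otimes \mes(G) \to S\orbI(G^!) \otimes \mes(G^!)$, where $\Upsilon_\zeta^\vee$ is the transpose of $\Upsilon_\zeta$ on the level of stable orbital integrals. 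So the first thing I would do is make $\Upsilon_\zeta^\vee$ explicit. For a single factor $\SO(2m+1)$, transfer to the twisted $\GL(2m, F)$ detects stable orbital integrals, and under it the twist $\phi \mapsto \phi\zeta$ of parameters corresponds exactly to multiplication by the $\tilde{\theta}$-stable character $\zeta \circ \det$ (this is precisely the mechanism behind the proof of Proposition \ref{prop:variation-Upsilon-psi}); pulling this back identifies $\Upsilon_\zeta^\vee$ on $S\orbI(\SO(2m+1))$, and hence on $S\orbI(G^!)$ by taking the product over the two $\SO$-factors. In this way the whole statement is folded into a comparison of the two-step transfers $\tilde{G}^{(2)} \rightsquigarrow \SO(2n+1) \rightsquigarrow$ twisted $\GL(2n, F)$ attached to $\bpsi_c$ and to $\bpsi$.

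Next I would reduce to the two ``simple'' endoscopic data $(n,0)$ and $(0,n)$, using the product structure $G^! = \SO(2n'+1) \times \SO(2n''+1)$ and the compatibility of $\Trans$ with this decomposition; the constant for a general $(n', n'')$ is then the product of the two, i.e.\ $\zeta(-1)^0 \cdot \zeta(-1)^{n''} = \zeta(-1)^{n''}$, the first $\SO$-factor contributing no sign because it sits on the $+1$-eigenspace of the endoscopic element. For each simple case the identity becomes a pointwise comparison of transfer factors $\Delta^{\bpsi_c}$ and $\Delta^{\bpsi}$ along matching strongly regular semisimple classes: recall from \cite{Li11} (with the correction in \cite[Remarque 2.3.1]{Li15}) that $\Delta^{\bpsi}$ is assembled from Langlands--Shelstad factors together with a genuine factor built from Weil indices $\gamma_{\bpsi}$ evaluated at discriminants of the quadratic forms attached to matching elements. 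Using the classical transformation $\gamma_{\bpsi_c}(a) = (c, a)_F\, \gamma_{\bpsi}(a)$ and the additivity of $\gamma_{\bpsi}(\cdot)$ over orthogonal direct sums, the ratio $\Delta^{\bpsi_c}/\Delta^{\bpsi}$ is computed as an explicit product of Hilbert symbols, and one checks that along matching elements this ratio equals the ratio of transfer factors produced by $\zeta\circ\det$ on the twisted $\GL$ side, times $\zeta(-1)^{n''}$.

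The sign $\zeta(-1)^{n''}$ should emerge exactly as in Proposition \ref{prop:T-psi-negation}: passing from the $(n,0)$-datum to the $(0,n)$-datum is the same ``$s \mapsto -s$'' operation, and \cite[Proposition 5.16]{Li11} already records that this introduces the corresponding discriminant sign; after the change $\bpsi \to \bpsi_c$ that sign is precisely the value $\zeta(-1)^n$. Thus the entire computation boils down to reciprocity-type identities for the Hilbert symbol, once the bookkeeping is set up.

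The hard part will be precisely that bookkeeping in the transfer-factor computation, and in particular pinning down the exponent of $\zeta(-1)$. One must fix a normalization of $\Delta^{\bpsi}$ (choice of Whittaker/splitting data, of $a$-data and $\chi$-data in the factors $\Delta_{\mathrm{I}}, \Delta_{\mathrm{II}}, \Delta_{1}$, and of the genuine Weil-index factor peculiar to $\Mp$) and then isolate which constituents actually depend on $\bpsi$ — only the genuine factor does, but its discriminant argument has to be matched against the eigenspace decomposition of the endoscopic element so that $\zeta(-1)^{n''}$ results rather than $\zeta(-1)^{n'}$ or $\zeta(-1)^{n}$. A secondary point requiring care is that $\Upsilon_\zeta$ is not literally a ``multiplication by a character'' operator on $SD_{\mathrm{spec}}(\SO(2m+1))$, so the reduction of $\Upsilon_\zeta^\vee$ through the twisted $\GL(2m, F)$, where it does become multiplication by $\zeta\circ\det$, must be shown compatible with the metaplectic transfer, i.e.\ the factorization of that transfer through twisted endoscopy must intertwine $\bpsi$-variation on the left with $\zeta\circ\det$ on the right up to the asserted constant — again in the spirit of the proof of Proposition \ref{prop:variation-Upsilon-psi}.
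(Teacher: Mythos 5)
Your proposal takes a genuinely different route from the paper. The paper's proof is entirely spectral: it reduces (via compatibility with parabolic induction) to $\phi^! \in \Phi_{\mathrm{gp},\mathrm{bdd}}(G^!)$, rewrites both sides using Luo's tempered endoscopic character relations (Theorem \ref{prop:Luo}) as sums over the L-packets $\Pi_{\phi\zeta}$ and $\Pi_\phi$, and then derives the identity \eqref{eqn:variation-psi-trans-aux} from the Gan--Savin formula \cite[Theorem 12.1]{GS1} for the bijection $\chi \mapsto \chi_c$ with $\pi^{\bpsi_c}_{\phi\zeta,\chi_c} \simeq \pi^{\bpsi}_{\phi,\chi}$; the sign $\zeta(-1)^{n''}$ falls out of the explicit formula for $\chi_c(s)/\chi(s)$ together with a parity computation for $\frac{1}{2}\dim\phi^{s=-1}$. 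Your plan instead dualizes to the geometric side and compares the transfer factors $\Delta^{\bpsi_c}$ and $\Delta^{\bpsi}$ directly via Weil indices. This is a legitimate alternative --- indeed the remark immediately following the paper's proof states that exactly such a direct transfer-factor argument exists, works uniformly in $F$, and avoids the deep input behind \cite[Theorem 12.1]{GS1}, but defers the details to \cite{Li24b}. So your strategy buys uniformity and independence from the Gross--Prasad-type results, at the cost of a genuine transfer-factor computation that the paper deliberately does not perform.

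The gap is that this computation, which carries all the content, is asserted rather than executed. Two steps in particular are missing. First, the identification of $\Upsilon_\zeta^\vee$ on stable orbital integrals: $\Upsilon_\zeta$ is defined only on $SD_{\mathrm{spec}}$ by $S\Theta^H_\phi \mapsto S\Theta^H_{\phi\zeta}$, and turning it into a concrete geometric operation (multiplication of stable orbital integrals by an explicit $\zeta$-valued invariant of the class of $\gamma$) is itself a nontrivial stable-character statement; the passage through the twisted $\GL(2m,F)$, where the operation becomes $\zeta\circ\det$, is the content of Proposition \ref{prop:variation-Upsilon-psi} on the spectral side, but you still need to descend it to $S\orbI(G^!)$ compatibly with the metaplectic transfer. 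Second, the actual ratio $\Delta^{\bpsi_c}/\Delta^{\bpsi}$ along matching classes must be computed and split into a $\gamma$-dependent part (to be matched with the geometric avatar of $\Upsilon_\zeta$) and the constant $\zeta(-1)^{n''}$; your appeal to \cite[Proposition 5.16]{Li11} conflates the $s \mapsto -s$ operation for fixed $\bpsi$ with the change $\bpsi \to \bpsi_c$, and does not by itself pin down the exponent $n''$ as opposed to $n'$ or $n$. As written, the proposal is a credible research plan for the proof promised in \cite{Li24b}, not a proof.
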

\begin{proof}
	It suffices to compare the images of $S\Theta^{G^!}_{\phi^!}$ under these maps, where $\phi^! \in \Phi(G^!)$.
	
	Let us show that both sides behave alike under parabolic induction. Denote by $\tilde{G}$ (resp.\ $\tilde{G}_c$) the $8$-fold covering associated with $\bpsi$ (resp.\ $\bpsi_c$), and similarly for Levi subgroups. Suppose that $\phi^!$ comes from $\phi_{M^!} = ((\phi_i)_{i \in I' \sqcup I''}, \underline{\phi}', \underline{\phi}'')$ where $M^!$ is the Levi subgroup
	\begin{align*}
		M^! & = \left(\, \prod_{i \in I'} \GL(n_i) \times \SO(2\underline{n}' + 1) \right) \times \left(\, \prod_{i \in I''} \GL(n_i) \times \SO(2\underline{n}'' + 1) \right) \\
		& \subset \SO(2n'+1) \times \SO(2n''+1) = G^! .
	\end{align*}
	
	Take $M \subset G$ such that $\mathbf{M}^! \in \Endo_{\elli}(\tilde{M}^{(2)})$. Using \cite[Proposition 3.8.4]{Li21} to commute parabolic induction and transfer, one readily verifies that the right (resp.\ left) hand side equals $\Ind^{\tilde{G}}_{\tilde{M}}$ (resp.\ $\Ind^{\tilde{G}_c}_{\tilde{M}_c}$) of the product of
	\begin{itemize}
		\item the analogous expression for the twofold metaplectic factor in $\tilde{M}$ (resp.\ $\tilde{M}_c$), the endoscopic datum $(\underline{n}', \underline{n''})$ and the parameter $(\underline{\phi}', \underline{\phi}'')$,
		\item the distribution $\bigotimes_{i \in I' \cup I''} \Theta_{\phi_i}$ on $\GL$ factors,
		\item the product of all $\zeta(-1)^{\dim \phi_i} \det \phi_i(-1)$ (resp.\ $\det(\phi_i \zeta)(-1)$) over $i \in I''$.
	\end{itemize}
	However $\zeta(-1)^{\dim \phi_i} \det \phi_i(-1) = \det(\phi_i \zeta)(-1)$. In this way, we are reduced to the case $\phi^! \in \Phi_{\mathrm{gp}, \mathrm{bdd}}(G^!)$.
	
	Suppose that $(\mathbf{G}^!, \phi^!) \mapsto (\phi, s)$ via Proposition \ref{prop:basic-bijection}, so that $2n'' = \dim \phi^{s = -1}$. Now $\epsilon(\phi^{s = -1}) = \nu_\phi(s)$ by Proposition \ref{prop:nu-good-parity}; ditto for $\phi\zeta$.
	
	In view of Theorem \ref{prop:Luo}, the problem reduces to showing that
	\begin{equation}\label{eqn:variation-psi-trans-aux}
		\sum_\chi (\nu_{\phi\zeta} \chi)(s) \pi^{\bpsi_c}_{\phi\zeta, \chi} = \zeta(-1)^{\frac{1}{2} \dim \phi^{s = -1}} \sum_\chi (\nu_\phi \chi)(s) \pi^{\bpsi}_{\phi, \chi}
	\end{equation}
	where $\chi$ ranges over $\EuScript{S}_\phi^\vee = \EuScript{S}_{\phi\zeta}^\vee$, for all $\phi \in \Phi_{\mathrm{gp}}(\tilde{G})$ and $s \in S_{\phi, 2}$.
	
	Write $\phi = \bigoplus_k m_k \phi_k$ (sorry for overlaps with the previous notation). The conjugacy class of $s$ corresponds to pairs $(m'_k, m''_k)_k$ satisfying $m_k = m'_k + m''_k$; here $m'_k$ (resp.\ $m''_k$) is the number of summands in $m_k \phi_k$ on which $s$ acts by $+1$ (resp.\ $-1$). The image of $s$ in $\EuScript{S}_\phi$ is determined by $(m''_k \bmod 2)_k$. Each $\phi_k$ is symplectic, hence
	\begin{align*}
		\frac{1}{2} \dim \phi^{s = -1} & = \frac{1}{2} \sum_k m''_k \dim \phi_k \\
		& \equiv \frac{1}{2}\sum_{k: m''_k \;\text{odd}} \dim \phi_k \pmod{2}.
	\end{align*}
	
	On the other hand, \cite[Theorem 12.1]{GS1} gives an explicit bijection $\chi \mapsto \chi_c$ such that $\pi^{\bpsi_c}_{\phi\zeta, \chi_c} \simeq \pi^{\bpsi}_{\phi, \chi}$ and
	\[ \frac{\chi_c(s)}{\chi(s)} = \frac{\nu_\phi(s)}{\nu_{\phi\zeta}(s)} \prod_{k: m''_k \;\text{odd}} \zeta(-1)^{\frac{\dim \phi_k}{2}}. \]
	
	By using the previous congruence and re-indexing the left hand side of \eqref{eqn:variation-psi-trans-aux} by $\chi_c$, the desired equality follows at once.
\end{proof}

The difference $\delta_c := \chi_c / \chi$ does not depend on $\chi$. A more general formula will be given in the sequel.

\begin{remark}
	The proof above for Proposition \ref{prop:variation-trans} is based on Luo's endoscopic character relations and \cite[Theorem 12.1]{GS1}; the latter relies on deep results such as the local Gross--Prasad conjecture for special orthogonal groups. Furthermore, \cite[Theorem 12.1]{GS1} is only stated for non-Archimedean $F$, although the Archimedean case can be extracted from \cite{AB98} after some efforts, or follows from similar arguments. A direct proof of both Proposition \ref{prop:variation-trans} and \cite[Theorem 12.1]{GS1} can be given using properties of metaplectic transfer factors, and this works uniformly for all $F$; details will be given in \cite{Li24b}.
\end{remark}

\subsection{Variation of additive characters: \texorpdfstring{$\pi_{\psi, \chi}$}{pipsichi}}
Let $\psi \in \Psi^+(\tilde{G})$, decomposed into the form \eqref{eqn:psi-decomp-2}. Describe $S_\psi$ and $\EuScript{S}_\psi \simeq \bmu_2^{I^+}$ as in \eqref{eqn:S-psi}.

Consider a quadratic character $\zeta$ of $\Weil{F}$, corresponding to $c F^{\times 2} \in F^{\times}/F^{\times 2}$. We have
\[ \pi^{\bpsi_c}_{\psi\zeta, \chi}, \; \pi^{\bpsi}_{\psi, \chi} \in D_{\mathrm{spec}, -}(\tilde{G}^{(2)}) \otimes \mes(G)^\vee \]
for each $\chi \in \EuScript{S}_\psi^\vee = \EuScript{S}_{\psi\zeta}^\vee$.

\begin{definition}\label{def:delta-c}
	\index{delta-c@$\delta_c$}
	Identify $\EuScript{S}_\psi^\vee = \EuScript{S}_{\psi\zeta}^\vee$ with $\bmu_2^{I^+}$. Define $\delta_c \in \EuScript{S}_\psi^\vee$ by
	\[ \delta_{c, i} = \begin{cases}
		\zeta(-1)^{\frac{1}{2} \dim \phi_i} \dfrac{\epsilon(\phi_i, \bpsi)}{\epsilon(\phi_i \zeta, \bpsi)}, & \text{if $b_i$ is odd} \\
		1, & \text{otherwise,}
	\end{cases} \]
	for all $i \in I^+$.
\end{definition}

\begin{lemma}
	We have $\delta_c(s_\psi) = 1$.
\end{lemma}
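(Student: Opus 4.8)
The plan is to unwind everything to the explicit descriptions already at hand: the concrete form of $s_\psi$ from the discussion after Definition~\ref{def:s-psi}, the isomorphism $\EuScript{S}_\psi \simeq \bmu_2^{I^+}$ from \eqref{eqn:S-psi}, and the definition of $\delta_c$ from Definition~\ref{def:delta-c}. First I would recall that, writing $\psi$ in the form \eqref{eqn:psi-decomp-2}, the projection of $s_\psi$ to the factor $\Or(m_i, \CC)$ of $S_\psi$ indexed by $i \in I^+$ is $\identity_{m_i}$ when $b_i$ is odd and $-\identity_{m_i}$ when $b_i$ is even. Since the quotient map $S_\psi \to \EuScript{S}_\psi \simeq \bmu_2^{I^+}$ is given by the determinant on each $\Or(m_i, \CC)$, the image $x_\psi$ of $s_\psi$ has $i$-th coordinate $1$ if $b_i$ is odd and $(-1)^{m_i}$ if $b_i$ is even.

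The argument is then coordinate-wise. Under the self-duality pairing on $\bmu_2^{I^+}$, the value $\delta_c(s_\psi) = \delta_c(x_\psi)$ is the product over $i \in I^+$ of the contribution of the $i$-th coordinate, namely $\delta_{c,i}$ raised to the power $a_i \in \{0,1\}$ determined by $x_{\psi,i} = (-1)^{a_i}$. If $b_i$ is even, then $\delta_{c,i} = 1$ by Definition~\ref{def:delta-c}, so this factor is trivial. If $b_i$ is odd, then $x_{\psi,i} = 1$, i.e.\ $a_i = 0$, so again the factor is trivial --- irrespective of whether $\delta_{c,i}$ is $+1$ or $-1$. Hence every coordinate contributes $1$, and $\delta_c(s_\psi) = 1$.

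This is purely bookkeeping and I do not anticipate a real obstacle; the only point requiring care is matching up the normalizations of $\EuScript{S}_\psi \simeq \bmu_2^{I^+}$, its Pontryagin dual $\EuScript{S}_\psi^\vee \simeq \bmu_2^{I^+}$, and the determinant quotient $S_\psi \to \EuScript{S}_\psi$ with the concrete shape of $s_\psi$, all of which are recorded in \S\ref{sec:basic-bijection} and the discussion following Definition~\ref{def:s-psi}. Note that since we work with $\tilde{G}$ rather than $\SO(2n+1)$, we do not pass to the quotient of $\EuScript{S}_\psi$ by the image of $Z_{\tilde{G}^\vee}$, so no extra sign enters.
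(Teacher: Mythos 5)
Your proof is correct and is essentially the paper's own argument: the paper's (one-line) proof likewise observes that the $i$-th component of $x_\psi$ is non-trivial only when $b_i$ is even, which is precisely the case where Definition~\ref{def:delta-c} forces $\delta_{c,i}=1$, so every coordinate contributes trivially. Your more explicit coordinate-wise bookkeeping (including $\det(-\identity_{m_i})=(-1)^{m_i}$) fills in the same computation and introduces no new ideas or gaps.
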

\begin{proof}
	Let $x_\psi \in \EuScript{S}_\psi$ be the image of $s_\psi$. Let $i \in I^+$. Then the $i$-th component of $x_\psi$ is non-trivial only when $b_i$ is even.
\end{proof}

The following result generalizes \cite[Theorem 12.1]{GS1}. The main inputs in its proof are Propositions \ref{prop:variation-Upsilon-psi} and \ref{prop:variation-trans}.

\begin{proposition}\label{prop:variation-pi}
	For all $\chi \in \EuScript{S}_\psi^\vee$, we have
	\[ \pi^{\bpsi_c}_{\psi\zeta, \chi \delta_c} = \pi^{\bpsi}_{\psi, \chi}. \]
\end{proposition}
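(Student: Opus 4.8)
The plan is to reduce Proposition~\ref{prop:variation-pi} to the two preceding results, Propositions~\ref{prop:variation-Upsilon-psi} and~\ref{prop:variation-trans}, via the basic bijection and Fourier inversion. First I would reduce to the case of good parity. By Proposition~\ref{prop:pi-psi-gp}, $\pi^{\bpsi}_{\psi,\chi}$ is the parabolic induction of $\pi^{\bpsi}_{\psi_M,\chi}$ from a Levi $\tilde{M} = \Mp(W^\flat)\times\GL(m)$ with $\psi_0\in\Psi_{\mathrm{gp}}$; twisting by $\zeta$ respects this decomposition (it twists the metaplectic parameter $\psi_0$ by $\zeta$ and the $\GL$-parameters by $\zeta$, leaving $\EuScript{S}$ unchanged). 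Since parabolic induction commutes with the transfer up to the central twist $\omega_\pi(-1)$ (as used repeatedly, e.g.\ in Lemma~\ref{prop:T-psi-s} and Proposition~\ref{prop:variation-trans}), and since the factors $\zeta(-1)^{\dim\phi_i}\det\phi_i(-1) = \det(\phi_i\zeta)(-1)$ cancel exactly the discrepancy in the $\GL$-twists, it suffices to prove the identity for $\psi\in\Psi_{\mathrm{gp}}(\tilde G)$ with $\delta_c$ supported on the (now all odd or even $b_i$) good-parity summands. I would also record that $\psi^+(\tilde G)$ versus $\psi(\tilde G)$ makes no difference here, since everything is built from $S\Theta$ which is defined for $\Psi^+$.

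Next, for good parity $\psi$, I would unwind the definition of $\pi^{\bpsi}_{\psi,\chi}$ through $T_{\psi,s}$. Fix $s\in S_{\psi,2}$ with image $x\in\EuScript{S}_\psi$ and let $(\mathbf{G}^!,\psi^!)\leftrightarrow(\psi,s)$ via Proposition~\ref{prop:basic-bijection}, with $\mathbf{G}^!$ corresponding to $(n',n'')$ where $2n'' = \dim\psi^{s=-1}$. Then $\zeta$ carries $(\mathbf{G}^!,\psi^!)$ to $(\mathbf{G}^!,\psi^!\zeta)$ with the same $\mathbf{G}^!$, and $(\mathbf{G}^!,\psi^!\zeta)\leftrightarrow(\psi\zeta,s)$. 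By Proposition~\ref{prop:nu-good-parity}, $\epsilon(\psi^{s=-1}) = \nu_\psi(s)$ and $\epsilon((\psi\zeta)^{s=-1}) = \nu_{\psi\zeta}(s)$. Combining Definition~\ref{def:STheta} with Proposition~\ref{prop:variation-Upsilon-psi} applied to $G^! = \SO(2n'+1)\times\SO(2n''+1)$ gives $\Upsilon_\zeta(S\Theta^{G^!}_{\psi^!}) = S\Theta^{G^!}_{\psi^!\zeta}$; feeding this into Proposition~\ref{prop:variation-trans} yields
\[ \trans^{\bpsi_c}_{\mathbf{G}^!,\tilde G^{(2)}}\!\left(S\Theta^{G^!}_{\psi^!\zeta}\right) = \zeta(-1)^{n''}\,\trans^{\bpsi}_{\mathbf{G}^!,\tilde G^{(2)}}\!\left(S\Theta^{G^!}_{\psi^!}\right). \]
Multiplying by the root-number factors and using $\nu_{\psi\zeta}(s)\nu_\psi(s) = \prod_{i\in I^+}\bigl(\epsilon(\phi_i\zeta)\epsilon(\phi_i)\bigr)^{b_i}$ evaluated at $s$, together with $n'' \equiv \tfrac12\dim\psi^{s=-1}$, I obtain a relation of the form
\[ T^{\bpsi_c}_{\psi\zeta,\,x} = \eta(x)\, T^{\bpsi}_{\psi,\,x}, \]
where $\eta\in\EuScript{S}_\psi^\vee$ is an explicit character built from $\zeta(-1)^{\tfrac12\dim\phi_i}$ and $\epsilon(\phi_i,\bpsi)/\epsilon(\phi_i\zeta,\bpsi)$ over the odd-$b_i$ indices --- precisely the $\delta_c$ of Definition~\ref{def:delta-c}. (The even-$b_i$ indices contribute trivially because there $\phi_i$ is orthogonal and the relevant root numbers are $\pm1$ squared; this is where one checks the two cases in the definition of $\delta_{c,i}$ match the parity bookkeeping, exactly as in the final paragraph of the proof of Proposition~\ref{prop:variation-trans}.)

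Finally I would run Fourier inversion. Since $x_{\psi} = x_{\psi\zeta}$ (both equal the image of $s_\psi$, which depends only on $\psi|_{\SL(2,\CC)}$, unchanged by the twist) and $\delta_c(s_\psi) = 1$ by the lemma just above, Definition~\ref{def:pi-psi-chi} gives
\[ \pi^{\bpsi_c}_{\psi\zeta,\chi\delta_c} = |\EuScript{S}_\psi|^{-1}\sum_{x}(\chi\delta_c)(x_\psi x)\,T^{\bpsi_c}_{\psi\zeta,x} = |\EuScript{S}_\psi|^{-1}\sum_{x}\chi(x_\psi x)\delta_c(x)\,\delta_c(x)\,T^{\bpsi}_{\psi,x} = \pi^{\bpsi}_{\psi,\chi}, \]
using $\delta_c^2 = \mathbf{1}$. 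The main obstacle I anticipate is not conceptual but bookkeeping: getting the sign $\zeta(-1)^{n''}$ from Proposition~\ref{prop:variation-trans} to agree, index by index, with $\prod_{i:\,b_i\text{ odd}}\zeta(-1)^{\tfrac12\dim\phi_i}$ on the nose --- one must carefully track how the $(-1)$-eigenspace of $s$ distributes among the simple summands $\phi_i\boxtimes r(b_i)$ (contributing $m_i''$ copies of $\phi_i$, so dimension $m_i'' b_i\dim\phi_i$, and $n'' = \tfrac12\sum_i m_i'' b_i\dim\phi_i$), and reconcile $\zeta(-1)^{m_i''b_i\dim\phi_i/2}$ with $\delta_{c,i}(x)^{m_i''}$ modulo the parity of $b_i$ and the type of $\phi_i$, mirroring the congruence computation at the end of the proof of Proposition~\ref{prop:variation-trans}. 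Once that arithmetic is pinned down the rest is formal.
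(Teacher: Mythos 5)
Your proposal is correct and follows essentially the same route as the paper: combine Propositions \ref{prop:variation-Upsilon-psi} and \ref{prop:variation-trans} to get $T^{\bpsi_c}_{\psi\zeta,x}=\delta_c(x)\,T^{\bpsi}_{\psi,x}$, identify the resulting character with $\delta_c$ via the odd/even-$b_i$ parity computation, and conclude by Fourier inversion using $\delta_c(s_\psi)=1$ and $\delta_c^2=\mathbf{1}$. The only divergence is your preliminary reduction to good parity, which the paper skips (it works with general $\psi$ in the form \eqref{eqn:psi-decomp-2}, choosing a representative $s$ trivial on the $I^-$ and $J$ components, and buries the induction compatibility inside the proof of Proposition \ref{prop:variation-trans}); your detour is harmless but adds the unaddressed subtlety of comparing parabolic inductions along the $\bpsi$- and $\bpsi_c$-dependent splittings of the $\GL$ factors, which the direct argument avoids.
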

\begin{proof}
	Given any $x = (x_i)_i \in \bmu_2^{I^+} \simeq \EuScript{S}_\psi^\vee$, we take a representative $s$ in $S_\psi$ such that for all $i \in I^+$, if $x_i = \pm1$ then its projection $s_i$ to $\Or(m_i, \CC)$ is
	\[ \begin{pmatrix} \pm 1 & & & \\ & 1 & & \\ & & \ddots & \\ & & & 1 \end{pmatrix}; \]
	the other components of $s$ in \eqref{eqn:S-psi} are trivial.
	
	Let $\chi, \chi' \in \EuScript{S}_\psi^\vee = \EuScript{S}_{\psi\zeta}^\vee$. We have
	\begin{align*}
		\pi^{\bpsi}_{\psi, \chi} & = 2^{-|I^+|} \sum_x \chi(x_\psi x) \epsilon(\psi^{s = -1}) \trans^{\bpsi}_{\mathbf{G}^!, \tilde{G}^{(2)}} \left( S\Theta^{G^!}_{\psi^!} \right) \\
		& = 2^{-|I^+|} \sum_x \chi(x_\psi x) \prod_{\substack{i \in I^+ \\ x_i = -1}} \epsilon(\phi_i, \bpsi)^{b_i} \cdot \trans^{\bpsi}_{\mathbf{G}^!, \tilde{G}^{(2)}} \left( S\Theta^{G^!}_{\psi^!} \right),
	\end{align*}
	where $(\mathbf{G}^!, \psi^!) \mapsto (\psi, s)$. If $(n', n'')$ corresponds to $\mathbf{G}^!$, then $n'' = \frac{1}{2} \sum_{\substack{i \in I^+ \\ x_i = -1}} b_i \dim \phi_i$.
	
	Combine Propositions \ref{prop:variation-Upsilon-psi} and \ref{prop:variation-trans} to obtain
	\begin{align*}
		\pi^{\bpsi_c}_{\psi\zeta, \chi'} & = 2^{-|I^+|} \sum_x \chi'(x_\psi x) \prod_{\substack{i \in I^+ \\ x_i = -1}} \epsilon(\phi_i, \bpsi)^{b_i} \zeta(-1)^{\frac{1}{2} b_i \dim \phi_i} \cdot \trans^{\bpsi}_{\mathbf{G}^!, \tilde{G}^{(2)}} \left( S\Theta^{G^!}_{\psi^!} \right) \\
		& = 2^{-|I^+|} \sum_x \chi'(x_\psi x) \epsilon((\psi\zeta)^{s = -1}) \prod_{\substack{i \in I^+ \\ x_i = -1}} \frac{\epsilon(\phi_i, \bpsi)^{b_i}}{\epsilon(\phi_i \zeta, \bpsi)^{b_i}} \zeta(-1)^{\frac{1}{2} b_i \dim \phi_i} \cdot \trans^{\bpsi}_{\mathbf{G}^!, \tilde{G}^{(2)}} 
		\left( S\Theta^{G^!}_{\psi^!} \right).
	\end{align*}
	Consider each term in the product over $i$.
	\begin{itemize}
		\item If $b_i$ is odd, then $\phi_i$ is symplectic, the $\epsilon$-factors are $\pm 1$ and we get
		\[ \frac{\epsilon(\phi_i, \bpsi)}{\epsilon(\phi_i \zeta, \bpsi)} \zeta(-1)^{\frac{1}{2} \dim\phi_i}. \]
		\item If $b_i$ is even, then
		\[ \left( \frac{\epsilon(\phi_i, \bpsi)^2}{\epsilon(\phi_i \zeta, \bpsi)^2}\right)^{b_i/2} = \left(\frac{(\det \phi_i)(-1)}{(\det \phi_i\zeta)(-1)}\right)^{b_i/2} = \zeta(-1)^{\frac{b_i}{2} \dim\phi_i} \]
		since $\phi_i$ is self-dual.
	\end{itemize}
	Therefore
	\[ \prod_{\substack{i \in I^+ \\ x_i = -1}} \frac{\epsilon(\phi_i, \bpsi)^{b_i}}{\epsilon(\phi_i \zeta, \bpsi)^{b_i}} \zeta(-1)^{\frac{1}{2} b_i \dim \phi_i} = \prod_{\substack{i \in I^+: x_i = -1 \\ b_i\;\text{odd}}} \frac{\epsilon(\phi_i, \bpsi)}{\epsilon(\phi_i\zeta, \bpsi)} \zeta(-1)^{\frac{1}{2} \dim\phi_i} = \delta_c(x). \]
	
	The desired assertion follows immediately.
\end{proof}

\subsection{Normalized intertwining operators via Arthur parameters}\label{sec:normalizing}
\index{PM@$\mathcal{P}(M)$}
For every Levi subgroup $M \subset G$, let $\mathcal{P}(M) = \mathcal{P}^G(M)$ be the set of parabolic subgroups with Levi factor $M$. For all $P, Q \in \mathcal{P}(M)$ and a genuine irreducible representation $\pi \in \Pi_-(\tilde{M})$, Harish-Chandra's theory gives the standard intertwining operator $J_{\tilde{Q}|\tilde{P}}(\pi_\lambda): I_{\tilde{P}}(\pi_\lambda) \to I_{\tilde{P}}(\pi_\lambda)$; here
\begin{itemize}
	\item $\mathfrak{a}^*_M := X^*(M) \dotimes{\Z} \R$, and $\lambda \in \mathfrak{a}^*_{M, \CC} := \mathfrak{a}^*_M \dotimes{\R} \CC$ acts on $\Pi_-(\tilde{M})$ in the evident way, written as $\pi \mapsto \pi_\lambda$;
	\item $I_{\tilde{P}}$ and $I_{\tilde{Q}}$ are parabolic inductions;
	\item $J_{\tilde{Q}|\tilde{P}}(\pi_\lambda)$ is understood as a meromorphic family of intertwining operators with parameter $\lambda \in \mathfrak{a}^*_{M, \CC}$.
\end{itemize}
It depends on the choice of Haar measures on $V := R_{\mathrm{u}}(P) \cap R_{\mathrm{u}}(Q)$. For non-Archimedean $F$, we fix a special maximal compact open subgroup $K \subset G(F)$ and demand that $\mes(V(F) \cap K) = 1$. For Archimedean $F$, see \cite[\S 3]{Ar89-IOR1}.

Assume that a family of normalizing factors $r_{\tilde{Q}|\tilde{P}}(\pi_\lambda)$ is chosen for $\tilde{G}$, where we consider $P, Q \in \mathcal{P}(M)$, $\pi \in \Pi_-(\tilde{M})$ and twisting parameters $\lambda \in \mathfrak{a}^*_{M, \CC}$ as above, so that $\lambda \mapsto r_{\tilde{Q}|\tilde{P}}(\pi_\lambda)$ is a scalar-valued meromorphic function. They are subject to various conditions, eg.\ conjugation-invariance, induction in stages and $r_{\tilde{Q}|\tilde{P}}((\pi_\lambda)_\mu) = r_{\tilde{Q}|\tilde{P}}(\pi_{\lambda + \mu})$, etc., and they serve to define the normalized intertwining operators as meromorphic families
\[ R_{\tilde{Q}|\tilde{P}}(\pi_\lambda) := r_{\tilde{Q}|\tilde{P}}(\pi_\lambda)^{-1} J_{\tilde{Q}|\tilde{P}}(\pi_\lambda), \]
which turn out to give unitary operators when $\pi \in \Pi_{\mathrm{unit}, -}(\tilde{M})$ and $\lambda \in \sqrt{-1}\mathfrak{a}_M^*$.
\index{R-QP@$R_{\tilde{Q}"|\tilde{P}}(\pi_\lambda)$}

The same is true for groups of metaplectic type. We refer to \cite[\S 8.1]{Li21} for further explanations and references. For what follows, we recall from \textit{loc.\ cit.}\ that their construction reduces to the case where $\pi$ is essentially square-integrable.

Specifically, $r_{\tilde{Q}|\tilde{P}}(\pi_\lambda)$ for tempered $\pi$ is obtained from the the square-integrable case by parabolic induction, and the case of general $\pi$ is obtained through Langlands quotient; we refer to \cite{Ar89-IOR1} for details.

\begin{proposition}\label{prop:normalizing-factor}
	A family of normalizing factors for $\tilde{G}$ can be chosen so that for all Levi subgroup $M$ and $\phi \in \Phi_{2, \mathrm{bdd}}(\tilde{M})$, we have
	\begin{equation*}
		\pi, \pi' \in \Pi^{\tilde{M}}_\phi \implies r_{\tilde{Q}|\tilde{P}}(\pi_\lambda) = r_{\tilde{Q}|\tilde{P}}(\pi'_\lambda), \quad P, Q \in \mathcal{P}(M).
	\end{equation*}
	More precisely, the following properties hold.
	\begin{enumerate}[(i)]
		\item In the unramified situation, use $K := G(\mathfrak{o}_F)$ to normalize Haar measures and denote the residual cardinality by $q_F$. For $K$-spherical $\pi$,
		\begin{itemize}
			\item as predicted by Langlands' recipe (see \cite[p.81]{Ar13}),
			\[ r_{\tilde{Q}|\tilde{P}}(\pi_\lambda) = \frac{L(0, \rho^\vee_{\tilde{Q}|\tilde{P}} \circ \phi_\lambda)}{L(1, \rho^\vee_{\tilde{Q}|\tilde{P}} \circ \phi_\lambda)} \cdot q_F^{-\frac{1}{2} \dim \rho^\vee_{\tilde{Q}|\tilde{P}}} , \]
			\item $R_{\tilde{Q}|\tilde{P}}(\pi_\lambda)$ preserves spherical vectors in spherical principal series.
		\end{itemize}
		\item In general, writing $M = \prod_k \GL(n_k) \times \Sp(W^\flat)$ with $\dim W^\flat = 2m$, if $\pi$ has L-parameter $\phi$ then $r_{\tilde{Q}|\tilde{P}}(\pi_\lambda)$ can be taken to be
		\[ c \cdot \frac{L(0, \rho^\vee_{\tilde{Q}|\tilde{P}} \circ \phi_\lambda)}{L(1, \rho^\vee_{\tilde{Q}|\tilde{P}} \circ \phi_\lambda)} \cdot \frac{\epsilon(\frac{1}{2}, \rho^\vee_{\tilde{Q}|\tilde{P}} \circ \phi_\lambda, \bpsi)}{\epsilon(0, \rho^\vee_{\tilde{Q}|\tilde{P}} \circ \phi_\lambda, \bpsi)} \]
		where $c \in \R_{> 0}$ is a constant depending only on $M$, $P$, $Q$ and $\phi$ modulo $\mathfrak{a}^*_{M, \CC}$, and the remaining terms are as in Langlands' recipe for
		\[ M^H := \prod_k \GL(n_k) \times \SO(2m+1) \subset \SO(2n+1) =: H, \]
		by noting that $\mathcal{P}^G(M)$ and $\mathcal{P}^H(M^H)$ are in natural bijection and $\tilde{M}^\vee = (M^H)^\vee$, $\tilde{G}^\vee = H^\vee$.
	\end{enumerate}
\end{proposition}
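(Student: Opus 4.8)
The plan is to transcribe Arthur's construction of normalizing factors for quasisplit classical groups in \cite[\S 2.3]{Ar13} (resting on \cite{Ar89-IOR1}) to $\tilde{G}$, exploiting the identification $\tilde{G}^\vee = H^\vee$ together with the local Langlands correspondence of Theorem \ref{prop:Luo}. The guiding principle is that once Harish-Chandra's Plancherel $\mu$-function of a genuine tempered representation $\pi$ of a Levi $\tilde{M}$ is written in "Langlands form" in terms of the $L$-parameter $\phi$ of $\pi$, the recipe $r_{\tilde{Q}|\tilde{P}}(\pi_\lambda) \rightsquigarrow L$- and $\epsilon$-factors of $\rho^\vee_{\tilde{Q}|\tilde{P}}\circ\phi_\lambda$ manifestly produces factors depending only on $\phi$, hence constant on each L-packet $\Pi^{\tilde{M}}_\phi$; the required compatibilities are then formal.

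\textbf{Step 1: reduction to essentially square-integrable $\pi$.} Recall from \cite[\S 8.1]{Li21} that for coverings the standard intertwining operators $J_{\tilde{Q}|\tilde{P}}(\pi_\lambda)$ and any admissible family of normalizing factors for a general $\pi$ are built from the essentially square-integrable case by parabolic induction (for tempered $\pi$) and through the Langlands quotient (for arbitrary $\pi$), exactly as in \cite{Ar89-IOR1}; the structural conditions (conjugation-invariance, induction in stages, the adjointness/cocycle relations, and $r_{\tilde{Q}|\tilde{P}}((\pi_\lambda)_\mu) = r_{\tilde{Q}|\tilde{P}}(\pi_{\lambda+\mu})$) are inherited. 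It therefore suffices to specify $r_{\tilde{Q}|\tilde{P}}(\pi_\lambda)$ for $\pi \in \Pi_{2, -}(\tilde{M})$, verify the defining properties there, and note that parabolic induction and the Langlands-quotient construction keep us inside $\Pi^{\tilde{M}}_\phi$, so L-packet-independence propagates.

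\textbf{Step 2: the $\mu$-function (the analytic heart).} Write $\tilde{M} = \prod_k \GL(n_k, F) \times \Mp(W^\flat)$ and factor Harish-Chandra's $\mu$-function into rank-one contributions attached to the reduced roots $\beta$ of $A_M$, i.e.\ to maximal parabolics of rank-one subgroups, which are of two types: a $\GL(d) \times \GL(d')$ situation, whose rank-one $\mu$-function is the classical Rankin--Selberg one, and a $\GL(d) \times \Mp$ situation, whose rank-one Plancherel measure for metaplectic groups is known explicitly and can in any case be recovered from the corresponding $\SO(2m+1)$ computation via the endoscopic character relations of \cite{Luo20} combined with \cite{Ar13}, or via $\Theta$-correspondence as in \cite{GS1}. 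In all cases, if $\pi$ has $L$-parameter $\phi \in \Phi_{2, \mathrm{bdd}}(\tilde{M})$, then the rank-one $\mu$-functions assemble into
\[
\mu_{\tilde{Q}|\tilde{P}}(\pi_\lambda) = \prod_{\beta} \gamma\bigl(0, \rho^\vee_\beta \circ \phi_\lambda, \bpsi\bigr)\,\gamma\bigl(0, \rho^\vee_{-\beta} \circ \phi_\lambda, \bpsi\bigr),
\]
the same expression as for $M^H = \prod_k \GL(n_k) \times \SO(2m+1) \subset H$, under the canonical identifications $\mathcal{P}^G(M) \simeq \mathcal{P}^H(M^H)$, $\tilde{M}^\vee = (M^H)^\vee$ and $\rho^\vee_{\tilde{Q}|\tilde{P}} = \rho^\vee_{Q^H|P^H}$. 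Since only the $L$-parameter enters, the would-be normalizing factors are already insensitive to the member of the packet.

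\textbf{Step 3: definition and verification; Steps (i) and (ii).} For $\pi \in \Pi_{2, -}(\tilde{M})$ with parameter $\phi$, set
\[
r_{\tilde{Q}|\tilde{P}}(\pi_\lambda) := c \cdot \frac{L(0, \rho^\vee_{\tilde{Q}|\tilde{P}} \circ \phi_\lambda)}{L(1, \rho^\vee_{\tilde{Q}|\tilde{P}} \circ \phi_\lambda)} \cdot \frac{\epsilon(\tfrac{1}{2}, \rho^\vee_{\tilde{Q}|\tilde{P}} \circ \phi_\lambda, \bpsi)}{\epsilon(0, \rho^\vee_{\tilde{Q}|\tilde{P}} \circ \phi_\lambda, \bpsi)},
\]
with $c \in \R_{> 0}$ the positive constant (depending only on $M, P, Q$ and $\phi$ modulo $\mathfrak{a}^*_{M, \CC}$) left undetermined by the $\mu$-function identity of Step 2, precisely as in \cite[\S 2.3]{Ar13}. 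That this is an admissible family (the functional equation $r_{\tilde{Q}|\tilde{P}} r_{\tilde{P}|\tilde{Q}} = \mu_{\tilde{Q}|\tilde{P}}^{-1}$ up to the stated ambiguity, induction in stages, conjugation-invariance, unitarity of $R_{\tilde{Q}|\tilde{P}}$ on $\sqrt{-1}\mathfrak{a}_M^*$) follows formally from Step 2 and the standard properties of Artin $L$- and $\epsilon$-factors, with $\bpsi$ intervening only through the $\epsilon$-factor. Extending by Step 1 gives the family on all of $\Pi_-(\tilde{M})$, compatible with L-packets, which is (ii). For (i): in the unramified situation the spherical $\mu$-function is the Gindikin--Karpelevich product, matching the above recipe with $c = 1$ and trivial $\epsilon$-factor (for $\bpsi$ of conductor $\mathfrak{o}_F$), and $R_{\tilde{Q}|\tilde{P}}(\pi_\lambda)$ preserves the spherical line because the Gindikin--Karpelevich computation for the metaplectic covering (the spherical theory recalled in \cite[\S 2.4]{Li21}, compatible with the Satake isomorphism \eqref{eqn:Satake-Mp}) evaluates $J_{\tilde{Q}|\tilde{P}}$ on the spherical vector by exactly the scalar $r_{\tilde{Q}|\tilde{P}}$.

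\textbf{Main obstacle.} The crux is Step 2: identifying the metaplectic rank-one Plancherel measures and confirming that they are the $\gamma$-factors of $\rho^\vee_\beta \circ \phi_\lambda$ predicted by the Gan--Savin/Luo parametrization. Once that matching is secured, everything else is a transcription of Arthur's formalism from \cite{Ar13, Ar89-IOR1}.
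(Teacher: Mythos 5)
Your proposal follows essentially the same route as the paper: reduce to $\pi\in\Pi_{2,-}(\tilde{M})$, show that the Harish-Chandra $\mu$-functions depend only on the L-parameter — which the paper does by citing Ishimoto's computation, itself resting on the preservation of $\mu$-functions under $\Theta$-correspondence \cite[Proposition 10.1]{GS1}, i.e.\ exactly the $\Theta$-correspondence matching you flag as the crux of your Step 2 — and then transcribe Arthur's formalism, with the unramified case (i) handled by the Gindikin--Karpelevich formula for $\tilde{G}$. The only caveat is that your alternative suggestion of recovering the rank-one Plancherel measures from Luo's endoscopic character relations alone is not obviously workable (character identities do not directly control poles of intertwining operators), but since your primary route is the $\Theta$-correspondence one, the argument stands.
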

\begin{proof}
	To prescribe normalizing factors, one reduces to the case $\pi \in \Pi_{2, -}(\tilde{M})$, and it boils down to describing the Harish-Chandra $\mu$-functions for $\tilde{G}$, showing that they depend only on L-parameters. See \cite[\S 7.3]{Is20} and the references therein, which actually lead to (ii). The proof in \textit{loc.\ cit.}\ is ultimately based on the preservation of $\mu$-functions under certain $\Theta$-correspondences, see \cite[Proposition 10.1]{GS1}.
	
	For (i), one has $M=T$ and the Gindikin--Karpelevich formula \cite[Theorem 6.4]{Mc11} for $\tilde{G}$ yields the desired properties.
\end{proof}

\begin{remark}\label{rem:normalizing-factor}
	When $M$ and $\phi$ are fixed, the constant $c$ in Proposition \ref{prop:normalizing-factor} (ii) has to be ``multiplicative'' in $P, Q \in \mathcal{P}(M)$, in the same manner as $J_{\tilde{Q}|\tilde{P}}(\pi_\lambda)$. Furthermore, $c = 1$ in the unramified setting (ii).
\end{remark}

Note that $\mathfrak{a}^*_{M, \CC}$ acts on $\Phi(\tilde{M})$ compatibly. In view of the local Langlands correspondence \eqref{eqn:LLC-general} for $\tilde{M}$ and the procedure above for prescribing normalizing factors, we may write
\[ r_{\tilde{Q}|\tilde{P}}(\phi_\lambda) := r_{\tilde{Q}|\tilde{P}}(\pi_\lambda), \quad \text{if}\; \pi \in \Pi^{\tilde{M}}_\phi. \]

Our goal is to normalize the intertwining operators through Arthur parameters instead of L-parameters. The recipe for quasisplit classical groups is given in \cite[\S 2.3]{Ar13}. For the reader's convenience, we supply some details for $\tilde{G}$ below.

\begin{definition}
	Given $P, P' \in \mathcal{P}(M)$ and $\phi_1, \phi_2 \in \Phi(\tilde{M})$, define
	\[ r_{\tilde{P}'|\tilde{P}}(\phi_1, \phi_2) := r_{\tilde{P}'|\tilde{P}}(\phi_1)^{-1} r_{\tilde{P}'|\tilde{P}}(\phi_2), \]
	which should be viewed a meromorphic family. We write $\phi_1 \approx \phi_2$ if
	\[ r_{\tilde{P}''|\tilde{P}}(\phi_1, \phi_2) = r_{\tilde{P}''|\tilde{P}'}(\phi_1, \phi_2) r_{\tilde{P}'|\tilde{P}}(\phi_1, \phi_2) \]
	holds for all $P, P', P'' \in \mathcal{P}(M)$. This is an equivalence relation.
\end{definition}

Following \cite[pp.42--43]{Ar89-IOR1}, define the \emph{block equivalence} (in Vogan's sense) to be the finest equivalence relation on $\Pi_-(\tilde{G})$ satisfying
\[ m(\pi, \rho) \neq 0 \implies \pi \;\text{is block-equivalent to}\; \pi_\rho, \]
where $\rho$ is a genuine standard module with Langlands quotient $\pi_\rho$, and $m(\pi, \rho)$ is the coefficient of $\Theta_\pi$ in the expansion of $\Theta_\rho$ into irreducibles.

The notion above generalizes to all covering groups, such as $\tilde{M}$; the group $\Xi_{M, S}$ in \textit{loc.\ cit.}\ is trivial in our setting.

\begin{lemma}\label{prop:approx-block}
	Let $\tilde{M}$ be a group of metaplectic type. If there exist $\pi_i \in \Pi^{\tilde{M}}_{\phi_i}$ for $i = 1, 2$ such that $\pi_1$ is block-equivalent to $\pi_2$, then $\phi_1 \approx \phi_2$.
\end{lemma}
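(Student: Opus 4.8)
The plan is to reduce to the generating instance of block equivalence and then to transport the question to the split group $H=\SO(2n+1)$. By \eqref{eqn:LLC-general} every $\pi\in\Pi_-(\tilde M)$ has a well-defined L-parameter $\phi_\pi\in\Phi(\tilde M)$, and $\approx$ is an equivalence relation on $\Phi(\tilde M)$. Since block equivalence is, by definition, generated by the relations $\pi\sim\pi_\rho$ for genuine standard modules $\rho$ with $m(\pi,\rho)\neq 0$ and Langlands quotient $\pi_\rho$, transitivity of $\approx$ along a block-equivalence chain from $\pi_1$ to $\pi_2$ reduces the lemma to: $\phi_\pi\approx\phi_{\pi_\rho}$ for every such pair. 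I would then write $\rho=I_{\tilde Q}(\sigma\otimes c_\nu)$ in Langlands form, with $\tilde Q=\tilde L\,U$ a parabolic of $\tilde M$, $\sigma\in\Pi_{\mathrm{temp},-}(\tilde L)$ and $\nu$ in the open positive chamber, so that $\phi_{\pi_\rho}$ is the image in $\Phi(\tilde M)$ of $(\phi_\sigma)_\nu$, while $\pi$ is the Langlands quotient of some standard module $I_{\tilde Q'}(\sigma'\otimes c_{\nu'})$ with $\sigma'$ tempered and $\|\nu'\|\le\|\nu\|$, and $\phi_\pi$ is the image of $(\phi_{\sigma'})_{\nu'}$. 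Because $\pi$ occurs in $\rho$, the cuspidal support of $\sigma'\otimes c_{\nu'}$ is associate to that of $\sigma\otimes c_\nu$; consequently $\phi_\pi$ and $\phi_{\pi_\rho}$ lie in one and the same $\mathfrak a^*_{M,\CC}$-orbit, have $\tilde M^\vee$-conjugate restrictions to $\Weil{F}$, and differ only by how the $\SL(2,\CC)$-factors of the tempered data are redistributed along $|\cdot|$-orbits.

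Next I would invoke Proposition \ref{prop:normalizing-factor}(ii) together with Remark \ref{rem:normalizing-factor}: for $\phi\in\Phi(\tilde M)$ the normalizing factor can be taken to be $r_{\tilde Q|\tilde P}(\phi_\lambda)=c_{\tilde Q|\tilde P}(\phi)\,r^{M^H}_{Q|P}(\phi_\lambda)$, where $r^{M^H}$ is the Langlands-recipe normalizing factor for $M^H\subset H$ (built from $L$- and $\epsilon$-factors, under the natural bijection $\mathcal P(M)\simeq\mathcal P^H(M^H)$ and the identification $\tilde M^\vee=(M^H)^\vee$), and where $c_{\tilde Q|\tilde P}(\phi)\in\R_{>0}$ depends only on $P,Q$ and on $\phi$ modulo $\mathfrak a^*_{M,\CC}$ and is multiplicative in $(P,Q)$. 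Since $\phi_\pi$ and $\phi_{\pi_\rho}$ are in the same $\mathfrak a^*_{M,\CC}$-orbit, the constants agree, hence $r_{\tilde P''|\tilde P}(\phi_\pi,\phi_{\pi_\rho})=r^{M^H}_{P''|P}(\phi_\pi,\phi_{\pi_\rho})$ for all $P,P''\in\mathcal P(M)$; thus $\phi_\pi\approx\phi_{\pi_\rho}$ is equivalent to the same relation computed with the pure Langlands-recipe factors of $M^H$. That relation is exactly the content of Arthur's choice of normalizing factors for split classical groups \cite{Ar89-IOR1} (see also \cite[\S 2.3]{Ar13}): L-parameters attached to block-equivalent representations of $M^H(F)$ have $\approx$-equivalent normalizing cocycles. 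It then remains to see that $\phi_\pi,\phi_{\pi_\rho}$ do arise from block-equivalent representations of $M^H(F)$, which I would check by comparing Langlands data and inducting on $\|\nu\|$, using that parabolic induction and the Langlands classification run in parallel for $\tilde M$ and for $M^H$ — same dual group, same $\rho_{\tilde Q|\tilde P}$, and, by the above, the same normalizing factors up to the inessential constant $c$ — so that block-equivalence classes are unions of L-packets and the induced relation on $\Phi(\tilde M)$ matches the one on $\Phi(M^H)$.

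The main obstacle is precisely this last point: propagating the block relation from $\tilde M$ to $M^H$ without invoking the local intertwining relations (which the article deliberately avoids). If one prefers to bypass it, I would instead carry out the computation of \cite{Ar89-IOR1} directly for $\tilde M$: using additivity of $L$- and $\epsilon$-factors under direct sums and the explicit shape of $\rho^\vee_{\tilde Q|\tilde P}\circ\phi_\lambda$, one verifies that replacing $(\phi_{\sigma'})_{\nu'}$ by $(\phi_\sigma)_\nu$ — a redistribution of $\SL(2,\CC)$-factors over $|\cdot|$-orbits that leaves the $\Weil{F}$-restriction unchanged — alters each combination $\tfrac{L(0,\,\rho^\vee_{\tilde Q|\tilde P}\circ\phi_\lambda)}{L(1,\,\rho^\vee_{\tilde Q|\tilde P}\circ\phi_\lambda)}\cdot\tfrac{\epsilon(\tfrac12,\,\rho^\vee_{\tilde Q|\tilde P}\circ\phi_\lambda,\bpsi)}{\epsilon(0,\,\rho^\vee_{\tilde Q|\tilde P}\circ\phi_\lambda,\bpsi)}$ only by a factor which telescopes along the chain of roots between $P$ and $Q$, hence is multiplicative in $(P,Q)$; this is exactly the assertion $\phi_\pi\approx\phi_{\pi_\rho}$, and the lemma follows.
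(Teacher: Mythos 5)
Your fallback route is essentially the paper's proof, which is two lines: Proposition \ref{prop:normalizing-factor} shows the normalizing factors for $\tilde{M}$ satisfy the conditions Arthur postulates (they depend only on L-parameters and are built from the $L$- and $\epsilon$-factors of $\rho^\vee_{\tilde{Q}|\tilde{P}}\circ\phi_\lambda$ via the identification $\tilde{M}^\vee=(M^H)^\vee$), and then one applies \cite[Proposition 5.2]{Ar89-IOR1} verbatim; Arthur's argument is formal once those axioms hold, so ``carrying out the computation of \cite{Ar89-IOR1} directly for $\tilde{M}$'' is exactly what is intended, and the reduction to the generating instance $\pi\sim\pi_\rho$ together with the telescoping of the $L$/$\epsilon$-ratios is the content of Arthur's proof rather than something you need to re-derive.

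Your primary route, however, has the gap you yourself flag, and it is a real one: there is no available mechanism (short of LIR or a $\Theta$-correspondence argument the paper avoids) to show that $\phi_\pi$ and $\phi_{\pi_\rho}$ ``arise from block-equivalent representations of $M^H(F)$''. The point is that this transport of \emph{representations} is unnecessary: $\approx$ is a relation on \emph{parameters}, computed purely from the normalizing factors, and Proposition \ref{prop:normalizing-factor} (ii) already expresses these through dual-group data shared with $M^H$, so no matching of representations of $M^H(F)$ ever enters. Two smaller inaccuracies: the claim that $\phi_\pi$ and $\phi_{\pi_\rho}$ lie in the same $\mathfrak{a}^*_{M,\CC}$-orbit is false in general (already for $\GL(2)$, the Steinberg and trivial representations are block-equivalent but $\mathbf{1}\boxtimes r(2)$ and $|\cdot|^{1/2}\oplus|\cdot|^{-1/2}$ are not twists of one another), so you cannot conclude the constants $c$ agree; but this is harmless, since $c$ is multiplicative in $(P,Q)$ by Remark \ref{rem:normalizing-factor} and therefore drops out of the cocycle condition defining $\approx$ regardless of whether the two constants coincide.
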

\begin{proof}
	Our normalizing factors in Proposition \ref{prop:normalizing-factor} satisfy the same properties posited by Arthur. Now we can apply \cite[Proposition 5.2]{Ar89-IOR1}.
\end{proof} 

Now consider $\psi \in \Psi(\tilde{M})$, the multi-set $\Pi^{\tilde{M}}_\psi$ and $\phi_\psi \in \Phi(\tilde{M})$ (Definition \ref{def:phi-psi} for $\tilde{M}$).

\begin{proposition}\label{prop:normalization-approx}
	Let $\tilde{M}$ be a group of metaplectic type, $\psi \in \Psi(\tilde{M})$ and $\phi \in \Phi(\tilde{M})$. If there exists $\pi \in \Pi^{\tilde{M}}_\phi$ which has nonzero multiplicity in $\Pi^{\tilde{M}}_\psi$, then $\phi \approx \phi_\psi$.
\end{proposition}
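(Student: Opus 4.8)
The plan is to track the representation $\pi$ backwards through the very construction of $\pi_{\psi,\chi}$ until it is pinned to a standard module attached to an L-packet that is manifestly $\approx$-equivalent to $\phi_\psi$, and then to conclude with Lemma \ref{prop:approx-block}. First I would reduce to the case $\tilde M=\tilde G=\Mp(W)$: the relation $\approx$, the block equivalence on $\Pi_-(\cdot)$, and the formation of the multi-sets $\Pi_\psi$, $\Pi_\phi$ are all compatible with the product decomposition of a group of metaplectic type and with parabolic induction from Levi subgroups, while on each $\GL(n_k,F)$-factor the L-packet and the Arthur packet attached to any parameter are singletons, so there $\phi=\phi_\psi$ is automatic. (One may in addition invoke Proposition \ref{prop:pi-psi-gp} to put $\psi$ in good parity, though this is not needed for the argument.)

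Next, $\pi$ has nonzero multiplicity in $\Pi_\psi$ precisely when $\Theta_\pi$ occurs with nonzero coefficient in some $\pi_{\psi,\chi}$, $\chi\in\EuScript{S}_\psi^\vee$; since $T_{\psi,x}=\sum_{\chi}\chi(x_\psi x)\pi_{\psi,\chi}$ and the $\chi$ run over the characters of the finite abelian group $\EuScript{S}_\psi$, this is equivalent to $\Theta_\pi$ occurring in $T_{\psi,x}$ for some $x\in\EuScript{S}_\psi$. Fixing such an $x$, a representative $s\in S_{\psi,2}$, and the pair $(\mathbf{G}^!,\psi^!)$ attached to $(\psi,s)$ by Proposition \ref{prop:basic-bijection}(ii), I would then rerun the expansion from the proof of Proposition \ref{prop:L-within-0}: write $S\Theta^{G^!}_{\psi^!}=\sum_{\phi^!\in\Phi(G^!)}n(\psi^!,\phi^!)\,S\Theta^{G^!}_{\phi^!}$ by \cite[(2.2.12)]{Ar13}; apply $\trans_{\mathbf{G}^!,\tilde G}$ termwise and use Proposition \ref{prop:endo-char-gen} to rewrite each $\trans_{\mathbf{G}^!,\tilde G}(S\Theta^{G^!}_{\phi^!})$ as a combination of standard characters $\Theta_\rho$, with $\rho$ ranging over the standard modules attached to the members of the L-packet $\Pi_{\phi'}$, where $\phi'$ is the image of $\phi^!$ in $\Phi(\tilde G)$; finally expand $\Theta_\rho=\sum_{\pi''}m(\rho,\pi'')\Theta_{\pi''}$ into irreducibles. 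The upshot is that the occurrence of $\Theta_\pi$ in $T_{\psi,x}$ produces some $\phi^!$ with $n(\psi^!,\phi^!)\neq0$ and a standard module $\rho$ attached to a member of $\Pi_{\phi'}$ with $m(\rho,\pi)\neq0$; by the definition of block equivalence this says that $\pi$ is block-equivalent to the Langlands quotient $\pi_\rho\in\Pi_{\phi'}$, and since $\pi\in\Pi_\phi$ by hypothesis, Lemma \ref{prop:approx-block} yields $\phi\approx\phi'$.

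It then remains to establish $\phi'\approx\phi_\psi$, which I expect to be the crux. The relevant input from Arthur's work \cite[\S\S2.2--2.3]{Ar13} is that every $\phi^!$ occurring in the expansion of $S\Theta^{G^!}_{\psi^!}$ satisfies $\phi^!\approx\phi_{\psi^!}$ inside $\Phi(G^!)$: all of them restrict to the same semisimplified representation of $\mathcal{L}_F$ as $\phi_{\psi^!}$ (up to $\mathfrak{a}^*$-twists), because $S\Theta^{G^!}_{\psi^!}$ is the stable transfer of a single $\tilde\theta$-stable representation of the twisted $\GL(2n)$, and for odd split $\SO$ the relation $\approx$ is cut out by this datum thanks to the rigidity of Langlands' normalizing factors. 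To descend this to $\tilde G$ I would use that $\phi_{\psi^!}\mapsto\phi_\psi$ under $\Phi(G^!)\to\Phi(\tilde G)$ (Definition \ref{def:phi-psi} being compatible with $(G^!)^\vee\hookrightarrow\tilde G^\vee$), that the image map carries $\approx$ on $\Phi(G^!)$ to $\approx$ on $\Phi(\tilde G)$, and that $\approx$ on $\Phi(\tilde G)=\Phi(H)$ (with $H:=\SO(2n+1)$) coincides with $\approx$ on $\Phi(H)$ — this last point being visible from the explicit normalizing factors of Proposition \ref{prop:normalizing-factor}(ii), which differ from Langlands' recipe for $H$ only by a constant that is multiplicative in $P,Q$ (Remark \ref{rem:normalizing-factor}) and hence drops out of the cocycle relations defining $\approx$. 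Chaining $\phi\approx\phi'\approx\phi_\psi$ then finishes the proof.

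The principal difficulty is exactly this last step: carefully transcribing the content of \cite[\S\S2.2--2.3]{Ar13} — on the parameters appearing in $S\Theta_{\psi^!}$ and on the endoscopic stability of the equivalence $\approx$ — into the metaplectic framework, uniformly in the Archimedean and non-Archimedean cases, with Proposition \ref{prop:normalizing-factor} playing the role of Langlands' recipe for quasi-split groups. Everything else is a bookkeeping repetition of the argument already carried out for Proposition \ref{prop:L-within-0}.
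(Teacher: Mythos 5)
Your opening reduction to $\tilde M=\tilde G=\Mp(W)$ does not work, and it undermines the rest of the argument. The relation $\approx$ is defined relative to the set $\mathcal{P}(M)$ of parabolic subgroups of the \emph{ambient} group $G$ with Levi factor $M$; for $\tilde M=\tilde G$ one has $\mathcal{P}(G)=\{G\}$ and $\phi\approx\phi_\psi$ holds vacuously, so nothing is proved. The proposition is genuinely a statement about a Levi $\tilde M=\prod_k\GL(n_k,F)\times\Mp(W^\flat)$ sitting inside $\tilde G$ (this is what feeds the corollary on normalized intertwining operators $R_{\tilde Q|\tilde P}$, $P,Q\in\mathcal{P}(M)$). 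Nor does $\approx$ decompose over the factors of $\tilde M$: the representation $\rho^\vee_{\tilde Q|\tilde P}$ entering the normalizing factors couples the $\GL$ blocks with each other and with the $\Sp(W^\flat)$ block, so agreement of $\phi$ and $\phi_\psi$ on the $\GL$ factors does not isolate a ``metaplectic part'' of the cocycle identity. The paper therefore works with $\tilde M$ as a whole: it takes the elliptic endoscopic datum $\mathbf{M}^!$ of $\tilde M$ attached to $(\psi,s_\psi^{-1})$, expands $S\Theta^{M^!}_{\psi^!}=\sum_i a_i S\Theta^{M^!}_{\phi^!_i}$, and uses $\trans_{\mathbf{M}^!,\tilde M}(S\Theta^{M^!}_{\psi^!})=\pm\sum_{\pi'\in\Pi^{\tilde M}_\psi}\Theta_{\pi'}$ together with Proposition \ref{prop:endo-char-gen} and Lemma \ref{prop:approx-block} to get $\phi\approx\phi_i$ for some $i$ --- this part of your sketch is essentially right once restated for $\tilde M$.

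The second gap is the crux you yourself flag, namely $\phi_i\approx\phi_\psi$, where your proposed mechanism is not the correct one. The relation $\approx$ is \emph{not} ``cut out by'' the semisimplified restriction of the parameter to $\mathcal{L}_F$ up to twists, and the assertion that ``the image map carries $\approx$ on $\Phi(G^!)$ to $\approx$ on $\Phi(\tilde G)$'' is both imprecise (relative to which Levi of which group?) and genuinely problematic: as the paper notes in the proof of Proposition \ref{prop:L-within-0}, the Levi subgroups and root data of $G^!$ and of $\tilde G$ do not match, so their normalizing factors are different objects. What the paper actually does is transfer the expansion of $S\Theta^{M^!}_{\psi^!}$ to $\prod_k\GL(n_k,F)\times\GL(2m',F)\times\GL(2m'',F)$ (the last two twisted), invoke the ``further induction'' step of \cite[(2.2.15)--(2.2.16)]{Ar13} --- which rests on the irreducibility of parabolic induction of unitary representations of $\GL$ --- to obtain an expansion of the Whittaker-normalized character indexed by $\psi$ on $\prod_k\GL(n_k,F)\times\GL(2m,F)$ (twisted) into standard characters indexed by the $\phi_i$, and then apply block equivalence \emph{on that twisted $\GL$ side} as in \cite[p.88]{Ar13}; the resulting $\approx$ is transported back to $\tilde M$ precisely because Proposition \ref{prop:normalizing-factor} (ii) expresses the metaplectic normalizing factors through $\tilde M^\vee=(M^H)^\vee$ and $\mathcal{P}^G(M)\simeq\mathcal{P}^H(M^H)$. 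Without this detour through the twisted general linear group your chain $\phi\approx\phi'\approx\phi_\psi$ has no second link.
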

\begin{proof}
	Write $M = \prod_k \GL(n_k) \times \Sp(W^\flat)$. Consider the $\mathbf{M}^! \in \Endo_{\elli}(\tilde{M})$ and $\psi^! \in \Psi(M^!)$ such that $(\mathbf{M}^!, \psi^!) \mapsto (\psi, s_\psi^{-1})$, say
	\begin{gather*}
		M^! = \prod_k \GL(n_k) \times \SO(2m'+1) \times \SO(2m''+1), \\
		m := m' + m'' = \frac{1}{2} \dim W^\flat, \\
		\psi^! = ((\psi_k)_k, \psi', \psi'').
	\end{gather*}
	
	Expand $S\Theta^{M^!}_{\psi^!}$ as $\sum_{i=1}^r a_i S\Theta^{M^!}_{\phi^!_i}$ where $a_i \in \CC^{\times}$ and $S\Theta^{M^!}_{\phi^!_i}$ is the stabilized standard character \eqref{eqn:STheta-gen} attached to some $\phi^!_i \in \Phi(M^!)$, assumed to be distinct. In the formalism of \S\ref{sec:A-packets},
	\[ \trans_{\mathbf{M}^!, \tilde{M}} \left( S\Theta^{M^!}_{\psi^!} \right) = \pm \sum_{\pi' \in \Pi^{\tilde{M}}_\psi} \Theta_{\pi'}. \]
	Hence there exists $i$ such that $\trans_{\mathbf{M}^!, \tilde{M}} \left( S\Theta^{M^!}_{\phi_i^!} \right)$ contains $\pi$ in its expansion into irreducibles.
	
	For all $i$, let $\phi_i \in \Phi(\tilde{M})$ is the image of $\phi^!_i$. By applying Proposition \ref{prop:endo-char-gen} to $\phi_i$, we infer that $\pi$ is block-equivalent to $\pi_i \in \Pi^{\tilde{M}}_{\phi_i}$, for some $1 \leq i \leq r$. Hence Lemma \ref{prop:approx-block} gives
	\begin{equation}\label{eqn:normalization-approx-1}
		\exists i, \; \phi \approx \phi_i.
	\end{equation}
	
	The expansion of $S\Theta^{M^!}_{\psi^!}$ transfers to an expansion of the Whittaker-normalized irreducible character of
	\[ \prod_k \GL(n_k, F) \times \underbracket{\GL(2m', F) \times \GL(2m'', F)}_{\text{both twisted}} \]
	indexed by $\psi^!$ into standard ones indexed by $\phi^!_1, \ldots, \phi^!_r$. This is actually a tensor product of equalities of characters of $\GL(n_k, F)$ and the twisted $\GL(2m', F)$ and $\GL(2m'', F)$, respectively.
	
	According to the discussions surrounding \cite[(2.2.15), (2.2.16)]{Ar13}, a further ``induction'' yields an expansion of the Whittaker-normalized character indexed by $\psi = ((\psi_k)_k, \psi' \oplus \psi'')$ of
	\[ \prod_k \GL(n_k, F) \times \underbracket{\GL(2m, F)}_{\text{twisted}} \]
	into standard ones indexed by $\phi_1, \ldots, \phi_r$, with nonzero coefficients. This stems ultimately from the fact that the parabolic induction of unitary irreducible representations is irreducible for $\GL$; see \textit{loc.\ cit.}
	
	In \cite{Ar89-IOR1}, the theory of normalized intertwining operators and block equivalences includes the twisted setting as well. Based on block equivalence on $\prod_k \GL(n_k, F) \times \GL(2m, F)$ with the last factor twisted, as in \cite[p.88]{Ar13}, the expansion in the preceding paragraph leads to
	\begin{equation}\label{eqn:normalization-approx-2}
		\forall j, \; \phi_j \approx \phi_\psi .
	\end{equation}
	To be precise, the $\approx$ above is established in \textit{loc.\ cit.}\ for $\prod_k \GL(n_k) \times \SO(2m+1)$, but Proposition \ref{prop:normalizing-factor} (ii) implies the same result for $\tilde{M}$.
	
	The combination of \eqref{eqn:normalization-approx-1} and \eqref{eqn:normalization-approx-2} yields $\phi \approx \phi_\psi$.
\end{proof}

\begin{corollary}[Cf.\ {\cite[Proposition 2.3.1]{Ar13}}]
	Put $r_{\tilde{Q}|\tilde{P}}(\psi_\lambda) := r_{\tilde{Q}|\tilde{P}}((\phi_\psi)_\lambda)$ and define
	\[ R_{\tilde{Q}|\tilde{P}}(\pi_\lambda, \psi_\lambda) := r_{\tilde{Q}|\tilde{P}}(\psi_\lambda)^{-1} J_{\tilde{Q}|\tilde{P}}(\pi_\lambda), \quad P, Q \in \mathcal{P}(M), \; \pi \in \Pi^{\tilde{M}}_\psi \]
	as meromorphic families in $\lambda \in \mathfrak{a}_{M, \CC}^*$; note that $\pi$ is unitary by Proposition \ref{prop:local-desiderata}. They satisfy
	\begin{enumerate}[(i)]
		\item $R_{\tilde{P}''|\tilde{P}}(\pi_\lambda, \psi_\lambda) = R_{\tilde{P}''|\tilde{P}'}(\pi_\lambda, \psi_\lambda) R_{\tilde{P}'|\tilde{P}}(\pi_\lambda, \psi_\lambda)$ for all $P, P', P'' \in \mathcal{P}(M)$;
		\item $R_{\tilde{Q}|\tilde{P}}(\pi_\lambda, \psi_\lambda)^* = R_{\tilde{P}|\tilde{Q}}(\pi_{-\overline{\lambda}}, \psi_{-\overline{\lambda}})$.
	\end{enumerate}
	In particular, $R_{\tilde{Q}|\tilde{P}}(\pi_\lambda, \psi_\lambda)$ is a unitary operator when $\lambda \in \sqrt{-1} \mathfrak{a}^*_M$, and is analytic on $\sqrt{-1} \mathfrak{a}^*_M$.
\end{corollary}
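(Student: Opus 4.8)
The plan is to reduce everything to the L-parameter normalization of \S\ref{sec:normalizing}. Let $\phi \in \Phi(\tilde{M})$ be the L-parameter of $\pi$; then $\pi \in \Pi^{\tilde{M}}_\phi$ has nonzero multiplicity in $\Pi^{\tilde{M}}_\psi$, so Proposition \ref{prop:normalization-approx} gives $\phi \approx \phi_\psi$, hence also $\phi_\psi \approx \phi$ since $\approx$ is an equivalence relation. Unwinding the definitions, with $r_{\tilde{Q}|\tilde{P}}(\psi_\lambda) = r_{\tilde{Q}|\tilde{P}}((\phi_\psi)_\lambda)$ and $J_{\tilde{Q}|\tilde{P}}(\pi_\lambda) = r_{\tilde{Q}|\tilde{P}}(\phi_\lambda) R_{\tilde{Q}|\tilde{P}}(\pi_\lambda)$, one writes $R_{\tilde{Q}|\tilde{P}}(\pi_\lambda, \psi_\lambda) = c_{\tilde{Q}|\tilde{P}}(\lambda)\, R_{\tilde{Q}|\tilde{P}}(\pi_\lambda)$ with the scalar meromorphic factor $c_{\tilde{Q}|\tilde{P}}(\lambda) := r_{\tilde{Q}|\tilde{P}}((\phi_\psi)_\lambda, \phi_\lambda)$. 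First I would record that $R_{\tilde{Q}|\tilde{P}}(\pi_\lambda)$ itself satisfies the analogues of (i) and (ii) — the cocycle relation and $R_{\tilde{Q}|\tilde{P}}(\pi_\lambda)^* = R_{\tilde{P}|\tilde{Q}}(\pi_{-\overline{\lambda}})$ — which is part of the construction recalled from \cite[\S 8.1]{Li21} and \cite{Ar89-IOR1}; this uses that $\pi$ is unitary (Theorem \ref{prop:local-desiderata}), so that $I_{\tilde{P}}(\pi_\lambda)$ carries the usual invariant Hermitian structure making the adjoints meaningful.

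Next I would deduce (i) by multiplying the cocycle relation for $R_{\tilde{Q}|\tilde{P}}(\pi_\lambda)$ by the identity $c_{\tilde{P}''|\tilde{P}}(\lambda) = c_{\tilde{P}''|\tilde{P}'}(\lambda)\, c_{\tilde{P}'|\tilde{P}}(\lambda)$, which is exactly the assertion $\phi_\psi \approx \phi$ (the $\approx$-relation being symmetric in its two arguments). For (ii) I would take adjoints of $R_{\tilde{Q}|\tilde{P}}(\pi_\lambda, \psi_\lambda) = c_{\tilde{Q}|\tilde{P}}(\lambda) R_{\tilde{Q}|\tilde{P}}(\pi_\lambda)$ and use $R_{\tilde{Q}|\tilde{P}}(\pi_\lambda)^* = R_{\tilde{P}|\tilde{Q}}(\pi_{-\overline{\lambda}})$, so that the remaining point is the conjugation symmetry $\overline{c_{\tilde{Q}|\tilde{P}}(\lambda)} = c_{\tilde{P}|\tilde{Q}}(-\overline{\lambda})$; this reduces to $\overline{r_{\tilde{Q}|\tilde{P}}(\sigma_\lambda)} = r_{\tilde{P}|\tilde{Q}}(\sigma_{-\overline{\lambda}})$ for $\sigma \in \{\phi, \phi_\psi\}$, a property built into the normalizing factors of Proposition \ref{prop:normalizing-factor} (among Arthur's conditions), being a formal consequence of the functional equations for the $L$- and $\epsilon$-factors appearing there, together with the analogous symmetry of the unramified factor and of the positive constant $c$.

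Finally, the ``in particular'' clauses: for $\lambda \in \sqrt{-1}\mathfrak{a}^*_M$ one has $-\overline{\lambda} = \lambda$, so (ii) becomes $R_{\tilde{Q}|\tilde{P}}(\pi_\lambda, \psi_\lambda)^* = R_{\tilde{P}|\tilde{Q}}(\pi_\lambda, \psi_\lambda)$, while (i) with $P'' = P$, $P' = Q$ and $R_{\tilde{P}|\tilde{P}}(\pi_\lambda, \psi_\lambda) = \identity$ gives $R_{\tilde{P}|\tilde{Q}}(\pi_\lambda, \psi_\lambda) R_{\tilde{Q}|\tilde{P}}(\pi_\lambda, \psi_\lambda) = \identity$; thus wherever it is defined on $\sqrt{-1}\mathfrak{a}^*_M$ the operator is unitary, hence bounded. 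Since the singular set of the meromorphic family $R_{\tilde{Q}|\tilde{P}}(\pi_\lambda, \psi_\lambda)$ is a locally finite union of affine hyperplanes, boundedness on a dense open subset of $\sqrt{-1}\mathfrak{a}^*_M$ forces, by a Harish-Chandra-type argument as in \cite[\S 2.3]{Ar13}, the absence of singularities on all of $\sqrt{-1}\mathfrak{a}^*_M$, whence analyticity and unitarity throughout. I expect the only real obstacle to be the bookkeeping for the conjugation/adjoint behaviour of the chosen normalizing factors and the meromorphic-to-analytic passage; both are routine given the above inputs, the genuinely substantive ingredient being Proposition \ref{prop:normalization-approx}, which is already at our disposal.
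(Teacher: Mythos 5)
Your proposal is correct and follows essentially the same route as the paper: the proof given there simply invokes Proposition \ref{prop:normalization-approx} and transports (i) and (ii) from the known properties of $R_{\tilde{Q}|\tilde{P}}(\pi_\lambda)$ exactly as in \cite[pp.87--89]{Ar13}, which is what you do explicitly via the scalar $c_{\tilde{Q}|\tilde{P}}(\lambda) = r_{\tilde{Q}|\tilde{P}}((\phi_\psi)_\lambda, \phi_\lambda)$ and the relation $\phi \approx \phi_\psi$. Your write-up merely spells out the cocycle and adjoint bookkeeping, and the meromorphic-to-analytic passage on $\sqrt{-1}\mathfrak{a}^*_M$, that the paper leaves implicit.
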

\begin{proof}
	Given Proposition \ref{prop:normalization-approx}, the proof is the same as \cite[pp.87--89]{Ar13}: it allows us to transport (i) and (ii) to the known cases for $R_{\tilde{Q}|\tilde{P}}(\pi_\lambda)$ that are normalized using $(\phi_\psi)_\lambda$ instead of $\psi$.
\end{proof}

\subsection{Remarks on quadratic unipotent packets}\label{sec:quadratic-unipotent}
The goal here is to establish Moeglin's results in \cite[\S 4.2]{Moe17} for quadratic unipotent parameters for $\tilde{G}$ over Archimedean $F$.

\begin{definition}\label{def:quadratic-unipotent}
	\index{Arthur parameter!quadratic unipotent}
	Let $\psi \in \Psi(\tilde{G})$. If $\psi = \bigoplus_{i \in I} \zeta_i \boxtimes r(b_i)$, where $\zeta_i$ are quadratic characters of $\Weil{F}$ inflated to $\mathcal{L}_F$, and $b_i \in \Z_{\geq 1}$ (not necessarily distinct), then $\psi$ is said to be \emph{quadratic unipotent}. Ditto for global Arthur parameters.
\end{definition}

\begin{remark}
	Suppose $F = \CC$. In the notation of \eqref{eqn:psi-decomp-2}, every $\psi$ with $J = \emptyset$ is quadratic unipotent; in fact $\zeta_i = \mathbf{1}$ for all $i$. In particular, every $\psi \in \Psi_{\mathrm{gp}}(\tilde{G})$ is quadratic unipotent.
\end{remark}

The main theorems below for $\tilde{G}$ are ``essentially proven'' in \cite{Moe17}. We state the desiderata first, and justify them at the end.

\begin{theorem}[Cf.\ {\cite[Corollaire 4.2.2]{Moe17}}]\label{prop:real-unip-packet}
	Suppose that $F = \R$ and let $\psi \in \Psi_{\mathrm{gp}}(\tilde{G})$ be quadratic unipotent. Then
	\begin{enumerate}[(i)]
		\item $\pi_{\psi, \chi}$ is a multiplicity-free sum of irreducible characters, possibly zero, for each $\chi \in \EuScript{S}_\psi^\vee$;
		\item if $\chi \neq \chi'$, then there is no common irreducible constituent in the expansions of $\pi_{\psi, \chi}$ and $\pi_{\psi, \chi'}$.
	\end{enumerate}
	In other words, $\Pi_\psi$ is multiplicity-free.
\end{theorem}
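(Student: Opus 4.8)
The plan is to derive the statement from C.\ Moeglin's construction of quadratic unipotent Arthur packets at real places in \cite{Moe17}, by identifying the packet $\Pi_\psi$ defined in this article (via the distributions $T_{\psi,s}$ of Definition \ref{def:T-dist} and the Fourier inversion of Definition \ref{def:pi-psi-chi}) with Moeglin's packet. Since $\psi \in \Psi_{\mathrm{gp}}(\tilde{G})$ is already of good parity, no preliminary reduction à la Proposition \ref{prop:pi-psi-gp} is needed; we may organize the discussion around the decomposition \eqref{eqn:psi-decomp-2} with $I = I^+$ and each simple constituent $\zeta_i \boxtimes r(b_i)$ symplectic, so that all $b_i$ are even. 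Moeglin's work produces, for such $\psi$, a \emph{multiplicity-free} finite multiset of unitary genuine irreducibles fibered over $\EuScript{S}_\psi^\vee$, with exactly the disjointness property (ii) built in; this is \cite[Corollaire 4.2.2]{Moe17}. Thus the task is reduced to matching her packet with ours.

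The matching proceeds through endoscopic character relations. Our $\pi_{\psi,\chi}$ is characterized, by definition, through the identities $T_{\psi,x} = \sum_{\chi\in\EuScript{S}_\psi^\vee} \chi(x_\psi x)\,\pi_{\psi,\chi}$, whose left-hand sides are the signed transfers $\epsilon(\psi^{s=-1})\,\trans_{\mathbf{G}^!,\tilde{G}}\!\bigl(S\Theta^{G^!}_{\psi^!}\bigr)$ of Arthur's stable characters on the endoscopic groups $G^! = \SO(2n'+1)\times\SO(2n''+1)$ (Definition \ref{def:STheta}), under the spectral transfer fixed in \S\ref{sec:endoscopy}. Moeglin's representations satisfy endoscopic character relations with the same stable distributions (her packets for odd special orthogonal groups realize Arthur's $S\Theta^{G^!}_{\psi^!}$, and her metaplectic packets are designed to transfer compatibly). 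The two normalizations differ only by explicit bookkeeping: the metaplectic root-number factor $\epsilon(\psi^{s=-1})$, the shift by $x_\psi$ in the Fourier inversion, and a possible discrepancy between Moeglin's parametrization and Arthur's by a known character of $\EuScript{S}_\psi$ (the real analogue of the shifting character, cf.\ Proposition \ref{prop:beta-variance}). For quadratic unipotent $\psi$ all these are computable: the relevant local root numbers are determined from the quadratic characters $\zeta_i$ by the properties in \S\ref{sec:local-root-numbers} and \eqref{eqn:epsilon-SL}, and they enter through $\nu_\psi$ (Proposition \ref{prop:nu-good-parity}); the dependence on $\bpsi$ is controlled by Proposition \ref{prop:variation-pi}. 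Tracking the constants shows that the two systems of linear relations — indexed by the conjugacy classes in $S_{\psi,2}$, with unknowns the virtual characters labelled by $\EuScript{S}_\psi^\vee$ — are carried to one another by an explicit automorphism $\sigma$ of $\EuScript{S}_\psi^\vee$. Since the coefficient matrix is (a translate of) the character table of the finite abelian $2$-group $\EuScript{S}_\psi$, hence invertible, this forces $\pi_{\psi,\chi} = \pi^{\mathrm{Moe}}_{\psi,\sigma(\chi)}$ as virtual characters for every $\chi$.

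Granting the identification, the conclusions follow at once: (i) each $\pi_{\psi,\chi}$ inherits multiplicity-freeness from its Moeglin counterpart, and (ii) the disjointness of distinct Moeglin members transports across the bijection $\sigma$; hence $\Pi_\psi$ is multiplicity-free.

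I expect the main obstacle to be precisely the normalization comparison in the second paragraph — reconciling the sign, Whittaker, and parametrization conventions of Moeglin's construction with the endoscopic setup used here, i.e.\ pinning down $\sigma$ and checking it agrees with the automorphism predicted by the $\epsilon$-factor and $s_\psi$ shifts. For the present statement this is eased by the fact that the conclusions (i)–(ii) do not depend on the exact value of $\sigma$; one does, however, still need to confirm that \cite{Moe17} covers all good-parity quadratic unipotent $\psi$ for the eightfold metaplectic cover, and to fill any gap using Moeglin's cohomological-induction and Jacquet-module methods — which we would not carry out in detail here.
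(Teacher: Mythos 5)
Your reduction rests on the premise that Moeglin's quadratic unipotent packets for the real metaplectic group already satisfy endoscopic character relations against Arthur's stable distributions $S\Theta^{G^!}_{\psi^!}$, so that the two Fourier-inverted systems can be compared and an explicit automorphism $\sigma$ of $\EuScript{S}_\psi^\vee$ extracted by inverting the character table of $\EuScript{S}_\psi$. That premise is not available in \cite{Moe17}: her metaplectic packets are built by $\Theta$-correspondence and global methods, and her \emph{local} statement (Corollaire 4.2.2) is itself obtained by globalization from a \emph{global} multiplicity-one theorem (Théorème 3.6.1) for very regular quadratic unipotent parameters, under hypotheses ((1) and (2) of her \S 3.3) that are left unverified for metaplectic groups. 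Proving that her packets satisfy the character relations of Definition \ref{def:pi-psi-chi} is essentially equivalent to the identification you are trying to establish, so the ``normalization comparison'' you defer is not bookkeeping but the substance of the problem; there is no purely local object in \cite{Moe17} against which the invertibility argument can be played off.

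The proof actually runs local-global. One first justifies Moeglin's hypotheses for $\Mp$ --- negativity of exponents and the shape of cuspidal supports of irreducible constituents of $L^2_{\dot{\psi}}$ for very regular $\dot{\psi}$ --- using the multiplicity formula of Theorem \ref{prop:global-multiplicity}, the endoscopic character relations already established locally, and the unramified normalizations of \S\ref{sec:normalizing}; this secures her global multiplicity-one theorem for adélic metaplectic groups. One then globalizes the given quadratic unipotent $\psi$ to a very regular $\dot{\psi}$ with $\dot{\psi}_u \simeq \psi$ (Lemma \ref{prop:globalization-very-regular}), realizes every member of $\Pi_\psi$ as the $u$-component of an irreducible constituent of $L^2_{\dot{\psi}}$ via the argument of \S\S\ref{sec:proof-1}--\ref{sec:proof-2}, and deduces (i) and (ii) from global multiplicity one exactly as in \cite[\S 4.2]{Moe17}. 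If you wish to salvage a purely local comparison, you would first need to show that Moeglin's packets satisfy the endoscopic character relations, which is not easier than the theorem itself.
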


\begin{theorem}[Cf.\ {\cite[Théorème 4.2.3]{Moe17}}]\label{prop:cplx-packet}
	Suppose that $F = \CC$ and $\psi \in \Psi_{\mathrm{gp}}(\tilde{G})$. We have
	\begin{enumerate}[(i)]
		\item $\pi_{\psi, \chi}$ is either zero or irreducible, for each $\chi \in \EuScript{S}_\psi^\vee$;
		\item if $\chi \neq \chi'$ and $\pi_{\psi, \chi}$ and $\pi_{\psi, \chi'}$ are both nonzero, then $\pi_{\psi, \chi} \neq \pi_{\psi, \chi'}$.
	\end{enumerate}
	In particular, $\Pi_\psi$ is multiplicity-free.
\end{theorem}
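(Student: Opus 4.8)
The plan is to deduce Theorem~\ref{prop:cplx-packet} from Moeglin's construction of quadratic unipotent Arthur packets in \cite[\S 4.2]{Moe17}, the actual work being the compatibility of her normalization with the one used here. Two simplifications specific to $F=\CC$ make the bridge clean. First, $\Weil{\CC}=\CC^{\times}$ carries no non-trivial quadratic character, so by the remark after Definition~\ref{def:quadratic-unipotent} an element $\psi\in\Psi_{\mathrm{gp}}(\tilde{G})$ has the form $\psi=\bigoplus_{i\in I}m_i(\mathbf{1}\boxtimes r(b_i))$ with each $b_i$ even; hence the local root numbers $\epsilon(\phi_i)=\epsilon(\tfrac12,\mathbf{1},\bpsi)$ are all $1$, and Definition~\ref{def:T-dist} reduces to $T_{\psi,x}=\trans_{\mathbf{G}^!,\tilde{G}}\bigl(S\Theta^{G^!}_{\psi^!}\bigr)$ with no $\epsilon$-twist. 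Second, the metaplectic covering splits over $\CC$, so genuine representations of $\tilde{G}$ are representations of $\Sp(2n,\CC)$; this is the framework of \cite{Moe17}, in which the endoscopic groups $\SO(2m'+1)\times\SO(2m''+1)$ are split complex odd orthogonal groups whose unipotent Arthur packets admit a Barbasch--Vogan type description (see also \cite{MR17}).

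First I would recall from \cite[\S 4.2]{Moe17} the construction attached to $\psi$: for each $\chi\in\EuScript{S}_\psi^\vee$ one has a genuine representation $\pi^{\mathrm{M}}_{\psi,\chi}$ of $\tilde{G}$, built by cohomological induction, which is irreducible or zero, with the non-zero ones pairwise non-isomorphic, and which satisfies the endoscopic character identity $\trans_{\mathbf{G}^!,\tilde{G}}\bigl(S\Theta^{G^!}_{\psi^!}\bigr)=\sum_{\chi\in\EuScript{S}_\psi^\vee}\chi(x_\psi x)\,\Theta_{\pi^{\mathrm{M}}_{\psi,\chi}}$ whenever $(\mathbf{G}^!,\psi^!)\leftrightarrow(\psi,x)$ via the basic bijection (Proposition~\ref{prop:basic-bijection}). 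Comparing with the displayed formulas of Definitions~\ref{def:T-dist} and~\ref{def:pi-psi-chi}, and using the triviality of the $\epsilon$-factor noted above, the family $\{\Theta_{\pi^{\mathrm{M}}_{\psi,\chi}}\}_{\chi}$ satisfies exactly the linear system that defines $\{\pi_{\psi,\chi}\}_{\chi}$; since the $T_{\psi,x}$ ($x\in\EuScript{S}_\psi$) determine the $\pi_{\psi,\chi}$ by Fourier inversion over the finite group $\EuScript{S}_\psi$, I would conclude that $\pi_{\psi,\chi}$ is the character of $\pi^{\mathrm{M}}_{\psi,\sigma(\chi)}$ (and $0$ when that vanishes) for a permutation $\sigma$ of $\EuScript{S}_\psi^\vee$ arising from the comparison of the two indexings. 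Parts (i) and (ii) then transfer verbatim from Moeglin's statements, so $\Pi_\psi$ is multiplicity-free.

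The one step needing genuine care --- the reason \cite{Moe17} only ``essentially'' proves this --- is showing that Moeglin's labelling of $\EuScript{S}_\psi^\vee$ agrees with the one inherited here from $\trans_{\mathbf{G}^!,\tilde{G}}$ and Arthur's $S\Theta^{G^!}_{\psi^!}$ of Definition~\ref{def:STheta}, i.e.\ that $\sigma$ is the identity; this amounts to tracking the Whittaker normalization on the orthogonal side, the translation $x\mapsto x_\psi x$, and the action of $-1\in Z_{\tilde{G}^\vee}$ (cf.\ Proposition~\ref{prop:p-psi-negation}). I would pin $\sigma$ down first on the L-packet inside $\Pi_\psi$: hypothesis (a) of Proposition~\ref{prop:L-within-0} holds for every $\psi$ over $F=\CC$ (established at the end of \S\ref{sec:proof-2}), so the embedding $\Pi_{\phi_\psi}\hookrightarrow\Pi_\psi^{\mathrm{mf}}$ of Proposition~\ref{prop:L-within} matches labels along $\EuScript{S}_{\phi_\psi}^\vee\hookrightarrow\EuScript{S}_\psi^\vee$, and the analogous embedding is part of Moeglin's picture; this fixes $\sigma$ on the image of $\EuScript{S}_{\phi_\psi}^\vee$, and one extends the agreement to all of $\EuScript{S}_\psi^\vee$ using Proposition~\ref{prop:p-psi-negation} together with the linear independence of the $T_{\psi,x}$. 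Granting this bookkeeping, which is the only real obstacle, the theorem follows.
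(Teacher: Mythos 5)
Your reduction rests on an input that does not exist in the form you use it: a local construction over $\CC$ of representations $\pi^{\mathrm{M}}_{\psi,\chi}$, irreducible or zero and pairwise distinct, that are \emph{already known} to satisfy the metaplectic endoscopic character identity $\trans_{\mathbf{G}^!,\tilde{G}}\bigl(S\Theta^{G^!}_{\psi^!}\bigr)=\sum_\chi\chi(x_\psi x)\,\Theta_{\pi^{\mathrm{M}}_{\psi,\chi}}$. If such a package were available, Theorem \ref{prop:local-desiderata} for $F=\CC$ would be immediate and all of \S\ref{sec:proof-2} would be superfluous. In fact Moeglin's local statements in \cite[\S 4.2]{Moe17} are themselves obtained by a globalization argument and are conditional: her \S 3.3 imposes two hypotheses on the metaplectic side (local Arthur packets characterized by endoscopic character relations, and a global multiplicity formula), and her \S 4.1 assumes one can globalize an Archimedean quadratic unipotent parameter of good parity to a \emph{very regular} global one while hitting a prescribed packet member. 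The proof of Theorem \ref{prop:cplx-packet} consists precisely of discharging these hypotheses: conditions (1) and (2) are justified using Theorems \ref{prop:local-desiderata} and \ref{prop:global-multiplicity} (plus the normalization of intertwining operators, Proposition \ref{prop:normalizing-factor}), which makes Moeglin's global multiplicity-one theorem for very regular quadratic unipotent parameters available for $\dot{\tilde{G}}$; the globalization hypothesis is Lemma \ref{prop:globalization-very-regular}, whose proof re-runs \S\S\ref{sec:proof-1}--\ref{sec:proof-2} and realizes each $\pi_{\psi,\chi}$ as $\sum_{\dot{\pi}\in A}\Theta_{\dot{\pi}_u}$ for constituents $\dot{\pi}$ of $L^2_{\dot{\psi}}$. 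Irreducibility and distinctness are then extracted from the global statements, not from an independent local construction; your proposal inverts this logic and is circular at the decisive step.

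Two smaller remarks. The Barbasch--Vogan / cohomological-induction description of the complex metaplectic packets that you invoke is explicitly deferred to future work in the Prospects section, so it cannot be used as a black box. On the other hand, your observations that every $\psi\in\Psi_{\mathrm{gp}}(\tilde{G})$ over $\CC$ is quadratic unipotent with all $b_i$ even, and that consequently $\nu_\psi=\mathbf{1}$ so the $\epsilon$-twist in $T_{\psi,x}$ disappears, are correct; and the labelling comparison via the L-packet inside the Arthur packet (Propositions \ref{prop:L-within-0} and \ref{prop:L-within}) is exactly the bookkeeping one would need \emph{if} two normalized constructions were both in place --- but here only the construction of Definition \ref{def:pi-psi-chi} exists a priori, and its internal structure must come from the local-global argument.
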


In \cite{Moe17}, on begins by considering \emph{very regular quadratic unipotent parameters} in both the local and global cases: a local Arthur parameter $\psi$ written as in \eqref{eqn:psi-decomp-2} is said to be \emph{very regular} if $m_i = 1$ for all $i \in I$, and $i \neq j \implies \phi_i \neq \phi_j$; such parameters are always of good parity. Ditto in the global setting.
\index{Arthur parameter!very regular}

Metaplectic groups are included in \textit{loc.\ cit.}\ and they fit well with the yoga of $\Theta$-lifting therein, but in order to define a genuine irreducible $L^2$-automorphic representations $\dot{\pi}$ on an adélic metaplectic group $\dot{\tilde{G}}$ associated with a very regular parameter $\dot{\psi}$, two extra conditions (1) and (2) are imposed in \S 3.3 of \textit{loc.\ cit.} We justify them below, with the terminologies of \textit{loc.\ cit.}

Consider an adélic metaplectic covering $\dot{\tilde{G}} = \Mp(\dot{W}, \A)$ with $\dim \dot{W} = 2n$.

\begin{proposition}
	Let $\dot{\psi} = \bigoplus_{i=1}^t \dot{\phi}_i \boxtimes r(b_i) \in \dot{\Psi}_2(\dot{\tilde{G}})$ be very regular, $b_1 \geq \cdots \geq b_t$ and $d_i := \dim \dot{\phi}_i$, so that $\sum_{i=1}^t b_i d_i = 2n$. Let $\dot{\pi} \subset L^2_{\dot{\psi}}$ be irreducible. Then
	\begin{enumerate}[(i)]
		\item the exponents of $\dot{\pi}$ are all negative;
		\item for all $1 \leq i \leq t$ there exists an integer $0 \leq \ell_i \leq \frac{b_i}{2}$ and a genuine irreducible cuspidal automorphic representation $\pi_{\mathrm{cusp}}$ of $\Mp(\dot{W}', \A)$ where
		\[ \dot{W}' \subset \dot{W}, \quad \dim \dot{W}' = 2n - 2\sum_{i=1}^t d_i \ell_i, \]
		such that the constant terms of $\pi$ have nonzero projection to the cuspidal support of the genuine $L^2$-automorphic representation
		\[ \left(\, \bigtimes_{\substack{i = 1, \ldots, t \\ \ell_i \neq 0}} \mathrm{Speh}(\dot{\phi}_i, \ell_i) |\det|_{\A}^{(b_i - \ell_i)/2} \right) \times \pi_{\mathrm{cusp}} \]
		of $\prod_i \GL(d_i \ell_i, \A) \times \Mp(\dot{W}', \A)$.
	\end{enumerate}
\end{proposition}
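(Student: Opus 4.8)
The statement is the metaplectic analogue of the standard "cuspidal support" facts for very regular Arthur parameters (conditions (1) and (2) of \S3.3 of \cite{Moe17}), so the proof should be an adaptation of Moeglin's reasoning for $\SO(2n+1)$ together with the multiplicity formula established above. First I would invoke Theorem \ref{prop:main-global} (in the form of Remark \ref{rem:global-packet}): since $\dot{\psi} \in \dot{\Psi}_2(\dot{\tilde{G}})$ is very regular, every irreducible $\dot{\pi} \subset L^2_{\dot{\psi}}$ is a restricted tensor product $\bigotimes'_v \dot{\pi}_v$ with $\dot{\pi}_v \in \Pi_{\dot{\psi}_v}$, and at almost all $v$ the component $\dot{\pi}_v$ is the $K_v$-spherical member parametrized by the Satake parameter $\phi_{\dot{\psi}_v}(\Frob_v)$ (Proposition \ref{prop:unramified-mf} (iii)). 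This already pins down the "generic" exponents of $\dot{\pi}$ through $\phi_{\dot\psi}$, hence the shape of the candidate cuspidal support: the $\GL$-part must be built from the segments occurring in $\phi_{\dot\psi}$, i.e.\ from twists $\dot\phi_i |\det|^{(b_i-1)/2 - h}$ with $0 \le h \le b_i - 1$, cf.\ \eqref{eqn:phi-psi}.

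\textbf{Key steps.} (i) For the negativity of exponents, the point is that $\dot{\pi}$ is square-integrable modulo the center, being a discrete automorphic representation; combined with Proposition \ref{prop:pi-psi-inf-char} (the Archimedean infinitesimal character is $\lambda(\dot\psi)$, hence that of $\phi_{\dot\psi}$) and Proposition \ref{prop:unramified-mf} (iii) at the unramified places, the Langlands parameters of all local components $\dot\pi_v$ have real parts compatible with a global parameter whose "positive" part is $\phi_{\dot\psi}$; the Casselman square-integrability criterion applied place by place (or, more conceptually, the fact that $\dot\pi$ embeds in $L^2_{\mathrm{disc}}$ and the leading exponents along any parabolic are governed by the constant term) forces strict negativity of the exponents. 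I would import this verbatim from \cite[\S3]{Moe17} or \cite{Ar13}, since nothing metaplectic-specific intervenes here beyond the covering-group analogue of the constant-term formalism, which is already in place via \cite{MW94, Li21}. (ii) For the cuspidal support statement, I would run the $\Theta$-lifting/Jacquet-module bookkeeping of \cite{Moe17}: repeatedly applying Jacquet functors along Siegel-type parabolics of $\Mp$ to $\dot\pi$, one peels off $\GL$-segments $\mathrm{Speh}(\dot\phi_i,\ell_i)|\det|^{(b_i-\ell_i)/2}$; the combinatorial constraint $0 \le \ell_i \le b_i/2$ comes precisely from the structure of $\phi_{\dot\psi}$ and the requirement that the residual piece $\pi_{\mathrm{cusp}}$ on the smaller $\Mp(\dot W',\A)$ be genuine cuspidal — this is where very regularity ($m_i = 1$, distinct $\dot\phi_i$) is used to avoid cancellations and multiplicities. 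The geometric lemma and the Bernstein--Zelevinsky-type formula for $r_{\tilde M}\,\trans_{\mathbf G^!,\tilde G}$ (already cited in the proof of Lemma \ref{prop:Theta-cuspidal-support}, via \cite{Hi04, Chen24}) are the main technical tools.

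\textbf{Main obstacle.} The hardest part will be establishing that the residual component $\pi_{\mathrm{cusp}}$ is genuinely \emph{cuspidal} (and not merely "more cuspidal" than $\dot\pi$), and that the inequality $\ell_i \le b_i/2$ is exactly the right cutoff rather than an artifact. In Moeglin's treatment this rests on a delicate interplay between the $\Theta$-correspondence towers and the non-vanishing/irreducibility of the relevant Speh-induced representations; transplanting it to the $8$-fold metaplectic cover requires checking that the extra signs and the $\bpsi$-dependence of the $\GL$-splittings (from \cite{Li12a}) do not disturb the argument. I expect this to be routine given the machinery already set up — Proposition \ref{prop:pi-psi-gp} (reduction to good parity), the commutation of transfer with Jacquet functors, and the explicit unramified description — but it is the step that genuinely needs care. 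Once (i) and (ii) are in hand, the stated conclusion follows by assembling the local data: the exponents are read off from $\phi_{\dot\psi}$ via Proposition \ref{prop:pi-psi-inf-char} and Proposition \ref{prop:unramified-mf}, and the cuspidal support is the one produced by the peeling procedure.
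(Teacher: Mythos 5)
Your overall strategy coincides with the paper's: the proof given there is precisely ``Moeglin's argument for quasisplit classical groups \cite[Th\'eor\`eme 1.2.1 (ii)]{Moe17} transfers to $\tilde{G}$, because the ingredients it consumes are now available'', and you correctly identify the global multiplicity formula and the local packet/character-relation package as the main such ingredients. Two corrections are needed, however.

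First, your justification of (i) is wrong where it is specific. An irreducible constituent of the discrete automorphic spectrum is \emph{not} locally square-integrable: the local components of a residual $\dot{\pi}$ are non-tempered Langlands quotients of Speh type, so ``Casselman's square-integrability criterion applied place by place'' fails outright. The negativity of exponents in (i) is a global statement about the cuspidal exponents of the constant terms of a square-integrable automorphic form (the criterion of \cite[I.4.11]{MW94}, valid for coverings), which is what your parenthetical alternative says; only that version of the argument survives. Relatedly, (ii) is a statement about constant terms of automorphic forms and Eisenstein series, not a local Jacquet-module peeling, even though the two are formally parallel.

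Second, your ``main obstacle'' paragraph misplaces where the metaplectic-specific work lies. The cuspidality of $\pi_{\mathrm{cusp}}$ and the cutoff $\ell_i \leq b_i/2$ are internal to Moeglin's existing argument and do not have to be re-proved. What actually has to be checked --- and what the paper's proof consists of --- is (a) that the Eisenstein-series and intertwining-operator manipulations imported from \cite[\S\S 1.1--1.3, \S 3.5]{Moe08} go through for $\tilde{G}$, which requires the normalization of intertwining operators in the unramified case (Gindikin--Karpelevich for $\tilde{G}$, i.e.\ Proposition \ref{prop:normalizing-factor} and Remark \ref{rem:normalizing-factor}); and (b) the compatibility of the spherical Hecke algebra isomorphisms with $\Theta$-lifting, \cite[Remarque 3.2.2]{Moe17}. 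Your proposal never mentions (a), which is the one genuinely non-formal input; the extra signs and the $\bpsi$-dependence of the $\GL$-splittings that you flag are not where the difficulty is.
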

\begin{proof}
	For quasisplit classical groups, the corresponding properties are proved in \S 1 of \textit{loc.\ cit.}, specifically Théorème 1.2.1 (ii). These proofs are largely based on subtle yet elementary operations on Eisenstein series and cuspidal supports; however, there are also non-trivial inputs from \cite[\S\S 1.1--1.3 and \S 3.5]{Moe08}. Endoscopic character relations and regularity play a critical role in these sources.

	For metaplectic groups, we now have local Arthur parameters, Arthur packets characterized by endoscopic character relations and the global multiplicity formula; see the properties established in \S\S\ref{sec:local-desiderata}--\ref{sec:A-packets} and \S\ref{sec:multiplicity-formula}, which are as strong as in \cite[\S\S 1.5--2.2]{Ar13}.

	For the normalization of intertwining operators in \cite[\S 3.5]{Moe08} in the unramified case, one may use Gindikin--Karpelevich formula \cite[Theorem 6.4]{Mc11} for $\tilde{G}$, or consult Proposition \ref{prop:normalizing-factor} and Remark \ref{rem:normalizing-factor}.

	All these are compatible with isomorphisms of spherical Hecke algebras in the unramified setting. Compatibility between Hecke algebra isomorphisms and $\Theta$-lifting is also used in Moeglin's work, see \cite[Remarque 3.2.2]{Moe17} which includes the metaplectic case. These ingredients are what one needs to remove/justify the conditions (1) and (2) in \S 3.3 of \textit{loc.\ cit.}
\end{proof}

The following multiplicity-one result by Moeglin is therefore justified.

\begin{theorem}[Cf.\ {\cite[Théorème 3.6.1]{Moe17}}]
	Let $\dot{\pi}$ be an irreducible constituent of $L^2_{\dot{\psi}}$ where $\dot{\psi} \in \dot{\Psi}_2(\dot{\tilde{G}})$ is quadratic unipotent and very regular. Then $\dot{\pi}$ occurs with multiplicity one in $L^2_{\mathrm{disc}, -}$.
\end{theorem}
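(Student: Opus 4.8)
The plan is to deduce the statement from Corollary \ref{prop:global-mf}, whose hypotheses are now available since Theorem \ref{prop:local-desiderata} has been established for every completion $\dot{F}_v$, thereby reducing the global multiplicity-one assertion to the multiplicity-freeness of the \emph{local} Arthur packets $\Pi_{\dot{\psi}_v}$. First I would observe that, as $\dot{\pi}$ is an irreducible constituent of $L^2_{\dot{\psi}}$, the multiplicity formula in the shape of Remark \ref{rem:global-packet} forces $\dot{\pi}_v \in \Pi_{\dot{\psi}_v}$ at every place $v$. Consequently Corollary \ref{prop:global-mf} (i) applies and shows that the multiplicity of $\dot{\pi}$ in $L^2_{\mathrm{disc}, -}$ coincides with its multiplicity in $L^2_{\dot{\psi}}$, which is certainly $\geq 1$. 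It then remains to bound the latter by $1$, and by Corollary \ref{prop:global-mf} (ii) this follows once $\dot{\pi}_v$ lies in $\Pi_{\dot{\psi}_v}^{\mathrm{mf}}$ for all $v$; I would prove the stronger fact that each $\Pi_{\dot{\psi}_v}$ is multiplicity-free.

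For the local input, I would first note that $\dot{\psi}_v \in \Psi_{\mathrm{gp}}(\dot{\tilde{G}}_v)$ is quadratic unipotent for every $v$: writing $\dot{\psi} = \bigoplus_i \dot{\phi}_i \boxtimes r(b_i)$ with each $\dot{\phi}_i$ a quadratic Hecke character (hence self-dual of orthogonal type), membership in $\dot{\Psi}_2(\dot{\tilde{G}}) \subset \dot{\Psi}_{\mathrm{symp}}(2n)$ forces every $b_i$ to be even, so $\dot{\psi}_v = \bigoplus_i \dot{\phi}_{i,v} \boxtimes r(b_i)$ is of good parity and quadratic unipotent at every place. At the Archimedean places, multiplicity-freeness of $\Pi_{\dot{\psi}_v}$ is then exactly Theorem \ref{prop:real-unip-packet} when $\dot{F}_v = \R$ and Theorem \ref{prop:cplx-packet} when $\dot{F}_v = \CC$. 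At a non-Archimedean $v$, the parameter $\dot{\psi}_v$ is anti-tempered in the sense of Definition \ref{def:anti-tempered-parameter}: the Deligne $\SL(2,\CC)$ acts trivially, so $\widehat{\dot{\psi}_v} = \bigoplus_i \dot{\phi}_{i,v} \boxtimes r(b_i)$, with the $r(b_i)$ now on the Weil-group side, lies in $\Phi_{\mathrm{bdd}}(\dot{\tilde{G}}_v)$. Then Corollary \ref{prop:anti-tempered-chi} gives $\pi_{\dot{\psi}_v, \chi} = \widehat{\pi_{\phi, \chi\tilde{\mu}}}$ in $\mathrm{Groth}(\dot{\tilde{G}}_v)$ with $\phi := \widehat{\dot{\psi}_v}$; since the tempered L-packet $\Pi_\phi$ is multiplicity-free with pairwise distinct members by Theorem \ref{prop:Luo}, and the Aubert--Zelevinsky involution is a bijection of $\Pi_-(\dot{\tilde{G}}_v)$ carrying an irreducible to $\pm$ an irreducible, the non-negativity supplied by Theorem \ref{prop:local-desiderata} forces $\pi_{\dot{\psi}_v, \chi}$ to be a single irreducible character, distinct for distinct $\chi$ (because $\chi \mapsto \chi\tilde{\mu}$ is a bijection). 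Hence $\Pi_{\dot{\psi}_v}$ is multiplicity-free at all finite places as well.

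Putting these together, $\Pi_{\dot{\psi}_v}^{\mathrm{mf}} = \Pi_{\dot{\psi}_v}$ for every $v$, so $\dot{\pi}_v \in \Pi_{\dot{\psi}_v}^{\mathrm{mf}}$; Corollary \ref{prop:global-mf} (ii) bounds the multiplicity of $\dot{\pi}$ in $L^2_{\dot{\psi}}$ by $1$, and part (i) transports the bound to $L^2_{\mathrm{disc}, -}$, giving multiplicity exactly $1$. This route runs parallel to Moeglin's \cite[Théorème 3.6.1]{Moe17}, the very regularity hypothesis entering (as in \textit{loc.\ cit.}) through the constant-term analysis of the preceding proposition rather than through the count above. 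The step I expect to require the most care is the non-Archimedean local multiplicity-freeness, but the anti-tempered reduction via Luo's packets and the Aubert--Zelevinsky involution makes it essentially formal once Theorem \ref{prop:local-desiderata} is in hand.
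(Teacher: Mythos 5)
Your route is genuinely different from the paper's, and it has a circularity problem at the Archimedean places. The paper does not prove this theorem via Corollary \ref{prop:global-mf}; it proves it by verifying the hypotheses of Moeglin's \cite[Théorème 3.6.1]{Moe17} — namely conditions (1) and (2) of \cite[\S 3.3]{Moe17}, which is exactly what the proposition immediately preceding the statement (on negativity of exponents and on cuspidal supports of constant terms) accomplishes — and then invoking Moeglin's argument, which runs through Eisenstein series and cuspidal supports and uses very regularity in an essential way. In the paper's logical order, Theorems \ref{prop:real-unip-packet} and \ref{prop:cplx-packet} come \emph{after} this theorem: they are deduced from it by the globalization argument of \cite[\S 4.2]{Moe17} together with Lemma \ref{prop:globalization-very-regular}. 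So when you invoke Theorems \ref{prop:real-unip-packet} and \ref{prop:cplx-packet} to get multiplicity-freeness of $\Pi_{\dot{\psi}_v}$ at real and complex places, you are using a consequence of the statement you are trying to prove. The paper has no independent Archimedean proof of those two theorems at this stage (the Prospects section explicitly presents them as first steps toward adapting Moeglin--Renard), so the gap cannot be closed by a pointer elsewhere in the text.

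The rest of your argument is sound and worth keeping in mind: the reduction of the global multiplicity to the local packets via Remark \ref{rem:global-packet} and Corollary \ref{prop:global-mf} is correct, and your treatment of the non-Archimedean places is fine — the localizations are indeed anti-tempered of good parity, and Corollary \ref{prop:anti-tempered-chi} together with Theorem \ref{prop:Luo} does give that $\pi_{\dot{\psi}_v,\chi} = \widehat{\pi_{\phi,\chi\tilde{\mu}}}$ is irreducible and that distinct $\chi$ yield distinct members, since the Aubert--Zelevinsky involution is a bijection on $\Pi_-(\dot{\tilde{G}}_v)$. (Note also that your argument nowhere uses very regularity, which should have been a warning sign: it would prove multiplicity one for all discrete quadratic unipotent parameters, whereas the very regular case is precisely the seed from which the general Archimedean multiplicity-freeness is later extracted.) To repair the proof you would have to supply the Archimedean local multiplicity-freeness by some independent means, or else follow the paper and go through Moeglin's constant-term analysis directly.
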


Next, in the local setting, we have to fulfill the hypotheses in \cite[\S 4.1]{Moe17} about globalization of Archimedean quadratic unipotent parameters of good parity.

\begin{lemma}\label{prop:globalization-very-regular}
	Let $F$ be an Archimedean local field. Given a quadratic unipotent local parameter $\psi \in \Psi_{\mathrm{gp}}(\tilde{G})$ and $\pi \in \Pi_-(\tilde{G})$ that appears in $\Pi_\psi$, we can
	\begin{enumerate}[(i)]
		\item globalize $\psi$ to a very regular quadratic unipotent $\dot{\psi} \in \dot{\Psi}(\dot{\tilde{G}})$ over some number field $\dot{F}$, with a place $u$ satisfying $\dot{F}_u \simeq F$;
		\item find an irreducible constituent $\dot{\pi}$ in $L^2_{\dot{\psi}}$ such that $\pi \simeq \dot{\pi}_u$.
	\end{enumerate}
\end{lemma}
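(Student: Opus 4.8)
The plan is to re-run the globalization argument of \S\ref{sec:aux-parameters}--\S\ref{sec:proof-2} for the given quadratic unipotent $\psi$, and to observe that the auxiliary global parameter it produces is automatically very regular and quadratic unipotent; the lemma then falls out as a byproduct of the proof of Theorem~\ref{prop:local-desiderata}.

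First I would reduce to the case $\psi \in \Psi_{\mathrm{gp}}(\tilde{G})$, which is already assumed, and write $\psi = \bigoplus_{i \in I} m_i (\zeta_i \boxtimes r(b_i))$ with the pairs $(\zeta_i, b_i)$ distinct, where each $\zeta_i$ is a quadratic character of $\Weil{F}$ and each $b_i$ is even (forced by good parity, since a $1$-dimensional self-dual character is orthogonal). When $F = \R$ the simple constituents $\zeta_i$ carry no $2$-dimensional blocks, so they are vacuously in general position in the sense of \eqref{eqn:general-position} (the relevant minimum is $\min\emptyset = +\infty$); hence we may take $\dot{F} = \Q$ with $u$ its Archimedean place and apply the ``real case in general position'' construction of \S\ref{sec:aux-parameters}. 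When $F = \CC$ we take $\dot{F}$ imaginary quadratic with $u$ its unique Archimedean place and use the construction of \S\ref{sec:proof-2}. In either case, since every $d_i := \dim\zeta_i$ equals $1$, the recipe for $\phi_{i,j,v}$ in \S\ref{sec:aux-parameters} always lands in the case ``$b_i$ even, $d_i$ odd'', giving $\phi_{i,j,v} = \rho^{\mathrm{orth}}_{*,0} \oplus \eta_* = \eta_*$, a ramified quadratic character; together with $\phi_{i,j,u} = \zeta_i$ and Lemma~\ref{prop:globalization}, this forces every $\dot{\phi}_{i,j}$ to be a quadratic Hecke character of $\dot{F}^{\times}\backslash\A^{\times}$. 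Therefore $\dot{\psi} := \bigoplus_{(i,j) \in J}\dot{\phi}_{i,j}\boxtimes r(b_i)$ is quadratic unipotent. By Lemma~\ref{prop:aux-parameters} (resp.\ Lemma~\ref{prop:aux-parameters-cplx}) the simple summands $\dot{\phi}_{i,j}\boxtimes r(b_i)$ are pairwise distinct, so $\dot{\psi} \in \dot{\Psi}_2(\dot{\tilde{G}})$ is very regular, and $\dot{\psi}_u = \psi$. This proves (i), with the place $u$ as above.

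For (ii), I would invoke the identity \eqref{eqn:proof-1-aux} produced in the proof of Theorem~\ref{prop:local-desiderata} (and its exact counterpart in the proof for $F = \CC$ in \S\ref{sec:proof-2}): for every $\chi \in \EuScript{S}_\psi^\vee$ there is a multiset $A_\chi$ of irreducible constituents of $L^2_{\dot{\psi}}$, with prescribed components away from $u$ and unramified components outside $V'$, such that $\sum_{\dot{\pi} \in A_\chi}\Theta_{\dot{\pi}_u} = \pi_{\psi,\chi}$. Since $\pi$ appears in $\Pi_\psi$, it occurs with positive multiplicity in $\pi_{\psi,\chi}$ for some $\chi$; comparing with the right-hand side, which is a non-negative integral combination of the $\Theta_{\dot{\pi}_u}$ with $\dot{\pi} \in A_\chi$, there is a $\dot{\pi} \in A_\chi$ with $\dot{\pi}_u \simeq \pi$, and this $\dot{\pi}$ is a subrepresentation of $L^2_{\dot{\psi}}$, as desired.

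The argument is essentially bookkeeping on top of \S\ref{sec:aux-parameters}--\S\ref{sec:proof-2}: the only genuinely new points are that the auxiliary $\dot{\psi}$ is very regular and quadratic unipotent, both immediate from the explicit construction. The one input that requires care --- already used in \S\ref{sec:globalization-lemma} and \S\ref{sec:proof-2} --- is that the quadratic characters $\dot{\phi}_{i,j}$ can be simultaneously prescribed at $u$, ramified in the prescribed way at the auxiliary places $V_0$ (so as to keep the simple summands distinct and to retain the injectivity $\EuScript{S}_{\dot{\psi}} \hookrightarrow \prod_{v \in V_0}\EuScript{S}_{\dot{\psi}_v}$), and quadratic; this is Grunwald--Wang together with weak approximation, with extra ramified places added to avoid the exceptional case, and no new difficulty arises here.
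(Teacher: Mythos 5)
Your proposal is correct and follows the same route as the paper: re-run the globalization of \S\ref{sec:aux-parameters}--\S\ref{sec:proof-2}, observe that with $\dim\phi_i=1$ the auxiliary data reduce to quadratic characters (so $\dot{\psi}$ is quadratic unipotent, and very regular by the distinctness built into the proof of Lemma \ref{prop:aux-parameters} (i)), and extract (ii) from the identity $\sum_{\dot{\pi}\in A}\Theta_{\dot{\pi}_u}=\pi_{\psi,\chi}$ in \eqref{eqn:proof-1-aux}. The extra bookkeeping you supply (evenness of the $b_i$, vacuousness of the general-position condition, the $d_i-1=0$ degeneration of the recipe at auxiliary places) is exactly what the paper's terser proof leaves implicit.
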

\begin{proof}
	Re-use the proofs in \S\S\ref{sec:proof-1}--\ref{sec:proof-2}. For (i), an inspection of \S\ref{sec:aux-parameters} shows that if each $\phi_i$ is a quadratic character, then so is each summand $\dot{\phi}_{i, j}$ of $\dot{\psi}$. In fact, the whole argument for Lemma \ref{prop:globalization} reduces to globalization of quadratic characters in the present case.
	
	For the regularity of $\dot{\psi}$, recall the proof of Lemma \ref{prop:aux-parameters} (i).
	
	For (ii), recall the final steps of the proof of Theorem \ref{prop:local-desiderata}, in which $\pi_{\psi, \chi}$ is realized as a finite sum of irreducible constituents of $L^2_{\dot{\psi}}$, for each $\chi \in \EuScript{S}_\psi^\vee$.
\end{proof}

In this way, Theorems \ref{prop:real-unip-packet} and \ref{prop:cplx-packet} are now justified since the global results in \cite[\S 3.6]{Moe17} carry over to metaplectic groups, whilst the details about the globalization argument in \cite[\S 4.2]{Moe17} (choice of data at auxiliary places, etc.) are supplied by \S\S\ref{sec:proof-1}--\ref{sec:proof-2}. The remaining arguments are the same.

\section{Examples}\label{sec:examples}
\subsection{Local principal Arthur parameters}\label{sec:local-principal}
Fix a local field $F$ of characteristic zero. Define $\tilde{G} = \Mp(W)$ using $(W, \lrangle{\cdot|\cdot})$ and $\bpsi: F \to \CC^{\times}$. Set $n := \frac{1}{2} \dim W$.

In what follows, $\zeta$ is always a quadratic character of $\Weil{F}$ associated with some $c F^{\times 2} \in F^{\times}/F^{\times 2}$, inflated to $\mathcal{L}_F$ in the non-Archimedean case. We will also use the twofold covering $\tilde{G}^{(2)}$ in \S\ref{sec:variation-1}.

\begin{definition}\label{def:principal}
	\index{Arthur parameter!principal}
	\index{Arthur packet!principal}
	Arthur parameters of the form $\zeta \boxtimes r(2n) \in \Psi_2(\tilde{G})$ are said to be \emph{principal}; the corresponding packets are said to be \emph{principal Arthur packets}.
\end{definition}

For principal $\psi$, we always have $\EuScript{S}_\psi = S_\psi = \Or(1, \CC) = \bmu_2$. The element $s_\psi$ is the non-trivial element of $Z_{\tilde{G}^\vee}$, and its image in $\EuScript{S}_\psi$ is also non-trivial.

Identify $\EuScript{S}_\psi^\vee$ with $\bmu_2$, so that $\pi_{\psi, \pm} = \pi^{\bpsi}_{\psi, \pm}$ make sense.

\begin{definition}
	\index{Theta-psi-pm@$\Theta_{\bpsi}^{\pm}, \Theta_{\bpsi}$}
	Let $\Theta_{\bpsi}^{\pm}$ (resp.\ $\Theta_{\bpsi}$) denote the character of $\omega_{\bpsi}^{\pm}$ (resp.\ $\omega_{\bpsi}$). They are represented by genuine $L^1_{\mathrm{loc}}$ functions on $\tilde{G}$ which are smooth on the regular semisimple locus.
\end{definition}

\begin{lemma}\label{prop:comparison-discriminant}
	Let $H := \SO(2n+1)$. Let $D^G$ (resp.\ $D^H$) denote the Weyl discriminants
	\[ D^G(\delta) = \det\left( \Ad(\delta) - \identity \middle| \mathfrak{g}/\mathfrak{g}_\delta \right), \quad D^H(\gamma) = \det\left( \Ad(\gamma) - \identity \middle| \mathfrak{h}/\mathfrak{h}_\gamma \right) \]
	where $\delta$ (resp.\ $\gamma$) is regular semisimple and $G_\delta$ (resp.\ $H_\gamma$) is its connected centralizer.
	\begin{enumerate}[(i)]
		\item If $\tilde{\delta} \in \tilde{G}$ has regular semisimple image $\delta \in G(F)$, whose stable conjugacy class corresponds to that of $\gamma \in H(F)$ relative to the endoscopic datum of $\tilde{G}$ indexed by $(n, 0)$ (see \cite[\S 5.1]{Li11}), then
		\[ \left| \Theta^+_{\bpsi}(\tilde{\delta}) - \Theta^-_{\bpsi}(\tilde{\delta}) \right| = \frac{\left| D^H(\gamma) \right|_F^{1/2} }{\left| D^G(\delta)\right|_F^{1/2}}. \]
		\item If the correspondence in (i) is taken relative to $(0, n)$, then
		\[ \left| \Theta_{\bpsi}(\tilde{\delta}) \right| = \left| \Theta^+_{\bpsi}(\tilde{\delta}) + \Theta^-_{\bpsi}(\tilde{\delta}) \right| = \frac{\left| D^H(\gamma) \right|_F^{1/2} }{\left| D^G(\delta)\right|_F^{1/2}}. \]
	\end{enumerate}
	Here we define $|z|_{\CC} := z\bar{z}$ when $z \in F = \CC$.
\end{lemma}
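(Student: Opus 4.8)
The statement compares the absolute value of a difference (or sum) of Weil representation characters on $\tilde{G}$ with a ratio of Weyl discriminants on $H=\SO(2n+1)$ and $G=\Sp(2n)$. The natural route is to recognize the left-hand side as (the absolute value of) the transfer of the stable character attached to the principal Arthur parameter $\psi=\zeta\boxtimes r(2n)$, evaluated via the endoscopic datum indexed by $(n,0)$ in case (i) and $(0,n)$ in case (ii). More precisely, I would first recall from the principal-packet computation (to be carried out in the surrounding section, where $\pi^{\bpsi}_{\psi,\pm}=\Tr\omega^{\pm}_{\bpsi_c}$ is established) that for $c\in\mathfrak{o}_F^{\times}$ say (or simply $\zeta=\mathbf{1}$, to which the general case reduces by Proposition~\ref{prop:variation-pi}), the combination $\sum_{\chi}\chi(s_\psi)\pi_{\psi,\chi}=\Theta^+_{\bpsi}-\Theta^-_{\bpsi}$ equals, up to the sign $\epsilon(\psi^{s=-1})=\pm1$, the spectral transfer $\trans_{(n,0),\tilde{G}}(S\Theta^{H}_{\psi})$; and likewise $\Theta^+_{\bpsi}+\Theta^-_{\bpsi}$ is the transfer along $(0,n)$. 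Since $\psi$ is the most degenerate parameter, $S\Theta^H_\psi$ is (again up to sign) the character of the trivial representation of $H(F)$, whose value at a regular semisimple $\gamma$ is identically $1$.

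\textbf{Key steps.} (1) Reduce to $\zeta=\mathbf{1}$ using Proposition~\ref{prop:variation-pi} and the fact that the quadratic twist changes $\omega_{\bpsi}$ to $\omega_{\bpsi_c}$, which only multiplies characters by $\pm1$ on the relevant locus and does not affect $|\cdot|$. (2) Invoke the explicit description of the principal packet: $\Theta^+_{\bpsi}-\Theta^-_{\bpsi}=\pm\,\trans_{(n,0),\tilde{G}}(S\Theta^H_{\psi})$ and $\Theta_{\bpsi}=\Theta^+_{\bpsi}+\Theta^-_{\bpsi}=\pm\,\trans_{(0,n),\tilde{G}}(S\Theta^H_{\psi})$, where $S\Theta^H_\psi=\pm\,\Theta_{\mathbf{1}_{H(F)}}$ since $\phi_\psi$ is the parameter of the trivial (or Steinberg, depending on normalization) representation and Arthur's stable character for the principal parameter of odd $\SO$ is the trivial representation's character. (3) Unwind the definition of spectral transfer: by definition $\trans_{\mathbf{H}^\dagger,\tilde{G}}$ dualizes the geometric transfer $\Trans$, which matches stable orbital integrals; concretely, for the character $\Theta_{\mathbf{1}_{H(F)}}\equiv 1$, the transferred distribution is the genuine function $\tilde{\delta}\mapsto \Delta(\tilde{\delta},\gamma)$, where $\Delta$ is the metaplectic transfer factor, and by the normalization of transfer factors in \cite[\S 5]{Li11} one has $|\Delta(\tilde{\delta},\gamma)|=|D^H(\gamma)|_F^{1/2}/|D^G(\delta)|_F^{1/2}$ when $\delta\leftrightarrow\gamma$ (this is exactly the $\Delta_{IV}$-type normalization). (4) Take absolute values to kill all the $\pm1$ ambiguities and conclude.

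\textbf{Main obstacle.} The crux is step (3): I must pin down precisely the absolute value of the metaplectic transfer factor $\Delta(\tilde{\delta},\gamma)$ and check that, with the conventions of this paper (eightfold covering, the chosen $\bpsi$, the endoscopic data $(n,0)$ and $(0,n)$), its modulus is the stated ratio of square roots of Weyl discriminants — and that this is symmetric in the two cases $(n,0)$ vs $(0,n)$, which a priori differ by which eigenvalue of $s$ is which, but whose $\Delta_{IV}$-factors coincide in absolute value because $D^G$ and $D^H$ are intrinsic. A secondary subtlety is making sure $S\Theta^H_\psi$ really is the trivial-representation character (and not a genuinely reducible combination): this follows because $\psi=\zeta\boxtimes r(2n)$ has $\EuScript{S}_\psi=\bmu_2$ and the Arthur packet $\Pi^H_\psi$ for odd $\SO$ at a principal parameter is a singleton consisting of the trivial representation, a fact internal to \cite{Ar13}. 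Once these normalization points are nailed down, the rest is formal: absolute values erase the signs $\epsilon(\psi^{s=-1})$, $\beta$-type signs, and the choice of $\pm$ in identifying $S\Theta^H_\psi$, leaving the clean identity asserted. I would also remark that the formula can be checked independently against Waldspurger's explicit $n=1$ computations as a sanity check.
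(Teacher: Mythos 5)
Your proposed route is circular within the logical structure of this paper. You want to deduce the identity from the fact that $\Theta^+_{\bpsi}-\Theta^-_{\bpsi}$ is (up to sign) the transfer $\trans_{(n,0)}\left(S\Theta^H_\psi\right)$ of the trivial character of $H(F)$ — but that is exactly the content of Proposition \ref{prop:principal} and of \eqref{eqn:T-principal}, whose proof \emph{uses} the present lemma. The reason it must: in the conventions of \cite[D\'efinition 5.9]{Li11}, reproduced in \eqref{eqn:Delta-principal}, the metaplectic transfer factor is the \emph{phase} $\Delta_{(n,0)}(\gamma,\tilde{\delta}) = (\Theta^+_{\bpsi}-\Theta^-_{\bpsi})(\tilde{\delta})\big/\left|(\Theta^+_{\bpsi}-\Theta^-_{\bpsi})(\tilde{\delta})\right|$, of absolute value one; the ratio $|D^H(\gamma)|_F^{1/2}/|D^G(\delta)|_F^{1/2}$ is \emph{not} built into $\Delta$ (there is no $\Delta_{IV}$ here), but enters through the normalization of the functions $S$ and $I$ by $|D^H|^{1/2}$ and $|D^G\circ\rev|^{1/2}$. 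So the assertion $|\Delta(\tilde{\delta},\gamma)| = |D^H(\gamma)|^{1/2}/|D^G(\delta)|^{1/2}$ in your step (3) is false for the normalization used, and the equality of the two normalizing factors on matching classes is precisely what the lemma supplies as an input to the endoscopic identity — it cannot be extracted as an output of it.

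The missing idea is that the lemma is proved by a short direct computation with no endoscopy at all. One uses Maktouf's explicit character formula for the Weil representation (\cite[Th\'eor\`eme 4.2]{Li11}), which gives $\left|\Theta_{\bpsi}(-\tilde{\delta})\right| = \left|\det(\delta+1\,|\,W)\right|_F^{-1/2}$, together with the observation $(\Theta^+_{\bpsi}-\Theta^-_{\bpsi})(\tilde{\delta}) = (\Theta^+_{\bpsi}+\Theta^-_{\bpsi})(-\tilde{\delta})$ coming from $\omega^{\pm}_{\bpsi}(-1)=\pm\identity$. It then remains to compare $\left|\det(\delta+1\,|\,W)\right|_F^{-1/2}$ with $|D^H(\gamma)|_F^{1/2}/|D^G(\delta)|_F^{1/2}$ by matching the eigenvalues of $\delta$ (resp.\ of $-\delta$ in case (ii)) with those of $\gamma$ apart from the eigenvalue $1$, which is an elementary cancellation in the Weyl discriminants. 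Your proposal contains no substitute for this explicit formula, so the stated equalities are never actually established.
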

\begin{proof}
	Define $-\tilde{\delta} := (-1) \tilde{\delta}$ with $-1$ as in Definition \ref{def:minus-1}, and observe that $(\Theta^+_{\bpsi} - \Theta^-_{\bpsi})(\tilde{\delta}) = (\Theta^+_{\bpsi} + \Theta^-_{\bpsi})(-\tilde{\delta})$ . By Maktouf's character formula (see \cite[Théorème 4.2]{Li11}),
	\[ \left| \Theta^+_{\bpsi}(\tilde{\delta}) - \Theta^-_{\bpsi}(\tilde{\delta}) \right| = \left| \Theta_{\bpsi}(-\tilde{\delta}) \right| = \left| \det(\delta + 1 | W) \right|_F^{-\frac{1}{2}}. \]
	
	In the case (i), the eigenvalues of $\delta$ match those of $\gamma$ (except $1$), counting multiplicities. The desired equality follows easily.
	
	In the case (ii), the eigenvalues of $-\delta$ match those of $\gamma$ (except $1$), and $D^G(\delta) = D^G(-\delta)$. The desired equality reduces to (i) at once.
\end{proof}

\begin{proposition}\label{prop:principal}
	Let $\psi = \zeta \boxtimes r(2n)$ be principal. We have $\pi^{\bpsi}_{\psi, \pm} = \Theta_{\bpsi_c}^{\pm}$ inside $D_{\mathrm{spec}, -}(\tilde{G}^{(2)}) \otimes \mes(G)^\vee$.
\end{proposition}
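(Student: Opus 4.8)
The plan is to first reduce to the case $\zeta = \mathbf{1}$ and then to carry out an explicit character computation, the input being Arthur's principal unipotent packet for $\SO(2n+1)$, the metaplectic transfer factor, and Maktouf's character formula for the Weil representation.

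\emph{Reduction to $\zeta = \mathbf{1}$.} Put $\psi_0 := \mathbf{1}\boxtimes r(2n)$, which is principal with trivial quadratic character. Since the unique index has $b = 2n$ even, Definition \ref{def:delta-c} gives $\delta_c = \mathbf{1}$, so Proposition \ref{prop:variation-pi} yields $\pi^{\bpsi_c}_{\zeta\boxtimes r(2n),\chi} = \pi^{\bpsi}_{\psi_0,\chi}$ for all $\chi \in \EuScript{S}_{\psi_0}^\vee = \EuScript{S}_{\zeta\boxtimes r(2n)}^\vee$. Applying this with $\bpsi$ replaced by $\bpsi_{c^{-1}}$, so that $(\bpsi_{c^{-1}})_c = \bpsi$ and $c^{-1}F^{\times 2} = cF^{\times 2}$, and invoking that $\omega_{\bpsi_c}^{\pm}$ depends on $c$ only through $cF^{\times 2}$ (\S\ref{sec:variation-1}), reduces everything to the statement $\pi^{\bpsi}_{\psi_0,\pm} = \Theta_{\bpsi}^{\pm}$.

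\emph{Unwinding the definition.} As $\psi_0$ is simple and $2n$-dimensional symplectic, $\EuScript{S}_{\psi_0} = S_{\psi_0} = \bmu_2 = Z_{\tilde{G}^\vee}$, the element $s_{\psi_0} = -1 \in \tilde{G}^\vee$ is nontrivial, and $x_{\psi_0}$ is the nontrivial element of $\EuScript{S}_{\psi_0}$. In Definition \ref{def:pi-psi-chi} there are just two terms, $T_{\psi_0,1}$ and $T_{\psi_0,s_{\psi_0}}$. For $s = 1$ the $(-1)$-eigenspace is zero and the associated endoscopic datum is the one attached to $(n,0)$; since $\psi_0$ is the principal unipotent parameter of $H := \SO(2n+1)$ and $\overline{\EuScript{S}}_{\psi_0}$ is trivial, its Arthur packet is the singleton $\{\mathbf{1}_H\}$ and $S\Theta^H_{\psi_0} = \Theta_{\mathbf{1}_H}$, so $T_{\psi_0,1} = \trans_{(n,0)}(\Theta_{\mathbf{1}_H}) =: \Xi$. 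Because $\epsilon(\psi_0|_{\mathcal{L}_F}) = \epsilon(\mathbf{1}^{\oplus 2n},\bpsi) = 1$, Proposition \ref{prop:T-psi-negation} gives $T_{\psi_0,s_{\psi_0}} = \tau_{-1}(\Xi)$. Hence
\[ \pi^{\bpsi}_{\psi_0,+} = \tfrac12\bigl(\Xi + \tau_{-1}\Xi\bigr), \qquad \pi^{\bpsi}_{\psi_0,-} = \tfrac12\bigl(\tau_{-1}\Xi - \Xi\bigr), \]
where $\pm$ refers to the value of $\chi$ on $-1 \in Z_{\tilde{G}^\vee}$; this is the labelling consistent with Proposition \ref{prop:p-psi-negation}, which here reads $\tau_{-1}\pi^{\bpsi}_{\psi_0,\pm} = \pm\pi^{\bpsi}_{\psi_0,\pm}$, matching $\omega_{\bpsi}^{\pm}(-1) = \pm\identity$ (Definition \ref{def:minus-1}).

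\emph{Identifying $\Xi$.} It remains to prove $\Xi = \Theta_{\bpsi}^{+} - \Theta_{\bpsi}^{-} = \tau_{-1}\Theta_{\bpsi}$. Since $\Theta_{\mathbf{1}_H}$ is the constant function $1$ on the regular semisimple set, its spectral transfer is governed purely by the metaplectic transfer factor of the $(n,0)$-datum, given in \cite[\S 5.1]{Li11} by an explicit Weil-constant expression; comparing this with Maktouf's character formula for the Weil representation \cite[Théorème 4.2]{Li11}, and using the evenness of $\dim W$ to match the Weyl-type denominators of $\Theta_{\bpsi}$ and $\tau_{-1}\Theta_{\bpsi}$, identifies $\Xi$ with $\Theta_{\bpsi}^{+} - \Theta_{\bpsi}^{-}$ on the regular semisimple locus — the equality of absolute values being exactly Lemma \ref{prop:comparison-discriminant}, so that the real content is the phase. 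As both sides lie in $D_{\mathrm{spec},-}(\tilde{G}^{(2)}) \otimes \mes(G)^\vee$ and such distributions are determined by their restriction to the regular semisimple locus, the identity holds as distributions. Substituting and using $\tau_{-1}\Theta_{\bpsi}^{\pm} = \pm\Theta_{\bpsi}^{\pm}$ gives $\pi^{\bpsi}_{\psi_0,+} = \tfrac12(\Xi + \tau_{-1}\Xi) = \Theta_{\bpsi}^{+}$ and $\pi^{\bpsi}_{\psi_0,-} = \tfrac12(\tau_{-1}\Xi - \Xi) = \Theta_{\bpsi}^{-}$, as wanted. The main obstacle is precisely this last step — checking that the normalization of the metaplectic transfer factor in \cite{Li11} is compatible with Maktouf's formula so that no spurious root of unity intervenes; the rest is formal bookkeeping with $\EuScript{S}_{\psi_0} = \bmu_2$ (and the outcome can be cross-checked against Adams' treatment \cite{Ad98} when $F = \R$).
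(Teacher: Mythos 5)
Your proposal follows the paper's route: reduce to $\zeta=\mathbf{1}$ via $\delta_c=\mathbf{1}$ and Proposition \ref{prop:variation-pi}, identify $T_{\psi_0,1}$ with the transfer of the stable character of $\mathbf{1}_H$ from the $(n,0)$-datum, and invert the two-term Fourier sum; your use of Proposition \ref{prop:T-psi-negation} to get $T_{\psi_0,-1}=\tau_{-1}\Xi$ is a harmless shortcut replacing the paper's second direct transfer computation from the $(0,n)$-datum. The one step you flag as "the main obstacle" --- matching the phase of the transfer factor against Maktouf's formula --- is not actually an open verification: by \cite[Définition 5.9]{Li11} (quoted as \eqref{eqn:Delta-principal}) the metaplectic transfer factor for the $(n,0)$-datum \emph{is} $\bigl(\Theta^+_{\bpsi}-\Theta^-_{\bpsi}\bigr)/\bigl|\Theta^+_{\bpsi}-\Theta^-_{\bpsi}\bigr|$, so once one knows that the transfer of a stable locally integrable function is computed pointwise on the regular semisimple locus by multiplying by $\Delta$ and adjusting Weyl discriminants (the paper cites \cite[\S 8.3]{Ar13} and \cite{Li19} for this), the identity $\Xi=\Theta^+_{\bpsi}-\Theta^-_{\bpsi}$ drops out of Lemma \ref{prop:comparison-discriminant} with no further normalization check. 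With that observation supplied, your argument is complete and coincides with the paper's.
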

\begin{proof}
	To begin with, assume $c=1$ and suppress the exponent $\bpsi$. The pair $(\psi, s = 1)$ (resp.\ $(\psi, s = -1)$) maps to $(\mathbf{G}^!, \psi^!)$ where $\mathbf{G}^! \in \Endo_{\elli}(\tilde{G})$ corresponds to $(n, 0)$ (resp.\ $(0, n)$), and $\psi^!$ is the same parameter viewed in $\Psi(H)$; the spectral transfer will be abbreviated as $\trans_{(n, 0)}$ (resp.\ $\trans_{(0, n)}$).
	
	Denote the transfer factors in question by $\Delta_{(n, 0)}$ and $\Delta_{(0, n)}$. According to \cite[Définition 5.9]{Li11}, if $\tilde{\delta} \in \tilde{G}$ and $\gamma \in H(F)$ are regular semisimple and correspond to each other relative to $(n, 0)$ or $(0, n)$, then
	\begin{equation}\label{eqn:Delta-principal}
		\Delta_{(n, 0)}(\gamma, \tilde{\delta}) = \frac{\Theta^+_{\bpsi}(\tilde{\delta}) - \Theta^-_{\bpsi}(\tilde{\delta})}{\left| \Theta^+_{\bpsi}(\tilde{\delta}) - \Theta^-_{\bpsi}(\tilde{\delta}) \right|}, \quad \Delta_{(0, n)}(\gamma, \tilde{\delta}) = \frac{\Theta^+_{\bpsi}(\tilde{\delta}) + \Theta^-_{\bpsi}(\tilde{\delta})}{\left| \Theta^+_{\bpsi}(\tilde{\delta}) + \Theta^-_{\bpsi}(\tilde{\delta}) \right|}.
	\end{equation}
	
	Let us show that
	\begin{equation}\label{eqn:T-principal}\begin{aligned}
		T_{\psi, 1} := \trans_{(n, 0)}(\mathbf{1}) & = \Theta_{\bpsi}^+ - \Theta_{\bpsi}^- , \\
		T_{\psi, -1} := \underbracket{\epsilon(\mathbf{1}, \bpsi)^{2n}}_{= \mathbf{1}(-1)^n = 1} \trans_{(0, n)}(\mathbf{1}) & = \Theta_{\bpsi}^+ + \Theta_{\bpsi}^- .
	\end{aligned}\end{equation}
	Take the first equation for example. We may view $\mathbf{1}$ (resp.\ $\trans_{(n, 0)}(\mathbf{1})$) as a smooth function on the regular semisimple locus of $H(F)$ (resp.\ $\tilde{G}$) without loss of information; let $S$ (resp.\ $I$) be its product with $|D^H|_F^{1/2}$ (resp.\ $|D^G \circ \rev|_F^{1/2}$). Since every regular semisimple $\delta \in G(F)$ corresponds to a unique stable conjugacy class of $\gamma \in H(F)$ via endoscopy, Arthur's techniques imply $I(\tilde{\delta}) = \Delta_{(n, 0)}(\gamma, \tilde{\delta}) S(\gamma)$ whenever $\gamma$ and $\delta := \rev(\tilde{\delta})$ are in correspondence; cf.\ \cite[\S 8.3, pp.470--473]{Ar13}, and use \cite[\S\S 5--6]{Li19} to adapt it to $\tilde{G}$. We then conclude by Lemma \ref{prop:comparison-discriminant} and \eqref{eqn:Delta-principal} The second equation in \eqref{eqn:T-principal} is settled in the same way.
	
	The earlier description of $s_\psi$ also gives
	\begin{align*}
		T_{\psi, 1} & = \pi_{\psi, +} - \pi_{\psi, -}, \\
		T_{\psi, -1} & = \pi_{\psi, +} + \pi_{\psi, -}.
	\end{align*}
	A comparison with \eqref{eqn:T-principal} implies $\pi_{\psi, \pm} = \Theta_{\bpsi}^{\pm}$.
	
	For the general case, we now have $\pi^{\bpsi_c}_{\mathbf{1} \boxtimes r(2n), \pm} = \Theta_{\bpsi_c}^{\pm}$. It remains to observe that $\delta_c = \mathbf{1}$ (Definition \ref{def:delta-c}) and apply Proposition \ref{prop:variation-pi}.
\end{proof}

The recipe above in the $+$ case was first conceived by J.\ Adams in \cite{Ad98} for $F = \R$.

\begin{remark}\label{rem:principal-mf}
	Principal Arthur packets are multiplicity-free (Definition \ref{def:multiplicity-free}) since $\bpsi \neq \bpsi'$ implies $\omega_{\bpsi}^{\pm} \not\simeq \omega_{\bpsi'}^{\pm}$, see for example \cite[p.36]{MVW87}.
\end{remark}

\subsection{Global principal Arthur parameters}\label{sec:global-principal}
Fix a number field $\dot{F}$. Define $\dot{\tilde{G}} = \Mp(\dot{W}, \A)$ using $(\dot{W}, \lrangle{\cdot|\cdot})$ and $\dot{\bpsi} = \prod_v \bpsi_v : \dot{F} \backslash \A \to \CC^{\times}$. Set $n := \frac{1}{2} \dim \dot{W}$.

\begin{definition}\label{def:principal-global}
	\index{Arthur parameter!principal}
	Global Arthur parameters of the form $\dot{\zeta} \boxtimes r(2n) \in \dot{\Psi}_2(\dot{\tilde{G}})$, where $\dot{\zeta}$ is some quadratic character of $\dot{F}^{\times} \backslash \A^{\times}$, are said to be \emph{principal}.
\end{definition}

The descriptions of $S_{\dot{\psi}} = \EuScript{S}_{\dot{\psi}}$ and $s_{\dot{\psi}}$ are identical to the local setting.

\begin{lemma}\label{prop:principal-global-epsilon}
	If $\dot{\psi}$ is principal, then the character in Theorem \ref{prop:global-multiplicity} satisfies $\epsilon_{\dot{\psi}} = \mathbf{1}$.
\end{lemma}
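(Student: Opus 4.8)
The claim is that for a principal global parameter $\dot\psi = \dot\zeta \boxtimes r(2n)$ one has $\epsilon_{\dot\psi} = \epsilon^{\mathrm{Art}}_{\dot\psi} \nu_{\dot\psi} = \mathbf{1}$ in $\EuScript{S}_{\dot\psi}^\vee$. Recall that for principal $\dot\psi$ the component group is tiny: $\EuScript{S}_{\dot\psi} = S_{\dot\psi} = \Or(1,\CC) = \bmu_2$, with $I = I^+$ a singleton, $m_i = 1$, $\dot\phi_i = \dot\zeta$ (a quadratic Hecke character, hence self-dual) and $b_i = 2n$. So it suffices to evaluate both characters at the non-trivial element $x \in \EuScript{S}_{\dot\psi}$, which is the image of $s_{\dot\psi}$ (since $b_i = 2n$ is even, the projection of $s_{\dot\psi}$ to $\Or(1,\CC)$ is $-1$). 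Thus $x = s_{\dot\psi}$, and the whole statement reduces to showing $\epsilon^{\mathrm{Art}}_{\dot\psi}(s_{\dot\psi})\,\nu_{\dot\psi}(s_{\dot\psi}) = 1$.

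First I would handle $\nu_{\dot\psi}$. By Definition \ref{def:nu-global} its unique component is $\nu_{\dot\psi, i} = \epsilon(\dot\zeta)^{b_i} = \epsilon(\dot\zeta)^{2n}$. Since $\dot\zeta$ is self-dual, $\epsilon(\dot\zeta)^2 = 1$ (stated after Definition \ref{def:nu-global}), hence $\nu_{\dot\psi}$ is the trivial character and in particular $\nu_{\dot\psi}(s_{\dot\psi}) = 1$. Alternatively one can invoke Lemma \ref{prop:nu-global} directly, which gives $\nu_{\dot\psi}(s_{\dot\psi}) = 1$ for any global parameter. So the burden falls entirely on showing $\epsilon^{\mathrm{Art}}_{\dot\psi}(s_{\dot\psi}) = 1$.

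For the Arthur sign character, I would use its definition from \cite[Theorem 1.5.2]{Ar13} (Definition \ref{def:epsilon-Art} here): $\epsilon^{\mathrm{Art}}_{\dot\psi}$ is built from the symplectic root numbers $\epsilon(\tfrac12, \dot\phi_i \otimes \dot\phi_j)$ attached to pairs of summands whose product is symplectic and whose $r(b)$-factors force a sign. With a single summand $\dot\phi_1 = \dot\zeta$, $b_1 = 2n$, the relevant datum is the self-pairing, and Arthur's formula reads $\epsilon^{\mathrm{Art}}_{\dot\psi}(s_{\dot\psi})$ as a product of local/global $\epsilon$-factors of $\dot\zeta \otimes \dot\zeta \cong \mathbf{1}$ over the appropriate index set, each of which is $\epsilon(\tfrac12, \mathbf{1}) = 1$; since $\dot\zeta$ is $1$-dimensional the symmetric and exterior square contributions degenerate and the sign is trivial. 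Concretely: the only simple constituent $\dot\zeta$ is $1$-dimensional, so $\dot\zeta$ is of orthogonal type, the pair contributing to Arthur's $\epsilon$-character must be of symplectic type (it is $\dot\zeta \otimes \dot\zeta = \mathbf{1}$, orthogonal), so it does not contribute at all, and $\epsilon^{\mathrm{Art}}_{\dot\psi} = \mathbf{1}$ outright.

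The main (and only mild) obstacle is bookkeeping: one must unwind Arthur's definition of $\epsilon^{\mathrm{Art}}_{\dot\psi}$ carefully enough to confirm that the single-summand principal case really produces no nontrivial sign — i.e.\ that the index set over which the product in \cite[Theorem 1.5.2]{Ar13} runs is empty here. This is a direct check: Arthur's character involves pairs $(\dot\phi_i, \dot\phi_j)$ with $\dot\phi_i \otimes \dot\phi_j$ symplectic and an odd exponent condition on the $r(b)$'s; with only $\dot\phi_1 = \dot\zeta$ (orthogonal, $1$-dimensional) the product $\dot\zeta \otimes \dot\zeta$ is orthogonal, so the pair is excluded and the formula collapses to $\mathbf{1}$. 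Combining $\epsilon^{\mathrm{Art}}_{\dot\psi} = \mathbf{1}$ with $\nu_{\dot\psi}(s_{\dot\psi}) = 1$ (via Lemma \ref{prop:nu-global}) gives $\epsilon_{\dot\psi} = \mathbf{1}$, as claimed. I expect this to be a short paragraph-length argument in the actual proof.
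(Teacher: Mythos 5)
Your proof is correct, and the $\nu_{\dot\psi}$ half coincides with the paper's (the paper also just notes that the value at the non-trivial element is $\epsilon(\dot\zeta)^{2n}=1$). Where you diverge is the treatment of $\epsilon^{\mathrm{Art}}_{\dot\psi}$: you unwind Arthur's definition via the decomposition of the adjoint representation into constituents $\mu_\alpha\otimes\nu_\alpha$ and observe that the only candidate $\dot\zeta\otimes\dot\zeta\simeq\mathbf{1}$ is orthogonal, so the index set over which the sign is taken is empty. That is a valid computation (for $\tilde{G}^\vee=\Sp(2n,\CC)$ the adjoint representation is $\Sym^2$ of the standard one, which here is $\mathbf{1}\boxtimes\Sym^2 r(2n)$, and no symplectic $\mu_\alpha$ occurs), but it forces you to get Arthur's formula exactly right, and your write-up hedges on this point. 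The paper instead uses the purely structural fact already recorded in Definition \ref{def:epsilon-Art}: $\epsilon^{\mathrm{Art}}_{\dot\psi}$ factors through $\overline{\EuScript{S}}_{\dot\psi}$, the quotient of $\EuScript{S}_{\dot\psi}$ by the image of $Z_{\tilde{G}^\vee}$; for a principal parameter $\EuScript{S}_{\dot\psi}=\bmu_2$ is exactly that image (the non-trivial element of $Z_{\tilde{G}^\vee}$ maps to $-1\in\Or(1,\CC)=S_{\dot\psi}$), so $\overline{\EuScript{S}}_{\dot\psi}=\{1\}$ and the character is trivial with no root-number computation at all. Your route buys an explicit verification from first principles; the paper's buys a one-line argument that sidesteps the bookkeeping you flag as the "main obstacle". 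Both are sound.
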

\begin{proof}
	We have $\epsilon^{\mathrm{Art}}_{\dot{\psi}} = \mathbf{1}$ since $\overline{\EuScript{S}}_{\dot{\psi}} = \{1\}$. On the other hand, the value of $\nu_{\dot{\psi}}$ at the non-trivial element is $\epsilon(\dot{\zeta})^{2n}$. We have seen in \S\ref{sec:global-root-number} that $\epsilon(\dot{\zeta})^2 = 1$.
\end{proof}

Consider
\[ \epsilon = (\epsilon_v)_v \in \bigoplus_v \bmu_2, \quad c \in \dot{F}^{\times}, \]
where $v$ ranges over all places of $\dot{F}$. Define the irreducible genuine representation of $\dot{\tilde{G}}$
\begin{equation*}
	\omega^\epsilon_{\dot{\bpsi}_c, \A} := \bigotimes_v \omega^{\epsilon_v}_{\bpsi_{v, c}},
\end{equation*}
which decomposes the adélic Weil representation \eqref{eqn:adelic-Weil} as
\begin{equation*}
	\omega_{\bpsi_c, \A} = \bigoplus_\epsilon \omega^\epsilon_{\dot{\bpsi}_c, \A}.
\end{equation*}
To define it, we have to pass to twofold coverings. The representation depends only on the coset $c \dot{F}^{\times 2}$, up to canonical isomorphisms.

By class field theory, $c \dot{F}^{\times 2}$ corresponds to a quadratic character $\dot{\zeta} = \bigotimes'_v \zeta_v$ of $\dot{F}^{\times} \backslash \A^{\times}$. By Proposition \ref{prop:principal}, the $v$-component of $\omega^\epsilon_{\dot{\bpsi}_c, \A}$ lies in $\Pi_{\dot{\psi}_v}$ where $\dot{\psi} := \dot{\zeta} \boxtimes r(2n)$.

\begin{proposition}\label{prop:principal-global-mult}
	For each $c \in \dot{F}^{\times}$, take the quadratic character $\dot{\zeta}$ corresponding to $c F^{\times 2}$, and the principal parameter $\dot{\psi} := \dot{\zeta} \boxtimes r(2n)$. Consider $\epsilon = (\epsilon_v)_v \in \bigoplus_v \bmu_2$.
	\begin{enumerate}[(i)]
		\item If $\prod_v \epsilon_v \neq 1$, then $\omega^\epsilon_{\dot{\bpsi}_c, \A}$ does not intervene in $L^2_{\mathrm{disc}, -}$.
		\item If $\prod_v \epsilon_v = 1$, then $\omega^\epsilon_{\dot{\bpsi}_c, \A}$ intervenes with multiplicity $1$; in fact, it appears in $L^2_{\dot{\psi}}$.
	\end{enumerate}
\end{proposition}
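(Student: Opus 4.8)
The plan is to deduce Proposition \ref{prop:principal-global-mult} directly from the multiplicity formula of Theorem \ref{prop:global-multiplicity} (equivalently, from the form given in Remark \ref{rem:global-packet}), combined with the explicit local description of principal Arthur packets in Proposition \ref{prop:principal}. Since $\dot{\psi} = \dot{\zeta} \boxtimes r(2n) \in \dot{\Psi}_2(\dot{\tilde{G}})$, Theorem \ref{prop:GI-disc} tells us $L^2_{\mathrm{disc},-} = \widehat{\bigoplus}_{\dot{\xi} \in \dot{\Psi}_2(\dot{\tilde{G}})} L^2_{\dot{\xi}}$, so the first thing to check is that $\omega^\epsilon_{\dot{\bpsi}_c, \A}$ can only contribute to the summand $L^2_{\dot{\psi}}$. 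This follows from Corollary \ref{prop:global-mf} (i): at almost all places $v$, the local component $\omega^{\epsilon_v}_{\bpsi_{v,c}}$ is the $K_v$-spherical member $\omega^+_{\bpsi_{v,c}}$ of the principal packet $\Pi_{\dot{\psi}_v}$ (using that $\omega^-$ is never unramified, as recalled after \eqref{eqn:adelic-Weil}), so by Proposition \ref{prop:unramified-mf} (ii) any discrete parameter $\dot{\xi}$ whose packet contains $\bigotimes'_v \omega^{\epsilon_v}_{\bpsi_{v,c}}$ must agree with $\dot{\psi}$ at almost all places, forcing $\dot{\xi} = \dot{\psi}$ by strong multiplicity one for $\GL$ (Jacquet--Shalika). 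Thus it suffices to compute the multiplicity of $\omega^\epsilon_{\dot{\bpsi}_c,\A}$ inside $L^2_{\dot{\psi}}$.

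Next I would unwind the combinatorics of $\EuScript{S}_{\dot{\psi}}$ and the sign characters. For principal $\dot{\psi}$ we have $\EuScript{S}_{\dot{\psi}} = \bmu_2$ and $\EuScript{S}_{\dot{\psi}_v} = \bmu_2$ for every $v$, with the localization map $\EuScript{S}_{\dot{\psi}} \to \prod_v \EuScript{S}_{\dot{\psi}_v}$ being the diagonal embedding $\bmu_2 \hookrightarrow \prod_v \bmu_2$. By Proposition \ref{prop:principal}, $\pi^{\bpsi_v}_{\dot{\psi}_v, \pm} = \Theta^{\pm}_{\bpsi_{v,c}}$, so under the formalism of \S\ref{sec:A-packets} the member $\omega^{\epsilon_v}_{\bpsi_{v,c}}$ of $\Pi_{\dot{\psi}_v}$ carries the character $\lrangle{\cdot, \omega^{\epsilon_v}_{\bpsi_{v,c}}} \in \EuScript{S}_{\dot{\psi}_v}^\vee = \bmu_2^\vee$ equal to $+$ when $\epsilon_v = +$ and $-$ when $\epsilon_v = -$ (identifying $\EuScript{S}_{\dot{\psi}_v}^\vee$ with $\{\pm 1\}$). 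By Lemma \ref{prop:principal-global-epsilon}, the sign character $\epsilon_{\dot{\psi}} = \epsilon^{\mathrm{Art}}_{\dot{\psi}} \nu_{\dot{\psi}}$ is trivial. Therefore $\bigotimes'_v \omega^{\epsilon_v}_{\bpsi_{v,c}}$ lies in $\Pi_{\dot{\psi}}(\epsilon_{\dot{\psi}})$ precisely when $\mathrm{diag}^* \prod_v \lrangle{\cdot, \omega^{\epsilon_v}_{\bpsi_{v,c}}} = \mathbf{1}$, i.e.\ when the pullback of $\prod_v \epsilon_v$ (as a sign) along the diagonal is trivial — which, since the diagonal is injective, is exactly the condition $\prod_v \epsilon_v = 1$. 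This gives part (ii), with the multiplicity being $1$ because each $\omega^{\epsilon_v}_{\bpsi_{v,c}}$ occurs with multiplicity one in $\Pi_{\dot{\psi}_v}$ (Remark \ref{rem:principal-mf}). Part (i) is the contrapositive: if $\prod_v \epsilon_v \neq 1$ then $\bigotimes'_v \omega^{\epsilon_v}_{\bpsi_{v,c}} \notin \Pi_{\dot{\psi}}(\epsilon_{\dot{\psi}})$, so it does not appear in $L^2_{\dot{\psi}}$, and by the first paragraph it cannot appear anywhere in $L^2_{\mathrm{disc},-}$.

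The one point that needs a little care — and which I expect to be the only genuine obstacle — is bookkeeping the identification $\EuScript{S}_{\dot{\psi}_v}^\vee \simeq \bmu_2$ consistently across the local and global descriptions, so that the sign $\lrangle{\cdot, \omega^{\epsilon_v}_{\bpsi_{v,c}}}$ really is $\epsilon_v$ and not $-\epsilon_v$ at some places; in particular one must track how $s_{\dot{\psi}}$ (the nontrivial element of $Z_{\tilde{G}^\vee}$) and the twist $\chi \mapsto \chi(x_\psi x)$ in Definition \ref{def:pi-psi-chi} interact with the labelling $\pi^{\bpsi}_{\psi,\pm} = \Theta^{\pm}_{\bpsi}$ of Proposition \ref{prop:principal}. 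This is purely a matter of chasing signs through Definitions \ref{def:pi-psi-chi} and \ref{def:A-packet} and the proof of Proposition \ref{prop:principal}; once it is pinned down, the rest is immediate. I would also remark that part (ii) recovers, via $\vartheta$-series, the classical fact that the even Weil representation embeds in the discrete automorphic spectrum, consistent with the map \eqref{eqn:theta-series} and the Poisson summation argument; this serves as a sanity check rather than an ingredient of the proof.
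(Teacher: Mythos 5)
Your proposal is correct and follows essentially the same route as the paper, whose proof is precisely to combine Proposition \ref{prop:principal}, Lemma \ref{prop:principal-global-epsilon}, Theorem \ref{prop:global-multiplicity} and Corollary \ref{prop:global-mf} with the multiplicity-freeness of local principal packets; you have simply written out the sign bookkeeping that the paper leaves implicit. The only caveat you raise is not a real obstacle: the identification $\lrangle{\cdot,\omega^{\epsilon_v}_{\bpsi_{v,c}}}=\epsilon_v$ is already forced by Proposition \ref{prop:principal} together with the requirement from Proposition \ref{prop:unramified-mf} (i) that the spherical member $\omega^{+}_{\bpsi_{v,c}}$ carry the trivial character at almost all places.
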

\begin{proof}
	Combine Proposition \ref{prop:principal}, Lemma \ref{prop:principal-global-epsilon}, Theorem \ref{prop:global-multiplicity} and its Corollary \ref{prop:global-mf}, by noting that principal Arthur packets are multiplicity-free over each $\dot{F}_v$.
\end{proof}

It remains to describe the automorphic realization of $\omega^\epsilon_{\dot{\bpsi}_c, \A}$ explicitly when $\prod_v \epsilon_v = 1$.

Consider the $\vartheta$-series map \eqref{eqn:theta-series}. Its restriction to $\omega^\epsilon_{\dot{\bpsi}, \A}$ is known to be nonzero if and only if $\prod_v \epsilon_v = 1$. This also holds if we use $\dot{\bpsi}_c$ instead.

\begin{proposition}\label{prop:principal-theta-series}
	Given $c \in \dot{F}^{\times}$, let $\dot{\zeta}$ be the quadratic character of $\dot{F}^{\times} \backslash \A$ determined by $c \dot{F}^{\times 2}$ and set $\dot{\psi} := \dot{\zeta} \boxtimes r(2n)$. If $\epsilon \in \bigoplus_v \bmu_2$ satisfies $\prod_v \epsilon_v = 1$, then the corresponding irreducible constituent of $L^2_{\dot{\psi}}$ (see Proposition \ref{prop:principal-global-mult}) equals $\vartheta_{\bpsi_c}( \omega^\epsilon_{\bpsi_c, \A})$.
\end{proposition}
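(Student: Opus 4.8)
The plan is to compare two realizations of the irreducible genuine representation $\omega^\epsilon_{\bpsi_c, \A} = \bigotimes_v \omega^{\epsilon_v}_{\bpsi_{v, c}}$ of $\dot{\tilde{G}}$: the abstract occurrence in $L^2_{\dot{\psi}}$ predicted by the multiplicity formula, and the concrete occurrence furnished by the $\vartheta$-series. First I would recall from Proposition \ref{prop:principal} that at each place $v$ the member of the local principal packet $\Pi_{\dot{\psi}_v}$ indexed by the trivial character of $\EuScript{S}_{\dot{\psi}_v} \simeq \bmu_2$ is $\omega^{+}_{\bpsi_{v, c}}$ and the one indexed by the non-trivial character is $\omega^{-}_{\bpsi_{v, c}}$; hence $\omega^\epsilon_{\bpsi_c, \A}$ is exactly the element $\dot{\pi} = (\dot{\pi}_v)_v \in \Pi_{\dot{\psi}}$ with $\lrangle{\cdot, \dot{\pi}_v} = \epsilon_v$ for every $v$. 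Since $\epsilon_{\dot{\psi}} = \mathbf{1}$ by Lemma \ref{prop:principal-global-epsilon} and $\prod_v \epsilon_v = 1$, this $\dot{\pi}$ belongs to $\Pi_{\dot{\psi}}(\epsilon_{\dot{\psi}})$, so by Theorem \ref{prop:main-global} (equivalently Proposition \ref{prop:principal-global-mult} (ii)) it occurs in $L^2_{\dot{\psi}}$ and with multiplicity one in all of $L^2_{\mathrm{disc}, -}$. It therefore suffices to exhibit the image of $\vartheta_{\bpsi_c}$ as one such occurrence.

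Next I would invoke the $\vartheta$-series map \eqref{eqn:theta-series}, with $\dot{\bpsi}$ replaced by $\dot{\bpsi}_c$, which is a $\dot{\tilde{G}}$-equivariant map $\omega_{\dot{\bpsi}_c, \A} \to \mathcal{A}_-$; restricted to the summand $\omega^\epsilon_{\bpsi_c, \A}$ of the decomposition \eqref{eqn:adelic-Weil} it is non-zero precisely because $\prod_v \epsilon_v = 1$ (as recalled just before the statement). As $\omega^\epsilon_{\bpsi_c, \A}$ is irreducible, the restriction of $\vartheta_{\bpsi_c}$ to it is injective, and its image is an irreducible genuine subrepresentation of the space of automorphic forms, abstractly isomorphic to $\omega^\epsilon_{\bpsi_c, \A}$. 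The analytic input I would use here is that elementary $\vartheta$-series are square-integrable: this is classical, and may be seen by computing the constant term of $\vartheta_{\bpsi_c}(f)$ along a maximal parabolic, which is again an elementary $\vartheta$-series on a metaplectic group of lower rank twisted by an explicit character, and then checking that the resulting exponents force membership in $L^2$ (equivalently, this is the theta lift from the anisotropic group $\Or_1$, whence square-integrability). Granting this, the image of $\vartheta_{\bpsi_c}|_{\omega^\epsilon_{\bpsi_c, \A}}$ is an irreducible subrepresentation of $L^2_-(\Sp(\dot{W})(\dot{F}) \backslash \dot{\tilde{G}})$, hence lies in $L^2_{\mathrm{disc}, -}$.

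Finally, by the multiplicity-one assertion of Proposition \ref{prop:principal-global-mult} (ii) there is a unique copy of $\omega^\epsilon_{\bpsi_c, \A}$ inside $L^2_{\mathrm{disc}, -}$, and it sits inside $L^2_{\dot{\psi}}$; consequently $\vartheta_{\bpsi_c}(\omega^\epsilon_{\bpsi_c, \A})$ must coincide with that copy, which is precisely the constituent of $L^2_{\dot{\psi}}$ singled out in Proposition \ref{prop:principal-global-mult}. The only step beyond formal bookkeeping is the square-integrability (and hence discreteness) of the elementary $\vartheta$-series; I expect that to be the main point to treat with care, while everything else follows by assembling Propositions \ref{prop:principal}, \ref{prop:principal-global-epsilon}, \ref{prop:principal-global-mult} together with the equivariance and non-vanishing of $\vartheta_{\bpsi_c}$.
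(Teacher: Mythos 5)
Your proposal is correct and follows essentially the same route as the paper: reduce everything to the multiplicity-one statement of Proposition \ref{prop:principal-global-mult} (ii) and then establish that elementary $\vartheta$-series are square-integrable, which you rightly single out as the only non-formal point. The paper settles that analytic step by invoking the estimates in the first part of the proof of Howe's Theorem 2.5 in \cite{Ho81} (noting they still apply to the dual pair with $\dim \dot{V} = 1$), which amounts to the same constant-term/exponent analysis you sketch.
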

\begin{proof}
	In view of the multiplicity-one property in Proposition \ref{prop:principal-global-mult} (ii), it suffices to show that every member of $\vartheta_{\bpsi_c}( \omega^\epsilon_{\bpsi_c, \A})$ is $L^2$.
	
	This property is established in the first part of the proof of \cite[Theorem 2.5]{Ho81}. What one considers in \textit{loc.\ cit.}\ is the $\vartheta$-series on $\Mp(\dot{W}, \A)$ from a reductive dual pair $(\Or(\dot{V}), \Sp(\dot{W}))$ inside $\Sp(\dot{V} \otimes \dot{W})$ such that $k := \dim \dot{V}$ is even and $k \leq n$; our case corresponds to $k = 1$. Nonetheless, the estimates in \textit{loc.\ cit.}\ including (2.25) still work.
\end{proof}

In short, the part of the genuine $L^2$-automorphic spectrum indexed by principal parameters is given by elementary $\vartheta$-series.

\subsection{The case \texorpdfstring{$n=1$}{n=1} (Waldspurger)}\label{sec:Waldspurger}
Let $n=1$. In the local setting, $\Psi(\tilde{G})$ is the union of $\Phi_{\mathrm{bdd}}(\tilde{G})$ and the subset of principal parameters (Definition \ref{def:principal}).
\begin{itemize}
	\item For $\phi \in \Phi_{\mathrm{bdd}}(\tilde{G})$ and $\chi \in \EuScript{S}_\phi^\vee$, we known that $\pi_{\phi, \chi}$ comes from $\SO(3) \simeq \PGL(2)$ or its non-split inner form via $\Theta$-lifting, according to whether $\chi$ is trivial or not, see Theorem \ref{prop:Luo}.
	\item For principal parameters $\psi = \zeta \boxtimes r(2)$, we obtain $\pi_{\psi, \pm} = \omega_{\bpsi_c}^{\pm}$ where $c F^{\times 2}$ corresponds to $\zeta$ (Proposition \ref{prop:principal}).
\end{itemize}

This recovers the local results of Waldspurger \cite{Wa80, Wa91}; see also \cite{GanP}.

In the global setting, $\dot{\Psi}_2(\dot{\tilde{G}})$ is the union of irreducible cuspidal automorphic representations of $\PGL(2, \A)$ and the subset of principal parameters $\dot{\zeta} \boxtimes r(2)$.

Recall that principal local Arthur packets are multiplicity-free (Remark \ref{rem:principal-mf}). Hence $L^2_{\mathrm{disc}, -}$ is multiplicity-free by Corollary \ref{prop:global-mf}.

Moreover, Arthur's sign character $\epsilon_{\dot{\psi}}^{\mathrm{Art}}$ is trivial as $\overline{\EuScript{S}}_{\dot{\psi}} = \{1\}$ when $n=1$. Therefore, the global multiplicity formula in Theorem \ref{prop:global-multiplicity} recovers global results of Waldspurger in \textit{loc.\ cit.}.

Suppose that $\epsilon \in \bigoplus_v \bmu_2$ satisfies $\prod_v \epsilon_v = 1$. For principal parameters $\dot{\zeta} \boxtimes r(2)$ as above, Proposition \ref{prop:principal-theta-series} implies that the irreducible constituent of $L^2_{\dot{\psi}}$ indexed by $\epsilon$ is concretely given by elementary $\vartheta$-series, namely $\vartheta_{\dot{\bpsi}_c}(\omega^\epsilon_{\dot{\bpsi}_c, \A})$. This is also contained in Waldspurger's works; cf.\ \cite[\S 3.4]{GanP}.

\subsection{The case \texorpdfstring{$n=2$}{n=2} (Gan--Ichino)}\label{sec:GI}
Here we assume $\dim W = 4$. In \cite{GI21}, Gan and Ichino attached to each pair $(\psi, \chi)$ a linear combination $\pi^{\mathrm{GI}}_{\psi, \chi}$ of genuine irreducible characters on $\tilde{G}$ with coefficients in $\Z_{\geq 0}$, where $\psi \in \Psi(\tilde{G})$ and $\chi \in \EuScript{S}_\psi^\vee$.
\index{pi-psi-chi-GI@$\pi^{\mathrm{GI}}_{\psi, \chi}$}

Moreover, they constructed $\pi^{\mathrm{GI}}_{\psi, \chi}$ for parameters of the form
\[ \psi = (\rho \boxtimes r(1)) \oplus (\zeta \boxtimes r(2)) \; \in \Psi^+(\tilde{G}) \]
where $\zeta$ is a quadratic character and $\rho$ is a $2$-dimensional symplectic representation of $\mathcal{L}_F$ that is \emph{almost tempered}, see \cite[\S 5]{GI18}. This accommodates the local components of global parameters for the discrete genuine $L^2$-automorphic spectrum.
\index{almost tempered}

By collecting the irreducibles in various $\pi^{\mathrm{GI}}_{\psi, \chi}$ as in \S\ref{sec:A-packets}, we also obtain the Arthur packet $\Pi^{\mathrm{GI}}_\psi$ fibered over $\Pi_-(\tilde{G})$ and $\EuScript{S}_\psi^\vee$.
\index{Pi-psi-GI@$\Pi^{\mathrm{GI}}_\psi$}

\begin{theorem}\label{prop:Mp4}
	For all $(\psi, \chi)$ above, we have $\pi^{\mathrm{GI}}_{\psi, \chi} = \pi_{\psi, \chi}$. Moreover, $\Pi^{\mathrm{GI}}_\psi = \Pi_\psi$ and $\Pi_\psi$ is multiplicity-free.
\end{theorem}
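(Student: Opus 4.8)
The plan is to follow the same template as the proof of Theorem~\ref{prop:local-desiderata}: reduce to good parity, then run a local-global argument through the multiplicity formula, but this time matching against Gan--Ichino's global results for $\Mp(4)$ instead of against our own packets. First I would recall that both sides of the claimed equality $\pi^{\mathrm{GI}}_{\psi,\chi}=\pi_{\psi,\chi}$ are linear combinations of genuine irreducible characters with coefficients in $\Z_{\geq 0}$ (the former by Gan--Ichino's construction, the latter by Theorem~\ref{prop:local-desiderata}), and that both are characterised by endoscopic character relations of the same shape --- so it suffices to check that Gan--Ichino's packets satisfy the defining relation that singles out $\pi_{\psi,\chi}$. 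The reduction to good parity is handled exactly as in Proposition~\ref{prop:pi-psi-gp}: a general $\psi$ of the admissible form is parabolically induced from a Levi $\tilde M$, and one checks that $\pi^{\mathrm{GI}}_{\psi,\chi}$ is likewise the full parabolic induction of the corresponding object for $\tilde M$ (this is part of the basic properties established in Lemma~\ref{prop:GI-local} referenced in the introduction). After this reduction, principal parameters are dealt with by Proposition~\ref{prop:principal} --- the packet is $\{\omega^+_{\bpsi_c},\omega^-_{\bpsi_c}\}$ on both sides --- and tempered parameters by Theorem~\ref{prop:Luo}, so the genuine content is the non-tempered, non-principal good-parity case.

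For that remaining case, the strategy outlined in the introduction (items (1)--(4) of the $n=2$ discussion) is: (i) verify that the local Gan--Ichino packets enjoy the same basic structural properties --- infinitesimal character, unramified behaviour, central character, behaviour under translation by $-1$, reduction to good parity --- as our $\pi_{\psi,\chi}$ (Lemma~\ref{prop:GI-local}); (ii) check that the global multiplicity formula of Gan--Ichino in \cite{GI21} involves exactly the sign character $\epsilon_{\dot\psi}=\epsilon^{\mathrm{Art}}_{\dot\psi}\nu_{\dot\psi}$ appearing in our Theorem~\ref{prop:global-multiplicity} (this is Lemma~\ref{prop:GI-epsilon}, which reduces to comparing $\nu_{\dot\psi}$ with the local root-number recipe in \cite{GI21} place by place); (iii) establish the local comparison for non-Archimedean $F$ and $\psi\in\Psi(\tilde G)^\star$ by taking Aubert--Zelevinsky duals of the explicit list of representations in \cite[Appendix~C]{GI21}, matching them against the anti-tempered description $\pi_{\psi,\chi}=\widehat{\pi_{\phi,\chi\tilde\mu}}$ from Lemma~\ref{prop:anti-tempered-bp} (using Chen's commutation \cite{Chen24} and the shifting character $\tilde\mu$), and invoking $\Theta$-correspondence and Kudla's cuspidal-support result \cite{Ku86} as in \S\ref{sec:anti-tempered-special} to pin down the signs; and (iv) conclude for general local $F$ and general admissible $\psi$ by re-running the globalization argument of \S\ref{sec:aux-parameters}--\S\ref{sec:proof-local}: globalize $\psi$ to a discrete $\dot\psi\in\dot\Psi_2(\dot{\tilde G})$ with local components in $\Psi(\tilde G_v)^\star$ at the auxiliary places $v\in V_0$ and anti-tempered (or tempered, or principal) elsewhere, apply both multiplicity formulas to the same test function, and isolate the place $u$. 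Since the two formulas agree at every place other than $u$ --- by the tempered case, the principal case, the $\Psi(\tilde G)^\star$ case, and the matching of sign characters --- and the fibres of the relevant diagonal maps are controlled as in Lemma~\ref{prop:aux-parameters}(vi), we get $\pi^{\mathrm{GI}}_{\psi,\chi}=\pi_{\psi,\chi}$ at $u$.

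Once the equality of the distributions is known for all $(\psi,\chi)$, the statements $\Pi^{\mathrm{GI}}_\psi=\Pi_\psi$ and the multiplicity-freeness of $\Pi_\psi$ follow formally: the Arthur packet is, by Definition~\ref{def:A-packet}, just the multi-set of irreducible constituents of the $\pi_{\psi,\chi}$ fibred over $\EuScript{S}_\psi^\vee$, so it is determined by the collection of distributions; and Gan--Ichino prove in \cite{GI21} that their packets $\Pi^{\mathrm{GI}}_\psi$ are multiplicity-free (in the tempered and principal cases this is Theorem~\ref{prop:Luo} and Remark~\ref{rem:principal-mf} respectively, and for the remaining good-parity parameters it is read off their explicit tables). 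Transporting that across the equality gives multiplicity-freeness of $\Pi_\psi$.

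The main obstacle I expect is step (iii): the case-by-case analysis of Aubert--Zelevinsky duals of the representations tabulated in \cite[Appendix~C]{GI21}. Gan--Ichino parametrise their packets via $\Theta$-lifts from $\SO$-groups of various ranks and Hasse invariants, whereas our anti-tempered description is intrinsic to $\tilde G$ and phrased through the involution $\widehat{(\cdot)}$ and the twisted shifting character $\tilde\mu_\psi$; reconciling the two requires, for each entry of the table, computing the Langlands data of the Aubert dual, identifying which $\chi\in\EuScript{S}_\psi^\vee$ it corresponds to on each side, and checking that the normalising signs $\beta(\psi)$, $\beta^{\tilde G}(\pi_{\phi,\chi\tilde\mu})$ and $\chi(s_\psi)$ line up with Gan--Ichino's conventions. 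This is where the $\Theta$-correspondence bookkeeping (dual pairs $(\Or(V^\pm),\Sp(W))$, quadratic characters that disappear because we use eightfold coverings, Kudla's persistence of cuspidal support) and the comparison of Whittaker normalisations must be done carefully; everything else is either already in the introduction's cited lemmas or is a routine invocation of the local-global machinery already set up in \S\ref{sec:proof-local}.
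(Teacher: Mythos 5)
Your proposal is correct and follows essentially the same route as the paper: reduction to good parity and the principal/tempered cases first, then the four-step program (basic properties of the Gan--Ichino packets, matching of the global sign characters, the Aubert--Zelevinsky case analysis for $\psi\in\Psi(\tilde G)^\star$ via Lemma \ref{prop:Mp4-lemma}, and re-running the globalization argument of \S\ref{sec:proof-1}--\S\ref{sec:proof-2} with $\pi^{\mathrm{GI}}$ in place of $\pi$), with multiplicity-freeness imported from Lemma \ref{prop:GI-local}(iv). You also correctly identify the genuine labor as the case-by-case comparison in step (iii); the only minor caveat is that one does not literally verify the endoscopic character relations for the Gan--Ichino packets directly, but rather matches the two constructions through the coincidence of the two global multiplicity formulas, exactly as you then go on to describe.
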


To prove this, several preparatory steps are in order.

\begin{lemma}\label{prop:GI-local}
	The formation of $\pi^{\mathrm{GI}}_{\psi, \chi}$ has the following properties.
	\begin{enumerate}[(i)]
		\item If $\phi \in \Phi_{\mathrm{bdd}}(\tilde{G})$ then $\pi^{\mathrm{GI}}_{\phi, \chi} = \pi_{\phi, \chi}$ for all $\chi \in \EuScript{S}_\phi^\vee$.
		
		\item It reduces to the case of $\psi \in \Psi_{\mathrm{gp}}(\tilde{G})$ by parabolic induction, in the same manner as Proposition \ref{prop:pi-psi-gp}.
		
		\item In the unramified situation, suppose that $\psi$ is also unramified, then
		\[ \pi^{\mathrm{GI}}_{\psi, \chi}(f_K) = \begin{cases}
			1, & \text{if}\; \chi = \mathbf{1} \\
			0, & \text{otherwise,}
		\end{cases}\]
		where $f_K$ is the unit of $\mathcal{H}_{\asp}(K \backslash \tilde{G} / K)$.
		
		\item The packets $\Pi^{\mathrm{GI}}_\psi$ are multiplicity-free.
		
		\item For $\psi \in \Psi(\tilde{G})$, the L-packets $\Pi_{\phi_\psi}$ embeds into $\Pi^{\mathrm{GI}}_\psi$ so that the diagram
		\[\begin{tikzcd}
			& \Pi_-(\tilde{G}) \\
			\Pi_{\phi_\psi} \arrow[hookrightarrow, r] \arrow[d, "{1:1}"'] \arrow[ru, "\text{incl.}"] & \Pi^{\mathrm{GI}}_\psi \arrow[d] \arrow[u] \\
			\EuScript{S}_{\phi_\psi}^\vee \arrow[hookrightarrow, r] & \EuScript{S}_\psi^\vee
		\end{tikzcd}\]
		commutes.
	\end{enumerate}
\end{lemma}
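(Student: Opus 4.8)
\textbf{Proof plan for Lemma \ref{prop:GI-local}.} The strategy is to read off each property from the corresponding structural result in \cite{GI18, GI21} and match it against what we have already established for $\pi_{\psi, \chi}$. Item (i) is essentially a restatement of Theorem \ref{prop:Luo}: Gan--Ichino's construction for tempered $\phi$ is built from $\Theta$-lifts from $\SO(5)$ and its inner form, and Luo \cite{Luo20} proves that our $\pi_{\phi, \chi}$ (characterised by endoscopic character relations) coincides with the Adams--Barbasch/Gan--Savin parametrisation which underlies the Gan--Ichino packet; so one cites \cite[Theorem 1.3 or the relevant comparison]{GI21} together with Theorem \ref{prop:Luo}. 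For item (ii), Gan--Ichino define $\pi^{\mathrm{GI}}_{\psi, \chi}$ for a general almost-tempered $\psi$ by a parabolic-induction recipe parallel to \eqref{eqn:psi-decomp-2}; one observes $\EuScript{S}_{\psi_M} \rightiso \EuScript{S}_\psi$ and that their recipe for the $\GL$-part agrees with ours, so the reduction is formal --- the only thing to check is that the normalisations of induction match, which follows from the compatibility of both constructions with \cite[Proposition 3.8.4]{Li21} (or its Gan--Ichino analogue).

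For item (iii), I would invoke the known unramified behaviour of Gan--Ichino packets: in the unramified situation $\Pi^{\mathrm{GI}}_\psi$ has a unique $K$-spherical member, parametrised by $\chi = \mathbf{1}$, with the normalisation making its value on $f_K$ equal to $1$; this is exactly the statement of Proposition \ref{prop:pi-psi-nr}/\ref{prop:pi-psi-nr} for the group $\SO(2n+1)$ transported via $\Theta$-correspondence, and is recorded in \cite{GI21}. Item (iv) --- multiplicity-freeness of $\Pi^{\mathrm{GI}}_\psi$ --- is the content of the explicit tables in \cite[Appendix C]{GI21}: Gan--Ichino list every member of every packet for $\MMp(4)$ and one simply observes no repetitions occur (principal packets being handled separately via Remark \ref{rem:principal-mf}). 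Item (v), the L-packet within an Arthur packet, is likewise either explicit in their tables or follows from the same bookkeeping as Proposition \ref{prop:L-within-0}: the $\Theta$-lift construction produces the Langlands quotients of the standard modules attached to $\Pi_{\phi_\psi}$ as genuine summands inside $\pi^{\mathrm{GI}}_{\psi, \chi}$ with the expected parametrisation by $\EuScript{S}_{\phi_\psi}^\vee \hookrightarrow \EuScript{S}_\psi^\vee$, and no multiplicity by (iv).

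The main obstacle is not any single item in isolation but the \emph{translation of conventions}: Gan--Ichino work throughout with the twofold covering $\MMp(4)$ and a specific choice of additive character, labelling packet members by characters of their $\EuScript{S}_\psi$ via $\Theta$-correspondence rather than via endoscopic character relations, and one must verify that their labelling agrees with ours up to the central twist and the sign character bookkeeping in \S\ref{sec:MMp}. In particular one has to be careful that the passage from their normalisation to ours does not introduce a spurious twist by $\nu_\psi$ or by $\zeta$; this is precisely the kind of check already performed in the proof of Proposition \ref{prop:variation-trans}, and I would handle it by the same method --- comparing both sides after transfer to (twisted) $\GL$ and invoking \cite[Theorem 12.1]{GS1}. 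Once the dictionary is fixed, (i)--(v) become essentially mechanical consequences of the explicit Gan--Ichino data combined with Theorems \ref{prop:Luo}, \ref{prop:local-desiderata} and the unramified computations in \S\ref{sec:local-desiderata}.
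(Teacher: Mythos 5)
Your items (i), (iv) and (v) line up with the paper, which simply cites the constructions and \cite[\S 2.2]{GI21} together with Theorem \ref{prop:Luo} (iii). The genuine gap is in item (ii), which you dismiss as "formal". The reduction to good parity is in fact the hardest part of this lemma. Gan--Ichino do \emph{not} define $\pi^{\mathrm{GI}}_{\psi,\chi}$ for Saito--Kurokawa-type parameters by a parabolic-induction recipe; their representations are built via $\Theta$-lifts, and to show that the construction "reduces to good parity by parabolic induction in the same manner as Proposition \ref{prop:pi-psi-gp}" one must (a) identify the Gan--Ichino representation with an irreducible subquotient of $I_{\tilde{P}_1}\bigl(\zeta|\cdot|_F^{s} \boxtimes \omega^{\pm}_{W_1,\bpsi_a}\bigr)$ for the almost-tempered parameter $\psi = \zeta|\cdot|_F^{s}\boxtimes r(1)\boxtimes r(1) \oplus \zeta^{-1}|\cdot|_F^{-s}\boxtimes r(1)\boxtimes r(1) \oplus \chi_a\boxtimes r(1)\boxtimes r(2)$ with $0 \le s < \tfrac{1}{2}$ (this is extracted from \cite[\S 8.1 and Appendix C]{GI21}), and then (b) prove that this induced representation is \emph{irreducible}, so that it equals the full induction demanded by the good-parity reduction. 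Step (b) is a separate, non-trivial statement (Lemma \ref{prop:Mp4-irred} in the paper): over non-Archimedean $F$ it uses \cite{HM10}, over $\CC$ it uses \cite{MR17}, and over $\R$ it requires an irreducibility criterion of \cite{PPSR10} applied to the parabolic $\tilde{P}_1$, checking goodness/badness of roots against the inducing genuine character. None of this is supplied by your appeal to the compatibility of induction normalizations.

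A secondary weakness is item (iii): you invoke "known unramified behaviour recorded in \cite{GI21}", but the value $\pi^{\mathrm{GI}}_{\psi,\chi}(f_K)$ is not stated there in the required form. The paper verifies it directly by inspecting the table in \cite[C.1.4]{GI21}: one must check that the spherical vector survives in the relevant Langlands quotients (via the Gindikin--Karpelevich formula for $\tilde{G}$) and that the other listed members ($\omega^{-}_{W_1,\bpsi_a}$, $\pi_\varphi^{+,+}$, $\pi_\varphi^{-,-}$, $\mathrm{st}_{\chi_a,\bpsi}$ for $a \in \mathfrak{o}_F^{\times}$) are not $K$-spherical. Your closing paragraph about translating conventions is a legitimate concern, but it is not where the work lies in this lemma; the labelling for tempered parameters is fixed by Theorem \ref{prop:Luo} (iii), and the non-tempered comparison is deferred to the global argument of Theorem \ref{prop:Mp4}.
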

\begin{proof}
	(i) is inherent in the constructions of \cite{GI21}, in view of Theorem \ref{prop:Luo} (iii).
	
	For (ii), it suffices to consider $\psi \notin \Phi_{\mathrm{bdd}}(\tilde{G})$. It almost suffices to inspect the table in \cite[Appendix C]{GI21}. Nonetheless, the case of Saito--Kurokawa parameters requires some care, since one has to include almost tempered summands as in \cite[\S 8.1]{GI21}, namely we shall consider
	\[ \psi = \zeta |\cdot|_F^s \boxtimes r(1) \boxtimes r(1) \oplus \zeta^{-1} |\cdot|_F^{-s} \boxtimes r(1) \boxtimes r(1) \oplus \chi_a \boxtimes r(1) \boxtimes r(2), \]
	where $\zeta$ is a unitary character, $\chi_a$ is the quadratic character determined by $a \in F^{\times}$ modulo $F^{\times 2}$, and $0 \leq s < \frac{1}{2}$. We have $\EuScript{S}_\psi^\vee \simeq \bmu_2$.

	Given such a parameter, let $\tilde{P}_1$ be the standard parabolic subgroup with Levi factor $\tilde{M}_1 := \GL(1, F) \times \Mp(W_1)$, where $W_1$ is a $2$-dimensional symplectic subspace of $W$, then $\psi$ is induced from $\Psi_{\mathrm{gp}}(\tilde{M}_1)$. It is shown at the end of \cite[\S 8.1]{GI21} (in the $+$ case by looking at $\phi_\psi$) and \cite[Lemmas C6 (iii) and C8 (ii)]{GI21} (in the $-$ case) that $\pi_{\psi, \pm}$ is an irreducible subquotient of $I_{\tilde{P}_1}\left( (\zeta |\cdot|_F^s \boxtimes \omega^\pm_{W_1, \bpsi_a}) \right)$. The case $F = \CC$ is omitted in \textit{loc.\ cit.}, but the proof is similar to that in Lemma A10 except that one employs the complex version of induction principle \cite[Corollary 3.21]{AB95} for $\Theta$-correspondence instead.
	
	In view of \S\ref{sec:Waldspurger}, it suffices to show $I_{\tilde{P}_1}\left( (\zeta |\cdot|_F^s \boxtimes \omega^\pm_{W_1, \bpsi_a}) \right)$ is irreducible. This is stated as a separate Lemma \ref{prop:Mp4-irred} below.
	
	For (iii), consider the table in \cite[C.1.4]{GI21}. After reduction to good parity by (ii), we are in the cases of line 1, 3, 4, 5 in the first column of \textit{loc.\ cit.} It remains to
	\begin{itemize}
		\item show the spherical vector survives in Langlands quotients using the Gindikin--Karpelevich formula (see eg.\ \cite[Theorem 6.4]{Mc11}) as in the linear case,
		\item observe that $\omega^-_{W_1, \bpsi_a}$, $\pi_\varphi^{+, +}$, $\pi_\varphi^{-,-}$, $\mathrm{st}_{\chi_a, \bpsi}$ (notation of la \textit{loc.\ cit.}) are not unramified, where $a \in \mathfrak{o}_F^{\times}$.
	\end{itemize}
	
	The assertions (iv) and (v) are in \cite[\S 2.2]{GI21}.
\end{proof}

\begin{lemma}\label{prop:Mp4-irred}
	Let $\tilde{P}_1 = \tilde{M}_1 U_1(F)$ be the standard parabolic subgroup of $\tilde{G}$ with $\tilde{M}_1 = \GL(1, F) \times \Mp(W_1)$. Let $\zeta$ be a unitary character of $F^{\times}$ and $\chi_a$ be the quadratic character determined by $a \in F^{\times}$ modulo $F^{\times 2}$. Then $I_{\tilde{P}_1}\left( \zeta |\cdot|_F^s \boxtimes \omega^\pm_{W_1, \bpsi_a}\right)$ is irreducible when $0 \leq s < \frac{1}{2}$.
\end{lemma}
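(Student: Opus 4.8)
The plan is to deduce the irreducibility from the $\Theta$-correspondence, exactly in the spirit of the proof of \cite[Lemma A10]{GI21}, and then to cross-check it with a Jacquet-module computation on the metaplectic side.

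\textbf{Reduction via the induction principle.} First I would recall, from \S\ref{sec:Waldspurger} (Waldspurger) and \cite{GS1}, that $\omega^{\pm}_{W_1,\bpsi_a}$ is the small theta lift $\Theta_{\bpsi_a}(\chi^{\pm})$ to $\Mp(W_1)=\Mp(2)$ of the trivial, resp.\ sign, character $\chi^{\pm}$ of the one-dimensional orthogonal group $\Or(V_a)$, where $V_a$ is the line carrying the form scaled by $a$. I would then enlarge the orthogonal space to $U:=V_a\oplus\mathbb{H}$ with $\mathbb{H}$ a hyperbolic plane, so $\dim U=3$, and let $Q\subset\Or(U)$ be the parabolic with Levi $\GL(1)\times\Or(V_a)$. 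Kudla's filtration of the Weil representation $\omega_{U,W}$ along $Q$ (\cite[Theorem 2.5]{Ku86}; its complex analogue is \cite[Corollary 3.21]{AB95}, with a parallel real version) shows that the big theta lift $\Theta_{\bpsi_a,W}\!\bigl(\sigma_s\bigr)$ of the degenerate principal series
\[
\sigma_s:=\Ind_{Q}^{\Or(U)}\bigl(\zeta|\cdot|_F^{s}\boxtimes\chi^{\pm}\bigr)
\]
carries a finite filtration whose graded pieces are governed by the Jacquet module $r_Q(\sigma_s)=\bigl(\zeta|\cdot|_F^{s}\boxtimes\chi^{\pm}\bigr)\oplus\bigl(\zeta^{-1}|\cdot|_F^{-s}\boxtimes\chi^{\pm}\bigr)$ together with the ``boundary'' term $\Theta_{\bpsi_a,W}(\chi^{\pm})=\omega^{\pm}_{W,\bpsi_a}$; the pieces coming from $r_Q(\sigma_s)$ are precisely $I_{\tilde{P}_1}\bigl(\zeta|\cdot|_F^{s}\boxtimes\omega^{\pm}_{W_1,\bpsi_a}\bigr)$ and $I_{\tilde{P}_1}\bigl(\zeta^{-1}|\cdot|_F^{-s}\boxtimes\omega^{\pm}_{W_1,\bpsi_a}\bigr)$.

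\textbf{The elementary input and the conclusion.} For $0\le s<\tfrac12$ the representation $\sigma_s$ is irreducible: $U$ is isotropic, so $\SO(U)\cong\PGL_2$, and $\sigma_s$ is, up to a twist and the harmless central sign of $\Or(U)/\SO(U)$, a principal series of $\PGL_2(F)$, whose reducibility can occur only at $s=\pm\tfrac12$. Combining this with the nonvanishing of $\Theta_{\bpsi_a,W}(\sigma_s)$ (Howe--Waldspurger first occurrence) and the Howe duality principle, one gets that $\Theta_{\bpsi_a,W}(\sigma_s)$ is indecomposable of length at most three, its constituents lying among $\omega^{\pm}_{W,\bpsi_a}$, $I_{\tilde{P}_1}\bigl(\zeta|\cdot|_F^{s}\boxtimes\omega^{\pm}_{W_1,\bpsi_a}\bigr)$ and $I_{\tilde{P}_1}\bigl(\zeta^{-1}|\cdot|_F^{-s}\boxtimes\omega^{\pm}_{W_1,\bpsi_a}\bigr)$; since these are pairwise non-isomorphic for $0\le s<\tfrac12$ (different cuspidal supports: $\zeta|\cdot|_F^{s}$ is then not a constituent of the cuspidal support of $\omega^{\pm}_{W,\bpsi_a}$, and $\zeta|\cdot|_F^{s}$ and $\zeta^{-1}|\cdot|_F^{-s}$ differ), each of them, in particular $I_{\tilde{P}_1}\bigl(\zeta|\cdot|_F^{s}\boxtimes\omega^{\pm}_{W_1,\bpsi_a}\bigr)$, is irreducible. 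As a parallel verification — and the route I would take for $F=\R$ if one prefers to bypass the real induction principle — one computes the two Jacquet modules of $I_{\tilde{P}_1}\bigl(\zeta|\cdot|_F^{s}\boxtimes\omega^{\pm}_{W_1,\bpsi_a}\bigr)$ by the geometric lemma from the known Jacquet modules of $\omega^{\pm}_{W_1,\bpsi_a}$, and shows that the long intertwining operator $J_{\tilde{P}_1^{-}|\tilde{P}_1}\bigl(\zeta|\cdot|_F^{s}\boxtimes\omega^{\pm}_{W_1,\bpsi_a}\bigr)$ is holomorphic and bijective in the range $0\le s<\tfrac12$; with the normalization provided by the $\mu$-functions underlying \cite{GS1} (cf.\ Proposition~\ref{prop:normalizing-factor}) the first reducibility occurs at $s=\tfrac12$, so the standard module equals its Langlands quotient and is irreducible.

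\textbf{Main obstacle.} The crux is pinning down that $s=\tfrac12$ is the first reducibility point, i.e.\ excluding any accidental reducibility for $s\in(0,\tfrac12)$ and, at the endpoint $s=0$, showing that the unitarily induced module stays irreducible (equivalently, that the relevant sign in the normalized self-intertwining operator is $+1$, so that no $R$-group phenomenon intervenes). In the theta-theoretic route this is the content of the nonvanishing and the length bound for $\Theta_{\bpsi_a,W}(\sigma_s)$, which must be made uniform in $s$ and in the sign $\pm$ (and needs a little extra care at $s=0$, where the two induced pieces coincide); in the direct route it is the assertion that the Plancherel measure of $\zeta|\cdot|_F^{s}\boxtimes\omega^{\pm}_{W_1,\bpsi_a}$ has neither zero nor pole for $0\le s<\tfrac12$, which one imports from the explicit computations behind \cite{GS1} (equivalently from the Waldspurger correspondence, since $\omega^{\pm}_{W_1,\bpsi_a}$ has principal parameter). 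Once this single point is secured, the remaining steps are routine, and the same analysis also yields the companion subquotient statement invoked in the proof of Lemma~\ref{prop:GI-local}(ii).
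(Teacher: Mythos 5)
Your strategy (realize $\omega^{\pm}_{W_1,\bpsi_a}$ as a theta lift from $\Or(V_a)$ with $\dim V_a=1$ and transport the irreducibility of a degenerate principal series on $\Or(U)$, $\dim U=3$, through Kudla's filtration) is genuinely different from the paper's, which is essentially a collection of citations: \cite[Lemma C1]{GI21} or \cite{HM10} for non-Archimedean $F$, \cite[Théorème 6.6, Remarque 6.7]{MR17} for $F=\CC$, \cite[Lemma A10]{GI21} for the real $-$ case, and for the real $+$ case the good/bad-root criterion of \cite[Proposition 8.9]{PPSR10} applied with $\nu=(s,\tfrac12)$ and $\Delta(\mathfrak{u}_1)=\{2\epsilon_1,\epsilon_1\pm\epsilon_2\}$, where $\lrangle{\nu,\beta}\notin\Z_{\geq 1}$ precisely because $0\leq s<\tfrac12$. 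Your route is in the spirit of Gan--Ichino's own Lemma A10, but the step where you conclude is a non sequitur: knowing that $\Theta_{\bpsi_a,W}(\sigma_s)$ has a filtration whose graded pieces lie among $\omega^{\pm}_{W,\bpsi_a}$ and the two induced modules $I_{\tilde{P}_1}\bigl(\zeta^{\pm 1}|\cdot|_F^{\pm s}\boxtimes\omega^{\pm}_{W_1,\bpsi_a}\bigr)$, and that these three are pairwise non-isomorphic, says nothing about whether each graded piece is itself irreducible --- which is exactly what is to be proved. To close the argument one must run the correspondence backwards: show that \emph{every} irreducible subquotient $\pi$ of $I_{\tilde{P}_1}\bigl(\zeta|\cdot|_F^{s}\boxtimes\omega^{\pm}_{W_1,\bpsi_a}\bigr)$ has nonzero theta lift to the relevant $3$-dimensional space, that this lift is a subquotient of the irreducible $\sigma_s$, and then use multiplicity one in the Howe correspondence to bound the length of the induced module by one. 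None of this is supplied, and the separation by cuspidal supports collapses at $s=0$ with $\zeta$ quadratic (a case needed for Lemma \ref{prop:GI-local} (ii)), where the two induced pieces coincide.

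Your fallback via intertwining operators is also thinner than it looks in the $+$ case: $\omega^{+}_{W_1,\bpsi_a}$ is neither supercuspidal nor square-integrable, so the $\mu$-function/Plancherel criterion locating the first reducibility point (which presupposes discrete-series data on the $\Mp(W_1)$ factor) does not apply directly; one would first have to resolve $\omega^{+}_{W_1,\bpsi_a}$ as a Langlands quotient from $\tilde{B}$ and analyze the full principal series, or use a criterion adapted to non-tempered inducing data. This is precisely why the paper changes tools for real $+$ and invokes \cite{PPSR10}, and why for $F=\CC$ it appeals to \cite{MR17} rather than to a real-type induction principle. If you wish to keep the theta route, model it on the actual proof of \cite[Lemma A10]{GI21}, which supplies the missing reverse-lift and multiplicity-one inputs; as written, the proposal does not yet constitute a proof.
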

\begin{proof}
	First off, note that $\omega_{W_1, \bpsi_a}^-$ is square-integrable. For non-Archimedean $F$, one can appeal to either \cite[Lemma C1]{GI21} or \cite{HM10} to conclude irreducibility.
	
	For $F = \CC$, one may employ \cite[Théorème 6.6 + Remarque 6.7]{MR17}, which applies to the complex-metaplectic setting as well.
	
	Assume $F = \R$ in what follows. The proof of \cite[Lemma A10]{GI21} contains the irreducibility in the $-$ case. We will apply \cite[Proposition 8.9]{PPSR10} to the parabolic subgroup $\tilde{P}_1$ to settle the $+$ case.
	
	To verify the conditions in the cited result, identify $\mathfrak{t}^*$ with $\R^2$ and let $\epsilon_1, \epsilon_2$ be the standard basis. Express $\zeta  \boxtimes \omega^+_{W_1, \bpsi_a}$ as a Langlands quotient induced from $\tilde{T}$. If we write
	\[ \tilde{T} \simeq \tilde{T}^1 \times (\R_{> 0}^{\times})^2, \]
	where $\tilde{T}^1$ is the maximal compact subgroup of $\tilde{T}$, then the inducing data alluded to above is expressed as $(\delta, \nu_T)$ where $\delta$ is a genuine character of $\tilde{T}^1$ and $\nu_T$ is the positive twisting parameter.
	
	The datum $\nu$ in \textit{loc.\ cit.}\ is $(s, \frac{1}{2}) \in \mathfrak{t}^*$. The cited condition for irreducibility is that $\lrangle{\nu, \beta} \notin \{1, 3, 5, \ldots\}$ for all $\beta \in \Delta(\mathfrak{u}_1)$ that are good for $\delta$, and $\lrangle{\nu, \beta} \notin \{2, 4, 6, \ldots\}$ for all $\beta \in \Delta(\mathfrak{u}_1)$ that are bad for $\delta$. Here 
	\begin{itemize}
		\item $\lrangle{\cdot, \cdot}$ is the standard inner product on $\R^2 \simeq \mathfrak{t}^*$,
		\item $\Delta(\mathfrak{u}_1) = \{ 2\epsilon_1$, $\epsilon_1 \pm \epsilon_2 \}$ is the set of positive roots appearing in $\mathfrak{u}_1$;
		\item good and bad roots for $\delta$ are defined in \S 3.1 and \S 3.4 of \textit{loc.\ cit.}\ through elements $m_\beta \in \tilde{M}_1$: we say $\beta$ is good for $\delta$ if $\delta(m_\alpha) \neq -1$, otherwise bad.
	\end{itemize}
	
	In fact, $m_\beta$ is the element denoted by $\gamma$ in \cite[\S 8.1.2]{Li21}, where a precise description can be found. We have $\lrangle{\nu, \beta} \notin \Z_{\geq 1}$ since $0 \leq s < \frac{1}{2}$, so the goodness is irrelevant here.
\end{proof}

Global inputs are also needed. Let $\dot{F}$ be a number field and consider the adélic metaplectic covering $\dot{\tilde{G}} \to \dot{G}(\A)$ associated with $(\dot{W}, \lrangle{\cdot|\cdot})$ and $\dot{\bpsi} = \prod_v \bpsi_v$, where $\dim \dot{W} = 4$. In \cite[Theorem 2.1]{GI21} is given a multiplicity formula for $L^2_{\dot{\psi}} \subset L^2_{\mathrm{disc}, -}$, where $\dot{\psi} \in \dot{\Psi}_2(\dot{\tilde{G}})$. It takes the same form as Theorem \ref{prop:global-multiplicity}, except that
\begin{itemize}
	\item $\pi^{\mathrm{GI}}_{\dot{\psi}_v, \chi_v}$ replaces $\pi_{\dot{\psi}_v, \chi_v}$ at each place $v \in V$;
	\item a priori, another character $\tilde{\epsilon}_{\dot{\psi}}$ of $\EuScript{S}_{\dot{\psi}}^\vee$ replaces $\epsilon_{\dot{\psi}}$.
\end{itemize}

\begin{lemma}\label{prop:GI-epsilon}
	We have $\tilde{\epsilon}_{\dot{\psi}} = \epsilon_{\dot{\psi}}$ for all $\dot{\psi} \in \dot{\Psi}_2(\dot{\tilde{G}})$.
\end{lemma}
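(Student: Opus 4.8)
The statement asserts that Gan--Ichino's sign character $\tilde\epsilon_{\dot\psi}$ coincides with our $\epsilon_{\dot\psi}=\epsilon^{\mathrm{Art}}_{\dot\psi}\nu_{\dot\psi}$ for every $\dot\psi\in\dot\Psi_2(\dot{\tilde G})$ with $\dim\dot W=4$. The idea is a local-global comparison: by Lemma \ref{prop:GI-local} the local distributions $\pi^{\mathrm{GI}}_{\dot\psi_v,\chi_v}$ agree with $\pi_{\dot\psi_v,\chi_v}$ whenever $\dot\psi_v$ is a bounded L-parameter (part (i)), is unramified (part (iii)), or when we have reduced to good parity (part (ii)); and in the remaining cases (Saito--Kurokawa and the other non-generic families of \cite[Appendix C]{GI21}) the case-by-case analysis via Aubert--Zelevinsky duals --- step (3) of the $n=2$ strategy in the introduction --- will already have identified $\pi^{\mathrm{GI}}_{\dot\psi_v,\chi_v}$ with $\widehat{\pi_{\phi_{\dot\psi_v},\chi_v\tilde\mu}}=\pi_{\dot\psi_v,\chi_v}$ for non-Archimedean $v$ with $\dot\psi_v\in\Psi(\tilde G_v)^\star$. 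Granting that the local distributions coincide on a large enough set of places, both multiplicity formulas --- our Theorem \ref{prop:global-multiplicity} and \cite[Theorem 2.1]{GI21} --- compute the same trace $\Tr L^2_{\dot\psi}(\dot f)$, hence
\[
\sum_{\chi_V\in\mathcal X(\dot\psi,V,\epsilon_{\dot\psi})}\prod_{v\in V}\pi_{\dot\psi_v,\chi_v}(f_v)
=\sum_{\chi_V\in\mathcal X(\dot\psi,V,\tilde\epsilon_{\dot\psi})}\prod_{v\in V}\pi^{\mathrm{GI}}_{\dot\psi_v,\chi_v}(f_v),
\]
and a Fourier-inversion/linear-independence argument forces $\epsilon_{\dot\psi}=\tilde\epsilon_{\dot\psi}$.

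First I would reduce to $\dot\psi$ not generic and, using Lemma \ref{prop:GI-local}(ii) together with Proposition \ref{prop:pi-psi-gp}, reduce to $\dot\psi$ of good parity; one checks that both $\tilde\epsilon$ and $\epsilon$ are compatible with the passage to the good-parity Levi, just as $\nu_{\dot\psi}$ and $\epsilon^{\mathrm{Art}}_{\dot\psi}$ are (this is the analogue of Lemma \ref{prop:mu-tilde-induction}). For good-parity $\dot\psi$ over $\dot F$ with $n=2$, the only discrete parameters are (up to the passage to a Levi already handled) Saito--Kurokawa type $\chi_a\boxtimes r(1)\boxtimes r(1)\oplus\mathbf 1\boxtimes r(1)\boxtimes r(2)$-style parameters and Howe--Piatetski-Shapiro/Yoshida type ones; for each, $\EuScript S_{\dot\psi}$ is small (a quotient of $\bmu_2^2$), so the characters $\tilde\epsilon_{\dot\psi}$ and $\epsilon_{\dot\psi}$ are each determined by one or two sign values. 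I would then pick one convenient auxiliary finite set $V$ and, by the globalization lemma (Lemma \ref{prop:globalization}) applied just as in \S\ref{sec:aux-parameters}, arrange that at all but one place $u$ the local parameter $\dot\psi_v$ is either unramified or lies in $\Psi(\tilde G_v)^\star$ (or is a bounded L-parameter at Archimedean $v\ne u$), so that at those places $\pi^{\mathrm{GI}}_{\dot\psi_v,\chi_v}=\pi_{\dot\psi_v,\chi_v}$ by Lemma \ref{prop:GI-local}(i),(iii), the $n=2$ step (3), and Proposition \ref{prop:anti-tempered-star}. Evaluating both trace formulas against test functions $\dot f=\prod_v f_v$ which single out a prescribed $\chi_v$ at $v\ne u$ and are arbitrary at $u$, and using the injectivity/surjectivity of the localization maps on $\EuScript S_{\dot\psi}$ (as in Lemma \ref{prop:aux-parameters}(vi)), one extracts an equation of the form $\pi_{\psi,\chi}=\pi^{\mathrm{GI}}_{\psi,\chi}$ at $u$ together with $\mathcal X(\dot\psi,V,\epsilon_{\dot\psi})=\mathcal X(\dot\psi,V,\tilde\epsilon_{\dot\psi})$, which says precisely $\epsilon_{\dot\psi}=\tilde\epsilon_{\dot\psi}$ on the image of the diagonal.

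The point needing care is that $\mathcal X(\dot\psi,V,\epsilon)$ only constrains $\epsilon$ through its pullback under $\mathrm{diag}_V$, so to conclude $\epsilon_{\dot\psi}=\tilde\epsilon_{\dot\psi}$ globally I must ensure the globalization makes $\mathrm{diag}_V$ (hence a fortiori $\mathrm{diag}$) injective --- which Lemma \ref{prop:aux-parameters}(vi) provides --- and I must be sure that the genuine irreducible characters $\pi_{\dot\psi_v,\chi_v}$ occurring are linearly independent as $\chi_v$ varies, at the auxiliary places; this holds because at the $\Psi(\tilde G_v)^\star$ places Lemma \ref{prop:anti-tempered-bp} gives distinct irreducibles, and at unramified places Proposition \ref{prop:unramified-mf}(i) pins down the spherical member uniquely. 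The main obstacle I anticipate is not this bookkeeping but rather the step (3) verification that feeds into it: matching $\pi^{\mathrm{GI}}_{\psi,\chi}$ with $\widehat{\pi_{\phi_\psi,\chi\tilde\mu}}$ non-Archimedean place by place via Aubert--Zelevinsky duals of the explicit list in \cite[Appendix C]{GI21}, where one must also keep track of the local root numbers hidden in $T_{\psi,s}$ (the sign $\epsilon(\psi^{s=-1})$, equal to $\epsilon(\phi_\psi^{s=-1})$ by Lemma \ref{prop:epsilon-phi-psi}) and of $\tilde\mu_\psi=\mu_\psi\nu_\psi\nu_{\phi_\psi}$. Once that identification is in hand, the comparison of the two multiplicity formulas is essentially formal, and a short Fourier-inversion argument on $\EuScript S_{\dot\psi}$ completes the proof. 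Alternatively, and perhaps more cleanly, one can sidestep auxiliary places entirely by invoking Remark \ref{rem:ArthurMp-Gan}: since $\nu^{\mathrm{Gan}}_{\dot\psi}=\nu_{\dot\psi}$ and $\epsilon^{\mathrm{Art}}_{\dot\psi}$ is the same in both frameworks, it suffices to check that Gan--Ichino's $\tilde\epsilon_{\dot\psi}$ is Gan's $\epsilon^{\mathrm{Art}}_{\dot\psi}\cdot\nu^{\mathrm{Gan}}_{\dot\psi}$ --- which is exactly what their multiplicity formula \cite[Theorem 2.1]{GI21} asserts once matched against \cite[Conjecture 8.2]{Gan14} --- and then apply Remark \ref{rem:ArthurMp-Gan}. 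I would present both routes and use whichever makes the exposition shortest.
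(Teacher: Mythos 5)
There is a genuine gap in your main route. Lemma \ref{prop:GI-epsilon} is a statement about an \emph{arbitrary given} $\dot{\psi} \in \dot{\Psi}_2(\dot{\tilde{G}})$, whereas your argument treats $\dot{\psi}$ as something you get to \emph{construct}: the globalization Lemma \ref{prop:globalization} produces a new global parameter from prescribed local data, but it cannot rearrange the local components of a parameter that is already given. For a general $\dot{\psi}$ (say a Saito--Kurokawa parameter whose $\PGL(2)$-constituent is ramified principal series or an unramified twist of Steinberg at some finite place), the localizations $\dot{\psi}_v$ at ramified places need not lie in $\Psi(\tilde{G}_v)^{\star}$, need not be tempered, and the localization maps $\EuScript{S}_{\dot{\psi}} \to \prod_{v}\EuScript{S}_{\dot{\psi}_v}$ restricted to the ``good'' places need not be injective; so neither the local identifications $\pi^{\mathrm{GI}}_{\dot{\psi}_v,\chi_v} = \pi_{\dot{\psi}_v,\chi_v}$ at all-but-one place nor the Fourier-inversion step is available. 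At best your comparison of the two trace identities would pin down $\mathrm{diag}^*$ of the two sign characters on a subgroup, for a restricted class of $\dot{\psi}$. There is also an ordering problem: the paper needs Lemma \ref{prop:GI-epsilon} \emph{before} running the local--global argument of Theorem \ref{prop:Mp4}, precisely so that the two multiplicity formulas can be compared term by term; deriving the lemma from that comparison inverts the intended logic. Your ``alternative route'' via Remark \ref{rem:ArthurMp-Gan} begs the question: that $\tilde{\epsilon}_{\dot{\psi}}$ (defined by the explicit formulas of \cite[\S 2.1]{GI21}) equals $\epsilon^{\mathrm{Art}}_{\dot{\psi}}\nu_{\dot{\psi}}$ is exactly the content of the lemma, not something Gan--Ichino assert.

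The intended proof is a direct, purely formula-level computation, with no trace formula and no globalization. One reads off $\tilde{\epsilon}_{\dot{\psi}}$ from the explicit root-number formulas in \cite[\S 2.1]{GI21} and checks that it agrees with $\nu_{\dot{\psi}}$ (Definition \ref{def:nu-global}) in every case except Saito--Kurokawa, where $\tilde{\epsilon}_{\dot{\psi}}$ carries an extra Rankin--Selberg $\epsilon$-factor; one then identifies $\epsilon^{\mathrm{Art}}_{\dot{\psi}}$ for $n=2$ (either by unwinding Arthur's definition or by quoting his $\GSp(4)$ computation, noting that passing between $\Sp(4,\CC)$ and $\GSp(4,\CC)$ does not change the adjoint-action sign) and observes that it is non-trivial exactly for Saito--Kurokawa parameters, where it equals that same Rankin--Selberg factor. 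Since $\EuScript{S}_{\dot{\psi}}$ is at most $\bmu_2^2$ here, this is a short finite check. I recommend you carry out this explicit comparison rather than the local--global scheme.
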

\begin{proof}
	In $\epsilon_{\dot{\psi}} = \epsilon^{\mathrm{Art}}_{\dot{\psi}} \nu_{\dot{\psi}}$ (see Theorem \ref{prop:global-multiplicity}), the latter factor has a simple description via global root numbers. From the formulas in \cite[\S 2.1]{GI21}, we see that $\nu_{\dot{\psi}} = \tilde{\epsilon}_{\dot{\psi}}$ except for Saito--Kurokawa parameters, in which case $\tilde{\epsilon}_{\dot{\psi}}$ includes an extra Rankin--Selberg $\epsilon$-factor.
	
	It remains to identify $\epsilon^{\mathrm{Art}}_{\dot{\psi}}$.	For this purpose, one can either compute $\epsilon^{\mathrm{Art}}_{\dot{\psi}}$ by unwinding definitions in \cite{Ar13} or consult \cite[pp.78--80]{Ar04}. In the latter reference, the group is $\GSp(4)$ whose dual is $\GSp(4, \CC)$, whilst $\dot{\tilde{G}}^\vee = \Sp(4, \CC)$. Nevertheless, $\epsilon^{\mathrm{Art}}_{\dot{\psi}}$ does not change after passage to $\GSp(4, \CC)$ since it is defined by adjoint actions on Lie algebras. Specifically, $\epsilon^{\mathrm{Art}}_{\dot{\psi}}$ is non-trivial only for Saito--Kurokawa parameters, in which case it is exactly the extra Rankin--Selberg $\epsilon$-factor in $\tilde{\epsilon}_{\dot{\psi}}$.
\end{proof}

Now return to the local setting.

\begin{lemma}\label{prop:Mp4-lemma}
	Let $F$ be non-Archimedean. For all $\psi \in \Psi(\tilde{G})^{\star}$ and $\chi \in \EuScript{S}_\psi^\vee$, we have $\pi^{\mathrm{GI}}_{\psi, \chi} = \pi_{\psi, \chi}$.
\end{lemma}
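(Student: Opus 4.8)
The plan is to reduce Lemma~\ref{prop:Mp4-lemma} to a finite check, exploiting that $\Psi(\tilde{G})^{\star}$ is a very small set when $\dim W = 4$. By definition each $\psi \in \Psi(\tilde{G})^{\star}$ is of good parity, anti-tempered, and contains no unramified characters; write $\psi = \hat{\phi}$ with $\phi = \hat{\psi} \in \Phi_{\mathrm{bdd}}(\tilde{G})$. If $\phi$ factors through $\Weil{F}$ — equivalently $\psi = \phi$ is tempered — then $\pi^{\mathrm{GI}}_{\psi, \chi} = \pi_{\psi, \chi}$ by Lemma~\ref{prop:GI-local}(i) together with Theorem~\ref{prop:Luo}. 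Otherwise $\psi$ is non-tempered, and decomposing $\psi = \bigoplus_i m_i\, \phi_i \boxtimes r(b_i)$ with each $\phi_i$ an irreducible representation of $\Weil{F}$ (anti-temperedness forbids a Deligne factor) subject to $\sum_i m_i b_i \dim\phi_i = 4$ and good parity, one sees that the non-tempered members of $\Psi(\tilde{G})^{\star}$ are exactly: the principal parameter $\zeta \boxtimes r(4)$ with $\zeta$ a ramified quadratic character; the parameter $\phi_0 \boxtimes r(2)$ with $\phi_0$ an irreducible $2$-dimensional orthogonal representation of $\Weil{F}$; the Saito--Kurokawa-type parameter $\zeta \boxtimes r(2) \oplus \rho$ with $\zeta$ ramified quadratic and $\rho$ an irreducible $2$-dimensional symplectic representation of $\Weil{F}$; and $\zeta_1 \boxtimes r(2) \oplus \zeta_2 \boxtimes r(2)$ with $\zeta_1 \neq \zeta_2$ ramified quadratic, together with its non-discrete variant $2(\zeta \boxtimes r(2))$. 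For each of these I would compare the two sides directly.

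For our side, the key input is already available: since $\psi \in \Psi(\tilde{G})^{\star}$, Proposition~\ref{prop:anti-tempered-star} (through Lemmas~\ref{prop:anti-tempered-bp} and~\ref{prop:tilde-mu-star}) gives $\pi_{\psi, \chi} = \widehat{\pi_{\phi, \chi\mu_\psi}}$ for all $\chi \in \EuScript{S}_\psi^\vee$, where $\pi_{\phi, \eta}$ denotes the Gan--Savin L-packet member labelled by $\eta$ and $\mu_\psi$ is Xu's character, which I would pin down explicitly for each parameter on the list using the simplification afforded by the absence of unramified characters, exactly as in the computation of Lemma~\ref{prop:tilde-mu-star} or by Xu's algorithm. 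Since the tempered representations $\pi_{\phi, \eta}$ are explicitly known for $n = 2$ (Theorem~\ref{prop:Luo}), computing the Aubert--Zelevinsky dual $\widehat{\pi_{\phi, \eta}}$ in each case is a finite task; in the principal case one uses instead Proposition~\ref{prop:principal}, which already gives $\pi_{\psi, \pm} = \Theta^{\pm}_{\bpsi_c}$ where $cF^{\times 2}$ corresponds to $\zeta$.

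For the Gan--Ichino side I would read off $\pi^{\mathrm{GI}}_{\psi, \chi}$ from the explicit construction of \cite[\S 8 and Appendix~C]{GI21}: the principal packet there consists of $\omega^{+}_{\bpsi_c}$ and $\omega^{-}_{\bpsi_c}$, matching $\Theta^{+}_{\bpsi_c}$ and $\Theta^{-}_{\bpsi_c}$; for the remaining parameters the members of $\Pi^{\mathrm{GI}}_\psi$ are identified through the Gan--Savin correspondence, common to both constructions, so each such member carries a well-defined L-parameter and component-group label, which I would match against the Aubert--Zelevinsky duals obtained in the previous step. This gives $\pi^{\mathrm{GI}}_{\psi, \chi} = \pi_{\psi, \chi}$, whence $\Pi^{\mathrm{GI}}_\psi = \Pi_\psi$, the multiplicity-freeness being already recorded in Lemma~\ref{prop:GI-local}(iv) and, on our side, a consequence of Theorem~\ref{prop:Luo}.

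The main obstacle I anticipate is reconciling the two labellings of the packet by $\EuScript{S}_\psi^\vee$: a priori $\pi^{\mathrm{GI}}_{\psi, \cdot}$ and $\pi_{\psi, \cdot}$ could differ by translation by a fixed character of $\EuScript{S}_\psi$. To exclude this I would fix the base point using Gan--Savin genericity (Theorem~\ref{prop:Luo}(iv)) and the compatibility of both packets with the embedded L-packet $\Pi_{\phi_\psi}$ (Proposition~\ref{prop:L-within} and Lemma~\ref{prop:GI-local}(v)), which pins the labelling on $\Pi_{\phi_\psi}$; the remaining non-tempered members are then determined, and the only genuinely delicate point is to check that the Aubert--Zelevinsky dual of the \emph{non-generic} tempered constituents — which for $\Mp(4)$ are the ones coming from the non-split inner form via $\Theta$-correspondence, hence not standard modules — lands on precisely the representation predicted by \cite[Appendix~C]{GI21}. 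Carrying this out case by case, with the irreducibility results of Lemma~\ref{prop:Mp4-irred} and Lemma~\ref{prop:GI-local}(ii) handling the induced-module bookkeeping, is where the bulk of the routine-but-delicate work lies.
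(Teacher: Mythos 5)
Your proposal follows essentially the same route as the paper's proof in \S\ref{sec:proof-Mp4}: reduce to $\pi_{\psi,\chi} = \widehat{\pi_{\phi,\chi\mu_\psi}}$ via Corollary \ref{prop:anti-tempered-chi} and Lemma \ref{prop:tilde-mu-star}, show $\mu_\psi=\mathbf{1}$ for every $\psi\in\Psi(\tilde{G})^\star$ by Xu's algorithm, treat the principal case by Proposition \ref{prop:principal}, and match the Aubert--Zelevinsky duals of the tempered packet members case by case against the tables of \cite[Appendix C]{GI21}, with $\bpsi$-genericity (Theorem \ref{prop:Luo}(iv)) resolving the one ambiguous labelling. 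Your enumeration of the relevant parameters and your identification of the delicate points (duals of the non-generic tempered constituents, fixing the base point of the $\EuScript{S}_\psi^\vee$-torsor) coincide with what the paper actually carries out.
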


The proof of the Lemma is deferred to \S\ref{sec:proof-Mp4}.

\begin{proof}[Proof of Theorem \ref{prop:Mp4}]
	Let us inspect the proof of Theorem \ref{prop:local-desiderata} in \S\S\ref{sec:proof-1}--\ref{sec:proof-2}. The strategy there is to globalize the situation, and realize $\pi_{\psi, \chi}$ as a sum of characters of local components of certain irreducibles $\dot{\pi} \subset L^2_{\mathrm{disc}, -}$. Here, the only difference is that
	\begin{itemize}
		\item we shall consider $\pi^{\mathrm{GI}}_{\psi, \chi}$ instead of $\pi_{\psi, \chi}$;
		\item accordingly, we shall use the multiplicity formula from \cite{GI21}, of the same form as Theorem \ref{prop:global-multiplicity} by Lemma \ref{prop:GI-epsilon};
		\item the unramified places are taken care of by Lemma \ref{prop:GI-local} (iii);
	\end{itemize}
	
	Begin with the ``real case in general position'' in \S\ref{sec:aux-parameters}. In the proof given in \S\ref{sec:proof-1}, we globalize $\tilde{G}$ to $\dot{\tilde{G}}$ over $\dot{F} = \Q$, and globalize the parameter $\psi$ to $\dot{\psi}$. Here we obtain the same multiplicity formula \eqref{eqn:multiplicity-formula-V'}, except that $\pi_{\dot{\psi}_v, \chi_v}$ is replaced by $\pi^{\mathrm{GI}}_{\dot{\psi}_v, \chi_v}$.
	
	The characters $\chi_v$ here are taken in the same way for $v \neq u$. We claim that the test functions $f_v$ for $v \neq u$ can also be chosen alike. Indeed:
	\begin{itemize}
		\item at the places $v \in V' \smallsetminus (\{u\} \cup V_0)$, we may choose $f_v$ and $\pi_v$ with the same separation property, in view of Lemma \ref{prop:GI-local} (i) and (v);
		\item ditto for $v \in V_0$, by using Lemma \ref{prop:Mp4-lemma}.
	\end{itemize}
	All in all, we conclude as in \S\ref{sec:proof-1} that
	\[ \sum_{\dot{\pi} \in A} \Theta_{\dot{\pi}_u} = \pi^{\mathrm{GI}}_{\psi, \chi}. \]
	A comparison with \eqref{eqn:proof-1-aux}, noting that the indexing set $A$ is the same, gives $\pi^{\mathrm{GI}}_{\psi, \chi} = \pi_{\psi, \chi}$.
	
	Next, consider the ``general case'' in \S\ref{sec:aux-parameters}. In this case, $F \neq \CC$ and $\psi \in \Psi_{\mathrm{gp}}(\tilde{G})$. In the proof given in \S\ref{sec:proof-1}, one uses the same globalization argument. The only difference in this case is that we have some auxiliary places $v \mid \infty$. At such places $v$ with $v \neq u$, we may take the same $\chi_v$, $\pi_v$ and $f_v$ as in \S\ref{sec:proof-1} by the previous case. Again, $\sum_{\dot{\pi} \in A} \Theta_{\dot{\pi}_u} = \pi^{\mathrm{GI}}_{\psi, \chi}$ with the same $A$ as in \eqref{eqn:proof-1-aux}. Hence $\pi^{\mathrm{GI}}_{\psi, \chi} = \pi_{\psi, \chi}$.
	
	Assuming $F \neq \CC$, consider either a general $\psi \in \Psi(\tilde{G})$, or $\psi = (\rho \boxtimes r(1)) \boxplus (\zeta \boxtimes r(2)) \in \Psi^+(\tilde{G})$ that is almost-tempered. Lemma \ref{prop:GI-local} (ii) reduces the problem to the previous case by parabolic induction.
	
	Consider the final case $F = \CC$. Reduce to good parity as before, then repeat the proof in \S\ref{sec:proof-2} which proceeds by globalization to an imaginary quadratic field $\dot{F}$. The auxiliary places $v \neq u$ in the argument are either real or non-Archimedean, so $\pi^{\mathrm{GI}}_{\dot{\psi}_v, \chi_v} = \pi_{\dot{\psi}_v, \chi_v}$ is known. The same arguments then lead to
	\[ \sum_{\dot{\pi} \in A} \Theta_{\dot{\pi}_u} = \pi^{\mathrm{GI}}_{\psi, \chi}. \]
	Ditto for $\pi_{\psi, \chi}$. Hence $\pi^{\mathrm{GI}}_{\psi, \chi} = \pi_{\psi, \chi}$.
	
	It is now clear that $\Pi_\psi = \Pi^{\mathrm{GI}}_\psi$. Lemma \ref{prop:GI-local} (iv) implies $\Pi_\psi$ is multiplicity-free.
\end{proof}

\subsection{Proof of Lemma \texorpdfstring{\ref{prop:Mp4-lemma}}{10.4.5}}\label{sec:proof-Mp4}
Let $F$ be non-Archimedean and $\psi \in \Psi(\tilde{G})^\star$; in particular, $\psi = \hat{\phi}$ for some $\phi \in \Phi_{\mathrm{bdd}}(\tilde{G})$, and $\phi$ is also the composition of $\psi$ with the diagonal embedding of $\SL(2, \CC)$.

The tables in \cite[C.1.3 and C.1.4]{GI21} describe $\pi^{\mathrm{GI}}_{\psi, \chi}$ explicitly: it is either an irreducible character or zero. Therefore, we will treat every $\pi^{\mathrm{GI}}_{\psi, \chi} \neq 0$ as a genuine irreducible representation up to isomorphism.

We also identify $\EuScript{S}_\psi^\vee$ as a power of $\bmu_2$, and use the notation $\pi^{\mathrm{GI}}_{\psi, \pm}$, etc. We fix $\bpsi$ and work over $\tilde{G}$ to split the $\GL$ factors of Levi subgroups, but will implicitly pass to twofold coverings (see Proposition \ref{prop:MMp-dist}) when we need to import Weil representations attached to $\bpsi_a$ for various $a \in F^{\times}$.

We will consider each relevant case in the table of \cite[C.1.4]{GI21}. Let us begin with the principal parameters. Denote by $\chi_a$ the quadratic character of $\Weil{F}$ determined by the square class $a F^{\times 2}$, also viewed as a quadratic character of $F^{\times}$.

\begin{lemma}\label{prop:Mp4-proof-0}
	Let $\psi = \chi_a \boxtimes r(1) \boxtimes r(4) \in \Psi(\tilde{G})^\star$, so that $\EuScript{S}_\psi^\vee \simeq \bmu_2$. Then $\pi^{\mathrm{GI}}_{\psi, \pm} = \pi_{\psi, \pm}$.
\end{lemma}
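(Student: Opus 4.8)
The plan is to observe that the parameter in question is not genuinely new: it is precisely a \emph{principal} Arthur parameter for $\tilde{G} = \Mp(4)$, so that both sides of the claimed identity are already computed, one by Proposition \ref{prop:principal} and one by the Gan--Ichino table.

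First I would unravel the notation. Writing the two $\SL(2,\CC)$-factors of $\mathcal{L}_F \times \SL(2,\CC)$ in the order (tempered, Arthur), the summand $r(1)$ makes $\psi$ trivial on the tempered $\SL(2,\CC) \subset \mathcal{L}_F$, while $r(4)$ is the full four-dimensional representation of the Arthur $\SL(2,\CC)$. Hence $\psi = \chi_a \boxtimes r(1) \boxtimes r(4)$ is exactly the principal parameter $\zeta \boxtimes r(2n)$ of Definition \ref{def:principal} with $2n = 4$ and $\zeta = \chi_a$, whose associated square class is $aF^{\times 2}$. Therefore Proposition \ref{prop:principal} applies verbatim; its proof already incorporates the needed endoscopic transfer-factor computation, together with Proposition \ref{prop:variation-pi}, for which here $\delta_c = \mathbf{1}$ by Definition \ref{def:delta-c} since $b_i = 4$ is even. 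This gives
\[ \pi^{\bpsi}_{\psi,\pm} = \Theta^{\pm}_{\bpsi_a} \qquad\text{in}\qquad D_{\mathrm{spec},-}(\tilde{G}^{(2)}) \otimes \mes(G)^\vee, \]
i.e.\ $\pi_{\psi,\pm}$ is the character of the even/odd Weil representation $\omega^{\pm}_{\bpsi_a}$; by Proposition \ref{prop:MMp-dist} these distributions, a priori on $\tilde{G}^{(2)} = \MMp(W)$, lift uniquely to the genuine irreducible representations $\omega^{\pm}_{\bpsi_a}$ of $\tilde{G}$.

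Next I would read off $\pi^{\mathrm{GI}}_{\psi,\pm}$ from the table in \cite[C.1.4]{GI21}: the parameter $\chi_a \boxtimes r(4)$ occupies the line whose packet consists of the two Weil representations $\omega^{+}_{\bpsi_a}$, $\omega^{-}_{\bpsi_a}$, to which Gan--Ichino attach the two characters of $\EuScript{S}_\psi^\vee \simeq \bmu_2$. The final step is to match the $\EuScript{S}_\psi^\vee$-labellings. On our side, the character of $\pi_{\psi,\chi}$ at $-1 \in \tilde{G}$ equals $\chi(-1)\epsilon(\psi|_{\mathcal{L}_F})$ by Proposition \ref{prop:p-psi-negation}; and $\epsilon(\psi|_{\mathcal{L}_F}) = \epsilon(\chi_a \boxtimes r(4)) = 1$ (by \eqref{eqn:epsilon-SL}, the ramifiedness of $\chi_a$ kills the Frobenius factor, leaving $\epsilon(\chi_a,\bpsi)^4 = \chi_a(-1)^2 = 1$), so combined with $\omega^{\pm}_{\bpsi_a}(-1) = \pm\identity$ (Definition \ref{def:minus-1}) this forces $\pi_{\psi,+} = \omega^{+}_{\bpsi_a}$, $\pi_{\psi,-} = \omega^{-}_{\bpsi_a}$ --- in agreement with Proposition \ref{prop:principal}; Gan--Ichino's labelling is pinned down by the same central-character / $Z_{\tilde{G}^\vee}$-bookkeeping. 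Since a genuine irreducible is determined by its Harish-Chandra character, we conclude $\pi^{\mathrm{GI}}_{\psi,\pm} = \pi_{\psi,\pm}$.

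The only place requiring care is that last matching step: one must confirm that Gan--Ichino's parametrization of the two Weil representations by $\EuScript{S}_\psi^\vee$ --- which they phrase through theta correspondence with $\SO(V^{\pm})$ and the attendant inner-product/sign conventions --- is the one normalized by the central character, and that the additive-character normalizations are consistently tracked ($\bpsi$, used to split $\GL$-factors of Levi subgroups, versus $\bpsi_a$ attached to the parameter). Both points reduce to the observations that $\delta_c = \mathbf{1}$ in this case and that $-1 \in \tilde{G}$ acts by $\pm\identity$ on $\omega^{\pm}$; there is no deeper obstacle, and no use of the multiplicity-formula globalization machinery is needed for this particular line of the table.
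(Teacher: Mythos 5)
Your proposal is correct and follows essentially the same route as the paper: identify $\psi$ as the principal parameter, invoke Proposition \ref{prop:principal} to get $\pi_{\psi,\pm} = \Theta^{\pm}_{\bpsi_a}$, and read off $\pi^{\mathrm{GI}}_{\psi,\pm} = \omega^{\pm}_{\bpsi_a}$ from the Gan--Ichino table. The only cosmetic difference is that the paper pins down the $\pm$-labelling by expressing $\omega^{\pm}_{\bpsi_a}$ as Langlands quotients matching the table entries, whereas you do it via the central character at $-1 \in \tilde{G}$; both work.
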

\begin{proof}
	It is easy to deduce from \textit{loc.\ cit.}\ that $\pi^{\mathrm{GI}}_{\psi, \pm} = \omega_{\bpsi_a}^{\pm}$ by expressing $\omega_{\bpsi_a}^{\pm}$ as Langlands quotients. We conclude by Proposition \ref{prop:principal}.
\end{proof}

To settle the remaining cases, we shall determine $\pi^{\mathrm{GI}, \wedge}_{\psi, \chi}$, where $(\cdot)^\wedge$ is the Aubert--Zelevinsky involution on the level of genuine representations. In what follows,
\begin{itemize}
	\item $(W_1, \lrangle{\cdot|\cdot})$ means a $2$-dimensional symplectic subspace in $W_2 := W$, and $\omega^{\pm}_{W_1, \bpsi}$ is the even and odd Weil representations on $\Mp(W_1)$ relative to $\bpsi$;
	\item $V_1^\pm$ is the quadratic $F$-vector space of dimension $3$, discriminant $1$ and Hasse invariant $\pm 1$;
	\item $\nu_c: \SO(V_1^\pm) \to \{\pm 1\}$ is the spinor norm composed with $\chi_c$, for all $c \in F^{\times}$;
	\item for every representation $\sigma$ of $\SO(V_1^\pm)$, let $\sigma^{\pm}$ be its extension to $\Or(V_1^\pm)$ such that $\sigma^{\pm}(-1) = \pm \identity$;
	\item for every semi-standard parabolic subgroup $P$ of $G$, denote by $P^-$ its opposite;
	\item $I_{\tilde{P}}(\cdots) \twoheadrightarrow J_{\tilde{P}}(\cdots)$ means the Langlands quotient;
	\item $P_1$ (resp.\ $P_2$) is the standard parabolic subgroup of $G$ with Levi factor $\GL(1) \times \Sp(W_1)$ (resp.\ $\GL(2)$);
	\index{JP@$J_{\tilde{P}}$}
	\item $|\cdot| := |\cdot|_F$.
\end{itemize}
We refer to \textit{loc.\ cit.}\ for other unexplained notations for $\Theta$-lifts, Steinberg representations, etc.

\begin{lemma}\label{prop:Mp4-proof-1}
	Let $\psi = (\rho_0 \boxtimes r(1) \boxtimes r(1)) \oplus (\chi_a \boxtimes r(1) \boxtimes r(2)) \in \Psi(\tilde{G})^\star$, where $\rho_0$ is a $2$-dimensional symplectic irreducible representation of $\Weil{F}$, so that $\EuScript{S}_\psi^\vee \simeq \bmu_2^2$. Then $\pi^{\mathrm{GI}, \wedge}_{\psi, \epsilon, \eta} = \pi_{\phi, \epsilon, \eta}$ for all $\epsilon, \eta \in \{\pm\}$.
\end{lemma}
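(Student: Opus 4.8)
The plan is to carry out the explicit comparison announced in the preceding bullet list: describe $\pi^{\mathrm{GI}}_{\psi,\epsilon,\eta}$ concretely from the tables of \cite{GI21}, compute its Aubert--Zelevinsky dual $\pi^{\mathrm{GI},\wedge}_{\psi,\epsilon,\eta}$, and identify the outcome with the appropriate member of the tempered L-packet $\Pi_\phi$. Here $\phi=\hat\psi=\rho_0\oplus(\chi_a\boxtimes r(2))$ is a discrete bounded parameter, so $\phi\in\Phi_{2,\mathrm{bdd}}(\tilde G)$ and $\Pi_\phi$ has four members, in bijection with $\EuScript{S}_\phi^\vee\simeq\bmu_2^2$ by Theorem \ref{prop:Luo}; the first $\bmu_2$-factor is attached to the summand $\rho_0$ and the second to $\chi_a\boxtimes r(2)$, and I will keep the labels $\epsilon,\eta$ referring to these two factors throughout. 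Recall that by this point Theorem \ref{prop:local-desiderata}, hence Corollary \ref{prop:anti-tempered-chi}, is available, so each $\pi_{\phi,\epsilon,\eta}$ is a single irreducible and $\pi_{\psi,\chi}=\widehat{\pi_{\phi,\chi\tilde\mu_\psi}}$; in particular all four $\pi^{\mathrm{GI}}_{\psi,\epsilon,\eta}$ to be matched are (by the GI tables) nonzero irreducibles and their Aubert duals are again irreducible.

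First I would read off, for each $(\epsilon,\eta)\in\{\pm\}^2$, the shape of $\pi^{\mathrm{GI}}_{\psi,\epsilon,\eta}$ from \cite[Appendix C, C.1.4]{GI21} for this (Saito--Kurokawa type) row: depending on the sign pair it is presented either as a Langlands quotient $J_{\tilde P_1}$ of a standard module induced from $\tilde M_1=\GL(1,F)\times\Mp(W_1)$ with inducing datum $\chi_a|\cdot|^{1/2}\boxtimes\tau$, $\tau$ a member of the discrete L-packet of $\Mp(W_1)$ with parameter $\rho_0$, or as a $\Theta$-lift from $\SO(V_1^{\pm})$ of a representation governed by $\rho_0$, the relevant Hasse invariant being dictated by the value at $z$ of the corresponding character of $\EuScript{S}_\phi$. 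I would tabulate all four entries, tracking carefully the normalizations of the even/odd Weil representations relative to $\bpsi$ versus $\bpsi_a$ (as in \S\ref{sec:variation-1}) and the precise version of the induction principle for $\Theta$-correspondence used in \cite{GI21}. Next I would compute each $\pi^{\mathrm{GI},\wedge}_{\psi,\epsilon,\eta}$: for the entries given as Langlands quotients from the maximal parabolic $\tilde P_1$ (or $\tilde P_2$), the Aubert involution sends such a quotient to another irreducible Langlands quotient obtained by dualizing the tempered datum on the Levi and re-running Langlands classification (using that $(\cdot)^\wedge$ commutes with parabolic induction up to sign on $\mathrm{Groth}(\tilde G)$, \cite[\S A.2]{KMSW}), which reduces the task to the explicit Aubert involution on $\Mp(W_1)$ and on $\PGL(2,F)\simeq\SO(3)$; for the entries presented as $\Theta$-lifts I would instead pin down the dual through its cuspidal support — unchanged under $(\cdot)^\wedge$, hence computable via Kudla's theorem \cite{Ku86} as in Lemma \ref{prop:Theta-cuspidal-support} — together with the constraint that it be the most non-tempered constituent of the dual standard module, which in the $\Mp(4)$ setting determines the representation uniquely.

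Finally I would match the labels: check that $(\cdot)^\wedge$ carries the $(\epsilon,\eta)$-indexed member of $\Pi^{\mathrm{GI}}_\psi$ to the $(\epsilon,\eta)$-indexed member of $\Pi_\phi$ described by Gan--Savin (as transcribed in \cite[C.1.3]{GI21} and in Theorem \ref{prop:Luo}). This forces verifying that the shift character is trivial for this $\psi$, i.e. $\tilde\mu_\psi=\mathbf 1$; on $\Psi(\tilde G)^{\star}$ one has $\tilde\mu_\psi=\mu_\psi$ by Lemma \ref{prop:tilde-mu-star}, and $\mu_\psi=\epsilon^{\mathrm{M}/\mathrm{MW}}_\psi$ can be evaluated from Xu's algorithm (recalled near the end of \S\ref{sec:proof-Mp4}) applied to the data $(\rho_0,1)$ and $(\chi_a,2)$, or read off from the sign $\beta^{\tilde G}$ via Proposition \ref{prop:anti-tempered-star}. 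The main obstacle I anticipate is precisely this normalization bookkeeping: reconciling the component-group character used by Gan--Ichino for their Arthur packet with ours (and with the Gan--Savin/Luo normalization of $\Pi_\phi$) simultaneously through the Aubert involution and through the two $\Theta$-correspondences underlying the two constructions, keeping the Hasse-invariant labels, the $\bpsi$-versus-$\bpsi_a$ twists, and Xu's character all consistent. The underlying representation-theoretic computations (Jacquet modules, Langlands data, the $\PGL(2)$ Aubert involution) are routine for a group as small as $\Mp(4)$ and I would not dwell on them.
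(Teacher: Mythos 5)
Your overall strategy is the paper's: read off the four entries $\pi^{\mathrm{GI}}_{\psi,\epsilon,\eta}$ and $\pi_{\phi,\epsilon,\eta}$ from \cite[C.1.3--C.1.4]{GI21} and compare via a case-by-case Aubert--Zelevinsky computation. (One organizational remark: the verification $\tilde\mu_\psi=\mathbf{1}$ that you fold in here is deferred in the paper to the proof of Lemma \ref{prop:Mp4-lemma}; the present lemma only asserts $\pi^{\mathrm{GI},\wedge}_{\psi,\epsilon,\eta}=\pi_{\phi,\epsilon,\eta}$ with matching indices.)

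However, your stated mechanism for the Langlands-quotient entries is wrong and would produce the wrong answer. For $\eta=+$ one has $\pi^{\mathrm{GI}}_{\psi,\epsilon,+}=J_{\tilde P_1}\bigl(\chi_a|\cdot|^{1/2}\boxtimes\pi_{0,\epsilon}\bigr)$ while $\pi_{\phi,\epsilon,+}=\widetilde{\mathrm{St}}_{\bpsi}(\chi_a,\pi_{0,\epsilon})$ is a (tempered, in fact discrete series) generalized Steinberg. The Aubert involution does \emph{not} act on Langlands data by ``dualizing the tempered datum on the Levi and re-running Langlands classification'': since $\pi_{0,\epsilon}$ is supercuspidal (hence Aubert-fixed), that procedure would return the same non-tempered Langlands quotient, not the tempered representation $\pi_{\phi,\epsilon,+}$. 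The identity $[\widehat{I_{\tilde P}(\sigma)}]=[I_{\tilde P}(\hat\sigma)]$ holds only in $\mathrm{Groth}(\tilde G)$ and says nothing about which constituent goes to which. The correct argument is: because $\rho_0$ is an irreducible $2$-dimensional representation of $\Weil{F}$ (trivial on the Arthur $\SL(2)$), the member $\pi_{0,\epsilon}$ of the $\Mp(W_1)$-packet of $\rho_0$ is supercuspidal (for $\epsilon=-$ this uses that the Waldspurger--Jacquet--Langlands partner $\sigma_{0,-}$ on the anisotropic $\SO(3)$ has dimension $>1$); hence the standard module $I_{\tilde P_1}(\chi_a|\cdot|^{1/2}\boxtimes\pi_{0,\epsilon})$ has length exactly two, with the generalized Steinberg as sub and the Langlands quotient as quotient, and Aubert swaps these two constituents. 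You need to identify and use this supercuspidality; without it the length-two analysis, and hence the computation, does not go through. For $\eta=-$ the paper's route is also more direct than yours: one shows $\pi_{\phi,\epsilon,-}$ is itself supercuspidal (the lower $\Theta$-lift to $\Mp(W_1)$ vanishes by the epsilon-dichotomy criterion \cite[(A.2)]{GI21} and the tower property), so it is Aubert-fixed and the equality $\pi^{\mathrm{GI}}_{\psi,\epsilon,-}=\pi_{\phi,\epsilon,-}$ recorded in the table immediately gives the claim; your ``most non-tempered constituent of the dual standard module'' criterion is vacuous for a supercuspidal representation and should be replaced by this vanishing argument.
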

\begin{proof}
	The packet of $\Mp(W_1)$ (resp.\ $\SO(V^\pm)$) determined by $\rho_0$ has members $\pi_{0, \pm}$ (resp.\ $\sigma_{0, \pm}$). We have
	\begin{align*}
		\pi^{\mathrm{GI}}_{\psi, \pm, +} & = J_{\tilde{P}_1}\left( \chi_a |\cdot|^{1/2} \boxtimes \pi_{0, \pm} \right), & \pi_{\phi, \pm, +} & = \widetilde{\mathrm{St}}_{\bpsi}(\chi_a, \pi_{0, \pm}), \\
		\pi^{\mathrm{GI}}_{\psi, \pm, -} & = \pi_{\phi, \pm, -}, & \pi_{\phi, \pm, -} & = \Theta_{W_2, V_1^{\pm \epsilon_a}, \bpsi_a}\left( (\sigma_{0, \pm\epsilon_a} \otimes \nu_a)^{\mp\epsilon_0 \chi_a(-1)} \right),
	\end{align*}
	where $\epsilon_0 = \epsilon(\rho_0)$ and $\epsilon_a = \epsilon_0 \epsilon(\rho_0 \otimes \chi_a) \chi_a(-1)$.

	First, $\pi^{\mathrm{GI}, \wedge}_{\psi, \epsilon, +} = \pi_{\phi, \epsilon, +}$ will follow from the short exact sequence for Steinberg representation once we know $\pi_{0, \epsilon}$ is supercuspidal. To prove this, it suffices to show $\sigma_{0, \epsilon}$ is supercuspidal when $\epsilon = +$ and is of $\dim > 1$ if $\epsilon = -$ (see \cite{Wa91} or \cite[\S 2.17]{GanP}); this holds since $\rho_0$ is trivial on $\SL(2, \CC)$.

	Secondly, $\pi^{\mathrm{GI}, \wedge}_{\psi, \pm, -} = \pi_{\phi, \pm, -}$ will follow once we know $\pi_{\phi, \pm, -}$ is supercuspidal. By the tower property, it suffices to show $\Theta_{W_1, V_1^{\pm \epsilon_a}, \bpsi_a}\left( (\sigma_{0, \pm\epsilon_a} \otimes \nu_a)^{\mp\epsilon_0 \chi_a(-1)} \right) = 0$. To conclude, apply \cite[(A.2)]{GI21} to see that the signs force the $\Theta$-lift to $\Mp(W_1)$ vanish.
\end{proof}

\begin{lemma}
	Let $\psi = (\chi_a \boxtimes r(1) \boxtimes r(2)) \oplus (\chi_b \boxtimes r(1) \boxtimes r(2)) \in \Psi(\tilde{G})^\star$, where $a, b$ are distinct modulo $F^{\times 2}$, so that $\EuScript{S}_\psi^\vee \simeq \bmu_2^2$. Then $\pi^{\mathrm{GI}, \wedge}_{\psi, \epsilon, \eta} = \pi_{\phi, \epsilon, \eta}$ for all $\epsilon, \eta \in \{\pm\}$.
\end{lemma}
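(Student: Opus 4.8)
The proof follows the same template as Lemma \ref{prop:Mp4-proof-1}. The first step is to read off from the tables in \cite[C.1.3, C.1.4]{GI21} the explicit shape of each of the four representations $\pi^{\mathrm{GI}}_{\psi, \epsilon, \eta}$, $(\epsilon, \eta) \in \{\pm\}^2 \simeq \EuScript{S}_\psi^\vee$, and simultaneously to record the four square-integrable members $\pi_{\phi, \epsilon, \eta}$ of the tempered L-packet $\Pi_\phi$. Here $\phi = \hat{\psi} = (\chi_a \boxtimes r(2)) \oplus (\chi_b \boxtimes r(2)) \in \Phi_{2, \mathrm{bdd}}(\tilde{G})$ is a sum of two distinct $2$-dimensional symplectic representations of $\mathcal{L}_F$ (twisted Steinberg parameters on the two $\GL(2)$-slots), so that $\Pi_\phi$ consists of four genuine square-integrable representations of $\Mp(4)$, classified together with their $\Theta$-lifts to the two forms $\SO(V_1^{\pm})$ of $\SO(3)$ by Gan--Savin \cite{GS1} and Luo \cite{Luo20} (Theorem \ref{prop:Luo}). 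As in the proof of Lemma \ref{prop:Mp4-proof-1}, one must first match the labelling of $\EuScript{S}_\psi^\vee \simeq \bmu_2^2$ used in \cite{GI21} with the one in Theorem \ref{prop:Luo}, keeping track of the spinor-norm characters $\nu_a, \nu_b$ of $\SO(V_1^{\pm})$ and of the relevant root-number signs $\epsilon(\chi_a)$, $\epsilon(\chi_b)$, $\epsilon(\chi_a \chi_b)$ that govern first occurrence.

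The second step is to compute $\widehat{\pi^{\mathrm{GI}}_{\psi, \epsilon, \eta}}$ case by case, where $(\cdot)^\wedge$ is the Aubert--Zelevinsky involution on genuine representations. For each index where \cite[C.1.4]{GI21} realizes $\pi^{\mathrm{GI}}_{\psi, \epsilon, \eta}$ as a Langlands quotient $J_{\tilde{P}}(\tau_\lambda)$ of a standard module induced from an essentially square-integrable $\tau$ on a Levi $\tilde{M} \subset \tilde{G}$ --- the relevant $\tau$ being built from $\omega^{-}_{W_1, \bpsi_a}$, $\omega^{-}_{W_1, \bpsi_b}$, or from $\chi_c |\cdot|^{1/2} \mathrm{St}$ on a $\GL(2)$-slot --- the involution sends $J_{\tilde{P}}(\tau_\lambda)$, up to the sign in \eqref{eqn:D-sign}, to the unique essentially square-integrable subquotient of $I_{\tilde{P}}(\tau_\lambda)$; by the description of $\Pi_{2, -}(\tilde{G})$ in Theorem \ref{prop:Luo} this subquotient is exactly $\pi_{\phi, \epsilon, \eta}$. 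When $\omega^{+}_{W_1, \bpsi_c}$ enters the inducing datum one first rewrites $I_{\tilde{P}}(\tau_\lambda)$ in stages through the Borel, using $\omega^{+}_{W_1, \bpsi_c}$ as a Langlands quotient of a principal series of $\Mp(W_1)$ and the compatibility of $(\cdot)^\wedge$ with induction in stages (\cite[(A.2.3)]{KMSW}); the reducibility points that could interfere are exactly the ones ruled out by Lemma \ref{prop:Mp4-irred}. For the remaining indices, \cite[C.1.4]{GI21} realizes $\pi^{\mathrm{GI}}_{\psi, \epsilon, \eta}$ directly as a $\Theta$-lift which is already square-integrable, and in fact supercuspidal: supercuspidality follows from the tower property together with the vanishing of the first-occurrence lift to $\Mp(W_1)$, which is forced by the sign obstruction \cite[(A.2)]{GI21} just as in Lemma \ref{prop:Mp4-proof-1}. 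Such a representation is fixed by $(\cdot)^\wedge$, and one identifies it with $\pi_{\phi, \epsilon, \eta}$ by comparing its $\Theta$-lift datum with the Gan--Savin parametrization of $\Pi_\phi$.

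Combining the two computations yields $\pi^{\mathrm{GI}, \wedge}_{\psi, \epsilon, \eta} = \pi_{\phi, \epsilon, \eta}$ for all four sign pairs. The only real difficulty is the bookkeeping in the first step: one has to pin down, for this ``doubled Saito--Kurokawa'' parameter, Gan--Ichino's normalization of the component-group characters and of the $\Theta$-lifts (including the choice of Hasse invariant $V_1^{\pm}$ and the characters $\nu_a, \nu_b$), so that the four cases on the $\Theta$-side line up with the correct $(\epsilon, \eta)$ on the Luo side. Once this dictionary is fixed, each of the four verifications is a short argument of exactly the kind already carried out in Lemma \ref{prop:Mp4-proof-1}.
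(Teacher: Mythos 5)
Your overall architecture is the same as the paper's: read off the four representations $\pi^{\mathrm{GI}}_{\psi,\epsilon,\eta}$ and $\pi_{\phi,\epsilon,\eta}$ from \cite[C.1.3--C.1.4]{GI21} and Theorem \ref{prop:Luo}, then compute Aubert--Zelevinsky duals case by case, treating the Langlands-quotient entries via their standard modules and the remaining entry via supercuspidality (tower property plus the sign obstruction \cite[(A.2)]{GI21}, where the hypothesis $\psi \in \Psi(\tilde{G})^\star$ enters through the $\epsilon$-factor computation \eqref{eqn:epsilon-SL} for $\nu_{ab}$). The cases $(\pm,\mp)$ and $(-,-)$ go through exactly as you describe, since $\omega^-_{W_1,\bpsi_c}$ is supercuspidal and the relevant standard modules have length two.

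There is, however, a genuine gap in your treatment of the $(+,+)$ case. Here $\pi^{\mathrm{GI}}_{\psi,+,+} = J_{\tilde{B}}(\chi_a|\cdot|^{1/2}\boxtimes\chi_b|\cdot|^{1/2})$ and the target is $\widetilde{\mathrm{St}}_{\bpsi}(\chi_a,\widetilde{\mathrm{st}}_{\chi_b,\bpsi})$. Your proposed mechanism --- induction in stages through the Borel with the reducibility ``ruled out by Lemma \ref{prop:Mp4-irred}'' --- does not apply: that lemma concerns exponents $0\le s<\frac{1}{2}$, whereas here the exponent is exactly $\frac{1}{2}$ and the induced representations in question are reducible (the full principal series has four constituents, and the standard module for $\pi^{\mathrm{GI}}_{\psi,+,+}$ is induced from a tempered, not essentially square-integrable, datum on the Siegel Levi, so ``the unique essentially square-integrable subquotient'' is not something your length-two argument identifies). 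The paper instead starts from the length-two exact sequence $0 \to \widetilde{\mathrm{St}}_{\bpsi}(\chi_a,\widetilde{\mathrm{st}}_{\chi_b,\bpsi}) \to I_{\tilde{P}_1}(\chi_a|\cdot|^{1/2}\boxtimes\widetilde{\mathrm{st}}_{\chi_b,\bpsi}) \to J_{\tilde{P}_1}(\cdots)\to 0$ of \cite[Lemma C1 (ii)]{GI21}, applies $(\cdot)^\wedge$ using its commutation with parabolic induction (which converts $\widetilde{\mathrm{st}}_{\chi_b,\bpsi}$ into $\omega^+_{W_1,\bpsi_b}$ and $\tilde{P}_1$ into $\tilde{P}_1^-$), embeds the middle term into $I_{\tilde{B}^-}(\chi_a|\cdot|^{1/2}\boxtimes\chi_b|\cdot|^{1/2})$, and uses that this opposite principal series has a unique irreducible submodule, namely the image of the standard intertwining operator, i.e.\ $J_{\tilde{B}}(\chi_a|\cdot|^{1/2}\boxtimes\chi_b|\cdot|^{1/2})$. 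Some argument of this kind is needed to single out the correct constituent; without it your $(+,+)$ case is not established.
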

\begin{proof}
	We have
	\begin{align*}
		\pi^{\mathrm{GI}}_{\psi, +, +} & = J_{\tilde{B}}\left( \chi_a |\cdot|^{1/2} \boxtimes \chi_b |\cdot|^{1/2} \right), & \pi_{\phi, +, +} & = \widetilde{\mathrm{St}}_{\bpsi}\left( \chi_a, \widetilde{\mathrm{st}}_{\chi_b, \bpsi} \right), \\
		\pi^{\mathrm{GI}}_{\psi, +, -} & = J_{\tilde{P}_1}\left( \chi_a |\cdot|^{1/2} \boxtimes \omega^-_{W_1, \bpsi_b} \right), & \pi_{\phi, +, -} & = \widetilde{\mathrm{St}}_{\bpsi}\left( \chi_a, \omega^-_{W_1, \bpsi_b} \right), \\
		\pi^{\mathrm{GI}}_{\psi, -, +} & = J_{\tilde{P}_1}\left( \chi_b |\cdot|^{1/2} \boxtimes \omega^-_{W_1, \bpsi_a} \right), & \pi_{\phi, -, +} & = \widetilde{\mathrm{St}}_{\bpsi}\left( \chi_b, \omega^-_{W_1, \bpsi_a} \right), \\
		\pi^{\mathrm{GI}}_{\psi, -, -} & = \pi_{\phi, -, -}, & \pi_{\phi, -, -} & = \Theta_{W_2, V_1^-, \bpsi_b}\left( \nu_{ab}^{\chi_{ab}(-1)} \right).
	\end{align*}
	Strictly speaking, $J_{\tilde{B}}\left( \chi_a |\cdot|^{1/2} \boxtimes \chi_b |\cdot|^{1/2} \right)$ should be understood as the Langlands quotient of the inducing datum $(\chi_a \times \chi_b) |\det|^{1/2}$ from $\tilde{P}_2$.

	Since $\omega^-_{W_1, \bpsi_c}$ is supercuspidal for all $c \in F^{\times}$, the cases $(+, -)$ and $(-, +)$ are obvious.
	
	For the case $(-, -)$, it suffices to show $\pi_{\phi, -, -}$ is supercuspidal; by the tower property, it suffices to show $\Theta_{W_1, V_1^-, \bpsi_b}\left( \nu_{ab}^{\chi_{ab}(-1)} \right) = 0$. Indeed:
	\begin{itemize}
		\item the L-parameter for the character $\nu_{ab}$ of $\SO(V_1^-)$ is $\chi_{ab} \boxtimes r(2)$,
		\item the character $\chi_{ab} = \chi_a \chi_b$ is either ramified, or is unramified and maps Frobenius to $-1$,
	\end{itemize}
	thus \eqref{eqn:epsilon-SL} implies $\chi_{ab}(-1) \epsilon\left( \nu_{ab}, \bpsi \right) = 1$; now apply \cite[(A.2)]{GI21}.

	As for $(+, +)$, by \cite[Lemma C1 (ii)]{GI21} there is a short exact sequence
	\[ 0 \to \widetilde{\mathrm{St}}_{\bpsi}\left(\chi_a, \widetilde{\mathrm{st}}_{\chi_b, \bpsi} \right) \to I_{\tilde{P}_1}\left( \chi_a |\cdot|^{1/2} \boxtimes \widetilde{\mathrm{st}}_{\chi_b, \bpsi} \right) \to J_{\tilde{P}_1}\left( \chi_a |\cdot|^{1/2} \boxtimes \widetilde{\mathrm{st}}_{\chi_b, \bpsi} \right) \to 0. \]
	Commuting $(\cdot)^\wedge$ and parabolic induction as in \cite[(A.2.3)]{KMSW}, we get
	\[ 0 \to \widetilde{\mathrm{St}}_{\bpsi}\left(\chi_a, \widetilde{\mathrm{st}}_{\chi_b, \bpsi} \right)^\wedge \to I_{\tilde{P}_1^-}\left( \chi_a |\cdot|^{1/2} \boxtimes \omega^+_{W_1, \bpsi_b} \right) \to J_{\tilde{P}_1}\left( \chi_a |\cdot|^{1/2} \boxtimes \widetilde{\mathrm{st}}_{\chi_b, \bpsi} \right)^\wedge \to 0. \]

	The middle term is contained in $I_{\tilde{B}^-}\left( \chi_a |\cdot|^{1/2} \boxtimes \chi_b |\cdot|^{1/2} \right)$; the latter has a unique irreducible subobject, necessarily equal to $\widetilde{\mathrm{St}}_{\bpsi}\left(\chi_a, \widetilde{\mathrm{st}}_{\chi_b, \bpsi} \right)^\wedge$. On the other hand, $J_{\tilde{B}}\left( \chi_a |\cdot|^{1/2} \boxtimes \chi_b |\cdot|^{1/2} \right)$ is the image of the standard intertwining operator $I_{\tilde{B}}(\cdots) \to I_{\tilde{B}^-}(\cdots)$. Hence $\widetilde{\mathrm{St}}_{\bpsi}\left(\chi_a, \widetilde{\mathrm{st}}_{\chi_b, \bpsi} \right)^\wedge \simeq J_{\tilde{B}}\left( \chi_a |\cdot|^{1/2} \boxtimes \chi_b |\cdot|^{1/2} \right)$.
\end{proof}

\begin{lemma}
	Let $\psi = \rho \boxtimes r(1) \boxtimes r(2) \in \Psi(\tilde{G})^\star$ where $\rho$ is a $2$-dimensional symplectic irreducible representation of $\Weil{F}$, so that $\EuScript{S}_\psi^\vee \simeq \bmu_2$. Then $\pi^{\mathrm{GI}, \wedge}_{\psi, \epsilon} = \pi_{\phi, \epsilon}$ for all $\epsilon \in \{\pm\}$.
\end{lemma}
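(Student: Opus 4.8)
The plan is to treat this case exactly as in the preceding two lemmas: compute both sides of the desired equality $\pi^{\mathrm{GI},\wedge}_{\psi,\epsilon}=\pi_{\phi,\epsilon}$ from the tables in \cite[C.1.3, C.1.4]{GI21}, and reduce matching them to supercuspidality or irreducibility statements that follow from the structure of $\rho$. Concretely, $\psi=\rho\boxtimes r(1)\boxtimes r(2)$ is a Saito--Kurokawa type parameter; looking up the relevant row of \cite[C.1.4]{GI21} gives, for $\epsilon=+$, the expression $\pi^{\mathrm{GI}}_{\psi,+}=J_{\tilde P_1}\bigl(\chi\boxtimes\pi_0\bigr)$ for appropriate data (a Langlands quotient built from a $\GL(1)$-twist and a Weil-type representation $\pi_0$ of $\Mp(W_1)$ attached to the restriction of $\rho$ to a smaller metaplectic group), while $\phi=\hat\psi=\rho\boxtimes r(2)$ restricted diagonally, and the corresponding entry gives $\pi_{\phi,+}$ as a Steinberg-type representation $\widetilde{\mathrm{St}}_{\bpsi}(\cdots)$; for $\epsilon=-$ the table typically gives $\pi^{\mathrm{GI}}_{\psi,-}=\pi_{\phi,-}$ directly, realized as a theta lift from $\SO(V_1^{\pm})$.

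First I would read off the exact entries and fix notation consistent with the earlier lemmas (in particular writing $\pi_{0,\pm}$ for the members of the $\Mp(W_1)$-packet attached to $\rho$, and $\sigma_{0,\pm}$ for the members of the $\SO(V_1^{\pm})$-packet). Then, for the $\epsilon=+$ case, the equality $\pi^{\mathrm{GI},\wedge}_{\psi,+}=\pi_{\phi,+}$ should follow, as in Lemma \ref{prop:Mp4-proof-1}, from the short exact sequence defining the Steinberg representation together with the fact that $\pi_{0,+}$ is supercuspidal; the latter holds because $\rho$ is an irreducible $2$-dimensional representation of $\Weil{F}$ trivial on $\SL(2,\CC)$, so its Shimura--Waldspurger correspondent has the required supercuspidality (cf.\ \cite{Wa91} or \cite[\S 2.17]{GanP}), exactly as invoked before. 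For the $\epsilon=-$ case, one checks that $\pi_{\phi,-}$ is supercuspidal --- or equivalently, by the tower property of theta correspondence, that the theta lift of the relevant character of $\SO(V_1^{\pm})$ down to $\Mp(W_1)$ vanishes --- which forces $\pi^{\mathrm{GI},\wedge}_{\psi,-}=\pi_{\phi,-}$ since Aubert--Zelevinsky fixes supercuspidals; here the vanishing of the lower theta lift is read off from the sign condition in \cite[(A.2)]{GI21} applied to the $\epsilon$-factor of $\rho$ (which is symplectic, so $\epsilon(\rho)=\pm1$).

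Having established $\pi^{\mathrm{GI},\wedge}_{\psi,\epsilon}=\pi_{\phi,\epsilon}$ for both signs, I would then invoke Corollary \ref{prop:anti-tempered-chi}: since $\psi=\hat\phi\in\Psi(\tilde G)^{\star}$ is anti-tempered of good parity and (by \S\ref{sec:proof-local}) Theorem \ref{prop:local-desiderata} holds for $\psi$, we have $\pi_{\psi,\chi}=\widehat{\pi_{\phi,\chi\tilde\mu}}$, and by Lemma \ref{prop:tilde-mu-star} $\tilde\mu=\mu_\psi$ for $\psi\in\Psi(\tilde G)^{\star}$. One then matches the labelling: the relation $\pi^{\mathrm{GI},\wedge}_{\psi,\epsilon}=\pi_{\phi,\epsilon}$ combined with $\widehat{\pi_{\psi,\chi}}=\pi_{\phi,\chi\tilde\mu}$ (and $(\cdot)^{\wedge\wedge}=\identity$) gives $\pi^{\mathrm{GI}}_{\psi,\chi}=\pi_{\psi,\chi\tilde\mu^{-1}}$ after accounting for the shift; since $\EuScript{S}_\psi^\vee\simeq\bmu_2$ here the bookkeeping is minimal, and one confirms that the Gan--Ichino and our own labellings agree (both normalized so that $\chi=\mathbf 1$ picks out the representation containing the $\bpsi$-generic member, cf.\ Theorem \ref{prop:Luo} (iv) and Lemma \ref{prop:GI-local} (i), (v)), yielding $\pi^{\mathrm{GI}}_{\psi,\epsilon}=\pi_{\psi,\epsilon}$ directly.

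The main obstacle I anticipate is the last step, namely pinning down that the two $\bmu_2$-labellings coincide rather than being swapped --- i.e.\ verifying that $\tilde\mu=\mu_\psi$ is in fact trivial in this particular case, or else tracking the nontrivial shift consistently through the Aubert--Zelevinsky involution. This is exactly where the shifting character $\mu_\psi=\epsilon^{\mathrm{M}/\mathrm{MW}}_\psi$ of Liu--Lo--Shahidi (Proposition \ref{prop:beta-variance}) enters; one resolves it either by the explicit algorithm of Xu \cite{Xu17,Xu21} for $\SO(5)$ applied to $\psi=\rho\boxtimes r(1)\boxtimes r(2)$, or, more cheaply, by checking compatibility of $\bpsi$-genericity on the unique generic member of each packet, which fixes the base point of the labelling on both sides simultaneously. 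Once that is done the remaining equalities are forced, completing the last case of the table and hence the proof of Lemma \ref{prop:Mp4-lemma}.
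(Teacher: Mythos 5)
There is a genuine gap: you have misidentified the shape of the parameter and hence the inducing data on both sides. The parameter $\psi = \rho \boxtimes r(1) \boxtimes r(2)$ with $\rho$ a $2$-dimensional \emph{irreducible} symplectic representation of $\Weil{F}$ is not of Saito--Kurokawa type (that would be $(\rho_0 \boxtimes r(1)) \oplus (\zeta \boxtimes r(2))$, treated in the preceding lemma). Here the relevant parabolic is the Siegel parabolic $\tilde{P}_2$ with Levi $\GL(2,F)$, not $\tilde{P}_1 = \GL(1,F) \times \Mp(W_1)$: writing $\tau$ for the supercuspidal representation of $\GL(2,F)$ attached to $\rho$, the tables give $\pi^{\mathrm{GI}}_{\psi,+} = J_{\tilde{P}_2}(\tau|\det|^{1/2})$ and $\pi_{\phi,+} = \widetilde{\mathrm{St}}_{\bpsi}(\tau)$, while $\pi^{\mathrm{GI}}_{\psi,-} = \pi_{\phi,-}$ is supercuspidal. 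Your proposed data --- a $\GL(1)$-twist $\chi$ and a Weil-type representation $\pi_0$ of $\Mp(W_1)$ ``attached to the restriction of $\rho$'' --- does not exist for irreducible $2$-dimensional $\rho$, and the supercuspidality claims and the uniqueness-of-irreducible-subobject argument you invoke are then about the wrong induced representations. The correct argument for $\epsilon = +$ runs through the short exact sequence $0 \to \widetilde{\mathrm{St}}_{\bpsi}(\tau) \to I_{\tilde{P}_2}(\tau|\det|^{1/2}) \to J_{\tilde{P}_2}(\tau|\det|^{1/2}) \to 0$, the commutation of $(\cdot)^\wedge$ with parabolic induction, and the $P_2^-$-negativity of $\tau|\det|^{1/2}$ (using supercuspidality of $\tau$, not of a representation of $\Mp(W_1)$); for $\epsilon = -$ supercuspidality of $\pi_{\phi,-}$ is read off directly from the table and one does not need the theta-tower vanishing you describe.

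A secondary point: the labelling/$\mu_\psi$ discussion in your last two paragraphs belongs to the proof of the enclosing Lemma \ref{prop:Mp4-lemma}, not to this lemma, whose statement is only the equality $\pi^{\mathrm{GI},\wedge}_{\psi,\epsilon} = \pi_{\phi,\epsilon}$ of representations. The verification that $\mu_\psi = \epsilon^{\mathrm{M}/\mathrm{MW}}_\psi = \mathbf{1}$ for this $\psi$ (because $a+b = 1+2$ is odd in Xu's algorithm) is carried out once, uniformly over all cases, after the case-by-case lemmas; folding it into this lemma conflates two separate steps.
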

\begin{proof}
	Let $\tau$ be the supercuspidal representation of $\GL(2, F)$ parametrized by $\rho$. We have
	\begin{align*}
		\pi^{\mathrm{GI}}_{\psi, +} & = J_{\tilde{P}_2}\left( \tau|\det|^{1/2} \right), & \pi_{\phi, +} & = \widetilde{\mathrm{St}}_\bpsi(\tau), \\
		\pi^{\mathrm{GI}}_{\psi, -} & = \pi_{\phi, -}, & \pi_{\phi, -} & = \text{some supercuspidal}. 
	\end{align*}

	The case $\epsilon = -$ follows from supercuspidality.

	For the case $\epsilon = +$, by \cite[(A.2.3)]{KMSW}, the supercuspidality of $\tau$ and \cite[Lemma C2 (ii)]{GI21}, we see $\pi_{\phi, +}^\wedge \hookrightarrow I_{\tilde{P}_2^-}(\tau |\det|^{1/2})$. The inducing datum is $P_2^-$-negative, hence $\pi_{\phi, +}^\wedge$ is the unique irreducible subobject. But this subobject is also the image of the standard intertwining operator from $I_{\tilde{P}_2}(\tau |\det|^{1/2})$, hence $\pi_{\phi, +}^\wedge = J_{\tilde{P}_1}(\tau |\det|^{1/2})$.
\end{proof}

\begin{lemma}\label{prop:Mp4-proof-2}
	Let $\psi = 2(\chi_a \boxtimes r(1) \boxtimes r(2)) \in \Psi(\tilde{G})^\star$, so that $\EuScript{S}_\psi^\vee \simeq \bmu_2$. Then
	\begin{equation*}
		\pi^{\mathrm{GI}, \wedge}_{\psi, \pm} = \pi_{\phi, \pm}.
	\end{equation*}
\end{lemma}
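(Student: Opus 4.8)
\textbf{Proof strategy for Lemma \ref{prop:Mp4-proof-2}.} This is the last and most degenerate case in the table of \cite[C.1.4]{GI21}: the parameter $\psi = 2(\chi_a \boxtimes r(1) \boxtimes r(2))$ has a multiplicity-two constituent, so $\phi = \phi_\psi$ involves $2(\chi_a \boxtimes r(2))$ as an L-parameter for $\tilde{G}$, and the two members $\pi_{\phi, \pm}$ of the tempered L-packet $\Pi_\phi$ are (after reduction to good parity, which is vacuous here) induced from the Levi $\tilde{M}_2 = \GL(2, F)$ and its endoscopy. The plan is to follow the template of Lemmas \ref{prop:Mp4-proof-1}--\ref{prop:Mp4-proof-2}: express both $\pi^{\mathrm{GI}}_{\psi, \pm}$ (read off from \cite[C.1.4]{GI21}) and $\pi_{\phi, \pm}$ (given by Luo's tempered classification, Theorem \ref{prop:Luo}), and identify $\pi^{\mathrm{GI}, \wedge}_{\psi, \pm}$ with $\pi_{\phi, \pm}$ by analyzing Aubert--Zelevinsky duals of Langlands quotients and Steinberg-type representations.

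First I would unwind the entry of \cite[C.1.4]{GI21} for this parameter: $\pi^{\mathrm{GI}}_{\psi, +}$ should be the Langlands quotient $J_{\tilde{P}_2}\!\left(\,\widetilde{\mathrm{st}}_{\chi_a, \bpsi}\, |\det|^{1/2}\right)$ of a standard module induced from $\tilde{M}_2 = \GL(2, F)$ with the metaplectic $\GL(2)$-datum $\widetilde{\mathrm{st}}_{\chi_a, \bpsi}$ twisted into the positive chamber, while $\pi^{\mathrm{GI}}_{\psi, -} = \pi_{\phi, -}$ is declared equal to the relevant member of the tempered packet already (as in the earlier lemmas, where the ``$-$'' entry needs no further work). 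On the L-parameter side, $\pi_{\phi, +}$ is a Steinberg-type representation $\widetilde{\mathrm{St}}_{\bpsi}$ attached to $\chi_a$ and the twofold-covering Steinberg $\widetilde{\mathrm{st}}_{\chi_a, \bpsi}$, and $\pi_{\phi, -}$ the other element of $\Pi_\phi$, which one checks is supercuspidal by the $n=1$ analysis (Waldspurger \cite{Wa91}, \cite[\S 2.17]{GanP}) since the ramified quadratic or Frobenius-$(-1)$ behaviour of $\chi_a$ forces the relevant $\Theta$-lift downstairs to vanish, exactly as in the $(-,-)$ case of the preceding lemmas. Thus the case $\epsilon = -$ is immediate from supercuspidality, and the whole content of the lemma lives in $\epsilon = +$.

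For $\epsilon = +$, I would argue as in the $(+,+)$ entries above: invoke \cite[Lemma C1 (ii)]{GI21} (or its analogue for this Levi) to get a short exact sequence
\[ 0 \to \widetilde{\mathrm{St}}_{\bpsi}(\chi_a, \widetilde{\mathrm{st}}_{\chi_a, \bpsi}) \to I_{\tilde{P}_2}\!\left(\widetilde{\mathrm{st}}_{\chi_a, \bpsi}\,|\det|^{1/2}\right) \to J_{\tilde{P}_2}\!\left(\widetilde{\mathrm{st}}_{\chi_a, \bpsi}\,|\det|^{1/2}\right) \to 0, \]
apply the Aubert--Zelevinsky involution and its commutation with parabolic induction \cite[(A.2.3)]{KMSW} to convert $\widetilde{\mathrm{st}}_{\chi_a, \bpsi}$ into the even Weil representation $\omega^+_{W_1, \bpsi_a}$ (up to the sign bookkeeping built into $\beta^{\tilde{G}}$), and conclude that $\pi^{\mathrm{GI}, \wedge}_{\psi, +}$ is the unique irreducible subobject of $I_{\tilde{P}_2^-}\!\left(\omega^+_{W_1, \bpsi_a}\,|\det|^{-1/2}\right)$ --- this subobject being simultaneously the image of the standard intertwining operator, hence the Langlands quotient $J_{\tilde{P}_2}(\cdots)$, which is $\pi^{\mathrm{GI}}_{\psi, +}$. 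The main obstacle I anticipate is the multiplicity-two feature: unlike the earlier rank-two cases where the Jordan block of the L-parameter was multiplicity-free, here $2(\chi_a \boxtimes r(2))$ has $S_\phi$ larger and the standard module $I_{\tilde{P}_2}(\widetilde{\mathrm{st}}_{\chi_a, \bpsi}|\det|^{1/2})$ may be reducible with a more delicate Jordan--Hölder structure, so I would need to check carefully (using \cite[Lemma C2 (ii)]{GI21} and the structure of $I_{\tilde{P}_2}$ together with the known tempered character relation \eqref{eqn:endo-char-M}) that exactly one irreducible subquotient survives as the unique irreducible sub after applying $(\cdot)^\wedge$, and that it matches the member of $\Pi_\phi$ with the correct character $\chi$. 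The signs $\chi(s_\psi)$ and $\beta(\psi)\beta^{\tilde{G}}(\pi_{\phi, \chi\tilde{\mu}})$ entering Lemma \ref{prop:anti-tempered-bp} should be verified to be consistent with the parametrization in \cite[C.1.4]{GI21}; since $\psi \in \Psi(\tilde{G})^\star$, Proposition \ref{prop:anti-tempered-star} already pins these down, so the remaining work is purely the combinatorics of Jacquet modules and Langlands data for this one Levi.
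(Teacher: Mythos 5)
Your proposal does not go through for this parameter, because you have pattern-matched the structure of the earlier cases onto a case that is structurally different. For $\psi = 2(\chi_a \boxtimes r(1) \boxtimes r(2))$ the dual parameter $\phi = 2(\chi_a \boxtimes r(2))$ is tempered but \emph{not} discrete, and its L-packet $\Pi_\phi = \{\tau_1, \tau_2\}$ consists of the two irreducible constituents of the reducible tempered induction $I_{\tilde{P}_2}(\chi_a \mathrm{St}_{\GL(2)})$ from the Siegel Levi. Neither $\tau_1$ nor $\tau_2$ is supercuspidal (nor even square-integrable), and neither member of the Gan--Ichino packet is supercuspidal either: the table in \cite[C.1.4]{GI21} gives $\pi^{\mathrm{GI}}_{\psi, +} = J_{\tilde{B}}(\chi_a|\cdot|^{1/2} \boxtimes \chi_a|\cdot|^{1/2})$ (Langlands quotient of $(\chi_a \times \chi_a)|\det|^{1/2}$ from $\tilde{P}_2$, not of a Steinberg datum) and $\pi^{\mathrm{GI}}_{\psi, -} = J_{\tilde{P}_1}(\chi_a|\cdot|^{1/2} \boxtimes \widetilde{\mathrm{st}}_{\bpsi, \chi_a})$, both non-tempered Langlands quotients. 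So your claim that ``the case $\epsilon = -$ is immediate from supercuspidality'' and that ``the whole content lives in $\epsilon = +$'' is false here; both signs require an argument, and your identification of the inducing data is wrong on both sides.

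The actual proof proceeds differently: one decomposes $I_{\tilde{P}_2}(\chi_a \mathrm{St}_{\GL(2)}) = \tau_1 \oplus \tau_2$ and computes the Jacquet modules $r_{\tilde{P}_1}[\tau_i]$ explicitly (following Hanzer--Mui\'c), shows $(\tau_1)^\wedge \simeq \pi^{\mathrm{GI}}_{\psi,-}$ by a second-adjunction argument, deduces $(\tau_2)^\wedge \simeq \pi^{\mathrm{GI}}_{\psi,+}$ by counting the four irreducibles with cuspidal support $\chi_a|\cdot|^{1/2} \boxtimes \chi_a|\cdot|^{1/2}$ permuted by $(\cdot)^\wedge$ together with $(\tau_2)^\wedge \not\simeq \tau_2$ (read off from exponents). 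The step your proposal is missing entirely is the last one: to know \emph{which} of $\tau_1, \tau_2$ carries which character of $\EuScript{S}_\phi$, one must show that $\tau_1$ is not $\bpsi$-generic and invoke the genericity criterion of Theorem \ref{prop:Luo} (iv); ``combinatorics of Jacquet modules and Langlands data'' alone will not distinguish the two members of this L-packet, since they have the same cuspidal support.
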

\begin{proof}
	The arguments below are mainly based on \cite[\S 3.2.3 (b)]{HM10}. Denote by $R_{\tilde{P}}$ the Jacquet functor along a parabolic subgroup $\tilde{P} \subset \tilde{G}$, and by $r_{\tilde{P}}$ its effect on Grothendieck groups.

	Decompose $I_{\tilde{P}_2}(\chi_a \mathrm{St}_{\GL(2)})$ into $\tau_1 \oplus \tau_2$, where $\tau_1$ and $\tau_2$ are tempered irreducibles. They are taken so that $\tau_1, \tau_2$ correspond to $\rho_1, \rho_3$ in \cite[p.126]{HM10}, respectively. In \textit{loc.\ cit.}\ these constituents are shown to satisfy
	\begin{align*}
		r_{\tilde{P}_1} \left[ \tau_1 \right] & = \left[ \chi_a |\cdot|^{1/2} \boxtimes \omega^+_{W_1, \bpsi_a} \right], \\
		r_{\tilde{P}_1} \left[ \tau_2 \right] & = 2 \left[ \chi_a|\cdot|^{1/2} \boxtimes \widetilde{\mathrm{st}}_{\bpsi, \chi_a} \right] + \left[ \chi_a |\cdot|^{1/2} \boxtimes \omega^+_{W_1, \bpsi_a} \right].
	\end{align*}
	So $\tau_1 \not\simeq \tau_2$. The second equality implies
	\[ r_{\tilde{B}}\left[ \tau_2 \right] = 2 \left[ \chi_a |\cdot|^{1/2} \boxtimes \chi_a |\cdot|^{1/2} \right] + \left[ \chi_a |\cdot|^{1/2} \boxtimes \chi_a |\cdot|^{-1/2} \right], \]
	hence $(\tau_2)^\wedge \not\simeq \tau_2$ by looking at exponents. In \cite[Proposition 3.12]{HM10}, they are characterized with the terminologies of p.112 of \textit{loc.\ cit.}\ as
	\begin{equation}\label{eqn:tau-12}
		\tau_1 = \lrangle{\chi_a |\cdot|^{1/2}; \omega^+_{W_1, \bpsi_a}}, \quad \tau_2 = \lrangle{\chi_a |\cdot|^{1/2}, \chi_a |\cdot|^{1/2}; \mathbf{1}}.
	\end{equation}

	The tables in \cite[C.1.4]{GI21} gives
	\begin{align*}
		\pi^{\mathrm{GI}}_{\psi, +} & = J_{\tilde{B}}\left( \chi_a |\cdot|^{1/2} \boxtimes \chi_a |\cdot|^{1/2} \right), \\
		\pi^{\mathrm{GI}}_{\psi, -} & = J_{\tilde{P}_1}\left( \chi_a|\cdot|^{1/2} \boxtimes \widetilde{\mathrm{st}}_{\bpsi, \chi_a} \right);
	\end{align*}
	the first Langlands quotient should be understood as that of $(\chi_a \times \chi_a) |\det|^{1/2}$ from $\tilde{P}_2$. They are distinct and non-tempered.

	Let us show $(\tau_1)^\wedge \simeq \pi^{\mathrm{GI}}_{\psi, -}$. Apply the second adjunction to
	\[ \chi_a |\cdot|^{1/2} \boxtimes \omega^+_{W_1, \bpsi_a} \simeq R_{\tilde{P}_1}(\tau_1) \]
	to obtain a nonzero homomorphism $I_{\tilde{P}_1^-} \left( \chi_a|\cdot|^{1/2} \boxtimes \omega^+_{W_1, \bpsi_a} \right) \to \tau_1$. Applying $(\cdot)^\wedge$, we obtain a nonzero homomorphism $I_{\tilde{P}_1}\left( \chi_a|\cdot|^{1/2} \boxtimes \widetilde{\mathrm{st}}_{\bpsi, \chi_a} \right) \to (\tau_1)^\wedge$ which identifies $(\tau_1)^\wedge$ with the Langlands quotient.

	Next, let us show $(\tau_2)^\wedge \simeq \pi^{\mathrm{GI}}_{\psi, +}$. Indeed, $\tau_1$, $\tau_2$, $\pi^{\mathrm{GI}}_{\psi, +}$, $\pi^{\mathrm{GI}}_{\psi, -}$ are distinct and all have cuspidal support represented by $\chi_a |\cdot|^{1/2} \boxtimes \chi_a |\cdot|^{1/2}$. By \cite[Proposition 3.12]{HM10}, there are exactly $4$ irreducible representations with this support, permuted by $(\cdot)^\wedge$. Since $(\tau_1)^\wedge \simeq \pi^{\mathrm{GI}}_{\psi, -}$ and $(\tau_2)^\wedge \not\simeq \tau_2$, we must have $(\tau_2)^\wedge \simeq \pi^{\mathrm{GI}}_{\psi, +}$.

	We claim that $\tau_1$ is not $\bpsi$-generic. In view of Theorem \ref{prop:Luo} (iv) and $\Pi_\phi = \{\tau_1, \tau_2 \}$, this will complete the proof.
	
	Embed $\tau_1$ into $I_{\tilde{P}_1}\left( \chi_a |\cdot|^{1/2} \boxtimes \omega^+_{W_1, \bpsi_a} \right)$ by \eqref{eqn:tau-12}. Note that $\psi \in \Psi(\tilde{G})^\star$ entails $\chi_a \neq \mathbf{1}$. By a comparison with the non-split short exact sequence
	\[ 0 \to (\text{non-generic irreducible}) \to I_{\tilde{P}_1}\left( \chi_a |\cdot|^{1/2} \boxtimes \omega^+_{W_1, \bpsi_a} \right) \to \pi^{\mathrm{GI}}_{\psi, +} \to 0 \]
	of \cite[Lemma C1 (iii)]{GI21}, we infer that $\tau_1$ is the non-generic constituent of $I_{\tilde{P}_2}(\chi_a \mathrm{St}_{\GL(2)})$.
\end{proof}

The discussions above exhaust the parameters in the table \cite[C.1.4]{GI21} which belong to $\Psi(\tilde{G})^\star$.

\begin{proof}[Proof of Lemma \ref{prop:Mp4-lemma}]
	We may assume $\psi \in \Psi(\tilde{G})^{\star}$ is non-tempered, and examine the aforementioned table. Lemma \ref{prop:Mp4-proof-0} settles the case of principal parameters. On the other hand, Corollary \ref{prop:anti-tempered-chi} and Lemma \ref{prop:tilde-mu-star} imply
	\[ \pi_{\psi, \chi} = (\pi_{\phi, \chi \mu_\psi})^\wedge. \]
	Thus it suffices to show $\pi^{\mathrm{GI}, \wedge}_{\psi, \chi} = \pi_{\phi, \chi\mu_\psi}$ for the remaining parameters. In view of Lemmas \ref{prop:Mp4-proof-1}--\ref{prop:Mp4-proof-2}, it suffices to show $\mu_\psi = \mathbf{1}$ for all $\psi \in \Psi(\tilde{G})^\star$.
	
	\index{Jord@$\mathrm{Jord}$}
	For every $\psi' \in \Psi_{\mathrm{gp}}(\tilde{G})$, let $\mathrm{Jord}(\psi')$ be the multi-set of triples $(\rho, a, b)$ in the decomposition of $\psi'$ into simple summands $\rho \boxtimes r(a) \boxtimes r(b)$. Recall from \S\ref{sec:anti-tempered-SO} that $\mu_\psi$ equals Xu's character $\epsilon^{\mathrm{M}/\mathrm{MW}}_\psi$ described in \cite[\S 1]{Xu21}. Specifically, as $\psi$ is anti-tempered of good parity, we shall take
	\begin{itemize}
		\item the auxiliary data $\upzeta_{1, b} := -1$ for all $(\rho, 1, b) \in \mathrm{Jord}(\psi)$;
		\item an admissible total order $>_\psi$ on $\mathrm{Jord}(\psi)$, whose condition simplifies to $b > b' \implies (\rho, 1, b) >_\psi (\rho, 1, b')$ in the anti-tempered case.
	\end{itemize}
	See \textit{loc.\ cit.}\ for a full explanation.

	\begin{itemize}
		\item For $\psi = \zeta \boxtimes r(1) \boxtimes r(4)$, $(\zeta \boxtimes r(1) \boxtimes r(2)) \oplus (\zeta' \boxtimes r(1) \boxtimes r(2))$, $\rho \boxtimes r(1) \boxtimes r(2)$ and $2 \chi \boxtimes r(1) \boxtimes r(2)$, where $\zeta$ and $\zeta'$ are quadratic characters, $\zeta \neq \zeta'$, we have $\epsilon^{\mathrm{M}/\mathrm{MW}}_\psi = \mathbf{1}$ since in its definition \cite[p.1094]{Xu21}, we are always in the case ``$a + b$ is odd''.
		
		\item For $\psi = (\rho_0 \boxtimes r(1) \boxtimes r(1)) \oplus (\zeta \boxtimes r(1) \boxtimes r(2))$, we have $\epsilon^{\mathrm{M}/\mathrm{MW}}_\psi = \mathbf{1}$. Indeed, $(\rho_0, 1, 1) <_\psi (\zeta, 1, 2)$; in the notation of \textit{loc.\ cit.}\ we have $\epsilon^{\mathrm{M}/\mathrm{MW}}_\psi(\rho_0, 1, 1) = (-1)^{m+n}$ where $m = n = 0$, and $\epsilon^{\mathrm{M}/\mathrm{MW}}_\psi(\zeta, 1, 2) = 1$ since $1 + 2$ is odd.
	\end{itemize}
	This completes the proof of Lemma \ref{prop:Mp4-lemma}.
\end{proof}

\begin{remark}
	\index{epsilon-M-W-psi@$\epsilon^{\mathrm{M}/\mathrm{W}}_\psi$}
	The character $\epsilon^{\mathrm{M}/\mathrm{W}}_\psi = \epsilon^{\mathrm{MW}/\mathrm{W}}_\psi \epsilon^{\mathrm{M}/\mathrm{MW}}_\psi$ in \cite[(1.4)]{Xu21} also plays a important role in applications. For anti-tempered $\psi \in \Psi_{\mathrm{gp}}(\tilde{G})$, we have $\epsilon^{\mathrm{MW}/\mathrm{W}}_\psi = \mathbf{1}$. Indeed, the set $\mathcal{Z}_{\mathrm{MW}/\mathrm{W}}(\psi)$ in \cite[p.1094]{Xu21} is empty since $a = a' = 1$ for all $(\rho, a, b), (\rho', a', b') \in \mathrm{Jord}(\psi)$, hence $\epsilon^{\mathrm{MW}/\mathrm{W}}_\psi = \mathbf{1}$ by \textit{loc.\ cit.} This holds for all $n$. 
\end{remark}

\printindex

\printbibliography[heading=bibintoc]

\vspace{1em}
\begin{flushleft} \small
	W.-W. Li: Beijing International Center for Mathematical Research / School of Mathematical Sciences, Peking University. No.\ 5 Yiheyuan Road, Beijing 100871, People's Republic of China. \\
	E-mail address: \href{mailto:wwli@bicmr.pku.edu.cn}{\texttt{wwli@bicmr.pku.edu.cn}}
\end{flushleft}

\end{document}